\documentclass[twoside,11pt,reqno]{amsart}
\usepackage{amsmath,amssymb,amscd,mathrsfs,epic,empheq}
\usepackage{latexsym,amsthm,amscd,enumerate}

\makeatletter

\hfuzz 5pt
\vfuzz 2pt

\textheight 214.5mm
\textwidth 132mm

\raggedbottom

\@addtoreset{equation}{section}
\@addtoreset{figure}{section}

\setcounter{tocdepth}{2}

\newtheorem{Proposition}{Proposition}[section]
\newtheorem{Lemma}[Proposition]{Lemma}
\newtheorem{Theorem}[Proposition]{Theorem}
\newtheorem{Corollary}[Proposition]{Corollary}

\newtheorem{Remark}[Proposition]{Remark}

\newtheorem{Example}[Proposition]{Example}

\def\phantomsubsection#1{\vspace{2mm}\noindent{\bf #1.}}

\newbox\squ  
\setbox\squ=\hbox{\vrule width.6pt
   \vbox{\hrule height.6pt width.4em\kern1ex
         \hrule height.6pt}%
   \vrule width.6pt}

\def\g{\mathfrak{g}}
\def\b{\mathfrak{b}}
\def\h{\mathfrak{h}}
\def\cD{\mathcal{D}}
\def\cE{\mathcal{E}}
\def\cO{\mathcal{O}}
\def\cF{\mathcal{F}}
\def\cT{\mathcal{T}}
\def\cG{\mathcal{G}}
\def\cL{\mathcal{L}}
\def\cV{\mathcal{V}}
\def\cS{\mathcal{S}}
\def\cP{\mathcal{P}}
\def\cY{\mathcal{Y}}

\def\T{{\mathtt T}}

\def\Laurent{\mathscr{A}}

\def\forget{\mathrm{f}}

\def\PImn{P(m,n;I)}

\def\KImn{K(m,n;I)}
\def\OImn{\cO(m,n;I)}
\def\LaImn{\La(m,n;I)}

\def\deg{\operatorname{deg}}
\def\defect{\operatorname{def}}
\def\caps{\operatorname{caps}}
\def\cups{\operatorname{cups}}

\def\circles{\operatorname{circles}}

\def\op{\operatorname{op}}

\def\height{\operatorname{ht}}
\def\down{\vee}
\def\up{\wedge}
\def\Proj#1{\operatorname{Proj}(#1)}
\def\Rep#1{\operatorname{Rep}(#1)}
\def\rep#1{\operatorname{rep}(#1)}

\def\mod#1{\operatorname{Mod}_{l\!f}(#1)}

\def\id{\operatorname{id}}

\def\C{{\mathbb C}}
\def\Q{{\mathbb Q}}
\def\Z{{\mathbb Z}}

\def\pr{{\operatorname{pr}}}
\def\rad{{\operatorname{rad}\:}}
\def\hom{{\operatorname{Hom}}}

\def\End{{\operatorname{End}}}


\def\im{{\operatorname{Im}\:}}

\def\eps{{\varepsilon}}
\def\delt{{\delta}}
\def\phi{{\varphi}}

\def\la{{\lambda}}
\def\La{{\Lambda}}
\def\Ga{{\Gamma}}
\def\ga{{\gamma}}
\def\de{{\delta}}
\def\De{{\Delta}}

\def\De{{\Delta}}
\def\al{{\alpha}}

\def\@underbar#1{\settowidth{\@tempdimb}{#1}\@tempdimb=0.8\@tempdimb
                   \ooalign{#1\crcr
                         \hfil\rule[-.5mm]{\@tempdimb}{.4pt}\hfil}}

\def\bi{\text{\boldmath$i$}}
\def\bj{\text{\boldmath$j$}}
\def\bLa{\text{\boldmath$\La$}}
\def\bGa{\text{\boldmath$\Ga$}}
\def\bDe{\text{\boldmath$\De$}}
\def\bt{\text{\boldmath$t$}}
\def\bs{\text{\boldmath$s$}}
\def\br{\text{\boldmath$r$}}
\def\bu{\text{\boldmath$u$}}
\def\bla{\text{\boldmath$\la$}}
\def\bde{\text{\boldmath$\de$}}
\def\bga{\text{\boldmath$\ga$}}

\def\bsigma{\text{\boldmath$\sigma$}}
\def\btau{\text{\boldmath$\tau$}}
\def\bepsilon{\text{\boldmath$\epsilon$}}

\newdimen\hoogte    \hoogte=14pt    
\newdimen\breedte   \breedte=14pt   
\newdimen\dikte     \dikte=0.5pt    

\newenvironment{young}{\begingroup
       \def\vr{\vrule height0.8\hoogte width\dikte depth 0.2\hoogte}
       \def\fbox##1{\vbox{\offinterlineskip
                    \hrule height\dikte
                    \hbox to \breedte{\vr\hfill##1\hfill\vr}
                    \hrule height\dikte}}
       \vbox\bgroup \offinterlineskip \tabskip=-\dikte \lineskip=-\dikte
            \halign\bgroup &\fbox{##\unskip}\unskip  \crcr }
       {\egroup\egroup\endgroup}
\def\diagram#1{\relax\ifmmode\vcenter{\,\begin{young}#1\end{young}\,}\else%
              $\vcenter{\,\begin{young}#1\end{young}\,}$\fi}

\begin{document}

\title[Khovanov's diagram algebra III]{\boldmath Highest weight categories
arising from Khovanov's diagram algebra III: category $\cO$}
\author{Jonathan Brundan and Catharina Stroppel}

\address{Department of Mathematics, University of Oregon, Eugene, OR 97403, USA}
\email{brundan@uoregon.edu}
\address{Department of Mathematics, University of Bonn, 53115 Bonn, Germany}
\email{stroppel@math.uni-bonn.de}

\thanks{2000 {\it Mathematics Subject Classification}: 17B10, 16S37.}
\thanks{First author supported in part by NSF grant no. DMS-0654147}
\thanks{Second author supported by the NSF and the Minerva Research Foundation DMS-0635607.}

\begin{abstract}
We prove that integral
blocks of parabolic category $\cO$
associated to the subalgebra $\mathfrak{gl}_m(\C) \oplus \mathfrak{gl}_n(\C)$
of $\mathfrak{gl}_{m+n}(\C)$
are Morita equivalent to
quasi-hereditary covers of generalised Khovanov algebras.
Although this result is in principle known, the existing proof
is quite indirect, going via perverse sheaves on
Grassmannians.
Our new approach is completely algebraic, exploiting Schur-Weyl duality for higher levels.
As a by-product we get a concrete combinatorial construction of $2$-Kac-Moody representations in the sense of Rouquier
corresponding to level two weights in finite type $A$.
\end{abstract}
\maketitle

\small
\tableofcontents
\normalsize

\section{Introduction}

The {\em generalised Khovanov algebra} $H^n_m$ is a certain
positively graded symmetric algebra
defined via an explicit calculus of diagrams. It
was introduced by Khovanov in the case $m=n$
as part of his ground-breaking
work categorifying the Jones polynomial  \cite{K,K2}.
In \cite{CK, S}, the definition was extended to obtain
another algebra $K^n_m$, known as the
{\em quasi-hereditary cover of $H^n_m$}. This extension also has a natural interpretation in terms of the geometry of Springer fibres and can be realized as a certain convolution algebra; see \cite{SW}.

In \cite{BS1, BS2}, we undertook a systematic study of the
representation theory of $K^n_m$,
showing in particular that $K^n_m$ is Koszul
and computing the various natural bases for its
graded Grothendieck group in an explicit combinatorial fashion
using the diagram calculus. We also set up a general
theory of {projective functors} for the algebras $K^n_m$, extending
ideas of Khovanov from \cite{K2}.

The category $\rep{K^n_m}$ of finite dimensional left $K^n_m$-modules
is equivalent to the category of perverse sheaves
(constructible with respect to the Schubert stratification)
on the Grassmannian $\operatorname{Gr}(m,m+n)$ of $m$-dimensional subspaces of an $(m+n)$-dimensional
complex vector space.
This statement is proved
in the case $m=n$
in \cite[Theorem 5.8.1]{S}
as an application of work of Braden
\cite{Br}, and it should be possible to obtain the general case by similar
arguments as indicated in \cite[Remark 5.8.2]{S}.
In turn, by the Beilinson-Bernstein localisation theorem
and the Riemann-Hilbert correspondence, this category of perverse
sheaves
is equivalent to
the principal block
of the parabolic analogue of the Bernstein-Gelfand-Gelfand
category $\cO$ associated to the subalgebra
$\mathfrak{gl}_m(\C) \oplus \mathfrak{gl}_n(\C)$ of
 $\mathfrak{gl}_{m+n}(\C)$.

The main goal of this article is to give
a direct
algebraic construction of an  equivalence between
$\rep{K^n_m}$ and the principal block of the
parabolic category $\cO$ just mentioned.
Our new approach actually does rather more.
For one thing, it applies
to {\em all} integral blocks, not just the principal block; these
are also equivalent to categories of $K^n_m$-modules
but for possibly smaller $m$ and $n$.
We are also able to prove for the first time
that the diagrammatically defined
projective functors from \cite[$\S$4]{BS2} correspond
under the equivalence of categories to
the projective functors
from \cite{BG} that arise by tensoring with finite dimensional irreducible
modules. This is the key identification needed to
verify \cite[Conjecture 2.9]{Stqft}, which relates Khovanov's
functorial tangle invariants from \cite{K2} to
the functorial tangle invariants defined in \cite{Sduke}.

To formulate some of the
results in more detail,
let $\g := \mathfrak{gl}_{m+n}(\C)$, let
$\h$ be the standard Cartan subalgebra
consisting of diagonal matrices,
and let $\b$ be the standard Borel subalgebra of upper triangular matrices.
The dual space $\h^*$ has orthonormal basis
$\eps_1,\dots,\eps_{m+n}$ with respect to the trace form $(.,.)$,
where
$\eps_i$ is the weight picking out the $i$th diagonal entry of a
diagonal matrix.
Let $\mathfrak{l}$ denote
the naturally embedded subalgebra $\mathfrak{gl}_m(\C)\oplus \mathfrak{gl}_n(\C)$, and $\mathfrak{p} := \mathfrak{l}+\mathfrak{b}$ be the corresponding standard parabolic subalgebra. Let $\cO(m,n)$ be the category of finitely generated $\mathfrak{g}$-modules
that are
locally finite over $\mathfrak{p}$, semisimple over $\mathfrak{h}$, and
have all weights
belonging to $\Z \eps_1\oplus\cdots\oplus \Z \eps_{m+n}$.
This is the sum of all integral blocks of the parabolic category $\cO$
associated to $\mathfrak{p} \subseteq \mathfrak{g}$.
A full set of representatives for the isomorphism classes of
irreducible modules in $\cO(m,n)$
is given by the modules $\{\cL(\la)\:|\:\la \in \La(m,n)\}$, where
\begin{align}\label{isd}
\La(m,n) &:= \left\{\la \in \h^*\:\:\Bigg|\:
\begin{array}{l}
(\la+\rho,\eps_i) \in \Z\text{ for all }1 \leq i\leq m+n,\\
(\la+\rho,\eps_1) > \cdots > (\la+\rho,\eps_m),\\
(\la+\rho,\eps_{m+1}) > \cdots > (\la+\rho,\eps_{m+n})
\end{array}\right\},\\
\rho &:= -\eps_2-2\eps_3-\cdots-(m+n-1)\eps_{m+n} \in \h^*,\label{rhodef}
\end{align}
and $\cL(\la)$ denotes the irreducible $\g$-module
of highest weight $\la$, i.e. the irreducible $\mathfrak{g}$-module
generated by a highest weight
vector $v_+$ such that $xv_+ = 0$ for all $x$ in the nilradical of $\mathfrak{b}$
and $h v_+ = \la(h) v_+$ for all $h \in \mathfrak{h}$.

In order to be able to
exploit the diagram calculus from \cite{BS1,BS2},
we need to identify
weights in $\La(m,n)$ with weights in the combinatorial
sense of \cite[$\S$2]{BS1} via the following {\em weight dictionary}.
Given $\la \in \La(m,n)$ we define
\begin{align}\label{Id}
I_\down(\la)&:=\{(\la+\rho,\eps_1), \dots, (\la+\rho,\eps_m)\},\\
I_\up(\la)&:= \{(\la+\rho,\eps_{m+1}), \dots, (\la+\rho,\eps_{m+n})\}.\label{Iu}
\end{align}
Then we identify $\la$ with the diagram consisting of a number line
whose vertices are indexed by $\Z$
such that the $i$th vertex is labelled
\begin{equation}\label{dict}
\left\{
\begin{array}{ll}
\circ&\text{if $i$ does not belong to either $I_\down(\la)$ or $I_\up(\la)$,}\\
{\scriptstyle\down} &\text{if $i$ belongs to $I_\down(\la)$ but not to
$I_\up(\la)$,}\\
{\scriptstyle\up} &\text{if $i$ belongs to $I_\up(\la)$ but not to
$I_\down(\la)$,}\\
\times&\text{if $i$ belongs to both $I_\down(\la)$ and $I_\up(\la)$.}
\end{array}\right.
\end{equation}
For example, the zero weight (which parametrises the trivial module)
is identified with
$$
\begin{picture}(30,33)
\put(-118,17.25){$\cdots$}
\put(-98,20){\line(1,0){223}}
\put(92.5,17.3){$\circ$}
\put(115.5,17.3){$\circ$}
\put(-44.7,15.4){$\scriptstyle\up$}
\put(-21.7,15.4){$\scriptstyle\up$}
\put(1.3,15.4){$\scriptstyle\up$}
\put(24.3,20.1){$\scriptstyle\down$}
\put(47.3,20.1){$\scriptstyle\down$}
\put(70.3,20.1){$\scriptstyle\down$}
\put(-68.5,17.3){$\circ$}
\put(-91.5,17.3){$\circ$}
\put(133,17.5){$\cdots$}
\put(-46,11){$\underbrace{\phantom{hellow worl}}_{n}$}
\put(23,11){$\underbrace{\phantom{hellow worl}}_{m}$}
\end{picture}
$$
where the $\up$'s and $\down$'s are on the vertices
indexed $1-m-n,\dots,-1,0$.
Viewing $\la, \mu \in \La(m,n)$ as diagrams in this way,
the irreducible $\g$-modules
$\cL(\la)$ and $\cL(\mu)$ have the same central character
if and only if
$\la$
can be obtained from $\mu$ by permuting $\up$'s and $\down$'s.
This defines an equivalence relation $\sim$
on $\La(m,n)$.
We let $P(m,n)$ denote the set
$\La(m,n) / \!\!\sim$
of all $\sim$-equivalence
classes, and refer to elements of $P(m,n)$ as {\em blocks}.

For each block $\Ga \in P(m,n)$,
there is a finite dimensional algebra $K_\Ga$
exactly as defined in \cite[$\S$4]{BS1}.
As a vector space, $K_\Ga$ has basis
$$
\left\{(a \la b)\:|\:\text{for all oriented circle diagrams
$a \la b$ with $\la \in \Ga$}\right\},
$$
and its multiplication is defined by an explicit combinatorial procedure
in terms of such diagrams; see \cite[(6.3)]{BS1} for an example.
In particular, if $\Ga$ is the principal block, that is, the block generated
by the zero weight,
then the algebra
$K_\Ga$
is identified in an obvious way with
the quasi-hereditary cover $K^n_m$ of $H^n_m$.
Consider the (locally unital) algebra
\begin{equation}\label{KZ}
K(m,n) := \bigoplus_{\Ga \in P(m,n)} K_\Ga.
\end{equation}
Let $\rep{K(m,n)}$ denote the category
of (locally unital) finite dimensional left $K(m,n)$-modules.
According to \cite[$\S$5]{BS1},
the irreducible $K(m,n)$-modules are all one dimensional and their
isomorphism classes are
indexed in a canonical way by the set $\La(m,n)$.
Let $L(\la)$ denote the irreducible $K(m,n)$-module
associated to $\la \in \La(m,n)$.

\begin{Theorem}\label{thm1}
There is an equivalence of categories
$$\mathbb{E}:\cO(m,n) \stackrel{\sim}{\rightarrow}
\rep{K(m,n)}
$$
such that $\mathbb{E} (\cL(\la))\cong L(\la)$
for every $\la \in \La(m,n)$.
In particular, $\mathbb{E}$ restricts to an equivalence
between the principal block of $\cO(m,n)$
and $\rep{K^n_m}$.
\end{Theorem}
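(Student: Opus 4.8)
\emph{Strategy of proof.}
Both categories in the statement decompose compatibly into blocks indexed by $P(m,n)$: for $\rep{K(m,n)}$ this is the definition~\eqref{KZ}, while for $\cO(m,n)$ it is the standard fact, recorded after~\eqref{dict}, that $\cL(\la)$ and $\cL(\mu)$ have the same central character exactly when $\la\sim\mu$. Writing $\cO_\Ga$ for the block of $\cO(m,n)$ indexed by $\Ga\in P(m,n)$, it therefore suffices to construct for each $\Ga$ an equivalence $\cO_\Ga\stackrel{\sim}{\rightarrow}\rep{K_\Ga}$ taking $\cL(\la)$ to $L(\la)$ for every $\la\in\Ga$. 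Since $\cO_\Ga$ has a projective generator, the functor $\hom_\g(P_\Ga,-)$ with $P_\Ga:=\bigoplus_{\la\in\Ga}P(\la)$ (here $P(\la)$ is the projective cover of $\cL(\la)$) identifies $\cO_\Ga$ with the category of finite-dimensional modules over $A_\Ga:=\End_\g(P_\Ga)^{\op}$. Hence the theorem comes down to an algebra isomorphism $A_\Ga\cong K_\Ga$ matching, for each $\la\in\Ga$, the idempotent of $A_\Ga$ projecting onto $P(\la)$ with the diagonal idempotent $e_\la$ of $K_\Ga$: then $\hom_\g(P_\Ga,-)$ sends $\cL(\la)$ to the one-dimensional simple supported on that idempotent, namely $L(\la)$, and $\mathbb{E}$ is the direct sum over $\Ga$ of these equivalences.

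The means of getting hold of $A_\Ga$ is Schur--Weyl duality for higher levels. All indecomposable projectives $P(\la)$, $\la\in\Ga$, occur as summands of $Q\otimes V^{\otimes a}\otimes(V^*)^{\otimes b}$, where $V=\C^{m+n}$ is the natural module and $Q$ ranges over a short explicit list of projectives---for instance a projective parabolic Verma module in each block (one always exists); this is the theory of projective functors of~\cite{BG}. The joint action of $-\otimes V$ and $-\otimes V^*$ on $\cO(m,n)$ satisfies a double-centralizer property with a degenerate cyclotomic Schur algebra of level two, so that a sum of blocks of $\cO(m,n)$ is equivalent to the module category of such an algebra, which admits a presentation by generators and relations; under this equivalence the spaces $\hom_\g(P(\mu),P(\nu))$ become explicit morphism spaces in that algebra. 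All the bookkeeping is organised by the weight dictionary~\eqref{dict}: parabolic Verma modules correspond to the $\circ/{\scriptstyle\up}/{\scriptstyle\down}/{\times}$ diagrams and $\sim$-classes to generalised Khovanov blocks, precisely as in~\cite{BS1}.

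It then remains to match $A_\Ga$ with $K_\Ga$ as an algebra. The underlying linear bijection is already available from~\cite{BS1}: each $P(\la)$ has a filtration by parabolic Verma modules, so BGG reciprocity expresses $\dim\hom_\g(P(\mu),P(\nu))$ in terms of the parabolic Kazhdan--Lusztig polynomials for the maximal parabolic $\mathfrak{l}\subseteq\g$ in type $A$; these are the monomial polynomials computed in~\cite{BS1}, and they count exactly the oriented circle diagrams $a\la b$ forming the $e_\mu K_\Ga e_\nu$ part of the standard basis of $K_\Ga$. Transporting that basis along the higher Schur--Weyl equivalence, one is left to verify that composition in $A_\Ga$ obeys the ``surgery'' multiplication rule defining $K_\Ga$ in~\cite{BS1}. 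Positivity of the grading (Koszulity of $K_\Ga$, \cite{BS1}), associativity, and compatibility with the projective-functor action pin this product down up to rescaling the basis, leaving the delicate bookkeeping of scalars and of the signs introduced when circles merge or split.

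I expect this last step---the sign- and scalar-sensitive comparison of the two multiplications, together with arranging the higher Schur--Weyl construction so that it delivers a genuine presentation of $A_\Ga$ rather than merely its graded dimensions---to be the main obstacle. A consistency check on the answer is the observation, advertised in the abstract, that $\bigoplus_\Ga\cO_\Ga$ and $\bigoplus_\Ga\rep{K_\Ga}$ both carry compatible actions of the relevant Kac--Moody $2$-category, making them $2$-representations generated by a highest-weight object, so that a rigidity theorem for such $2$-representations again forces the equivalence; but that route ultimately reduces to the same identification of endomorphism algebras.
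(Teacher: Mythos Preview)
Your architecture matches the paper: reduce to blocks, identify $\cO_\Ga$ with modules over $A_\Ga=\End_\g(\cP_\Ga)^{\op}$ for $\cP_\Ga=\bigoplus_{\la\in\Ga}\cP(\la)$, and prove $A_\Ga\cong K_\Ga$ matching idempotents; level-two Schur--Weyl duality is indeed the bridge. The gap is precisely the step you flagged as ``the main obstacle'': your proposal to transport a basis and verify the surgery rule, trusting that Koszulity, associativity and projective-functor compatibility ``pin this product down up to rescaling'', is not substantiated, and no such rigidity argument appears in the paper either. That route, if it works at all, would require an independent and substantial argument.

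The paper avoids comparing multiplications altogether. Rather than attack $\cP_\Ga$ directly, it introduces a larger \emph{prinjective} generator $\cT^\La_\alpha=\bigoplus_{\bi\in I^\alpha}\cF_\bi\cL(\iota)$ on the $\cO$ side and the parallel $T^\La_\alpha$ on the diagram side. The endomorphism algebra of $\cT^\La_\alpha$ is the degenerate cyclotomic Hecke block $e_\alpha H^{o+m,o+n}_d$ by~\cite{BKschur}; on the diagram side an explicit map from the cyclotomic KLR algebra $R^\La_\alpha$ to $E^\La_\alpha:=\End_{K^\La_\alpha}(T^\La_\alpha)^{\op}$ is built from the local relations of $\S$\ref{sL} and proved surjective (Theorem~\ref{surjectivity}), whereupon the isomorphism $e_\alpha H^{o+m,o+n}_d\cong R^\La_\alpha$ of~\cite{BKyoung} makes it an isomorphism by a dimension count. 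The Schur functors $\pi=\hom(\cT^\La_\alpha,-)$ and $\pi=\hom(T^\La_\alpha,-)$ are fully faithful on projectives, so $A_\Ga$ and $K_\Ga$ both become endomorphism rings of ``Young modules'' $\cY^\La_\alpha$ and $\widehat T^\La_\alpha$ over $R^\La_\alpha$. The remaining ingredient, absent from your outline, is Lemma~\ref{ided}: $\cY(\la)\cong Y(\la)$ for every $\la$, proved by induction along the crystal graph using that both Schur functors intertwine the special projective functors with $i$-induction on $R^\La_\alpha$-modules. This yields $\cY^\La_\alpha\cong\widehat T^\La_\alpha$ as $R^\La_\alpha$-modules, hence $A_\Ga\cong K_\Ga$, with no direct comparison of structure constants ever required.
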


The next theorem is concerned
with projective functors.
On the diagram algebra side, these functors
arise from crossingless matchings.
More precisely, take $\Ga, \De \in P(m,n)$
and let $t$ be a proper $\De\Ga$-matching in the sense of
\cite[$\S$2]{BS2}. To this data there is associated a non-zero
$(K_\De,K_\Ga)$-bimodule $K_{\De\Ga}^t$; see \cite[$\S$3]{BS2}.
We view it as a $(K(m,n), K(m,n))$-bimodule
by extending the left action of $K_\De$ (resp.\ the right action of
$K_\Ga$) to
all of $K(m,n)$ by declaring that all the summands of (\ref{KZ})
different from $K_\De$ (resp.\ $K_\Ga$) act as zero.
Tensoring with this bimodule
defines an exact functor
\begin{equation*}\label{pfun}
K^t_{\De\Ga} \otimes_{K(m,n)}?:\rep{K(m,n)}
\rightarrow
\rep{K(m,n)}.
\end{equation*}
By \cite[Theorem 4.14]{BS2}, this functor
is indecomposable.
A {\em projective functor} on $\rep{K(m,n)}$ simply means
any endofunctor that is isomorphic to a finite direct sum of
such functors.

On the category $\cO$ side,
following \cite{BG}, a {\em projective functor}
means a functor that is isomorphic to a
summand of one of the exact endofunctors of $\cO(m,n)$
that arise
by tensoring with a finite dimensional rational $\g$-module.
In order to classify such projective functors, it suffices
by a variation on the Krull-Schmidt theorem to classify
the {\em indecomposable} projective functors; see e.g. \cite[$\S$3.1]{Sduke}.
The classification of
indecomposable projective functors on the principal block of
$\cO(m,n)$ can be deduced from \cite[Theorem 5.1]{Sduke}
(using \cite[Proposition 4.2]{Sduke} to determine which projective functors
have non-zero restrictions). This had been conjectured earlier
by Bernstein, Frenkel and Khovanov \cite[p.237]{BFK}.
The following theorem
gives an alternative approach to this classification, and
extends it to arbitrary integral blocks.

\begin{Theorem}\label{thm2}
Given blocks $\Ga,\De \in P(m,n)$
and a proper $\De\Ga$-matching $t$,
there is an indecomposable projective functor
$\cG^t_{\De\Ga}:\cO(m,n) \rightarrow \cO(m,n)$
and an isomorphism of functors
$$
\mathbb{E} \circ \cG^t_{\De\Ga} \cong
(K_{\De\Ga}^t \otimes_{K(m,n)} ?) \circ \mathbb{E}:
{\cO}(m,n) \rightarrow \rep{K(m,n)}.
$$
Every indecomposable projective functor on $\cO(m,n)$
is isomorphic to such a functor $\cG^t_{\De\Ga}$ for unique $\Ga,\De$ and $t$.
\end{Theorem}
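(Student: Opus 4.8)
The plan is to obtain $\cG^t_{\De\Ga}$ by transport of structure along the equivalence $\mathbb{E}$ of Theorem~\ref{thm1}. I would set $\cG^t_{\De\Ga} := \mathbb{E}^{-1}\circ(K^t_{\De\Ga}\otimes_{K(m,n)}?)\circ\mathbb{E}$. Then the displayed isomorphism of functors holds by definition, and, since an equivalence of categories induces a direct-sum-preserving bijection on isomorphism classes of endofunctors, $\cG^t_{\De\Ga}$ is indecomposable by \cite[Theorem 4.14]{BS2}. What then remains is: (i) that $\cG^t_{\De\Ga}$ is a projective functor in the sense of \cite{BG}, i.e.\ a direct summand of $?\otimes M$ for some finite-dimensional rational $\g$-module $M$; (ii) conversely, that every indecomposable projective functor on $\cO(m,n)$ is isomorphic to some $\cG^t_{\De\Ga}$; and (iii) that the triple $(\De,\Ga,t)$ is then unique.

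The strategy for (i) and (ii) is to reduce both to the case of \emph{elementary} matchings, after which it only remains to match two explicit families of ``elementary'' functors. On the diagram side, it follows from \cite[\S2--4]{BS2} that every bimodule $K^t_{\De\Ga}$ is, up to grading shifts, a direct summand of a tensor product of the bimodules attached to elementary matchings, and that conversely any such tensor product is a direct sum of grading-shifted copies of bimodules of the form $K^u_{\De'\Ga'}$; forgetting gradings, the class of functors isomorphic to finite direct sums of $\cG^u$'s is thus closed under composition and contains every $\cG^u$ as a summand of a composite of elementary ones. On the category $\cO$ side, a projective functor is by definition a summand of $?\otimes M$; since finite-dimensional $\g$-modules are completely reducible and every finite-dimensional irreducible of $\g=\mathfrak{gl}_{m+n}(\C)$ occurs as a summand of some $V^{\otimes a}\otimes(V^*)^{\otimes b}$ with $V:=\C^{m+n}$, the functor $?\otimes M$ is a summand of a composite of copies of $?\otimes V$ and $?\otimes V^*$. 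Decomposing over blocks (block projection is a summand of the identity functor), every projective functor on $\cO(m,n)$ is therefore a summand of a composite of \emph{translation functors}, obtained by restricting $?\otimes V$ or $?\otimes V^*$ to one block and projecting onto another; and each such translation functor is a finite direct sum of elementary $\cG^t$'s once the identification below is in hand.

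\textbf{The crux of the argument} is to identify, for each elementary $\De\Ga$-matching $t$, the transported functor $\cG^t_{\De\Ga}$ with the corresponding translation functor: the projection onto the block $\De$ of $?\otimes V$ (resp.\ $?\otimes V^*$) applied to the block $\Ga$. At the level of blocks this is a purely combinatorial match, since adding a weight $\eps_i$ of $V$ to $\la+\rho$ moves a single $\down$, a single $\up$, or one half of a $\times$ in the weight diagram of $\la$ by one vertex, which is exactly the effect on weight diagrams of an elementary matching in the sense of \cite{BS2} (dually for $V^*$). The real point is that $\mathbb{E}$ must carry these functors to one another on the nose, and this is where Schur--Weyl duality for higher levels enters: the equivalence $\mathbb{E}$ of Theorem~\ref{thm1} is built as a composite of functors each compatible with the relevant categorical Kac--Moody action --- the action on $\cO(m,n)$ being given by the block components of $?\otimes V$ and $?\otimes V^*$, and the action on $\rep{K(m,n)}$ by the bimodules of elementary matchings as in \cite{BS1,BS2} --- so that tracing the Chevalley generators $E_i,F_i$ through the construction of $\mathbb{E}$ yields the required isomorphisms of functors; equivalently, one upgrades $\mathbb{E}$ to an equivalence of $2$-representations in the sense of Rouquier and observes that both $2$-categories of projective functors are generated, under composition and passage to summands, by the Chevalley functors. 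I expect this identification to be the only place where substantial work is needed, and to rest essentially on the explicit construction of $\mathbb{E}$ used to prove Theorem~\ref{thm1}.

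Granting the identification of the elementary functors, assertions (i)--(iii) follow formally. For (i): by the reduction above each $\cG^u_{\De\Ga}$ is a summand of a composite of elementary $\cG$'s, hence of a composite of translation functors, hence of $?\otimes M$ for a suitable finite-dimensional $M$, so it is a projective functor. For (ii): conversely, any projective functor is a summand of a composite of translation functors; writing each translation functor as a finite direct sum of elementary $\cG$'s and using the composition rule for the $K^t$ recalled above, a composite of translation functors is a finite direct sum of $\cG^u$'s, so by indecomposability of the $\cG^u$ and Krull--Schmidt any projective functor is a finite direct sum of $\cG^u$'s; in particular every \emph{indecomposable} projective functor is some $\cG^u_{\De\Ga}$. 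For (iii): an isomorphism $\cG^t_{\De\Ga}\cong\cG^s_{\De'\Ga'}$ transports under $\mathbb{E}$ to an isomorphism of the functors $K^t_{\De\Ga}\otimes_{K(m,n)}?$ and $K^s_{\De'\Ga'}\otimes_{K(m,n)}?$ on $\rep{K(m,n)}$, and evaluating on $K(m,n)$ gives an isomorphism $K^t_{\De\Ga}\cong K^s_{\De'\Ga'}$ of (ungraded) $(K(m,n),K(m,n))$-bimodules; since $K^t_{\De\Ga}$ is supported as a bimodule on the block $\De$ on the left and the block $\Ga$ on the right, this forces $\De=\De'$ and $\Ga=\Ga'$, after which $t=s$ because distinct proper $\De\Ga$-matchings already yield non-isomorphic bimodules $K^t_{\De\Ga}$ after forgetting the grading, as one sees from their distinct effect on the indecomposable projective modules computed in \cite{BS2}.
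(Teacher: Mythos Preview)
Your proposal is correct and follows essentially the same route as the paper. The paper's proof (at the end of \S\ref{sE}) defines $\cG^t_{\De\Ga}$ by transport along $\mathbb{E}$, then reduces the identification of the two notions of projective functor to the special projective functors via Lemma~\ref{gen} (your reduction on the diagram side), Lemma~\ref{gen2} (your reduction on the category $\cO$ side), and Theorem~\ref{idspf} (your ``crux'', i.e.\ $\mathbb{E}\circ\cF_i\cong F_i\circ\mathbb{E}$ and $\mathbb{E}\circ\cE_i\cong E_i\circ\mathbb{E}$); the latter is indeed proved by tracing the Schur functors through the construction of $\mathbb{E}$, exactly as you anticipated.
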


One pleasant feature of the diagram algebra setup is that it makes
some natural but hard-to-see gradings
on the category $\cO$ side absolutely explicit.
Indeed, the algebra $K(m,n)$ carries an obvious grading
with respect to which its diagram basis is homogeneous.
This grading makes $K(m,n)$ into a
Koszul algebra; see \cite[Theorem 5.6]{BS2}.
In view of Theorem~\ref{thm1},
the category $\Rep{K(m,n)}$
of finite dimensional {\em graded}
left $K(m,n)$-modules can be interpreted as a graded version
of $\cO(m,n)$.
By the unicity of Koszul gradings \cite[$\S$2.5]{BGS}, this
 is equivalent to the
graded version of $\cO(m,n)$ constructed geometrically
(and in far greater generality)
in \cite[$\S$3.11]{BGS} and \cite{Back}.
The bimodules $K_{\De\Ga}^t$ are also naturally graded, so
in view of Theorem~\ref{thm2} they
define explicit graded lifts of the indecomposable
projective functors on $\cO(m,n)$.

For the proofs, the basic idea
is to exploit a special case of
the Schur-Weyl duality for higher levels
developed in \cite{BKschur}.
Recall that classical Schur-Weyl duality relates
polynomial representations of $\mathfrak{gl}_{m+n}(\C)$
to the representation theory of symmetric groups.
Schur-Weyl duality for level two
relates the category $\cO(m,n)$
to the representation theory of
degenerate cyclotomic Hecke algebras of level two.
By mimicking this Schur-Weyl duality on the diagram algebra
side, we obtain a natural realisation of another
(at first sight quite different) family of
Hecke algebras, namely, level two cyclotomic quotients $R^\La_\alpha$ of
certain algebras introduced independently by
Khovanov and Lauda in \cite{KLa}
and Rouquier in \cite{Rou} (for Cartan matrices of finite type $A$).
The bridge between parabolic category $\cO$
and the diagram algebra side finally comes as an application of
the main result of \cite{BKyoung}, which asserts that
these two sorts of Hecke algebra arising from the
two versions of Schur-Weyl duality are actually isomorphic algebras.

As a by-product of the argument, we also obtain an elementary
proof of the categorification
conjecture formulated by Khovanov and Lauda in \cite[$\S$3.4]{KLa}
for level two weights in finite type A;
see also \cite{BKllt, VV} which treat much more general situations
(but using geometric methods).
At the same time we give a conceptual interpretation of the
grading on $R^\La_\alpha$: for us this algebra arises naturally as the endomorphism algebra of a certain
projective module in $\mathcal O(m,n) \cong \rep{K(m,n)}$, and the
$\Z$-grading on it is the grading
induced by the Koszul grading on these projective modules suitably shifted in degree.
Finally our methods yield a special {\em graded cellular basis} for $R^\La_\alpha$  parametrised by some diagrams which are in bijection with
certain Young tableaux; see \cite[Remark 4.12]{BKW} where the existence of such bases is predicted
in more general situations.
In particular we deduce from this a graded dimension formula for the irreducible $R^\La_\alpha$-modules (in level two for  finite type $A$).

\vspace{2mm}

\noindent
{\em Acknowledgements.}
Both authors thank Alexander Kleshchev, Andrew Mathas and Rapha\"el Rouquier for useful conversations.
The second author acknowledges support from a Von Neumann Fellowship at the Institute for Advanced Study, Princeton,
where part of this research was carried out.

\vspace{2mm}

\noindent
{\em Notation.}\label{II}
For the remainder of the article, we fix
an index set $I$ that is a non-empty, bounded-below set of
consecutive integers and let $m, n$ be integers
with $0 \leq m,n \leq |I|+1$.
Set $I^+ := I \cup (I+1)$
and $o := \min(I) - 1$.
The reader will lose little in generality by assuming that
$I = I^+ = \{1,2,3,\dots\}$ and $o = 0$.

\section{Combinatorics of canonical bases}\label{sB}

This and the next two sections are concerned with some
essential combinatorial book-keeping at the level of Grothendieck groups.
We begin in this section by introducing an auxiliary space
$\textstyle \bigwedge^m V \otimes \bigwedge^n V$
together with three natural bases, namely, the monomial basis,
the dual-canonical basis and the quasi-canonical basis,
following the setup of \cite[section 2]{BKariki} closely.

\phantomsubsection{The quantised enveloping algebra}
We begin with some basic notions
related to the general linear Lie algebra of $I^+ \times I^+$
matrices. The underlying {\em weight lattice} $P$ is
the free $\Z$-module
on basis $\{\delt_i\:|\:i \in I^+\}$ equipped with a
bilinear form $(.,.)$ such that $(\delt_i, \delt_j)  = \delta_{i,j}$
(where $\delta_{i,j}$ is the usual Kronecker $\delta$-function).
For $i \in I$ and $j \in I^+$, let
\begin{equation*}\label{srp}
\alpha_i := \delt_i - \delt_{i+1},\qquad
\La_j := \sum_{I \ni i \leq j} \delt_i
\end{equation*}
denote the $i$th {\em simple root} and the $j$th {\em fundamental weight},
respectively; note that $(\alpha_i, \La_j) = \delta_{i,j}$.
The {\em root lattice} $Q$ is the $\Z$-submodule of $P$
generated by the simple roots. Let $Q_+$ be the subset of $Q$
consisting of all $\alpha$ that have non-negative
coefficients when expressed in terms of the simple roots,
and define the {\em height} $\height(\alpha)$
to be the sum of these coefficients.

Let $U$ denote the (generic) quantised enveloping algebra
associated to this root datum.\label{qea}
So $U$ is the associative algebra over the field of rational functions
$\Q(q)$ in an indeterminate $q$, with generators
$\{E_i, F_i\:|\:i \in I\} \cup \{D_i, D_i^{-1}\:|\:i \in I^+\}$
subject to the following well-known relations:
\begin{align*}
D_iD_i^{-1} &= D_i^{-1}D_i = 1,&E_iE_j &= E_jE_i&\hbox{if $|i-j|>1$},\\
D_iD_j &= D_jD_i,&\!\!\!\!E_i^2E_j  + E_jE_i^2 &= (q+q^{-1})E_iE_jE_i&\hbox{if $|i-j|=1$},\\
D_iE_jD_i^{-1} &= q^{(\delt_i, \alpha_j)} E_j,&F_iF_j &= F_jF_i&\hbox{if $|i-j| > 1$,}\\
D_iF_jD_i^{-1} &= q^{-(\delt_i, \alpha_j)} F_j,&\!\!\!\!F_i^2F_j+ F_jF_i^2 &=   (q+q^{-1})F_iF_jF_i&\hbox{if $|i-j|=1$,}
\end{align*}
\begin{equation*}
\quad
E_iF_j - F_jE_i = \delta_{i, j} \frac{D_i D_{i+1}^{-1} - D_{i+1} D_i^{-1}}{q - q^{-1}}.
\end{equation*}
We view $U$ as a Hopf algebra with comultiplication
$\Delta$ defined on generators by
\begin{align*}
\Delta(E_i) &= 1 \otimes E_i + E_i \otimes D_i D_{i+1}^{-1},\\
\Delta(F_i) &= F_i \otimes 1 + D_i^{-1}D_{i+1}  \otimes F_i,\\
\Delta(D_i^{\pm 1}) &= D_i^{\pm 1} \otimes D_i^{\pm 1}.
\end{align*}

\phantomsubsection{\boldmath The space $\bigwedge^m V \otimes \bigwedge^n V$}
Let $V$ denote the natural $U$-module
with basis
$\{v_i\:|\:i \in I^+\}$. The generators act on this basis by
the rules
$$
E_i v_j = \delta_{i+1, j} v_i,
\qquad
F_i v_j = \delta_{i,j} v_{i+1},
\qquad
D_i^{\pm 1} v_j = q^{\pm \delta_{i,j}} v_j.
$$
Following \cite[$\S$5]{B2} (noting the roles of $q$
and $q^{-1}$ are switched there),
we define the $n$th {\em quantum exterior power} $\bigwedge^n V$
to be the $U$-submodule of $\bigotimes^n V$
with basis given by the vectors
\begin{align}\label{bvs}
v_{i_1} \wedge\cdots \wedge v_{i_n} &:=
\sum_{w \in S_n} (-q)^{\ell(w)} v_{i_{w(1)}} \otimes\cdots\otimes v_{i_{w(n)}}
\end{align}
for all $i_1 > \cdots > i_n$ from the index set $I^+$.
Here, $\ell(w)$ denotes the usual length of a permutation
$w$ in the symmetric group $S_n$.

Consider the $U$-module
$\bigwedge^m V \otimes \bigwedge^n V$.
It has the obvious monomial basis
\begin{equation}\label{mb1}
\left\{
(v_{i_1}\wedge\cdots\wedge v_{i_m}) \otimes
(v_{j_1}\wedge\cdots\wedge v_{j_n})\:\bigg|\:
\begin{array}{l}
i_1,\dots,i_m,j_1,\dots,j_n \in I^+,\\
i_1>\cdots> i_m, j_1 > \cdots > j_n
\end{array}\right\}.
\end{equation}
Each vector $(v_{i_1}\wedge\cdots\wedge v_{i_m}) \otimes
(v_{j_1}\wedge\cdots\wedge v_{j_n})$ from this basis
is of weight
$(\delt_{i_1}+\cdots+\delt_{i_m})+(\delt_{j_1}+\cdots+\delt_{j_n}) \in P$.
Let
$\PImn$ denote
the set of all $\Ga \in P$ such that
\begin{itemize}
\item $0 \leq (\Ga,\delt_i) \leq 2$ for all $i \in I^+$;
\item $\sum_{i \in I^+} (\Ga,\delt_i) = m+n$;
\item the number of $i \in I^+$ such that $(\Ga,\delt_i) = 2$
is at most $\min(m,n)$.
\end{itemize}\label{pimn}
In other words, $\PImn$ is the set of weights that arise with non-zero multiplicity
in the module $\bigwedge^m V \otimes \bigwedge^n V$.
We reserve the notation $\La$ from now on for the
weight
\begin{equation}\label{gsw}
\La := \La_{o+m}+\La_{o+n},
\end{equation}
recalling that $o = \min(I)-1$ and $\La_i$ denotes the $i$th
fundamental weight.
This is the unique maximal element of $\PImn$ in
the dominance ordering, i.e.
all elements of $\PImn$ are of the form $\La - \alpha$
for $\alpha \in Q_+$.

\phantomsubsection{Combinatorics of weights and blocks}
Unfortunately the word ``weight'' gets over-used in this business.
In the remainder of the article, the terminology {\em weight} will always refer to
weights in the combinatorial sense of
\cite[$\S$2]{BS1}, namely,
diagrams consisting of a number line
with vertices labelled by the symbols
$\circ$, $\down$, $\up$ or $\times$.\label{wtsdef}
Recall also from \cite[$\S$2]{BS1}
the Bruhat order $\leq$ on weights,
which is generated by the basic operation of interchanging
$\down\up$ pairs of labels, and the equivalence relation
$\sim$, which arises by permuting
$\up$'s and $\down$'s.

Let $\LaImn$ denote the set of all weights drawn on a number line
with vertices indexed by $I^+$
such that exactly
$m$ of the vertices are labelled $\down$ or $\times$ and
exactly $n$ vertices are labelled $\up$ or $\times$.
By a {\em block} we mean a $\sim$-equivalence class of weights
from $\LaImn$.
It is often convenient to
represent such a block $\Ga$ diagrammatically
by replacing all the vertices labelled $\down$ or $\up$
in the weights from $\Ga$ by the symbol $\bullet$.
For example, taking $I= \{1,\dots,8\}$, $m=5$ and $n=4$,
the block $\Ga$ generated by the weight
\begin{equation}\label{eg}
\begin{picture}(54,10)
\put(-93,1){$\la=$}
\put(-64.8,3){\line(1,0){184}}
\put(-68.2,1){$\scriptstyle\times$}
\put(-44.8,.2){$\circ$}
\put(46.8,1){$\scriptstyle\times$}
\put(93.2,.2){$\circ$}
\put(-21.7,-1.6){$\scriptstyle\up$}
\put(70.3,-1.6){$\scriptstyle\up$}
\put(1.3,3.1){$\scriptstyle\down$}
\put(116.3,3.1){$\scriptstyle\down$}
\put(24.3,3.1){$\scriptstyle\down$}
\end{picture}
\end{equation}
is represented by the block diagram
\begin{equation}\label{egb}
\begin{picture}(54,10)
\put(-93,1){$\Ga=$}
\put(-64.8,3){\line(1,0){184}}
\put(-68.2,1){$\scriptstyle\times$}
\put(-44.8,.2){$\circ$}
\put(46.8,1){$\scriptstyle\times$}
\put(93.2,0.2){$\circ$}
\put(-21.4,1){$\scriptstyle\bullet$}
\put(70.6,1){$\scriptstyle\bullet$}
\put(1.6,1){$\scriptstyle\bullet$}
\put(116.6,1){$\scriptstyle\bullet$}
\put(24.6,1){$\scriptstyle\bullet$}
\end{picture}
\end{equation}
Abusing notation further, we identify the set
$\PImn$ defined in the previous subsection
with the set
$\LaImn / \!\!\sim$ of all blocks
by identifying $\Ga \in \PImn$ with
the block diagram whose $i$th vertex is labelled
$\circ, \bullet$ or $\times$ according to whether
$(\Ga,\delt_i) = 0$, $1$ or $2$.
For example, the special element $\La \in \PImn$ from (\ref{gsw})
is identified in this way with the block
consisting of just one weight, namely, the weight
\vspace{4mm}
\begin{equation}\label{groundstate}
\iota := \left\{
\begin{array}{ll}
\hspace{86mm}
&\text{\!\!\!\!\!if $m \geq n$,}\\\\
&\text{\!\!\!\!\!if $m \leq n$.}\\
\end{array}
\right.
\begin{picture}(0,30)
\put(-93,12.2){$\cdots$}
\put(-294,14){$\overbrace{\phantom{hellow worl}}^{n}$}
\put(-227,14){$\overbrace{\phantom{hellow worl}}^{m-n}$}
\put(-291.8,15){\line(1,0){191}}
\put(-295.2,13){$\scriptstyle\times$}
\put(-272.2,13){$\scriptstyle\times$}
\put(-249.2,13){$\scriptstyle\times$}
\put(-225.7,15.1){$\scriptstyle\down$}
\put(-202.7,15.1){$\scriptstyle\down$}
\put(-179.7,15.1){$\scriptstyle\down$}
\put(-156.8,12.2){$\circ$}
\put(-133.8,12.2){$\circ$}
\put(-110.8,12.2){$\circ$}
\end{picture}
\begin{picture}(0,0)
\put(-93,-13.8){$\cdots$}
\put(-294,-18){$\underbrace{\phantom{hellow worl}}_{m}$}
\put(-227,-18){$\underbrace{\phantom{hellow worl}}_{n-m}$}
\put(-291.8,-11){\line(1,0){191}}
\put(-295.2,-13){$\scriptstyle\times$}
\put(-272.2,-13){$\scriptstyle\times$}
\put(-249.2,-13){$\scriptstyle\times$}
\put(-225.7,-15.6){$\scriptstyle\up$}
\put(-202.7,-15.6){$\scriptstyle\up$}
\put(-179.7,-15.6){$\scriptstyle\up$}
\put(-156.8,-13.8){$\circ$}
\put(-133.8,-13.8){$\circ$}
\put(-110.8,-13.8){$\circ$}
\end{picture}
\end{equation}
\vspace{4mm}

\noindent
We refer to this special weight as the {\em ground-state}.
Recall finally the
notion of {\em defect} $\defect(\Ga)$
of a block $\Ga$
from \cite[$\S$2]{BS1}. In our setting, we have simply that
\begin{equation}\label{defectdef}
\defect(\Ga)=\min(m,n) - \#\left(
\begin{array}{c}
\text{vertices labelled $\times$ in}\\
\text{the diagram for $\Ga$}
\end{array}
\right).
\end{equation}
For example, the block $\Ga$ from (\ref{egb})
is of defect $2$.
In Lie theoretic terms, we have equivalently
that
\begin{equation}
\defect(\Ga) = ((\La,\La)-(\Ga,\Ga))/2.
\end{equation}
This formula gives meaning to the notion of
defect for more general $\Ga \in P$ that do not necessarily belong to $\PImn$.

\phantomsubsection{The monomial basis}
With this combinatorial notation behind us,
given $\la \in \LaImn$,
define
$$\label{mon}
V_\la := (v_{i_1} \wedge \cdots \wedge v_{i_m}) \otimes
(v_{j_1} \wedge\cdots \wedge v_{j_n})
$$
where $i_1 > \cdots > i_m$ index the vertices of $\la$
labelled $\down$ or $\times$ and
$j_1 > \cdots > j_n$ index the vertices of $\la$ labelled
$\up$ or $\times$.
For example, if $\la$ is as in (\ref{eg})
then
$V_\la = (v_9 \wedge v_6 \wedge v_5 \wedge v_4 \wedge v_1) \otimes
(v_7 \wedge v_6 \wedge v_3 \wedge v_1)$.
The monomial basis for the space
$\bigwedge^m V \otimes \bigwedge^n V$
from (\ref{mb1})
is then the set
\begin{equation*}
\{V_\la\:|\:\la \in \LaImn\}.
\end{equation*}

\begin{Remark}\label{dinnertime}\rm
To help the reader to
make the connection with combinatorics elsewhere in the literature (e.g. as in
\cite{BKW, BKllt}), we note that there is an inclusion
$\LaImn \hookrightarrow \mathscr P^2$, where $\mathscr P^2$ denotes the
set of all {\em bipartitions}, meaning pairs $(\la^{(1)}, \la^{(2)})$
of partitions in the usual sense.
To define this,
take a weight $\la \in \LaImn$ and read off the sequences
$i_1 > \cdots > i_m$ and $j_1 > \cdots > j_n$ as above.
Then we associate to $\la$ the bipartition $(\la^{(1)}, \la^{(2)})$ where
$\la^{(1)} = (\la^{(1)}_1 \geq \la^{(1)}_2 \geq \cdots)$ and
$\la^{(2)} = (\la^{(2)}_1 \geq \la^{(2)}_2 \geq \cdots)$
are defined from
$$
\la_r^{(1)} :=
i_r-o-m+r-1,\qquad
\la_s^{(2)} :=
j_s-o-n+s-1
$$
for $1 \leq r \leq m$ and $1 \leq s \leq n$, with all other parts of $\la^{(1)}$ and $\la^{(2)}$ being zero.
Assuming $I$ is not bounded above, this map gives a bijection between
$\LaImn$ and the set of all bipartitions $(\la^{(1)}, \la^{(2)})$ where $\la^{(1)}$ has at most
$m$ and $\la^{(2)}$ has at most $n$ non-zero parts.
\end{Remark}

\phantomsubsection{\bf The dual-canonical basis}
As well as the monomial basis,
we need to introduce two other bases for the space
$\bigwedge^m V\otimes \bigwedge^n V$.
The first of these is
the so-called {\em dual-canonical basis}
\begin{equation*}\label{dcb}
\{L_\la\:|\:\la \in \LaImn\}.
\end{equation*}
To define $L_\la$ formally
following \cite[$\S$2.3]{BKariki},
we need some bar-involutions.
The {\em bar-involution} on $U$ is the
automorphism $-:U \rightarrow U$ that is
anti-linear with respect to the field automorphism
$\Q(q) \rightarrow \Q(q), f(q) \mapsto f(q^{-1})$
and satisfies
\begin{equation}\label{barinv}
\overline{E_i} = E_i,\qquad
\overline{F_i} = F_i,\qquad
\overline{K_i} = K_i^{-1}.
\end{equation}
By a {\em compatible bar-involution} on a $U$-module $M$
we mean an anti-linear involution
$-:M \rightarrow M$
such that $\overline{uv} = \overline{u}\,\overline{v}$
for each $u \in U, v \in M$.
The next lemma shows that our module $\bigwedge^m V \otimes \bigwedge^n V$
possesses a compatible bar-involution.

\begin{Lemma}\label{charac}
There is a unique compatible bar-involution on
$\bigwedge^m V \otimes \bigwedge^n V$
such that $\overline{V_\la} = V_\la$ for each weight
$\la \in \LaImn$ that is minimal with respect to
the Bruhat order.
Moreover:
$$
\overline{V_\la} = V_\la +
 \left(\text{a $\Z[q,q^{-1}]$-linear combination of $V_\mu$'s
for $\mu < \la$}\right)
$$
for any $\la \in \LaImn$.
\end{Lemma}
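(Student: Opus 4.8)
The plan is to construct the bar‑involution from Lusztig's quasi‑$R$‑matrix $\Theta$ and then read off the triangularity from the elementary way in which $U^-$ and $U^+$ move the index sets of the wedge vectors, following the template of \cite{BKariki}.

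First, the natural module $V$ carries a compatible bar‑involution with $\overline{v_i}=v_i$ for all $i$: anti‑linearity leaves no freedom, and compatibility is checked directly on the generators $E_i,F_i,D_i^{\pm1}$ using $\overline{D_i}=D_i^{-1}$. Since $E_i^2=F_i^2=0$ on $V$, the module $V$ and all of its tensor powers are integrable in the weak sense that $\Theta=\sum_{\nu\in Q_+}\Theta_\nu$ — with $\Theta_0=1\otimes1$ and $\Theta_\nu\in U^-_{-\nu}\otimes U^+_\nu$ — acts on them; using Lusztig's intertwining identity one then gets a compatible bar‑involution on $\bigotimes^kV$ by the inductive recipe $\psi_{M\otimes N}:=\Theta\circ(\psi_M\otimes\psi_N)$. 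One checks (this is standard; see \cite{B2}) that this involution preserves the submodule $\bigwedge^kV$, and since the weight spaces of $\bigwedge^kV$ are at most one‑dimensional while compatibility with the $F_i$ forces the resulting scalars to agree, a single rescaling achieves $\psi(v_{i_1}\wedge\cdots\wedge v_{i_k})=v_{i_1}\wedge\cdots\wedge v_{i_k}$ for all $i_1>\cdots>i_k$. Running the recipe once more on the pair $\bigwedge^mV,\bigwedge^nV$ produces a compatible bar‑involution $-$ on $\bigwedge^mV\otimes\bigwedge^nV$.

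The heart of the matter is then the triangularity. Writing $V_\la=V_\la^{(1)}\otimes V_\la^{(2)}$ and using that the bar‑involutions of the two factors fix the wedge bases, $\overline{V_\la}=\Theta(V_\la^{(1)}\otimes V_\la^{(2)})=\sum_{\nu\in Q_+}\Theta_\nu(V_\la^{(1)}\otimes V_\la^{(2)})$. One first observes that all $V_\mu$ with $\mu$ ranging over a fixed block $\Ga$ share a single $P$‑weight — namely $\Ga$ itself, under the identification $\PImn\cong\LaImn/\!\!\sim$ — and that conversely the $\Ga$‑weight space of $\bigwedge^mV\otimes\bigwedge^nV$ has basis $\{V_\mu:\mu\in\Ga\}$; together with the $\Z[q,q^{-1}]$‑integrality of $\Theta$ this already forces $\overline{V_\la}\in\sum_{\mu\sim\la}\Z[q,q^{-1}]\,V_\mu$. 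Next, for $\nu>0$ the $U^-$‑factor of $\Theta_\nu$ shifts the $\down$/$\times$‑indices of $V_\la^{(1)}$ to the right (each $F_i$ sending an index $i$ to $i+1$ inside $\bigwedge^mV$) while the $U^+$‑factor shifts the $\up$/$\times$‑indices of $V_\la^{(2)}$ to the left; transcribing through the weight dictionary, every surviving term is a $V_\mu$ with $\mu$ obtained from $\la$ by moving $\down$'s leftwards and $\up$'s rightwards past one another, that is, with $\mu<\la$ in the Bruhat order of \cite[$\S$2]{BS1}, whereas the $\nu=0$ term contributes exactly $V_\la$. This gives $\overline{V_\la}=V_\la+\bigl(\text{a }\Z[q,q^{-1}]\text{-combination of the }V_\mu\text{ with }\mu<\la\bigr)$, and in particular $\overline{V_\la}=V_\la$ whenever $\la$ is Bruhat‑minimal. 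I expect this step to be the main obstacle: one must line up the soft ``$\Theta$ lowers the weight'' mechanism with the explicit $\down\up\mapsto\up\down$ description of the Bruhat order and carry the $\Z[q,q^{-1}]$‑coefficients along throughout — in effect, the bookkeeping of \cite[$\S$2.3]{BKariki} specialised to this situation.

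Finally, existence is exactly the content of the two preceding paragraphs. For uniqueness, if $-$ and $-'$ are two compatible bar‑involutions both fixing every Bruhat‑minimal $V_\la$, then their composite $\phi$ is a $\Q(q)$‑linear $U$‑module endomorphism of $\bigwedge^mV\otimes\bigwedge^nV$ fixing every Bruhat‑minimal $V_\la$, and hence is the identity: either because these vectors generate the module over $U$, or — using the triangularity just established — by downward induction on the Bruhat order, expressing each $V_\mu$ modulo Bruhat‑lower terms as the image of some $V_\nu$ with $\nu<\mu$ under a suitable element of $U$.
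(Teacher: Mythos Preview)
Your overall approach matches the paper's: construct the bar-involution via Lusztig's tensor-product recipe (the quasi-$R$-matrix applied to the product of the bar-involutions on the two wedge factors) and deduce uniqueness by an induction on the Bruhat order, for which the paper itself only cites \cite[Proposition~4.5]{CWZ}.

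However, the triangularity step is internally inconsistent. For the comultiplication used in this paper ($\Delta(E_i)=1\otimes E_i+E_i\otimes D_iD_{i+1}^{-1}$ and $\Delta(F_i)=F_i\otimes1+D_i^{-1}D_{i+1}\otimes F_i$), the quasi-$R$-matrix satisfies $\Theta_\nu\in U^+_\nu\otimes U^-_{-\nu}$, not $U^-_{-\nu}\otimes U^+_\nu$ as you assert; a direct calculation gives $\Theta_{\alpha_i}=(q^{-1}-q)\,E_i\otimes F_i$. One can see your form cannot be right already in the $m=n=1$ case: with an $F\otimes E$ term, $\Theta(v_1\otimes v_2)$ would pick up a nonzero multiple of $v_2\otimes v_1$, so the Bruhat-minimal vector $V_{\la_{\scriptscriptstyle\down\up}}=v_1\otimes v_2$ would fail to be fixed. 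With your stated form, the $F$'s on the first factor move $\down$-indices to the \emph{right} and the $E$'s on the second move $\up$-indices to the \emph{left}, sending $\la_{\scriptscriptstyle\down\up}$ to $\la_{\scriptscriptstyle\up\down}$, which is strictly \emph{larger} in the Bruhat order --- the opposite of what the lemma claims. You then silently reverse direction (``shifts the $\down/\times$-indices \ldots\ to the right \ldots\ moving $\down$'s leftwards'') to land on the right conclusion. With the correct $E\otimes F$ form, $E$'s on the first factor move $\down$-indices left and $F$'s on the second move $\up$-indices right, and every surviving term genuinely has $\mu<\la$. So the strategy is sound but this step needs to be redone with the correct conventions.
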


\begin{proof}
The module $\bigwedge^n V$ possesses a
compatible bar-involution, namely, the unique anti-linear involution
fixing the basis vectors
of the form (\ref{bvs}).
Similarly so does $\bigwedge^m V$.
Combining this with Lusztig's tensor product
construction from
\cite[$\S$27.3]{Lubook},
we obtain a compatible bar-involution on
$\bigwedge^m V \otimes \bigwedge^n V$
as in the statement of the lemma.
Uniqueness can be checked by induction on the Bruhat ordering;
see \cite[Proposition 4.5]{CWZ}.
\end{proof}

Now we can define the dual-canonical basis element $L_\la
\in \bigwedge^m V \otimes \bigwedge^n V$ for
any $\la \in \LaImn$: it is the unique bar invariant vector
such that
$$\label{ib}
L_\la = V_\la + \left(\text{a $q \Z[q]$-linear combination of $V_\mu$'s
for $\mu \in \LaImn$}\right).
$$
The existence and uniqueness of $L_\la$ follows from Lemma~\ref{charac}
by a general argument originating in \cite{KL},
sometimes known as Lusztig's lemma; see e.g.
\cite[1.2]{Du} for a concise formulation.
The polynomials $d_{\la,\mu}(q), p_{\la,\mu}(q) \in \Z[q]$ defined from
\begin{align}
V_\mu &= \sum_{\la \in \LaImn} d_{\la,\mu}(q) L_\la\label{e1},\\
L_\mu &= \sum_{\la \in \LaImn} p_{\la,\mu}(-q) V_\la\label{e2}
\end{align}
satisfy
$p_{\la,\la}(q) = d_{\la,\la}(q) = 1$ and
$p_{\la,\mu}(q) =  d_{\la,\mu}(q) = 0$ unless $\la \leq \mu$.

\begin{Remark}\rm
Although not needed explicitly here, it is important to note that
the polynomials $d_{\la,\mu}(q)$ and $p_{\la,\mu}(q)$
can be expressed in terms of certain
special Kazhdan-Lusztig polynomials associated to the
symmetric group $S_{m+n}$; see \cite[Remark 14]{B2}.
In particular, up to a trivial renormalisation, the $p_{\la,\mu}(q)$'s are
Deodhar's parabolic Kazhdan-Lusztig
polynomials associated to the subgroup $S_m \times S_n$ of $S_{m+n}$.
See also Remark~\ref{Exp} below for more about these polynomials.
\end{Remark}

\phantomsubsection{The quasi-canonical basis}
There is another basis
\begin{equation*}\label{qcb}
\{P_\la\:|\:\la \in \LaImn\}
\end{equation*}
for
$\bigwedge^m V \otimes \bigwedge^n V$ such that
\begin{align}
P_\la &= \sum_{\mu \in \LaImn} d_{\la,\mu}(q) V_\mu,\label{boo}\\
\label{boo2}
V_\la &= \sum_{\mu \in \LaImn} p_{\la,\mu}(-q) P_\mu.
\end{align}
(We have simply transposed the transition matrices
(\ref{e1})--(\ref{e2})).
Following \cite[$\S$2.6]{BKariki},
we call this the {\em quasi-canonical basis} to emphasise that it is
{\em not} the same as Lusztig's canonical basis
from
\cite[$\S$27.3]{Lubook}: our $P_\la$'s are not in general invariant under the
bar-involution.
Nevertheless, our quasi-canonical and dual-canonical bases are
dual to each other in a suitable sense.

To explain this,
let $\langle.,.\rangle$ be the sesquilinear form
on $\bigwedge^m V \otimes \bigwedge^n V$
(anti-linear in the first argument, linear in the second argument)
such that
\begin{equation}\label{e3}
\langle V_\la, \overline{V_\mu} \rangle = \delta_{\la,\mu}
\end{equation}
for each $\la,\mu \in \LaImn$.
Then, by a straightforward computation
using (\ref{boo}) and the formula obtained from (\ref{e2}) by applying
the bar-involution, we get that
\begin{equation}\label{e4}
\langle P_\la, L_\mu \rangle = \delta_{\la,\mu}
\end{equation}
for $\la,\mu \in \LaImn$.
For this to be useful,
we need to formulate one other basic property of the form $\langle.,.\rangle$.
Let $\tau:U \rightarrow U$ be the anti-linear anti-automorphism
such that
\begin{equation}\label{taudef}
\tau(E_i) = q F_i D_i^{-1}D_{i+1},\qquad
\tau(F_i) = q^{-1} D_iD_{i+1}^{-1} E_i,\qquad
\tau(D_i) = D_i^{-1}.
\end{equation}
Then, as can be checked directly from (\ref{e3}), we have that
\begin{equation}\label{berkeley}
\langle ux, y \rangle = \langle x, \tau(u) y \rangle
\end{equation}
for all $x, y \in \bigwedge^m V \otimes \bigwedge^n V$
and $u \in U$.

\phantomsubsection{\boldmath Specialisation at $q=1$}\label{spec}
Let $\Laurent := \Z[q,q^{-1}]$ and
$U_{\!\Laurent}$ denote
Lusztig's $\Laurent$-form for $U$.
This is the $\Laurent$-subalgebra of $U$ generated
by the quantum
divided powers $E_i^{(r)}$, $F_i^{(r)}$,
the elements $D_i, D_i^{-1}$, and the elements
$$
\left[\!\!\!\begin{array}{c}D_i\\r\end{array}\!\!\!\right]
:= \prod_{s=1}^r \frac{D_i q^{1-s} - D_i^{-1} q^{s-1}}{q^s-q^{-s}}
$$
for all $i$ and $r \geq 0$.
The Hopf algebra structure on $U$ makes
$U_{\!\Laurent}$ into a Hopf algebra over
$\Laurent$.
The maps (\ref{barinv}) and (\ref{taudef})
restrict to well-defined maps on $U_{\!\Laurent}$ too.
Let $V_\Laurent$ denote the $U_{\!\Laurent}$-submodule of $V$
generated as a free $\Laurent$-module by the basis vectors
$\{v_i\:|\:i \in I^+\}$.
Taking all tensor products over $\Laurent$ instead of $\Q(q)$,
we construct the $U_{\!\Laurent}$-module
$\bigwedge^m V_\Laurent \otimes \bigwedge^n V_\Laurent$ as above.
It is a free $\Laurent$-submodule
of $\bigwedge^m V \otimes \bigwedge^n V$ with the three distinguished
bases $\{V_\la\}$, $\{L_\la\}$ and $\{P_\la\}$,
all indexed by the set $\LaImn$.

Let $U_\Z$ and
$\bigwedge^m V_\Z \otimes \bigwedge^n V_\Z$ denote specialisations of
$U_{\!\Laurent}$ and $\bigwedge^m V_\Laurent \otimes \bigwedge^n V_\Laurent$
at $q=1$, i.e. we
apply the base change functor $\Z \otimes_{\Laurent} ?$
viewing $\Z$ as an $\Laurent$-module so that
$q$ acts as $1$.
It causes no problems here to
replace $U_\Z$ with the usual Kostant $\Z$-form
for the universal enveloping algebra of the general linear Lie algebra
of $I^+ \times I^+$ matrices,
with Chevalley generators $\cE_i$ and $\cF_i$ for $i \in I$.
The bases $\{V_\la\}, \{L_\la\}$ and $\{P_\la\}$
specialise to give bases
$\{\cV_\la\}, \{\cL_\la\}$ and $\{\cP_\la\}$
for
$\bigwedge^m V_\Z \otimes \bigwedge^n V_\Z$
as a free $\Z$-module,
with
\begin{align}\label{bgg}
\cV_\mu = \sum_{\la \in \LaImn} d_{\la,\mu}(1) \cL_\la,\qquad
\cP_\la = \sum_{\mu \in \LaImn} d_{\la,\mu}(1) \cV_\mu.
\end{align}
The form $\langle.,.\rangle$ specialises to a
symmetric bilinear form
on
$\bigwedge^m V_\Z \otimes \bigwedge^n V_\Z$
with respect to which the basis $\{\cV_\la\}$ is orthonormal
and the bases $\{\cP_\la\}$ and $\{\cL_\la\}$ are dual.
Moreover the Chevalley generators $\cE_i$ and $\cF_i$ are biadjoint
with respect to this form, i.e.
$\langle \cE_i v, w \rangle =
\langle v, \cF_i w \rangle$
and
$\langle \cF_i v, w \rangle =
\langle v, \cE_i w \rangle$.

\section{Categorification via diagram algebras}\label{sC1}

Following the ideas of
\cite{HK, Ch}
we next construct a graded diagram algebra
$\KImn$, and show that the Grothendieck group of the
category of graded $\KImn$-modules
can be identified with $\bigwedge^m V_\Laurent \otimes \bigwedge^n V_\Laurent$
so that
the standard modules, irreducible modules
and projective indecomposable modules correspond to the
monomial, dual-canonical and quasi-canonical bases, respectively.

\phantomsubsection{\boldmath The category $\Rep{\KImn}$}
If $A$ is any locally unital graded algebra, we write
$\rep{A}$ for the category of all
finite dimensional locally unital
left $A$-modules
and $\Rep{A}$ for the category of all
finite dimensional {\em graded} locally unital
left $A$-modules.
There an obvious functor
\begin{equation}\label{forget1}
\forget:\Rep{A} \rightarrow \rep{A}
\end{equation}
that forgets the grading on a module.
Recalling that $\Laurent$ denotes $\Z[q,q^{-1}]$,
the Grothendieck group $[\Rep{A}]$ of the category $\Rep{A}$
is naturally an $\Laurent$-module so that
$q^i [M] = [M \langle i \rangle]$ for each $i \in \Z$
and $M \in \Rep{A}$, where $\langle i \rangle$ denotes the
degree shift functor defined so that
$M \langle i \rangle_j =
M_{i-j}$.
We refer to \cite[$\S$5]{BS1} for other conventions regarding graded
modules over locally unital algebras.

Introduce the locally unital graded algebra
\begin{equation}\label{KI}
\KImn :=
\bigoplus_{\Ga \in \PImn} K_\Ga,
\end{equation}
where $K_\Ga$ is the
finite dimensional graded
algebra defined in \cite[$\S$4]{BS1}.
Here we are using the identification explained just after (\ref{egb})
of elements of $\PImn$ with
blocks of weights from $\LaImn$.
For $\Ga \in P(m,n;I)$ and $\la \in \Ga$,
we have the
$K_\Ga$-modules $L(\la), V(\la)$ and $P(\la)$, which are the
irreducible, cell and projective indecomposable modules from
\cite[$\S$5]{BS1}, respectively.\label{cellmod}
We always view them as $\KImn$-modules by extending the $K_\Ga$-action so that all the summands from (\ref{KI}) different from $K_\Ga$
act as zero.

The modules $L(\la)$, $V(\la)$ and $P(\la)$
are naturally graded so that $L(\la)$ is concentrated in degree zero
and the canonical quotient maps
$P(\la) \twoheadrightarrow V(\la) \twoheadrightarrow L(\la)$
are homogeneous of degree zero.
By \cite[Theorem 5.3]{BS1},
$\Rep{\KImn}$ is a graded
highest weight category and the cell modules are
its standard modules in the general sense of \cite{CPS}.
Because of this, we refer to $V(\la)$ as a
{\em standard module} from now on.
The isomorphism classes
$\{[L(\la)]\:|\:\la \in \LaImn\}$,
$\{[V(\la)]\:|\:\la \in \LaImn\}$
and
$\{[P(\la)]\:|\:\la \in \LaImn\}$
give three distinguished bases for
 $[\Rep{\KImn}]$ as a free $\Laurent$-module.

There is also a duality $\circledast$ on $\Rep{\KImn}$
which induces an anti-linear involution
\begin{equation*}
\circledast:[\Rep{\KImn}] \rightarrow [\Rep{\KImn}]
\end{equation*}
on the Grothendieck group fixing the $[L(\la)]$'s; see \cite[(5.4)]{BS1}.

\phantomsubsection{\boldmath Geometric bimodules and projective functors}\label{sgbpf}
Recall for blocks
$\Ga, \De \in \PImn$
and a $\De\Ga$-matching $t$ in the sense of \cite[$\S$2]{BS2}
that there is associated a graded $(K_\De, K_\Ga)$-bimodule
$K_{\De\Ga}^t$; see \cite[$\S$3]{BS2}.
This bimodule is non-zero if and only if $t$ is a {\em proper}
$\De\Ga$-matching,
i.e. at least one oriented $\De\Ga$-matching
$\de t \ga$ exists; here, $\de$ and $\ga$ are weights from
$\De$ and $\Ga$, respectively.
As in the introduction, we
always view $K_{\De\Ga}^t$ as a $(\KImn,\KImn)$-bimodule
by extending the $K_\De$- and $K_\Ga$-actions to all of $\KImn$
in the obvious way.
Writing $\caps(t)$ (resp.\ $\cups(t)$)\label{ncup}
for the number of caps (resp.\ cups) in the matching $t$, let
\begin{equation*}
G^t_{\De\Ga} := K^t_{\De\Ga} \langle -\caps(t) \rangle \otimes_{\KImn}?
:
\Rep{\KImn} \rightarrow \Rep{\KImn}.
\end{equation*}
Assuming $t$ is proper, $G^t_{\De\Ga}$ is an indecomposable functor;
see \cite[Theorem 4.14]{BS2}.
A {\em graded projective functor}
on $\Rep{\KImn}$ means an endofunctor that is isomorphic to a
finite direct
sum of $G^t_{\De\Ga}$'s, possibly shifted in degree.
By \cite[Theorem 4.10]{BS2}, each
$G^t_{\De\Ga}$ commutes with the duality $\circledast$, i.e.
there is a canonical
degree zero isomorphism
\begin{equation}\label{Comm}
G^t_{\De\Ga} \circ \circledast
\cong \circledast \circ G^t_{\De\Ga}.
\end{equation}
Let $t^*$ denote the mirror image of $t$ in a horizontal axis.
By \cite[Corollary 4.9]{BS2}, there is a canonical degree zero adjunction
making
\begin{equation}\label{Adjpar}
\left(G^{t^*}_{\Ga\De} \langle \defect(\Ga)-\defect(\De) \rangle, G^t_{\De\Ga}\right)
\end{equation}
into an adjoint pair.
In particular, this implies that each $G^t_{\De\Ga}$ is exact.

More generally, suppose that $\bGa = \Ga_d \cdots \Ga_0$ is any
sequence of blocks in $\PImn$ and $\bt = t_d \cdots t_1$
is a $\bGa$-matching in the sense of \cite[$\S$2]{BS2}.\label{compmatch}
We write $\bGa^*$ for the opposite block sequence
$\Ga_0 \cdots \Ga_d$ and $\bt^*$ for $t_1^* \cdots t_d^*$,
which is a $\bGa^*$-matching.
To this data, there is associated a graded $(\KImn,\KImn)$-bimodule
$K^\bt_\bGa$; see \cite[$\S$3]{BS2}.
By its
definition, it is non-zero if and only if $\bt$ is a proper $\bGa$-matching,
i.e. at least one
oriented $\bGa$-matching
\begin{equation}\label{tlanotation}
\bt[\bga] = \ga_d t_d \ga_{d-1}\cdots \ga_1 t_1 \ga_0
\end{equation}
exists, where
$\bga = \ga_d \cdots \ga_0$ is a sequence of weights with
$\ga_r \in \Ga_r$ for each $r$.
Moreover, $K^\bt_\bGa$ has an
explicit homogeneous basis
\begin{equation}\label{diagbasis}
\left\{(a \:\bt[\bga]\:b)\:|\:\text{for all oriented $\bGa$-circle diagrams
$a \:\bt[\bga]\:b$}\right\},
\end{equation}
in which the degree of $(a\:\bt[\bga]\:b)$ is equal to the total number
of clockwise cups and caps in the diagram.
Let
\begin{equation*}
G^{\bt}_{\bGa} :=
K^\bt_{\bGa} \langle -\caps(\bt) \rangle \otimes_{\KImn}?
:
\Rep{\KImn} \rightarrow \Rep{\KImn}.
\end{equation*}
By \cite[Theorem 3.5(iii)]{BS2}, the associative
multiplication from \cite[(3.12)]{BS2}
defines a canonical graded bimodule isomorphism
\begin{equation}\label{miso}
K^{t_d}_{\Ga_d \Ga_{d-1}}
 \otimes_{\KImn} \cdots \otimes_{\KImn}
K^{t_1}_{\Ga_1 \Ga_0} \stackrel{\sim}{\rightarrow}
K^{\bt}_{\bGa}.
\end{equation}
This induces a canonical isomorphism of functors
\begin{equation}\label{miso2}
G^{t_d}_{\Ga_d\Ga_{d-1}} \circ\cdots\circ G^{t_1}_{\Ga_1\Ga_0}\stackrel{\sim}{\rightarrow}
G^{\bt}_{\bGa}.
\end{equation}
Hence $G^\bt_\bGa$ is exact and commutes with duality, as that is true for
each individual
$G^{t_r}_{\Ga_r \Ga_{r-1}}$.
By \cite[Theorem 3.6]{BS2} we have for proper $\bt$ that
\begin{equation}\label{moso}
K^{\bt}_{\bGa}\langle-\caps(\bt)\rangle
\cong K^s_{\Ga_d \Ga_0}\langle-\caps(s)\rangle \otimes R^{\otimes \circles(\bt)}
\end{equation}
where $s$ denotes the reduction of $\bt$ in the sense of \cite[$\S$2]{BS2},
$\circles(\bt)$ is the number of internal circles in $\bt$,\label{ncirc}
and $R$ denotes
$\C[x] / (x^2)$ graded by declaring that
$1$ in degree $-1$ and $x$ in degree $1$.
This induces an isomorphism of functors
\begin{equation}\label{moso2}
G^{\bt}_{\bGa} \cong (? \otimes R^{\otimes \circles(\bt)}) \circ G^s_{\Ga_d \Ga_0}
\end{equation}
for proper $\bt$.
This explains how to decompose
$G^{\bt}_{\bGa}$ as a direct sum of indecomposable
projective functors.

\phantomsubsection{Special projective functors}
Now we define some important
functors $F_i$ and $E_i$
for each $i \in I$.
Given a block $\Ga\in\PImn$,
we say that $i \in I$ is {\em $\Ga$-admissible} if
$\Ga - \alpha_i$ belongs to $\PImn$.
Viewing blocks diagrammatically like in (\ref{egb}), this means that
the $i$th and $(i+1)$th vertices of $\Ga$
match the top number line
of a unique one of the following diagrams,
and $\defect(\Ga)$ is as indicated:
\begin{equation}
\begin{picture}(75,33)
\put(159.5,12){$\Ga$}
\put(155,1){$t_i(\Ga)$}
\put(152,-11.4){$\Ga-\alpha_i$}
\put(-106,17){\vector(0,-1){28}}\put(-125,-.5){$F_i$}
\put(-86,28){$_\text{$\defect(\Ga) \geq 1$}$}
\put(-26,28){$_\text{$\defect(\Ga) \geq 0$}$}
\put(34,28){$_\text{$\defect(\Ga) \geq 0$}$}
\put(94,28){$_\text{$\defect(\Ga) \geq 0$}$}
\put(-82,-24){$\text{``cup''}$}
\put(-21,-24){$\text{``cap''}$}
\put(21,-24){$\text{``right-shift''}$}
\put(89,-24){$\text{``left-shift''}$}

\put(-84,15){\line(1,0){33}}
\put(-84,-9){\line(1,0){33}}
\put(-67.5,15){\oval(23,23)[b]}
\put(-59.3,-10.9){$\scriptstyle\times$}
\put(-81.8,-11.6){$\circ$}
\put(-80.9,13.1){{$\scriptstyle\bullet$}}
\put(-57.9,13.1){{$\scriptstyle\bullet$}}

\put(-24,15){\line(1,0){33}}
\put(-24,-9){\line(1,0){33}}
\put(-7.5,-9){\oval(23,23)[t]}
\put(1.2,12.4){$\circ$}
\put(-22.2,13.1){$\scriptstyle\times$}
\put(-21.1,-10.9){{$\scriptstyle\bullet$}}
\put(1.9,-10.9){{$\scriptstyle\bullet$}}

\put(36,15){\line(1,0){33}}
\put(36,-9){\line(1,0){33}}
\put(61.2,12.4){$\circ$}
\put(38.2,-11.6){$\circ$}
\put(64,-8.2){\line(-1,1){22.9}}
\put(61.9,-10.9){{$\scriptstyle\bullet$}}
\put(38.9,13.1){{$\scriptstyle\bullet$}}

\put(96,15){\line(1,0){33}}
\put(96,-9){\line(1,0){33}}
\put(120.7,-10.9){$\scriptstyle\times$}
\put(97.7,13.1){$\scriptstyle\times$}
\put(101.1,-8.2){\line(1,1){22.9}}
\put(98.9,-10.9){{$\scriptstyle\bullet$}}
\put(121.9,13.1){{$\scriptstyle\bullet$}}
\end{picture}
\vspace{8mm}\label{CKLR}
\end{equation}
Also define a $(\Ga-\alpha_i)\Ga$-matching $t_i(\Ga)$
so that the strip between the $i$th and $(i+1)$th vertices
is as in the diagram, and there are only vertical ``identity'' line segments
elsewhere.
The {\em special projective functors} are
the functors
\begin{equation}\label{gFi}
F_i := \bigoplus_{\Ga}
G_{(\Ga-\alpha_i)\Ga}^{t_i(\Ga)},
\qquad\qquad
E_i := \bigoplus_\Ga
G_{\Ga(\Ga-\alpha_i)}^{t_i(\Ga)^*},
\end{equation}
where the direct sums are over all
$\Ga \in \PImn$ such that $i$ is $\Ga$-admissible.
The following lemma makes precise the sense in which
these functors generate all
other projective functors on $\Rep{\KImn}$.

\begin{Lemma}\label{gen}
Suppose we are given blocks
$\Ga, \De \in \PImn$
and a proper $\Ga\De$-matching $t$. Up to a degree shift, the
indecomposable projective functor
$G^t_{\Ga\De}$ is
a summand of a
composition of finitely many special projective functors.
\end{Lemma}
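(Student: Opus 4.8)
The plan is to reduce the statement to a purely combinatorial fact about matchings: any proper $\Ga\De$-matching $t$ can be obtained, up to adding internal circles and permuting $\up$'s and $\down$'s, by vertically stacking finitely many of the elementary ``cup'', ``cap'', ``right-shift'' and ``left-shift'' strips from (\ref{CKLR}), together with identity strips. Once this is established, the isomorphism (\ref{miso2}) shows that the composite functor $G^{t_d}_{\Ga_d\Ga_{d-1}}\circ\cdots\circ G^{t_1}_{\Ga_1\Ga_0}$ associated to such a stacking is isomorphic to $G^\bt_\bGa$ for the composite matching $\bt = t_d\cdots t_1$, while (\ref{moso2}) shows that $G^\bt_\bGa$ is isomorphic to $(?\otimes R^{\otimes\circles(\bt)})\circ G^s_{\Ga_d\Ga_0}$, where $s$ is the reduction of $\bt$. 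Since $R^{\otimes c}$ decomposes as a direct sum of degree-shifted copies of the trivial graded vector space, $G^s_{\Ga_d\Ga_0}$ — hence $G^t_{\Ga\De}$ once we check $s$ has the same reduction as $t$ — is a summand of the composite of special projective functors, up to degree shift.

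The heart of the argument, then, is the combinatorial claim. First I would handle the ``block'' part: since the functors $F_i, E_i$ are indexed only by $\Ga$-admissible $i$ and the matchings $t_i(\Ga)$ are honest diagrams (not just $\sim$-equivalence classes), I need to see that composing the elementary steps lets me move between any two blocks $\Ga,\De\in\PImn$ with $\defect(\Ga)=\defect(\De)$ that are connected by the underlying matching. The key observation is that the ``cup'' step sends $\Ga$ to $\Ga-\al_i$ by changing a local $\circ\,\bullet$ pattern to $\bullet\,\times$ (raising defect by... actually lowering the number of $\times$'s — recomputing via (\ref{defectdef})), the ``cap'' step does the reverse, and the two ``shift'' steps move a $\bullet$ or $\times$ one vertex over without changing the defect. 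A proper $\Ga\De$-matching by definition pairs up the vertices where $\Ga$ and $\De$ differ into nested cups-on-top and caps-on-bottom with through-strands in between; I would induct on the number of cup--cap arcs plus the total horizontal displacement of through-strands, at each stage peeling off an innermost (short) arc or moving a through-strand one step using the corresponding elementary strip, so that the residual matching is still proper and involves strictly smaller blocks or shorter arcs.

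The main obstacle I anticipate is bookkeeping with \emph{properness} and with the internal circles: stacking elementary strips produces a composite matching $\bt$ that is in general not reduced, and I must confirm that (i) $\bt$ stays proper at each stage — i.e., that an oriented $\bGa$-matching survives — which requires choosing the orientations compatibly, and (ii) the reduction $s$ of the final $\bt$ really equals the given $t$ (or differs from it only by $\sim$-equivalence of the endpoint blocks, which is harmless since $\PImn$ identifies blocks up to $\sim$). Properness should follow from the second part of the claim together with the observation that the elementary strips are each individually proper on the relevant admissible blocks, plus \cite[Theorem 3.6]{BS2}, which guarantees that composing proper matchings with matching endpoint blocks yields a proper matching whose reduction is computed by cancelling cup--cap pairs. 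The reduction-equals-$t$ point is then just the statement that my inductive decomposition was set up to recover exactly the arcs and through-strands of $t$; I would phrase the induction so this is automatic.

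A cleaner alternative, which I would actually prefer to write up, is to bypass the explicit geometry and argue on Grothendieck groups plus indecomposability: by \cite[Theorem 4.14]{BS2} each $G^t_{\Ga\De}$ is indecomposable, and by the classification of indecomposable projective functors it is determined by its effect on a single projective or standard module; meanwhile the $F_i, E_i$ act on $[\Rep{\KImn}]\cong\bigwedge^m V_\Laurent\otimes\bigwedge^n V_\Laurent$ as the Chevalley generators, which generate the image of $U_\Laurent$, and from there one sees at the level of classes that $[G^t_{\Ga\De}(P(\la))]$ appears (up to $q$-powers) inside $[F_{i_1}\cdots F_{i_k} E_{j_1}\cdots E_{j_l}(P(\mu))]$ for suitable indices, so by Krull--Schmidt and indecomposability $G^t_{\Ga\De}$ is a summand of that composite up to shift. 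Either route works; the combinatorial route is more self-contained and I would lead with it, relegating the Grothendieck-group argument to a remark.
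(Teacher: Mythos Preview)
Your primary (combinatorial) approach is correct and is exactly the paper's: realise $t$ as the reduction of a composite of the elementary strips from (\ref{CKLR}) and their mirror images, then invoke (\ref{miso2}) and (\ref{moso2}). The paper is in fact terser than you are---it simply asserts the combinatorial decomposition exists and ``omits some combinatorial details here''---so your inductive sketch on the number of arcs and the displacement of through-strands, together with your observation that properness of $\bt$ follows from $s=t$ being proper (which is what \cite[Theorem~3.6]{BS2} gives), is more than the paper supplies.

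Your alternative Grothendieck-group route, however, has a genuine gap: knowing that $[G^t_{\Ga\De}(P(\la))]$ lies in the $\Laurent$-span of classes of the form $[F_{\bi}E_{\bj}(P(\mu))]$ does not by itself imply that $G^t_{\Ga\De}$ is a summand of some composite $F_{\bi}E_{\bj}$ as a functor. You would need a Krull--Schmidt statement at the level of functors (or bimodules) together with a uniqueness result saying that an indecomposable projective functor is determined by its effect on Grothendieck groups; neither is available at this point in the paper, and the latter is essentially equivalent to what you are trying to prove. So keep the combinatorial argument and drop the alternative, or relegate it to a heuristic remark only.
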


\begin{proof}
The key point is that $t$
can be obtained as the reduction of a composite matching
built from diagrams of the form (\ref{CKLR}) and their duals;
we omit some combinatorial details here.
Given this, the lemma follows from  (\ref{miso2}) and (\ref{moso2}).
\end{proof}

\phantomsubsection{Properties of special projective functors}
We proceed to record some other
basic properties of the functors $F_i$ and $E_i$.

\begin{Lemma}\label{duality}
The functors $F_i$ and $E_i$ commute with the duality $\circledast$,
i.e. there are canonical degree zero isomorphisms
$F_i \circ \circledast \cong \circledast \circ F_i$ and
$E_i \circ \circledast \cong \circledast \circ E_i$
of functors on $\Rep{\KImn}$.
\end{Lemma}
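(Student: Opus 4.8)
The plan is to deduce this directly from the corresponding property (\ref{Comm}) of the individual geometric bimodule functors $G^t_{\De\Ga}$. Recall from the definition (\ref{gFi}) that $F_i = \bigoplus_\Ga G^{t_i(\Ga)}_{(\Ga-\alpha_i)\Ga}$ and $E_i = \bigoplus_\Ga G^{t_i(\Ga)^*}_{\Ga(\Ga-\alpha_i)}$, the direct sums running over all $\Ga \in \PImn$ for which $i$ is $\Ga$-admissible; for such $\Ga$ the matching $t_i(\Ga)$, and hence its mirror $t_i(\Ga)^*$, is proper, so each summand is one of the functors covered by (\ref{Comm}). The first thing I would note is that on any fixed $M \in \Rep{\KImn}$ all but finitely many of the summands $K_\Ga$ of $\KImn$ act as zero, so that $F_i$ and $E_i$ act as \emph{finite} direct sums of functors on $M$; there is then nothing delicate about interchanging them with an additive functor.

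Next I would record that $\circledast$ is an additive contravariant self-equivalence of $\Rep{\KImn}$, so it carries finite direct sums to finite direct sums by a canonical degree zero isomorphism. Combining this with the canonical degree zero isomorphisms $G^{t}_{\De\Ga}\circ\circledast\cong\circledast\circ G^{t}_{\De\Ga}$ furnished by (\ref{Comm}) for $t = t_i(\Ga)$ and for $t = t_i(\Ga)^*$, I would assemble
$$
F_i \circ \circledast = \bigoplus_\Ga \bigl(G^{t_i(\Ga)}_{(\Ga-\alpha_i)\Ga}\circ\circledast\bigr)
\;\cong\; \bigoplus_\Ga \bigl(\circledast\circ G^{t_i(\Ga)}_{(\Ga-\alpha_i)\Ga}\bigr)
\;\cong\; \circledast\circ\Bigl(\bigoplus_\Ga G^{t_i(\Ga)}_{(\Ga-\alpha_i)\Ga}\Bigr) = \circledast\circ F_i,
$$
naturally in $M$, and likewise for $E_i$ using the summands $G^{t_i(\Ga)^*}_{\Ga(\Ga-\alpha_i)}$. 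Since each arrow in this chain is a canonical degree zero isomorphism, so is the composite, which yields the required isomorphisms of functors.

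I do not expect a genuine obstacle here: the content is entirely formal once (\ref{Comm}) is in hand. The only points that require a moment's care are (i) that the canonical isomorphism of (\ref{Comm}) is independent of auxiliary choices, so that the summands over $\Ga$ glue coherently into an isomorphism of functors, and (ii) that no spurious degree shift is introduced --- which it is not, because the shifts $\langle-\caps(t)\rangle$ are already built into the definition (\ref{gFi}) of $F_i$ and $E_i$, and a direct sum of degree zero isomorphisms is again a degree zero isomorphism.
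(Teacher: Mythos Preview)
Your proof is correct and is exactly the approach taken in the paper, which simply says ``This is immediate from (\ref{Comm}).'' You have merely spelled out in more detail the formal step of passing to the direct sum defining $F_i$ and $E_i$.
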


\begin{proof}
This is immediate from (\ref{Comm}).
\end{proof}

For $i \in I^+$,
we let
\begin{equation}\label{Ki}
D_i^{\pm 1}:\Rep{\KImn} \rightarrow \Rep{\KImn}
\end{equation}
be the degree shift functor mapping a module $M
\in\Rep{K_\Ga}$ to $M\langle \pm(\Ga, \delt_i) \rangle$.
The following lemma should be compared with (\ref{taudef})--(\ref{berkeley})
and \cite[Proposition 4.2]{FKS}.

\begin{Lemma}\label{adjunctions}
There are degree zero adjunctions making
$(F_i \circ D_i D_{i+1}^{-1} \langle -1 \rangle, E_i)$
and $(E_i, F_i \circ D_i^{-1} D_{i+1} \langle 1 \rangle)$
into adjoint pairs of functors on $\Rep{\KImn}$.
\end{Lemma}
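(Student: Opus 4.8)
The plan is to reduce the statement to the block-by-block adjunction recorded in (\ref{Adjpar}) and then to recognise the defect-dependent degree shift appearing there as precisely the shift built into the functors $D_i^{\pm1}$. First I would decompose everything over blocks: since $\Rep{\KImn} = \bigoplus_{\Ga} \Rep{K_\Ga}$ and an adjunction may be checked one block at a time, and since by (\ref{gFi}) the component of $F_i$ on $\Rep{K_\Ga}$ (for $\Ga$-admissible $i$) is $G^{t_i(\Ga)}_{(\Ga-\alpha_i)\Ga}$ while the corresponding component of $E_i$ is $G^{t_i(\Ga)^*}_{\Ga(\Ga-\alpha_i)}$, it suffices to construct, for each $\Ga \in \PImn$ with $i$ $\Ga$-admissible, degree zero adjoint pairs
$$
\left(G^{t_i(\Ga)}_{(\Ga-\alpha_i)\Ga}\langle(\Ga,\alpha_i)-1\rangle,\ G^{t_i(\Ga)^*}_{\Ga(\Ga-\alpha_i)}\right)
\quad\text{and}\quad
\left(G^{t_i(\Ga)^*}_{\Ga(\Ga-\alpha_i)}\langle 1-(\Ga,\alpha_i)\rangle,\ G^{t_i(\Ga)}_{(\Ga-\alpha_i)\Ga}\right).
$$
Both of these are instances of (\ref{Adjpar}), applied to the matchings $t_i(\Ga)^*$ and $t_i(\Ga)$ between the blocks $\Ga$ and $\Ga-\alpha_i$ respectively, once one evaluates the defect shift occurring in it: using the Lie-theoretic formula $\defect(\Ga) = ((\La,\La)-(\Ga,\Ga))/2$ together with $(\alpha_i,\alpha_i) = (\delt_i-\delt_{i+1},\delt_i-\delt_{i+1}) = 2$ gives $\defect(\Ga)-\defect(\Ga-\alpha_i) = \bigl((\Ga-\alpha_i,\Ga-\alpha_i)-(\Ga,\Ga)\bigr)/2 = 1-(\Ga,\alpha_i)$.

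It then remains to identify these block shifts with the prescribed shift functors. By (\ref{Ki}), on $\Rep{K_\Ga}$ the functor $D_iD_{i+1}^{-1}$ acts as the shift $\langle(\Ga,\delt_i)-(\Ga,\delt_{i+1})\rangle = \langle(\Ga,\alpha_i)\rangle$; since $(\Ga-\alpha_i,\alpha_i) = (\Ga,\alpha_i)-2$, the same functor acts on $\Rep{K_{\Ga-\alpha_i}}$ as $\langle(\Ga,\alpha_i)-2\rangle$. As $F_i$ sends $\Rep{K_\Ga}$ to $\Rep{K_{\Ga-\alpha_i}}$ and commutes with honest degree shifts, one deduces the identity $F_i\circ D_iD_{i+1}^{-1}\langle-1\rangle = D_iD_{i+1}^{-1}\langle 1\rangle\circ F_i$, both sides restricting on $\Rep{K_\Ga}$ to $G^{t_i(\Ga)}_{(\Ga-\alpha_i)\Ga}\langle(\Ga,\alpha_i)-1\rangle$; likewise $F_i\circ D_i^{-1}D_{i+1}\langle 1\rangle = D_i^{-1}D_{i+1}\langle-1\rangle\circ F_i$ restricts on $\Rep{K_\Ga}$ to $G^{t_i(\Ga)}_{(\Ga-\alpha_i)\Ga}\langle 1-(\Ga,\alpha_i)\rangle$. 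Feeding these identities into the two blockwise adjoint pairs above, and using that the $D_i^{\pm1}\langle\pm1\rangle$ are autoequivalences of $\Rep{\KImn}$ so that they may be slid freely across an adjunction, produces exactly the adjoint pairs $(F_i\circ D_iD_{i+1}^{-1}\langle-1\rangle, E_i)$ and $(E_i, F_i\circ D_i^{-1}D_{i+1}\langle 1\rangle)$. Summing these block contributions over $\Ga$ gives the two adjunctions on all of $\Rep{\KImn}$, with units and counits inherited from those of (\ref{Adjpar}).

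The one genuinely delicate point is this grading bookkeeping. Because $D_iD_{i+1}^{-1}$ is not a single global shift but a block-dependent one, differing by $\langle(\alpha_i,\alpha_i)\rangle = \langle 2\rangle$ between the blocks $\Ga$ and $\Ga-\alpha_i$, one must keep careful track of whether a given shift is being applied before or after $F_i$, and relative to which block the defect shift of (\ref{Adjpar}) is computed; this is precisely what reconciles the superficially asymmetric $\langle-1\rangle$ and $\langle 1\rangle$ in the two claimed adjunctions. Apart from this, I anticipate no real obstacle, since (\ref{Adjpar}) already furnishes the required adjunction morphisms.
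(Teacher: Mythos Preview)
Your proposal is correct and follows essentially the same route as the paper's proof: decompose over blocks, invoke the general adjunction (\ref{Adjpar}) for the matchings $t_i(\Ga)$ and $t_i(\Ga)^*$, compute the defect shift, and match it with the definition of $D_i^{\pm1}$. The only cosmetic difference is that the paper reads off $\defect(\Ga-\alpha_i)-\defect(\Ga)=(\Ga,\delt_i-\delt_{i+1})-1$ directly from the four pictures in (\ref{CKLR}) (this is recorded as (\ref{defede})), whereas you obtain the same identity from the Lie-theoretic formula $\defect(\Ga)=\bigl((\La,\La)-(\Ga,\Ga)\bigr)/2$; both are immediate.
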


\begin{proof}
We just derive the adjunction for
$(F_i \circ D_i D_{i+1}^{-1} \langle -1 \rangle, E_i)$,
the other being similar.
Let $\Ga \in \PImn$ such that $i$ is $\Ga$-admissible.
By looking at the diagrams in (\ref{CKLR}), we have that
\begin{equation}\label{defede}
\defect(\Ga-\alpha_i)-\defect(\Ga)=(\Ga,\delt_i-\delt_{i+1}) - 1.
\end{equation}
Hence applying (\ref{Adjpar}), we get a canonical degree
zero adjunction making
$$
\left(G^{t_i(\Ga)}_{(\Ga-\alpha_i) \Ga}\langle (\Ga,\delt_i-\delt_{i+1})-1\rangle,
G^{t_i(\Ga)^*}_{\Ga (\Ga-\alpha_i)}\right)
$$
into an adjoint pair.
Now use the definitions
(\ref{gFi}) and (\ref{Ki}).
\end{proof}

\begin{Lemma}\label{ga}
Let $\la \in \LaImn$ and $i \in I$.
For symbols $x,y \in \{\circ,\up,\down,\times\}$
we write $\la_{xy}$ for the diagram obtained from $\la$
by relabelling the $i$th and $(i+1)$th vertices
by $x$ and $y$, respectively.
\begin{itemize}
\item[\rm(i)]
If $\la = \la_{{\scriptscriptstyle\down}\circ}$ then
$F_i P(\la) \cong P(\la_{\circ{\scriptscriptstyle\down}})$,
$F_i V(\la) \cong V(\la_{\circ{\scriptscriptstyle\down}})$,
$F_i L(\la) \cong L(\la_{\circ{\scriptscriptstyle\down}})$.
\item[\rm(ii)]
If $\la = \la_{{\scriptscriptstyle\up}\circ}$ then
$F_i P(\la) \cong P(\la_{\circ{\scriptscriptstyle\up}})$,
$F_i V(\la) \cong V(\la_{\circ{\scriptscriptstyle\up}})$,
$F_i L(\la) \cong L(\la_{\circ{\scriptscriptstyle\up}})$.
\item[\rm(iii)]
If $\la = \la_{\scriptscriptstyle\times\down}$ then
$F_i P(\la) \cong P(\la_{\scriptscriptstyle\down\times})$,
$\!F_i V(\la) \cong V(\la_{\scriptscriptstyle\down\times})$,
$\!F_i L(\la) \cong L(\la_{\scriptscriptstyle\down\times})$.
\item[\rm(iv)]
If $\la = \la_{\scriptscriptstyle\times\up}$ then
$F_i P(\la) \cong P(\la_{\scriptscriptstyle\up\times})$,
$\!F_i V(\la) \cong V(\la_{\scriptscriptstyle\up\times})$,
$\!F_i L(\la) \cong L(\la_{\scriptscriptstyle\up\times})$.
\item[\rm(v)]
If $\la = \la_{{\scriptscriptstyle\times}\circ}$ then:
\begin{itemize}
\item[(a)]
$F_i P(\la) \cong P(\la_{\scriptscriptstyle\down\up})\langle -1\rangle$;
\item[(b)]
there is a short exact sequence
$$
0 \rightarrow V(\la_{\scriptscriptstyle\up\down}) \rightarrow F_i V(\la)
\rightarrow V(\la_{\scriptscriptstyle\down\up}) \langle -1 \rangle \rightarrow 0;
$$
\item[(c)]
$F_i L(\la)$ has irreducible
socle $L(\la_{\scriptscriptstyle\down\up})\langle 1 \rangle$ and
head $L(\la_{\scriptscriptstyle\down\up})\langle -1 \rangle$, and all other composition
factors are of the form $L(\mu)$ for $\mu \in\LaImn$
such that
$\mu = \mu_{\scriptscriptstyle\down\down}$,
$\mu = \mu_{\scriptscriptstyle\up\up}$ or
$\mu = \mu_{\scriptscriptstyle\up\down}$.
\end{itemize}
\item[\rm(vi)]
If $\la = \la_{{\scriptscriptstyle\down\up}}$ then
$F_i P(\la) \cong
P(\la_{\circ\scriptscriptstyle{\times}}) \oplus
P(\la_{\circ\scriptscriptstyle{\times}}) \langle 2\rangle$,
$F_i V(\la) \cong V(\la_{\circ{\scriptscriptstyle\times}})$
and
$F_i L(\la) \cong L(\la_{\circ{\scriptscriptstyle\times}})$.
\item[\rm(vii)]
If $\la = \la_{{\scriptscriptstyle\up\down}}$ then
$F_i V(\la) \cong V(\la_{\circ{\scriptscriptstyle\times}})\langle 1 \rangle$
and $F_i L(\la) = \{0\}$.
\item[\rm(viii)]
If
$\la = \la_{{\scriptscriptstyle\down\down}}$
then
$F_i V(\la) = F_i L(\la) = \{0\}$.
\item[\rm(ix)]
If
$\la = \la_{{\scriptscriptstyle\up\up}}$
then
$F_i V(\la) = F_i L(\la) = \{0\}$.
\item[\rm(x)]
For all other $\la$ we have that
$F_i P(\la) = F_i V(\la) = F_i L(\la) = \{0\}$.
\end{itemize}
For the dual statement about $E_i$,
interchange all occurrences of $\circ$ and $\times$.
\end{Lemma}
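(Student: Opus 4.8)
The plan is to reduce the statement to a single projective functor and then split into cases by type, with a simultaneous induction on the Bruhat order for the assertions about irreducibles. Write $\Ga$ for the block of $\la$. By (\ref{gFi}), $F_i$ acts on $\Rep{K_\Ga}$ as $G^{t_i(\Ga)}_{(\Ga-\alpha_i)\Ga}$ when $i$ is $\Ga$-admissible and as the zero functor otherwise, so everything comes down to this one functor, which I would classify by reading the labels of $\la$ at vertices $i$ and $i+1$ (equivalently the labels $\circ,\bullet,\times$ of $\Ga$ there) against the four pictures in (\ref{CKLR}). One checks directly that $i$ is $\Ga$-admissible precisely in cases (i)--(ix); in the $\bullet\bullet$-cases (vi)--(ix) one additionally needs $\defect(\Ga)\ge 1$, which is automatic when the $i,i+1$-labels of $\la$ are $\down\up$ or $\up\down$ (such a pair forces two distinct $\bullet$-strands) but may fail for $\down\down$ or $\up\up$, in which event $F_i$ annihilates the whole block and (viii)--(ix) are vacuous. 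For the remaining label-pairs -- $\circ\circ,\circ\down,\circ\up,\circ\times,\down\times,\up\times,\times\times$ -- vertex $i$ or $i+1$ already obstructs admissibility, which is case (x). The $E_i$ statement then follows from the $\circ\leftrightarrow\times$ symmetry of the diagram calculus (which fixes $\Ga$ and swaps $F_i,E_i$), so it is enough to treat $F_i$.

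For the shift cases (i)--(iv), $t_i(\Ga)$ is a right- or left-shift -- identity segments away from the $i,i+1$-strip, with $\caps(t_i(\Ga))=0$. Composing $G^{t_i(\Ga)}_{(\Ga-\alpha_i)\Ga}$ with the opposite-shift functor $G^{t_i(\Ga)^*}_{\Ga(\Ga-\alpha_i)}$ and using (\ref{miso}) and (\ref{moso}) -- the composite matching reduces to the identity matching on $\Ga$ with no internal circles, and $\defect$ is unchanged -- one gets $G^{t_i(\Ga)^*}_{\Ga(\Ga-\alpha_i)}\circ G^{t_i(\Ga)}_{(\Ga-\alpha_i)\Ga}\cong\id$ and dually, so $G^{t_i(\Ga)}$ is an equivalence $\Rep{K_\Ga}\xrightarrow{\sim}\Rep{K_{\Ga-\alpha_i}}$ with no degree shift. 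Inspecting its action on the diagram basis (\ref{diagbasis}), it intertwines the evident bijection between the weights of $\Ga$ and of $\Ga-\alpha_i$ relabelling the $i$th and $(i+1)$th vertices as in (\ref{CKLR}); hence $P(\la),V(\la),L(\la)$ map to $P(\la'),V(\la'),L(\la')$ for the relabelled weight $\la'$, giving (i)--(iv).

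In the cup/cap cases (v)--(ix), $F_i=G^{t_i(\Ga)}_{(\Ga-\alpha_i)\Ga}$ is exact, and since $K^{t_i(\Ga)}_{(\Ga-\alpha_i)\Ga}$ is projective as a left module it preserves projectives. I would obtain the statements about $P(\la)$ and $V(\la)$ by explicit computation with the diagram bases, using the surgery procedure of \cite[\S3]{BS2} and the bases of $P(\la),V(\la)$ from \cite[\S5]{BS1}: tensoring fuses the relevant arcs, and one reads off the answer with its degree shift (the degree being the number of clockwise cups and caps, cf.\ (\ref{moso})) and, in (v)(b), the order of the two-step standard filtration (forced by the Bruhat order, with $V(\la_{\down\up})\langle-1\rangle$ the quotient). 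Thus in (vi) the internal circle created by the surgery has two orientations, giving $P(\la_{\circ\times})\oplus P(\la_{\circ\times})\langle 2\rangle$; in (vi),(vii) the two strands of the cell module fuse into a single cell module (shifted by $1$ in (vii)); in (viii),(ix) the cup functor kills the cell module.

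The heart of the matter is $F_i$ on irreducibles in the cup/cap cases. Cases (viii),(ix) for $L$ are immediate from right-exactness, since $L(\la)$ is a quotient of $V(\la)$ and $F_iV(\la)=0$. For (vii), $F_iL(\la_{\up\down})$ is at once a quotient of $F_iV(\la_{\up\down})=V(\la_{\circ\times})\langle 1\rangle$, hence in strictly positive degrees, and $\circledast$-self-dual by Lemma~\ref{duality}, and a self-dual module in positive degrees vanishes. For (v), right-exactness applied to $P(\la_{\times\circ})\twoheadrightarrow L(\la_{\times\circ})$ together with part (v)(a) exhibits $F_iL(\la_{\times\circ})$ as a quotient of $P(\la_{\down\up})\langle-1\rangle$, which has simple head $L(\la_{\down\up})\langle-1\rangle$; it is nonzero because $F_iV(\la_{\times\circ})$ surjects onto $L(\la_{\down\up})\langle-1\rangle$ while, by the inductive form of (v)(c), $F_i\rad V(\la_{\times\circ})$ has no composition factor $L(\la_{\down\up})$; hence $\cosoc F_iL(\la_{\times\circ})=L(\la_{\down\up})\langle-1\rangle$ and, by self-duality, $\soc F_iL(\la_{\times\circ})=L(\la_{\down\up})\langle 1\rangle$. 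The remaining clause of (v)(c) -- that all other composition factors carry labels $\down\down,\up\up$ or $\up\down$ -- and the assertion (vi) that $F_iL(\la_{\down\up})=L(\la_{\circ\times})$ both reduce, via exactness and these inductive hypotheses applied to $[F_iL(\la)]=[F_iV(\la)]-[F_i\rad V(\la)]$, to combinatorial identities relating the decomposition numbers of the neighbouring blocks $\Ga$ and $\Ga-\alpha_i$ -- that is, to cancellations among the polynomials $d_{\la,\mu}$ of \S\ref{sB}. Marshalling those identities is, I expect, the main obstacle; the rest is shift equivalences and short exact-functor arguments.
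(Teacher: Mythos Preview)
The paper's proof is a one-line citation: \cite[Theorems 4.2, 4.5, 4.11]{BS2} compute the effect of an arbitrary indecomposable projective functor $G^t_{\De\Ga}$ on $P(\la)$, $V(\la)$ and $L(\la)$ in full generality via the diagram calculus, and Lemma~\ref{ga} is just the specialisation $t=t_i(\Ga)$.

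Your approach is genuinely different. For $P(\la)$ and $V(\la)$ you are essentially re-deriving the relevant special cases of \cite[Theorems 4.2, 4.5]{BS2} by hand, which is fine. For $L(\la)$ you avoid the diagrammatic computation behind \cite[Theorem 4.11]{BS2} and instead argue formally via exactness, $\circledast$-self-duality, and Bruhat induction on $[F_iL(\la)]=[F_iV(\la)]-[F_i\rad V(\la)]$. This is exactly the style of the paper's proof of the category-$\cO$ analogue, Lemma~\ref{a}; your grading argument for (vii) is in fact cleaner than the proof of Lemma~\ref{a}(vii), which has to invoke the singular-vector computation Lemma~\ref{jsf}.

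There is, however, a real gap, which you yourself flag. Your inductive step for (v)(c) and (vi) reduces to identities of the shape
\[
d_{\nu_{\circ\times},\la_{\circ\times}}=d_{\nu_{\down\up},\la_{\down\up}},\qquad
d_{\nu_{\down\up},\la_{\up\down}}+d_{\nu_{\down\up},\la_{\down\up}}=2\,d_{\nu_{\times\circ},\la_{\times\circ}},
\]
relating decomposition numbers across adjacent blocks, and you do not prove these. They are not formal; establishing them directly amounts to the closed formula \cite[(5.12)]{BS1}, at which point you are essentially redoing \cite[Theorem 4.11]{BS2}. The paper's proof of Lemma~\ref{a}(v)(c) sidesteps this with a trick you do not use: it passes to $\cF_i^2$, which maps the $\times\circ$-block to the $\circ\times$-block sending $[\cV(\nu)]\mapsto 2[\cV(\nu_{\circ\times})]$, so that $\cF_i^2\cL(\la)$ is a nonzero self-dual quotient of $\cV(\la_{\circ\times})^{\oplus 2}$ whose class is divisible by~$2$, forcing $\cF_i^2\cL(\la)\cong\cL(\la_{\circ\times})^{\oplus 2}$; then the $\down\up$-labelled multiplicities in $\cF_i\cL(\la)$ are read off from this using the already-proved (vi)--(ix). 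The same device works verbatim on the diagram side (forgetting the grading) and, combined with your head/socle computation, pins down the graded statement as well. Adding this would close your gap without any decomposition-number identities.
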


\begin{proof}
Apply \cite[Theorem 4.2]{BS2} for $P(\la)$,
\cite[Theorem 4.5]{BS2} for $V(\la)$, and
\cite[Theorem 4.11]{BS2} for $L(\la)$.
\end{proof}

\phantomsubsection{\bf The first categorification theorem}
The following theorem explains the connection between the
Grothendieck group of
$\Rep{\KImn}$ and the $\Laurent$-form
of the module $\bigwedge^m V
\otimes \bigwedge^n V$ from $\S$\ref{sB}.

\begin{Theorem}\label{cat1}
Identify the Grothendieck group $[\Rep{\KImn}]$
with the $U_{\!\Laurent}$-module $\bigwedge^m V_\Laurent
\otimes \bigwedge^n V_\Laurent$
by identifying
$[V(\la)]$ with $V_\la$ for each $\la \in \LaImn$.
\begin{enumerate}
\item[\rm(i)] We have that $[L(\la)] = L_\la$
and $[P(\la)] = P_\la$ for each $\la \in \LaImn$.
\item[\rm(ii)] The
endomorphisms of the Grothendieck group induced by the
exact functors
$E_i, F_i$ and $D_i^{\pm 1}$
coincide with the action of the
generators $E_i, F_i$ and $D_i^{\pm 1}$
 of $U_{\!\Laurent}$ for each $i \in I$.
\item[\rm(iii)]
We have that
$$
\sum_{j \in \Z} q^j \dim \hom_{\KImn}(P, M)_j=
\left\langle [P], [M] \right\rangle
$$
for $M, P \in \Rep{\KImn}$
with $P$ projective.
\item[\rm(iv)]
We have that $[M^\circledast] = \overline{[M]}$ for each $M \in \Rep{\KImn}$.
\end{enumerate}
\end{Theorem}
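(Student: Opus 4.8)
The plan is to establish the four assertions in the order (ii), (iv), (i), (iii), since each uses the previous ones. For (ii), I would first compute the action of the generators $E_i$, $F_i$, $D_i^{\pm 1}$ of $U_{\!\Laurent}$ on the monomial basis $\{V_\la\}$ of $\bigwedge^m V_\Laurent \otimes \bigwedge^n V_\Laurent$ explicitly. For $D_i^{\pm 1}$ the action is multiplication by $q^{\pm(\Ga,\delt_i)}$ on the weight-$\Ga$ subspace, and since $V_\la$ has weight $\Ga$ for $\la \in \Ga$, this agrees with the degree-shift endomorphism induced by the functor $D_i^{\pm 1}$ of (\ref{Ki}). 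For $F_i$ one uses $\Delta(F_i) = F_i \otimes 1 + D_i^{-1} D_{i+1} \otimes F_i$ together with the standard description of $F_i$ on a quantum exterior power (it sends the wedge factor $v_i$ to $v_{i+1}$, up to an explicit power of $q$ coming from the intervening indices, and gives $0$ if $v_{i+1}$ already occurs), and likewise $E_i$ via $\Delta(E_i)$. Reading these off according to the labels at the $i$th and $(i+1)$th vertices of $\la$, and using $[V(\mu)\langle j\rangle] = q^j V_\mu$ for the degree shifts, one matches them term by term with Lemma \ref{ga}; e.g.\ Lemma \ref{ga}(v)(b) gives $[F_i V(\la)] = V_{\la_{\up\down}} + q^{-1} V_{\la_{\down\up}}$, which is exactly the output of $\Delta(F_i)$ on $V_\la$ in that case. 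Since $\{[V(\mu)]\}$ is a basis of the Grothendieck group, checking the identity of operators on it suffices.

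For (iv): via the identification, $\circledast$ induces an anti-linear involution $b$ on $\bigwedge^m V_\Laurent \otimes \bigwedge^n V_\Laurent$ (it is exact, preserves blocks and reverses the grading), and the claim $[M^\circledast] = \overline{[M]}$ says precisely that $b$ equals the bar-involution. By the uniqueness in Lemma \ref{charac} it is enough to check that $b$ is a compatible bar-involution and that $b(V_\la) = V_\la$ for every $\la \in \LaImn$ minimal in the Bruhat order. Compatibility means $b$ commutes with the operators $E_i$ and $F_i$ --- immediate from Lemma \ref{duality} --- and $b(D_i x) = D_i^{-1} b(x)$, which is immediate from (ii) since $D_i$ scales the weight-$\Ga$ subspace by $q^{(\Ga,\delt_i)}$ and $b$ is anti-linear; as $E_i, F_i, D_i^{\pm 1}$ generate $U$, this gives $\overline{uv} = \bar u\, b(v)$ for all $u \in U$. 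For the values on minimal weights: if $\la$ is Bruhat-minimal in its block then by the graded highest weight structure of \cite[Theorem 5.3]{BS1} every composition factor of $V(\la)$ is $L(\mu)$ with $\mu \le \la$, hence $V(\la) = L(\la)$; and $L(\la)$, being irreducible, concentrated in degree zero, with $[L(\la)^\circledast] = [L(\la)]$, satisfies $L(\la)^\circledast \cong L(\la)$, so $b(V_\la) = [V(\la)^\circledast] = [L(\la)] = V_\la$.

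For (i): let $\tilde d_{\la,\mu}(q) := [V(\mu):L(\la)]_q$ be the graded composition multiplicities, so by \cite[Theorem 5.3]{BS1} and positivity of the grading on $K_\Ga$ one has $\tilde d_{\mu,\mu} = 1$, $\tilde d_{\la,\mu} \in q\Z[q]$ for $\la < \mu$, and $\tilde d_{\la,\mu} = 0$ otherwise; moreover graded BGG reciprocity (\cite{BS1}) gives $[P(\la)] = \sum_\mu \tilde d_{\la,\mu}(q)\,[V(\mu)]$. From $V_\mu = [V(\mu)] = \sum_\la \tilde d_{\la,\mu}(q)\,[L(\la)]$ and unitriangularity we get $[L(\la)] = V_\la + (\text{a }q\Z[q]\text{-linear combination of the }V_\mu\text{'s})$; combined with $\overline{[L(\la)]} = [L(\la)]$ from (iv), Lusztig's lemma forces $[L(\la)] = L_\la$. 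Comparing $V_\mu = \sum_\la \tilde d_{\la,\mu}(q) L_\la$ with (\ref{e1}) then gives $\tilde d_{\la,\mu} = d_{\la,\mu}$, whence $[P(\la)] = \sum_\mu d_{\la,\mu}(q) V_\mu = P_\la$ by (\ref{boo}).

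Finally (iii): both sides are additive in $M$ along short exact sequences (since $P$ is projective) and additive in $P$ along direct sums, and every projective is a direct sum of degree shifts of the $P(\la)$'s, so it suffices to take $P = P(\la)$ and $M = L(\mu)$; then the left side is $\delta_{\la,\mu}$ because $P(\la)$ is the graded projective cover of $L(\mu)$ and $L(\mu)$ lives in degree zero, while the right side is $\langle [P(\la)], [L(\mu)]\rangle = \langle P_\la, L_\mu\rangle = \delta_{\la,\mu}$ by (i) and (\ref{e4}). I expect the main obstacle to be (ii): though conceptually routine, it requires careful bookkeeping of the comultiplication and of the $q$-powers arising in the quantum exterior powers, together with a somewhat tedious verification that each of the roughly ten cases of Lemma \ref{ga}, with all of its degree shifts, matches the corresponding term of $\Delta(F_i)$ (and of $\Delta(E_i)$) applied to $V_\la$; everything downstream is then a clean application of the two uniqueness characterisations.
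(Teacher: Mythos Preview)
Your proposal is correct and follows essentially the same route as the paper: establish (ii) by matching the action of $F_i$ (and $E_i$, $D_i^{\pm 1}$) on the monomial basis against the standard-module computations of Lemma~\ref{ga}; deduce (iv) from (ii), Lemma~\ref{duality}, and the uniqueness in Lemma~\ref{charac}; obtain (i) from bar-invariance of $[L(\la)]$ together with unitriangularity (the paper cites \cite[(5.14)--(5.15)]{BS1} directly, you derive the same via graded BGG reciprocity); and finish (iii) by reducing to $P=P(\la)$, $M=L(\mu)$ and invoking (\ref{e4}). The only differences are cosmetic choices of citation and the order in which you package the unitriangularity/BGG step.
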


\begin{proof}
We first check (ii), explaining the argument
just in the case of $F_i$; a similar argument establishes
the statement for $E_i$ and the statement for $D_i^{\pm 1}$ is
obvious.
By the definition of the action of $F_i$
on $\bigwedge^m V \otimes \bigwedge^n V$ it maps
\begin{align*}
(- v_{i} -)\otimes (-)
&\mapsto
(- v_{i+1} -) \otimes (-),\\
 (-) \otimes (- v_{i} -)
&\mapsto
(-) \otimes (- v_{i+1} -),\\
(- v_{i+1} \wedge v_i -) \otimes (- v_{i}-)
&\mapsto (- v_{i+1} \wedge v_i -) \otimes (- v_{i+1} -),\\
(- v_{i} -) \otimes(- v_{i+1} \wedge v_i -)
&\mapsto (- v_{i+1} -) \otimes (- v_{i+1}\wedge v_i -),\\
(- v_{i}-) \otimes (- v_{i} -)
&\mapsto
(- v_{i+1}-) \otimes (- v_i -)+ q^{-1}
(- v_i -) \otimes (- v_{i+1} -),\\
(- v_{i}-) \otimes (- v_{i+1} -)
&\mapsto
(- v_{i+1} -) \otimes (- v_{i+1} -),\\
(- v_{i+1}-) \otimes (- v_{i}-)
&\mapsto
q(- v_{i+1}-) \otimes (- v_{i+1} -),
\end{align*}
where $-$ denotes a wedge product of basis vectors
$v_j$ for $j \neq i, i+1$.
Moreover $F_i$ acts as zero on all other $V_\la$'s.
Comparing with Lemma~\ref{ga}, this is the
same as the action of the functor $F_i$ on the
basis for the Grothendieck group coming from the standard modules.

Next we consider (iv).
If $\la$
is minimal in the Bruhat order
then $V(\la) = L(\la)$ by \cite[Theorem 5.2]{BS1},
so $V(\la)^\circledast \cong V(\la)$.
Thus, $\circledast$ induces an anti-linear endomorphism
of the Grothendieck group
that fixes $V_\la$
for each minimal $\la$.
Moreover by (ii) and Lemma~\ref{duality} this
induced endomorphism commutes
with the actions of $E_i, F_i$ for all $i \in I$.
It follows easily that the induced endomorphism
is a compatible bar-involution.
Hence it coincides with the bar-involution
from Lemma~\ref{charac} by the uniqueness from that lemma.

Using (ii) and (iv), we can now establish (i).
As $L(\la)^\circledast \cong L(\la)$,
we get from (iv) that the vector $[L(\la)]$ is bar invariant
for each $\la \in \LaImn$.
Also the inverse of the $q$-decomposition matrix from \cite[(5.14)]{BS1}
has $1$'s on the diagonal and all other entries belong to
$q \Z[q]$. So:
$$
[L(\la)] = [V(\la)] + (\text{a $q \Z[q]$-linear
combination of $[V(\mu)]$'s}).
$$
This verifies that $[L(\la)]$ satisfies the defining properties
of the dual-canonical basis vector $L_\la$.
Hence $[L(\la)] = L_\la$.
Then the fact that $[P(\la)] = P_\la$
follows on comparing (\ref{boo})
and \cite[(5.15)]{BS1}.

Finally, (iii) is clear from (\ref{e4}), (i) and the sesquilinearity
of the form $\langle.,.\rangle$, since
$\dim \hom_{\KImn}(P(\la), L(\mu))_j = \delta_{\la,\mu} \delta_{j,0}$
for each $\la,\mu \in \LaImn$.
\end{proof}

\begin{Remark}\label{Exp}\rm
In view of Theorem~\ref{cat1}, the
polynomials $d_{\la,\mu}(q)$ and
$p_{\la,\mu}(q)$ from (\ref{e1})--(\ref{e2}) are the same
as the polynomials defined by explicit closed formulae in
\cite[(5.12)]{BS1} and \cite[(5.3)]{BS2}, the latter
going back to \cite{LS};
see also \cite{Brenti} and \cite[Theorem 3]{LM}.
Hence the quasi-canonical and dual-canonical bases
for
 $\bigwedge^m V \otimes \bigwedge^n V$
are known exactly.
Moreover \cite[Theorems 4.2 and 4.11]{BS2} give explicit formulae
for the action of $F_i$ and $E_i$ on these bases; in almost all cases
this is described already by Lemma~\ref{ga}.
This gives an explicit diagram calculus
for working with the various bases
of  $\bigwedge^m V \otimes \bigwedge^n V$, which is closely related to
the diagram calculus
developed by Frenkel and Khovanov in \cite{FK}.
\end{Remark}

\phantomsubsection{The crystal graph}
Given $i \in I$ and $\la,\mu \in \LaImn$, we write
$\la = \tilde f_i (\mu)$ if
the $i$th and $(i+1)$th vertices of $\la$ and $\mu$
are labelled
according to one of the
six cases in the following table,
and all other vertices of $\la$ and $\mu$ are labelled in the
same way:
\begin{equation}\label{cg}
\begin{array}{c|c|c|c|c|c|c}
\mu&\quad{\scriptstyle\down}\:\circ\quad&\quad{\scriptstyle\up}\:\circ\quad&\quad{\scriptstyle\times}\:{\scriptstyle\down}\quad&\quad{\scriptstyle\times}\:{\scriptstyle\up}\quad&\quad{\scriptstyle\times}\:\circ\quad&\quad{\scriptstyle\down}\:{\scriptstyle\up}\quad\\
\hline
\la&\circ\:{\scriptstyle\down}&\circ\:{\scriptstyle\up}&{\scriptstyle\down}\:{\scriptstyle\times}&{\scriptstyle\up}\:{\scriptstyle\times}&{\scriptstyle\down}\:{\scriptstyle\up}&\circ\:{\scriptstyle\times}
\end{array}
\end{equation}
Define the {\em crystal graph}
to be the directed coloured graph with vertex set equal to $\LaImn$
and a directed edge $\mu \stackrel{i}{\rightarrow} \la$ of colour $i \in I$
whenever $\la = \tilde f_i(\mu)$.
This graph is isomorphic to the crystal graph that is the tensor product
of the crystal graphs associated to
the irreducible $U$-modules $\bigwedge^m V$ and
$\bigwedge^n V$ in the sense of Kashiwara.
The representation theoretic significance of the crystal graph is clear
from Lemma~\ref{ga}:
$\la = \tilde f_i(\mu)$ if and only if
$L(\la)$ is a quotient of
$F_i L(\mu)$ (possibly shifted in degree).

\begin{Lemma}\label{cgl}
Let $\la \in \LaImn$.
Then there exists $\mu \in \LaImn$ that is maximal in the Bruhat
ordering and a sequence $i_1,\dots,i_k \in I$ for some $k \geq 0$
such that $\la = \tilde f_{i_k} \cdots \tilde f_{i_1}(\mu)$.
\end{Lemma}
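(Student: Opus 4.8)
\emph{Proof sketch.}
The plan is to deduce this from the crystal structure together with standard facts about crystals. As recalled just before the lemma, the crystal graph on $\LaImn$ is isomorphic to the tensor product of the crystals of the integrable $\U$-modules $\bigwedge^m V$ and $\bigwedge^n V$. By the general theory of crystals, such a tensor product is a disjoint union of its connected components, each of which is isomorphic to the crystal $B(\nu)$ of an integrable highest weight module; and in $B(\nu)$ every vertex equals $\tilde f_{i_k}\cdots\tilde f_{i_1}$ applied to the unique highest weight vertex, i.e.\ the unique $\mu$ in that component with $\tilde e_i(\mu) = 0$ for all $i \in I$ (call such a $\mu$ a \emph{source}). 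So the lemma reduces to the assertion that \emph{every source is maximal in the Bruhat order} on $\LaImn$.

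To use this I first need to describe the sources combinatorially. Reading off from the table~(\ref{cg}), the operator $\tilde e_i$ is defined on $\la$ (that is, $\tilde e_i(\la)\ne 0$) exactly when the labels on the $i$th and $(i+1)$th vertices of $\la$ read one of $\circ\,\down$, $\circ\,\up$, $\down\,\times$, $\up\,\times$, $\down\,\up$, $\circ\,\times$; so $\mu$ is a source precisely when none of these six patterns occurs anywhere in $\mu$. An easy inspection shows that this forces every $\circ$ to be immediately followed by a $\circ$, so the non-$\circ$ labels of $\mu$ occupy an initial segment of the number line, and within that segment the labels must read $\times^a\,\up^b\,\down^c$ for some $a,b,c\geq 0$. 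Since $\mu \in \LaImn$ has exactly $m$ vertices labelled $\down$ or $\times$ and exactly $n$ labelled $\up$ or $\times$, we get $a + c = m$ and $a + b = n$, so the sources are exactly the $\min(m,n)+1$ weights obtained by taking $a = 0, 1, \dots, \min(m,n)$ (the case $a = \min(m,n)$ being the ground-state $\iota$). This count matches the number of connected components, i.e.\ the number of irreducible summands of $\bigwedge^m V \otimes \bigwedge^n V$.

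It then remains to check that each such source $\mu$ is Bruhat-maximal in $\LaImn$. Recall from \cite[$\S$2]{BS1} that a basic Bruhat move replaces a $\down\,\up$ pair of vertices by $\up\,\down$ when all vertices strictly between them are labelled $\circ$ or $\times$; in particular such a move only permutes labels within a single block, so it suffices to see that $\mu$ is maximal inside its own block. But the block of $\mu$ has all of its $\times$- and $\bullet$-vertices on an initial segment of the number line, and a basic move within it would have to swap a $\down\,\up$ pair of consecutive $\bullet$-vertices (only $\circ$'s and $\times$'s lying between) to $\up\,\down$; since in $\mu$ the $\bullet$-vertices already read $\up\cdots\up\,\down\cdots\down$, no such move is available, so $\mu$ is maximal. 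This completes the reduction, and hence the proof.

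I expect the main obstacle to be the bookkeeping in the last two steps: reading off exactly which local patterns are ``raisable'' from~(\ref{cg}), tracking the orientation conventions for the Bruhat order from \cite{BS1}, and confirming that the left-justified staircase shape $\times^a\up^b\down^c$ really is the Bruhat-top of its block. The crystal-theoretic input of the first paragraph — that a tensor product of highest weight crystals decomposes into highest weight crystals, each generated from its source under the $\tilde f_i$ — is entirely standard.
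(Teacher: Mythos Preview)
Your argument is correct, and the combinatorial identification of the sources as the weights $\times^a\up^b\down^c\circ\cdots$ together with the check that these are Bruhat-maximal is clean. But your route is genuinely different from the paper's.

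The paper gives a short, completely elementary, constructive argument: if $\la$ is not Bruhat-maximal, locate a $\down$ at position $i$ and a $\up$ at position $j>i$ with only $\circ$'s and $\times$'s in between, and then explicitly write down a sequence of $\tilde f$-edges (of types $\times\circ\to\down\up\to\circ\times$, $\up\circ\to\circ\up$, $\times\down\to\down\times$, and finally $\times\circ\to\down\up$) that eliminates this $\down\up$ pair; iterate. This never leaves the table~(\ref{cg}) and invokes no external crystal theory whatsoever.

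By contrast, you import the structural theorem that a tensor product of highest weight crystals decomposes into connected highest weight crystals, each generated from its source by $\tilde f$'s. This is of course standard Kashiwara theory, and the paper does remark (just before the lemma) that the crystal graph here is such a tensor product, so the ingredients are available. What you gain is a more conceptual explanation and, as a byproduct, an explicit enumeration of the connected components and their sources. What you lose is self-containment: the paper's surrounding development in \S\ref{sC2} is deliberately elementary (it even reproves a special case of the Kazhdan--Lusztig conjecture by hand), and your proof would be the only place in that section where nontrivial crystal theory is invoked. The paper's argument also produces an explicit path in the crystal graph, which matches how the lemma is actually used downstream (Theorem~\ref{cgt}, Lemma~\ref{ided}).
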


\begin{proof}
If $\la$ is maximal in the Bruhat ordering, there is nothing to do.
So assume it is not maximal.
Then we can find $i < j$ such that the $i$th vertex
of $\la$ is labelled $\down$, the $j$th vertex of $\la$ is
labelled $\up$,
and all vertices of $\la$ in between are labelled $\circ$ or $\times$.
By using crystal graph edges
of the form ${\scriptstyle\times}\circ \rightarrow
{\scriptstyle\down\up}\rightarrow \circ{\scriptstyle\times}$,
we reduce to the situation that the vertices $i+1,\dots,j-1$
are labelled so all the $\circ$'s are to the right of the $\times$'s.
Then by using edges of the form
${\scriptstyle\up}\circ \rightarrow \circ{\scriptstyle \up}$
and ${\scriptstyle\times\down} \rightarrow
{\scriptstyle \down\times}$ we reduce to the situation that
the vertices $\down$ and $\up$ are neighbours.
Finally use an edge of the form ${\scriptstyle\times}\circ
\rightarrow {\scriptstyle \down\up}$ to eliminate this $\down\up$ pair
altogether. Then iterate.
\end{proof}

\begin{Theorem}\label{cgt}
Take $\la \in \LaImn$ and let
$\mu, i_1,\dots,i_k$ be as in Lemma~\ref{cgl}.
Then
$$
F_{i_k} \cdots F_{i_1} V_\mu= (q+q^{-1})^rq^{r-s} P_\la,
$$
where $r$ (resp.\ $s$) is the number of
crystal graph edges in the
path $\mu \stackrel{i_1}{\rightarrow} \cdots \stackrel{i_d}{\rightarrow}
\la$ that are
of the form ${\scriptstyle \down\up}
\rightarrow \circ{\scriptstyle\times}$ (resp.\ ${\scriptstyle \times}\circ
\rightarrow {\scriptstyle \down\up}$).
\end{Theorem}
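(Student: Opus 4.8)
The plan is to transport the computation into the category $\Rep{\KImn}$ by means of Theorem~\ref{cat1}, and reduce it to the action of the special projective functors on projective indecomposable modules, which can be read off directly from Lemma~\ref{ga}. First I would note that since $\mu$ is maximal in the Bruhat order we have $V_\mu = P_\mu$: indeed \eqref{boo2} gives $V_\mu = \sum_{\nu \in \LaImn} p_{\mu,\nu}(-q)\, P_\nu$, and since $p_{\mu,\nu}(q) = 0$ unless $\nu \geq \mu$ while $p_{\mu,\mu}(q) = 1$, the only surviving term is $P_\mu$; equivalently, $[V(\mu)] = [P(\mu)]$ and in fact $V(\mu) \cong P(\mu)$ in $\Rep{\KImn}$. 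Using the identifications of Theorem~\ref{cat1} (the basis vector $V_\la$ with $[V(\la)]$, $L_\la$ with $[L(\la)]$, $P_\la$ with $[P(\la)]$, and the operators $E_i, F_i$ with the functors of the same name), it therefore suffices to show that $F_{i_k}\cdots F_{i_1} P(\mu)$ is a direct sum of degree-shifted copies of $P(\la)$ whose class in $[\Rep{\KImn}]$ equals $(q+q^{-1})^r q^{r-s}\,[P(\la)]$.

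The main step is an induction along the path. Set $\mu_0 := \mu$ and $\mu_r := \tilde f_{i_r}(\mu_{r-1})$ for $1 \leq r \leq k$, so that $\mu_k = \la$, and let $a_r$ (resp.\ $b_r$) denote the number of edges of the form ${\scriptstyle\down\up} \rightarrow \circ{\scriptstyle\times}$ (resp.\ ${\scriptstyle\times}\circ \rightarrow {\scriptstyle\down\up}$) among the first $r$ edges $\mu_0 \stackrel{i_1}{\rightarrow} \cdots \stackrel{i_r}{\rightarrow} \mu_r$. I would show by induction on $r$ that $F_{i_r}\cdots F_{i_1} P(\mu)$ is a direct sum of degree-shifted copies of $P(\mu_r)$ with class $(q+q^{-1})^{a_r} q^{\,a_r - b_r}\,[P(\mu_r)]$. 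For the inductive step, the edge $\mu_{r-1} \stackrel{i_r}{\rightarrow} \mu_r$ realises exactly one of the six columns of table~\eqref{cg}, and these columns correspond, one-for-one and in order, to cases (i)--(vi) of Lemma~\ref{ga} (applied with $\la := \mu_{r-1}$ and with $i_r$ as the distinguished vertex): in cases (i)--(iv) one gets $F_{i_r} P(\mu_{r-1}) \cong P(\mu_r)$; in case (v), the edge ${\scriptstyle\times}\circ \rightarrow {\scriptstyle\down\up}$, one gets $F_{i_r} P(\mu_{r-1}) \cong P(\mu_r)\langle -1\rangle$, contributing a factor $q^{-1}$; and in case (vi), the edge ${\scriptstyle\down\up} \rightarrow \circ{\scriptstyle\times}$, one gets $F_{i_r} P(\mu_{r-1}) \cong P(\mu_r) \oplus P(\mu_r)\langle 2\rangle$, contributing a factor $1+q^2 = q(q+q^{-1})$. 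Since $F_{i_r}$ is exact, hence additive, and commutes with degree shifts, applying it to a direct sum of shifts of $P(\mu_{r-1})$ produces a direct sum of shifts of $P(\mu_r)$ and multiplies the Grothendieck class by the stated factor, which advances the induction.

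Taking $r = k$ gives $a_k = r$, $b_k = s$ and $\mu_k = \la$, so that $F_{i_k}\cdots F_{i_1} V_\mu = (q+q^{-1})^r q^{r-s}\,[P(\la)] = (q+q^{-1})^r q^{r-s}\,P_\la$, as required. I expect the only genuinely delicate point to be the bookkeeping of the degree shifts together with the factorisation $1+q^2 = q(q+q^{-1})$, which is what apportions the contribution of each ${\scriptstyle\down\up} \rightarrow \circ{\scriptstyle\times}$ edge between the $(q+q^{-1})$-power and the $q^{r-s}$-power; everything else is a direct appeal to Lemma~\ref{ga}, whose cases (i)--(vi) also guarantee that at each stage the module stays concentrated, up to degree shift, on the single weight $\mu_r$, so the induction closes up.
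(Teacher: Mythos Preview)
Your argument is correct and follows essentially the same route as the paper: identify $V_\mu$ with $P_\mu$ using maximality of $\mu$, then apply Lemma~\ref{ga}(i)--(vi) along the crystal path to compute $F_{i_k}\cdots F_{i_1}$ on projectives, and invoke Theorem~\ref{cat1} to pass between $[\Rep{\KImn}]$ and $\bigwedge^m V_\Laurent \otimes \bigwedge^n V_\Laurent$. The only cosmetic issue is that you reuse the letter $r$ both for your induction index and for the integer $r$ from the statement, which makes the line ``$a_k = r$, $b_k = s$'' momentarily confusing; choosing a different index letter would fix this.
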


\begin{proof}
By applying Lemma~\ref{ga}(vi) a total of $r$ times and
Lemma~\ref{ga}(v) a total of $s$ times, we get that
$[F_{i_k} \circ \cdots \circ F_{i_1} (P(\mu))]
=(q+q^{-1})^r q^{r-s} [P(\la)]$
in the Grothendieck group $[\Rep{\KImn}]$.
As $\mu$ is maximal in the Bruhat ordering
we have by \cite[Theorem 5.1]{BS1} that $P(\mu) \cong V(\mu)$.
Hence
\begin{align*}
F_{i_k} \cdots F_{i_1} V_\mu &=
[F_{i_k} \circ \cdots \circ F_{i_1} (V(\mu))]
=
[F_{i_k} \circ \cdots \circ F_{i_1} (P(\mu))]\\
&=(q+q^{-1})^r q^{r-s} [P(\la)] = (q+q^{-1})^r q^{r-s} P_\la,
\end{align*}
using Theorem~\ref{cat1}(i)--(ii).
\end{proof}

As in \cite[(6.7)]{BS1}, given a block $\Ga \in \PImn$,
we let $\Ga^\circ$ denote the
set of all weights $\ga \in \Ga$ that are of
maximal defect, i.e. the associated cup diagram $\underline{\ga}$
from \cite[$\S$2]{BS1}
has $\defect(\Ga)$ cups.
By a {\em prinjective module} we mean a module that is both projective
and injective.

\begin{Lemma}\label{princ1}
Up to shifts in degree,
the modules $\{P(\la)\:|\:\la \in \Ga^\circ\}$
give a complete set of representatives for the isomorphism classes of
prinjective indecomposable modules
in $\Rep{K_\Ga}$.
Moreover for any $\la \in \Ga^\circ$ the
module $P(\la)\langle -\defect(\Ga)\rangle$ is self-dual.
\end{Lemma}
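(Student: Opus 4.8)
The plan is to identify the prinjective indecomposables via the highest weight structure, then establish self-duality using the categorification machinery. First I would recall that for a graded highest weight category (here $\Rep{K_\Ga}$, by \cite[Theorem 5.3]{BS1}) the projective indecomposable $P(\la)$ is injective if and only if the corresponding standard module $V(\la)$ and costandard module (the dual $V(\la)^\circledast$) have $P(\la)$ as their common projective cover, equivalently if and only if $\la$ indexes a module that is simultaneously tilting; concretely, $P(\la)$ is prinjective precisely when $\la\in\Ga^\circ$, i.e.\ $\underline\ga$ has the maximal number $\defect(\Ga)$ of cups. One clean way to see this is via the functors $F_i, E_i$: start from the ground-state block and apply Theorem~\ref{cgt}. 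For a maximal weight $\mu$ we have $P(\mu)\cong V(\mu)$ projective, and since $\mu$ is also minimal in its own singleton-type situation (the top of the crystal), $V(\mu)=L(\mu)$ is self-dual and hence $P(\mu)$ is self-dual and prinjective up to the appropriate shift. Applying the special projective functors $F_i$ of Lemma~\ref{ga}(vi) — the only case producing a genuine direct sum $P(\la_{\circ\times})\oplus P(\la_{\circ\times})\langle 2\rangle$ from a $\down\up\to\circ\times$ move — carries prinjectives to prinjectives (projective functors being exact and having both adjoints, by (\ref{Adjpar}), they preserve both projectivity and injectivity). Tracking which $P(\la)$ arise this way, and using Lemma~\ref{cgl} to reach every weight, shows exactly the $P(\la)$ with $\la\in\Ga^\circ$ occur as summands, and a counting/linear-independence argument in the Grothendieck group (using that the $[P(\la)]=P_\la$ are linearly independent) shows these exhaust the prinjective indecomposables.

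For the self-duality claim: by Lemma~\ref{duality} each $F_i$ commutes with $\circledast$ up to canonical degree-zero isomorphism, so the self-duality of $P(\mu)\langle 0\rangle$ (for maximal $\mu$, noting $\defect$ is then $0$ if $\mu$ is the ground-state and more generally the shift is bookkept through the chain) propagates: applying $F_i$ to a self-dual module $M$ gives $F_iM$ with $(F_iM)^\circledast\cong F_i(M^\circledast)\cong F_iM$. The only subtlety is the degree shift: in the $\down\up\to\circ\times$ move of Lemma~\ref{ga}(vi), $F_i P(\la)\cong P(\la_{\circ\times})\oplus P(\la_{\circ\times})\langle 2\rangle$, and $\defect(\la_{\circ\times})=\defect(\la_{\down\up})-1$ (one fewer cap-able pair, cf.\ (\ref{defectdef})); the self-dual summand is the one shifted to sit symmetrically, i.e.\ $P(\la_{\circ\times})\langle -1\rangle$ relative to the $M$-normalisation, which matches the claim $P(\la)\langle-\defect(\Ga)\rangle$ self-dual once one inductively checks that $P(\mu)\langle-\defect(\mu)\rangle$ for maximal $\mu$ is self-dual (for the ground-state $\defect=\min(m,n)$ and one computes directly that $P(\iota)$ is self-dual after this shift, or one notes $\iota$ is both maximal and minimal so $V(\iota)=L(\iota)=P(\iota)$ up to the Koszul-grading shift). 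An alternative, perhaps cleaner, route to self-duality is to use Theorem~\ref{cat1}(iii)–(iv): $P(\la)$ prinjective means it is also the injective hull of $L(\la)$, so $P(\la)^\circledast$ is the projective cover of $L(\la)^\circledast=L(\la)$, hence $P(\la)^\circledast\cong P(\la)\langle j\rangle$ for a unique shift $j$; pairing against itself via (iii), $\langle [P(\la)],[P(\la)]\rangle$ is bar-invariant, and one reads off from the graded Cartan entry (computable from $p_{\la,\mu}$, cf.\ Remark~\ref{Exp}) that the Loewy length forces $j=-2\,\mathrm{(top\ degree)}/1$; combined with the fact that the top degree of $P(\la)$ for $\la\in\Ga^\circ$ equals $2\defect(\Ga)$ (the graded length of the Koszul projective over its socle) one gets $j$ so that $P(\la)\langle-\defect(\Ga)\rangle$ is symmetric, hence self-dual.

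I expect the main obstacle to be bookkeeping the degree shifts correctly — both proving that the top of $P(\la)$ (for $\la\in\Ga^\circ$) sits in degree $2\defect(\Ga)$ so that the midpoint shift is exactly $\defect(\Ga)$, and checking the base case that the ground-state projective is self-dual after this shift. The conceptual content (prinjectives $\leftrightarrow$ maximal-defect weights; projective functors preserve prinjectivity and commute with duality) is comparatively routine given Lemmas~\ref{ga}, \ref{duality} and the adjunctions, but the grading normalisation requires care. A convenient shortcut is to invoke the Koszulity and the double-centraliser / Ringel self-duality features of $K_\Ga$ already available from \cite{BS1, BS2}: self-duality of the maximal-defect projectives is essentially the statement that $K_\Ga$ is (graded) symmetric when restricted to its prinjective part, which one can cite or re-derive from the explicit basis $(a\la b)$ and the trace form on $K_\Ga$.
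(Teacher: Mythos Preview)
The paper's proof is a single citation to \cite[Theorem 6.1]{BS2}; you are attempting a genuinely different, self-contained argument using the crystal and the special projective functors. The overall strategy (projective functors preserve prinjectivity and commute with $\circledast$, so propagate from a trivial base case) is sound for one direction, but there are two real gaps.

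First, your base case is confused. Lemma~\ref{cgl} produces, for arbitrary $\la$, a Bruhat-\emph{maximal} weight $\mu$---not the ground-state $\iota$. Such a $\mu$ lies in some block with possibly positive defect, where it is \emph{not} minimal, so $V(\mu)\neq L(\mu)$ and $P(\mu)\cong V(\mu)$ is not self-dual (or prinjective) in general. What you actually need is Lemma~\ref{stupidcomb}: $\la\in\Ga^\circ$ if and only if there is a crystal path from $\iota$ to $\la$. Then the base case is $\iota$, whose block $\La$ has defect $0$ (not $\min(m,n)$, as you wrote---the ground-state has $\min(m,n)$ vertices labelled $\times$, so (\ref{defectdef}) gives zero), and $P(\iota)=L(\iota)$ is trivially prinjective and self-dual. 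From there your inductive degree-shift bookkeeping via Lemma~\ref{ga}(v)--(vi) does go through: in case (v) the defect increases by $1$ and the shift $\langle-1\rangle$ matches; in case (vi) the defect decreases by $1$ and self-duality of the direct sum $P\oplus P\langle 2\rangle$ forces $P\langle -(\defect-1)\rangle$ to be self-dual. Note that Lemma~\ref{stupidcomb} appears after Lemma~\ref{princ1} in the paper, so you would need to reorder (its proof is independent of \ref{princ1}).

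Second, your converse---that $P(\la)$ is \emph{not} prinjective for $\la\notin\Ga^\circ$---is not established. ``Counting/linear-independence in the Grothendieck group'' does not do this: linear independence of the $[P(\la)]$ says nothing about which are injective. You need an actual argument, e.g.\ that a prinjective $P(\la)$ satisfies $P(\la)^\circledast\cong P(\la)\langle j\rangle$, hence $L(\la)$ appears in $\soc P(\la)$, and then a combinatorial analysis (as in \cite[\S6]{BS1},\cite[\S6]{BS2}) of when $L(\la)$ lies in the socle of $P(\la)$; or run $E_i$'s backward along the crystal and show one lands at $\iota$. Either way this is precisely the content of the cited \cite[Theorem 6.1]{BS2}, which is why the paper simply invokes it.
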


\begin{proof}
This follows from \cite[Theorem 6.1]{BS2}.
\end{proof}

Our final lemma gives an alternative description of the set
\begin{equation}\label{circirc}
\LaImn^\circ := \bigcup_{\Ga \in \PImn} \Ga^\circ.
\end{equation}
in terms of the crystal graph: it is the connected component of the
crystal graph generated by the ground-state $\iota$ from (\ref{groundstate}).

\begin{Lemma}\label{stupidcomb}
For $\la \in \LaImn$, we have that
$\la \in \LaImn^\circ$ if and only if
there exists a sequence $i_1,\dots,i_d \in I$ such that
$\la = \tilde f_{i_d} \cdots \tilde f_{i_1} (\iota)$.
\end{Lemma}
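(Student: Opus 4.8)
The plan is to exploit a single numerical invariant of the crystal graph. For $\la\in\LaImn$ belonging to a block $\Ga$, let $c(\la)$ denote the number of cups in the cup diagram $\underline{\la}$ of \cite[$\S$2]{BS1}, and set $j(\la):=c(\la)+\#\{\text{vertices of }\la\text{ labelled }\times\}$. Since $\la$ and $\Ga$ carry their $\times$-vertices in exactly the same places, formula (\ref{defectdef}) reads $\defect(\Ga)=\min(m,n)-\#\{\times\text{-vertices of }\la\}$, so $\la\in\Ga^\circ$ --- equivalently, by (\ref{circirc}), $\la\in\LaImn^\circ$ --- if and only if $c(\la)=\defect(\Ga)$, that is, if and only if $j(\la)=\min(m,n)$. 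Thus the lemma is equivalent to the assertion that the connected component of the ground-state $\iota$ in the crystal graph is precisely $\{\la\in\LaImn:j(\la)=\min(m,n)\}$.

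First I would verify that $j$ is constant along every edge of the crystal graph, i.e. that $j(\tilde f_i(\mu))=j(\mu)$ whenever $\tilde f_i(\mu)$ is defined. This is a case-by-case inspection of the six rows of (\ref{cg}) using the cup-diagram algorithm of \cite[$\S$2]{BS1}. In the first four cases the move $\mu\to\la$ only slides a $\down$- or $\up$-vertex past an adjacent $\circ$, or slides a $\times$-vertex past an adjacent $\down$- or $\up$-vertex; in each of these the word in the letters $\down,\up$ obtained by deleting all $\circ$'s and $\times$'s is unchanged, so $\underline{\la}$ and $\underline{\mu}$ determine the same matching and hence $c(\la)=c(\mu)$, while the number of $\times$'s is also unchanged, so $j(\la)=j(\mu)$. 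In the fifth case a $\times\,\circ$ at vertices $i,i+1$ of $\mu$ is replaced by $\down\up$ in $\la$: the number of $\times$'s drops by one, and since the freshly created adjacent $\down\up$ pair accounts for exactly one (innermost) cup with the remaining matching unchanged from $\underline{\mu}$, one gets $c(\la)=c(\mu)+1$ and hence $j(\la)=j(\mu)$. The sixth case is the reverse of the fifth. Consequently $j$ is constant on each connected component of the crystal graph.

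Next I would pin down the sources of the crystal graph, meaning the weights $\nu\in\LaImn$ on which $\tilde e_i$ is undefined for every $i\in I$. Reading off the $\la$-row of (\ref{cg}), this happens exactly when no adjacent pair of vertices of $\nu$ is one of $\circ\,\down$, $\circ\,\up$, $\circ\,\times$, $\down\,\up$, $\down\,\times$, $\up\,\times$; ordering the symbols as $\circ<\down<\up<\times$, these six forbidden pairs are precisely the strictly increasing ones, so $\nu$ is a source if and only if its label sequence is weakly decreasing. Hence the sources are exactly the weights $\nu_t:=\times^{\,t}\,\up^{\,n-t}\,\down^{\,m-t}\,\circ\,\circ\cdots$ for $0\leq t\leq\min(m,n)$, where $t$ is the number of $\times$-vertices of $\nu_t$; each $\nu_t$ has no cups, so $j(\nu_t)=t$, and comparison with (\ref{groundstate}) shows that $\nu_{\min(m,n)}$ is the ground-state $\iota$.

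Finally, applying $\tilde e_i$ raises the $P$-weight of a weight by $\alpha_i$, while every weight occurring in $\bigwedge^mV\otimes\bigwedge^nV$ is of the form $\La-\alpha$ with $\alpha\in Q_+$; since the set of $\beta\in Q_+$ with $\beta\leq\alpha$ is finite for each fixed $\alpha$, iterating $\tilde e_i$'s starting from any $\la\in\LaImn$ terminates at some source $\nu_t$. By the previous two paragraphs $j(\la)=j(\nu_t)=t$, so $\la\in\LaImn^\circ \iff j(\la)=\min(m,n) \iff t=\min(m,n) \iff \nu_t=\iota$, and this last condition holds exactly when $\la$ is obtained from $\iota$ by a sequence of $\tilde f_i$'s, namely the terminating $\tilde e_i$-path read backwards (using that $\tilde e_i$ and $\tilde f_i$ are mutually inverse wherever both are defined). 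This proves both implications at once. The step I expect to cost the most effort is the invariance of $j$ in the fifth and sixth rows of (\ref{cg}): one must confirm carefully, from the definition of the cup-diagram algorithm in \cite{BS1}, that replacing a $\times\,\circ$ by a $\down\up$ contributes exactly one new cup and does not rearrange the remaining matching. Should that comparison be inconvenient, the implication ``$\Leftarrow$'' can alternatively be obtained from Lemmas~\ref{ga} and \ref{princ1}: the functors $F_i$ and $E_i$ are biadjoint projective functors, hence preserve prinjective modules, and $P(\iota)$ is prinjective (as $\iota\in\LaImn^\circ$), so applying these functors to $P(\iota)$ yields only direct sums of shifts of modules $P(\la)$ with $\la\in\LaImn^\circ$.
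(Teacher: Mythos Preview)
Your proof is correct and follows essentially the same strategy as the paper's: both arguments show that the property ``$\la$ has maximal defect in its block'' is preserved along crystal-graph edges (you encode this via the invariant $j(\la)=c(\la)+\#\{\times\}$, the paper simply says ``by inspecting (\ref{cg})''), then reduce to the sources/extremal weights, which are the $\times\cdots\times\up\cdots\up\down\cdots\down\circ\cdots\circ$ patterns, and finally observe that among those only $\iota$ has maximal defect. Your version is more explicit, in particular your termination argument via the $P$-weight and your identification of the forbidden $\la$-patterns in (\ref{cg}) with the strictly increasing pairs for the order $\circ<\down<\up<\times$ are nice touches, but the underlying logic is the same.
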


\begin{proof}
Suppose first that there exists $\mu \in \LaImn$
such that $\la = \tilde f_i(\mu)$ for some
$i \in I$.
By inspecting (\ref{cg}),
we have that $\la$ is of maximal defect in its block
if and only if $\mu$ is of maximal defect in its block.
Hence we are reduced to the case that
$\la$ is extremal in the crystal graph in the sense that it
cannot be written as $\tilde f_i(\mu)$
for any $\mu\in \LaImn$ or $i \in I$.
Then by (\ref{cg}) again
the weight $\la$ consists of $\times$'s then
$\up$'s then $\down$'s then $\circ$'s. For such a weight it is
clear that $\la$ is of maximal defect in its block if and only if
$\la = \iota$.
\end{proof}

\section{Categorification via parabolic category $\cO$}\label{sC2}

In this section, we give a self-contained account of
another known categorification theorem
this time categorifying
$\bigwedge^m V_\Z \otimes \bigwedge^n V_\Z$ (the same
space as in the previous section but specialised at $q=1$)
using
a certain sum of blocks of the
parabolic category $\cO$ corresponding to the Grassmannian
$\operatorname{Gr}(m,m+n)$. (To keep $q$ generic one could work with a graded version of category $\cO$ as in \cite{Sussan, MSslk}, but 
it is enough for the purposes of this paper to stick to the specialised version
on the category $\cO$ side,
making the grading explicit only on the diagrammatical side.)
The arguments in this section provide an elementary proof of
the Kazhdan-Lusztig conjecture in this very special case;
the possibility of doing this goes back to work of Enright and Shelton
\cite{ES} though they used a different strategy.

\phantomsubsection{\boldmath The category $\OImn$}
Let $\g := \mathfrak{gl}_{m+n}(\C)$\label{lieg}
with its standard Cartan subalgebra $\h$ of
diagonal matrices and its standard Borel subalgebra
$\b$ of upper triangular matrices.
We define the standard coordinates
$\eps_1,\dots,\eps_{m+n}$ for $\mathfrak{h}^*$, the weight $\rho$,
the subalgebras $\mathfrak{l}$ and $\mathfrak{p}$,
and the category $\cO(m,n)$ as in the introduction.
We refer the reader to \cite[Chapter 9]{Hbook} for
a detailed treatment of the basic properties of
parabolic category $\mathcal O$ (for any semisimple Lie algebra).

The category $\cO(m,n)$ is a highest weight category in the sense of \cite{CPS}\label{cato}
with irreducible modules $\{\cL(\la)\:|\:\la\in \La(m,n)\}$,
standard modules $\{\cV(\la)\:|\:\la \in \La(m,n)\}$
and projective indecomposable
modules $\{\cP(\la)\:|\:\la \in \La(m,n)\}$,
where $\La(m,n)$ is the subset of $\mathfrak{h}^*$
defined by (\ref{isd}).
The standard module $\cV(\la)$ can be constructed explicitly
as a
{\em parabolic Verma module}:
\begin{equation}\label{feet}
\cV(\la) = U(\g) \otimes_{U(\mathfrak{p})} \cS(\la)
\end{equation}
where $\cS(\la)$ is the finite dimensional irreducible $\mathfrak{l}$-module of highest weight $\la$, viewed as a $\mathfrak{p}$-module via the
natural projection $\mathfrak{p}\twoheadrightarrow \mathfrak{l}$.
The irreducible module $\cL(\la)$ is the unique irreducible quotient
of $\cV(\la)$, and the projective indecomposable module $\cP(\la)$
is its projective cover in $\cO(m,n)$.

Let $\circledast$ denote the standard duality on
$\cO(m,n)$, namely, $M^\circledast$ is the direct sum
of the duals of all the weight spaces of $M$, with
$x \in \mathfrak{g}$ acting on $f \in M^{\circledast}$
by $(xf)(v) := f(x^T v)$ (matrix transposition).
This duality fixes irreducible modules, i.e.
$\cL(\la)^\circledast \cong \cL(\la)$ for each $\la \in \La(m,n)$.

Two irreducible modules $\cL(\la)$ and $\cL(\mu)$
have the same central character if and only if
$\la+\rho$ and $\mu+\rho$ lie in the same orbit under the
natural action of the symmetric group $S_{m+n}$
permuting the $\eps_i$'s.
We denote this equivalence relation on $\La(m,n)$
by $\sim$, and let $P(m,n)$ denote the set
$\La(m,n) / \!\!\sim$ of equivalence classes.
The category $\cO(m,n)$ decomposes
according to generalised central characters as
\begin{equation}\label{allblocks}
\cO(m,n)= \bigoplus_{\Ga \in P(m,n)}
\cO_\Ga
\end{equation}
where $\cO_\Ga$ denotes the Serre subcategory of
$\cO(m,n)$ generated by the irreducible
objects $\cL(\la)$ with $\la \in \Ga$.
For $\Ga \in P(m,n)$, we
let \begin{equation*}
\pr_\Ga:\cO(m,n) \rightarrow \cO(m,n)
\end{equation*}
denote the projection onto the summand $\cO_\Ga$ along
(\ref{allblocks}).

\begin{Remark}\rm
In fact it is known by
a special case of \cite[Theorem 2]{cyclo} (see also (\cite[2.4.4 Corollary B and 2.9 Proposition B]{BoeNakano})
that each $\cO_\Ga$ is a single
block of $\cO(m,n)$, i.e. it is an indecomposable
category, though we will not need to use this.
\end{Remark}

Now recall the set $\LaImn$ of weights
defined in diagrammatic terms in $\S$\ref{sB}.
Using the weight dictionary from (\ref{dict}), we can identify
$\LaImn$ with the following subset of $\La(m,n)$:
\begin{equation}
\LaImn = \left\{\la \in \h^*\:\:\Bigg|\:
\begin{array}{l}
(\la+\rho,\eps_i) \in I^+\text{ for all }1 \leq i\leq m+n,\\
(\la+\rho,\eps_1) > \cdots > (\la+\rho,\eps_m),\\
(\la+\rho,\eps_{m+1}) > \cdots > (\la+\rho,\eps_{m+n})
\end{array}\right\}.
\end{equation}
For example, taking $I = \{1,\dots,8\}, m=5$ and $n=4$, the weight
\begin{equation*}
\la = 9\eps_1+ 7\eps_2+ 7\eps_3+ 7\eps_4+ 5\eps_5+ 12\eps_6+ 12\eps_7+
10\eps_8+9\eps_9\in \mathfrak{h}^*
\end{equation*}
is an element of $\LaImn$. The corresponding
sets $I_\down(\la)$ and $I_\up(\la)$ from (\ref{Id})--(\ref{Iu})
are $\{9,6,5,4,1\}$ and $\{7,6,3,1\}$, respectively. Hence
via the weight dictionary $\la$ is identified with the
weight displayed in (\ref{eg}).

Recall also that $\PImn$ denotes the $\sim$-equivalence classes
in $\LaImn$.
Under the identification just made,
$\PImn$ becomes a subset of the set $P(m,n)$ appearing
in (\ref{allblocks}).
So it makes sense to
consider the following sum of blocks in $\cO(m,n)$:
\begin{equation}\label{blocks}
\OImn := \bigoplus_{\Ga \in \PImn} \cO_\Ga.
\end{equation}
Equivalently, this is the
category of all $\mathfrak{g}$-modules that are
semisimple over $\mathfrak{h}$ and possess a composition series
with composition factors of the form
$\cL(\la)$ for $\la \in \LaImn$.
The irreducible, standard and projective indecomposable modules
in $\OImn$ are the
modules $\cL(\la), \cV(\la)$ and $\cP(\la)$
for $\la \in \LaImn$.
Their isomorphism classes
$\{[\cL(\la)]\}$, $\{[\cV(\la)]\}$ and $\{[\cP(\la)]\}$
give three natural bases for the Grothendieck
group $[\OImn]$.

The following lemma originates in work of Irving \cite{I}.

\begin{Lemma}\label{princ2}
Recalling (\ref{circirc}),
the modules $\{\cP(\la)\:|\:\la \in \LaImn^\circ\}$
give a complete set of representatives for the isomorphism classes of
prinjective indecomposable
modules in $\OImn$.
\end{Lemma}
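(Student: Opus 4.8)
The plan is to characterize prinjective indecomposable modules in $\OImn$ and show they are exactly the $\cP(\la)$ with $\la \in \LaImn^\circ$. Recall the classical fact, going back to Irving, that in a block of parabolic category $\cO$ the prinjective (= projective-injective = tilting-projective) indecomposable modules are precisely those $\cP(\la)$ whose highest weight $\la$ lies in the ``big cell'', equivalently $\cP(\la)$ has a standard filtration in which $\cV(w_0 \cdot \la_-)$ for the appropriate dominant-type weight appears; concretely, $\cP(\la)$ is prinjective if and only if $\la$ is maximal in its $W$-orbit intersected with $\La(m,n)$ under the relevant ordering. The translation to our diagrammatic language is that this ``big cell'' condition is exactly the condition that $\la$ has maximal defect in its block, i.e. $\la \in \Ga^\circ$ for $\Ga$ the block of $\la$. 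So the first step is to invoke this known description (citing \cite{I}, or deriving it from the structure of $\cO(m,n)$ as a highest weight category together with the fact that parabolic Verma modules induced from the big cell are projective).

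Next I would translate the big-cell condition into the crystal-graph condition. By Lemma~\ref{stupidcomb}, $\la \in \LaImn^\circ$ if and only if $\la = \tilde f_{i_d}\cdots \tilde f_{i_1}(\iota)$ for some sequence; and $\LaImn^\circ = \bigcup_{\Ga} \Ga^\circ$ by definition (\ref{circirc}). So the content to be verified is simply that ``$\cP(\la)$ prinjective'' coincides with ``$\la$ of maximal defect in its block''. On the $\cO$ side this is the statement that the projective-injective indecomposables in a block are indexed by the weights whose associated Schubert cell in $\operatorname{Gr}(m,m+n)$ has maximal dimension among those occurring in the block — which matches the combinatorial defect formula (\ref{defectdef}) counting $\times$'s. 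I would spell out this dictionary: the number of $\times$'s records how far $\la$ is from the big cell, and $\defect(\Ga)$ measures exactly the codimension, so maximal defect $\Leftrightarrow$ big cell $\Leftrightarrow$ prinjective.

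Alternatively — and this is probably the cleanest route given what is already set up — I would prove the lemma by exploiting the parallel with the diagram-algebra side. We already know (Lemma~\ref{princ1}) that on the $\KImn$ side the prinjective indecomposables are exactly the $P(\la)$ for $\la \in \LaImn^\circ$. If at this point in the paper one has available a comparison of $\OImn$ with $\Rep{\KImn}$ at the level of combinatorics (e.g.\ matching standard/costandard filtration multiplicities, via the $q=1$ specialisation of Theorem~\ref{cat1}), then prinjectivity of $\cP(\la)$ can be detected purely from decomposition numbers: $\cP(\la)$ is injective iff it has a costandard (dual Verma) filtration, iff $[\cP(\la):\cV(\mu)] = [\cV(\mu)^\circledast \text{-filtration mult}]$, which by BGG reciprocity reduces to a symmetry of the decomposition matrix $d_{\la,\mu}(1)$; and that symmetry holds precisely for $\la$ in the big cell. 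So the proof would: (1) recall BGG reciprocity in $\OImn$; (2) recall that $\cP(\la)$ is injective iff it is tilting iff it has a $\Delta$- and a $\nabla$-filtration; (3) use the explicit Kazhdan–Lusztig-type formulas for $d_{\la,\mu}$ (Remark~\ref{Exp}) to identify exactly when this holds, getting the condition $\la \in \LaImn^\circ$ via Lemma~\ref{stupidcomb}.

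The main obstacle is pinning down the precise equivalence ``prinjective $\Leftrightarrow$ maximal defect'' without circularity — the paper's overall goal is to prove $\OImn \cong \Rep{\KImn}$, so one must be careful to derive this lemma from genuinely independent input (Irving's big-cell theorem for $\cO$, or a direct homological argument using parabolic Verma modules induced from the big cell being projective, together with the combinatorial defect bookkeeping), rather than from the not-yet-established Morita equivalence. Given the citation to Irving \cite{I} in the statement, the intended proof is surely the short one: quote Irving's classification of projective-injectives in parabolic category $\cO$ and observe that his indexing set translates under the weight dictionary to $\LaImn^\circ$.
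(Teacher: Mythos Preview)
Your proposal is correct and takes essentially the same approach as the paper: quote a known external characterization of the prinjective indecomposables in parabolic category $\cO$ and observe that under the weight dictionary the indexing set becomes $\LaImn^\circ$. The only difference is the citation target --- you propose citing Irving \cite{I} directly and carrying out the big-cell/maximal-defect translation by hand, whereas the paper cites \cite[Theorem~4.8]{BKschur}, where the result is already packaged in a form adapted to the present Schur--Weyl duality setup (so the combinatorial translation is effectively absorbed into that reference). Your caution about circularity is well placed and correctly resolved: neither the diagram-algebra comparison (Lemma~\ref{princ1}) nor the decomposition-number formulas (Theorem~\ref{cat2}, Remark~\ref{Exp}) are available as inputs at this point, so the proof must rest on independent category-$\cO$ input, exactly as you conclude.
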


\begin{proof}
If $m \geq n$ then this follows by a special case of
\cite[Theorem 4.8]{BKschur}.
A similar
argument establishes the result if $m < n$ too.
\end{proof}

\phantomsubsection{Special projective functors}
Now we introduce the {special projective functors}
on $\OImn$ following \cite[$\S$4.4]{BKrep}
and \cite[$\S$7.4]{CR}. It is convenient
to work first
on all of $\cO(m,n)$, defining functors
$\cF_i$ and $\cE_i$ for all $i \in \Z$, before restricting attention
to $\OImn$.

Given $\Ga \in P(m,n)$ and $i \in \Z$, we say that $i$ is
{\em $\Ga$-admissible} if there exists $\la \in \Ga$
and $1 \leq j \leq m+n$ such that $\la+\eps_j\in\La(m,n)$
and $(\la+\rho,\eps_j) = i$.
In that case, we let $\Ga - \alpha_i \in P(m,n)$ denote the
$\sim$-equivalence class generated by the weight
$\la+\eps_j$, for any $\la$ and $j$ as in the previous sentence.
If $\Ga \in \PImn$ and $i \in I$ then these notions
agree with the ones introduced in diagrammatic terms in the preceeding
sections.

Let $\cV$ be the natural $\mathfrak{g}$-module of column vectors
and $\cV^*$ be its dual in the usual sense of Lie algebras.
The {\em special projective functors} on $\cO(m,n)$
are the endofunctors $\cF_i$ and $\cE_i$
defined for each $i \in \Z$ by
\begin{align}\label{donely}
\cF_i &:= \bigoplus_{\Ga}
\pr_{\Ga - \alpha_i} \circ (? \otimes \cV) \circ \pr_\Ga,\qquad
\cE_i := \bigoplus_{\Ga}
\pr_{\Ga} \circ (? \otimes \cV^*) \circ \pr_{\Ga-\alpha_i},
\end{align}
where the direct sums are over all $\Ga \in P(m,n)$ such that
$i$ is $\Ga$-admissible.
Because the functors $? \otimes \cV$ and $? \otimes \cV^*$
commute with the duality $\circledast$, so do the functors $\cF_i$ and $\cE_i$.
Moreover $\cF_i$ and $\cE_i$ are biadjoint, hence they are both exact and send
projectives to projectives.

\begin{Lemma}\label{tid}
For $\la \in \La(m,n)$,
$\cV(\la) \otimes \cV$ has a filtration
with sections isomorphic to
$\cV(\la+\eps_j)$ for all
$j=1,\dots,m+n$ such that
$\la+\eps_j \in \La(m,n)$, arranged in order from bottom
to top.
Dually,
$\cV(\la) \otimes \cV^*$ has a filtration
with sections isomorphic to
$\cV(\la-\eps_j)$ for all
$j=1,\dots,m+n$ such that
$\la-\eps_j \in \La(m,n)$, arranged in order from top
to bottom.
\end{Lemma}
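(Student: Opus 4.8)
The plan is to use the standard fact about tensor identities for parabolic Verma modules, proved by an explicit filtration of $\cV \otimes U(\g)$ as a $(U(\g),U(\mathfrak p))$-bimodule. First I would recall the classical tensor identity: for any $\mathfrak g$-module $N$ that is free over $U(\mathfrak p)$ in the relevant sense (in particular for an induced module), there is a natural isomorphism $\cV(\la) \otimes \cV \cong U(\g)\otimes_{U(\mathfrak p)}\bigl(\cS(\la)\otimes \cV\bigr)$, where $\cV$ on the right is restricted to $\mathfrak p$; this is the projection formula $\bigl(U(\g)\otimes_{U(\mathfrak p)} M\bigr)\otimes \cV \cong U(\g)\otimes_{U(\mathfrak p)}(M \otimes \res^{\mathfrak g}_{\mathfrak p}\cV)$. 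So the problem reduces to filtering the finite-dimensional $\mathfrak p$-module $\cS(\la)\otimes\cV$.

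Next I would filter $\res^{\mathfrak g}_{\mathfrak p}\cV$. Since $\cV$ is the natural module with weights $\eps_1,\dots,\eps_{m+n}$, and $\mathfrak p = \mathfrak l + \mathfrak b$ is the parabolic with Levi $\mathfrak{gl}_m\oplus\mathfrak{gl}_n$, the module $\cV$ restricted to $\mathfrak p$ has a two-step filtration $0 \subseteq \cV' \subseteq \cV$ where $\cV'$ is the span of the first $m$ coordinate vectors (a submodule because $\mathfrak p$ is block-upper-triangular) and $\cV/\cV'$ is the span of the last $n$; as $\mathfrak l$-modules these are the two natural modules for the two factors, each inflated to $\mathfrak p$. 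Tensoring the $\mathfrak p$-filtration $\cV' \subseteq \cV$ with $\cS(\la)$ gives a two-step $\mathfrak p$-filtration of $\cS(\la)\otimes\cV$ with sections $\cS(\la)\otimes\cV'$ and $\cS(\la)\otimes(\cV/\cV')$. Each of these is a finite-dimensional $\mathfrak p$-module on which the nilradical acts in a way compatible with the projection $\mathfrak p \twoheadrightarrow \mathfrak l$ only up to the filtration; I would further filter each of $\cS(\la)\otimes\cV'$ and $\cS(\la)\otimes(\cV/\cV')$ as $\mathfrak l$-modules (these are genuinely semisimple as $\mathfrak l$-modules since $\mathfrak l$ is reductive and everything is finite-dimensional) via the Pieri rule / classical $\mathfrak{gl}$ branching: $\cS(\la)\otimes\cV'$ decomposes into a direct sum of $\cS(\la+\eps_j)$ for $1\le j\le m$ with $\la+\eps_j\in\La(m,n)$, and $\cS(\la)\otimes(\cV/\cV')$ decomposes into $\cS(\la+\eps_j)$ for $m+1\le j\le m+n$ with $\la+\eps_j\in\La(m,n)$. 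Applying the exact functor $U(\g)\otimes_{U(\mathfrak p)}?$ to this filtration, using that it sends a short exact sequence of $\mathfrak p$-modules to a short exact sequence of parabolic Verma-type modules and using (\ref{feet}), produces the claimed filtration of $\cV(\la)\otimes\cV$ with sections $\cV(\la+\eps_j)$.

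The only genuinely delicate point is the \emph{ordering} of the sections. The submodule $\cV'\subseteq\cV$ contributes the sections indexed by the first Levi block, so after applying induction these sit at the \emph{bottom} of the filtration; the quotient $\cV/\cV'$ contributes the sections indexed by the second Levi block at the top. Within each block the sections come from a semisimple $\mathfrak l$-module so can be ordered arbitrarily; to get the total order "from bottom to top" in increasing $j$ one just refines compatibly. (The claim in the lemma, read correctly, only really pins down that the $j\le m$ sections lie below the $j>m$ sections, which is exactly what the two-step restriction filtration of $\cV$ forces.) For $\cV^*$ the argument is identical with $\cV$ replaced by $\cV^*$, whose restriction to $\mathfrak p$ has the opposite filtration — the span of the last $n$ dual coordinates is the submodule — so the sections $\cV(\la-\eps_j)$ with $j>m$ appear at the bottom and those with $j\le m$ at the top, i.e. "from top to bottom" in the order stated. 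I expect the main obstacle to be bookkeeping the direction of the filtration on $\res\cV$ versus $\res\cV^*$ carefully enough to match the stated "bottom to top" / "top to bottom" conventions, together with checking that $\la+\eps_j\in\La(m,n)$ is precisely the condition for the corresponding $\mathfrak l$-constituent $\cS(\la+\eps_j)$ to be nonzero and dominant — but both of these are routine once the tensor identity and the two-step restriction filtration are in place.
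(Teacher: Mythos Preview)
Your proposal is correct and is exactly the approach the paper has in mind: the paper's own proof is the single sentence ``This is a standard consequence of the definition (\ref{feet}) and the tensor identity; see e.g.\ \cite[Theorem 3.6]{Hbook},'' and what you have written is precisely the unpacking of that reference via the projection formula and the Pieri rule for $\mathfrak l$. Your discussion of the ordering (and the caveat that within each Levi block the sections can be arranged freely since the $\mathfrak l$-module is semisimple) is also accurate.
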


\begin{proof}
This is a standard consequence
of the definition (\ref{feet}) and the tensor identity;
see e.g. \cite[Theorem 3.6]{Hbook}.
\end{proof}

\begin{Corollary}\label{tid2}
For $\la \in \La(m,n)$ and $i\in\Z$,
$\cF_i \cV(\la)$ has a filtration
with sections isomorphic to
$\cV(\la+\eps_j)$ for all
$j=1,\dots,m+n$ such that
$\la+\eps_j \in \La(m,n)$
and $(\la+\rho,\eps_j) = i$,
arranged in order from bottom
to top.
Dually,
$\cV(\la) \otimes \cV^*$ has a filtration
with sections isomorphic to
$\cV(\la-\eps_j)$ for all
$j=1,\dots,m+n$ such that
$\la-\eps_j \in \La(m,n)$ and $(\la+\rho,\eps_j)= i+1$,
arranged in order from top
to bottom.
\end{Corollary}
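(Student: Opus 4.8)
The plan is to derive both filtrations by applying the appropriate block projection, which is exact, to the filtrations already constructed in Lemma~\ref{tid}. Fix $\la\in\La(m,n)$, let $\Ga\in P(m,n)$ be the block containing $\la$, and let $i\in\Z$. Unravelling the definition (\ref{donely}) and using that $\pr_{\Ga'}\cV(\la)$ equals $\cV(\la)$ if $\Ga'=\Ga$ and $0$ otherwise, one gets $\cF_i\cV(\la)=\pr_{\Ga-\alpha_i}\bigl(\cV(\la)\otimes\cV\bigr)$ when $i$ is $\Ga$-admissible and $\cF_i\cV(\la)=0$ otherwise. Since $\pr_{\Ga-\alpha_i}$ is projection onto a summand in the block decomposition (\ref{allblocks}) it is exact, so it takes the filtration of $\cV(\la)\otimes\cV$ from Lemma~\ref{tid} to a filtration of $\cF_i\cV(\la)$ whose sections, read from bottom to top, are the modules $\pr_{\Ga-\alpha_i}\cV(\la+\eps_j)$ over all $j$ with $\la+\eps_j\in\La(m,n)$; each of these is either $\cV(\la+\eps_j)$ or $0$, according as $\la+\eps_j$ lies in the block $\Ga-\alpha_i$ or not, and the zero terms are simply dropped.

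It therefore remains to determine which of the weights $\la+\eps_j\in\La(m,n)$ lie in $\Ga-\alpha_i$. Two weights of $\La(m,n)$ lie in a common block exactly when their $\rho$-shifts have the same multiset of $\eps$-coordinates, and the multiset of $\la+\eps_j+\rho$ is obtained from that of $\la+\rho$ by replacing one copy of $(\la+\rho,\eps_j)$ by $(\la+\rho,\eps_j)+1$; an elementary check then shows that $\la+\eps_j$ and $\la+\eps_k$ lie in the same block if and only if $(\la+\rho,\eps_j)=(\la+\rho,\eps_k)$. Since $\Ga-\alpha_i$ is by definition the block of $\la+\eps_k$ for any $k$ with $\la+\eps_k\in\La(m,n)$ and $(\la+\rho,\eps_k)=i$, it follows that $\la+\eps_j\in\Ga-\alpha_i$ precisely when $(\la+\rho,\eps_j)=i$, which is exactly the asserted description of $\cF_i\cV(\la)$.

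The dual statement, which should be understood as the assertion about $\cE_i\cV(\la)$ rather than about $\cV(\la)\otimes\cV^*$ itself, is proved by the same pattern. One has $\cE_i\cV(\la)=\pr_{\De}\bigl(\cV(\la)\otimes\cV^*\bigr)$, where $\De$ is the unique block with $\De-\alpha_i=\Ga$ if such a $\De$ exists and $\cE_i\cV(\la)=0$ otherwise; here uniqueness uses that the operation $\De'\mapsto\De'-\alpha_i$ is injective, as is visible from the local pictures in (\ref{CKLR}). Applying the exact functor $\pr_\De$ to the filtration of $\cV(\la)\otimes\cV^*$ from Lemma~\ref{tid}, and invoking the analogous multiset computation (so that the sections $\cV(\la-\eps_j)$ are sorted into blocks according to the value $(\la+\rho,\eps_j)$), one checks that $\cV(\la-\eps_j)$ lies in $\De$ precisely when $(\la+\rho,\eps_j)=i+1$: for such $j$ the equality $((\la-\eps_j)+\rho,\eps_j)=i$ exhibits $\la=(\la-\eps_j)+\eps_j$ as lying in $\De_j-\alpha_i$, where $\De_j$ is the block of $\la-\eps_j$, so $\De_j-\alpha_i=\Ga$ and hence $\De_j=\De$ by injectivity; while the multiset criterion shows no section with a different value of $(\la+\rho,\eps_j)$ can share the central character of $\De$. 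This yields the filtration with sections $\cV(\la-\eps_j)$ for $\la-\eps_j\in\La(m,n)$ and $(\la+\rho,\eps_j)=i+1$, arranged from top to bottom. The one point requiring attention throughout is this central-character bookkeeping — matching each section against its target block — but it is completely controlled by the elementary multiset observation, so I do not expect a genuine obstacle.
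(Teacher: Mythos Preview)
Your proof is correct and is exactly the argument the paper has in mind: the corollary is stated without proof because it follows immediately from Lemma~\ref{tid} by applying the exact block projection and checking which sections survive via central characters. Your multiset bookkeeping is the right way to make this precise, and you correctly diagnose that the ``dual'' statement is really about $\cE_i\cV(\la)$ rather than the full tensor product $\cV(\la)\otimes\cV^*$; the one cosmetic point is that your citation of the pictures (\ref{CKLR}) for injectivity of $\De\mapsto\De-\alpha_i$ points to the diagram-algebra side, whereas here the injectivity is already clear from the multiset description of blocks you gave.
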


\begin{Corollary}\label{all}
The functors $? \otimes \cV$ and $? \otimes \cV^*$
on $\mathcal O(m,n)$ decompose as
$$
? \otimes \cV = \bigoplus_{i \in \Z} \cF_i,
\qquad
? \otimes \cV^* = \bigoplus_{i \in \Z} \cE_i.
$$
\end{Corollary}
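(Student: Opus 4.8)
The plan is to deduce both decompositions from the block decomposition (\ref{allblocks}), the exactness of $?\otimes\cV$ and $?\otimes\cV^*$, and the standard-module computation of Lemma~\ref{tid}. I will spell out the argument for $?\otimes\cV$; the case of $?\otimes\cV^*$ is entirely parallel, using the dual halves of Lemma~\ref{tid} and Corollary~\ref{tid2} (or, alternatively, applying the duality $\circledast$).

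First I would note that $?\otimes\cV$ maps $\cO(m,n)$ to itself, since the weights of $\cV$ lie in $\Z\eps_1\oplus\cdots\oplus\Z\eps_{m+n}$, and that it is exact, being the tensor product over $\C$ with a finite dimensional space. The crucial step is then the following claim: for $\Ga\in P(m,n)$ and any $M\in\cO_\Ga$, the module $M\otimes\cV$ lies in $\bigoplus_i\cO_{\Ga-\alpha_i}$, the sum taken over all $\Ga$-admissible $i\in\Z$. Since every object of $\cO_\Ga$ is a quotient of a finite direct sum of projectives $\cP(\la)$ with $\la\in\Ga$, and each $\cO_\De$ is a Serre subcategory (hence closed under quotients, extensions and finite direct sums), it is enough to prove the claim when $M=\cP(\la)$; such a module has a $\Delta$-filtration whose sections $\cV(\mu)$ all have $\mu\in\Ga$, so by exactness it suffices to treat $M=\cV(\la)$ with $\la\in\Ga$. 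For this, Lemma~\ref{tid} exhibits a filtration of $\cV(\la)\otimes\cV$ with sections $\cV(\la+\eps_j)$ ranging over the $j$ with $\la+\eps_j\in\La(m,n)$; for each such $j$ the integer $i:=(\la+\rho,\eps_j)$ is by definition $\Ga$-admissible, and $\cV(\la+\eps_j)\in\cO_{\Ga-\alpha_i}$ by the very definition of the block $\Ga-\alpha_i$.

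Granting the claim, I would next check that for fixed $\Ga$ the assignment $i\mapsto\Ga-\alpha_i$ is injective on the set of $\Ga$-admissible $i$, so that the blocks occurring in $M\otimes\cV$ are indexed, without repetition, by such $i$. This follows by comparing central characters: for $\la\in\Ga$ with $\la+\eps_j\in\La(m,n)$ and $(\la+\rho,\eps_j)=i$, the multiset $\{(\la+\eps_j+\rho,\eps_k)\mid 1\le k\le m+n\}$ is obtained from the multiset $\{(\la+\rho,\eps_k)\mid 1\le k\le m+n\}$ by deleting one entry equal to $i$ and inserting one entry equal to $i+1$; the former multiset is an invariant of the block $\Ga-\alpha_i$ while the latter depends only on $\Ga$, so the pair $(\Ga,\Ga-\alpha_i)$ determines the formal difference $-[i]+[i+1]$, hence determines $i$.

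Finally, assembling: from (\ref{allblocks}) we have $?\otimes\cV=\bigoplus_\Ga(?\otimes\cV)\circ\pr_\Ga$, and each summand decomposes further as $\bigoplus_{\De\in P(m,n)}\pr_\De\circ(?\otimes\cV)\circ\pr_\Ga$; the claim kills every term with $\De$ not of the form $\Ga-\alpha_i$, and the injectivity just established lets us rewrite the surviving sum as $\bigoplus_{i\ \Ga\text{-admissible}}\pr_{\Ga-\alpha_i}\circ(?\otimes\cV)\circ\pr_\Ga$. Interchanging the order of summation over $\Ga$ and $i$ then produces exactly $\bigoplus_{i\in\Z}\cF_i$ with $\cF_i$ as in (\ref{donely}). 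The only genuinely non-formal input is Lemma~\ref{tid}; the step requiring care is the purely combinatorial bookkeeping of blocks — especially the injectivity of $i\mapsto\Ga-\alpha_i$ — since that is what prevents the various $\cF_i$ from overlapping and thus guarantees that their direct sum is all of $?\otimes\cV$.
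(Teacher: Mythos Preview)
Your argument is correct and is essentially the approach the paper has in mind: the corollary is stated without proof immediately after Lemma~\ref{tid} and Corollary~\ref{tid2}, and you have simply made explicit the standard reduction to parabolic Verma modules via the block decomposition and exactness, together with the routine check that $i\mapsto\Ga-\alpha_i$ is injective. There is nothing to add.
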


Much later on we will also need the following lemma
first observed in \cite[$\S$7.4]{CR}
which gives
an alternative description of the functors $\cF_i$
and $\cE_i$.
Let
\begin{equation}\label{traceform}
\Omega := \sum_{j,k=1}^{m+n} e_{j,k} \otimes e_{k,j} \in \mathfrak{g}
\otimes \mathfrak{g}.
\end{equation}
This
corresponds to the (invariant) trace form on $\mathfrak{g}$.

\begin{Lemma}\label{crd}
For any $M \in \mathcal O(m,n)$,
$\cF_i M$ (resp.\ $\cE_i M$)
is the generalised $i$-eigenspace
(resp. the generalised $-(m+n+i)$-eigenspace)
of the operator $\Omega$
acting on $M \otimes \cV$ (resp.\ $M \otimes \cV^*$).
\end{Lemma}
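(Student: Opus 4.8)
The plan is to exploit the fact that the Casimir-type operator $\Omega$ acts on tensor products via the comultiplication, so that its action on $M \otimes \cV$ is controlled by the central element $C := \sum_{j,k} e_{j,k} e_{k,j}$ of $U(\g)$ together with the separate actions of this element on the three pieces $M$, $\cV$, and $M \otimes \cV$. Concretely, if $C_M$ denotes the action of $C$ on $M$, then on $M \otimes \cV$ one has the identity $C_{M \otimes \cV} = C_M \otimes \id + \id \otimes C_\cV + 2\,\Omega$, which rearranges to $\Omega = \tfrac12\bigl(C_{M\otimes\cV} - C_M\otimes\id - C_\cV\bigr)$. Since $C$ acts as a scalar on the natural module $\cV$ (namely by $m+n$, coming from the highest weight $\eps_1$), the operator $\Omega$ on $M\otimes\cV$ differs from $\tfrac12\bigl(C_{M\otimes\cV} - C_M\otimes\id\bigr)$ only by a constant shift. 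First I would record this computation and pin down the scalar by which $C$ acts on $\cV$ and, more importantly, by which it acts on the standard module $\cV(\la)$: a routine highest-weight calculation gives that $C$ acts on $\cV(\la)$ by $(\la, \la+2\rho) + (\text{trace-part correction})$, and the key numerical point is that this value, as $j$ ranges over indices with $\la+\eps_j \in \La(m,n)$, separates the summands according to the value of $(\la+\rho,\eps_j)$.

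Next I would reduce to standard modules. By Corollary~\ref{tid2}, $\cF_i M$ for $M = \cV(\la)$ has a filtration whose sections are exactly the $\cV(\la+\eps_j)$ with $(\la+\rho,\eps_j) = i$, and the complementary summands $\cF_{i'}\cV(\la)$ ($i' \neq i$) account for the remaining sections of $\cV(\la)\otimes\cV$. On each section $\cV(\la+\eps_j)$ the central element $C$ acts by a scalar, and the computation above shows this scalar depends on $j$ only through $(\la+\rho,\eps_j)$; comparing with the scalar by which $C$ acts on $\cV(\la)$ itself, one checks that $\Omega$ acts on the associated graded of $\cV(\la)\otimes\cV$ by the scalar $(\la+\rho,\eps_j) - 1 + c$ for a universal constant $c$, which we normalise (by fixing conventions for $\rho$ and the trace form) so that this scalar is precisely $i$ when $(\la+\rho,\eps_j) = i$. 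Hence $\Omega$ acts on $\cV(\la)\otimes\cV$ with generalised eigenvalues exactly the integers $i$ such that some $\cV(\la+\eps_j)$ occurs, and the generalised $i$-eigenspace, being a subquotient-compatible direct summand, coincides with $\cF_i\cV(\la)$. This settles the claim on standard modules.

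Finally I would bootstrap to arbitrary $M \in \cO(m,n)$. Every such $M$ has a finite filtration — or, more efficiently, every $M$ is a quotient of a module with a standard (parabolic Verma) filtration, and both $?\otimes\cV$ and the generalised eigenspace projections for $\Omega$ are exact additive functors — so the statement propagates from standard modules to all of $\cO(m,n)$ by a standard dévissage: the generalised $i$-eigenspace of $\Omega$ is an exact endofunctor of $\cO(m,n)\otimes\cV$ agreeing with $\cF_i$ on parabolic Vermas, and $\cF_i$ is characterised (via Corollary~\ref{all}) as the unique summand of $?\otimes\cV$ with that behaviour on standards. The dual statement for $\cE_i$ and $\cV^*$ is identical, with the shift $-(m+n+i)$ arising because $C$ acts on $\cV^*$ by the same scalar $m+n$ while the relevant index bookkeeping is now $(\la+\rho,\eps_j) = i+1$, producing the eigenvalue $-(m+n) - i$ after the same rearrangement. \emph{The main obstacle} is purely bookkeeping: getting the additive constant exactly right so that the $\Omega$-eigenvalue is literally $i$ (and literally $-(m+n+i)$ in the dual case), which requires being careful about the non-semisimple "trace part" of $\mathfrak{gl}_{m+n}$ — i.e., that $\Omega$ as written includes the contribution of the central $\sum_j e_{j,j}\otimes\sum_k e_{k,k}$ term — and about the precise form of $\rho$ fixed in \eqref{rhodef}; once the conventions are matched this is a one-line check, but it is the only place where anything could go wrong.
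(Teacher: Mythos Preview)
Your proposal is correct and follows essentially the same route as the paper: express $\Omega$ on $M\otimes\cV$ via the action of a central element, verify on parabolic Vermas $\cV(\la)$ using the filtration of Corollary~\ref{tid2} to see that $\Omega$ acts on the section $\cV(\la+\eps_j)$ by the scalar $(\la+\rho,\eps_j)$, and then propagate to arbitrary $M$ by exactness. The only cosmetic difference is that you use the quadratic Casimir $C=\sum_{j,k}e_{j,k}e_{k,j}$ together with the coproduct identity $\Delta(C)=C\otimes 1+1\otimes C+2\Omega$, whereas the paper uses the central element $z_2$ (acting on highest weight modules via the second elementary symmetric polynomial in the $(\la+\rho,\eps_i)$) and cites \cite[Lemma~5.1]{cyclo} for the analogous relation; both computations collapse to the same eigenvalue.
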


\begin{proof}
We just prove the statement about $\cF_i$, a similar argument
treating $\cE_i$.
By classical theory, the center of
$U(\mathfrak{g})$ is a free polynomial algebra with
generators $z_1,\dots,z_{m+n}$, where $z_r$ is the central element
determined uniquely by the property
that it acts on all highest weight modules of highest weight
$\la \in \mathfrak{h}^*$ by multiplication by the scalar
$$
e_r(\la)
:= \sum_{1 \leq i_1 < \cdots < i_r \leq m+n}
(\la+\rho,\eps_{i_1})(\la+\rho,\eps_{i_2}) \cdots (\la+\rho,\eps_{i_r}).
$$
Now fix $\la \in \La(m,n)$. If we can check that the statement of the
lemma holds in the special case that $M = \cV(\la)$, then it follows
at once that it is true on every irreducible module in $\cO(m,n)$,
hence it is true on any module.
By \cite[Lemma 5.1]{cyclo}, $\Omega$ acts on $\cV(\la) \otimes \cV$
in the same way as the central element
\begin{equation*}
e_1(\la)+e_2(\la) - z_2.
\end{equation*}
Hence, fixing a filtration of $\cV(\la) \otimes \cV$
as in Lemma~\ref{tid}, $\Omega$ respects the filtration
and the induced action on the section isomorphic
to $\cV(\la+\eps_j)$ is by multiplication by the scalar
\begin{equation*}
e_1(\la)+e_2(\la) - e_2(\la+\eps_j) = (\la+\rho,\eps_j).
\end{equation*}
Comparing with Corollary~\ref{tid2},
we deduce that $\cF_i \cV(\la)$ is the generalised $i$-eigenspace of
$\Omega$, as required.
\end{proof}

Following \cite{BG}, a
{\em projective functor} on $\cO(m,n)$ means
any endofunctor that is isomorphic to a summand
of a functor arising from tensoring with a finite dimensional
rational $\mathfrak{g}$-module.

\begin{Lemma}\label{gen2}
Any indecomposable projective functor on $\cO(m,n)$
is a summand of a composition of finitely many special
projective functors.
\end{Lemma}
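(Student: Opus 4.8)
The plan is to reduce the statement to the fundamental decomposition theorem for projective functors due to Bernstein and Gelfand \cite{BG}, which says that any projective functor on $\cO(m,n)$ is (up to direct summands) a composition of the elementary functors $?\otimes\cV$ and $?\otimes\cV^*$ together with projections onto blocks. First I would recall that an arbitrary finite dimensional rational $\g$-module $W$ has a filtration whose sections are among $\cV^{\otimes a}\otimes(\cV^*)^{\otimes b}$ (indeed, $\cV$ and $\cV^*$ generate the category of finite dimensional $\g$-modules under $\otimes$, $\oplus$ and taking summands), so any projective functor is a summand of a composition of copies of $?\otimes\cV$ and $?\otimes\cV^*$ interspersed with the identity. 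Hence it suffices to show that $?\otimes\cV$ and $?\otimes\cV^*$ are themselves compositions (in fact finite direct sums) of special projective functors. But that is exactly the content of Corollary~\ref{all}: $?\otimes\cV=\bigoplus_{i\in\Z}\cF_i$ and $?\otimes\cV^*=\bigoplus_{i\in\Z}\cE_i$, where the sums are finite because $\cO(m,n)$ is a sum of finitely many blocks and only finitely many $i$ are $\Ga$-admissible for $\Ga$ ranging over $P(m,n)$.

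Carrying this out: given an indecomposable projective functor $\cG$, by definition it is a summand of $?\otimes W$ for some finite dimensional rational $W$. Filtering $W$ by submodules with sections of the form $\cV^{\otimes a_k}\otimes(\cV^*)^{\otimes b_k}$ and using that tensoring is exact and additive, $?\otimes W$ is a summand of $\bigoplus_k (?\otimes\cV^{\otimes a_k}\otimes(\cV^*)^{\otimes b_k})$; each summand here is an iterated composition of $?\otimes\cV$'s and $?\otimes\cV^*$'s. Substituting the decompositions from Corollary~\ref{all} and expanding the composition distributively over the finite direct sums, we see that $?\otimes W$ — and hence $\cG$ as a summand of it — is a summand of a finite direct sum of compositions of the $\cF_i$'s and $\cE_i$'s. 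By the Krull--Schmidt property of the category of endofunctors (valid here since $\cO(m,n)$ has finitely many blocks, each equivalent to modules over a finite dimensional algebra), the indecomposable $\cG$ must then be a summand of one of these individual compositions of special projective functors, which is what we wanted.

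The only mildly delicate point is the finiteness needed for Krull--Schmidt and for the direct sum decompositions; this is guaranteed because each block $\cO_\Ga$ is $\operatorname{Mod}$ over a finite dimensional algebra and only finitely many blocks are reached by tensoring a fixed $W$ with modules in $\OImn$ — equivalently only finitely many $i$ are $\Ga$-admissible for the relevant $\Ga$. I do not expect any real obstacle here: the lemma is a formal consequence of Corollary~\ref{all} together with the classical generation statement for finite dimensional $\g$-modules, so the argument above should essentially be the whole proof.
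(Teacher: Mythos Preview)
Your approach is essentially the same as the paper's: reduce to tensor powers of $\cV$ and $\cV^*$ via the fact that these generate all finite dimensional rational $\mathfrak{g}$-modules under $\otimes$, $\oplus$ and summands, then apply Corollary~\ref{all}. The paper's proof is just two sentences and omits the Krull--Schmidt bookkeeping you spell out.

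One small correction: $\cO(m,n)$ has \emph{infinitely} many blocks (the set $P(m,n)$ is infinite), so your finiteness justification is misstated. The correct point is that an indecomposable projective functor maps a single block $\cO_\Ga$ to a single block $\cO_\De$, and for fixed $\Ga$ only finitely many $i$ are $\Ga$-admissible; hence restricted to $\cO_\Ga$ the composition $(?\otimes\cV)^{\circ a}\circ(?\otimes\cV^*)^{\circ b}$ decomposes as a \emph{finite} direct sum of compositions of special projective functors, and Krull--Schmidt applies there.
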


\begin{proof}
Observe that any irreducible rational $\mathfrak{g}$-module
is a summand of a tensor product of finitely many copies of
$\cV$ and $\cV^*$.
Given this, the lemma follows from Corollary~\ref{all}.
\end{proof}

For $i \in I$, the functors $\cF_i$ and $\cE_i$
restrict to well-defined endofunctors of the subcategory $\OImn$;
the resulting
restrictions are given explicitly by the same formulae as (\ref{donely}),
but summing now only over $\Ga \in \PImn$ such that $i$ is $\Ga$-admissible.
We call these the {\em special projective functors} on $\OImn$.

\phantomsubsection{Properties of special projective functors}
The goal in the remainder of the section
is to prove an analogue of Theorem~\ref{cat1} in the
present setting. Our approach is entirely elementary, based just
on Corollary~\ref{tid2} and the following technical result; we include a
simple computational proof
in order to make the exposition self-contained.

\begin{Lemma}\label{jsf}
Let $i \in I$ and $\la
\in \LaImn$ be a weight such that the
$i$th and $(i+1)$th vertices of $\la$ are labelled
$\up$ and $\down$, respectively.
Let $\mu$ be the weight obtained from $\la$
by interchanging the labels on these two vertices.
Then $\cL(\mu)$
is
a composition factor of
$\cV(\la)$.
\end{Lemma}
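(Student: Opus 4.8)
The plan is to rephrase the statement in Lie-theoretic terms and then to locate the first degeneracy of the contravariant form on $\cV(\la)$. Because the $i$-th vertex of $\la$ is labelled $\up$ and the $(i+1)$-th is labelled $\down$, there are unique indices $1\le j\le m<k\le m+n$ with $(\la+\rho,\eps_j)=i+1$ and $(\la+\rho,\eps_k)=i$, and moreover $i\notin I_\down(\la)$ and $i+1\notin I_\up(\la)$, so no coordinate of $\la+\rho$ in slots $1,\dots,m$ equals $i$ and none in slots $m+1,\dots,m+n$ equals $i+1$. Comparing labels gives $\mu+\rho=\la+\rho-\eps_j+\eps_k$, i.e.\ $\mu=\la-\alpha$ with $\alpha:=\eps_j-\eps_k\in\Phi^+\setminus\Phi^+_{\mathfrak{l}}$ (as $j\le m<k$) and $(\la+\rho,\alpha^\vee)=(i+1)-i=1$; thus $\mu=s_\alpha\cdot\la$.

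Next I would pass to the contravariant (Shapovalov) form $\langle\cdot,\cdot\rangle$ on $\cV(\la)$, whose radical is the maximal submodule $\rad\cV(\la)$ and which is nonzero on the highest weight line. Since $\mu\ne\la$, it is enough to see that $\cL(\mu)$ is a composition factor of $\rad\cV(\la)$, and for this it suffices to show that $\rad\cV(\la)$ is zero in every weight $\xi$ with $\mu<\xi\le\la$ but nonzero in weight $\mu$: as all weights of $\cV(\la)$ are $\le\la$, this makes $\mu$ a maximal weight of the submodule $\rad\cV(\la)$, so any nonzero $v\in(\rad\cV(\la))_\mu$ is killed by the nilradical of $\b$ and generates a highest weight submodule of highest weight $\mu$, whence $\cL(\mu)$ is a subquotient.

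For the vanishing/non-vanishing I would use the generalised-Verma analogue of Shapovalov's determinant formula, due to Jantzen (see e.g.\ \cite[Ch.~9]{Hbook}): up to a nonzero scalar, $\det\!\big(\langle\cdot,\cdot\rangle\,|\,\cV(\la)_{\la-\beta}\big)$ is a product, over pairs $(\ga,r)$ with $\ga\in\Phi^+\setminus\Phi^+_{\mathfrak{l}}$, $r\ge 1$ and $r\ga\le\beta$, of powers of the linear expression $(\la+\rho,\ga^\vee)-r$, the exponent at $(\ga,r)=(\alpha,1)$ being positive when $\beta=\alpha$. For $\beta\le\alpha$ the condition $r\ga\le\beta$ forces $r=1$ and $\ga\le\alpha$, so, writing $\ga=\eps_a-\eps_b$, that $j\le a\le m<b\le k$. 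The decisive observation — and the only place the hypothesis that $i,i+1$ are \emph{adjacent} with labels $\up,\down$ enters — is that for every such $\ga\ne\alpha$ one has $(\la+\rho,\eps_a)\le i-1$ (when $a>j$, since $i\notin I_\down(\la)$) or $(\la+\rho,\eps_b)\ge i+2$ (when $b<k$, since $i+1\notin I_\up(\la)$), whence $(\la+\rho,\ga^\vee)=(\la+\rho,\eps_a)-(\la+\rho,\eps_b)\le-2$ and the factor $(\la+\rho,\ga^\vee)-1$ is nonzero; for $\ga=\alpha$ the factor $(\la+\rho,\alpha^\vee)-1$ is zero. Hence $\det(\langle\cdot,\cdot\rangle\,|\,\cV(\la)_{\la-\beta})\ne0$ for $0<\beta<\alpha$ while it vanishes for $\beta=\alpha$, which is exactly what was needed. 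The main obstacle is this finite bookkeeping of which roots $\ga$ can produce a vanishing factor; the rest is formal.

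Alternatively one might try to produce a highest weight vector of weight $\mu$ in $\cV(\la)=U(\g)\otimes_{U(\mathfrak{p})}\cS(\la)$ by hand; but the obvious candidate $e_{k,j}\otimes v_\la$ is killed by the nilradical of $\b$ only when $j=m$, and in general needs correction terms $e_{k,a}\otimes(\text{lowering of }v_\la)$ for $j<a\le m$ whose coefficients are pinned down by $(\la+\rho,\alpha^\vee)=1$. This is more explicit but combinatorially heavier, so I would prefer the determinant route above.
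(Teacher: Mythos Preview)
Your approach differs genuinely from the paper's. The paper follows precisely the ``alternative'' you sketch and set aside at the end: it writes down an explicit singular vector $s_{k,j}v_+\in\cV(\la)$ of weight $\mu$, where $s_{k,j}\in U(\g)$ is a column determinant in the matrix units $e_{p,q}$ (a Nagel--Moshinsky lowering operator), verifies from the determinantal recursion that every $e_{r,r+1}$ annihilates it (the one nontrivial case $r=k-1$ uses exactly $a_j-a_k-1=(\la+\rho,\alpha^\vee)-1=0$), and proves non-vanishing by extracting one explicit coefficient in the PBW expansion. This is elementary and entirely self-contained, in line with the paper's stated aim of avoiding Kazhdan--Lusztig or Jantzen machinery in this special case.

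Your determinant route is correct in outline, with two small points to tighten. First, the bound: for $\gamma=\eps_a-\eps_b\ne\alpha$ with $j\le a\le m<b\le k$ and, say, $a>j$ but $b=k$, you only get $(\la+\rho,\gamma^\vee)\le(i-1)-i=-1$, not $\le-2$; symmetrically for $a=j$, $b<k$. This is harmless since you only need $\ne1$. Second, the formula you quote is the Shapovalov formula for \emph{ordinary} Verma modules. Jantzen's parabolic analogue still factors into the same linear forms $((\la+\rho,\gamma^\vee)-r)$ over $\gamma\in\Phi^+\setminus\Phi^+_{\mathfrak l}$, but with exponents involving the weight multiplicities of $\cS(\la)$ together with a partition function for the opposite nilradical; you should add a sentence checking that the exponent at $(\alpha,1)$ for $\beta=\alpha$ is actually positive (it is, essentially because the relevant weight space of $\cS(\la)$ is the highest-weight line).

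In short: your argument is conceptual and explains structurally why $\mu$ is the first degeneracy, but it imports Jantzen's formula as a black box; the paper's explicit construction needs nothing beyond commutator relations in $U(\g)$ and actually produces the singular vector, at the cost of a page of determinant manipulation.
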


\begin{proof}
Let $a_j := (\la+\rho,\eps_j)$ and $v_+$ be a non-zero highest weight vector
in $\cV(\la)$.
For $1 \leq j < k \leq m+n$, let
$$
s_{k,j} := \operatorname{cdet}
\left(
\begin{array}{cccccc}
e_{j+1,j}&a_{j+1}-a_j&0&\hdots&0\\
e_{j+2,j}&e_{j + 2, j + 1}&a_{j+2}-a_j&\hdots&0\\
\vdots&\vdots&\ddots&\ddots&\vdots\\
e_{k-2,j}&e_{k-2,j+1}&\hdots&a_{k-2}-a_j&0\\
e_{k-1,j}&e_{k-1,j+1}&\hdots&e_{k-1,k-2}&a_{k-1}-a_j\\
e_{k,j}&e_{k,j+1}&\hdots&e_{k,k-2}&e_{k,k-1}
\end{array}
\right)
\in U(\g)
$$
where $\operatorname{cdet}$ means the usual Laplace expansion of determinant,
ordering monomials in column order.
These lowering operators were introduced originally (in a slightly
different form) in \cite{NM}.
The following key properties are easily checked by direct calculation
from the above matrix:
\begin{itemize}
\item $e_{r,r+1} s_{k,j} v_+ = 0$ for $1 \leq r < m+n$ with $r \neq k-1$;
\item $e_{k-1,k} s_{k,j} v_+ = (a_j-a_k-1) s_{k-1,j} v_+$ (interpreting
$s_{j,j}$ as $1$).
\end{itemize}
Now, the assumptions on $\la$ mean that there are integers
$1 \leq j \leq m$ and $m+1 \leq k \leq m+n$ such that
$a_j = i+1, a_k = i$ and none
of the numbers $a_{j+1},\dots,a_{k-1}$
are equal to $i$ or $i+1$.
The assumptions on $\mu$ mean that $\mu$ is the weight obtained
from $\la$ by subtracting the positive root $\eps_j - \eps_k$.
We claim that the vector
$s_{k,j} v_+$
is a non-zero highest weight vector in $\cV(\la)$. Since it has weight
$\mu$ this claim proves the lemma.

For the claim, the two properties from the previous paragraph
and the fact that $a_j-a_k-1=0$
give at once that $s_{k,j} v_+$ is a highest weight vector.
The problem is to show that it is non-zero. For this, we can expand
$$
s_{k,j} v_+ = \sum_{\substack{j \leq q \leq m \\ m+1 \leq p \leq k}}
e_{p,q} \otimes s_{k,j}^{p,q} v_+ \in U(\g)
\otimes_{U(\mathfrak{p})} \cS(\la)
$$
for unique vectors $s_{k,j}^{p,q} v_+ \in \cS(\la)$.
In particular:
$$
s_{k,j}^{k,m} v_+ = (a_{m+1}-a_j)(a_{m+2}-a_j)\cdots (a_{k-1}-a_j)s_{m,j} v_+.
$$
To complete the proof we show that
$s_{k,j}^{k,m} v_+ \neq 0$.
Since none of $a_{m+1},\dots,a_{k-1}$ equal $a_j = i+1$, we just need to show that
$s_{m,j} v_+ \neq 0$. For this we apply the operator
$e_{j,j+1} e_{j+1,j+2} \cdots e_{m-1,m}$ using the second property from the
previous paragraph to get $(a_j-a_{j+1}-1)\cdots(a_j-a_m-1) v_+$,
which is non-zero as none of $a_{j+1},\dots,a_m$ equal $a_j-1 = i$.
\end{proof}

The next lemma is obviously the same statement as Lemma~\ref{ga},
except that there are no degree shifts to keep track of in the present
ungraded setting.

\begin{Lemma}\label{a}
Let $\la \in \LaImn$ and $i \in I$.
For symbols $x,y \in \{\circ,\up,\down,\times\}$
we write $\la_{xy}$ for the diagram obtained from $\la$
by relabelling the $i$th and $(i+1)$th vertices
by $x$ and $y$, respectively.
\begin{itemize}
\item[\rm(i)]
If $\la = \la_{{\scriptscriptstyle\down}\circ}$ then
$\cF_i \cP(\la) \cong \cP(\la_{\circ{\scriptscriptstyle\down}})$,
$\cF_i \cV(\la) \cong \cV(\la_{\circ{\scriptscriptstyle\down}})$,
$\cF_i \cL(\la) \cong \cL(\la_{\circ{\scriptscriptstyle\down}})$.
\item[\rm(ii)]
If $\la = \la_{{\scriptscriptstyle\up}\circ}$ then
$\cF_i \cP(\la) \cong \cP(\la_{\circ{\scriptscriptstyle\up}})$,
$\cF_i \cV(\la) \cong \cV(\la_{\circ{\scriptscriptstyle\up}})$,
$\cF_i \cL(\la) \cong \cL(\la_{\circ{\scriptscriptstyle\up}})$.
\item[\rm(iii)]
If $\la = \la_{\scriptscriptstyle\times\!\down}$ then
$\cF_i \cP(\la) \cong \cP(\la_{\scriptscriptstyle\down\times})$,
$\cF_i \cV(\la) \cong \cV(\la_{\scriptscriptstyle\down\times})$,
$\cF_i \cL(\la) \cong \cL(\la_{\scriptscriptstyle\down\times})$.
\item[\rm(iv)]
If $\la = \la_{\scriptscriptstyle\times\!\up}$ then
$\cF_i \cP(\la) \cong \cP(\la_{\scriptscriptstyle\up\times})$,
$\cF_i \cV(\la) \cong \cV(\la_{\scriptscriptstyle\up\times})$,
$\cF_i \cL(\la) \cong \cL(\la_{\scriptscriptstyle\up\times})$.
\item[\rm(v)]
If $\la = \la_{{\scriptscriptstyle\times}\circ}$ then:
\begin{itemize}
\item[(a)]
$\cF_i \cP(\la) \cong \cP(\la_{\scriptscriptstyle\down\up})$;
\item[(b)]
there is a short exact sequence
$$
0 \rightarrow \cV(\la_{\scriptscriptstyle\up\down}) \rightarrow \cF_i \cV(\la)
\rightarrow \cV(\la_{\scriptscriptstyle\down\up}) \rightarrow 0;
$$
\item[(c)]
$[\cF_i \cL(\la):\cL(\la_{\scriptscriptstyle\down\up})] = 2$
and all other composition
factors are of the form $\cL(\mu)$ for $\mu$
with
$\mu = \mu_{\scriptscriptstyle\down\down}$,
$\mu = \mu_{\scriptscriptstyle\up\up}$ or
$\mu = \mu_{\scriptscriptstyle\up\down}$;
\item[(d)]
$\cF_i \cL(\la)$ has irreducible
socle and
head isomorphic to $\cL(\la_{\scriptscriptstyle\down\up})$.
\end{itemize}
\item[\rm(vi)]
If $\la = \la_{{\scriptscriptstyle\down\up}}$ then
$\cF_i \cP(\la) \cong
\cP(\la_{\circ\scriptscriptstyle{\times}}) \oplus
\cP(\la_{\circ\scriptscriptstyle{\times}})$,
$\cF_i \cV(\la) \cong \cV(\la_{\circ{\scriptscriptstyle\times}})$
and
$\cF_i \cL(\la) \cong \cL(\la_{\circ{\scriptscriptstyle\times}})$.
\item[\rm(vii)]
If $\la = \la_{{\scriptscriptstyle\up\down}}$ then
$\cF_i \cV(\la) \cong \cV(\la_{\circ{\scriptscriptstyle\times}})$
and $\cF_i \cL(\la) = \{0\}$.
\item[\rm(viii)]
If
$\la = \la_{{\scriptscriptstyle\down\down}}$
then
$\cF_i \cV(\la) = \cF_i \cL(\la) = \{0\}$.
\item[\rm(ix)]
If
$\la = \la_{{\scriptscriptstyle\up\up}}$
then
$\cF_i \cV(\la) = \cF_i \cL(\la) = \{0\}$.
\item[\rm(x)]
For all other $\la$ we have that
$\cF_i \cP(\la) = \cF_i \cV(\la) = \cF_i \cL(\la) = \{0\}$.
\end{itemize}
For the dual statement about $\cE_i$,
interchange all occurrences of $\circ$ and $\times$.
\end{Lemma}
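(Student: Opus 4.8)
The plan is to treat the three families $\cV(\la)$, $\cL(\la)$, $\cP(\la)$ in succession, the only non-formal inputs being Corollary~\ref{tid2} and Lemma~\ref{jsf}; everything else is the general machinery of highest weight categories together with the exactness, biadjointness and $\circledast$-invariance of $\cF_i,\cE_i$. The statements about standard modules come straight out of Corollary~\ref{tid2}: since $\cF_i\cV(\la)=\pr_{\Ga-\alpha_i}(\cV(\la)\otimes\cV)$, that corollary filters $\cF_i\cV(\la)$ by the $\cV(\la+\eps_j)$ with $\la+\eps_j\in\La(m,n)$ and $(\la+\rho,\eps_j)=i$, and under the weight dictionary the admissible indices $j$ are exactly those effecting a legal shift of a $\down$- or an $\up$-label from the $i$th to the $(i+1)$th vertex. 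Running through the finitely many label patterns at vertices $i,i+1$ then yields (i)--(x); in case (v) the order of the two sections---hence the direction of the short exact sequence (v)(b)---is dictated by the ``bottom-to-top'' convention of Lemma~\ref{tid}, using that the $\down$-index is $\le m$ and the $\up$-index is $>m$. The dual half of Corollary~\ref{tid2} gives the corresponding $\cE_i$-statements.

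The heart of the matter is $\cF_i\cL(\la)$. Here I would exploit three facts: $\cF_i$ is exact; it commutes with $\circledast$, so $(\cF_i\cL(\la))^\circledast\cong\cF_i\cL(\la)$ and head and socle of $\cF_i\cL(\la)$ have equal composition; and applying $\cF_i$ to $\cV(\la)\twoheadrightarrow\cL(\la)\hookrightarrow\cV(\la)^\circledast$ realises $\cF_i\cL(\la)$ simultaneously as a quotient of $\cF_i\cV(\la)$ and a submodule of $(\cF_i\cV(\la))^\circledast$. Since $\hom(\cV(\mu),\cV(\nu)^\circledast)=\delta_{\mu,\nu}\C$ in a highest weight category, this already pins $\cF_i\cL(\la)$ down to a single candidate in each case, leaving only the question of non-vanishing; in cases (i)--(vi) that follows from the adjunction $(\cE_i,\cF_i)$, as $\hom(\cP(\nu),\cF_i\cL(\la))\cong\hom(\cE_i\cP(\nu),\cL(\la))$ and $\cE_i\cP(\nu)$ (projective, since $\cE_i$ preserves projectives) surjects onto $\cE_i\cV(\nu)$, which by the $\cV$-step is a standard module with head $\cL(\la)$ for the relevant target $\nu$. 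The one genuinely delicate case is (v): $\cF_i\cV(\la_{{\scriptscriptstyle\times}\circ})$ is the extension of $\cV(\la_{\scriptscriptstyle\down\up})$ by $\cV(\la_{\scriptscriptstyle\up\down})$ from (v)(b), and it is precisely Lemma~\ref{jsf}, applied to the $\up\down$-pattern of $\la_{\scriptscriptstyle\up\down}$, that supplies a second copy of $\cL(\la_{\scriptscriptstyle\down\up})$---inside the submodule $\cV(\la_{\scriptscriptstyle\up\down})$---forcing $[\cF_i\cL(\la):\cL(\la_{\scriptscriptstyle\down\up})]=2$; the claim that the remaining composition factors all have pattern $\mu_{\scriptscriptstyle\down\down}$, $\mu_{\scriptscriptstyle\up\up}$ or $\mu_{\scriptscriptstyle\up\down}$, and that head and socle are $\cL(\la_{\scriptscriptstyle\down\up})$, then follows by tracking labels through (v)(b) and invoking the self-duality.

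For the projectives one argues formally: $\cF_i$ preserves projectives, so $\cF_i\cP(\la)\cong\bigoplus_\nu\cP(\nu)^{\oplus c_\nu}$ with $c_\nu=\dim\hom(\cF_i\cP(\la),\cL(\nu))=\dim\hom(\cP(\la),\cE_i\cL(\nu))=[\cE_i\cL(\nu):\cL(\la)]$ by the adjunction $(\cF_i,\cE_i)$, and the multiplicities $c_\nu$ are read off from the $\cE_i$-analogue of the composition-factor statements of the previous paragraph (note the multiplicity two in (vi) matching the one in the dual of (v)(c)). To keep the mild cross-dependence between cases (v) and (vi) honest, the cleanest route is to run the whole argument as a single induction on the Bruhat order, the base case being the maximal weights, where $\cP(\la)=\cV(\la)=\cL(\la)$ and everything collapses to the $\cV$-computation; the $\cE_i$-versions are obtained in parallel by the identical reasoning. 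The main obstacle is thus case (v) of the $\cL$-analysis---disentangling the composition factors of $\cF_i\cL(\la_{{\scriptscriptstyle\times}\circ})$, pinning the multiplicity of $\cL(\la_{\scriptscriptstyle\down\up})$ to exactly two and excluding the spurious patterns---which is exactly the place where Corollary~\ref{tid2} has to be supplemented by Lemma~\ref{jsf}.
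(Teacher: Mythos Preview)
Your treatment of the $\cV(\la)$-statements and of the cases (i)--(iv), (viii)--(x) for $\cL(\la)$ is essentially the paper's argument.  The gap is in cases (v)(c) and (vii), and it is exactly where you deploy Lemma~\ref{jsf} that the argument breaks down.  From Lemma~\ref{jsf} you only learn that $\cL(\la_{\scriptscriptstyle\down\up})$ occurs in $\cV(\la_{\scriptscriptstyle\up\down})$, hence that $[\cF_i\cV(\la_{{\scriptscriptstyle\times}\circ}):\cL(\la_{\scriptscriptstyle\down\up})]\geq 2$.  But $\cF_i\cL(\la)$ is only a \emph{quotient} of $\cF_i\cV(\la)$, so this gives no lower bound on $[\cF_i\cL(\la):\cL(\la_{\scriptscriptstyle\down\up})]$, let alone the upper bound $2$, and ``tracking labels through (v)(b)'' cannot exclude other composition factors $\cL(\mu)$ with $\mu=\mu_{\scriptscriptstyle\down\up}$: the composition factors of $\cV(\la_{\scriptscriptstyle\down\up})$ and $\cV(\la_{\scriptscriptstyle\up\down})$ can perfectly well have pattern $\scriptstyle\down\up$ at positions $i,i+1$.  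Equally, your scheme leaves (vii) untouched: you reduce $\cF_i\cL(\la_{\scriptscriptstyle\up\down})$ to the dichotomy ``$0$ or $\cL(\la_{\circ{\scriptscriptstyle\times}})$'', but your non-vanishing mechanism (adjunction plus $\cE_i\cV$) only ever produces non-vanishing, and there is no argument offered for the required vanishing here.

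In the paper the roles are reversed.  Lemma~\ref{jsf} is used to prove (vii): if $\cF_i\cL(\la_{\scriptscriptstyle\up\down})$ were nonzero it would be $\cL(\la_{\circ{\scriptscriptstyle\times}})$, and since (vi) has already been established one would then get $[\cF_i\cV(\la_{\scriptscriptstyle\up\down}):\cL(\la_{\circ{\scriptscriptstyle\times}})]\geq 2$ from the two composition factors $\cL(\la_{\scriptscriptstyle\up\down})$ and $\cL(\la_{\scriptscriptstyle\down\up})$ of $\cV(\la_{\scriptscriptstyle\up\down})$, contradicting $\cF_i\cV(\la_{\scriptscriptstyle\up\down})\cong\cV(\la_{\circ{\scriptscriptstyle\times}})$.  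For (v)(c) the paper uses a different idea altogether: apply $\cF_i$ twice.  On standards one has $\cF_i^2\cV(\nu)\cong\cV(\nu_{\circ{\scriptscriptstyle\times}})^{\oplus 2}$ for every $\nu$ in the block of $\la_{{\scriptscriptstyle\times}\circ}$, so $\cF_i^2$ induces an isomorphism $[\cO_\Ga]\stackrel{\sim}{\to}2[\cO_{\Ga-2\alpha_i}]$; hence $\cF_i^2\cL(\la)$ is a nonzero self-dual quotient of $\cV(\la_{\circ{\scriptscriptstyle\times}})^{\oplus 2}$ with class divisible by $2$, forcing $\cF_i^2\cL(\la)\cong\cL(\la_{\circ{\scriptscriptstyle\times}})^{\oplus 2}$.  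Since by (vi)--(ix) the only composition factors $\cL(\mu)$ of $\cF_i\cL(\la)$ with $\cF_i\cL(\mu)\neq 0$ are those with $\mu=\mu_{\scriptscriptstyle\down\up}$, one reads off $[\cF_i\cL(\la):\cL(\mu)]=[\cF_i^2\cL(\la):\cL(\mu_{\circ{\scriptscriptstyle\times}})]$ for such $\mu$, which equals $2$ if $\mu=\la_{\scriptscriptstyle\down\up}$ and $0$ otherwise.  This simultaneously pins the multiplicity to exactly $2$ and excludes the spurious $\scriptstyle\down\up$-patterns; your proposal is missing both of these steps.
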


\begin{proof}
The statements (i)--(x) for $\cV(\la)$ follow directly from
Corollary~\ref{tid2}
on translating into the diagrammatic language.

We next check (viii), (ix) and (x) for $\cL(\la)$.
In all these cases, we know already that
$\cF_i \cV(\la) = \{0\}$.
As $\cL(\la)$ is a quotient of $\cV(\la)$ and $\cF_i$ is exact,
it follows immediately that $\cF_i \cL(\la) = \{0\}$ as required.

The proofs of (i), (ii), (iii) and (iv) for $\cL(\la)$
are not much harder. For example, if
$\la = \la_{{\scriptscriptstyle \down}\circ}$ as in (i),
then $\cF_i \cL(\la)$ is a quotient of
$\cF_i \cV(\la) \cong
\cV(\la_{\circ{\scriptscriptstyle \down}})$.
Moreover it is self-dual
as $\cL(\la)$ is self-dual and $\cF_i$ commutes with duality.
Hence we either have that
$\cF_i \cL(\la) = \{0\}$ or
$\cF_i \cL(\la) \cong \cL(\la_{\circ{\scriptscriptstyle \down}})$,
as these are the only self-dual quotients of
$\cV(\la_{\circ{\scriptscriptstyle \down}})$.
To rule out the possibility that it is zero, let $\Ga$ be the block
generated by $\la$, and note that
$\cF_i$ maps $\cO_\Ga$
to $\cO_{\Ga-\alpha_i}$.
Moreover it induces a $\Z$-module isomorphism
$[\cO_\Ga] \stackrel{\sim}{\rightarrow}
[\cO_{\Ga-\alpha_i}]$
because it defines a bijection between the bases
of these Grothendieck groups arising from the standard modules.
Hence $\cF_i$ is non-zero on every non-zero module in
$\cO_\Ga$.
This proves (i) for $\cL(\la)$, and the proofs of (ii), (iii)
and (iv) are similar.

Next we check (vi) and (vii) for $\cL(\la)$, i.e.
we show that
$\cF_i \cL(\la_{\scriptscriptstyle \down\up})
\cong \cL(\la_{\circ{\scriptscriptstyle\times}})$
and
$\cF_i \cL(\la_{\scriptscriptstyle \up\down})
= \{0\}$.
We know that $\cF_i \cV(\la_{\scriptscriptstyle \down\up}) \cong \cF_i
\cV(\la_{\scriptscriptstyle \up\down}) \cong
\cV(\la_{\circ{\scriptscriptstyle\times}})$.
So by an argument from the previous paragraph, we either have that
$\cF_i \cL(\la_{\scriptscriptstyle \down\up})
\cong \cL(\la_{\circ{\scriptscriptstyle\times}})$
or $\cF_i \cL(\la_{\scriptscriptstyle\down\up}) = \{0\}$.
Similarly, either $\cF_i \cL(\la_{\scriptscriptstyle \up\down})
\cong \cL(\la_{\circ{\scriptscriptstyle\times}})$
or $\cF_i \cL(\la_{\scriptscriptstyle\up\down}) = \{0\}$.
As $[\cF_i \cV(\la_{\scriptscriptstyle\down\up}):
\cL(\la_{\circ{\scriptscriptstyle\times}})] = 1$,
there must be some composition factor
$\cL(\mu)$ of $\cV(\la_{\scriptscriptstyle\down\up})$
such that
$[\cF_i \cL(\mu):
\cL(\la_{\circ{\scriptscriptstyle\times}})] = 1$.
The facts proved so far
imply either that
$\mu = \la_{\scriptscriptstyle\down\up}$
or that
$\mu = \la_{\scriptscriptstyle\up\down}$.
But the latter case cannot occur as
$\la_{\scriptscriptstyle\up\down}$ is strictly bigger than
$\la_{\scriptscriptstyle\down\up}$ in the Bruhat ordering.
Hence
$\mu =
\la_{\scriptscriptstyle\down\up}$
and we have proved that
$[\cF_i \cL(\la_{\scriptscriptstyle\down\up}):\cL(\la_{\circ{\scriptscriptstyle\times}})]=1$.
This gives $\cF_i \cL(\la_{\scriptscriptstyle \down\up})
\cong \cL(\la_{\circ{\scriptscriptstyle\times}})$ as required for (vi).
It remains for (vii) to show that
$\cF_i \cL(\la_{\scriptscriptstyle \up\down}) = \{0\}$.
Suppose for a contradiction that it is non-zero,
hence
$\cF_i \cL(\la_{\scriptscriptstyle \up\down})
\cong \cL(\la_{\circ{\scriptscriptstyle\times}})$.
By Lemma~\ref{jsf},
$\cV(\la_{\scriptscriptstyle \up\down})$
has both
$\cL(\la_{\scriptscriptstyle \up\down})$
and
$\cL(\la_{\scriptscriptstyle \down\up})$ as composition factors,
so we deduce that
$[\cF_i \cV(\la_{\scriptscriptstyle \up\down}):
\cL(\la_{\circ{\scriptscriptstyle\times}})]
\geq 2$, which is the desired
 contradiction.

In this paragraph, we check (v)(c).
Take $\la$ with $\la = \la_{{\scriptscriptstyle \times}\circ}$.
Let $\Ga$ be the block generated by $\la$.
Note $\cF_i$ maps
$\cO_\Ga$ to $\cO_{\Ga-\alpha_i}$
and $\cO_{\Ga-\alpha_i}$ to
$\cO_{\Ga-2\alpha_i}$.
We know for any $\nu \in \Ga$ that $\cF_i^2 \cV(\nu)
\cong
\cV(\nu_{\circ{\scriptscriptstyle\times}})
\oplus \cV(\nu_{\circ{\scriptscriptstyle\times}})$.
Hence $\cF_i^2$ induces a $\Z$-module isomorphism
between $[\cO_\Ga]$ and
$2 [\cO_{\Ga-2\alpha_i}]$.
We deduce for any non-zero module $M \in \cO_\Ga$
that $\cF_i^2 M$
is non-zero and its class is
divisible by two in $[\cO_{\Ga-2\alpha_i}]$.
In particular,  $\cF_i^2 \cL(\la)$
 is a non-zero self-dual quotient of
$\cV(\la_{\circ{\scriptscriptstyle\times}})
\oplus \cV(\la_{\circ{\scriptscriptstyle\times}})$
whose class is divisible by two.
This implies that
\begin{equation}\label{drop}
\cF_i^2 \cL(\la)
\cong
\cL(\la_{\circ{\scriptscriptstyle\times}})\oplus
\cL(\la_{\circ{\scriptscriptstyle\times}}).
\end{equation}
Now take any $\mu \in \Ga-\alpha_i$.
We know already that
$\cF_i \cL(\mu) \cong \cL(\mu_{\circ{\scriptscriptstyle \times}})$
if $\mu = \mu_{\scriptscriptstyle \down\up}$, and
$\cF_i \cL(\mu) = \{0\}$ otherwise.
Assuming now that $\mu =
\mu_{\scriptscriptstyle \down\up}$,
we deduce from this that
$[\cF_i \cL(\la): \cL(\mu)] =
[\cF_i^2 \cL(\la): \cL(\mu_{\circ{\scriptscriptstyle \times}})]$.
Using (\ref{drop}), we conclude for $\mu = \mu_{\scriptscriptstyle \down\up}$
that
$[\cF_i \cL(\la):\cL(\mu)] = 0$ unless $\mu =
\la_{\scriptscriptstyle \down\up}$, and
$[\cF_i \cL(\la):\cL(\la_{\scriptscriptstyle \down\up})] = 2$.

Now we deduce all the statements (i)--(x) for $\cP(\la)$
by using the fact that $(\cF_i, \cE_i)$ is an adjoint pair of functors.
We just explain the argument in case (vi), since the other cases
are similar (actually, easier).
As $\cF_i$ sends projectives to projectives, $\cF_i \cP(\la)$ is a direct
sum of projective indecomposables. To compute the multiplicity of $\cP(\mu)$
in this decomposition we calculate
$$
\hom_{\g}(\cF_i \cP(\la), \cL(\mu))
\cong \hom_{\g}(\cP(\la), \cE_i \cL(\mu)) = [\cE_i \cL(\mu): \cL(\la)].
$$
By (v)(c) (or rather, its analogue for $\cE_i$)
this multiplicity is zero unless $\mu = \la_{\circ{\scriptscriptstyle\times}}$,
when it is two.
Hence $\cF_i \cP(\la) \cong \cP(\la_{\circ{\scriptscriptstyle\times}}) \oplus
\cP(\la_{\circ{\scriptscriptstyle\times}})$.

It just remains to deduce (v)(d).
By (v)(a), (v)(c) and exactness of $\cF_i$, we get that
$\cF_i \cL(\la)$ is a non-zero quotient of
$\cP(\la_{\scriptscriptstyle \down\up})$,
hence it has irreducible head
isomorphic to
$\cL(\la_{\scriptscriptstyle\down\up})$.
Since it is self-dual it also has irreducible socle
isomorphic to
$\cL(\la_{\scriptscriptstyle\down\up})$.
\end{proof}

\phantomsubsection{\boldmath The second categorification theorem}
The following theorem should be compared with Theorem~\ref{cat1}.

\begin{Theorem}\label{cat2}
Identify $[\OImn]$
with  $\bigwedge^m V_\Z \otimes \bigwedge^n V_\Z$
by identifying $[\cV(\la)]$ with $\cV_\la$ for each
$\la \in \LaImn$.
\begin{enumerate}
\item[\rm(i)] We have that $[\cL(\la)] = \cL_\la$
and $[\cP(\la)] = \cP_\la$ for each $\la \in \LaImn$.
\item[\rm(ii)] The
endomorphisms of the Grothendieck group induced by the
exact functors
$\cE_i$ and $\cF_i$
coincide with the action of the Chevalley
generators $\cE_i$ and $\cF_i$
 of $U_\Z$ for each $i \in I$.
\item[\rm(iii)]
We have that
$$
\dim \hom_{\mathfrak{g}}(P, M)=
\left\langle [P], [M] \right\rangle
$$
for $M, P \in \OImn$
with $P$ projective.
\end{enumerate}
\end{Theorem}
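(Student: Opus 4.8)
The plan is to follow the proof of Theorem~\ref{cat1} as closely as possible; the one structural difference is that there is no graded duality available here to characterise the dual-canonical basis, so the role played there by the bar-involution must be taken over by the crystal computation of Theorem~\ref{cgt}.

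First I would prove (ii), which is independent of (i). By Lemma~\ref{a} the exact functors $\cF_i$ and $\cE_i$ act on the classes $[\cV(\la)]$ of the standard modules by explicit combinatorial rules. On the other side, specialising at $q=1$ the transition formulas displayed in the proof of Theorem~\ref{cat1}(ii) gives the action of the Chevalley generator $\cF_i$ of $U_\Z$ on the monomial basis $\{\cV_\la\}$ (all of whose coefficients become $1$ at $q=1$), and a case-by-case comparison with Lemma~\ref{a}(i)--(x) shows the two agree; likewise for $\cE_i$.

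Next I would show $[\cP(\la)]=\cP_\la$ for every $\la\in\LaImn$. Given $\la$, use Lemma~\ref{cgl} to choose $\mu$ maximal in the Bruhat order and $i_1,\dots,i_k\in I$ with $\la=\tilde f_{i_k}\cdots\tilde f_{i_1}(\mu)$. Following this crystal path and applying the projective-module assertions of Lemma~\ref{a}(i)--(vi) one vertex at a time, each step sends $\cP(\cdot)$ to $\cP(\cdot)$, except that an edge of the form $\down\up\to\circ\times$ sends it to $\cP(\cdot)^{\oplus 2}$; hence $\cF_{i_k}\circ\cdots\circ\cF_{i_1}(\cP(\mu))\cong\cP(\la)^{\oplus 2^r}$, where $r$ counts the doubling edges along the path. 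Since $\mu$ is maximal, BGG reciprocity shows that $\cP(\mu)$ has a standard filtration with a single section $\cV(\mu)$, so $\cP(\mu)=\cV(\mu)$; thus by (ii) the class of the module above equals $\cF_{i_k}\cdots\cF_{i_1}\cV_\mu$, which by Theorem~\ref{cgt} specialised at $q=1$ equals $2^r\cP_\la$. As $[\OImn]$ is a free $\Z$-module, cancelling $2^r$ gives $[\cP(\la)]=\cP_\la$. Now $[\cL(\la)]=\cL_\la$ is formal: BGG reciprocity turns $[\cP(\la)]=\sum_\mu d_{\la,\mu}(1)\cV_\mu$ into $[\cV(\mu):\cL(\la)]=d_{\la,\mu}(1)$, so the change-of-basis matrix from $\{[\cL(\la)]\}$ to $\{[\cV(\mu)]\}$ is $(d_{\la,\mu}(1))^{-1}$, which by (\ref{bgg}) is also the change-of-basis matrix from $\{\cL_\la\}$ to $\{\cV_\mu\}$; under $[\cV(\mu)]\leftrightarrow\cV_\mu$ this forces $[\cL(\la)]=\cL_\la$. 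This proves (i).

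For (iii), both sides are additive in short exact sequences in $M$ and the $\cP(\la)$'s exhaust the indecomposable projectives, so it is enough to take $P=\cP(\la)$ and $M=\cL(\mu)$; then $\dim\hom_\g(\cP(\la),\cL(\mu))=\delta_{\la,\mu}$ since $\cP(\la)$ is the projective cover of $\cL(\la)$, while $\langle[\cP(\la)],[\cL(\mu)]\rangle=\langle\cP_\la,\cL_\mu\rangle=\delta_{\la,\mu}$ because the specialised quasi-canonical and dual-canonical bases are dual, as recorded in $\S$\ref{spec}. The hard part will be the middle step: one has to be sure that sweeping the crystal path with the functors $\cF_{i_j}$ creates no indecomposable summand other than copies of $\cP(\la)$, and that the doubling bookkeeping reproduces exactly the factor $(q+q^{-1})^r$ of Theorem~\ref{cgt} after setting $q=1$ --- which is precisely where the full strength of Lemma~\ref{a}, including its assertions about projective modules (obtained there via adjunctions and self-duality), gets used.
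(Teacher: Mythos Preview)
Your proposal is correct and follows essentially the same route as the paper: prove (ii) by matching Lemma~\ref{a} against the specialised action on the monomial basis, then obtain $[\cP(\la)]=\cP_\la$ by pushing $\cP(\mu)=\cV(\mu)$ along a crystal path via Lemma~\ref{a} and comparing with Theorem~\ref{cgt} at $q=1$, deduce $[\cL(\la)]=\cL_\la$ from BGG reciprocity, and finish (iii) by duality of the specialised bases. The only cosmetic difference is that you assert the module-level isomorphism $\cF_{i_k}\circ\cdots\circ\cF_{i_1}(\cP(\mu))\cong\cP(\la)^{\oplus 2^r}$ whereas the paper works purely in the Grothendieck group; your version is a harmless (and correct) strengthening, since Lemma~\ref{a}(i)--(vi) gives genuine isomorphisms at each step.
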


\begin{proof}
In view of Theorem~\ref{cat1},
Lemma~\ref{ga} can be re-interpreted as describing
how the generators $E_i$ and $F_i$ of $U_{\!\Laurent}$ act on
the basis elements $V_\la, L_\la$ and $P_\la$
of $\bigwedge^m V_\Laurent \otimes \bigwedge^n V_\Laurent$.
Specializing at $q=1$, we get analogous descriptions of how
the generators $\cE_i$ and $\cF_i$ of $U_\Z$
act on $\cV_\la, \cL_\la$ and $\cP_\la$.
In particular, we see that the Chevalley generators $\cE_i$ and $\cF_i$ act on
$\cV_\la$ in exactly the same way as the
functors $\cE_i$ and $\cF_i$ act on $[\cV(\la)]$ as described by Lemma~\ref{a}.
This proves (ii).

To deduce (i),
take any $\la \in \LaImn$ and
let $\mu, i_1,\dots,i_k$ and $r$ be as in Lemma~\ref{cgl}.
In view of Theorem~\ref{cgt} specialised at $q=1$, we know already that
$$
\cF_{i_k} \cdots \cF_{i_1} \cV_\mu = 2^r \cP_\la.
$$
On the other hand by Lemma~\ref{a} we have that
$$
[\cF_{i_k}\circ \cdots\circ \cF_{i_1} (\cP(\mu))] = 2^r [\cP(\la)].
$$
As $\mu$ is maximal in the Bruhat ordering,
the parabolic Verma module $\cV(\mu)$ is projective,
i.e. $[\cP(\mu)] = [\cV(\mu)] = \cV_\mu$.
Hence combining the above two equations, we get that
$[\cP(\la)] =  \cP_\la$.
It then follows that
 $[\cL(\la)] = \cL_\la$ too, by
(\ref{bgg}) and the usual BGG reciprocity in the highest weight
category $\OImn$.

Finally (iii) follows because the bases $\{\cP_\la\}$ and
$\{\cL_\la\}$ are dual with respect to the form
$\langle.,.\rangle$ and also
$\dim \hom_{\mathfrak{g}}(\cP(\la),\cL(\mu)) = \delta_{\la,\mu}$.
\end{proof}

\begin{Remark}\rm
Theorem~\ref{cat2} is certainly not new; for example, essentially
this theorem appears already in \cite[Theorem 5.5]{CWZ}.
Its generalisation from 2-block to $k$-block
parabolic subalgebras in type A
is recorded in \cite[Theorem 4.5]{BKrep}, where it is deduced from
the Kazhdan-Lusztig conjecture; see also \cite[$\S$3.1]{BKariki}.
The graded version of this result gives a categorification of
a $k$-fold tensor product of quantum exterior powers,
which has been  used in
\cite{Sussan} and \cite{MSslk} to define functorial knot and
tangle invariants which decategorify to the $\mathfrak{sl}_k$-version of the
HOMFLY-PT polynomial \cite{MOY}.
\end{Remark}

\section{Local analysis of special projective functors}\label{sL}

The goal in the remainder of the article is to explain the
combinatorial coincidence between Theorems~\ref{cat1} and \ref{cat2}
by showing that the categories $\rep{\KImn}$ and $\OImn$ are equivalent.
Most of the new work needed to establish this takes place on the diagram
algebra side. We begin in this section by studying
some locally-defined natural transformations between
compositions of special projective functors.

\phantomsubsection{Admissible sequences and associated composite matchings}
For {any} sequence $\bi = (i_1,\dots,i_d) \in I^d$, we can consider
the compositions
\begin{align}\label{cf}
F_{\bi} &:= F_{i_d} \circ \cdots \circ F_{i_1}:\Rep{\KImn}
\rightarrow \Rep{\KImn},\\
E_{\bi} &:= E_{i_d} \circ \cdots \circ E_{i_1}:\Rep{\KImn}
\rightarrow \Rep{\KImn}.\label{cf2}
\end{align}
We are mainly going to be interested
here in the properties of the first of these.

Suppose we are
given a block $\Ga \in \PImn$.
Generalising the definition made just before (\ref{CKLR}),
we say that
$\bi = (i_1,\dots,i_d) \in I^d$ is a {\em $\Ga$-admissible sequence}
if $\Ga - \alpha_{i_1}-\cdots-\alpha_{i_r} \in \PImn$ for each $r=1,\dots,d$.
The restriction $F_{\bi}|_{\Rep{K_\Ga}}$ of the functor $F_\bi$ to the subcategory
$\Rep{K_\Ga}$ is obviously zero unless $\bi$ is a $\Ga$-admissible sequence.

Given a $\Ga$-admissible sequence $\bi \in I^d$,
we define
the {\em associated block sequence}
$\bGa = \Ga_d \cdots \Ga_0$
by setting $\Ga_r := \Ga - \alpha_{i_1}-\cdots-\alpha_{i_r}$
for each $r=0,\dots,d$.
Then define the
 {\em associated composite matching}
$\bt = t_d \cdots t_1$
by setting
$t_r := t_{i_r}(\Ga_{r-1})$
for each $r=1,\dots,d$;
see Figure~\ref{fig1} for an example.
We say that $\bi$ is a
{\em proper} $\Ga$-admissible sequence if $\bt$
is a proper $\bGa$-matching in the sense of \cite[$\S$2]{BS2}.

\begin{figure}
$$
\begin{picture}(-230,295)

\put(-214,286){$_1$}
\put(-214,266){$_2$}
\put(-214,246){$_3$}
\put(-214,226){$_4$}
\put(-214,206){$_5$}
\put(-214,186){$_6$}
\put(-214,166){$_7$}
\put(-214,146){$_8$}
\put(-214,126){$_9$}
\put(-218,106){$_{10}$}
\put(-218,86){$_{11}$}
\put(-218,66){$_{12}$}
\put(-218,46){$_{13}$}
\put(-218,26){$_{14}$}
\put(-218,6){$_{15}$}

\put(-193,297){\line(1,0){180}}
\put(-193,277){\line(1,0){180}}
\put(-193,257){\line(1,0){180}}
\put(-193,237){\line(1,0){180}}
\put(-193,217){\line(1,0){180}}
\put(-193,197){\line(1,0){180}}
\put(-193,177){\line(1,0){180}}
\put(-193,157){\line(1,0){180}}
\put(-193,137){\line(1,0){180}}
\put(-193,117){\line(1,0){180}}
\put(-193,97){\line(1,0){180}}
\put(-193,77){\line(1,0){180}}
\put(-193,57){\line(1,0){180}}
\put(-193,37){\line(1,0){180}}
\put(-193,17){\line(1,0){180}}
\put(-193,-3){\line(1,0){180}}

\put(-176,295){{$\scriptstyle\times$}}
\put(-156,295){{$\scriptstyle\times$}}
\put(-136,295){{$\scriptstyle\times$}}
\put(-115.7,294.3){{$\circ$}}
\put(-95.7,294.3){{$\circ$}}
\put(-75,295){{$\scriptstyle\bullet$}}
\put(-55,295){{$\scriptstyle\bullet$}}
\put(-35,295){{$\scriptstyle\bullet$}}
\put(-43,297){\oval(20,20)[b]}

\put(-196,295){{$\scriptstyle\times$}}
\put(-15.7,294.3){{$\circ$}}
\put(-176,275){{$\scriptstyle\times$}}
\put(-156,275){{$\scriptstyle\times$}}
\put(-136,275){{$\scriptstyle\times$}}
\put(-115.7,274.3){{$\circ$}}
\put(-95.7,274.3){{$\circ$}}
\put(-75,275){{$\scriptstyle\bullet$}}
\put(-55.7,274.3){{$\circ$}}
\put(-36,275){{$\scriptstyle\times$}}
\put(-73,277){\line(0,1){20}}

\put(-196,275){{$\scriptstyle\times$}}
\put(-15.7,274.3){{$\circ$}}
\put(-176,255){{$\scriptstyle\times$}}
\put(-156,255){{$\scriptstyle\times$}}
\put(-95.7,254.3){{$\circ$}}
\put(-55.7,254.3){{$\circ$}}
\put(-75,255){{$\scriptstyle\bullet$}}
\put(-36,255){{$\scriptstyle\times$}}
\put(-53,237){\line(-1,1){20}}
\put(-135,255){{$\scriptstyle\bullet$}}
\put(-115,255){{$\scriptstyle\bullet$}}

\put(-196,255){{$\scriptstyle\times$}}
\put(-15.7,254.3){{$\circ$}}
\put(-176,235){{$\scriptstyle\times$}}
\put(-156,235){{$\scriptstyle\times$}}
\put(-135,235){{$\scriptstyle\bullet$}}
\put(-115,235){{$\scriptstyle\bullet$}}
\put(-95.7,234.3){{$\circ$}}
\put(-75.7,234.3){{$\circ$}}
\put(-123,257){\oval(20,20)[t]}
\put(-36,235){{$\scriptstyle\times$}}
\put(-73,257){\line(0,1){20}}
\put(-55,235){{$\scriptstyle\bullet$}}

\put(-196,235){{$\scriptstyle\times$}}
\put(-15.7,234.3){{$\circ$}}
\put(-176,215){{$\scriptstyle\times$}}
\put(-156,215){{$\scriptstyle\times$}}
\put(-135,215){{$\scriptstyle\bullet$}}
\put(-115.7,214.3){{$\circ$}}
\put(-95,215){{$\scriptstyle\bullet$}}
\put(-75.7,214.3){{$\circ$}}
\put(-93,217){\line(-1,1){20}}
\put(-133,217){\line(0,1){20}}
\put(-133,237){\line(0,1){20}}
\put(-113,237){\line(0,1){20}}
\put(-36,215){{$\scriptstyle\times$}}
\put(-53,217){\line(0,1){20}}
\put(-55,215){{$\scriptstyle\bullet$}}

\put(-196,215){{$\scriptstyle\times$}}
\put(-15.7,214.3){{$\circ$}}
\put(-176,195){{$\scriptstyle\times$}}
\put(-155,195){{$\scriptstyle\bullet$}}
\put(-135,195){{$\scriptstyle\times$}}
\put(-115.8,194.3){{$\circ$}}
\put(-95,195){{$\scriptstyle\bullet$}}
\put(-75.7,194.3){{$\circ$}}
\put(-153,197){\line(1,1){20}}
\put(-93,197){\line(0,1){20}}
\put(-53,197){\line(0,1){20}}
\put(-36,195){{$\scriptstyle\times$}}
\put(-55,195){{$\scriptstyle\bullet$}}

\put(-196,195){{$\scriptstyle\times$}}
\put(-15.7,194.3){{$\circ$}}
\put(-176,175){{$\scriptstyle\times$}}
\put(-155,175){{$\scriptstyle\bullet$}}
\put(-135,175){{$\scriptstyle\bullet$}}
\put(-115,175){{$\scriptstyle\bullet$}}
\put(-95,175){{$\scriptstyle\bullet$}}
\put(-75.7,174.3){{$\circ$}}
\put(-153,177){\line(0,1){20}}
\put(-93,177){\line(0,1){20}}
\put(-123,177){\oval(20,20)[t]}
\put(-53,177){\line(0,1){20}}
\put(-36,175){{$\scriptstyle\times$}}
\put(-55,175){{$\scriptstyle\bullet$}}

\put(-196,175){{$\scriptstyle\times$}}
\put(-15.7,174.3){{$\circ$}}
\put(-176,155){{$\scriptstyle\times$}}
\put(-155,155){{$\scriptstyle\bullet$}}
\put(-135,155){{$\scriptstyle\bullet$}}
\put(-115,155){{$\scriptstyle\bullet$}}
\put(-95.7,154.3){{$\circ$}}
\put(-75,155){{$\scriptstyle\bullet$}}
\put(-153,157){\line(0,1){20}}
\put(-133,157){\line(0,1){20}}
\put(-113,157){\line(0,1){20}}
\put(-73,157){\line(-1,1){20}}
\put(-53,157){\line(0,1){20}}
\put(-36,155){{$\scriptstyle\times$}}
\put(-55,155){{$\scriptstyle\bullet$}}

\put(-196,155){{$\scriptstyle\times$}}
\put(-15.7,154.3){{$\circ$}}
\put(-176,135){{$\scriptstyle\times$}}
\put(-155,135){{$\scriptstyle\bullet$}}
\put(-135,135){{$\scriptstyle\bullet$}}
\put(-115.7,134.3){{$\circ$}}
\put(-95,135){{$\scriptstyle\bullet$}}
\put(-75,135){{$\scriptstyle\bullet$}}
\put(-153,137){\line(0,1){20}}
\put(-133,137){\line(0,1){20}}
\put(-93,137){\line(-1,1){20}}
\put(-73,137){\line(0,1){20}}
\put(-53,137){\line(0,1){20}}
\put(-36,135){{$\scriptstyle\times$}}
\put(-55,135){{$\scriptstyle\bullet$}}

\put(-196,135){{$\scriptstyle\times$}}
\put(-15.7,134.3){{$\circ$}}
\put(-175,115){{$\scriptstyle\bullet$}}
\put(-156,115){{$\scriptstyle\times$}}
\put(-135,115){{$\scriptstyle\bullet$}}
\put(-115.7,114.3){{$\circ$}}
\put(-95,115){{$\scriptstyle\bullet$}}
\put(-75,115){{$\scriptstyle\bullet$}}
\put(-173,97){\line(0,1){20}}
\put(-113,97){\line(-1,1){20}}
\put(-93,117){\line(0,1){20}}
\put(-73,117){\line(0,1){20}}
\put(-53,117){\line(0,1){20}}
\put(-36,115){{$\scriptstyle\times$}}
\put(-55,115){{$\scriptstyle\bullet$}}

\put(-196,115){{$\scriptstyle\times$}}
\put(-15.7,114.3){{$\circ$}}
\put(-175,95){{$\scriptstyle\bullet$}}
\put(-156,95){{$\scriptstyle\times$}}
\put(-135.7,94.3){{$\circ$}}
\put(-115,95){{$\scriptstyle\bullet$}}
\put(-95,95){{$\scriptstyle\bullet$}}
\put(-75,95){{$\scriptstyle\bullet$}}
\put(-173,77){\line(0,1){20}}
\put(-93,97){\line(0,1){20}}
\put(-133,117){\line(0,1){20}}
\put(-73,97){\line(0,1){20}}
\put(-103,97){\oval(20,20)[b]}
\put(-53,97){\line(0,1){20}}
\put(-36,95){{$\scriptstyle\times$}}
\put(-55,95){{$\scriptstyle\bullet$}}

\put(-196,95){{$\scriptstyle\times$}}
\put(-15.7,94.3){{$\circ$}}
\put(-175,75){{$\scriptstyle\bullet$}}
\put(-156,75){{$\scriptstyle\times$}}
\put(-135.7,74.3){{$\circ$}}
\put(-115.7,74.3){{$\circ$}}
\put(-96,75){{$\scriptstyle\times$}}
\put(-75,75){{$\scriptstyle\bullet$}}
\put(-173,117){\line(1,1){20}}
\put(-73,77){\line(0,1){20}}
\put(-53,77){\line(0,1){20}}
\put(-36,75){{$\scriptstyle\times$}}
\put(-55,75){{$\scriptstyle\bullet$}}

\put(-196,75){{$\scriptstyle\times$}}
\put(-15.7,74.3){{$\circ$}}
\put(-175,55){{$\scriptstyle\bullet$}}
\put(-155,55){{$\scriptstyle\bullet$}}
\put(-135,55){{$\scriptstyle\bullet$}}
\put(-115.7,54.3){{$\circ$}}
\put(-96,55){{$\scriptstyle\times$}}
\put(-75,55){{$\scriptstyle\bullet$}}
\put(-173,57){\line(0,1){20}}
\put(-73,57){\line(0,1){20}}
\put(-143,57){\oval(20,20)[t]}
\put(-53,57){\line(0,1){20}}
\put(-36,55){{$\scriptstyle\times$}}
\put(-55,55){{$\scriptstyle\bullet$}}

\put(-196,55){{$\scriptstyle\times$}}
\put(-15.7,54.3){{$\circ$}}
\put(-175.7,34.3){{$\circ$}}
\put(-156,35){{$\scriptstyle\times$}}
\put(-135,35){{$\scriptstyle\bullet$}}
\put(-115.7,34.3){{$\circ$}}
\put(-96,35){{$\scriptstyle\times$}}
\put(-75,35){{$\scriptstyle\bullet$}}
\put(-133,37){\line(0,1){20}}
\put(-73,37){\line(0,1){20}}
\put(-163,57){\oval(20,20)[b]}
\put(-53,37){\line(0,1){20}}
\put(-36,35){{$\scriptstyle\times$}}
\put(-55,35){{$\scriptstyle\bullet$}}

\put(-196,35){{$\scriptstyle\times$}}
\put(-15.7,34.3){{$\circ$}}
\put(-175.7,14.3){{$\circ$}}
\put(-156,15){{$\scriptstyle\times$}}
\put(-135,15){{$\scriptstyle\bullet$}}
\put(-115.7,14.3){{$\circ$}}
\put(-95,15){{$\scriptstyle\bullet$}}
\put(-76,15){{$\scriptstyle\times$}}
\put(-133,17){\line(0,1){20}}
\put(-73,37){\line(-1,-1){20}}
\put(-53,17){\line(0,1){20}}
\put(-36,15){{$\scriptstyle\times$}}
\put(-55,15){{$\scriptstyle\bullet$}}

\put(-196,15){{$\scriptstyle\times$}}
\put(-15.7,14.3){{$\circ$}}
\put(-196,-5){{$\scriptstyle\times$}}
\put(-175.7,-5.7){{$\circ$}}
\put(-155,-5){{$\scriptstyle\bullet$}}
\put(-136,-5){{$\scriptstyle\times$}}
\put(-115.7,-5.7){{$\circ$}}
\put(-95,-5){{$\scriptstyle\bullet$}}
\put(-76,-5){{$\scriptstyle\times$}}
\put(-55,-5){{$\scriptstyle\bullet$}}
\put(-36,-5){{$\scriptstyle\times$}}
\put(-15.7,-5.7){{$\circ$}}
\put(-153,-3){\line(1,1){20}}
\put(-93,-3){\line(0,1){20}}
\put(-53,-3){\line(0,1){20}}
\end{picture}
$$
\caption{An associated composite matching}\label{fig1}
\end{figure}
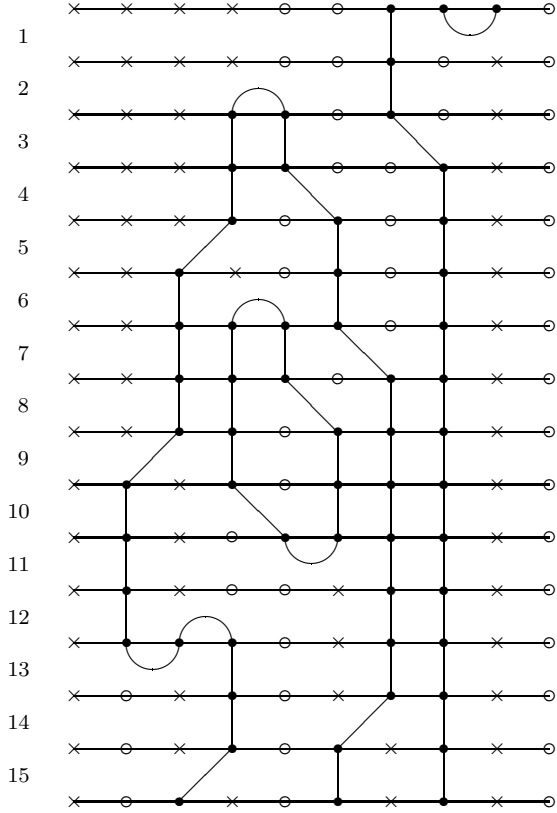

\begin{Lemma}\label{caniso}
Let $\bi \in I^d$ be a $\Ga$-admissible sequence,
and $\bGa$ and $\bt$ be the associated block
sequence and composite matching.
There is a
canonical isomorphism
between $F_{\bi}|_{\Rep{K_\Ga}}$
and the functor $G^{\bt}_{\bGa}$ that arises by tensoring with the
bimodule $K^\bt_{\bGa}\langle-\caps(\bt)\rangle$.
\end{Lemma}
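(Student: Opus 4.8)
The plan is to reduce the statement to the functorial identity~(\ref{miso2}), the only real work being to match up the restriction $F_{\bi}|_{\Rep{K_\Ga}}$ with the composition $G^{t_d}_{\Ga_d \Ga_{d-1}}\circ\cdots\circ G^{t_1}_{\Ga_1 \Ga_0}$ appearing there.

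First I would unwind the definitions. Recall from~(\ref{gFi}) that $F_{i}=\bigoplus_{\Ga'} G^{t_{i}(\Ga')}_{(\Ga'-\alpha_{i})\Ga'}$, where each summand $G^{t}_{\De'\Ga'}=K^{t}_{\De'\Ga'}\langle-\caps(t)\rangle\otimes_{\KImn}?$ is obtained by tensoring with a bimodule whose left (resp.\ right) action, under the extension-by-zero conventions following~(\ref{KI}), factors through $K_{\De'}$ (resp.\ $K_{\Ga'}$). Consequently $G^{t}_{\De'\Ga'}$ annihilates every module supported on a block other than $\Ga'$, and sends a module supported on $\Ga'$ to one supported on $\De'$. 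Writing $\Ga_r:=\Ga-\alpha_{i_1}-\cdots-\alpha_{i_r}$ and $t_r:=t_{i_r}(\Ga_{r-1})$, an easy induction on $d$ then shows that $F_{\bi}|_{\Rep{K_\Ga}}$ is canonically isomorphic (essentially by the identity, discarding zero summands) to $G^{t_d}_{\Ga_d \Ga_{d-1}}\circ\cdots\circ G^{t_1}_{\Ga_1 \Ga_0}$: at the $r$-th stage the input is supported on $\Ga_{r-1}$, so all summands of $F_{i_r}$ other than $G^{t_r}_{\Ga_r \Ga_{r-1}}$ act as zero, and the output is supported on $\Ga_r$. The hypothesis that $\bi$ is a $\Ga$-admissible sequence is precisely what guarantees $\Ga_r\in\PImn$ for every $r$, so that $\bGa=\Ga_d\cdots\Ga_0$ and $\bt=t_d\cdots t_1$ really are the associated block sequence and composite matching of the statement, and so that no intermediate functor collapses for the wrong reason.

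Second, I would simply invoke~(\ref{miso2}). By~(\ref{miso}), the associative multiplication on the geometric bimodules furnishes the canonical graded bimodule isomorphism $K^{t_d}_{\Ga_d \Ga_{d-1}}\otimes_{\KImn}\cdots\otimes_{\KImn}K^{t_1}_{\Ga_1 \Ga_0}\stackrel{\sim}{\rightarrow}K^{\bt}_{\bGa}$; shifting degrees by $\caps(\bt)=\sum_{r}\caps(t_r)$ and tensoring with $?$ yields the functor isomorphism $G^{t_d}_{\Ga_d \Ga_{d-1}}\circ\cdots\circ G^{t_1}_{\Ga_1 \Ga_0}\stackrel{\sim}{\rightarrow}G^{\bt}_{\bGa}$ of~(\ref{miso2}). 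Composing this with the isomorphism of the previous paragraph produces the desired canonical isomorphism $F_{\bi}|_{\Rep{K_\Ga}}\cong G^{\bt}_{\bGa}=K^{\bt}_{\bGa}\langle-\caps(\bt)\rangle\otimes_{\KImn}?$, as claimed.

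There is no serious obstacle here; the substance is all contained in \cite{BS2}. The one point deserving care is the word \emph{canonical}: I would be explicit that the identification in the second paragraph uses the obvious block-projection isomorphisms rather than an arbitrary choice, and I would remark that the associativity built into~(\ref{miso}) (from \cite[Theorem~3.5(iii)]{BS2}) makes the resulting $d$-fold isomorphism independent of how the composition is bracketed, which is what one needs for later coherence arguments. (In the degenerate case where $\bt$ is not proper, both $K^{\bt}_{\bGa}$ and the composite functor vanish, so there is nothing to check.)
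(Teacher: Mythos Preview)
Your proof is correct and follows exactly the same approach as the paper: identify $F_{\bi}|_{\Rep{K_\Ga}}$ with the composition $G^{t_d}_{\Ga_d\Ga_{d-1}}\circ\cdots\circ G^{t_1}_{\Ga_1\Ga_0}$ using the definition~(\ref{gFi}), then apply~(\ref{miso2}). You have simply spelled out in detail what the paper compresses into two sentences.
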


\begin{proof}
By the definition (\ref{gFi}), the restriction of $F_{\bi}$ to $\Rep{K_\Ga}$
is equal to the composition
$G^{t_d}_{\Ga_d\Ga_{d-1}} \circ\cdots\circ
G^{t_1}_{\Ga_1\Ga_0}$. Now apply (\ref{miso2}).
\end{proof}

\begin{Corollary}
The restriction
$F_\bi|_{\Rep{K_\Ga}}$ is non-zero
if and only if $\bi$ is a proper $\Ga$-admissible sequence.
\end{Corollary}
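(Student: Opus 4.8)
\emph{Proof proposal.}
The plan is to read this off from Lemma~\ref{caniso} together with the basic non\-vanishing criterion for the bimodules $K^\bt_\bGa$ recorded in $\S$\ref{sgbpf}. First I would dispose of the trivial case: if $\bi$ is not a $\Ga$-admissible sequence then $F_\bi|_{\Rep{K_\Ga}}$ is the zero functor, as observed above, and at the same time $\bi$ is not a \emph{proper} $\Ga$-admissible sequence by definition, so both sides of the equivalence fail and there is nothing to prove. Hence assume from now on that $\bi$ is $\Ga$-admissible, and let $\bGa = \Ga_d \cdots \Ga_0$ (with $\Ga_0 = \Ga$) and $\bt = t_d \cdots t_1$ be the associated block sequence and composite matching.

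By Lemma~\ref{caniso} we may identify $F_\bi|_{\Rep{K_\Ga}}$ with the functor $G^\bt_\bGa = K^\bt_\bGa\langle -\caps(\bt)\rangle \otimes_{\KImn}?$. Now recall from $\S$\ref{sgbpf} that the bimodule $K^\bt_\bGa$ is non-zero if and only if $\bt$ is a proper $\bGa$-matching, which by the definition made just before the corollary is precisely the statement that $\bi$ is a proper $\Ga$-admissible sequence. So the claim reduces to the assertion that the functor $G^\bt_\bGa$ is non-zero if and only if the bimodule $K^\bt_\bGa$ is non-zero.

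The forward implication is immediate: if $K^\bt_\bGa = 0$ then $G^\bt_\bGa$ is the zero functor. For the converse I would simply evaluate $G^\bt_\bGa$ on a projective generator. Since $K_\Ga$ is finite dimensional, the regular module $K_\Ga$ lies in $\Rep{K_\Ga}$, and because all summands of $\KImn$ other than $K_\Ga$ annihilate it we have
\[
G^\bt_\bGa(K_\Ga) = K^\bt_\bGa\langle -\caps(\bt)\rangle \otimes_{\KImn} K_\Ga \cong K^\bt_\bGa\langle -\caps(\bt)\rangle .
\]
Thus if $K^\bt_\bGa \neq 0$ then $G^\bt_\bGa$, and hence $F_\bi|_{\Rep{K_\Ga}}$, sends a module to a non-zero module and so is non-zero. (Equivalently, decomposing $K_\Ga$ into indecomposable projectives $P(\la)$, $\la \in \Ga$, some $G^\bt_\bGa(P(\la)) = K^\bt_\bGa e_\la\langle -\caps(\bt)\rangle$ must be non-zero.)

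I do not anticipate any real obstacle: the argument is a direct corollary of Lemma~\ref{caniso}, and the only point needing a word of care is that a non-zero bimodule yields a non-zero tensor functor, which holds here because $K_\Ga$ is finite dimensional and so belongs to the category on which the functor acts.
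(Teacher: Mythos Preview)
Your proof is correct and follows essentially the same approach as the paper: invoke Lemma~\ref{caniso} to identify $F_\bi|_{\Rep{K_\Ga}}$ with $G^\bt_\bGa$, then use the criterion from \S\ref{sgbpf} that $K^\bt_\bGa$ is non-zero if and only if $\bt$ is a proper $\bGa$-matching. The only difference is that you spell out explicitly why a non-zero bimodule yields a non-zero tensor functor (by evaluating on the regular module $K_\Ga$), whereas the paper simply asserts that $G^\bt_\bGa$ is non-zero if and only if $\bt$ is proper and points back to the discussion before~(\ref{tlanotation}).
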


\begin{proof}
This follows from
Lemma~\ref{caniso} and the fact that $G^\bt_\bGa$ is non-zero
if and only if $\bt$ is a proper $\bGa$-matching
(recall the discussion immediately before (\ref{tlanotation})).
\end{proof}

Suppose that $\bt = t_d \cdots t_1$
is the composite matching associated to some
$\Ga$-admissible sequence $\bi \in I^d$.
The diagram $\bt$ involves various different
connected components.
We refer to such a component $C$ as
a {\em generalised cap}
if it is connected to the bottom number line but not the top number line,
a {\em generalised cup}
if it is connected to the top number line but not the bottom number line,
a {\em propagating line}
if it is connected to both the bottom and the top number lines,
and an {\em internal circle} if it is not connected to
either the top or the bottom number lines.
The example in Figure~\ref{fig1}
contains one of each of these sorts of components.
An internal circle is called a {\em small circle} if it consists
just of a single cap and a single cup.

Recalling (\ref{CKLR}), each level $t_r$ of $\bt$
contains exactly one of the following:
a cup, a cap, a right-shift or a left-shift.
We say that a component $C$ of $\bt$ has a cup at level $r$, a cap
at level $r$, a right-shift at level $r$
or a left-shift at level $r$ if
there is such a cup, cap, right-shift or left-shift in $t_r$
that lies on the component $C$.
We say that $C$ is {\em non-trivial} at level $r$ if one of these
four things occurs.
Define the {\em height} of $C$ to be the number of $r=1,\dots,d$
such that $C$ is non-trivial at level $r$.
The sum of the heights of all the components of $\bt$
is equal to the height $d$ of the composite matching $\bt$ itself.
For example, if $C$ is
the internal circle in Figure~\ref{fig1}
then $C$ is non-trivial
at levels 6, 8, 10 and 11, hence it
is of height four, and it
has a cap at level $6$.

\begin{Lemma}\label{as}
Let $\Ga \in \PImn$.
\begin{itemize}
\item[(i)]
$(i,i)$ is a $\Ga$-admissible sequence
if and only if the $i$th and $(i+1)$th vertices of $\Ga$ are
labelled $\times$ and
$\circ$, respectively.
The asssociated composite matching contains a small internal circle:
$$
\begin{picture}(50,62)
\put(-84,55){$_{\defect(\Ga) \geq 0}$}

\put(-103,44){\vector(0,-1){44}}
\put(-128,30){$F_i$}
\put(-128,10){$F_{i}$}
\put(-82,42){\line(1,0){28}}
\put(-82,22){\line(1,0){28}}
\put(-82,2){\line(1,0){28}}

\put(-80,20){{$\scriptstyle\bullet$}}
\put(-60,20){{$\scriptstyle\bullet$}}
\put(-60.3,39.3){{$\circ$}}
\put(-80.3,-0.7){{$\circ$}}
\put(-81,40){{$\scriptstyle\times$}}
\put(-61,0){{$\scriptstyle\times$}}
\put(-68,22){\oval(20,20)[t]}
\put(-68,22){\oval(20,20)[b]}
\end{picture}
$$
\item[(ii)]
$(i,i+1)$ and $(i+1,i)$ are both $\Ga$-admissible sequences
if and only if both of the following conditions hold:
\begin{itemize}
\item[(a)]
the $(i+1)$th vertex of $\Ga$ is labelled $\bullet$;
\item[(b)]
  if the $i$th or the $(i+2)$th vertex of $\Ga$ is labelled
$\bullet$ then $\defect(\Ga) \geq 1$.
\end{itemize}
The associated composite matchings fall into the following
four families (displaying only the strip between $i$
and $i+2$):
$$
\begin{picture}(50,61)
\put(-103,44){\vector(0,-1){44}}
\put(-128,30){$F_i$}
\put(-128,10){$F_{i+1}$}

\put(-76,55){$_{\defect(\Ga) \geq 0}$}
\put(-16,55){$_{\defect(\Ga) \geq 1}$}
\put(44,55){$_{\defect(\Ga) \geq 1}$}
\put(104,55){$_{\defect(\Ga) \geq 1}$}
\put(-82,42){\line(1,0){48}}
\put(-82,22){\line(1,0){48}}
\put(-82,2){\line(1,0){48}}
\put(-58,42){\line(-1,-1){20}}
\put(-81,40){{$\scriptstyle\times$}}
\put(-60,40){{$\scriptstyle\bullet$}}
\put(-40.7,39.3){{$\circ$}}
\put(-78,22){\line(0,-1){20}}
\put(-80,20){$\scriptstyle\bullet$}
\put(-61,20){$\scriptstyle\times$}
\put(-40.7,19.3){{$\circ$}}
\put(-80,0){$\scriptstyle\bullet$}
\put(-60,0){$\scriptstyle\bullet$}
\put(-40,0){{$\scriptstyle\bullet$}}
\put(-48,2){\oval(20,20)[t]}

\put(-22,42){\line(1,0){48}}
\put(-22,22){\line(1,0){48}}
\put(-22,2){\line(1,0){48}}
\put(-8,40){\oval(20,20)[b]}
\put(-20,40){{$\scriptstyle\bullet$}}
\put(0,40){{$\scriptstyle\bullet$}}
\put(20,40){{$\scriptstyle\bullet$}}
\put(22,42){\line(0,-1){20}}
\put(-20.7,19.3){$\circ$}
\put(-1,20){$\scriptstyle\times$}
\put(20,20){{$\scriptstyle\bullet$}}
\put(-20.7,-0.7){$\circ$}
\put(0,0){$\scriptstyle\bullet$}
\put(19,0){{$\scriptstyle\times$}}
\put(22,22){\line(-1,-1){20}}
\put(38,42){\line(1,0){48}}
\put(38,22){\line(1,0){48}}
\put(38,2){\line(1,0){48}}
\put(52,40){\oval(20,20)[b]}
\put(40,40){{$\scriptstyle\bullet$}}
\put(60,40){{$\scriptstyle\bullet$}}
\put(79.3,39.3){{$\circ$}}
\put(39.3,19.3){$\circ$}
\put(59,20){$\scriptstyle\times$}
\put(79.3,19.3){{$\circ$}}
\put(39.3,-0.7){$\circ$}
\put(60,0){$\scriptstyle\bullet$}
\put(80,0){{$\scriptstyle\bullet$}}
\put(72,2){\oval(20,20)[t]}
\put(98,42){\line(1,0){48}}
\put(98,22){\line(1,0){48}}
\put(98,2){\line(1,0){48}}
\put(122,42){\line(-1,-1){20}}
\put(99,40){{$\scriptstyle\times$}}
\put(120,40){{$\scriptstyle\bullet$}}
\put(140,40){{$\scriptstyle\bullet$}}
\put(142,42){\line(0,-1){20}}
\put(102,22){\line(0,-1){20}}
\put(100,20){$\scriptstyle\bullet$}
\put(119,20){$\scriptstyle\times$}
\put(140,20){{$\scriptstyle\bullet$}}
\put(142,22){\line(-1,-1){20}}
\put(100,0){$\scriptstyle\bullet$}
\put(120,0){$\scriptstyle\bullet$}
\put(139,0){{$\scriptstyle\times$}}
\end{picture}
$$
$$
\begin{picture}(50,52)
\put(-103,44){\vector(0,-1){44}}
\put(-128,30){$F_{i+1}$}
\put(-128,10){$F_{i}$}

\put(-82,42){\line(1,0){48}}
\put(-82,22){\line(1,0){48}}
\put(-82,2){\line(1,0){48}}
\put(-58,42){\line(1,-1){20}}
\put(-81,40){{$\scriptstyle\times$}}
\put(-60,40){{$\scriptstyle\bullet$}}
\put(-40.7,39.3){{$\circ$}}
\put(-38,22){\line(0,-1){20}}
\put(-81,20){$\scriptstyle\times$}
\put(-60.7,19.3){$\circ$}
\put(-40,20){{$\scriptstyle\bullet$}}
\put(-80,0){$\scriptstyle\bullet$}
\put(-60,0){$\scriptstyle\bullet$}
\put(-40,0){{$\scriptstyle\bullet$}}
\put(-68,2){\oval(20,20)[t]}

\put(-22,42){\line(1,0){48}}
\put(-22,22){\line(1,0){48}}
\put(-22,2){\line(1,0){48}}
\put(12,40){\oval(20,20)[b]}
\put(-20,40){{$\scriptstyle\bullet$}}
\put(0,40){{$\scriptstyle\bullet$}}
\put(20,40){{$\scriptstyle\bullet$}}
\put(-18,42){\line(0,-1){20}}
\put(-20,20){$\scriptstyle\bullet$}
\put(-0.7,19.3){$\circ$}
\put(19,20){{$\scriptstyle\times$}}
\put(-20.7,-0.7){$\circ$}
\put(0,0){$\scriptstyle\bullet$}
\put(19,0){{$\scriptstyle\times$}}
\put(-18,22){\line(1,-1){20}}
\put(38,42){\line(1,0){48}}
\put(38,22){\line(1,0){48}}
\put(38,2){\line(1,0){48}}
\put(62,42){\line(1,-1){20}}
\put(40,40){{$\scriptstyle\bullet$}}
\put(60,40){{$\scriptstyle\bullet$}}
\put(79.3,39.3){{$\circ$}}
\put(42,42){\line(0,-1){20}}
\put(82,22){\line(0,-1){20}}
\put(40,20){$\scriptstyle\bullet$}
\put(59.3,19.3){$\circ$}
\put(80,20){{$\scriptstyle\bullet$}}
\put(42,22){\line(1,-1){20}}
\put(39.3,-0.7){$\circ$}
\put(60,0){$\scriptstyle\bullet$}
\put(80,0){{$\scriptstyle\bullet$}}
\put(98,42){\line(1,0){48}}
\put(98,22){\line(1,0){48}}
\put(98,2){\line(1,0){48}}
\put(132,40){\oval(20,20)[b]}
\put(99,40){{$\scriptstyle\times$}}
\put(120,40){{$\scriptstyle\bullet$}}
\put(140,40){{$\scriptstyle\bullet$}}
\put(99,20){$\scriptstyle\times$}
\put(119.3,19.3){$\circ$}
\put(139,20){{$\scriptstyle\times$}}
\put(100,0){$\scriptstyle\bullet$}
\put(120,0){$\scriptstyle\bullet$}
\put(139,0){{$\scriptstyle\times$}}
\put(112,2){\oval(20,20)[t]}
\end{picture}
\vspace{2mm}
$$
\item[(iii)] $(i,i+1)$ is a $\Ga$-admissible sequence but
$(i+1,i)$ is not if and only if exactly one of the following
conditions holds:
\begin{itemize}
\item[(a)] the $(i+1)$th vertex of $\Ga$ is labelled $\circ$,
and moreover if the $i$th and $(i+2)$th vertices
are labelled $\bullet$ then $\defect(\Ga) \geq 1$;
\item[(b)]
the $i$th, $(i+1)$th and $(i+2)$th vertices
are labelled $\times,\bullet$ and $\bullet$, respectively,
and $\defect(\Ga) = 0$.
\end{itemize}
The associated composite matchings are as follows:
$$
\begin{picture}(50,64)
\put(-76,55){$_{\defect(\Ga) \geq 0}$}
\put(-16,55){$_{\defect(\Ga) \geq 0}$}
\put(44,55){$_{\defect(\Ga) \geq 0}$}
\put(104,55){$_{\defect(\Ga) \geq 1}$}
\put(164,55){$_{\defect(\Ga) = 0}$}

\put(-103,44){\vector(0,-1){44}}
\put(-128,30){$F_i$}
\put(-128,10){$F_{i+1}$}
\put(-82,42){\line(1,0){48}}
\put(-82,22){\line(1,0){48}}
\put(-82,2){\line(1,0){48}}
\put(-80,40){{$\scriptstyle\bullet$}}
\put(-60.7,39.3){{$\circ$}}
\put(-40.7,39.3){{$\circ$}}
\put(-78,42){\line(1,-1){20}}
\put(-80.7,19.3){$\circ$}
\put(-60,20){$\scriptstyle\bullet$}
\put(-40.7,19.3){{$\circ$}}
\put(-80.7,-0.7){$\circ$}
\put(-60.7,-0.7){$\circ$}
\put(-40,0){{$\scriptstyle\bullet$}}
\put(-58,22){\line(1,-1){20}}
\put(38,42){\line(1,0){48}}
\put(38,22){\line(1,0){48}}
\put(38,2){\line(1,0){48}}
\put(39,40){{$\scriptstyle\times$}}
\put(59.3,39.3){{$\circ$}}
\put(79.3,39.3){{$\circ$}}
\put(42,22){\line(0,-1){20}}
\put(62,22){\line(1,-1){20}}
\put(40,20){$\scriptstyle\bullet$}
\put(60,20){$\scriptstyle\bullet$}
\put(79.3,19.3){{$\circ$}}
\put(40,0){$\scriptstyle\bullet$}
\put(59.3,-0.7){$\circ$}
\put(80,0){{$\scriptstyle\bullet$}}
\put(52,22){\oval(20,20)[t]}
\put(-22,42){\line(1,0){48}}
\put(-22,22){\line(1,0){48}}
\put(-22,2){\line(1,0){48}}
\put(22,42){\line(-1,0){20}}
\put(-21,40){{$\scriptstyle\times$}}
\put(-0.7,39.3){{$\circ$}}
\put(20,40){{$\scriptstyle\bullet$}}
\put(22,42){\line(0,-1){20}}
\put(-18,22){\line(0,-1){20}}
\put(-20,20){$\scriptstyle\bullet$}
\put(0,20){$\scriptstyle\bullet$}
\put(20,20){{$\scriptstyle\bullet$}}
\put(-20,0){$\scriptstyle\bullet$}
\put(-0.7,-0.7){$\circ$}
\put(19,0){{$\scriptstyle\times$}}
\put(-8,22){\oval(20,20)[t]}
\put(12,22){\oval(20,20)[b]}
\put(98,42){\line(1,0){48}}
\put(98,22){\line(1,0){48}}
\put(98,2){\line(1,0){48}}
\put(100,40){{$\scriptstyle\bullet$}}
\put(119.3,39.3){{$\circ$}}
\put(140,40){{$\scriptstyle\bullet$}}
\put(142,42){\line(0,-1){20}}
\put(102,42){\line(1,-1){20}}
\put(99.3,19.3){$\circ$}
\put(120,20){$\scriptstyle\bullet$}
\put(140,20){{$\scriptstyle\bullet$}}
\put(99.3,-0.7){$\circ$}
\put(119.3,-0.7){$\circ$}
\put(139,0){{$\scriptstyle\times$}}
\put(132,22){\oval(20,20)[b]}
\put(158,42){\line(1,0){48}}
\put(158,22){\line(1,0){48}}
\put(158,2){\line(1,0){48}}
\put(182,42){\line(-1,-1){20}}
\put(159,40){{$\scriptstyle\times$}}
\put(180,40){{$\scriptstyle\bullet$}}
\put(200,40){{$\scriptstyle\bullet$}}
\put(202,42){\line(0,-1){20}}
\put(162,22){\line(0,-1){20}}
\put(160,20){$\scriptstyle\bullet$}
\put(179,20){$\scriptstyle\times$}
\put(200,20){{$\scriptstyle\bullet$}}
\put(202,22){\line(-1,-1){20}}
\put(160,0){$\scriptstyle\bullet$}
\put(180,0){$\scriptstyle\bullet$}
\put(199,0){{$\scriptstyle\times$}}
\end{picture}
$$
\item[(iv)]
 $(i+1,i)$ is a $\Ga$-admissible sequence but
$(i,i+1)$ is not if and only exactly one of the following conditions holds:
\begin{itemize}
\item[(a)] the $(i+1)$th vertex of $\Ga$ is labelled $\times$, and moreover
if the $i$th and $(i+2)$th vertices
are labelled $\bullet$ then $\defect(\Ga) \geq 1$;
\item[(b)]
the $i$th, $(i+1)$th and $(i+2)$th vertices
are labelled $\bullet,\bullet$ and $\circ$, respectively,
and $\defect(\Ga) = 0$.
\end{itemize}
The associated composite matchings are as follows:
$$
\begin{picture}(50,64)
\put(-76,55){$_{\defect(\Ga) \geq 0}$}
\put(-16,55){$_{\defect(\Ga) \geq 0}$}
\put(44,55){$_{\defect(\Ga) \geq 1}$}
\put(104,55){$_{\defect(\Ga) \geq 0}$}
\put(164,55){$_{\defect(\Ga) = 0}$}

\put(-103,44){\vector(0,-1){44}}
\put(-128,30){$F_{i+1}$}
\put(-128,10){$F_{i}$}
\put(-82,42){\line(1,0){48}}
\put(-82,22){\line(1,0){48}}
\put(-82,2){\line(1,0){48}}
\put(-81,40){{$\scriptstyle\times$}}
\put(-61,40){{$\scriptstyle\times$}}
\put(-40,40){{$\scriptstyle\bullet$}}
\put(-38,42){\line(-1,-1){20}}
\put(-81,20){$\scriptstyle\times$}
\put(-60,20){$\scriptstyle\bullet$}
\put(-41,20){{$\scriptstyle\times$}}
\put(-80,0){$\scriptstyle\bullet$}
\put(-61,0){$\scriptstyle\times$}
\put(-41,0){{$\scriptstyle\times$}}
\put(-58,22){\line(-1,-1){20}}
\put(98,42){\line(1,0){48}}
\put(98,22){\line(1,0){48}}
\put(98,2){\line(1,0){48}}
\put(99,40){{$\scriptstyle\times$}}
\put(119,40){{$\scriptstyle\times$}}
\put(139.3,39.3){{$\circ$}}
\put(142,22){\line(0,-1){20}}
\put(122,22){\line(-1,-1){20}}
\put(99,20){$\scriptstyle\times$}
\put(120,20){$\scriptstyle\bullet$}
\put(140,20){{$\scriptstyle\bullet$}}
\put(100,0){$\scriptstyle\bullet$}
\put(119,0){$\scriptstyle\times$}
\put(140,0){{$\scriptstyle\bullet$}}
\put(132,22){\oval(20,20)[t]}
\put(-22,42){\line(1,0){48}}
\put(-22,22){\line(1,0){48}}
\put(-22,2){\line(1,0){48}}
\put(22,42){\line(-1,0){20}}
\put(-20,40){{$\scriptstyle\bullet$}}
\put(-1,40){{$\scriptstyle\times$}}
\put(19.3,39.3){{$\circ$}}
\put(-18,42){\line(0,-1){20}}
\put(22,22){\line(0,-1){20}}
\put(-20,20){$\scriptstyle\bullet$}
\put(0,20){$\scriptstyle\bullet$}
\put(20,20){{$\scriptstyle\bullet$}}
\put(-20.7,-0.7){$\circ$}
\put(-1,0){$\scriptstyle\times$}
\put(20,0){{$\scriptstyle\bullet$}}
\put(-8,22){\oval(20,20)[b]}
\put(12,22){\oval(20,20)[t]}
\put(38,42){\line(1,0){48}}
\put(38,22){\line(1,0){48}}
\put(38,2){\line(1,0){48}}
\put(40,40){{$\scriptstyle\bullet$}}
\put(59,40){{$\scriptstyle\times$}}
\put(80,40){{$\scriptstyle\bullet$}}
\put(42,42){\line(0,-1){20}}
\put(62,22){\line(1,1){20}}
\put(40,20){$\scriptstyle\bullet$}
\put(60,20){$\scriptstyle\bullet$}
\put(79,20){{$\scriptstyle\times$}}
\put(39.3,-0.7){$\circ$}
\put(59,0){$\scriptstyle\times$}
\put(79,0){{$\scriptstyle\times$}}
\put(52,22){\oval(20,20)[b]}
\put(158,42){\line(1,0){48}}
\put(158,22){\line(1,0){48}}
\put(158,2){\line(1,0){48}}
\put(182,42){\line(1,-1){20}}
\put(160,40){{$\scriptstyle\bullet$}}
\put(180,40){{$\scriptstyle\bullet$}}
\put(199.3,39.3){{$\circ$}}
\put(162,42){\line(0,-1){20}}
\put(202,22){\line(0,-1){20}}
\put(160,20){$\scriptstyle\bullet$}
\put(179.3,19.3){$\circ$}
\put(200,20){{$\scriptstyle\bullet$}}
\put(162,22){\line(1,-1){20}}
\put(159.3,-0.7){$\circ$}
\put(180,0){$\scriptstyle\bullet$}
\put(200,0){{$\scriptstyle\bullet$}}
\end{picture}
$$
\item[(v)] For $|i-j| > 1$,
$(i,j)$ and $(j,i)$ are both $\Ga$-admissible sequences if and only if
all of the following conditions hold:
\begin{itemize}
\item[(a)] the $i$th vertex of $\Ga$ is labelled $\bullet$ or $\times$,
the $(i+1)$th vertex of $\Ga$ is labelled $\circ$ or $\bullet$,
and if both are labelled $\bullet$ then $\defect(\Ga) \geq 1$;
\item[(b)] the $j$th vertex of $\Ga$ is labelled $\bullet$ or $\times$,
the $(j+1)$th vertex of $\Ga$ is labelled $\circ$ or $\bullet$,
and if both are labelled $\bullet$ then $\defect(\Ga) \geq 1$;
\item[(c)] if the $i$th, $(i+1)$th, $j$th and $(j+1)$th vertices
of $\Ga$ are all labelled $\bullet$
then $\defect(\Ga) \geq 2$.
\end{itemize}
In all cases, $(i,j)$ and $(j,i)$ are proper
$\Ga$-admissible sequences, and the associated composite matchings
have the same reductions and contain
no internal circles.
\item[(vi)]
For $|i-j| > 1$,
$(i,j)$ is a $\Ga$-admissible sequence but $(j,i)$ is not if and only if
 the $i$th, $(i+1)$th, $j$th and $(j+1)$th vertices
of $\Ga$ are labelled $\times,\circ,\bullet$ and $\bullet$,
respectively, and  $\defect(\Ga) = 0$.
In this case $(i,j)$ is not a proper $\Ga$-admissible sequence:
\end{itemize}
$$
\begin{picture}(50,64)
\put(-64,55){$_{\defect(\Ga) = 0}$}

\put(-103,44){\vector(0,-1){44}}
\put(-128,30){$F_i$}
\put(-128,10){$F_{j}$}
\put(-82,42){\line(1,0){28}}
\put(-82,22){\line(1,0){28}}
\put(-82,2){\line(1,0){28}}
\dashline{2.5}(-52,42)(-40,42)
\dashline{2.5}(-52,2)(-40,2)
\dashline{2.5}(-52,22)(-40,22)
\put(-40,42){\line(1,0){28}}
\put(-40,22){\line(1,0){28}}
\put(-40,2){\line(1,0){28}}

\put(-38,40){{$\scriptstyle\bullet$}}
\put(-18,40){{$\scriptstyle\bullet$}}
\put(-38,20){{$\scriptstyle\bullet$}}
\put(-18,20){{$\scriptstyle\bullet$}}
\put(-80,0){{$\scriptstyle\bullet$}}
\put(-60,0){{$\scriptstyle\bullet$}}
\put(-80,20){{$\scriptstyle\bullet$}}
\put(-60,20){{$\scriptstyle\bullet$}}

\put(-36,42){\line(0,-1){20}}
\put(-16,42){\line(0,-1){20}}
\put(-78,22){\line(0,-1){20}}
\put(-58,22){\line(0,-1){20}}
\put(-38.3,-0.7){{$\circ$}}
\put(-60.3,39.3){{$\circ$}}
\put(-81,40){{$\scriptstyle\times$}}
\put(-19,0){{$\scriptstyle\times$}}
\put(-26,22){\oval(20,20)[b]}
\put(-68,22){\oval(20,20)[t]}
\end{picture}
$$
\end{Lemma}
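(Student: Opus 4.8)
The plan is to deduce the whole statement from the single-step admissibility criterion encoded in~(\ref{CKLR}) by a finite case analysis, together with a little bookkeeping in the diagram calculus of \cite{BS2}. I would first restate that criterion in a usable form: an index $i \in I$ is $\Ga$-admissible precisely when the labels of $\Ga$ at vertices $i$ and $i+1$ form one of the pairs $\times\,\circ$, $\bullet\,\circ$, $\times\,\bullet$ or $\bullet\,\bullet$, the last requiring in addition that $\defect(\Ga) \geq 1$; these four cases insert respectively a cap, a right-shift, a left-shift or a cup at level $i$ of the otherwise-identity matching $t_i(\Ga)$, and they change the defect by $+1$, $0$, $0$ and $-1$. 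Since the labels of $\Ga_r := \Ga_{r-1} - \alpha_{i_r}$ differ from those of $\Ga_{r-1}$ only at vertices $i_r, i_r+1$ and $\defect$ changes by the recorded amount, admissibility of a two-term sequence $(i_1,i_2)$ is decided by the labels of $\Ga$ near $i_1$ and $i_2$ together with $\defect(\Ga)$.

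Next I would split into the cases $i_1 = i_2$, $|i_1 - i_2| = 1$ and $|i_1 - i_2| > 1$, which give part~(i); parts~(ii)--(iv); and parts~(v)--(vi). When the two elementary windows overlap (the first two cases), for each of the at most four admissible first steps I would compute the labels and the defect of $\Ga_1$ at the relevant vertices and then list which elementary diagram, if any, the second step may insert; matching the resulting pairs against the pictures in the statement of the lemma yields both the equivalences and the displayed composite matchings. For example, with $i_1 = i_2 = i$ the only first step leaving room for a second is a cap (forcing the window $\times\,\circ$), after which the defect has risen by one and a cup at the same two vertices is available, producing exactly the cap-then-cup diagram of~(i) with its small internal circle. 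When $|i_1 - i_2| > 1$ the windows are disjoint, so each step is governed by its own window and the only coupling is through the defect; tabulating the combinations for $(i,j)$ and, symmetrically, for $(j,i)$ gives~(v) and~(vi)---in particular the threshold $\defect(\Ga) \geq 2$ in~(v)(c) arises because, with all four relevant vertices labelled $\bullet$, the first cup lowers the defect by one and the second cup still needs the result to be at least one.

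I would then read off each composite matching $\bt = t_{i_2}(\Ga_1)\,t_{i_1}(\Ga_0)$ by vertically stacking the two elementary diagrams over their common identity strip, and compute its reduction and internal circles by the recipe of \cite[$\S$2]{BS2}. The only closed loop that can form is a small circle built from a cap at the upper level and a cup at the lower level over the same pair of adjacent vertices; this happens precisely in~(i), while in all other cases the cups and caps at the intermediate level glue onto propagating lines or onto the top and bottom number lines, so the stacked diagram reduces to honest cups, caps and propagating lines with no internal circle. For $|i_1 - i_2| > 1$ disjointness makes this transparent, and the reductions for $(i,j)$ and $(j,i)$ agree because the two elementary operations act on disjoint vertices.

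Finally I would settle the properness claims using the fact (recorded just before~(\ref{tlanotation})) that a composite matching is proper exactly when it admits an oriented $\bGa$-matching. In~(i)--(v) one simply exhibits compatible weights: each elementary diagram is orientable once the corresponding index is admissible---when a cup occurs, the hypothesis $\defect \geq 1$ furnishes a weight orienting it---and for disjoint windows these orientations may be chosen independently. The sole genuine obstruction is~(vi): there $\defect(\Ga) = 0$, so $\Ga_0$ and $\Ga_2$ are each a single weight, and the identity segments of the two levels force the middle weight $\ga_1$ to agree with that weight on every $\bullet$-vertex; but $\defect(\Ga_1) = 1 > 0$, so $\Ga_1$ contains no such weight, hence no oriented $\bGa$-matching exists and $(i,j)$ is improper. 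The hard part will not be any single idea but keeping the case list in the overlapping-windows situation exhaustive and every defect threshold ($\geq 0$, $\geq 1$, $\geq 2$, $= 0$) consistent through both steps; the one place needing real thought rather than bookkeeping is this improperness in~(vi).
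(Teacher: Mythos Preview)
Your proposal is correct and is exactly the expansion of what the paper leaves implicit: the paper's own proof reads in full ``This follows from the definitions,'' and your plan simply unpacks that by running the finite case analysis against the single-step criterion~(\ref{CKLR}) and the defect formula~(\ref{defede}). Your bookkeeping (the four admissible windows, the $+1/0/0/-1$ defect changes, the disjoint-window reduction for $|i-j|>1$, and the improperness argument in~(vi) via the forced middle weight) is the right way to make ``follows from the definitions'' precise, so there is nothing to add.
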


\begin{proof}
This follows from the definitions.
\end{proof}

\phantomsubsection{\boldmath The natural transformations $y(i)$ and $\psi(ij)$}
In this subsection, we
construct natural transformations
\begin{align}\label{cold}
y(i) :\qquad\:\: F_i &\rightarrow F_i,\\
\psi(ij):\:\:F_j \circ F_i
&\rightarrow F_i \circ F_j,\label{bold}
\end{align}
for each $i,j \in I$.
To do this, it suffices by additivity to define natural transformations
\begin{align*}
y(i)_\Ga :\quad\qquad F_i|_{\Rep{K_\Ga}} &\rightarrow F_i|_{\Rep{K_\Ga}},\\
\psi(ij)_\Ga:\:(F_j \circ F_i)|_{\Rep{K_\Ga}}
&\rightarrow (F_i \circ F_j)|_{\Rep{K_\Ga}},
\end{align*}
for each block $\Ga \in \PImn$.

\subsubsection*{The definition of $y(i)_\Ga$}
If $i$ is not $\Ga$-admissible, then $F_i|_{\Rep{K_\Ga}}$ is the
zero functor, and we have to take
$y(i)_\Ga := 0$.
Now assume that $i$ is $\Ga$-admissible.
By the definition (\ref{gFi}),
$F_i|_{\Rep{K_\Ga}}$ is the functor
defined by tensoring with the bimodule
$K^{t}_{\De\Ga}\langle - \caps(t)\rangle$,
where $t := t_i(\Ga)$ and $\De := \Ga - \alpha_i$.
In the next paragraph, we define a bimodule endomorphism
\begin{equation}\label{bimd}
\overline y = \overline y(i)_\Ga:K^{t}_{\De\Ga}\langle-\caps(t)\rangle \rightarrow K^{t}_{\De\Ga}
\langle-\caps(t)\rangle.
\end{equation}
Given this, we let the desired natural transformation $y(i)_\Ga$
on $M \in \Rep{K_\Ga}$ be the
homomorphism
$y(i)_M:F_i M \rightarrow F_i M$
defined by
\begin{equation}\label{bimd2}
y(i)_M := (-1)^{(\Ga,\La_i)} \overline y(i)_\Ga \otimes \id_M.
\end{equation}
The sign $(-1)^{(\Ga,\La_i)}$ here may be computed in practise by counting the
number of vertices to the left or equal to the $i$th vertex that are labelled
$\bullet$
in the block diagram for $\Ga$.
We point out according to the definition in the next paragraph
that the bimodule endomorphism
$\overline y(i)_\Ga$ is homogeneous of degree two,
hence each $y(i)_M$ is of degree two as well.

It remains to define the bimodule endomorphism
$\overline y(i)_\Ga$, which we often refer to as
a {\em positive circle move} (``positive''
because it is of positive degree).
Recall that $K^t_{\De\Ga}\langle -\caps(t)\rangle$ has
a basis consisting of vectors $(a \de t \ga b)$ for
each oriented circle diagram $a \de t \ga b$ with $\de \in \De$
and $\ga \in \Ga$.
Define $\overline y(i)_\Ga$ to be the linear map
sending the basis vector $(a \de t \ga b)$
to $(a \de' t \ga' b)$
if the component of $t$ between vertices $i$ and $i+1$
lies on an anti-clockwise circle in $a \de t \ga b$,
or to $0$ otherwise; here $\de'$ and $\ga'$ are the weights obtained
on switching the orientation of this circle so that it becomes a
clockwise circle.

Unfortunately,
it is not obvious from the definition in the previous paragraph
that $\overline y(i)_\Ga$
is actually a bimodule homomorphism. To see this, we reinterpret the
map $\overline y(i)_\Ga$
in terms of the surgery procedure from \cite[$\S$3]{BS1}:
roughly, it is ``multiplication by a clockwise circle''
at the component of $t$ between vertices $i$ and $i+1$.
We explain precisely what we mean by this just in the case
that $t$ is a right-shift; the other three cases
from (\ref{CKLR}) are interpreted in a similar way.
In this case, the definition of $\overline y(i)_\Ga$
is summarised by the following
diagram (we display only the strip between vertices $i$ and $i+1$):
$$
\begin{picture}(108,95)
\put(-85,59){\line(1,0){28}}
\put(-85,29){\line(1,0){28}}
\put(-83,57){{$\scriptstyle\bullet$}}
\put(-63,27){{$\scriptstyle\bullet$}}
\put(-83.7,26.3){{$\circ$}}
\put(-63.7,56.3){{$\circ$}}
\put(-81,59){\line(2,-3){20}}

\put(0,89){\line(1,0){28}}
\put(-10,59){\line(1,0){48}}
\put(-10,29){\line(1,0){48}}
\put(0,-1){\line(1,0){28}}
\put(24,-1){\line(1,3){10}}
\put(-6,59){\line(1,3){10}}
\put(2,87){{$\scriptstyle\bullet$}}
\put(-8,57){{$\scriptstyle\bullet$}}
\put(12,57){{$\scriptstyle\bullet$}}
\put(32,57){{$\scriptstyle\bullet$}}
\put(32,27){{$\scriptstyle\bullet$}}
\put(22,-3){{$\scriptstyle\bullet$}}
\put(1.3,-3.7){{$\circ$}}
\put(21.3,86.3){{$\circ$}}
\put(34,59){\line(0,-1){30}}
\put(4,59){\oval(20,20)[b]}
\put(24,59){\oval(20,20)[t]}
\put(4,29){\oval(20,20)[t]}
\put(4,29){\oval(20,20)[b]}
\put(-8.8,24.8){{$\scriptstyle\up$}}
\put(11.2,28.8){{$\scriptstyle\down$}}
\dashline{8.2}(4,40.5)(4,47.5)

\put(90,89){\line(1,0){28}}
\put(80,59){\line(1,0){48}}
\put(80,29){\line(1,0){48}}
\put(90,-1){\line(1,0){28}}
\put(114,-1){\line(1,3){10}}
\put(84,59){\line(1,3){10}}
\put(92,87){{$\scriptstyle\bullet$}}
\put(82,57){{$\scriptstyle\bullet$}}
\put(82,27){{$\scriptstyle\bullet$}}
\put(102,27){{$\scriptstyle\bullet$}}
\put(102,57){{$\scriptstyle\bullet$}}
\put(122,57){{$\scriptstyle\bullet$}}
\put(122,27){{$\scriptstyle\bullet$}}
\put(112,-3){{$\scriptstyle\bullet$}}
\put(91.3,-3.7){{$\circ$}}
\put(111.3,86.3){{$\circ$}}
\put(124,59){\line(0,-1){30}}
\put(84,59){\line(0,-1){30}}
\put(104,59){\line(0,-1){30}}
\put(114,59){\oval(20,20)[t]}
\put(94,29){\oval(20,20)[b]}

\put(-38,42){$\rightsquigarrow$}
\put(52,42){$\rightsquigarrow$}
\put(144,42){$\rightsquigarrow$}

\put(175,59){\line(1,0){28}}
\put(175,29){\line(1,0){28}}
\put(177,57){{$\scriptstyle\bullet$}}
\put(197,27){{$\scriptstyle\bullet$}}
\put(176.3,26.3){{$\circ$}}
\put(196.3,56.3){{$\circ$}}
\put(179,59){\line(2,-3){20}}
\end{picture}
$$
Formally, we take a basis vector $(a \de t \ga b) \in K^{t}_{\De\Ga}$ and
proceed as follows:
\begin{itemize}
\item
apply the closure operation from \cite[$\S$3]{BS2}
to convert $a \de t \ga b$
into a closed oriented circle diagram;
\item
extend the diagram by
inserting additional number lines and an internal clockwise circle
as indicated in the above diagram;
\item
apply the surgery procedure from \cite[$\S$3]{BS1}
at the position indicated by the dotted line in the
diagram;
\item reduce the result by removing the additional internal number lines;
\item finally apply the inverse of the closure operation
 to get back to an element of $K^{t}_{\De\Ga}$.
\end{itemize}
This procedure gives
the same linear map $\overline y(i)_\Ga$ as defined
in the previous paragraph.
Moreover the new description implies that $\overline y(i)_\Ga$
is a bimodule homomorphism,
because we know that
any sequence of surgery procedures produces the same result
independent of the order chosen. This is the same observation as
used to justify that the algebra multiplication is well defined and associative
in \cite[$\S$3]{BS1};
its proof involves reformulating the surgery procedure in the language of
TQFT's.

\subsubsection*{The definition of $\psi(ij)_\Ga$}
If $(i,j)$ (resp.\ $(j,i)$) is not a $\Ga$-admissible sequence
then the functor
$(F_j \circ F_i)|_{\Rep{K_\Ga}}$
(resp.\ $(F_i \circ F_j)|_{\Rep{K_\Ga}}$) is the zero functor,
in which case we have to take $\psi(ij)_\Ga := 0$.
Now assume that both $(i,j)$ and $(j,i)$ are $\Ga$-admissible sequences.
Lemma~\ref{caniso} gives us canonical isomorphisms
$$
c':
(F_j \circ F_i)|_{\Rep{K_\Ga}}
\stackrel{\sim}{\rightarrow}
G^{\bt}_{\bGa},\qquad
c:(F_i \circ F_j)|_{\Rep{K_\Ga}}
\stackrel{\sim}{\rightarrow}
G^{\bu}_{\bDe},
$$
where $\bGa$ and
$\bt$ (resp.\ $\bDe$ and $\bu$)
denote the block sequence and
composite matching
associated to $(i,j)$ (resp.\ $(j,i)$).
Recalling $G^{\bt}_{\bGa}$ (resp.\ $G^\bu_\bDe$) is the functor defined
by tensoring with the bimodule $K^\bt_\bGa\langle-\caps(\bt)\rangle$
(resp. $K^\bu_\bDe\langle-\caps(\bu)\rangle$),
the plan is to define a bimodule homomorphism
\begin{equation}\label{betadef}
\overline\psi = \overline\psi(ij)_\Ga:
K^{\bt}_{\bGa} \langle-\caps(\bt)\rangle
\rightarrow K^{\bu}_{\bDe} \langle -\caps(\bu)\rangle.
\end{equation}
Given this, we let
the desired natural transformation
$\psi(ij)_\Ga$ on $M \in \Rep{K_\Ga}$ be the homomorphism
$\psi(ij)_M:F_jF_iM \rightarrow F_iF_jM$ defined by
\begin{equation}\label{sff}
\psi(ij)_M :=
\left\{
\begin{array}{ll}
-(-1)^{(\Ga,\La_i)}
c_M^{-1}\circ (\overline\psi(ij)_\Ga \otimes \id_M) \circ c_M'
&\text{if $j=i$ or $i+1$,}\\
c_M^{-1} \circ (\overline\psi(ij)_\Ga \otimes \id_M) \circ c_M'
&\text{otherwise.}
\end{array}\right.
\end{equation}
To define
the bimodule homomorphism $\overline\psi(ij)_\Ga$,
 we split into cases according
to whether $i=j$, $|i-j|=1$
or $|i-j|>1$.
We will refer to $\overline\psi(ij)_\Ga$ in
these three cases as a
{\em negative circle move}, a {\em crossing move} or a
{\em height move}, respectively.
It will turn out that negative circle moves are homogeneous
of degree $-2$, crossing moves are homogeneous of degree $1$,
and height moves are homogeneous of degree $0$.

\vspace{1mm}

\noindent{\em Negative circle moves.} Suppose first that $i=j$.
By Lemma~\ref{as}(i)
the matching $\bt = \bu$ contains a small internal circle.
We define $\overline\psi(ii)_\Ga$ on a basis vector $(a\: \bt[\bga]\: b) \in
K^\bt_\bGa\langle -1 \rangle$ as follows.
If the internal circle in $a\: \bt [\bga] \:b$
is anti-clockwise then we map $(a\:\bt[\bga]\:b)$ to zero;
if it is clockwise then we map
$(a \: \bt[\bga]\:b)$ to the basis vector
$(a\:\bt[\bga']\:b)$ where $\bga'$ is
obtained by
switching the orientation of this circle to anti-clockwise.

\vspace{1mm}

\noindent{\em Crossing moves.}
Next suppose that $|i-j|=1$.
The eight possibilities for $\bt$ and the corresponding
possibilities for $\bu$ are listed in Lemma~\ref{as}(ii).
In all cases, the part of the matching displayed involves two
distinct components. Define $\overline\psi(ij)_\Ga$
by applying the surgery procedure to cut these two
components
in $\bt$ and rejoin them as in $\bu$.
We again make this precise just in one case,
in which the
definition of $\overline\psi(ij)_\Ga$
is as summarised by the following diagram:
$$
\begin{picture}(123,95)
\put(-103,14){\line(1,0){48}}
\put(-103,74){\line(1,0){48}}
\put(-103,44){\line(1,0){48}}
\put(-101,72){{$\scriptstyle\bullet$}}
\put(-81,72){{$\scriptstyle\bullet$}}
\put(-61,72){{$\scriptstyle\bullet$}}
\put(-61,42){{$\scriptstyle\bullet$}}
\put(-80.9,42.1){{$\scriptstyle\times$}}
\put(-60.9,12.1){{$\scriptstyle\times$}}
\put(-81,12){{$\scriptstyle\bullet$}}
\put(-101.7,11.3){{$\circ$}}
\put(-101.7,41.3){{$\circ$}}
\put(-79,14){\line(2,3){20}}
\put(-59,44){\line(0,1){30}}
\put(-89,74){\oval(20,20)[b]}

\put(-10,89){\line(1,0){48}}
\put(-10,59){\line(1,0){48}}
\put(-10,29){\line(1,0){48}}
\put(-10,-1){\line(1,0){48}}
\put(14,-1){\line(-2,3){20}}
\put(-8,87){{$\scriptstyle\bullet$}}
\put(12,87){{$\scriptstyle\bullet$}}
\put(32,87){{$\scriptstyle\bullet$}}
\put(-8,57){{$\scriptstyle\bullet$}}
\put(12,57){{$\scriptstyle\bullet$}}
\put(32,57){{$\scriptstyle\bullet$}}
\put(-8,27){{$\scriptstyle\bullet$}}
\put(12,-3){{$\scriptstyle\bullet$}}
\put(32.1,-2.9){{$\scriptstyle\times$}}
\put(-8.7,-3.7){{$\circ$}}
\put(32.1,27.1){{$\scriptstyle\times$}}
\put(11.3,26.3){{$\circ$}}
\put(34,59){\line(0,1){30}}
\put(-6,59){\line(0,-1){30}}
\put(4,59){\oval(20,20)[t]}
\put(24,59){\oval(20,20)[b]}
\put(4,89){\oval(20,20)[b]}
\dashline{8.2}(4,70.5)(4,77.5)

\put(80,89){\line(1,0){48}}
\put(80,59){\line(1,0){48}}
\put(80,29){\line(1,0){48}}
\put(80,-1){\line(1,0){48}}
\put(104,-1){\line(-2,3){20}}
\put(82,87){{$\scriptstyle\bullet$}}
\put(102,87){{$\scriptstyle\bullet$}}
\put(122,87){{$\scriptstyle\bullet$}}
\put(82,57){{$\scriptstyle\bullet$}}
\put(102,57){{$\scriptstyle\bullet$}}
\put(122,57){{$\scriptstyle\bullet$}}
\put(82,27){{$\scriptstyle\bullet$}}
\put(102,-3){{$\scriptstyle\bullet$}}
\put(122.1,-2.9){{$\scriptstyle\times$}}
\put(81.3,-3.7){{$\circ$}}
\put(122.1,27.1){{$\scriptstyle\times$}}
\put(101.3,26.3){{$\circ$}}
\put(124,59){\line(0,1){30}}
\put(84,59){\line(0,-1){30}}
\put(84,89){\line(0,-1){30}}
\put(104,89){\line(0,-1){30}}
\put(114,59){\oval(20,20)[b]}

\put(-38,42){$\rightsquigarrow$}
\put(52,42){$\rightsquigarrow$}
\put(144,42){$\rightsquigarrow$}

\put(173,74){\line(1,0){48}}
\put(173,44){\line(1,0){48}}
\put(173,14){\line(1,0){48}}
\put(202,14){\line(-2,3){20}}
\put(180,72){{$\scriptstyle\bullet$}}
\put(200,72){{$\scriptstyle\bullet$}}
\put(220,72){{$\scriptstyle\bullet$}}
\put(180,42){{$\scriptstyle\bullet$}}
\put(200,12){{$\scriptstyle\bullet$}}
\put(220.1,12.1){{$\scriptstyle\times$}}
\put(179.3,11.3){{$\circ$}}
\put(220.1,42.1){{$\scriptstyle\times$}}
\put(199.3,41.3){{$\circ$}}
\put(182,74){\line(0,-1){30}}
\put(212,74){\oval(20,20)[b]}
\end{picture}
$$
The interpretation of this follows
the same steps as in
the earlier definition of $\overline y(i)_\Ga$.
Note in this case that the map $\overline\psi(ij)_\Ga$
is homogeneous of degree 1:
the first map in the above diagram
is of degree 1 (as one additional clockwise cap or cup gets added),
the second map is of degree 0 (as the surgery procedure
 preserves the number of clockwise cups and caps),
and the final map is obviously of degree 0 too.
In fact in all eight of these cases, $\overline\psi(ij)_\Ga$
is homogeneous of degree 1.

\vspace{1mm}

\noindent{\em Height moves.}
Finally assume that $|i-j|>1$.
By Lemma~\ref{as}(v),
the composite matchings $\bt$ and $\bu$ are proper, have the same
reductions and no internal circles.
Hence we get an {\em isomorphism}
$\overline\psi(ij)_\Ga:
K^{\bt}_{\bGa}\langle -\caps(\bt)\rangle
\stackrel{\sim}{\rightarrow}
K^{\bu}_{\bDe}\langle-\caps(\bu)\rangle$
by composing two isomorphisms of the form (\ref{moso}).
In all these cases, $\overline\psi(ij)_\Ga$ is of degree 0.

\phantomsubsection{\boldmath
Composite natural transformations}
Suppose now we are given a $d$-tuple
$\bi = (i_1,\dots,i_d) \in I^d$, and recall the composite functor $F_\bi$
from (\ref{cf}).
We will often use the natural left action of the symmetric
group $S_d$ on $I^d$ by permuting the entries.
Let $e(\bi):F_\bi \rightarrow F_\bi$
denote the identity endomorphism of the
functor $F_\bi$.
Note that
$$
e(\bi) = e(i_d) \cdots e(i_1).
$$
The multiplication being used in this expression
is the ``horizontal'' composition of natural
transformations from \cite[$\S$II.5]{Maclane}; we reserve the notation
$\circ$ for the ``vertical'' composition
from \cite[$\S$II.4]{Maclane}.

The natural transformation $y(i):F_i \rightarrow F_i$ from (\ref{cold})
induces
\begin{equation}\label{ci}
y_r(\bi) := e(i_d) \cdots e(i_{r+1}) y(i_r) e(i_{r-1})\cdots e(i_1)
:F_\bi \rightarrow F_\bi
\end{equation}
for $r=1,\dots,d$.
In other words,
for a module $M$,
$y_r(\bi)_M:F_\bi M \rightarrow F_\bi M$
is the homomorphism
$F_{i_d} \cdots F_{i_{r+1}} y(i_r)_{F_{i_{r-1}}\cdots F_{i_1} M}$.

Similarly the natural transformation $\psi(ij) :F_j \circ F_i
\rightarrow F_i \circ F_j$ from (\ref{bold}) induces
\begin{equation}\label{bi}
\psi_r(\bi) := e(i_d) \cdots e(i_{r+2})
\psi(i_r i_{r+1}) e(i_{r-1}) \cdots e(i_1)
:F_\bi \rightarrow F_{s_r \cdot \bi}
\end{equation}
for $r=1,\dots,d-1$.
In other words, for a module $M$,
$\psi_r(\bi)_M:F_\bi M \rightarrow F_{s_r \cdot \bi} M$
is the homomorphism
$F_{i_d} \cdots F_{i_{r+2}} \psi(i_r i_{r+1})_{F_{i_{r-1}} \cdots F_{i_1} M}$.

Given a block $\Ga \in \PImn$, we also use the notation
$e(\bi)_\Ga, y_r(\bi)_\Ga$ and $\psi_r(\bi)_\Ga$ for the
natural transformations obtained from $e(\bi)$, $y_r(\bi)$
and $\psi_r(\bi)$ by restricting to $\Rep{K_\Ga}$.
For computational purposes,
it is important to have available a more concrete description
of $y_r(\bi)_\Ga$ and $\psi_r(\bi)_\Ga$ in terms of bimodule homomorphisms.

To explain this for $y_r(\bi)_\Ga$, we may assume that
$\bi$ is a $\Ga$-admissible sequence, so that according to
Lemma~\ref{caniso} there is a canonical isomorphism
$$
c:F_\bi|_{\Rep{K_\Ga}}
\stackrel{\sim}{\rightarrow} G^\bt_\bGa
$$
where $\bGa$ and $\bt$ are the block sequence and composite matching
associated to $\bi$.
We define a bimodule endomorphism
\begin{equation}\label{expy}
\overline y_r = \overline y_r(\bi)_\Ga:K^\bt_\bGa\langle-\caps(\bt)\rangle
\rightarrow K^\bt_\bGa\langle-\caps(\bt)\rangle
\end{equation}
exactly like in (\ref{bimd}), but
performing the positive circle move
to the component of $\bt$ that is non-trivial at level $r$.

\begin{Lemma}\label{inb}
With the above notation, we have that
\begin{equation*}
y_r(\bi)_M = (-1)^{(\Ga_{r-1}, \La_{i_r})}
c_M^{-1} \circ (\overline y_r(\bi)_\Ga \otimes \id_M) \circ c_M,
\end{equation*}
for any $M \in \Rep{K_\Ga}$.
\end{Lemma}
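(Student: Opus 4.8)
The plan is to unwind the definition of $y_r(\bi)_M$ and match it term by term with the claimed right-hand side, the only genuine content being a single identity of bimodule homomorphisms that follows from the order-independence of surgery procedures. First I would invoke (\ref{ci}), which exhibits $y_r(\bi)$ as the horizontal composite $e(i_d)\cdots e(i_{r+1})\,y(i_r)\,e(i_{r-1})\cdots e(i_1)$, so that $y_r(\bi)_M = F_{i_d}\cdots F_{i_{r+1}}\big(y(i_r)_N\big)$ with $N := F_{i_{r-1}}\cdots F_{i_1}M \in \Rep{K_{\Ga_{r-1}}}$. (If $\bi$ is not $\Ga$-admissible both sides vanish, so I may assume it is, whence $t_r = t_{i_r}(\Ga_{r-1})$.) By (\ref{bimd2}), $y(i_r)_N = (-1)^{(\Ga_{r-1},\La_{i_r})}\,\overline y(i_r)_{\Ga_{r-1}}\otimes\id_N$, where $\overline y(i_r)_{\Ga_{r-1}}$ is the positive circle move on $K^{t_r}_{\Ga_r\Ga_{r-1}}\langle-\caps(t_r)\rangle$. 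Since each $F_{i_s}$ is tensoring on the left with $K^{t_s}_{\Ga_s\Ga_{s-1}}\langle-\caps(t_s)\rangle$, inserting the associativity isomorphisms of tensor products identifies $y_r(\bi)_M$ with $(-1)^{(\Ga_{r-1},\La_{i_r})}$ times the endomorphism of $K^{t_d}_{\Ga_d\Ga_{d-1}}\langle-\caps(t_d)\rangle\otimes_{\KImn}\cdots\otimes_{\KImn}K^{t_1}_{\Ga_1\Ga_0}\langle-\caps(t_1)\rangle\otimes M$ that applies $\overline y(i_r)_{\Ga_{r-1}}$ to the $r$th tensor factor and the identity to everything else (including $M$). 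Note that the required sign thereby comes for free, directly from (\ref{bimd2}) applied to $N$, whose block is precisely $\Ga_{r-1}$.

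Next I would recall that the canonical isomorphism $c$ of Lemma~\ref{caniso} arises, through (\ref{miso2}), from the multiplication isomorphism (\ref{miso}) of \cite[Theorem 3.5(iii)]{BS2} --- call it $\mu$ --- so that $c_M$ is $\mu$ tensored with $\id_M$, up to the associativity isomorphisms of tensor products. Hence the right-hand side of the lemma equals $\big(\mu^{-1}\circ\overline y_r(\bi)_\Ga\circ\mu\big)\otimes\id_M$, and, comparing with the previous paragraph and evaluating at $M=\KImn$ to strip off the $\otimes\id_M$, the whole statement reduces to the identity of bimodule homomorphisms $\overline y_r(\bi)_\Ga\circ\mu = \mu\circ\big(\id\otimes\cdots\otimes\overline y(i_r)_{\Ga_{r-1}}\otimes\cdots\otimes\id\big)$, with $\overline y(i_r)_{\Ga_{r-1}}$ in the $r$th slot.

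The proof of this reduced identity is the main --- though still essentially routine --- point. Here I would use that both of its sides are built entirely out of the surgery procedure of \cite[$\S$3]{BS1}. By the reformulation used to show that $\overline y(i)_\Ga$ is a bimodule homomorphism, $\overline y(i_r)_{\Ga_{r-1}}$ is ``multiplication by an internal clockwise circle'': one inserts such a circle at the cap, cup or shift between vertices $i_r$ and $i_r+1$ in $t_r$ and performs a surgery to absorb it. By (\ref{expy}), $\overline y_r(\bi)_\Ga$ is this very same insert-and-absorb step performed on the component of $\bt$ that is non-trivial at level $r$, whose level-$r$ portion is exactly the cap, cup or shift in $t_r$ just described. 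Since $\mu$ is itself a composite of surgery procedures, the reduced identity says precisely that this insert-and-absorb step may be carried out either before or after the surgeries comprising $\mu$, with the same effect. This is an instance of the fundamental fact --- proved via the TQFT reformulation in \cite[$\S$3]{BS1}, and already used to establish both the associativity of the multiplication on $\KImn$ and the fact that the positive circle moves are bimodule homomorphisms --- that the outcome of a sequence of surgery procedures is independent of the order in which they are performed. The degree bookkeeping needs no separate attention, every map in sight being homogeneous of degree two.
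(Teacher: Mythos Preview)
Your argument is correct and follows essentially the same route as the paper's proof: both reduce the claim to the identity $\overline y_r(\bi)_\Ga\circ\mu = \mu\circ(\id\otimes\cdots\otimes\overline y(i_r)_{\Ga_{r-1}}\otimes\cdots\otimes\id)$ of bimodule homomorphisms, and both justify this by the order-independence of surgery procedures. The paper's version is simply more compressed, dispatching the $d=1$ case as the definition and then noting that for $d>1$ both sides of $c_M\circ y_r(\bi)_M = (-1)^{(\Ga_{r-1},\La_{i_r})}(\overline y_r(\bi)_\Ga\otimes\id_M)\circ c_M$ amount to the same surgeries performed in different orders with the same sign.
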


\begin{proof}
There is nothing to prove if $d=1$, as this is just the original
definition of $y(i)_M$ from (\ref{bimd2}) in this case.
If $d > 1$ then it suffices to show that
$$
c_M \circ y_r(\bi)_M =
(-1)^{(\Ga_{r-1}, \La_{i_r})}
(\overline y_r(\bi)_\Ga \otimes \id_M) \circ c_M.
$$
Recalling the definition of $c_M$ which goes back to (\ref{miso}),
both sides amount to applying the same two sequences of surgery procedures,
but in different orders, then multiplying by the same sign.
So they are equal because the order does not matter when applying
sequences of surgery procedures.
\end{proof}

The bimodule interpretation of $\psi_r(\bi)_\Ga$ is similar.
We may assume that both $\bi$ and $s_r \cdot \bi$ are $\Ga$-admissible
sequences, and let
$\bGa$ and
$\bt$ (resp.\ $\bDe$ and $\bu$)
denote the block sequence and
composite matching
associated to $\bi$ (resp.\ $s_r\cdot\bi$).
Lemma~\ref{caniso} gives us canonical isomorphisms
$$
c':
F_\bi|_{\Rep{K_\Ga}}
\stackrel{\sim}{\rightarrow}
G^{\bt}_{\bGa},\qquad
c:F_{s_r\cdot\bi}|_{\Rep{K_\Ga}}
\stackrel{\sim}{\rightarrow}
G^{\bu}_{\bDe}.
$$
We define a bimodule homomorphism
\begin{equation}\label{betadef2}
\overline\psi_r = \overline\psi_r(\bi)_\Ga:
K^{\bt}_{\bGa} \langle-\caps(\bt)\rangle
\rightarrow K^{\bu}_{\bDe} \langle -\caps(\bu)\rangle
\end{equation}
in similar fashion to (\ref{betadef}),
making either a negative circle move,
a crossing move or a height move
at levels $r$ and $(r+1)$
of the matching $\bt$ according to whether
$i_r = i_{r+1}$, $|i_r-i_{r+1}| = 1$ or $|i_r - i_{r+1}|> 1$.

\begin{Lemma}\label{sff2}
With the above notation, we have that
\begin{equation*}
\psi_r(\bi)_M =
\left\{
\begin{array}{ll}
-(-1)^{(\Ga_{r-1},\La_{i_r})}
c_M^{-1}\circ (\overline\psi_r(\bi)_\Ga \otimes \id_M) \circ c_M'
&\text{if $i_{r+1}=i_r$ or $i_r+1$,}\\
c_M^{-1} \circ (\overline\psi_r(\bi)_\Ga \otimes \id_M) \circ c_M'
&\text{otherwise,}
\end{array}\right.
\end{equation*}
for any $M \in \Rep{K_\Ga}$.
\end{Lemma}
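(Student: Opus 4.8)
The plan is to mimic the proof of Lemma~\ref{inb}, pushing everything back to the order-independence of the surgery procedure. First I would dispose of the base case $d=2$, $r=1$: there $\bGa$ and $\bt$ (resp.\ $\bDe$ and $\bu$) \emph{are} the block sequence and composite matching associated to $(i_1,i_2)$ (resp.\ $(i_2,i_1)$), so $\overline\psi_1(\bi)_\Ga$ of (\ref{betadef2}) is literally $\overline\psi(i_1 i_2)_\Ga$ of (\ref{betadef}), we have $\Ga_0=\Ga$, and the isomorphisms $c_M',c_M$ of Lemma~\ref{caniso} are exactly the ones used in (\ref{sff}); hence the asserted formula reduces to the defining equation (\ref{sff}) for $\psi(i_1 i_2)_M=\psi_1(\bi)_M$, and there is nothing to prove.

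For general $d$, since $c_M$ is an isomorphism it suffices to establish
$$
c_M \circ \psi_r(\bi)_M =
\left\{
\begin{array}{ll}
-(-1)^{(\Ga_{r-1},\La_{i_r})}(\overline\psi_r(\bi)_\Ga \otimes \id_M)\circ c_M'&\text{if $i_{r+1} = i_r$ or $i_r+1$,}\\[1mm]
(\overline\psi_r(\bi)_\Ga \otimes \id_M)\circ c_M'&\text{otherwise.}
\end{array}\right.
$$
Here I would unwind the definition (\ref{bi}): $\psi_r(\bi)_M = F_{i_d}\cdots F_{i_{r+2}}\,\psi(i_r i_{r+1})_N$ with $N := F_{i_{r-1}}\cdots F_{i_1} M$, a module concentrated in $\Rep{K_{\Ga_{r-1}}}$ since the first $r-1$ functors carry $\Ga$ to $\Ga_{r-1}$. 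Applying the outer functors $F_{i_d}\cdots F_{i_{r+2}}$ is mere functoriality and introduces no signs, so the entire sign of $\psi_r(\bi)_M$ is the sign produced by (\ref{sff}) applied to $\psi(i_r i_{r+1})_N$ at the block $\Ga_{r-1}$; this is precisely $(-1)^{(\Ga_{r-1},\La_{i_r})}$ --- namely $(-1)^{(\Ga,\La_i)}$ of (\ref{sff}) with $\Ga$ replaced by $\Ga_{r-1}$ and $i$ by $i_r$ --- together with the case-dependent extra minus sign, inherited verbatim from (\ref{sff}) with $(i,j)=(i_r,i_{r+1})$. Thus the signs take care of themselves, and what remains is the sign-free claim that the two sides of the display agree as linear maps on $K^\bt_\bGa\langle-\caps(\bt)\rangle$.

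For this sign-free claim I would use that, by Lemma~\ref{caniso} together with the multiplicativity isomorphism (\ref{miso})--(\ref{miso2}), the full canonical isomorphism $c_M':F_\bi M \stackrel{\sim}{\rightarrow} K^\bt_\bGa\langle-\caps(\bt)\rangle\otimes_{\KImn}M$ factors, in its two middle tensor slots, through the two-step canonical isomorphism for $(F_{i_{r+1}}\circ F_{i_r})|_{\Rep{K_{\Ga_{r-1}}}}$, tensored with the identities on the outer factors $K^{t_d}_{\Ga_d\Ga_{d-1}},\dots$ and $\dots,K^{t_1}_{\Ga_1\Ga_0}$ (which, since $s_r$ only swaps levels $r,r+1$, are common to $\bt$ and $\bu$); likewise for $c_M$ with $\bt$ replaced by $\bu$. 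Under this factorization the middle operator $\overline\psi(i_r i_{r+1})_{\Ga_{r-1}}\otimes\id$ becomes exactly the move $\overline\psi_r(\bi)_\Ga\otimes\id_M$ performed on the component of $\bt$ that is non-trivial at levels $r,r+1$ (the shifts $\langle-\caps(\bt)\rangle$, $\langle-\caps(\bu)\rangle$ redistribute correctly because the caps outside levels $r,r+1$ agree). Finally, exactly as in the proofs of associativity of the diagram algebra multiplication in \cite[$\S$3]{BS1} and of Lemma~\ref{inb}, each of $c_M'$, $c_M$, $\overline\psi_r(\bi)_\Ga$ and $\overline\psi(i_r i_{r+1})_{\Ga_{r-1}}$ is realized by a prescribed sequence of surgery procedures on closed oriented circle diagrams (after the closure operation of \cite[$\S$3]{BS2}); the two sides of the display carry out the same collection of such procedures, only in a different order, and the order is immaterial by the TQFT reformulation used in \cite[$\S$3]{BS1}. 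Hence they agree.

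I expect the main obstacle to be purely bookkeeping: verifying that horizontal composition with the identity transformations $e(i_k)$ corresponds under the canonical isomorphisms to tensoring with identities on the outer bimodule factors, that the shifts by $-\caps(\bt)$ and $-\caps(\bu)$ split across (\ref{miso}) in a compatible way, and that the surgery description of $\overline\psi_r(\bi)_\Ga$ really localizes to the single component non-trivial at levels $r,r+1$. None of this is conceptually hard given the machinery already developed; the only genuine input is the order-independence of surgeries, which is reused verbatim from \cite[$\S$3]{BS1}.
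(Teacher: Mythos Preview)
Your proposal is correct and follows essentially the same approach as the paper, which simply says ``This follows by a similar argument to the proof of Lemma~\ref{inb}.'' You have merely written out in detail the analogue of that argument: the base case reduces to the definition (\ref{sff}), the sign is inherited from (\ref{sff}) applied at the block $\Ga_{r-1}$, and the sign-free equality holds because both sides implement the same collection of surgery procedures in different orders.
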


\begin{proof}
This follows by a similar argument to the proof of Lemma~\ref{inb}.
\end{proof}

\phantomsubsection{\boldmath Local relations}
Now we can prove the following key result, which
describes the relations that hold betwen
the natural transformations $y(i)_\Ga$ and $\psi(ij)_\Ga$.

\begin{Theorem}\label{localrels}
The following hold
for $\Ga \in \PImn$ and $i,j,k \in I$.
\begin{itemize}
\item[(i)]
\begin{itemize}
\item[(a)]
$y(i)_\Ga\circ y(i)_\Ga = 0$;
\item[(b)]
$y(i)_\Ga = 0$
if $\defect(\Ga) = 0$
and either the $i$th or the $(i+1)$th vertex of $\Ga$ is labelled $\bullet$.
\end{itemize}
\item[(ii)]
\begin{itemize}
\item[(a)] $\psi(ii)_\Ga\circ\psi(ii)_\Ga =0$;
\item[(b)] $\psi(ii)_\Ga \circ y_2(ii)_\Ga = y_1(ii)_\Ga\circ \psi(ii)_\Ga +
e(ii)_\Ga$;
\item[(c)]
$y_2(ii)_\Ga\circ \psi(ii)_\Ga = \psi(ii)_\Ga\circ y_1(ii)_\Ga + e(ii)_\Ga$;
\item[(d)] $y_1(ii)_\Ga + y_2(ii)_\Ga = 0$.
\end{itemize}
\item[(iii)] If $i \neq j$ then
\begin{itemize}
\item[(a)] $\psi(ij)_\Ga \circ y_2(ij)_\Ga = y_1(ji)_\Ga \circ \psi(ij)_\Ga$;
\item[(b)] $y_2(ji)_\Ga\circ\psi(ij)_\Ga  = \psi(ij)_\Ga\circ y_1(ij)_\Ga $.
\end{itemize}
\item[(iv)] If $|i-j|> 1$ then $\psi(ji)_\Ga\circ \psi(ij)_\Ga = e(ij)_\Ga$.
\item[(v)] If $|i-j|=1$ and the $max(i,j)$th vertex of $\Ga$ is
$\bullet$ then
\begin{itemize}
\item[(a)] $\psi(ji)_\Ga \circ\psi(ij)_\Ga =
(i-j)y_1(ij)_\Ga + (j-i)y_2(ij)_\Ga$;
\item[(b)] $\psi(ij)_\Ga\circ y_1(ij)_\Ga + \psi(ij)_\Ga\circ y_2(ij)_\Ga = 0$;
\item[(c)]
$\psi_1(jii)_\Ga\circ \psi_2(jii)_\Ga\circ \psi_1(iji)_\Ga =
(j-i)e(iji)_\Ga$;
\item[(d)]
$\psi_2(iij)_\Ga\circ \psi_1(iij)_\Ga\circ \psi_2(iji)_\Ga = 0$.
\end{itemize}
\item[(vi)] If $|i-j|=1$ and the $max(i,j)$th vertex of $\Ga$ is
$\circ$ or $\times$ then
\begin{itemize}
\item[(a)] $\psi(ji)_\Ga \circ \psi(ij)_\Ga = 0$;
\item[(b)] $y_1(ij)_\Ga = y_2(ij)_\Ga$.
\item[(c)]
$\psi_1(jii)_\Ga\circ \psi_2(jii)_\Ga\circ \psi_1(iji)_\Ga = 0$;
\item[(d)]
$\psi_2(iij)_\Ga \circ\psi_1(iij)_\Ga\circ \psi_2(iji)_\Ga = (i-j) e(iji)_\Ga$.
\end{itemize}
\item[(vii)]
$\psi_1(jki)_\Ga \circ\psi_2(jik)_\Ga\circ \psi_1(ijk)_\Ga
= \psi_2(kij)_\Ga \circ\psi_1(ikj)_\Ga \circ
\psi_2(ijk)_\Ga$
either if $i \neq k$ or if $|i-j| \neq 1$.
\end{itemize}
\end{Theorem}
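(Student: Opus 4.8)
\emph{Proof proposal for part (vii).}
The plan is to reduce the identity to an equality of bimodule homomorphisms and then to appeal, exactly as in the proofs of Lemmas~\ref{inb} and \ref{sff2}, to the fact that a sequence of surgery procedures produces the same result no matter in which order the individual surgeries are performed. By additivity it suffices to prove the equation after restricting both sides to $\Rep{K_\Ga}$ for a fixed $\Ga \in \PImn$. Since $F_{(i,j,k)}|_{\Rep{K_\Ga}}$ is the zero functor unless $(i,j,k)$ is a $\Ga$-admissible sequence, we may assume that it is. Using the case analysis of Lemma~\ref{as} together with the defect bookkeeping that governs when moves such as $\bullet\bullet\to\circ\times$ occur, one then checks that under the hypothesis ``$i\neq k$ or $|i-j|\neq1$'' either all six index sequences occurring as the sources and targets of the three factors on the two sides are again $\Ga$-admissible, or else some factor on each side has zero target, so that both sides vanish.

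Assuming full admissibility, Lemma~\ref{caniso} identifies $F_{(i,j,k)}|_{\Rep{K_\Ga}}$ with $G^{\bt}_{\bGa}$ and $F_{(k,j,i)}|_{\Rep{K_\Ga}}$ with $G^{\bu}_{\bDe}$, where $\bGa,\bt$ (respectively $\bDe,\bu$) are the block sequence and composite matching associated to the sequence $(i,j,k)$ (respectively $(k,j,i)$) at $\Ga$; here $(i,j,k)$ and $(k,j,i)$ differ by the longest element of $S_3$, and the two sides of the claimed identity realise its two reduced expressions $s_1s_2s_1$ and $s_2s_1s_2$. Applying Lemma~\ref{sff2} three times to each side, the left-hand side becomes $\varepsilon\, c^{-1}\circ(\Phi\otimes\id)\circ c'$ and the right-hand side $\varepsilon'\, c^{-1}\circ(\Phi'\otimes\id)\circ c'$, where $c'$ and $c$ are the fixed canonical isomorphisms of Lemma~\ref{caniso}, $\varepsilon,\varepsilon'\in\{\pm1\}$ are products of the signs dictated by (\ref{sff}) and Lemma~\ref{sff2}, and $\Phi,\Phi':K^{\bt}_{\bGa}\langle-\caps(\bt)\rangle\to K^{\bu}_{\bDe}\langle-\caps(\bu)\rangle$ are bimodule homomorphisms, each a composite of three elementary moves from $\S$\ref{sL}: a crossing move whenever the two relevant indices at that step are adjacent, a height move when they are far apart, and a negative circle move when they coincide.

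It then remains to show that $\Phi=\Phi'$ and that $\varepsilon=\varepsilon'$. For the first, unwinding the definitions of the elementary moves in terms of the closure operation and the surgery procedure of \cite[$\S$3]{BS1} shows that $\Phi$ and $\Phi'$ each carry out a collection of surgeries on a single closed oriented diagram; inspection of the composite matchings listed in Lemma~\ref{as} --- crucially using that, because $i\neq k$ or $|i-j|\neq1$, the local picture at the three relevant strips is never of the ``$iji$''-shape with adjacent colours that creates the extra small circles responsible for the correction terms of parts (v)(c), (v)(d), (vi)(c) and (vi)(d) --- shows that $\Phi$ and $\Phi'$ perform exactly the same collection of surgeries, differing only in the order, so $\Phi=\Phi'$. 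For the signs, each factor contributes $+1$ or $-(-1)^{(\Ga_{r-1},\La_{i_r})}$ by Lemma~\ref{sff2}; using $(\alpha_\ell,\La_{\ell'})=\delta_{\ell,\ell'}$ to track the change of block index along $\bGa$ and $\bDe$, a short case check according to which of $i,j,k$ coincide and which are adjacent yields $\varepsilon=\varepsilon'$. The main obstacle is purely organisational: making the combinatorial bookkeeping of Lemma~\ref{as} --- which indices are equal or adjacent, which $\circ/\bullet/\times$ patterns occur, and how $\defect(\Ga)$ constrains admissibility --- simultaneously finite, exhaustive, and compatible with the claim that the ``extra'' surgeries do not occur once the excluded configuration is ruled out.
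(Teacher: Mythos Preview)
Your approach is essentially the same as the paper's: translate to an equality of bimodule homomorphisms via Lemmas~\ref{inb} and~\ref{sff2} and then verify it directly. Since the paper explicitly omits the details of (vii), saying only that ``(vii) can be checked by analogous techniques, though it is somewhat more lengthy,'' your outline is in fact more detailed than what the paper provides, and the organisational framework (realising the two sides as the two reduced words for the longest element of $S_3$, tracking the three pairwise comparisons $(i,j),(i,k),(j,k)$, and separating the bimodule equality $\Phi=\Phi'$ from the sign equality $\varepsilon=\varepsilon'$) is sound.

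There is one imprecision worth flagging. Your key step asserts that ``$\Phi$ and $\Phi'$ each carry out a collection of surgeries on a single closed oriented diagram'' and hence agree by order-independence of surgery. This is accurate for crossing moves, which are genuinely defined via the surgery procedure of \cite[$\S$3]{BS1}, but \emph{negative circle moves} are not surgeries: by definition $\overline\psi(ii)_\Ga$ sends an anti-clockwise small circle to zero and re-orients a clockwise one, which is not a TQFT operation in the same sense. Likewise, height moves are the explicit isomorphisms (\ref{moso}) rather than surgeries. So whenever two of $i,j,k$ coincide (e.g.\ $i=k$ with $|i-j|>1$, which is allowed by the hypothesis), your ``same surgeries, different order'' slogan does not literally apply. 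The argument still goes through, but one must argue separately that conjugating a negative circle move by height moves gives the negative circle move on the corresponding small circle at the other level, and similarly that the Frobenius identities used for crossing moves interact correctly with any negative circle move in the composite; this is exactly the kind of explicit basis computation the paper has in mind. Your outline would be strengthened by acknowledging this and sketching the $i=k$, $|i-j|>1$ case (height $\circ$ negative circle $\circ$ height on both sides) as a representative example.
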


\begin{proof}
In all cases,
the strategy is to translate into a statement about
bimodule homomorphisms using Lemmas \ref{inb} and \ref{sff2}, then to
verify that statement by direct computations with the diagram bases.
To get started, consider (i). We trivially have that $y(i)_\Ga = 0$ unless
$i$ is $\Ga$-admissible. So assume that $i$ is $\Ga$-admissible.
By the definition of the bimodule homomorphism
(\ref{bimd}), it is clear that $\overline y(i)_\Ga^2= 0$,
and moreover $\overline y(i)_\Ga = 0$ if
$\defect(\Ga) = 0$ and either the $i$th or $(i+1)$th vertex of
$\Ga$ is labelled $\bullet$ (see the last two diagrams from (\ref{CKLR})).
In view of (\ref{bimd2}), this implies the desired statement about
the natural transformation $y(i)_\Ga$.

Next consider (ii).
The desired relations are all trivially true if $(i,i)$ is not a $\Ga$-admissible sequence. So assume that $(i,i)$ is $\Ga$-admissible. Then we are in the situation of Lemma~\ref{as}(i). By (\ref{moso2}),
the functor
$(F_i \circ F_i)|_{\Rep{K_\Ga}}$ can be identified with the functor
defined by tensoring with the
vector space $R$ (in which $1$ corresponds to an anti-clockwise circle
and $x$ corresponds to a clockwise circle). The bimodule endomorphisms
$\overline y_1(ii)_\Ga$ and $\overline y_2(ii)_\Ga$
are both equal to the same positive circle move
coming from the map $R \rightarrow R, 1 \mapsto x, x \mapsto 0$,
and the endomorphism $\overline\psi(ii)_\Ga$ is the negative
circle move
coming from the map
$R \rightarrow R, x \mapsto 1, 1 \mapsto 0$. Using this it is trivial to check that
$$
\overline\psi(ii)_\Ga\circ\overline\psi(ii)_\Ga = 0,\:
\overline\psi(ii)_\Ga \circ\overline y_1(ii)_\Ga + \overline
y_2(ii)_\Ga \circ\overline\psi(ii)_\Ga = e(ii)_\Ga,
\:
\overline y_1(ii)_\Ga = \overline y_2(ii)_\Ga.
$$
Incorporating the signs from Lemma~\ref{inb} and (\ref{sff}),
these equations imply
the desired identities (a), (c) and (d). Then (b) follows from
(c) and (d).

For (iii), we may assume that both $(i,j)$ and $(j,i)$ are $\La$-admissible sequences,
as both sides of the desired identities are trivially zero if they are not.
Adopting the same notation as in (\ref{betadef}),
and noting that
the additional signs coming from (\ref{sff}) and Lemma~\ref{inb}
are the same on both sides,
it suffices to check that
$$
\overline\psi(ij)_\Ga \circ \overline y_2(ij)_\Ga
=
\overline y_1(ji)_\Ga \circ \overline\psi(ij)_\Ga,
\qquad
\overline\psi(ij)_\Ga \circ \overline y_2(ij)_\Ga
=
\overline y_1(ji)_\Ga \circ \overline\psi(ij)_\Ga,
$$
as bimodule homomorphisms from
$K^\bt_\bGa\langle-\caps(\bt)\rangle$ to
$K^\bu_\bDe\langle-\caps(\bu)\rangle$.
These identities are obvious if $|i-j| > 1$.
If $|i-j|=1$ then
the possibilities for $\bt$ and $\bu$ are listed in Lemma~\ref{as}(ii).
In (the closure of) a diagram basis vector from
$K^{\bt}_{\bGa}\langle-\caps(\bt)\rangle$
(resp.\ $K^\bu_\bDe\langle-\caps(\bu)\rangle$)
the two components of $\bt$ (resp.\ $\bu$) from the
diagrams in Lemma~\ref{as}(ii)
could either be joined
into one component in the big picture
or they could remain as
two distinct components in the big picture.
In the former case we denote the basis vector by $1$ or $x$
according to whether this single component is anti-clockwise or clockwise;
in the latter case we denote the basis vector by
$1 \otimes 1$, $1 \otimes x$, $x \otimes 1$ or $x \otimes x$
according to the orientations of the two components.
With this notation, we can
represent our bimodule homomorphisms as
\begin{align*}
\overline\psi&:1 \mapsto 1 \otimes x + x \otimes 1, \:x \mapsto x\otimes x,\\
\overline y_1&: 1 \mapsto x, \:x \mapsto 0,\\
\overline y_2&: 1 \mapsto x, \:x \mapsto 0.\\\intertext{in the one component case, or}
\overline\psi&:1 \otimes 1 \mapsto 1,\: x \otimes 1 \mapsto x,\: 1 \otimes x \mapsto x,\:
x \otimes x \mapsto 0,\\
\overline y_1&:1 \otimes 1 \mapsto x \otimes 1, \:
1 \otimes x \mapsto x \otimes x,
\:x \otimes 1 \mapsto 0, \:x \otimes x \mapsto 0,\\
\overline y_2&:1 \otimes 1 \mapsto 1 \otimes x, \:
1 \otimes x \mapsto 0,\:
x \otimes 1 \mapsto x\otimes x,\: x \otimes x \mapsto 0
\end{align*}
in the two component case.
Now it is easy to check that
$\overline\psi \circ\overline y_1 = \overline y_2 \circ\overline\psi$
and
$\overline\psi \circ\overline y_2 = \overline y_1 \circ\overline\psi$,
as required.

For (iv), we may assume $(i,j)$ is a proper $\Ga$-admissible sequence.
In view of Lemma~\ref{as}(vi), we get that $(j,i)$ is admissible too, and we are
in the situation of Lemma~\ref{as}(v). By the definition (\ref{betadef}),
the height moves
$\overline\psi(ij)_\Ga$ and $\overline\psi(ji)_\Ga$ are inverses of each other,
and there are no additional signs, so we are done.

Next consider (v)(a), (b).
We may assume that $(i,j)$ is a $\Ga$-admissible sequence.
Then we are either in the situation of Lemma~\ref{as}(ii) or the fifth
cases from Lemma~\ref{as}(iii),(iv). In the latter two cases all of
$\psi(ij)_\Ga, y_1(ij)_\Ga$ and $y_2(ij)_\Ga$ are zero, so we are done.
In the former case, $(j,i)$ is also $\Ga$-admissible and,
noting that $(\Ga,\La_i) = -(\Ga,\La_j)$, we reduce to checking the
following identities
$$
\overline\psi(ji)_\Ga \circ \overline\psi(ij)_\Ga = \overline y_1(ij)_\Ga+
\overline y_2(ij)_\Ga,\qquad
\overline\psi(ij)_\Ga\circ \overline y_1(ij)_\Ga = \overline\psi(ij)_\Ga
\circ \overline y_2(ij)_\Ga
$$
at the level of bimodules.
This is easy to do using the formulae
for $\overline\psi, \overline y_1$ and $\overline y_2$
from the previous paragraph,
considering the one component and two component
cases separately.

Next consider (v)(c), (d). We explain just the situation
when $j=i+1$, the case $i=j+1$ being entirely similar.
We may assume that $(i,j,i)$ is a
proper $\Ga$-admissible
sequence, and deduce by Lemma~\ref{as}
that there are only two possibilities for the associated matching
$\bt$. The two possibilities are as displayed on the left hand side of the
following (the second possibility arises only for $\defect(\Ga) > 0$):
$$
\begin{picture}(-40,72)
\put(-177,62){\line(1,0){48}}
\put(-177,42){\line(1,0){48}}
\put(-177,22){\line(1,0){48}}
\put(-177,2){\line(1,0){48}}
\put(-153,62){\line(-1,-1){20}}
\put(-176,60){{$\scriptstyle\times$}}
\put(-155,60){{$\scriptstyle\bullet$}}
\put(-135.7,59.3){{$\circ$}}
\put(-173,42){\line(0,-1){20}}
\put(-175,40){$\scriptstyle\bullet$}
\put(-156,40){$\scriptstyle\times$}
\put(-156,0){$\scriptstyle\times$}
\put(-175.7,-0.7){$\circ$}
\put(-135.7,39.3){{$\circ$}}
\put(-175,20){$\scriptstyle\bullet$}
\put(-155,20){$\scriptstyle\bullet$}
\put(-135,20){{$\scriptstyle\bullet$}}
\put(-135,0){{$\scriptstyle\bullet$}}
\put(-143,22){\oval(20,20)[t]}
\put(-163,22){\oval(20,20)[b]}
\put(-133,22){\line(0,-1){20}}
\put(66,30){$\stackrel{\overline\psi_1}{\rightsquigarrow}$}
\put(-24,30){$\stackrel{\overline\psi_2}{\rightsquigarrow}$}
\put(-114,30){$\stackrel{\overline\psi_1}{\rightsquigarrow}$}

\put(-87,62){\line(1,0){48}}
\put(-87,42){\line(1,0){48}}
\put(-87,22){\line(1,0){48}}
\put(-87,2){\line(1,0){48}}
\put(-63,62){\line(1,-1){20}}
\put(-86,60){{$\scriptstyle\times$}}
\put(-65,60){{$\scriptstyle\bullet$}}
\put(-45.7,59.3){{$\circ$}}
\put(-43,42){\line(0,-1){20}}
\put(-45,40){$\scriptstyle\bullet$}
\put(-86,40){$\scriptstyle\times$}
\put(-66,0){$\scriptstyle\times$}
\put(-85.7,-0.7){$\circ$}
\put(-65.7,39.3){{$\circ$}}
\put(-85,20){$\scriptstyle\bullet$}
\put(-65,20){$\scriptstyle\bullet$}
\put(-45,20){{$\scriptstyle\bullet$}}
\put(-45,0){{$\scriptstyle\bullet$}}
\put(-73,22){\oval(20,20)[t]}
\put(-73,22){\oval(20,20)[b]}
\put(-43,22){\line(0,-1){20}}

\put(3,62){\line(1,0){48}}
\put(3,42){\line(1,0){48}}
\put(3,22){\line(1,0){48}}
\put(3,2){\line(1,0){48}}
\put(27,62){\line(1,-1){20}}
\put(4,60){{$\scriptstyle\times$}}
\put(25,60){{$\scriptstyle\bullet$}}
\put(44.3,59.3){{$\circ$}}
\put(47,42){\line(0,-1){20}}
\put(45,40){$\scriptstyle\bullet$}
\put(4,40){$\scriptstyle\times$}
\put(24,0){$\scriptstyle\times$}
\put(3.3,-0.7){$\circ$}
\put(23.3,39.3){{$\circ$}}
\put(5,20){$\scriptstyle\bullet$}
\put(25,20){$\scriptstyle\bullet$}
\put(45,20){{$\scriptstyle\bullet$}}
\put(45,0){{$\scriptstyle\bullet$}}
\put(17,22){\oval(20,20)[t]}
\put(17,22){\oval(20,20)[b]}
\put(47,22){\line(0,-1){20}}

\put(93,62){\line(1,0){48}}
\put(93,42){\line(1,0){48}}
\put(93,22){\line(1,0){48}}
\put(93,2){\line(1,0){48}}
\put(117,62){\line(-1,-1){20}}
\put(94,60){{$\scriptstyle\times$}}
\put(115,60){{$\scriptstyle\bullet$}}
\put(134.3,59.3){{$\circ$}}
\put(97,42){\line(0,-1){20}}
\put(95,40){$\scriptstyle\bullet$}
\put(114,40){$\scriptstyle\times$}
\put(114,0){$\scriptstyle\times$}
\put(94.3,-0.7){$\circ$}
\put(134.3,39.3){{$\circ$}}
\put(95,20){$\scriptstyle\bullet$}
\put(115,20){$\scriptstyle\bullet$}
\put(135,20){{$\scriptstyle\bullet$}}
\put(135,0){{$\scriptstyle\bullet$}}
\put(127,22){\oval(20,20)[t]}
\put(107,22){\oval(20,20)[b]}
\put(137,22){\line(0,-1){20}}
\end{picture}
$$
$$
\begin{picture}(-40,75)
\put(66,30){$\stackrel{\overline\psi_1}{\rightsquigarrow}$}
\put(-24,30){$\stackrel{\overline\psi_2}{\rightsquigarrow}$}
\put(-114,30){$\stackrel{\overline\psi_1}{\rightsquigarrow}$}
\put(93,62){\line(1,0){48}}
\put(93,42){\line(1,0){48}}
\put(93,22){\line(1,0){48}}
\put(93,2){\line(1,0){48}}
\put(117,62){\line(-1,-1){20}}
\put(137,42){\line(-1,-1){20}}
\put(94,60){{$\scriptstyle\times$}}
\put(115,60){{$\scriptstyle\bullet$}}
\put(97,42){\line(0,-1){20}}
\put(137,62){\line(0,-1){20}}
\put(95,40){$\scriptstyle\bullet$}
\put(135,40){$\scriptstyle\bullet$}
\put(135,60){$\scriptstyle\bullet$}
\put(134,20){$\scriptstyle\times$}
\put(114,40){$\scriptstyle\times$}
\put(114,0){$\scriptstyle\times$}
\put(134,0){$\scriptstyle\times$}
\put(94.3,-0.7){$\circ$}
\put(95,20){$\scriptstyle\bullet$}
\put(115,20){$\scriptstyle\bullet$}
\put(107,22){\oval(20,20)[b]}

\put(3,62){\line(1,0){48}}
\put(3,42){\line(1,0){48}}
\put(3,22){\line(1,0){48}}
\put(3,2){\line(1,0){48}}
\put(4,60){{$\scriptstyle\times$}}
\put(4,40){{$\scriptstyle\times$}}
\put(44,40){{$\scriptstyle\times$}}
\put(25,60){{$\scriptstyle\bullet$}}
\put(45,60){$\scriptstyle\bullet$}
\put(44,20){$\scriptstyle\times$}
\put(24,0){$\scriptstyle\times$}
\put(44,0){$\scriptstyle\times$}
\put(4.3,-0.7){$\circ$}
\put(24.3,39.3){$\circ$}
\put(5,20){$\scriptstyle\bullet$}
\put(25,20){$\scriptstyle\bullet$}
\put(17,22){\oval(20,20)[b]}
\put(17,22){\oval(20,20)[t]}
\put(37,62){\oval(20,20)[b]}

\put(-87,62){\line(1,0){48}}
\put(-87,42){\line(1,0){48}}
\put(-87,22){\line(1,0){48}}
\put(-87,2){\line(1,0){48}}
\put(-86,60){{$\scriptstyle\times$}}
\put(-86,40){{$\scriptstyle\times$}}
\put(-46,40){{$\scriptstyle\times$}}
\put(-65,60){{$\scriptstyle\bullet$}}
\put(-45,60){$\scriptstyle\bullet$}
\put(-46,20){$\scriptstyle\times$}
\put(-66,0){$\scriptstyle\times$}
\put(-46,0){$\scriptstyle\times$}
\put(-85.7,-0.7){$\circ$}
\put(-65.7,39.3){$\circ$}
\put(-85,20){$\scriptstyle\bullet$}
\put(-65,20){$\scriptstyle\bullet$}
\put(-73,22){\oval(20,20)[b]}
\put(-73,22){\oval(20,20)[t]}
\put(-53,62){\oval(20,20)[b]}

\put(-177,62){\line(1,0){48}}
\put(-177,42){\line(1,0){48}}
\put(-177,22){\line(1,0){48}}
\put(-177,2){\line(1,0){48}}
\put(-153,62){\line(-1,-1){20}}
\put(-133,42){\line(-1,-1){20}}
\put(-176,60){{$\scriptstyle\times$}}
\put(-155,60){{$\scriptstyle\bullet$}}
\put(-173,42){\line(0,-1){20}}
\put(-133,62){\line(0,-1){20}}
\put(-175,40){$\scriptstyle\bullet$}
\put(-135,40){$\scriptstyle\bullet$}
\put(-135,60){$\scriptstyle\bullet$}
\put(-136,20){$\scriptstyle\times$}
\put(-156,40){$\scriptstyle\times$}
\put(-156,0){$\scriptstyle\times$}
\put(-136,0){$\scriptstyle\times$}
\put(-175.7,-0.7){$\circ$}
\put(-175,20){$\scriptstyle\bullet$}
\put(-155,20){$\scriptstyle\bullet$}
\put(-163,22){\oval(20,20)[b]}
\end{picture}
$$
In either case it is clear that $(i,i,j)$ is not $\Ga$-admissible,
hence $\psi_2(iji)_\Ga = 0$ proving (d).
For (c), the usual consideration of signs reduces to checking
at the level of bimodule homomorphisms that
$\overline\psi_1 \circ \overline\psi_2\circ \overline\psi_1 = 1$;
we have indicated the maps involved in the above diagram.
To check this identity, we write $1$ for
an anti-clockwise circle and $x$ for a clockwise circle
 as usual, and get that
the composition $\overline\psi_1 \circ \overline\psi_2 \circ \overline\psi_1$ is
$$
1 \mapsto 1 \otimes x + x \otimes 1 \mapsto 1 \otimes 1 \mapsto 1,\qquad
x \mapsto x \otimes x \mapsto 1 \otimes x \mapsto x,
$$
i.e. it is the identity map.

The proof of (vi) is similar to (v) so we omit it.
Finally (vii) can be checked by analogous techniques, though it is
somewhat more lengthy.
We omit the details.
\end{proof}

\begin{Remark}\rm
The functors $F_i$ and $E_i$ for $i \in I$ together with the
natural transformations (\ref{cold})--(\ref{bold})
and the adjunction $(F_i, E_i \circ D_i^{-1} D_{i+1} \langle 1\rangle)$
from Lemma~\ref{adjunctions}
make the graded abelian category
$\Rep{K(m,n;I)}$ into an integrable representation
of the $2$-Kac-Moody algebra
$\mathfrak{A}(\mathfrak{sl}_{I^+})$
in the sense of \cite{Rou} (except that we have interchanged the roles of
$E_i$ and $F_i$).
This is easy to verify using Theorem~\ref{localrels},
Theorem~\ref{cat1}(ii),
Lemma~\ref{adjunctions} and \cite[Theorem 5.27]{Rou}.
We will not pursue this connection
further here.
\end{Remark}

\section{Homogeneous Schur-Weyl duality and a graded cellular basis}\label{sD}

It is time to give a rough sketch of the strategy
 in the remainder of the article.
For each block $\Ga = \La-\alpha \in \PImn$,
we will define modules $T^\La_\alpha \in \Rep{K_\Ga}$ on the diagram
algebra side
and $\mathcal T^\La_\alpha \in \mathcal O_\Ga$ on the category $\mathcal O$ side,
both of which are built using the respective
special projective functors;
see (\ref{dc}) and (\ref{cT}).
These modules both satisfy a
{\em double centraliser property} which
ensures that
the categories $\Rep{K_\Ga}$ and $\mathcal O_\Ga$
can be reconstructed from the
endomorphism algebras
$\End_{K_\Ga}(T^\La_\alpha)^{\op}$
and $\End_{\mathfrak{g}}(\cT^\La_\alpha)^{\op}$,
respectively; see Corollaries~\ref{mapj} and \ref{mapi}.
As we will explain in detail later on, this
reduces the problem of proving
our two categories are equivalent to showing that
$$
\End_{K_\Ga}(T^\La_\alpha)^{\op}
\cong
\End_{\mathfrak{g}}(\cT^\La_\alpha)^{\op}
$$
as algebras.
The right hand endomorphism algebra (from the category $\mathcal O$ side)
is already well understood thanks to a special case of the
{\em Schur-Weyl duality for higher levels} from \cite{BKschur}: it is
a certain block of a degenerate cyclotomic Hecke algebra of level two.

In this section we are going to focus instead on the left hand endomorphism
algebra (i.e. from the diagram algebra side), which we denote by $E^\La_\alpha$.
The most important result of the section gives an explicit graded
cellular basis for $E^\La_\alpha$ parametrised by certain
diagrams called {\em oriented stretched circle diagrams}; see Theorem~\ref{cell}.
We loosely refer to all this
as {\em homogeneous Schur-Weyl duality}, as it gives rise to a
naturally graded analogue on the diagram algebra side
of the Schur-Weyl duality from \cite{BKschur}.

The other important ingredient in the section is the definition of another
algebra $R^\La_\alpha$ by certain generators and relations originating in
\cite{KLa, Rou}, which is already known by \cite{BKyoung} to be isomorphic
to the Hecke algebra block mentioned in the opening paragraph; see
Theorem~\ref{id2}.
As an application of the local relations from Theorem~\ref{localrels},
we will
construct a homomorphism
$$
\omega:R^\La_\alpha \rightarrow E^\La_\alpha;
$$
see Theorem~\ref{phi}.
This homomorphism will eventually turn out to be an isomorphism;
see Corollary~\ref{id3}. Hence we will have proved that the above two
endomorphism algebras are isomorphic, and the all-important
link between the two sides
will be made.

\phantomsubsection{\boldmath The prinjective generator $T^\La_\alpha$}
Given $\alpha \in Q_+$ of height $d$,
let
\begin{equation*}\label{mind}
I^\alpha := \{\bi = (i_1,\dots,i_d) \in I^d\:|\:\alpha_{i_1}+\cdots+\alpha_{i_d} = \alpha\}.
\end{equation*}
This is a single orbit under the action of the symmetric group $S_d$ on
$I^d$.
Recall also the special block $\La \in \PImn$ fixed in (\ref{gsw});
it is a block of defect zero containing
only one weight $\iota$, namely,
the ground-state from (\ref{groundstate}).
Moreover every block $\Ga \in \PImn$ can be written as
$\Ga = \La-\alpha$ for some $\alpha \in Q_+$.
It is convenient at this point to set
\begin{equation*}\label{altk}
K^\La_\alpha := \left\{
\begin{array}{ll}
K_{\La-\alpha}&\text{if $\La - \alpha \in \PImn$,}\\
{\mathbf 0}\:\text{(the zero algebra)}&\text{if $\La - \alpha \notin \PImn$.}
\end{array}\right.
\end{equation*}
Let
\begin{equation}\label{dc}
T^\La_\alpha := \bigoplus_{\bi \in I^\alpha} F_\bi L(\iota) \langle \defect(\La-\alpha)\rangle.
\end{equation}
If $\La - \alpha \notin\PImn$, there are no $\La$-admissible
sequences $\bi \in I^\alpha$, so $T^\La_\alpha$ is the zero module.
If $\La - \alpha \in \PImn$ then
$T^\La_\alpha$ is a $K_{\La-\alpha}$-module.
Hence, in all cases, it makes sense to regard
$T^\La_\alpha$ as a $K^\La_\alpha$-module.

To justify the importance of this module, recall a prinjective
module is a module that is both projective and injective.
A {\em prinjective generator} for $\Rep{K^\La_\alpha}$
means a finite dimensional
prinjective $K^\La_\alpha$-module
such that
every prinjective indecomposable module from Lemma~\ref{princ1}
appears as a summand (possibly shifted in degree).

\begin{Lemma}\label{pring1}
The module $T^\La_\alpha$ is a prinjective generator for $\Rep{K^\La_\alpha}$.
It is non-zero if and only if $\La-\alpha \in \PImn$.
\end{Lemma}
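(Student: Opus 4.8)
# Proof proposal for Lemma~\ref{pring1}

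The plan is to analyze $T^\La_\alpha$ by tracking what the composite functors $F_\bi$ do to $L(\iota)$, using the crystal-graph combinatorics established in Section~\ref{sC1}. First I would dispose of the easy case: if $\La-\alpha\notin\PImn$, then there are no $\La$-admissible sequences in $I^\alpha$, so every summand $F_\bi L(\iota)$ is zero (the restriction of $F_\bi$ to $\Rep{K_\La}$ vanishes unless $\bi$ is $\La$-admissible), whence $T^\La_\alpha=\{0\}$. For the converse and the main content, assume $\La-\alpha\in\PImn$ and recall that $L(\iota)=V(\iota)=P(\iota)$ since $\iota$ is the unique weight in the defect-zero block $\La$ and is maximal (indeed minimal, and the only element) in its Bruhat order. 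In particular $L(\iota)$ is projective, and since $F_\bi$ is a composition of special projective functors $F_i$, each of which sends projectives to projectives (each $F_i$ is a summand of the biadjoint-to-itself family, being exact with exact adjoint, hence preserves projectivity — or invoke directly that $G^t_{\De\Ga}$ sends projectives to projectives as it is a summand of a composition of $\cF$-type functors; alternatively each $F_i P(\la)$ is a sum of $P(\mu)$'s by Lemma~\ref{ga}), every summand $F_\bi L(\iota)$ is projective. By the graded-symmetric-algebra structure of $K_\Ga$ (each $K_\Ga$ is a symmetric algebra, so projective $=$ injective), these summands are also injective, hence prinjective. Therefore $T^\La_\alpha$ is a prinjective $K^\La_\alpha$-module.

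The heart of the argument is to show $T^\La_\alpha$ is a \emph{generator} among prinjectives, i.e. that every indecomposable prinjective $P(\la)$ with $\la\in(\La-\alpha)^\circ$ occurs (up to degree shift) as a summand of some $F_\bi L(\iota)$. By Lemma~\ref{princ1}, the indecomposable prinjectives in $\Rep{K_{\La-\alpha}}$ are exactly $\{P(\la)\:|\:\la\in(\La-\alpha)^\circ\}$ up to shift, and by Lemma~\ref{stupidcomb} a weight $\la$ lies in $\LaImn^\circ$ (equivalently in $(\La-\alpha)^\circ$ once we fix the block) precisely when $\la=\tilde f_{i_d}\cdots\tilde f_{i_1}(\iota)$ for some sequence $i_1,\dots,i_d\in I$; note that the weight of $\la$ is then $\La-\alpha$, so $(i_1,\dots,i_d)\in I^\alpha$. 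Now apply Theorem~\ref{cgt} with $\mu=\iota$: since $\iota$ is maximal in the Bruhat order, $V(\iota)=P(\iota)=L(\iota)$, and we obtain
$$
F_{i_d}\cdots F_{i_1}\,V_\iota=(q+q^{-1})^r q^{r-s}P_\la
$$
in the Grothendieck group $[\Rep{\KImn}]$, where $r,s$ count the relevant crystal edges. Since $F_\bi L(\iota)=F_\bi P(\iota)$ is projective, its class being $(q+q^{-1})^r q^{r-s}[P(\la)]$ forces $F_\bi L(\iota)\cong\bigoplus_{k}P(\la)\langle r-s-r+2k\rangle$ (the binomial expansion of $(q+q^{-1})^r$), so in particular $P(\la)$ occurs as a summand up to a degree shift. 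This shows $T^\La_\alpha$ is a prinjective generator and, since for $\La-\alpha\in\PImn$ the block $(\La-\alpha)^\circ$ is nonempty (it always contains the weight of maximal defect constructed by pushing $\bullet$'s leftward — concretely, some $\tilde f$-sequence from $\iota$ reaches a weight of weight $\La-\alpha$), that $T^\La_\alpha\neq\{0\}$.

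The step I expect to be the main obstacle is the passage from the Grothendieck-group identity of Theorem~\ref{cgt} to an actual module decomposition, i.e. justifying that $F_\bi L(\iota)$ is literally $\bigoplus P(\la)\langle\cdot\rangle$ and not merely equal to it in $[\Rep{\KImn}]$. This needs: (i) $F_\bi L(\iota)$ is a direct sum of projective indecomposables (true, as argued above, because $L(\iota)=P(\iota)$ is projective and $F_\bi$ preserves projectives); (ii) in a highest weight category, a projective module is determined up to isomorphism by its class in the Grothendieck group together with its graded structure — this follows because $[\Rep{\KImn}]$ has the PIM classes $\{[P(\mu)]\}$ as an $\Laurent$-basis, so the multiplicities (with degree shifts) of the $P(\mu)$'s in any projective are read off uniquely from its class, using that $K_\Ga$ is positively graded with $L(\la)$ in degree $0$ and the grading conventions of Section~\ref{sC1}. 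Care is needed that the degree shift $\langle\defect(\La-\alpha)\rangle$ in the definition (\ref{dc}) and the self-duality of $P(\la)\langle-\defect(\La-\alpha)\rangle$ from Lemma~\ref{princ1} are compatible, so that $T^\La_\alpha$ is in fact self-dual — this is a bookkeeping check rather than a real difficulty, but it is the point where one must be precise about the shift conventions.
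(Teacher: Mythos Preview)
Your argument contains one genuine error: the claim that ``each $K_\Ga$ is a symmetric algebra, so projective $=$ injective'' is false. The algebra $K_\Ga$ is quasi-hereditary, and a self-injective quasi-hereditary algebra is necessarily semisimple; but $K_\Ga$ is not semisimple whenever $\defect(\Ga)>0$. (It is the truncation $H_\Ga = e K_\Ga e$ that is symmetric, not $K_\Ga$ itself.) Indeed, Lemma~\ref{princ1} itself tells you that $P(\la)$ is injective only for $\la\in\Ga^\circ$, so in general not every projective $K_\Ga$-module is injective. Thus your deduction of injectivity of $F_\bi L(\iota)$ from projectivity alone does not go through.

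The fix is easy and is essentially what the paper does: $L(\iota)$ is prinjective (its block has defect zero), and each $F_i$ preserves \emph{both} projectives and injectives because it has exact left and right adjoints (Lemma~\ref{adjunctions}); equivalently, $F_i$ commutes with the duality $\circledast$ (Lemma~\ref{duality}) and duality interchanges projectives and injectives, so $F_\bi L(\iota)\cong (F_\bi L(\iota))^\circledast$ is injective as well as projective. Your generator argument via Theorem~\ref{cgt} is correct but slightly heavier than necessary: the paper instead applies Lemma~\ref{ga} directly along the crystal path to see that $L(\la)$ (up to shift) appears in the head of $F_\bi L(\iota)$, which immediately forces $P(\la)$ to be a summand since $F_\bi L(\iota)$ is projective. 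Both routes work; yours additionally invokes that the $[P(\mu)]$'s form an $\Laurent$-basis of the Grothendieck group, which is fine.
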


\begin{proof}
The fact that $T^\La_\alpha$ is a prinjective module follows easily
because $L(\iota)$ is and
projective functors send prinjectives
to prinjectives.
To see it is a generator, we may assume that
$\Ga := \La-\alpha \in \PImn$ and
take any $\la \in \Ga^\circ$.
By Lemma~\ref{stupidcomb}
we can write $\lambda = \tilde f_{i_d} \cdots \tilde f_{i_1} (\iota)$.
Applying Lemma~\ref{ga}, we deduce that
$L(\la)\langle j \rangle$ appears in the head
of $F_{\bi} L(\iota)$
for $\bi = (i_1,\dots,i_d)$ and some $j \in \Z$.
Hence $P(\la)\langle k\rangle$
is a summand of $T^\La_\alpha$ for some $k \in \Z$.
For the final statement about the non-zeroness of $T^\La_\alpha$,
it just remains to observe that
$\Ga^\circ$ is non-empty for $\Ga \in \PImn$.
\end{proof}

\phantomsubsection{\boldmath The endomorphism algebra $E^\La_\alpha$}
We are interested in the remainder of this section in the
(naturally graded) endomorphism algebra
\begin{equation}\label{Thend}
E^\La_\alpha :=
\End_{K^{\La}_\alpha}(T^\La_\alpha)^{\op}.
\end{equation}
The $\op$ here indicates that we view $T^\La_\alpha$
as a {\em right} $E^\La_\alpha$-module, i.e. it is
a graded $(K^\La_\alpha, E^\La_\alpha)$-bimodule.

The algebra $E^\La_\alpha$ is Morita equivalent to a
generalised Khovanov algebra. To explain this precisely,
introduce the idempotent
\begin{equation}\label{truncid}
e :=
\left\{\begin{array}{ll}
\sum_{\la \in (\La-\alpha)^\circ} e_\la
&\text{if $\La-\alpha \in \PImn$,}\\
0&\text{if $\La-\alpha \notin \PImn$.}
\end{array}
\right.
\end{equation}
As in
\cite[(6.8)]{BS1},
the {\em generalised Khovanov algebra}
is the
subalgebra
\begin{equation}\label{genk}
H^\La_\alpha := e K^\La_\alpha e.
\end{equation}
Moreover by \cite[Corollary 6.3]{BS2}, the familiar truncation functor
\begin{equation}\label{truncf}
e:\mod{K_\alpha^\La} \rightarrow \mod{H^\La_\alpha}
\end{equation}
defined by left multiplication by the idempotent $e$
is fully faithful on projectives.

\begin{Theorem}\label{itskhovanov}
The module
$e T^\La_\alpha$ is a projective generator
for $H^\La_\alpha$.
Moreover the natural restriction map gives an isomorphism
between $E^\La_\alpha$ and the endomorphism algebra
$\End_{H^\La_\alpha}(e T^\La_\alpha)^{\op}$.
Hence
$E^\La_\alpha$ is the endomorphism algebra of a projective generator
for $H^\La_\alpha$, so $E^\La_\alpha$
and $H^\La_\alpha$ are Morita equivalent.
\end{Theorem}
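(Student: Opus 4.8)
The plan is to derive Theorem~\ref{itskhovanov} by standard idempotent-truncation arguments, feeding in the two facts already in hand: that $T^\La_\alpha$ is a prinjective (hence projective) generator for $\Rep{K^\La_\alpha}$ (Lemma~\ref{pring1}), and that the truncation functor~(\ref{truncf}) is fully faithful on projectives by \cite[Corollary 6.3]{BS2}. If $\La-\alpha\notin\PImn$ then $T^\La_\alpha$, $K^\La_\alpha$ and $H^\La_\alpha$ are all zero and the theorem is vacuous, so throughout I would fix $\Ga:=\La-\alpha\in\PImn$ and work inside $\Rep{K_\Ga}$.

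The first step is to identify the projective indecomposable $H^\La_\alpha$-modules. By~(\ref{truncid}) the unit of $H^\La_\alpha=eK_\Ga e$ is $e=\sum_{\la\in(\La-\alpha)^\circ}e_\la$, a sum of pairwise orthogonal idempotents satisfying $e_\la e=ee_\la=e_\la$; hence $e_\la H^\La_\alpha e_\la=e_\la K_\Ga e_\la=\End_{K_\Ga}(P(\la))$ is local, so each $e_\la$ is a primitive idempotent of the finite-dimensional algebra $H^\La_\alpha$. Consequently $eP(\la)=eK_\Ga e_\la=(eK_\Ga e)e_\la=H^\La_\alpha e_\la$ is an indecomposable projective $H^\La_\alpha$-module, and since every primitive idempotent of $H^\La_\alpha$ is conjugate to one of the $e_\la$, every projective indecomposable $H^\La_\alpha$-module is isomorphic to a degree shift of some $H^\La_\alpha e_\la$ with $\la\in(\La-\alpha)^\circ$. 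Now $T^\La_\alpha$ is prinjective by Lemma~\ref{pring1}, so by Krull--Schmidt together with Lemma~\ref{princ1} it is a direct sum of degree shifts of the modules $P(\la)$, $\la\in(\La-\alpha)^\circ$, with each such $\la$ occurring at least once (being a prinjective generator, by Lemma~\ref{pring1} and the definition preceding it). Applying the additive exact functor~(\ref{truncf}) shows that $eT^\La_\alpha$ is a direct sum of degree shifts of the modules $H^\La_\alpha e_\la$ with every $\la\in(\La-\alpha)^\circ$ occurring; this is exactly the assertion that $eT^\La_\alpha$ is a projective generator for $H^\La_\alpha$.

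For the isomorphism of endomorphism algebras, note that any $f\in\End_{K_\Ga}(T^\La_\alpha)$ is left $K_\Ga$-linear, so $f(eT^\La_\alpha)=ef(T^\La_\alpha)\subseteq eT^\La_\alpha$; thus $f\mapsto f|_{eT^\La_\alpha}$ is a well-defined algebra homomorphism $\End_{K_\Ga}(T^\La_\alpha)\to\End_{H^\La_\alpha}(eT^\La_\alpha)$, and after passing to opposite algebras it becomes the natural restriction map $E^\La_\alpha\to\End_{H^\La_\alpha}(eT^\La_\alpha)^{\op}$ appearing in the statement. Since $T^\La_\alpha$ is projective and the truncation functor~(\ref{truncf}) is fully faithful on projectives by \cite[Corollary 6.3]{BS2}, the underlying graded map $\hom_{K_\Ga}(T^\La_\alpha,T^\La_\alpha)\to\hom_{H^\La_\alpha}(eT^\La_\alpha,eT^\La_\alpha)$ is a degree-preserving isomorphism, giving $E^\La_\alpha\cong\End_{H^\La_\alpha}(eT^\La_\alpha)^{\op}$ as graded algebras.

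Finally, combining the two previous steps, $E^\La_\alpha$ is the opposite of the endomorphism algebra of the projective generator $eT^\La_\alpha$ for $H^\La_\alpha$, so the graded version of the classical theorem that $A$ and $\End_A(P)^{\op}$ are Morita equivalent whenever $P$ is a projective generator (the equivalence being $\hom_A(P,-)$) shows that $E^\La_\alpha$ and $H^\La_\alpha$ are Morita equivalent. No step here is genuinely difficult; the main point requiring attention is to quote \cite[Corollary 6.3]{BS2} in its graded form, so that the displayed isomorphism is one of graded algebras, and to keep track of the (harmless) degree shifts when identifying the indecomposable summands of $T^\La_\alpha$ and of $eT^\La_\alpha$.
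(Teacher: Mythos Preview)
Your proposal is correct and follows essentially the same approach as the paper's own proof: both derive the projective-generator statement from Lemmas~\ref{pring1} and~\ref{princ1} together with standard properties of the idempotent truncation, and obtain the endomorphism-algebra isomorphism from the fact (\cite[Corollary 6.3]{BS2}) that the truncation functor~(\ref{truncf}) is fully faithful on projectives. You have simply spelled out in more detail what the paper summarises as ``standard facts about truncation functors''.
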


\begin{proof}
The fact that $e T^\La_\alpha$ is a projective generator for $H^\La_\alpha$
follows from Lemmas~\ref{pring1} and \ref{princ1}, together with
standard facts about truncation functors of the form (\ref{truncf}).
The fact that $E^\La_\alpha \cong \End_{H^\La_\alpha}(eT^\La_\alpha)^{\op}$
is a consequence of the definition (\ref{Thend})
and the fact that the functor $e$ is fully faithful on projectives.
\end{proof}

\phantomsubsection{\boldmath The cyclotomic
Khovanov-Lauda-Rouquier algebra $R^\La_\alpha$}
Introduce another algebra $R^\La_\alpha$ defined by generators
\begin{equation}\label{EKLGens}
\{e(\bi)\:|\: \bi\in I^\al\}\cup\{y_1,\dots,y_{d}\}\cup
\{\psi_1, \dots,\psi_{d-1}\},
\end{equation}
where $d = \height(\alpha)$ as before,
subject to the following relations
for $\bi,\bj\in I^\al$ and all
admissible $r, s$:
\begin{align}
y_1^{(\al_{i_1},\La)}e(\bi)&=0;\hspace{82mm}
\label{ERCyc}\end{align}
\begin{align}
\hspace{6mm}e(\bi) e(\bj) &= \de_{\bi,\bj} e(\bi);
\hspace{17.3mm}{\textstyle\sum_{\bi \in I^\alpha}} e(\bi) = 1;\label{R1}\\
y_r e(\bi) &= e(\bi) y_r;
\hspace{26mm}\psi_r e(\bi) = e(s_r{\cdot}\bi) \psi_r;\hspace{17mm}\label{R2PsiE}
\end{align}\begin{align}
\label{newc}
y_r y_s  &= y_s y_r;\\
\label{R3YPsi}
\hspace{9mm}\psi_r y_s  &= y_s \psi_r\hspace{51.4mm}\text{if $s \neq r,r+1$};\\
\psi_r \psi_s &= \psi_s \psi_r\hspace{50.8mm}\text{if $|r-s|>1$};\label{R3Psi}\end{align}
\begin{align}
\psi_r y_{r+1} e(\bi)
&=
\left\{
\begin{array}{ll}
(y_r\psi_r+1)e(\bi) &\hbox{if $i_r=i_{r+1}$},\\
y_r\psi_r e(\bi) \hspace{37.4mm}&\hbox{if $i_r\neq i_{r+1}$};
\end{array}
\right.
\label{R6}\\
y_{r+1} \psi_re(\bi) &=
\left\{
\begin{array}{ll}
(\psi_r y_r+1) e(\bi)
&\hbox{if $i_r=i_{r+1}$},\\
\psi_r y_r e(\bi)  \hspace{37.4mm}&\hbox{if $i_r\neq i_{r+1}$};\hspace{2mm}
\end{array}
\right.
\label{R5}\end{align}\begin{align}
\hspace{11.9mm}\psi_r^2e(\bi) &=
\left\{
\begin{array}{ll}
0&\text{if $i_r = i_{r+1}$},\\
(i_{r+1}-i_r)(y_{r+1}-y_r)e(\bi)\hspace{8.5mm}&\text{if $i_r=i_{r+1}\pm 1$,}\\
e(\bi)&\text{otherwise};
\end{array}
\right.
 \label{R4}\\
\:\:\psi_{r}\psi_{r+1} \psi_{r} e(\bi)
&=
\left\{\begin{array}{ll}
(\psi_{r+1} \psi_{r} \psi_{r+1} +(i_{r+1}-i_r))e(\bi)&\text{if $i_{r+2}=i_r = i_{r+1}\pm 1$},\\
\psi_{r+1} \psi_{r} \psi_{r+1} e(\bi)&\text{otherwise}.
\end{array}\right.
\label{R7}
\end{align}
This algebra is isomorphic in an obvious way to the
algebra denoted $R(\alpha;\La)$ in \cite[$\S$3.4]{KLa}, which is
a level two cyclotomic quotient of the Khovanov-Lauda-Rouquier
algebra associated to the Dynkin
diagram of type $A$ with vertices indexed by $I$.
By inspecting the relations, it follows that
there is a $\Z$-grading on $R^\La_\alpha$
defined by declaring that each $e(\bi)$ is of degree 0,
each $y_r$ is of degree $2$, and $\psi_r e(\bi)$ is of
degree $-2, 1$ or $0$ according to whether
$i_r = i_{r+1}$, $|i_r-i_{r+1}| =1$ or $|i_r - i_{r+1}| > 1$.

The connection between the endomorphism algebra
$E^\La_\alpha$
and the Khovanov-Lauda-Rouquier
algebra $R^\La_\alpha$ is suggested by the following theorem,
which makes $T^\La_\alpha$ into a right $R^\La_\alpha$-module.

\begin{Theorem}\label{phi}
For any $\alpha \in Q_+$,
there is a homomorphism of graded algebras
$$
\omega:R^\La_\alpha \rightarrow E^\La_\alpha
$$
mapping $e(\bi)$ to the
projection of $T^\La_\alpha$ onto the summand
$F_\bi L(\iota) \langle \defect(\La-\alpha)\rangle$ along the decomposition (\ref{dc}),
$y_r$ to the endomorphism $\sum_{\bi \in I^\alpha} y_r(\bi)_{L(\iota)}$,
and $\psi_r$ to the endomorphism
$\sum_{\bi \in I^\alpha} \psi_r(\bi)_{L(\iota)}$.
\end{Theorem}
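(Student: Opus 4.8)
The plan is to check directly that the stated assignments on the generators (\ref{EKLGens}) of $R^\La_\alpha$ respect all the defining relations (\ref{ERCyc})--(\ref{R7}), so that $\omega$ is a well-defined algebra homomorphism, and that it preserves degree. First one observes that the proposed images really do lie in $E^\La_\alpha$: by (\ref{ci})--(\ref{bi}) the maps $y_r(\bi)_{L(\iota)}$ and $\psi_r(\bi)_{L(\iota)}$ are components of natural transformations between composites of the $F_i$, hence $K_{\La-\alpha}$-module homomorphisms between the summands of $T^\La_\alpha$, and $e(\bi)$ is sent to an idempotent because (\ref{dc}) is a direct sum. For the grading, recall that $\overline y$ is homogeneous of degree $2$ and $\overline\psi$ of degree $-2$, $1$ or $0$ in the cases $i_r=i_{r+1}$, $|i_r-i_{r+1}|=1$, $|i_r-i_{r+1}|>1$; since the uniform shift $\langle\defect(\La-\alpha)\rangle$ in (\ref{dc}) has no effect on the grading of an endomorphism algebra, these degrees match the grading of $R^\La_\alpha$, so $\omega$ is automatically graded once it is known to be a homomorphism.

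The bulk of the work is to translate each relation into an identity among composites of the \emph{local} natural transformations $y(i)_\Ga$ and $\psi(ij)_\Ga$, which is then exactly what Theorem \ref{localrels} supplies. Relations (\ref{R1}) and (\ref{R2PsiE}) are immediate from the definitions. Relations (\ref{newc}), (\ref{R3YPsi}), (\ref{R3Psi}) are locality statements: for the indices involved the relevant bimodule homomorphisms are whiskered circle- or crossing-moves performed at two non-interacting levels of the associated composite matching, and these commute because the order of surgery procedures does not matter, exactly as in \cite[$\S$3]{BS1}. Each of (\ref{R6}), (\ref{R5}), (\ref{R4}), (\ref{R7}) only involves the letters $i_r,i_{r+1}$ (and $i_{r+2}$ for (\ref{R7})), so using the horizontal-composition description (\ref{ci})--(\ref{bi}) it can be whiskered down to the statement for the short sequence at the block $\Ga_{r-1}:=\La-\alpha_{i_1}-\cdots-\alpha_{i_{r-1}}$ (which lies in $\PImn$ unless $F_\bi L(\iota)=0$, when the relation is trivial): (\ref{R6}) and (\ref{R5}) become Theorem \ref{localrels}(ii)(b),(c) or (iii)(a),(b); (\ref{R4}) becomes (ii)(a), (v)(a),(b)/(vi)(a),(b), or (iv) in the three cases; and the braid relation (\ref{R7}) follows by combining (v)(c),(d) with (vi)(c),(d) in the exceptional case $i_{r+2}=i_r=i_{r+1}\pm1$ and using (vii) otherwise. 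An important point here is that Theorem \ref{localrels} is already stated in terms of the $y(i)_\Ga,\psi(ij)_\Ga$ themselves, which by Lemmas \ref{inb} and \ref{sff2} already carry the signs $(-1)^{(\Ga,\La_i)}$ from (\ref{bimd2}) and (\ref{sff}); so beyond matching indices and keeping the $\op$ and left/right conventions consistent, there is no additional sign bookkeeping.

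The only relation not contained in Theorem \ref{localrels} is the cyclotomic relation (\ref{ERCyc}), and for it the extra input is the combinatorics of the ground-state block $\La$. Since $\defect(\La)=0$, the block diagram of $\La$ in (\ref{groundstate}) has $\bullet$-vertices exactly at positions $o+n+1,\dots,o+m$ when $m\ge n$ (and $o+m+1,\dots,o+n$ when $m\le n$), and checking the patterns in (\ref{CKLR}) shows the only $\La$-admissible $i\in I$ are $i=o+m$ and $i=o+n$ --- exactly those with $(\alpha_i,\La)\ge 1$. Hence: if $i_1\notin\{o+m,o+n\}$ then $F_{i_1}L(\iota)=0$, so $F_\bi L(\iota)=0$ and $\omega(e(\bi))=0$, covering $(\alpha_{i_1},\La)=0$; if $m\ne n$ and $i_1\in\{o+m,o+n\}$, then one of the $i_1$th, $(i_1+1)$th vertices of $\La$ is a $\bullet$, so $y(i_1)_\La=0$ by Theorem \ref{localrels}(i)(b) and hence $y_1(\bi)_{L(\iota)}=0$, covering $(\alpha_{i_1},\La)=1$; and if $m=n$, then $y(i_1)_\La^2=0$ by Theorem \ref{localrels}(i)(a), so $y_1(\bi)_{L(\iota)}^2=0$, covering $(\alpha_{i_1},\La)=2$; whiskering by $F_{i_d}\cdots F_{i_2}$ gives $\omega(y_1)^{(\alpha_{i_1},\La)}\omega(e(\bi))=0$. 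I expect the main obstacle to be not any individual relation but the amount of careful bookkeeping --- the whiskering reductions, the index matching, and keeping the $\op$-convention straight throughout --- since all the genuinely hard diagrammatic computations have already been absorbed into Theorem \ref{localrels}.
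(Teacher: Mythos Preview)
Your proposal is correct and follows essentially the same approach as the paper: the relations (\ref{R1})--(\ref{R2PsiE}) are immediate from the definitions, the commuting relations (\ref{newc})--(\ref{R3Psi}) follow from locality, and all remaining relations (including the cyclotomic one) are reduced via whiskering to the identities in Theorem~\ref{localrels}. Your explicit treatment of the cyclotomic relation (\ref{ERCyc}) via the case analysis on $(\alpha_{i_1},\La)$ and parts (i)(a),(b) of Theorem~\ref{localrels} spells out a detail the paper leaves implicit, but the underlying argument is the same.
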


\begin{proof}
The relations (\ref{R1}) are immediate, while (\ref{R2PsiE}) follows
because $y_r$ leaves each summand $F_\bi L(\iota)\langle\defect(\La-\alpha)\rangle$ invariant
and $\psi_r$ maps $F_\bi L(\iota)\langle\defect(\La-\alpha)\rangle$ to $F_{s_r\cdot \bi} L(\iota)\langle\defect(\La-\alpha)\rangle$
according to (\ref{ci}) and (\ref{bi}).
Also the commuting relations (\ref{newc}), (\ref{R3YPsi})
and (\ref{R3Psi}) follow from the local nature of the
definitions (\ref{ci}) and (\ref{bi}).
All the remaining relations follow from Theorem~\ref{localrels}.
\end{proof}

\phantomsubsection{Stretched cap, cup and circle diagrams}
Continue with $\alpha \in Q_+$ of height $d$.
In the subsequent two subsections, we are going to introduce
explicit diagram bases for $T^\La_\alpha$ and its endomorphism
algebra $E^\La_\alpha$. In this subsection we define the various
sorts of diagrams needed to do  this.

We refer to the composite matchings $\bt = t_d \cdots t_1$
that are associated to $\La$-admissible sequences
$\bi \in I^\alpha$ as {\em stretched cap diagrams}
of type $\alpha$, calling
$\bt$ {\em proper} if
$\bi$ is a proper $\La$-admissible sequence.
Here are some examples, the last of which is not proper:
$$
\begin{picture}(157,84)
\put(-96,0){\line(1,0){60}}
\put(-96,20){\line(1,0){60}}
\put(-96,40){\line(1,0){60}}
\put(-96,60){\line(1,0){60}}
\put(-96,80){\line(1,0){60}}
\put(-76,20){\line(0,-1){20}}
\put(-76,60){\line(-1,-1){20}}
\put(-56,40){\line(-1,-1){20}}
\put(-56,60){\line(0,-1){20}}
\put(-96,40){\line(0,-1){40}}
\put(-66,60){\oval(20,20)[t]}
\put(-46,0){\oval(20,20)[t]}
\put(-58.2,-2){$\scriptstyle\bullet$}
\put(-38.2,-2){$\scriptstyle\bullet$}
\put(-78.2,-2){$\scriptstyle\bullet$}
\put(-98.2,-2){$\scriptstyle\bullet$}
\put(-98.2,18){$\scriptstyle\bullet$}
\put(-78.2,18){$\scriptstyle\bullet$}
\put(-59.3,18.2){$\scriptstyle\times$}
\put(-38.5,17.3){$\circ$}
\put(-98.2,38){$\scriptstyle\bullet$}
\put(-58.2,38){$\scriptstyle\bullet$}
\put(-79.3,38.2){$\scriptstyle\times$}
\put(-38.5,37.3){$\circ$}
\put(-78.2,58){$\scriptstyle\bullet$}
\put(-58.2,58){$\scriptstyle\bullet$}
\put(-99.3,58.2){$\scriptstyle\times$}
\put(-38.5,57.3){$\circ$}
\put(-99.3,78.2){$\scriptstyle\times$}
\put(-79.3,78.2){$\scriptstyle\times$}
\put(-58.5,77.3){$\circ$}
\put(-38.5,77.3){$\circ$}

\put(0,0){\line(1,0){60}}
\put(0,20){\line(1,0){60}}
\put(0,40){\line(1,0){60}}
\put(0,60){\line(1,0){60}}
\put(0,80){\line(1,0){60}}
\put(20,20){\line(0,-1){20}}
\put(0,20){\line(0,-1){20}}
\put(30,60){\oval(20,20)[t]}
\put(10,20){\oval(20,20)[t]}
\put(30,60){\oval(20,20)[b]}
\put(50,0){\oval(20,20)[t]}
\put(37.8,-2){$\scriptstyle\bullet$}
\put(57.8,-2){$\scriptstyle\bullet$}
\put(17.8,-2){$\scriptstyle\bullet$}
\put(-2.2,-2){$\scriptstyle\bullet$}
\put(-2.2,18){$\scriptstyle\bullet$}
\put(17.8,18){$\scriptstyle\bullet$}
\put(36.7,18.2){$\scriptstyle\times$}
\put(57.5,17.3){$\circ$}
\put(-3.3,38.2){$\scriptstyle\times$}
\put(37.7,38.2){$\scriptstyle\times$}
\put(57.5,37.3){$\circ$}
\put(17.5,37.3){$\circ$}
\put(17.8,58){$\scriptstyle\bullet$}
\put(37.8,58){$\scriptstyle\bullet$}
\put(-3.3,58.2){$\scriptstyle\times$}
\put(57.5,57.3){$\circ$}
\put(-3.3,78.2){$\scriptstyle\times$}
\put(16.7,78.2){$\scriptstyle\times$}
\put(37.5,77.3){$\circ$}
\put(57.5,77.3){$\circ$}

\put(96,0){\line(1,0){60}}
\put(96,20){\line(1,0){60}}
\put(96,40){\line(1,0){60}}
\put(96,60){\line(1,0){60}}
\put(96,80){\line(1,0){60}}
\put(106,20){\oval(20,20)[b]}
\put(126,20){\oval(20,20)[t]}
\put(136,20){\line(0,-1){20}}
\put(156,60){\line(0,-1){60}}
\put(116,60){\line(-1,-1){20}}
\put(156,60){\line(-1,1){20}}
\put(96,40){\line(0,-1){20}}
\put(116,80){\line(0,-1){20}}
\put(133.8,-2){$\scriptstyle\bullet$}
\put(153.8,-2){$\scriptstyle\bullet$}
\put(112.7,-1.8){$\scriptstyle\times$}
\put(93.5,-2.7){$\circ$}
\put(93.8,18){$\scriptstyle\bullet$}
\put(113.8,18){$\scriptstyle\bullet$}
\put(133.8,18){$\scriptstyle\bullet$}
\put(153.8,18){$\scriptstyle\bullet$}
\put(93.8,38){$\scriptstyle\bullet$}
\put(153.8,38){$\scriptstyle\bullet$}
\put(112.7,38.2){$\scriptstyle\times$}
\put(133.5,37.3){$\circ$}
\put(113.8,58){$\scriptstyle\bullet$}
\put(153.8,58){$\scriptstyle\bullet$}
\put(92.7,58.2){$\scriptstyle\times$}
\put(133.5,57.3){$\circ$}
\put(113.8,78){$\scriptstyle\bullet$}
\put(133.8,78){$\scriptstyle\bullet$}
\put(92.7,78.2){$\scriptstyle\times$}
\put(153.5,77.3){$\circ$}

\put(192,0){\line(1,0){60}}
\put(192,20){\line(1,0){60}}
\put(192,40){\line(1,0){60}}
\put(192,60){\line(1,0){60}}
\put(192,80){\line(1,0){60}}
\put(242,20){\oval(20,20)[b]}
\put(202,20){\oval(20,20)[t]}
\put(212,20){\line(0,-1){20}}
\put(192,20){\line(0,-1){20}}
\put(252,60){\line(0,-1){40}}
\put(232,40){\line(0,-1){20}}
\put(212,80){\line(0,-1){20}}
\put(252,60){\line(-1,1){20}}
\put(232,40){\line(-1,1){20}}
\put(189.8,-2){$\scriptstyle\bullet$}
\put(209.8,-2){$\scriptstyle\bullet$}
\put(248.7,-1.8){$\scriptstyle\times$}
\put(229.5,-2.7){$\circ$}
\put(189.8,18){$\scriptstyle\bullet$}
\put(209.8,18){$\scriptstyle\bullet$}
\put(229.8,18){$\scriptstyle\bullet$}
\put(249.8,18){$\scriptstyle\bullet$}
\put(229.8,38){$\scriptstyle\bullet$}
\put(249.8,38){$\scriptstyle\bullet$}
\put(188.7,38.2){$\scriptstyle\times$}
\put(209.5,37.3){$\circ$}
\put(209.8,58){$\scriptstyle\bullet$}
\put(249.8,58){$\scriptstyle\bullet$}
\put(188.7,58.2){$\scriptstyle\times$}
\put(229.5,57.3){$\circ$}
\put(209.8,78){$\scriptstyle\bullet$}
\put(229.8,78){$\scriptstyle\bullet$}
\put(188.7,78.2){$\scriptstyle\times$}
\put(249.5,77.3){$\circ$}
\end{picture}
$$
If $\bt$ is a stretched cap diagram, its
{\em upper reduction} means the (ordinary) cap diagram obtained by
removing all its number lines except for the
bottom one together with
any internal circles and generalised cups it may contain;
propagating lines become rays up to infinity.
A {\em (proper) stretched cup diagram}
means the mirror image $\bt^* = t_1^* \cdots t_d^*$
of a (proper) stretched cap diagram $\bt = t_d \cdots t_1$.
Its {\em lower reduction} is the mirror image of the upper reduction
of $\bt$.

Given a stretched cap diagram $\bt$ and a cup diagram $a$
whose top number line matches the bottom number line of $\bt$
(i.e. their free vertices in all the same positions)
we can glue $a$ under $\bt$ to obtain
a new diagram $a \bt$. We call this an {\em upper-stretched circle diagram}.
The mirror image of an upper-stretched circle diagram
is a {\em lower-stretched circle diagram}.
Given a pair of stretched cap diagrams $\bt$ and $\bu$
of type $\alpha$, we can glue $\bu^*$ under $\bt$
to obtain a {\em stretched circle diagram} $\bu^* \bt$ of type $\alpha$.
Note such a diagram
has a distinguished number line in the middle, which we call
the {\em boundary line}.
We refer to the internal circles of $\bt$ (resp. $\bu^*$)
as the {\em upper circles} (resp. {\em lower circles})
of $\bu^* \bt$.
All remaining internal circles in $\bu^* \bt$ cross the boundary line,
so we call them {\em boundary circles}.

All the diagrams so far have oriented versions too.
Given a stretched cap diagram $\bt$ of type $\alpha$
we can uniquely recover the underlying
$\La$-admissible sequence $\bi \in I^\alpha$ from it, hence $\bt$ also
determines the
{\em associated block sequence} $\bGa = \Ga_d \cdots \Ga_0$
in which $\Ga_r = \La - \alpha_{i_1}-\cdots-\alpha_{i_{r}}$.
For $\bGa$ arising from $\bt$
in this way, we refer to an oriented $\bGa$-matching
$\bt[\bga]$ as in (\ref{tlanotation})\label{oscd}
as an {\em oriented stretched cap diagram} of type $\alpha$.
Note
it is necessarily the case that $\ga_0 = \iota$, the ground-state.
An {\em oriented stretched cup diagram} means the mirror image
$\bt^*[\bga^*]$ of an oriented stretched cup diagram;
here, for $\bga = \ga_d \cdots \ga_0$ we write
$\bga^*$ for the opposite weight sequence
$\ga_0 \cdots \ga_d$.

An {\em oriented upper-stretched circle diagram}
means a diagram of the form
$a \:\bt[\bga]$ where $a \ga_d$ is an oriented cup diagram
and $\bt[\bga]$ is an oriented stretched cap diagram.
The mirror image
$\bt^*[\bga^*]\:a^*$
of such a diagram is an {\em oriented lower-stretched
circle diagram}.
An {\em oriented stretched circle diagram} is a diagram
 obtained
by gluing an oriented stretched cup diagram
$\bu^*[\bde^*]$ underneath an oriented stretched cap diagram
$\bt[\bga]$ assuming that $\ga_d = \de_d$; we denote this
composite diagram by
\begin{equation*}
\bu^*[\bde^*] \wr \bt[\bga] =
\de_0 u_1^*\de_1 \cdots \de_{d-1} u_d^* \ga_d t_d\ga_{d-1} \cdots \ga_1 t_1 \ga_0.
\end{equation*}
Here is
 an example of an oriented stretched circle diagram,
taking $m=n=2$ and $I = \{1,2\}$,
together with its lower  and upper reductions:
\begin{equation}
\begin{picture}(300,68)
\put(119,-50){\text{$u_1^*$}}
\put(119,-30){\text{$u_2^*$}}
\put(119,-10){\text{$u_3^*$}}
\put(119,12){\text{$t_3$}}
\put(119,32){\text{$t_2$}}
\put(119,52){\text{$t_1$}}
\put(-5,-59){\text{$\de_0=\iota$}}
\put(-5,-39){\text{$\de_1$}}
\put(-5,-19){\text{$\de_2$}}
\put(-5,1){\text{$\de_3=\ga_3$}}
\put(-1,21){\text{$\phantom{\de_3=}\ga_2$}}
\put(-1,41){\text{$\phantom{\de_3=}\ga_1$}}
\put(-1,61){\text{$\iota=\ga_0$}}
\put(52,63){\line(1,0){40}}
\put(52,43){\line(1,0){40}}
\put(52,23){\line(1,0){40}}
\thicklines\put(52,3){\line(1,0){40}}\thinlines
\put(52,-17){\line(1,0){40}}
\put(52,-37){\line(1,0){40}}
\put(52,-57){\line(1,0){40}}
\put(49.3,-21.1){{$\scriptstyle\up$}}
\put(69,-19){{$\scriptstyle\times$}}
\put(89.1,-17.1){{$\scriptstyle\down$}}
\put(92,-17){\line(0,-1){20}}
\put(52,-17){\line(1,-1){20}}
\put(49,-39){$\scriptstyle\times$}
\put(89.1,38.9){$\scriptstyle\up$}
\put(69.1,42.9){$\scriptstyle\down$}
\put(68.7,-41.1){$\scriptstyle\up$}
\put(89.1,-37.1){{$\scriptstyle\down$}}
\put(49,-59){$\scriptstyle\times$}
\put(69,-59){$\scriptstyle\times$}
\put(89.3,-59.7){{$\circ$}}
\put(89,1){$\scriptstyle\times$}
\put(82,-37){\oval(20,20)[b]}
\put(62,3){\oval(20,20)[t]}
\put(82,43){\oval(20,20)[b]}
\put(82,43){\oval(20,20)[t]}
\put(49,61){$\scriptstyle\times$}
\put(69,61){$\scriptstyle\times$}
\put(89.3,60.7){{$\circ$}}
\put(89,21){$\scriptstyle\times$}
\put(49,21){$\scriptstyle\times$}
\put(49,41){$\scriptstyle\times$}
\put(69.3,20.4){{$\circ$}}
\put(69.5,2.9){$\scriptstyle\down$}
\put(49.3,-1.1){$\scriptstyle\up$}
\put(52,3){\line(0,-1){20}}
\put(72,3){\line(1,-1){20}}
\thicklines\put(252,3){\line(1,0){40}}\thinlines
\put(252,-17){\line(1,0){40}}
\put(252,-37){\line(1,0){40}}
\put(252,-57){\line(1,0){40}}
\put(249.3,-21.1){{$\scriptstyle\up$}}
\put(269,-19){{$\scriptstyle\times$}}
\put(289.1,-17.1){{$\scriptstyle\down$}}
\put(292,-17){\line(0,-1){20}}
\put(252,-17){\line(1,-1){20}}
\put(249,-39){$\scriptstyle\times$}
\put(268.7,-41.1){$\scriptstyle\up$}
\put(289.1,-37.1){{$\scriptstyle\down$}}
\put(249,-59){$\scriptstyle\times$}
\put(269,-59){$\scriptstyle\times$}
\put(289.3,-59.7){{$\circ$}}
\put(289,1){$\scriptstyle\times$}
\put(282,-37){\oval(20,20)[b]}
\put(262,3){\oval(20,20)[t]}
\put(269.5,2.9){$\scriptstyle\down$}
\put(249.3,-1.1){$\scriptstyle\up$}
\put(252,3){\line(0,-1){20}}
\put(272,3){\line(1,-1){20}}
\put(172,63){\line(1,0){40}}
\put(172,43){\line(1,0){40}}
\put(172,23){\line(1,0){40}}
\thicklines
\put(172,3){\line(1,0){40}}
\thinlines
\put(209.1,38.9){$\scriptstyle\up$}
\put(189.1,42.9){$\scriptstyle\down$}
\put(209,1){$\scriptstyle\times$}
\put(182,3){\oval(20,20)[t]}
\put(202,43){\oval(20,20)[b]}
\put(182,3){\oval(20,20)[b]}
\put(202,43){\oval(20,20)[t]}
\put(169,61){$\scriptstyle\times$}
\put(189,61){$\scriptstyle\times$}
\put(209.3,60.7){{$\circ$}}
\put(209,21){$\scriptstyle\times$}
\put(169,21){$\scriptstyle\times$}
\put(169,41){$\scriptstyle\times$}
\put(189.3,20.7){{$\circ$}}
\put(189.1,2.9){$\scriptstyle\down$}
\put(169.3,-1.1){$\scriptstyle\up$}
\end{picture}\label{fig2}
\end{equation}
\vspace{50pt}

\noindent
We say a stretched circle diagram $\bu^*\bt$ is {\em proper}
if oriented diagrams of the form $\bu^*[\bde^*]\wr\bt[\bga]$
exist; this implies that both $\bu$ and $\bt$ are proper too.

Finally we introduce some degrees. If $\bt[\bga]$, $\bu[\bde^*]$
and $\bu^*[\bde^*]\wr\bt[\bga]$ are oriented stretched cap, cup and circle diagrams, respectively,
we define their degrees from
\begin{align}
\deg(\bt[\bga]) &:= \#\text{(clockwise caps)} - \#\text{(anti-clockwise cups)},\label{degz}\\
\deg(\bu^*[\bde^*]) &:= \#\text{(clockwise cups)} - \#\text{(anti-clockwise caps)},\label{degzz}\\
\deg(\bu^*[\bde^*]\wr\bt[\bga])
&:= \deg(\bu^*[\bde^*])+\deg(\bt[\bga])=
\deg(\bu[\bde])+\deg(\bt[\bga]).\label{maindegdef}
\end{align}
The following lemma gives an alternative description of the last one of these;
for instance, the oriented stretched circle diagram in (\ref{fig2}) is of degree 1.

\begin{Lemma}
The degree of the oriented stretched circle diagram $\bu^*[\bde^*]\wr\bt[\bga]$ is
equal to
$\defect(\La-\alpha)+
\#\text{(clockwise circles)} - \#\text{(anti-clockwise circles)}.$
\end{Lemma}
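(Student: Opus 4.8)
The plan is to evaluate $\deg(\bu^*[\bde^*]\wr\bt[\bga])$ directly from the definition (\ref{maindegdef}), writing it as $\deg(\bt[\bga])+\deg(\bu^*[\bde^*])$ and then sorting the cups and caps of the composite diagram $D:=\bu^*[\bde^*]\wr\bt[\bga]$ according to the connected component they lie on. The crucial input is an elementary count: since the outermost number lines of $D$ both equal the ground-state $\iota$ and $\defect(\La)=0$, the stretched cap diagram $\bt$ has no generalised cups, and its generalised caps are in bijection with the $\defect(\La-\alpha)$ caps of its upper reduction, which is an ordinary cap diagram of the block $\La-\alpha$. Every internal circle has as many cups as caps; every propagating line of $\bt$ runs from the top line to the bottom line, starting and ending with a downward strand and hence also having as many cups as caps; and each generalised cap of $\bt$ has exactly one more cap than cup. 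Summing over components,
\[
\#(\text{cups of }\bt)-\#(\text{caps of }\bt)=-\defect(\La-\alpha),
\]
and the mirror-image count gives $\#(\text{cups of }\bu^*)-\#(\text{caps of }\bu^*)=+\defect(\La-\alpha)$.

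Next I would rewrite the two degrees. Using (\ref{degz}) together with $\#(\text{acw cups})=\#(\text{cups})-\#(\text{cw cups})$,
\[
\deg(\bt[\bga])=\#(\text{cw caps of }\bt)-\#(\text{acw cups of }\bt)=\#(\text{cw arcs of }\bt)-\#(\text{cups of }\bt),
\]
and similarly $\deg(\bu^*[\bde^*])=\#(\text{cw arcs of }\bu^*)-\#(\text{caps of }\bu^*)$ from (\ref{degzz}). Adding these, using the counts just established, and writing $N:=\#(\text{cups of }D)=\#(\text{caps of }D)$ (equality because $D$ is a disjoint union of circles and propagating lines, each with balanced cups and caps for the same reasons as above), one obtains
\[
\deg(D)=\#(\text{cw arcs of }D)-N+\defect(\La-\alpha).
\]
It then only remains to show $\#(\text{cw arcs of }D)-N=\#(\text{clockwise circles})-\#(\text{anticlockwise circles})$, which I would prove by a component-by-component analysis in the spirit of the oriented-diagram combinatorics of \cite[$\S$2]{BS1}: each closed circle of $D$ contributes $+1$ or $-1$ to $\#(\text{cw arcs of }D)-N$ according to whether it is clockwise or anticlockwise (for the internal circles this is exactly the $R$-grading in (\ref{moso})), while each propagating line of $D$ — which, since $\defect(\La)=0$ forbids any component of $D$ from returning to an outermost number line, is a propagating line of $\bt$ spliced to one of $\bu^*$ across the boundary line — carries equally many clockwise and anticlockwise arcs and so contributes $0$.

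The only real work is this final bookkeeping of clockwise versus anticlockwise arcs along a single component: one must track, using precisely the orientation conventions of \cite[$\S$2]{BS1}, the way the labels a strand forces on the cups and caps along it arrange themselves, so as to confirm that a propagating line contributes nothing and that a closed circle — however deeply nested, and however often it crosses the boundary line — contributes exactly $\pm 1$. Everything else in the argument is routine counting.
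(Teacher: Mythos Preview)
Your proposal is correct and follows essentially the same route as the paper's proof. Both arguments reduce the degree formula to
\[
\deg(D)=\defect(\La-\alpha)+\#(\text{clockwise cups and caps in }D)-\#(\text{caps in }D),
\]
using the identity $\defect(\La-\alpha)=\caps(\bt)-\cups(\bt)$ (which the paper records just before (\ref{te}); your component-by-component derivation of it is a valid but more elaborate alternative). The paper then finishes in one stroke by invoking \cite[Lemma~2.2]{BS2}, which is exactly the statement you isolate as ``the only real work'': that on each closed circle the quantity $\#(\text{cw arcs})-\#(\text{caps})$ equals $\pm 1$ according to orientation, and on each propagating line it equals $0$. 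So your final bookkeeping step is not new content but a reproof of that cited lemma; there is no genuine divergence in strategy.
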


\begin{proof}
Using the observation that $\defect(\La-\alpha) = \caps(\bt) - \cups(\bt)$,
it is easy to see from the definition (\ref{maindegdef}) that
$\deg(\bu^*[\bde^*]\wr \bt[\bga])$ is equal to
$\defect(\La-\alpha)$ plus the number of clockwise cups and caps in the diagram
$\bu^*[\bde^*]\wr\bt[\bga]$ minus the total
number of caps in $\bu^* \bt$.
Now apply \cite[Lemma 2.2]{BS2}.
\end{proof}

\begin{Remark}\label{newr1}\rm
There is a natural bijection between the set of oriented stretched cap diagrams
and a special case of the standard tableaux from \cite[$\S$3.2]{BKW} (taking $e:=0$, $l:=2$
and $(k_1,k_2) := (o+m,o+n)$).
To make this precise, define a {\em standard bitableau}
to be a pair $\T = (\T^{(1)}, \T^{(2)})$ of Young tableaux in the usual sense,
with boxes filled with the distinct integers $1,\dots,d$ so that within each $\T^{(i)}$ the entries are strictly increasing
along rows and down columns, like in the following example:
\begin{equation}\label{atab}
\T = \left(\:\diagram{5\cr 8\cr}\:,\:\:\:
\diagram{1&2&3\cr 4&6&7\cr}\:\right).
\end{equation}
In such a diagram, we define the {\em residue} of the node in row $r$ and column $c$ of $\T^{(1)}$
(resp.\ $T^{(2)}$) to be the integer $(o+m+c-r)$ (resp.\ $(o+n+c-r)$).
The {\em residue sequence} $\bi^\T$ of $\T$ means the sequence $(i_1,\dots,i_d)$ where $i_k$ is the residue of the node with entry $k$, and the {\em type} of $\T$ is $\alpha_{i_1}+\cdots+\alpha_{i_d} \in Q_+$; for instance, taking $o=0$ and $m=n=2$,
the tableau in (\ref{atab}) has $\bi^\T = (2,3,4,1,2,2,3,1)$ and is of type $2\alpha_1+3\alpha_2+2\alpha_3+\alpha_4$.

Now suppose we are given an oriented stretched cap diagram $\bt[\bga]$ with underlying admissible sequence $\bi = (i_1,\dots,i_d) \in I^\alpha$.
We let $\T$ be the standard bitableau
obtained by starting from the empty bitableau, then
adding a node of residue $i_k$ labelled by the entry $k$ for each $k=1,\dots,d$, so that the diagram obtained
after each step is itself a standard bitableau.
At the $k$th
step, there is a unique way to
add such a node with one exception: when
the $k$th level $t_k$ of $\bt$ is a cap there are two possible places to add a node of residue $i_k$;
in that case we add the node into $\T^{(1)}$ if the cap is clockwise
or into $\T^{(2)}$ if the cap is anti-clockwise.
In this way, we obtain a well-defined map from oriented stretched cap diagrams $\bt[\bga]$ of type $\alpha$
to standard bitableaux
of type $\alpha$. Moreover the underlying shape of the bitableau $\T$
is the bipartition $(\la^{(1)}, \la^{(2)})$ associated to the bottom
weight $\la := \ga_d$ according to the map from Remark~\ref{dinnertime}.

It is quite easy to reverse the construction, hence the map $\bt[\bga] \mapsto \T$ just defined is actually a bijection.
Moreover, the notion of degree from (\ref{degz}) coincides under this bijection to the notion of degree of a standard
bitableau from \cite[(3.5)]{BKW}.
For example, taking $m=n=2$ again, the tableau (\ref{atab}) arises via our bijection from the following oriented stretched
cap diagram
\begin{equation}
\begin{picture}(-310,100)
\put(-195,94){$_1$}
\put(-175,94){$_2$}
\put(-155,94){$_3$}
\put(-135,94){$_4$}
\put(-115,94){$_5$}
\put(-212,71){$_1$}
\put(-212,51){$_2$}
\put(-212,31){$_3$}
\put(-212,11){$_4$}
\put(-212,-9){$_5$}
\put(-212,-29){$_6$}
\put(-212,-49){$_7$}
\put(-212,-69){$_8$}
\put(-192.5,82){\line(1,0){80}}
\put(-192.5,62){\line(1,0){80}}
\put(-192.5,42){\line(1,0){80}}
\put(-192.5,22){\line(1,0){80}}
\put(-192.5,2){\line(1,0){80}}
\put(-192.5,-18){\line(1,0){80}}
\put(-192.5,-38){\line(1,0){80}}
\put(-192.5,-58){\line(1,0){80}}
\put(-192.5,-78){\line(1,0){80}}

\put(-196,80){{$\scriptstyle\times$}}
\put(-176,80){{$\scriptstyle\times$}}
\put(-155.7,79.3){{$\circ$}}
\put(-135.7,79.3){{$\circ$}}
\put(-115.7,79.3){{$\circ$}}
\put(-162.5,62){\oval(20,20)[t]}
\put(-172.5,62){\line(0,-1){40}}
\put(-162.5,-18){\oval(20,20)[t]}
\put(-192.5,2){\line(0,-1){60}}
\put(-112.5,22){\line(0,-1){100}}
\put(-132.5,-58){\line(0,-1){20}}
\put(-152.5,-58){\line(0,-1){20}}
\put(-142.5,-58){\oval(20,20)[t]}
\put(-162.5,-18){\oval(20,20)[b]}
\put(-192.5,-58){\line(1,-1){20}}
\put(-172.5,22){\line(-1,-1){20}}
\put(-152.5,62){\line(1,-1){40}}

\put(-196,60){{$\scriptstyle\times$}}
\put(-175.2,62){{$\scriptstyle\down$}}
\put(-155.2,57.4){{$\scriptstyle\up$}}
\put(-135.7,59.3){{$\circ$}}
\put(-115.7,59.3){{$\circ$}}

\put(-196,40){{$\scriptstyle\times$}}
\put(-175.2,42){{$\scriptstyle\down$}}
\put(-135.2,37.4){{$\scriptstyle\up$}}
\put(-155.7,39.3){{$\circ$}}
\put(-115.7,39.3){{$\circ$}}

\put(-196,20){{$\scriptstyle\times$}}
\put(-175.2,22){{$\scriptstyle\down$}}
\put(-115.2,17.4){{$\scriptstyle\up$}}
\put(-155.7,19.3){{$\circ$}}
\put(-135.7,19.3){{$\circ$}}

\put(-195.2,2){{$\scriptstyle\down$}}
\put(-115.2,-2.6){{$\scriptstyle\up$}}
\put(-176,0){{$\scriptstyle\times$}}
\put(-155.7,-0.7){{$\circ$}}
\put(-135.7,-0.7){{$\circ$}}

\put(-195.2,-18){{$\scriptstyle\down$}}
\put(-115.2,-22.6){{$\scriptstyle\up$}}
\put(-135.7,-20.7){{$\circ$}}
\put(-155.2,-18){{$\scriptstyle\down$}}
\put(-175.2,-22.6){{$\scriptstyle\up$}}

\put(-195.2,-38){{$\scriptstyle\down$}}
\put(-115.2,-42.6){{$\scriptstyle\up$}}
\put(-135.7,-40.7){{$\circ$}}
\put(-156,-40){{$\scriptstyle\times$}}
\put(-175.7,-40.7){{$\circ$}}

\put(-195.2,-58){{$\scriptstyle\down$}}
\put(-175.7,-60.7){{$\circ$}}
\put(-155.2,-58){{$\scriptstyle\down$}}
\put(-135.2,-62.6){{$\scriptstyle\up$}}
\put(-115.2,-62.6){{$\scriptstyle\up$}}

\put(-195.7,-80.7){{$\circ$}}
\put(-175.2,-78){{$\scriptstyle\down$}}
\put(-155.2,-78){{$\scriptstyle\down$}}
\put(-135.2,-82.6){{$\scriptstyle\up$}}
\put(-115.2,-82.6){{$\scriptstyle\up$}}
\end{picture}\label{itmightbewrong}
\end{equation}

\vspace{28mm}

\noindent
which is of degree $1$.

In a similar way, there is a bijection from
stretched oriented cup diagrams of type $\alpha$ to standard bitableaux of type $\alpha$.
Putting the two bijections together, we obtain a bijection from the set of
stretched oriented circle diagrams of type $\alpha$ to the set of all
pairs $({\mathtt S},\T)$ of standard bitableaux of type $\alpha$ such that ${\mathtt S}$ and $\T$
have the same underlying shape.
\end{Remark}

\phantomsubsection{\boldmath Stretched circle diagram bases for $T^\La_\alpha$ and its dual}
As $\La$ is a block of defect zero,
the algebra $K_\La$ is trivial, so the irreducible module $L(\iota)$
is just a copy of the regular module $K_\La$.
Hence we can rewrite the definition (\ref{dc}) as
\begin{align*}
T^\La_\alpha &:= \bigoplus_{\bi \in I^\alpha} F_\bi K_\La \langle\defect(\La-\alpha)\rangle.\\\intertext{Also introduce a dual object}
\widehat T^\La_\alpha &:= \bigoplus_{\bi \in I^\alpha}  E_{\bi^*}K^\La_\alpha
\end{align*}
where for $\bi = (i_1,\dots,i_d)$ we set $\bi^* := (i_d,\dots,i_1)$.
Note $T^\La_\alpha$ is a graded left $K^\La_\alpha$-module, and
$\widehat T^\La_\alpha$ is a graded right $K^\La_\alpha$-module
(arising from the right regular action of $K^\La_\alpha$ on itself).

Using Lemma~\ref{caniso}, noting also that
$\defect(\La-\alpha) = \caps(\bt)-\cups(\bt)$,
we get a canonical isomorphism
\begin{equation}\label{te}
T^\La_\alpha \stackrel{\sim}{\rightarrow}
\bigoplus K_\bGa^\bt \langle-\cups(\bt)\rangle
\end{equation}
of graded left $K^\La_\alpha$-modules,
where the direct sum is over all proper $\La$-admissible sequences
$\bi \in I^\alpha$, and $\bGa$ and $\bt$ denote the associated block
sequence and composite matching, respectively.
Similarly, using $\caps(\bt^*) = \cups(\bt)$, there is a canonical isomorphism
\begin{equation}\label{te2}
\widehat T^\La_\alpha \stackrel{\sim}{\rightarrow}
\bigoplus K_{\bGa^*}^{\bt^*} \langle -\cups(\bt)\rangle,
\end{equation}
where the direct sum is over the same triples $(\bi, \bGa, \bt)$ as in (\ref{te}).
Pulling the diagram bases
from (\ref{diagbasis})
back through these canonical isomorphisms, we deduce that
$T^\La_\alpha$ and $\widehat T^\La_\alpha$ have
distinguished homogeneous bases denoted
\begin{align}\label{thes}
\bigg\{(a\:\bt[\bga]\,|\:&\bigg|\:\begin{array}{l}
\text{for all oriented upper-stretched}\\
\text{circle diagrams
$a\:\bt[\bga]$ of type $\alpha$}
\end{array}\bigg\},\\
\bigg\{|\,\bu^*[\bde^*]\:b)\:&\bigg|\:\begin{array}{l}
\text{for all oriented lower-stretched}\\
\text{circle diagrams
$\bu^*[\bde^*]\:b$ of type $\alpha$}
\end{array}\bigg\}.\label{thes2}
\end{align}

The bases (\ref{thes})--(\ref{thes2}) are
quite convenient for computations.
For example, the natural right action of the
generators $y_r$ and $\psi_r$ of $R^\La_\alpha$ on
a basis element $(a\:\bt[\bga]\,|$
of $T^\La_\alpha$
can be computed quickly by making the appropriate
circle move, crossing move or height move
to the diagram $a \:\bt[\bga]$
as in the definitions
(\ref{expy}) and (\ref{betadef2}) (remembering also the signs
from Lemmas~\ref{inb} and \ref{sff2}). The left $K^\La_\alpha$-module
structure on $T^\La_\alpha$ can be computed in terms of the basis as follows.
Take
$(a \la b) \in K^\La_\alpha$ and
$(c \:\bt[\bga]\,| \in T^\La_\alpha$.
If $b^*\neq c$,
we have that $(a \la b)(c \:\bt[\bga]| = 0$.
If $b^*=c$, proceed as follows:
\begin{itemize}
\item draw $a \la b$ underneath $c \:\bt[\bga]$;
\item iterate the generalised surgery procedure
from \cite[$\S$6]{BS1} to the symmetric section of the diagram
 containing $b$ under $c$ to eliminate all cap-cup pairs;
\item
finally remove the bottom number line
to obtain a
linear combination
of basis vectors of $T^\La_\alpha$.
\end{itemize}
The right $K^\La_\alpha$-module structure on $\widehat T^\La_\alpha$
can be computed similarly.

There is a precise sense in which $\widehat T^\La_\alpha$
is dual to $T^\La_\alpha$.
To explain this,
define another multiplication map
\begin{equation}\label{varphi}
\varphi:T^\La_\alpha \otimes \widehat T^\La_\alpha
\rightarrow K^\La_\alpha
\end{equation}
as follows.
Take basis vectors
$(a \:\bt[\bga]\,| \in T^\La_\alpha$
and $|\,\bu^*[\bde^*]\:b) \in \widehat T^\La_\alpha$.
If $\bt = \bu$ and all mirror image pairs of
internal circles
in $\bt[\bga]$ and $\bu^*[\bde^*]$ are oriented so that one is clockwise,
the other anti-clockwise,
then we define
\begin{equation*}
\phi((a \:\bt[\bga]\,| \otimes |\,\bu^*[\bde^*]\:b)) := (a \,\ga_d\, c) (c^* \de_d\, b) \in K^\La_\alpha,
\end{equation*}
where $c$ is the upper reduction of $\bt = \bu$.
Otherwise, we set
\begin{equation*}
\phi((a \:\bt[\bga]\,| \otimes |\,\bu^*[\bde^*]\:b)) := 0.
\end{equation*}
This definition is a slight
variation on the definition of the adjunction
morphism $\varphi$ from \cite[(4.3)]{BS2}.
The following lemma establishes in particular that
$\varphi$ is a degree zero
homomorphism of graded $(K^\La_\alpha, K^\La_\alpha)$-bimodules.

\begin{Lemma}\label{precd}
There is a degree zero isomorphism of graded right $K^\La_\alpha$-modules
$$
\widehat T^\La_\alpha
\stackrel{\sim}{\rightarrow}
\hom_{K^\La_\alpha}(T^\La_\alpha,
K^\La_\alpha)
$$
mapping $y \in \widehat T^\La_\alpha$ to the
homomorphism $T^\La_\alpha
\rightarrow
K^\La_\alpha, \:x \mapsto \phi(x \otimes y)$.
\end{Lemma}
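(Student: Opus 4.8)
The plan is to reduce to a single-summand statement and then recognise it as an iterated version of the adjunction between the geometric bimodules $K^\bt_\bGa$ and $K^{\bt^*}_{\bGa^*}$ from \cite{BS2}. Both $T^\La_\alpha$ and $\widehat T^\La_\alpha$ are direct sums indexed by $\bi\in I^\alpha$, and $\hom_{K^\La_\alpha}(-,K^\La_\alpha)$ converts direct sums into direct sums, so it suffices to treat one $\bi$ at a time; moreover the pairing $\phi$ is block-diagonal for these decompositions, since $\phi((a\:\bt[\bga]\,|\otimes|\,\bu^*[\bde^*]\:b))=0$ unless $\bt=\bu$, so the candidate map $y\mapsto\phi(-\otimes y)$ respects the decompositions as well. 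If $\bi$ is not a proper $\La$-admissible sequence both summands vanish, so fix a proper $\bi$ with associated block sequence $\bGa$ and composite matching $\bt$, and put $\De:=\La-\alpha$. Using Lemma~\ref{caniso}, the identities $\defect(\De)=\caps(\bt)-\cups(\bt)$ and $\caps(\bt^*)=\cups(\bt)$, and the canonical isomorphisms (\ref{te}) and (\ref{te2}) (recalling that $K_\La$ is the trivial algebra, so there is no $K_\La$-action to track), the claim for this $\bi$ reduces to: $\phi$ induces a degree-zero isomorphism $K^{\bt^*}_{\bGa^*}\langle-\cups(\bt)\rangle\stackrel{\sim}{\rightarrow}\hom_{K_\De}(K^\bt_\bGa\langle-\cups(\bt)\rangle,K_\De)$ of graded right $K_\De$-modules.

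For the abstract isomorphism I would iterate the adjunction (\ref{Adjpar}). Applying it also to the mirror matchings shows that $G^t_{\De\Ga}$ has right adjoint $G^{t^*}_{\Ga\De}\langle\defect(\De)-\defect(\Ga)\rangle$; composing over the levels of $\bt$ via (\ref{miso2}) then identifies the right adjoint of $F_\bi|_{\Rep{K_\La}}\colon\Rep{K_\La}\to\Rep{K_\De}$ with $E_{\bi^*}|_{\Rep{K_\De}}$, shifted so that the accumulated shift telescopes to $\langle\defect(\De)\rangle$ (using $\defect(\La)=0$). Since $L(\iota)=K_\La$ is the regular module of the trivial algebra, $\hom_{K_\La}(L(\iota),-)$ is the identity on graded vector spaces, so evaluating this adjunction with source $L(\iota)$ and target $K_\De$ gives a degree-preserving isomorphism of graded right $K_\De$-modules $\hom_{K_\De}(F_\bi L(\iota),K_\De)\cong E_{\bi^*}(K_\De)\langle\defect(\De)\rangle$; after inserting the shift $\langle\defect(\De)\rangle$ built into the definition (\ref{dc}) of $T^\La_\alpha$ and rewriting $\defect(\De)=\caps(\bt)-\cups(\bt)$, this is exactly the reduced claim, with underlying map the transpose of the counit. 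The internal circles of $\bt$ cause no real trouble: by (\ref{moso}) each of them merely contributes a tensor factor of $R$.

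What remains --- and this is the real content --- is to check that this counit-transpose is precisely the map $y\mapsto\phi(-\otimes y)$. Here one uses that $\phi$ is, by its very construction, the composite of the level-by-level adjunction morphisms $\varphi$ of \cite[(4.3)]{BS2}: unwinding the iterated counit reproduces the recipe defining $\phi$ (insert the extra number lines, run the surgery procedure, reduce, just as in the definition of $\overline y(i)_\Ga$), and the proviso ``all mirror image pairs of internal circles are oriented so that one is clockwise, the other anti-clockwise'' is exactly the condition under which the iterated counit does not annihilate the basis vector --- this being the statement, for the factor $R\otimes R^*$ attached to each internal circle by (\ref{moso}), that the natural pairing to $\C$ is supported off the diagonal. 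Granting this, $\phi$ induces the adjunction isomorphism, which is the lemma; and since a counit is a natural transformation, this also yields the asserted fact that $\phi$ is a homomorphism of graded $(K^\La_\alpha,K^\La_\alpha)$-bimodules.

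The main obstacle is thus entirely bookkeeping: lining up the iterated-counit description with the combinatorial definition of $\phi$ while keeping correct track of signs, degree shifts, and circle orientations. If one wishes to avoid the adjunction machinery, there is a more pedestrian route: the surgery reinterpretation of $\phi$ shows at once that $y\mapsto\phi(-\otimes y)$ is a degree-zero homomorphism of graded right $K^\La_\alpha$-modules, and it is injective by a triangularity argument on the bases (\ref{thes})--(\ref{thes2}) --- given $|\,\bu^*[\bde^*]\:b)$, take $\bt:=\bu$ and $\bga:=\bde$ with the upper circles reoriented opposite to their mirror images (legitimate since internal circles may be oriented freely) and a suitable cup diagram $a$, so that $\phi$ outputs a nonzero basis element of $K^\La_\alpha$ not produced by any other basis pair; injectivity plus equality of graded dimensions (the two sides being finite-dimensional, with matching graded dimensions by the mirror-image bijection relating their stretched-circle bases) then gives the isomorphism. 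Either way the delicate point is the same combinatorial matching, and the adjunction route seems the cleanest way to organise it.
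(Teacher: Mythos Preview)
Your proposal is correct, and both you and the paper begin by reducing to a single summand $\bi$ with associated $\bGa,\bt$. From there, however, the routes differ. The paper does \emph{not} iterate the level-by-level adjunction (\ref{Adjpar}); instead it first applies (\ref{moso}) to write
\[
K^{\bt^*}_{\bGa^*}\langle-\cups(\bt)\rangle\cong K^{s^*}_{\La(\La-\alpha)}\langle-\cups(s)\rangle\otimes R^{\otimes\circles(\bt)}
\]
with $s$ the reduction of $\bt$ (and similarly on the other side), and then invokes \cite[Theorem~4.7]{BS2} a single time to obtain the canonical isomorphism $K^{s^*}_{\La(\La-\alpha)}\cong\hom_{K^\La_\alpha}(K^s_{(\La-\alpha)\La},K^\La_\alpha)$ up to the appropriate shift. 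The tensor factor $R^{\otimes\circles(\bt)}$ is self-dual and can be moved inside the $\hom$, and the paper then checks that the composite of these four canonical isomorphisms is exactly $y\mapsto\phi(-\otimes y)$.

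Your iterated approach is valid but involves more bookkeeping, and one step is not quite as automatic as you suggest: $\phi$ as defined in the text is \emph{not} constructed as a composite of level-by-level counits --- it is written down directly in terms of the upper reduction $c$ of $\bt$ (sending to $(a\,\ga_d\,c)(c^*\,\de_d\,b)$ under the stated conditions). So identifying your iterated counit with $\phi$ is itself a small calculation, essentially equivalent to what the paper does but spread over $d$ steps rather than one. The paper's route sidesteps this by reducing to a situation where the already-proved \cite[Theorem~4.7]{BS2} applies directly. Your pedestrian alternative via triangularity and graded-dimension count would also work, though then the bimodule property of $\phi$ would need a separate argument; in the paper's approach this comes for free since every map in the chain is canonical.
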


\begin{proof}
It suffices to show that the restriction of the given map is
a degree zero $K^\La_\alpha$-module
isomorphism
$$
K_{\bGa^*}^{\bt^*}\langle - \cups(\bt)\rangle
\stackrel{\sim}{\rightarrow}
\hom_{K^\La_\alpha}(K^\bt_\bGa \langle -\cups(\bt)\rangle, K^\La_\alpha)
$$
for $\bGa$ and $\bt$ associated
to a fixed proper $\La$-admissible sequence $\bi \in I^\alpha$.
Let $s$ be the reduction of $\bt$, and note that
$\caps(s)- \cups(s) = \caps(\bt)-\cups(\bt)$.
Using (\ref{moso})
and \cite[Theorem 4.7]{BS2},
we have canonical isomorphisms
\begin{align*}
K^{\bt^*}_{\bGa^*} \langle - \cups(\bt) \rangle
&\cong
K^{s^*}_{\La(\La-\alpha)} \langle -\cups(s)\rangle
\otimes R^{\circles(\bt)}\\
&\cong
\hom_{K^\La_\alpha}(K^s_{(\La-\alpha) \La}, K^\La_\alpha)\langle\cups(s)\rangle
\otimes R^{\circles(\bt)}\\
&
\cong
\hom_{K^\La_\alpha}(K^s_{(\La-\alpha) \La}\langle-\cups(s)\rangle \otimes R^{\circles(\bt)}, K^\La_\alpha)\\
&\cong
\hom_{K^\La_\alpha}(K^\bt_\bGa \langle -\cups(\bt)\rangle, K^\La_\alpha).
\end{align*}
The composite of all of these isomorphisms
is exactly the map
defined in the statement of the lemma.
\end{proof}

Note via the isomorphism from
Lemma~\ref{precd} that $\widehat{T}^\La_\alpha$
becomes a graded left $E^\La_\alpha$-module,
because $\hom_{K^\La_\alpha}(T^\La_\alpha,
K^\La_\alpha)$
is naturally such.
Applying the homomorphism $\omega$ from Theorem~\ref{phi},
we deduce $\widehat{T}^\La_\alpha$ is a graded left $R^\La_\alpha$-module
too.

\begin{Remark}\label{bhat}\rm
Let $e$ be the truncation functor from (\ref{truncf}).
The isomorphism in Lemma~\ref{precd} restricts to an isomorphism
$$
\widehat{T}^\La_\alpha e \stackrel{\sim}{\rightarrow}
\hom_{K^\La_\alpha}(T^\La_\alpha,
K^\La_\alpha)e =
\hom_{K^\La_\alpha}(T^\La_\alpha,
K^\La_\alpha e).
$$
Since the truncation functor is fully faithful on projectives,
it defines an isomorphism
$\hom_{K^\La_\alpha}(T^\La_\alpha,
K^\La_\alpha e) \cong
\hom_{H^\La_\alpha}(e T^\La_\alpha,
H^\La_\alpha).$
In this way, we deduce also the existence of an isomorphism
$$
\widehat{T}^\La_\alpha e \stackrel{\sim}{\rightarrow}
\hom_{H^\La_\alpha}(e T^\La_\alpha,
H^\La_\alpha)
$$
as graded $(E^\La_\alpha, H^\La_\alpha)$-bimodules.
\end{Remark}

\phantomsubsection{\boldmath The stretched circle diagram basis
for $E^\La_\alpha$}
Now we turn our attention to the endomorphism algebra
$E^\La_\alpha$ from (\ref{Thend}),
still assuming we are given $\alpha \in Q_+$ of height $d$.
The following theorem gives an explicit description of this algebra.

\begin{Theorem}\label{dis}
There is a graded vector space isomorphism
$$
\widehat T^\La_\alpha \otimes_{K^\La_\alpha} T^\La_\alpha
\stackrel{\sim}{\rightarrow}
E^\La_\alpha
$$
mapping $y \otimes z$
to the endomorphism $T^\La_\alpha \rightarrow T^\La_\alpha,
\:x \mapsto \phi(x \otimes y) z$.
Using this isomorphism, the multiplication on
 $E^\La_\alpha$
lifts to define a graded multiplication
on $\widehat T^\La_\alpha \otimes_{K^\La_\alpha} T^\La_\alpha$
with $(u \otimes x)(y \otimes z) = u \phi(x \otimes y) \otimes z
= u \otimes \phi(x \otimes y) z$.
\end{Theorem}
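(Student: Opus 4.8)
The statement to be proved is Theorem~\ref{dis}: that the natural map $\widehat T^\La_\alpha \otimes_{K^\La_\alpha} T^\La_\alpha \to E^\La_\alpha$ sending $y \otimes z$ to $(x \mapsto \phi(x \otimes y)z)$ is a graded vector space isomorphism, and that transporting the multiplication of $E^\La_\alpha$ back along it gives the stated formula. The overall strategy is first to reduce to the generalised Khovanov algebra $H^\La_\alpha$ using Theorem~\ref{itskhovanov} and Remark~\ref{bhat}, then to recognise the whole statement as an instance of the standard tensor-hom description of the endomorphism algebra of a projective module, and finally to read off the multiplication formula by unwinding the definition of $\phi$.

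\textbf{Step 1: the map is well defined.} The expression $\phi(x \otimes y)z$ is manifestly balanced over $K^\La_\alpha$: for $a \in K^\La_\alpha$ we have $\phi(x a \otimes y) = \phi(x \otimes a y)$ because $\phi$ is a $(K^\La_\alpha, K^\La_\alpha)$-bimodule homomorphism (this is part of the content of Lemma~\ref{precd}, which says $\phi$ corresponds to the canonical map $\widehat T^\La_\alpha \to \hom_{K^\La_\alpha}(T^\La_\alpha, K^\La_\alpha)$, a morphism of right $K^\La_\alpha$-modules, hence $\phi$ respects the bimodule structures). So the assignment factors through $\widehat T^\La_\alpha \otimes_{K^\La_\alpha} T^\La_\alpha$ and lands in $\End_{K^\La_\alpha}(T^\La_\alpha) = (E^\La_\alpha)^{\op}$, i.e. in $E^\La_\alpha$ viewed as the opposite. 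One also checks it is homogeneous of degree zero, since $\phi$ and the module structures are.

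\textbf{Step 2: identify the map as an isomorphism.} Here I would invoke the general fact that for a finite-dimensional algebra $A$ and a finitely generated projective left $A$-module $Q$, the natural map
\[
\hom_A(Q, A) \otimes_A Q \;\xrightarrow{\;\sim\;}\; \End_A(Q),\qquad f \otimes q \longmapsto (q' \mapsto f(q')\,q),
\]
is an isomorphism of vector spaces (and of graded vector spaces in the graded setting). Apply this with $A = H^\La_\alpha$ and $Q = e T^\La_\alpha$, which is a projective generator for $H^\La_\alpha$ by Theorem~\ref{itskhovanov}. By Remark~\ref{bhat} we have graded isomorphisms $\widehat T^\La_\alpha e \cong \hom_{H^\La_\alpha}(e T^\La_\alpha, H^\La_\alpha)$ and by Theorem~\ref{itskhovanov} again $E^\La_\alpha \cong \End_{H^\La_\alpha}(e T^\La_\alpha)^{\op}$, compatibly with the maps in sight. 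Using also the Morita isomorphism $\widehat T^\La_\alpha \otimes_{K^\La_\alpha} T^\La_\alpha \cong (\widehat T^\La_\alpha e) \otimes_{H^\La_\alpha} (e T^\La_\alpha)$ — which holds because $e$ is a full idempotent in $K^\La_\alpha$ (its defining property being that $T^\La_\alpha$, equivalently $K^\La_\alpha e$, is a generator) — the map of Theorem~\ref{dis} is carried to the standard isomorphism above, hence is itself an isomorphism. Throughout I would be careful that the identifications of Remark~\ref{bhat} and Theorem~\ref{itskhovanov} are exactly the ones that intertwine the evaluation map $\phi$ with the abstract evaluation $f \otimes q \mapsto f(q)$; this traceability through several canonical isomorphisms is the step I expect to be the main obstacle, since one must verify that no sign or degree shift is lost when passing $K^\La_\alpha \rightsquigarrow H^\La_\alpha$ via the truncation functor and back.

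\textbf{Step 3: the multiplication formula.} Finally, transport the multiplication of $E^\La_\alpha = \End_{K^\La_\alpha}(T^\La_\alpha)^{\op}$ across the isomorphism. If $u \otimes x$ and $y \otimes z$ correspond to endomorphisms $\theta_{u,x}\colon x' \mapsto \phi(x' \otimes u)x$ and $\theta_{y,z}\colon x' \mapsto \phi(x' \otimes y)z$, then their product in $E^\La_\alpha$ (opposite composition) is $x' \mapsto \theta_{u,x}(\theta_{y,z}$ applied appropriately$)$; unwinding, $(x' \mapsto \phi(\phi(x' \otimes y)z \otimes u)\,x)$, and using the $K^\La_\alpha$-bilinearity of $\phi$ to pull the scalar $\phi(x' \otimes y)$ out, this equals $x' \mapsto \phi(x' \otimes y)\,\phi(z \otimes u)\,x = \phi(x' \otimes y)\,u\,\phi(x \otimes z)$? — one checks directly from the formula $\phi((a\,\bt[\bga]|\otimes|\bu^*[\bde^*]\,b)) = (a\,\ga_d\,c)(c^*\de_d\,b)$ that the scalar $\phi(x\otimes y) \in K^\La_\alpha$ interacts correctly, giving $(u\otimes x)(y\otimes z) = u\,\phi(x\otimes y)\otimes z = u \otimes \phi(x\otimes y)\,z$, the two expressions being equal precisely because the tensor product is over $K^\La_\alpha$ and $\phi(x \otimes y) \in K^\La_\alpha$. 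This is a routine computation once Step 2 has pinned down the correspondence, so I would present it briefly.
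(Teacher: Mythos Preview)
Your Step~2 takes an unnecessary detour and the justification you give for it is wrong. The idempotent $e$ from (\ref{truncid}) is the sum of the $e_\la$ only over weights $\la$ of \emph{maximal defect} in the block; it is \emph{not} a full idempotent in $K^\La_\alpha$, and $K^\La_\alpha e$ is a prinjective generator, not a projective generator. So the ``Morita isomorphism'' $\widehat T^\La_\alpha \otimes_{K^\La_\alpha} T^\La_\alpha \cong (\widehat T^\La_\alpha e) \otimes_{H^\La_\alpha} (e T^\La_\alpha)$ cannot be read off from fullness of $e$. (It can in fact be salvaged via the full faithfulness of the truncation functor on projectives together with projectivity of $T^\La_\alpha$ and of $eT^\La_\alpha$ --- but once you invoke projectivity of $T^\La_\alpha$ over $K^\La_\alpha$ you have already proved the theorem directly, as explained below, so the detour buys nothing.)

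The paper's proof is a single line and stays entirely over $K^\La_\alpha$. By Lemma~\ref{pring1} (or the reference to \cite[Corollary 4.4]{BS2}), $T^\La_\alpha$ is a projective left $K^\La_\alpha$-module. Hence the general fact you yourself quote --- $\hom_A(Q,A)\otimes_A Q \stackrel{\sim}{\to} \hom_A(Q,Q)$, $f\otimes q \mapsto (q'\mapsto f(q')q)$, for $Q$ finitely generated projective --- applies directly with $A=K^\La_\alpha$ and $Q=T^\La_\alpha$. Precomposing with the isomorphism $\widehat T^\La_\alpha \cong \hom_{K^\La_\alpha}(T^\La_\alpha,K^\La_\alpha)$ of Lemma~\ref{precd} gives exactly the map in the statement. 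No passage to $H^\La_\alpha$, and none of the compatibility checks you flag as ``the main obstacle'', are needed. Your Step~1 and the general principle in Step~2 are correct; you simply applied that principle to the wrong algebra. (Step~3 is then immediate; your write-up of it is rather tangled in the middle but the final formula and its justification via $K^\La_\alpha$-balancedness of the tensor product are right.)
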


\begin{proof}
The map in the statement of the theorem
is the composite of the canonical isomorphisms
$$
\widehat{T}^\La_\alpha \otimes_{K^\La_\alpha} T^\La_\alpha
\stackrel{\sim}{\rightarrow}
\hom_{K^\La_\alpha}(T^\La_\alpha, K^\La_\alpha) \otimes_{K^\La_\alpha}
T^\La_\alpha
\stackrel{\sim}{\rightarrow}
\hom_{K^\La_\alpha}(T^\La_\alpha, T^\La_\alpha).
$$
The first of these arises from Lemma~\ref{precd}, and the
second is the map sending $f \otimes t$ to the map
$x \mapsto f(x) t$ (which
is an isomorphism
because $T^\La_\alpha$ is a projective left $K^\La_\alpha$-module
by \cite[Corollary 4.4]{BS2}).
\end{proof}

Finally,
we need one more canonical isomorphism
arising from the bimodule multiplication from (\ref{miso}):
\begin{equation*}
\widehat{T}^\La_\alpha \otimes_{K^\La_\alpha} T^\La_\alpha
\stackrel{\sim}{\rightarrow}
\bigoplus
K^{\bu^*\bt}_{\bDe^* \wr \bGa} \langle-\cups(\bu)-\cups(\bt)\rangle,
\end{equation*}
where the direct sum is over all proper
$\La$-admissible sequences $\bi, \bj \in I^\alpha$,
and $\bGa$ and $\bt$ (resp. $\bDe$ and $\bu$)
denote the block sequence and composite matching associated to $\bi$
(resp.\ $\bj$).
Pulling back the diagram basis for each $K^{\bu^* \bt}_{\bDe^* \wr \bGa}$
from (\ref{diagbasis}) through this isomorphism, then pushing forward
from there to $E^\La_\alpha$ using the isomorphism from Theorem~\ref{dis},
we deduce that $E^\La_\alpha$ has a distinguished homogeneous basis
\begin{equation}\label{Hbasis}
\left\{|\,\bu^*[\bde^*]\wr \bt[\bga]\,|\:\bigg|\:\begin{array}{l}
\text{for all oriented stretched circle}\\
\text{diagrams
$\bu^*[\bde^*]\wr\bt[\bga]$ of type $\alpha$}.
\end{array}\right\}
\end{equation}
The degree of the vector
$|\,\bu^*[\bde^*]\wr \bt[\bga]\,|$ is given by the formula (\ref{maindegdef}).

\phantomsubsection{Algorithms for computing with stretched circle diagrams}
We next spell out how to compute the various sorts of products in terms of the
diagram basis (\ref{Hbasis}).
 These explicit descriptions all follow from Theorem~\ref{dis}
and the appropriate associativity.

First of all, given  two basis vectors
$|\,\bs^*[\btau^*]\wr \br[\bsigma]\,|$ and
$|\,\bu^*[\bde^*]\wr \bt[\bga]\,|$ in $E^\La_\alpha$,
their
product $|\,\bs^*[\btau^*]\wr \br[\bsigma]\,|
\cdot|\,\bu^*[\bde^*]\wr \bt[\bga]\,|$ is zero unless
$\br = \bu$ and
all mirror image pairs of internal circles in
$\br[\bsigma]$ and $\bu^*[\bde^*]$ are oriented so that one
is clockwise, the other anti-clockwise.
Assuming these conditions hold, we let $b$ be the upper reduction of
$\br = \bu$ and proceed as follows:
\begin{itemize}
\item
draw $\bs^*[\btau^*] \:b$ under $b^* \:\bt[\bga]$;
\item
iterate the generalised surgery procedure from \cite[$\S$6]{BS1}
to eliminate all cap-cup pairs in the symmetric middle section of
the diagram;
\item collapse the middle section to obtained the
desired linear combination of basis vectors of $E^\La_\alpha$.
\end{itemize}
The right action of $E^\La_\alpha$ on $T^\La_\alpha$
and the left action of $E^\La_\alpha$ on $\widehat T^\La_\alpha$
are both computed on basis vectors by analogous
procedures.

Let us describe explicitly the image of the idempotent
$e(\bi) \in R^\La_\alpha$ under the homomorphism $\omega$ from Theorem~\ref{phi}.
If $\bi$ is not $\La$-admissible then $\omega(e(\bi)) = 0$.
If $\bi$ is $\La$-admissible, we let $\bGa$ and $\bt$ be the associated
block sequence and composite matching.
Then
\begin{equation}\label{piei}
\omega(e(\bi)) = \sum |\,\bt^*[\bde^*]\wr \bt[\bga]\,|
\end{equation}
where the sum is over all $\bga = \ga_d \cdots \ga_0$
and $\bde = \de_d \cdots \de_0$ such that
\begin{itemize}
\item $\ga_r, \de_r \in \Ga_r$ for all $r=0,\dots,d$,
and $\ga_d = \de_d$;
\item
$\bt^*[\bde^*]\wr \bt[\bga]$ is an oriented stretched circle diagram;
\item every boundary circle in $\bt^*[\bde^*]\wr \bt[\bga]$
is anti-clockwise;
\item every mirror image pair
of upper and lower circles in $\bt^*[\bde^*]\wr \bt[\bga]$ is oriented
so one is clockwise, the other anti-clockwise.
\end{itemize}
Note in particular that this element is homogeneous of degree zero.
The set $\{\omega(e(\bi))\:|\:\bi \in I^\alpha\}$
is a system of mutually orthogonal idempotents in $E^\La_\alpha$
whose sum is the identity
endomorphism.

Via the homomorphism from Theorem~\ref{phi}
we get right and left actions of the generators $y_r$ and $\psi_r$ of
$R^\La_\alpha$ on $E^\La_\alpha$. These can be computed directly on the
basis vectors of $E^\La_\alpha$ by mimicking the definitions
of (\ref{expy}) and (\ref{betadef2}) (and incorporating
a sign as in Lemmas~\ref{inb} and \ref{sff2}).
So multiplying a basis vector $|\,\bu^*[\bde^*]\wr \bt[\bga]\,|$
on the right (resp. left) by $y_r$ involves making a signed
positive circle move at the $r$th level of $\bt$
(resp. $\bu$).
Similarly multiplying a basis vector $|\,\bu^*[\bde^*]\wr \bt[\bga]\,|$
on the right (resp. left) by $\psi_r$ involves making a signed
negative circle move, crossing move or height move
at the $r$th and $(r+1)$th levels of $\bt$
(resp. $\bu$).

The image of $y_r e(\bi) = e(\bi) y_r$ under the homomorphism $\omega$
can be obtained explicitly in terms of the diagram basis
by starting from $\omega(e(\bi))$
as in (\ref{piei}) then acting by $y_r$, either on the left or the right.
Similarly the image of $\psi_r  e(\bi) = e(s_r\cdot\bi) \psi_r$
can be obtained either by starting from $\omega(e(\bi))$ and acting on the left
by $\psi_r$, or by starting with $e(s_r\cdot \bi)$ and acting on
the right by $\psi_r$.

\phantomsubsection{\boldmath Cellularity of $E^\La_\alpha$}
There are involutory graded algebra anti-automorphisms
\begin{equation}\label{Stars}
*:R^\La_\alpha \rightarrow R^\La_\alpha,
\qquad
*:E^\La_\alpha \rightarrow E^\La_\alpha.
\end{equation}
On $R^\La_\alpha$, $*$ is the anti-automorphism with
$e(\bi)^* = e(\bi), y_r^* = y_r$ and $\psi_r^* = \psi_r$
for each $\bi$ and $r$.
On $E^\La_\alpha$, $*$ is the linear map sending
$|\,\bu^*[\bde^*]\wr\bt[\bga]\,|$
to its mirror image $|\,\bt^*[\bga^*]\wr \bu[\bde]\,|$.
Recalling the homomorphism $\omega$ from Theorem~\ref{phi},
the following diagram commutes:
\begin{equation}
\begin{CD}
R^\La_\alpha&@>*>>&R^\La_\alpha\\
@V\omega VV&&@VV\omega V\\
E^\La_\alpha&@>*>>&E^\La_\alpha
\end{CD}
\end{equation}

\begin{Theorem}\label{cell}
Assume that $\Ga := \La - \alpha \in \PImn$.
Then the algebra $E^\La_\alpha$ is a cellular algebra
in the sense of Graham and Lehrer \cite{GL} with cell datum
$(\Ga, M, C, *)$ where
\begin{itemize}
\item[(i)] $\Ga$ is viewed as a poset with respect
to the Bruhat order;
\item[(ii)]
$M(\la)$ for each $\la \in \Ga$ denotes the set of all
oriented upper-stretched cap diagrams $\bt[\bga]$
of type $\al$ such that $\ga_d = \la$;
\item[(iii)] $C$ is defined by setting $C^\la_{\bu[\bde],\bt[\bga]}
:= |\,\bu^*[\bde^*]\wr\bt[\bga]\,|$ for each $\la \in \Ga$
and $\bu[\bde], \bt[\bga] \in M(\la)$;
\item[(iv)] $*:E^\La_\alpha \rightarrow E^\La_\alpha$ is the algebra
anti-automorphism from (\ref{Stars}).
\end{itemize}
\end{Theorem}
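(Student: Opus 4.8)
The plan is to verify the three defining axioms of a cellular algebra in the sense of Graham--Lehrer \cite{GL} directly from the explicit diagram basis (\ref{Hbasis}) of $E^\La_\alpha$ and the bijection with pairs of standard bitableaux from Remark~\ref{newr1}. Recall that a cell datum requires: (C1) the indexing data, namely a poset $\Ga$, finite sets $M(\la)$, and basis elements $C^\la_{\bu[\bde],\bt[\bga]}$ indexed by $\la \in \Ga$ and pairs from $M(\la)$; (C2) the anti-automorphism $*$ should satisfy $(C^\la_{\bu[\bde],\bt[\bga]})^* = C^\la_{\bt[\bga],\bu[\bde]}$; and (C3) for any $x \in E^\La_\alpha$ and any basis element $C^\la_{\bu[\bde],\bt[\bga]}$, one has $x \cdot C^\la_{\bu[\bde],\bt[\bga]} \equiv \sum_{\bu'[\bde'] \in M(\la)} r_x(\bu'[\bde'],\bu[\bde])\, C^\la_{\bu'[\bde'],\bt[\bga]} \pmod{E^\La_\alpha(> \la)}$, where $E^\La_\alpha(> \la)$ is spanned by basis elements $C^\mu_{\dots,\dots}$ with $\mu > \la$ and the scalars $r_x$ depend only on $x$ and the first index.

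First I would dispose of the straightforward axioms. That the $C^\la_{\bu[\bde],\bt[\bga]} = |\,\bu^*[\bde^*]\wr\bt[\bga]\,|$ form a basis of $E^\La_\alpha$ is exactly the content of the discussion culminating in (\ref{Hbasis}), and the parametrisation by $\la \in \Ga$ and $\bu[\bde],\bt[\bga] \in M(\la)$ (with the common condition $\ga_d = \de_d = \la$) is precisely the bijection from Remark~\ref{newr1}, so (C1) holds. For (C2), by the definition of $*$ on $E^\La_\alpha$ in (\ref{Stars}), $*$ sends $|\,\bu^*[\bde^*]\wr\bt[\bga]\,|$ to its mirror image $|\,\bt^*[\bga^*]\wr\bu[\bde]\,| = C^\la_{\bt[\bga],\bu[\bde]}$, which is the required identity; here one just notes that mirroring interchanges the roles of the stretched cap part and the stretched cup part but keeps the boundary weight $\la = \ga_d = \de_d$ fixed, so it preserves the cell label. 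The real work is (C3).

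For (C3), by $*$-symmetry it suffices to treat left multiplication. Since the $\omega(e(\bi))$ and the operators $y_r, \psi_r$ (acting through $\omega$) together generate all of $E^\La_\alpha$ under left multiplication --- indeed $E^\La_\alpha$ is spanned by basis vectors, and any basis vector can be built up by this action via the algorithms described in the subsection on computing with stretched circle diagrams, using surgery --- it is enough to check the straightening property when $x$ is one of these generators, and more fundamentally when $x$ is an arbitrary basis element $|\,\bs^*[\btau^*]\wr\br[\bsigma]\,|$. Left-multiplying $C^\la_{\bu[\bde],\bt[\bga]} = |\,\bu^*[\bde^*]\wr\bt[\bga]\,|$ by $|\,\bs^*[\btau^*]\wr\br[\bsigma]\,|$ is computed, per the product algorithm, by matching $\br$ against $\bu$ (the product is zero unless $\br = \bu$ and the mirror-pair orientation condition holds), drawing $\bs^*[\btau^*]\,b$ under $b^*\,\bt[\bga]$ where $b$ is the upper reduction, and iterating the generalised surgery procedure of \cite[\S6]{BS1} in the symmetric middle section. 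The crucial observation is that this entire surgery process takes place in the ``upper'' half of the diagram and never touches the stretched cap part $\bt[\bga]$ sitting at the top: the boundary weight $\la = \ga_d$ is untouched, so every basis vector appearing in the result has second index $\bt[\bga]$ unchanged, while the first index varies over $M(\la)$, and lower-order terms (weights $\mu \ne \la$) must be strictly bigger than $\la$ in the Bruhat order by the standard degree/defect bookkeeping for surgery moves (this is the same mechanism used in \cite[\S6]{BS1} and \cite[\S3]{BS2} to establish cellularity of $K_\Ga$ itself and of the bimodules $K^\bt_\bGa$). That the coefficients $r_x$ depend only on $\bs^*[\btau^*]\wr\br[\bsigma]$ and $\bu[\bde]$, not on $\bt[\bga]$, is immediate from the same locality: the surgery never sees $\bt[\bga]$.

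The main obstacle --- and the one step I would expect to require genuine care rather than quotation --- is pinning down the Bruhat-order behaviour: namely verifying that whenever a basis vector $C^\mu_{\bu'[\bde'],\bt'[\bga']}$ with $\mu \neq \la$ appears in the surgery expansion of $|\,\bs^*[\btau^*]\wr\br[\bsigma]\,|\cdot|\,\bu^*[\bde^*]\wr\bt[\bga]\,|$, one actually has $\bt'[\bga'] = \bt[\bga]$ \emph{and} $\mu > \la$. The equality $\bt'[\bga'] = \bt[\bga]$ should follow from locality as indicated; but the inequality $\mu > \la$ is exactly the ``cellularity via surgery'' estimate, which in the context of $K_\Ga$ is \cite[Theorem 6.1]{BS2} (cf.\ the proof of Lemma~\ref{princ1}'s ambient results). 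One must check that the same estimate survives the passage through the stretched/composite bimodule $K^{\bu^*\bt}_{\bDe^*\wr\bGa}$ and the identification of Theorem~\ref{dis}; concretely, one uses that each elementary surgery move either leaves the bottom weight of the cap part alone or strictly increases it in the Bruhat order, together with the fact (\ref{moso}) reducing composite matchings to ordinary ones tensored with copies of $R$. Modulo this verification --- which is combinatorial and parallels the arguments already in \cite{BS1,BS2} --- the three axioms are established and the theorem follows.
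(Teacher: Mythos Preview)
Your strategy---verifying the Graham--Lehrer axioms directly from the diagram basis (\ref{Hbasis}) and the multiplication rule---is the same as the paper's, which simply cites the parallel argument for $K_\Ga$ in \cite[Corollary~3.3]{BS1}. You handle (C1) and (C2) correctly.

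Your argument for (C3), however, contains a real error. The claim that the surgery ``never touches the stretched cap part $\bt[\bga]$'' is false: the generalised surgery in the symmetric $bb^*$-section acts on components that pass through the middle into both $\bs^*[\btau^*]$ below and $\bt[\bga]$ above, so it can and does reorient pieces of $\bt[\bga]$---in particular the boundary weight $\ga_d$ can strictly increase in the Bruhat order. What the algorithm preserves is only the underlying matching $\bt$. The correct argument, paralleling the one for $K_\Ga$, has two parts: (a) every term in the expansion has boundary weight $\ga'_d \geq \la$, by the Bruhat-monotonicity of the generalised surgery procedure (the same ingredient that makes $K_\Ga$ cellular); and (b) whenever $\ga'_d = \la$, the entire sequence $\bga'$ is forced to equal $\bga$, because the orientations of the generalised caps and propagating lines of $\bt$ are determined by the weights $\ga_d$ and $\ga_0 = \iota$ at the two ends, while the internal (upper) circles of $\bt$ do not reach the bottom number line and are therefore disconnected from the surgery region. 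Point (b) is the genuinely new observation needed in the stretched setting, and it is precisely what you miss by asserting that surgery does not touch $\bt[\bga]$ at all.

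A smaller point: your ``main obstacle'' paragraph over-states what (C3) requires. For terms with cell label $\mu \neq \la$ you only need $\mu > \la$; you do not need $\bt'[\bga'] = \bt[\bga]$, and in fact that equality fails since $\ga'_d = \mu$.
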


\noindent
(For our conventions regarding cellular algebras, see the paragraph
after the statement of \cite[Corollary 3.3]{BS1}.)

\begin{proof}
This follows by similar arguments to \cite[Corollary 3.3]{BS1}.
\end{proof}

\begin{Remark}\rm\label{newr2}
(1) In fact our cellular basis makes $E^\La_\alpha$ into a {\em graded} cellular algebra in the obvious sense, with the
degree of $\bt[\bga] \in M(\la)$ being
defined by (\ref{degz}). We show in Corollary~\ref{id3} below
that $E^\La_\alpha \cong R^\La_\alpha$, so that $E^\La_\alpha$
is a particular example of a
cyclotomic Khovanov-Lauda-Rouquier algebra of type $A$.
The latter algebras were conjectured to be graded cellular algebras in
\cite[Remark 4.12]{BKW},
with a graded cellular basis parametrised by pairs of bitableaux of the same shape and type $\alpha$.
Combined with the observations made in Remark~\ref{newr1}, Theorem~\ref{cell} verifies a particular instance of this conjecture.
Subsequently, the conjecture has been proved in
general by Hu and Mathas \cite{HM}.

(2)
By arguments completely analogous to \cite[Theorem 6.3]{BS1}, we get also that $E^\La_\alpha$ is a {\em graded symmetric algebra}:
it possesses a homogeneous symmetrizing form $\tau:E^\La_\alpha\rightarrow \C$ of degree $-2\defect(\La-\alpha)$.
Explicitly, $\tau$ is the linear map defined on a basis vector $|\bu^*[\bde^*]\wr \bt[\bga]|$  by declaring that
it is zero unless $\bu=\bt$, all boundary circles are clockwise, and all mirror image pairs of internal circles are
oppositely oriented, in which case it is $1$.

(3)
For $\Ga := \La-\alpha \in \PImn$ as in Theorem~\ref{cell},
we showed already in Theorem~\ref{itskhovanov}
that $E^\La_\alpha$ is
Morita equivalent to the generalised Khovanov algebra
$H_\Ga$ from \cite[(6.2)]{BS1}.
The latter algebra
was already shown to be both cellular and symmetric
in \cite[Theorems 6.2--6.3]{BS1}.
\end{Remark}

\phantomsubsection{\boldmath Addition of a simple root}
Suppose we are given $\alpha \in Q_+$ of height $d$
as before, and also $i \in I$.
For $\bi = (i_1,\dots,i_d) \in I^\alpha$, we write $\bi+i$
for the tuple $(i_1,\dots, i_d,i) \in I^{\alpha+\alpha_i}$.
By the relations, there is a (non-unital) algebra homomorphism
\begin{equation}\label{thetai}
\theta_i:R^\La_\alpha \rightarrow R^\La_{\alpha+\alpha_i}
\end{equation}
such that $\theta_i(e(\bi)) = e(\bi+i)$,
$\theta_i(y_r e(\bi)) = y_r e(\bi+i)$
and $\theta_i (\psi_r e(\bi)) = \psi_r e(\bi+i)$.
The image of the identity element of $R^\La_\alpha$ under this homomorphism
is the idempotent
\begin{equation}\label{baa}
\sum_{\substack{\bi \in I^{\alpha+\alpha_i} \\ i_{d+1} = i}} e(\bi)
\in R^\La_{\alpha+\alpha_i}.
\end{equation}
We want to construct a corresponding homomorphism at the level
of the endomorphism algebra $E^\La_\alpha$, which will be a key tool
for inductive arguments.

We begin by defining a degree-preserving linear map
\begin{equation}\label{kwon}
\theta_i:E^\La_\alpha \rightarrow E^\La_{\alpha+\alpha_i}.
\end{equation}
Take a basis vector $|\,\bu^*[\bde^*]\wr \bt[\bga]\,| \in E^\La_\alpha$
as in (\ref{Hbasis}).
If $i$ is not $(\La-\alpha)$-admissible then we set
$\theta_i(|\,\bu^*[\bde^*]\wr \bt[\bga]\,|) := 0$.
If $i$ is $(\La-\alpha)$-admissible, there are various cases
according to whether $s := t_i(\La-\alpha)$ is a cup, a cap,
a right-shift or a left-shift.
In all cases the basic idea
is to
replace the matching $\bu^* \bt$ in the diagram by
$\bu^* s^* s \bt$, i.e. we want to insert
two extra levels at the boundary line:
\begin{equation}
\begin{picture}(145,39)
\put(162.5,2){$\La-\alpha-\alpha_i$}
\put(172,-10){$s^*$}
\put(172,15){$s$}
\put(162.5,-21.4){$\La-\alpha$}
\put(162.5,26.6){$\La-\alpha$}
\put(-82,-34){$\text{``cup''}$}
\put(-21,-34){$\text{``cap''}$}
\put(21,-34){$\text{``right-shift''}$}
\put(89,-34){$\text{``left-shift''}$}

\put(-24,29){\line(1,0){33}}
\thicklines\put(-24,5){\line(1,0){33}}\thinlines
\put(-24,-19){\line(1,0){33}}
\put(-7.5,5){\oval(23,23)[b]}
\put(-7.5,5){\oval(23,23)[t]}
\put(-22.3,-20.9){$\scriptstyle\times$}
\put(1.2,-21.6){$\circ$}
\put(-22.3,27.1){$\scriptstyle\times$}
\put(1.2,26.4){$\circ$}
\put(-20.9,3.1){{$\scriptstyle\bullet$}}
\put(2.1,3.1){{$\scriptstyle\bullet$}}

\put(-84,29){\line(1,0){33}}
\put(-67.5,29){\oval(23,23)[b]}
\put(-80.9,27.1){{$\scriptstyle\bullet$}}
\put(-57.9,27.1){{$\scriptstyle\bullet$}}
\thicklines\put(-84,5){\line(1,0){33}}\thinlines
\put(-84,-19){\line(1,0){33}}
\put(-67.5,-19){\oval(23,23)[t]}
\put(-81.8,2.4){$\circ$}
\put(-59.2,3.1){$\scriptstyle\times$}
\put(-81.1,-20.9){{$\scriptstyle\bullet$}}
\put(-58.1,-20.9){{$\scriptstyle\bullet$}}

\thicklines\put(96,5){\line(1,0){33}}\thinlines
\put(96,-19){\line(1,0){33}}
\put(120.7,3.1){$\scriptstyle\times$}
\put(97.7,-20.9){$\scriptstyle\times$}
\put(124.5,-19.2){\line(-1,1){22.9}}
\put(121.9,-20.9){{$\scriptstyle\bullet$}}
\put(98.9,3.1){{$\scriptstyle\bullet$}}
\put(96,29){\line(1,0){33}}
\put(97.7,27.1){$\scriptstyle\times$}
\put(101.1,5.8){\line(1,1){22.9}}
\put(121.9,27.1){{$\scriptstyle\bullet$}}

\put(36,29){\line(1,0){33}}
\put(61.2,26.4){$\circ$}
\put(64,5.8){\line(-1,1){22.9}}
\put(38.9,27.1){{$\scriptstyle\bullet$}}
\thicklines\put(36,5){\line(1,0){33}}\thinlines
\put(36,-19){\line(1,0){33}}
\put(61.2,-21.6){$\circ$}
\put(38.2,2.4){$\circ$}
\put(41.5,-18.2){\line(1,1){22.9}}
\put(38.9,-20.9){{$\scriptstyle\bullet$}}
\put(61.9,3.1){{$\scriptstyle\bullet$}}
\end{picture}
\vspace{12mm}\label{CKLR2}
\end{equation}
Here, as in (\ref{CKLR}), we display only the strip between the $i$th
and $(i+1)$th vertices, all other strips being the ``identity''.
Somewhat informally, we proceed as follows in the various cases:

\vspace{1mm}

\noindent{\em Case one: $s$ is a right-shift or a left-shift.}
We compute
$\theta_i(|\,\bu^*[\bde^*]\wr \bt[\bga]\,|)$
simply by inserting the matching $s^* s$
into the middle of the
 diagram $\bu^*[\bde^*]\wr \bt[\bga]$ then orienting the new boundary line
in the only sensible way, leaving all other orientations unchanged.

\vspace{1mm}

\noindent{\em Case two: $s$ is a cap.}
This time,
$s^* s$ contains a small circle. Again we simply insert $s^* s$
into the diagram, orienting the new boundary line so that the
new small circle is anti-clockwise
and leaving all other orientations unchanged.

\vspace{1mm}

\noindent
{\em Case three: $s$ is a cup.}
If $s$ is a cup,
then inserting $s^*s$ into the diagram either splits one component
into two or joins two components into one.
In this case we define
$\theta_i(|\,\bu^*[\bde^*]\wr \bt[\bga]\,|)$
to be a sum of zero, one or two basis vectors of $E^\La_{\alpha+\alpha_i}$,
with the new component(s)
oriented following exactly the same rules as in the
generalised surgery procedure from \cite[$\S$6]{BS1}.

\vspace{1mm}

Put more formally, this means in all cases that
\begin{equation}\label{lastyuk}
\theta_i(|\,\bu^*[\bde^*]\wr \bt[\bga]\,|) = \sum
|\,\bu^*[\hat\bde^*] s^* \la s \bt[\hat\bga]\,|
\end{equation}
where the sum is over all
$\hat\bde = \hat \de_d \cdots \hat \de_0,
\hat\bga = \hat\ga_d \cdots \hat\ga_0$ and $\la\in\La-\alpha-\alpha_i$
such that
\begin{itemize}
\item
each
$\hat\ga_r$ (resp.\ $\hat\de_r$)
lies in the same block as $\ga_r$ (resp.\ $\de_r$);
\item $\bu^*[\hat\bde^*] s^* \la s \bt[\hat\bga]$ is an oriented stretched
circle diagram of the same degree as $\bu^*[\bde^*] \wr \bt[\bga]$;
\item
the components of $\bu^*[\hat\bde^*] s^* \la s \bt [\hat\bga]$
that do not pass through the $i$th or $(i+1)$th vertices
at the top or bottom of $s$ or $s^*$ are oriented in the same way as
the corresponding components of $\bu^*[\bde^*] \wr \bt[\bga]$.
\end{itemize}

For example, writing $e$ for the identity element of the one-dimensional
algebra $K_\La$, we have according to this definition that
\begin{equation}\label{imei}
\theta_{i_d} \circ \cdots \circ \theta_{i_1}(e) = \omega(e(\bi))
\end{equation}
as computed by (\ref{piei}), for any $\bi \in I^\alpha$.

\begin{Theorem}\label{thetaithm}
The linear map $\theta_i$ just defined is equal to the composition
of the following maps:
\begin{align*}
E^\La_\alpha &=
\End_{K^\La_\alpha}\bigg(\bigoplus_{\bi \in I^\alpha} F_\bi L(\iota)\bigg)^{\op}
&&\!\!\!\!\!\!\!\!\rightarrow
\End_{K^\La_{\alpha+\alpha_i}}\bigg(\bigoplus_{\bi \in I^{\alpha}} F_{\bi+i}
L(\iota)\bigg)^{\op}\\
&=\End_{K^\La_{\alpha+\alpha_i}}\bigg(\bigoplus_{\substack{\bi \in I^{\alpha+\alpha_i} \\ \alpha_{d+1} = i}} F_{\bi}
L(\iota)\bigg)^{\op}
&&\!\!\!\!\!\!\!\!\hookrightarrow \End_{K^\La_{\alpha+\alpha_i}}\bigg(\bigoplus_{\bi \in I^{\alpha+\alpha_i}} F_\bi L(\iota)\bigg)^{\op} = E^\La_{\alpha+\alpha_i},
\end{align*}
where the first map is defined by applying the functor $F_i$
and the second map is the natural inclusion.
In particular, $\theta_i$ is a graded (non-unital) algebra homomorphism.
\end{Theorem}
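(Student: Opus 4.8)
The plan is to show that the combinatorially-defined linear map $\theta_i$ from (\ref{kwon}) agrees with the algebra homomorphism obtained by applying the functor $F_i$ followed by the natural inclusion. Since both maps are $\C$-linear, it suffices to check they agree on the diagram basis (\ref{Hbasis}) of $E^\La_\alpha$. By Theorem~\ref{dis}, an endomorphism of $T^\La_\alpha$ corresponding to a basis vector $|\,\bu^*[\bde^*]\wr\bt[\bga]\,|$ is, after unwinding the canonical isomorphisms (\ref{te}), (\ref{te2}) and (\ref{miso}), nothing other than the bimodule endomorphism of $\bigoplus K^\bt_\bGa\langle-\cups(\bt)\rangle$ given by a composite of surgery procedures in the symmetric middle section of the stretched circle diagram. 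Applying the functor $F_i$ to this endomorphism means tensoring the underlying bimodule homomorphism on the left with the bimodule $K^s_{(\La-\alpha-\alpha_i)(\La-\alpha)}\langle-\caps(s)\rangle$ defining $F_i$, where $s=t_i(\La-\alpha)$; via the multiplication isomorphism (\ref{miso}), this inserts the two extra levels $s^\ast s$ at the boundary line exactly as in (\ref{CKLR2}).

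First I would fix $\Ga:=\La-\alpha\in\PImn$ (the map is zero on both sides otherwise) and recall that, by Lemma~\ref{caniso} and (\ref{te})--(\ref{te2}), the module $F_i T^\La_\alpha$ is canonically identified with $\bigoplus K^{s\bt}_{(\Ga-\alpha_i)\wr\bGa}\langle-\cups(s\bt)\rangle$ over proper $\La$-admissible $\bi\in I^\alpha$, which is precisely the restriction of $T^\La_{\alpha+\alpha_i}$ to the summands indexed by sequences ending in $i$. Then I would track how a basis endomorphism transforms: under $F_i$, the bimodule homomorphism $\phi(-\otimes y)z$ acquires an extra tensor factor coming from $K^s_{\cdots}$, and by associativity of the multiplication (\ref{miso}) -- i.e. the fact that a sequence of surgery procedures is order-independent, as used throughout \cite[$\S$3]{BS1} -- the resulting homomorphism is computed by first performing the $s^\ast s$-surgeries near the boundary line and then the original middle-section surgeries. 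Matching the three cases (cup, cap, right/left-shift) against the definition of $\theta_i$ in ``Case one/two/three'' above, one sees that in the shift cases no surgery is triggered (one just inserts oriented identity or shift segments), in the cap case an anti-clockwise small circle is created, and in the cup case the generalised surgery procedure from \cite[$\S$6]{BS1} produces exactly the sum over new orientations written in (\ref{lastyuk}). This is a direct diagrammatic comparison, and the signs work out because the sign $(-1)^{(\Ga,\La_i)}$ attached to $y(i)_\Ga$ and $\psi(ij)_\Ga$ in (\ref{bimd2}) and (\ref{sff}) is not seen by $e(\bi)$-type idempotents, which is all that $\theta_i$ touches.

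Having identified $\theta_i$ with $F_i$ followed by inclusion, the final assertions are formal: $F_i$ induces an algebra map $\End_{K^\La_\alpha}(T^\La_\alpha)^{\op}\to\End_{K^\La_{\alpha+\alpha_i}}(F_i T^\La_\alpha)^{\op}$ because applying a functor to endomorphisms respects composition, and the identification of $F_i T^\La_\alpha$ with the sub-sum of $T^\La_{\alpha+\alpha_i}$ indexed by $\bi$ with $i_{d+1}=i$ together with the inclusion of that sub-sum into the whole direct sum is a (non-unital) algebra embedding. Degree-preservation is already recorded in the construction of (\ref{kwon}) and follows on the functor side because $F_i=G^s_{\cdots}\otimes_{\KImn}?$ is defined with the degree shift $\langle-\caps(s)\rangle$ built in so as to be degree zero, and the inclusion is tautologically degree zero; consistency of the two bookkeeping conventions is exactly the identity $\defect(\La-\alpha)-\defect(\La-\alpha-\alpha_i)=1-(\Ga,\delt_i-\delt_{i+1})$ from (\ref{defede}), which reconciles the degree shift $\langle\defect(\La-\alpha)\rangle$ in (\ref{dc}) across the two values of $\alpha$.

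The main obstacle will be the bookkeeping in the cup case: verifying that the sum produced by inserting $s^\ast s$ and then running the generalised surgery procedure of \cite[$\S$6]{BS1} reorients precisely the components specified by the three bullet points following (\ref{lastyuk}) -- in particular that the degree is preserved and that no orientations outside the affected components change -- requires carefully matching the reorientation rules of the surgery procedure against the case analysis of Lemma~\ref{ga}(v),(vi) (equivalently Lemma~\ref{a}), since that is the one situation where $F_i$ is not given by a single geometric bimodule isomorphism. Everything else is a routine, if slightly tedious, unwinding of the canonical isomorphisms already set up, combined with the order-independence of surgery procedures that underpins \cite[$\S$3]{BS1} and \cite[$\S$6]{BS1}.
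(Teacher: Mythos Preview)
Your approach is essentially the same as the paper's: both check agreement on the diagram basis, handle the shift and cap cases by observing that inserting $s^*s$ either triggers no new surgery or adds a small anti-clockwise circle (multiplication by the identity), and isolate the cup case as the substantive one.

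The main difference is in how the cup case is executed. The paper does not try to compare the two maps abstractly as bimodule maps; instead it evaluates both $\theta_i(|\,\bu^*[\bde^*]\wr\bt[\bga]\,|)$ and $\bar\theta_i(|\,\bu^*[\bde^*]\wr\bt[\bga]\,|)$ on a generic test vector $(a\,\mu\,s\,\bu[\bepsilon]\,|\in T^\La_{\alpha+\alpha_i}$ and reduces the comparison to two explicit surgery computations on the diagrams $a\,\mu\,s\,\eps_d\,b\,b^*\,\bt[\bga]$ versus $a\,\mu\,c\,c^*\,\la\,s\,\bt[\hat\bga]$ (here $b$ is the upper reduction of $\bu$ and $c$ the upper reduction of $s\bu$). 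The cup case then splits into three sub-cases according to whether inserting $s^*s$ decreases or increases the component count of $b^*\bt$, and in the latter case whether $sb$ acquires an internal circle. Each sub-case is resolved by a single Frobenius-algebra identity for $R=\C[x]/(x^2)$: associativity $m\circ(m\otimes 1)=m\circ(1\otimes m)$, the Frobenius relation $(m\otimes 1)\circ(1\otimes\delta)=\delta\circ m$, and the counit identity $((\eps\circ m)\otimes 1)\circ(1\otimes\delta)=m$. Your invocation of ``order-independence of surgery procedures'' is exactly the principle that underlies these identities, but in practice you will need to unpack it into these three concrete checks; the third sub-case in particular is not a pure reordering of surgeries, since it involves the rule that mirror-image internal circles in $s\bu[\bepsilon]$ and $\bu^*[\hat\bde^*]s^*$ must be oppositely oriented for the product to be nonzero (this is where the counit $\eps$ enters). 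Your closing paragraph already anticipates this; the paper's proof confirms that this is indeed where the work lies.
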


\begin{proof}
We may assume that $i$ is $(\La-\alpha)$-admissible
and set $s := t_i(\La-\alpha)$.
Let $\bar\theta_i$ denote the composition of the maps from the
statement of the theorem.
Take any basis vector $|\,\bu^*[\bde^*]\wr \bt[\bga]\,| \in E^\La_\alpha$.
We want to show that \begin{equation*}
\theta_i(|\,\bu^*[\bde^*]\wr \bt[\bga]\,|)= \bar\theta_i(|\,\bu^*[\bde^*]\wr \bt[\bga]\,|)
\end{equation*}
as endomorphisms
of $T^\La_{\alpha+\alpha_i}$. It is clear that both sides
are zero on all basis vectors of $T^\La_{\alpha+\alpha_i}$ that
are {\em not} of the special form $(a \mu s \bu[\bepsilon]\,|$, so take such a
special basis vector. Then we need to show that
\begin{equation}\label{reallybad}
(a \mu s \bu[\bepsilon]\,| \cdot \theta_i(|\,\bu^*[\bde^*]\wr \bt[\bga]\,|)
=
(a \mu s \bu[\bepsilon]\,| \cdot \bar\theta_i(|\,\bu^*[\bde^*]\wr \bt[\bga]\,|).
\end{equation}
The vector on the right hand side of (\ref{reallybad}) can be computed
explicitly as follows. It is zero unless all
mirror image pairs of internal circles
in $\bu[\bepsilon]$ and $\bu^*[\bde^*]$ are oppositely oriented.
In that case, the product is obtained by applying a sequence of
generalised surgery procedures to remove all cap-cup pairs from
the $bb^*$-part of the diagram
\begin{equation}\label{lhd2}
a \mu s \eps_d b b^* \bt[\bga],
\end{equation}
where $b$ is the upper reduction of $\bu$.
On the other hand, the vector on the left hand side
of (\ref{reallybad}) is equal to
\begin{equation}\label{lhd}
\sum
(a \mu s \bu[\bepsilon]\,|\cdot |\,[\bu^*[\hat\bde^*]s^* \la s \bt[\hat\bga]\,|
\end{equation}
summing over the same tuples as in (\ref{lastyuk}).
Recall moreover to compute each
product in (\ref{lhd}),
the product is zero unless mirror image pairs of internal circles
in $s \bu[\bepsilon]$ and $\bu^*[\hat\bde^*] s^*$ are oppositely oriented.
In that case, the product is obtained by applying a sequence of
generalised surgery procedure
to remove all cap-cup pairs from the $cc^*$-part of the diagram
\begin{equation}\label{lhd1}
a \mu c c^* \la s \bt[\hat\bga],
\end{equation}
where
$c$ is the upper reduction of $s \bu$ ($=$ the upper reduction of $sb$).

Comparing (\ref{lhd1}) and (\ref{lhd2}),
it follows easily that (\ref{reallybad}) holds if $s$ is a right-shift
or a left-shift: both procedures amount to performing exactly the same
sequence of surgery procedures.
The case when $s$ is a cap is not much harder: this time the
computation of (\ref{lhd1}) involves one more surgery procedure than
(\ref{lhd2}), but this extra surgery procedure corresponds
to ``multiplication by an anti-clockwise circle'' which is the identity map.
Finally we are left with the case when $s$ is a cup, which is
more difficult to explain.
Recall from \cite[$\S$3]{BS1}
that our surgery procedures are associated to the TQFT
defined by the commutative Frobenius algebra $R = \C[x] / (x^2)$,
which has multiplication $m$, comultiplication $\delta$ and counit
$\eps$.
We split into three sub-cases.

\vspace{1mm}

\noindent{\em Sub-case one: $b^*s^* s \bt$ has one less component
than $b^* \bt$.}
We only need to consider components that interact with $s$ or $s^*$,
as the surgery procedures on all other components clearly correspond
in (\ref{lhd2}) and (\ref{lhd1}). We will assume moreover that the relevant
components are all circles rather than propagating lines,
leaving the appropriate
modifications when propagating lines are involved to the reader.
With these reductions,
there is just one basic configuration to be considered,
represented
by the following diagram (in which for simplicity we have displayed
only the lower
reduction $d$ of $\bt$ rather than $\bt$ itself):
$$
\begin{picture}(172,132)
\put(220.8,29.3){{$\circ$}}
\put(239.7,30.1){{$\scriptstyle\times$}}
\put(220.8,69.3){{$\circ$}}
\put(239.7,70.1){{$\scriptstyle\times$}}
\put(-59.2,29.3){{$\circ$}}
\put(-40.3,30.1){{$\scriptstyle\times$}}
\put(80.8,9.3){{$\circ$}}
\put(99.7,10.1){{$\scriptstyle\times$}}
\put(80.8,89.3){{$\circ$}}
\put(99.7,90.1){{$\scriptstyle\times$}}

\put(61,90){{$\scriptstyle\bullet$}}
\put(61,70){{$\scriptstyle\bullet$}}
\put(101,70){{$\scriptstyle\bullet$}}
\put(81,70){{$\scriptstyle\bullet$}}
\put(121,90){{$\scriptstyle\bullet$}}
\put(121,70){{$\scriptstyle\bullet$}}
\put(121,110){{$\scriptstyle\bullet$}}
\put(101,110){{$\scriptstyle\bullet$}}
\put(81,110){{$\scriptstyle\bullet$}}
\put(61,110){{$\scriptstyle\bullet$}}
\put(101,30){{$\scriptstyle\bullet$}}
\put(81,30){{$\scriptstyle\bullet$}}
\put(61,30){{$\scriptstyle\bullet$}}
\put(121,30){{$\scriptstyle\bullet$}}
\put(61,10){{$\scriptstyle\bullet$}}
\put(121,10){{$\scriptstyle\bullet$}}

\put(261,30){{$\scriptstyle\bullet$}}
\put(201,30){{$\scriptstyle\bullet$}}
\put(261,70){{$\scriptstyle\bullet$}}
\put(201,70){{$\scriptstyle\bullet$}}
\put(261,90){{$\scriptstyle\bullet$}}
\put(201,90){{$\scriptstyle\bullet$}}
\put(241,90){{$\scriptstyle\bullet$}}
\put(221,90){{$\scriptstyle\bullet$}}

\put(-19,30){{$\scriptstyle\bullet$}}
\put(-79,30){{$\scriptstyle\bullet$}}
\put(-19,50){{$\scriptstyle\bullet$}}
\put(-79,50){{$\scriptstyle\bullet$}}
\put(-39,50){{$\scriptstyle\bullet$}}
\put(-59,50){{$\scriptstyle\bullet$}}
\put(-19,90){{$\scriptstyle\bullet$}}
\put(-79,90){{$\scriptstyle\bullet$}}
\put(-59,90){{$\scriptstyle\bullet$}}
\put(-39,90){{$\scriptstyle\bullet$}}

\put(-95,100){$d$}
\put(-95,80){$b^*$}
\put(-95,60){$b$}
\put(-95,40){$s$}
\put(-95,20){$a$}
\put(-82,92){\line(1,0){70}}
\put(-82,52){\line(1,0){70}}
\put(-82,32){\line(1,0){70}}
\dashline{2.5}(-82,72)(-12,72)
\put(-67,92){\oval(20,20)[t]}
\put(-27,92){\oval(20,20)[t]}
\put(-67,92){\oval(20,20)[b]}
\put(-27,92){\oval(20,20)[b]}
\put(-27,52){\oval(20,20)[t]}
\put(-67,52){\oval(20,20)[t]}
\put(-47,52){\oval(20,20)[b]}
\put(-77,52){\line(0,-1){20}}
\put(-17,52){\line(0,-1){20}}
\put(-47,32){\oval(60,30)[b]}

\put(10,60){$\rightsquigarrow$}
\put(149,60){$\rightsquigarrow$}

\put(45,120){$d$}
\put(45,100){$s$}
\put(45,80){$s^*$}
\put(45,60){$b^*$}
\put(45,40){$b$}
\put(45,20){$s$}
\put(45,0){$a$}
\put(58,112){\line(1,0){70}}
\put(58,92){\line(1,0){70}}
\put(58,72){\line(1,0){70}}
\put(58,32){\line(1,0){70}}
\put(58,12){\line(1,0){70}}
\dashline{2.5}(58,52)(128,52)
\put(113,112){\oval(20,20)[t]}
\put(73,112){\oval(20,20)[t]}
\put(93,112){\oval(20,20)[b]}
\put(63,112){\line(0,-1){40}}
\put(123,112){\line(0,-1){40}}
\put(93,72){\oval(20,20)[t]}
\put(73,72){\oval(20,20)[b]}
\put(113,72){\oval(20,20)[b]}
\put(73,32){\oval(20,20)[t]}
\put(113,32){\oval(20,20)[t]}
\put(93,32){\oval(20,20)[b]}
\put(63,32){\line(0,-1){20}}
\put(123,32){\line(0,-1){20}}
\put(93,12){\oval(60,30)[b]}

\put(185,100){$d$}
\put(185,80){$s$}
\put(185,60){$c^*$}
\put(185,40){$c$}
\put(185,20){$a$}
\put(198,92){\line(1,0){70}}
\put(198,72){\line(1,0){70}}
\put(198,32){\line(1,0){70}}
\dashline{2.5}(198,52)(268,52)
\put(253,92){\oval(20,20)[t]}
\put(213,92){\oval(20,20)[t]}
\put(233,92){\oval(20,20)[b]}
\put(203,92){\line(0,-1){20}}
\put(263,92){\line(0,-1){20}}
\put(233,72){\oval(60,22)[b]}
\put(233,32){\oval(60,22)[t]}
\put(233,32){\oval(60,30)[b]}
\end{picture}
$$
The point now is that to compute (\ref{lhd2}) involves two surgery
procedures to the $bb^*$-part of the leftmost diagram.
Each of these involves two circles combining into one circle, represented
by the multiplication $m$ of the
Frobenius algebra $R$.
So the computation of (\ref{lhd2})
is a map represented by $m \circ (m \otimes 1)$.
On the other hand the passage from (\ref{lhd2}) to (\ref{lhd}) involves
applying the map $1 \otimes m$ to join the top two circles, then one surgery
procedure gets performed to the $cc^*$-part of the rightmost diagram,
which is the map $m$. Now we are done because $m \circ (m \otimes 1) = m \circ (1 \otimes m)$.

\vspace{1mm}

\noindent{\em Sub-case two: $b^*s^* s \bt$ has one more component
than $b^* \bt$ and $sb$ does not have an internal circle.}
Making the same reductions as before, there is essentially only one basic configuration to consider, represented by the following diagram:
$$
\begin{picture}(172,132)
\put(220.8,29.3){{$\circ$}}
\put(239.7,30.1){{$\scriptstyle\times$}}
\put(220.8,69.3){{$\circ$}}
\put(239.7,70.1){{$\scriptstyle\times$}}

\put(-59.2,29.3){{$\circ$}}
\put(-40.3,30.1){{$\scriptstyle\times$}}
\put(80.8,9.3){{$\circ$}}
\put(99.7,10.1){{$\scriptstyle\times$}}
\put(80.8,89.3){{$\circ$}}
\put(99.7,90.1){{$\scriptstyle\times$}}

\put(-19,30){{$\scriptstyle\bullet$}}
\put(-79,30){{$\scriptstyle\bullet$}}
\put(-19,50){{$\scriptstyle\bullet$}}
\put(-79,50){{$\scriptstyle\bullet$}}
\put(-39,50){{$\scriptstyle\bullet$}}
\put(-59,50){{$\scriptstyle\bullet$}}
\put(-19,90){{$\scriptstyle\bullet$}}
\put(-79,90){{$\scriptstyle\bullet$}}
\put(-59,90){{$\scriptstyle\bullet$}}
\put(-39,90){{$\scriptstyle\bullet$}}

\put(61,10){{$\scriptstyle\bullet$}}
\put(121,10){{$\scriptstyle\bullet$}}
\put(81,30){{$\scriptstyle\bullet$}}
\put(101,30){{$\scriptstyle\bullet$}}
\put(61,30){{$\scriptstyle\bullet$}}
\put(121,30){{$\scriptstyle\bullet$}}
\put(121,110){{$\scriptstyle\bullet$}}
\put(81,110){{$\scriptstyle\bullet$}}
\put(61,110){{$\scriptstyle\bullet$}}
\put(101,110){{$\scriptstyle\bullet$}}
\put(121,70){{$\scriptstyle\bullet$}}
\put(81,70){{$\scriptstyle\bullet$}}
\put(61,70){{$\scriptstyle\bullet$}}
\put(101,70){{$\scriptstyle\bullet$}}

\put(221,90){{$\scriptstyle\bullet$}}
\put(241,90){{$\scriptstyle\bullet$}}
\put(261,70){{$\scriptstyle\bullet$}}
\put(201,70){{$\scriptstyle\bullet$}}
\put(261,30){{$\scriptstyle\bullet$}}
\put(201,30){{$\scriptstyle\bullet$}}

\put(-95,100){$d$}
\put(-95,80){$b^*$}
\put(-95,60){$b$}
\put(-95,40){$s$}
\put(-95,20){$a$}
\put(-82,92){\line(1,0){70}}
\put(-82,52){\line(1,0){70}}
\put(-82,32){\line(1,0){70}}
\dashline{2.5}(-82,72)(-12,72)
\put(-47,92){\oval(20,20)[t]}
\put(-47,92){\oval(60,40)[t]}
\put(-47,32){\oval(60,30)[b]}
\put(-47,52){\oval(20,20)[b]}
\put(-67,52){\oval(20,20)[t]}
\put(-27,52){\oval(20,20)[t]}
\put(-67,92){\oval(20,20)[b]}
\put(-27,92){\oval(20,20)[b]}
\put(-77,52){\line(0,-1){20}}
\put(-17,52){\line(0,-1){20}}

\put(10,60){$\rightsquigarrow$}
\put(149,60){$\rightsquigarrow$}

\put(45,120){$d$}
\put(45,100){$s$}
\put(45,80){$s^*$}
\put(45,60){$b^*$}
\put(45,40){$b$}
\put(45,20){$s$}
\put(45,0){$a$}
\put(58,112){\line(1,0){70}}
\put(58,92){\line(1,0){70}}
\put(58,72){\line(1,0){70}}
\put(58,32){\line(1,0){70}}
\put(58,12){\line(1,0){70}}
\dashline{2.5}(58,52)(128,52)
\put(93,112){\oval(20,20)[t]}
\put(93,112){\oval(60,40)[t]}
\put(93,12){\oval(60,30)[b]}
\put(93,32){\oval(20,20)[b]}
\put(73,32){\oval(20,20)[t]}
\put(113,32){\oval(20,20)[t]}
\put(73,72){\oval(20,20)[b]}
\put(113,72){\oval(20,20)[b]}
\put(93,72){\oval(20,20)[t]}
\put(93,112){\oval(20,20)[b]}
\put(63,32){\line(0,-1){20}}
\put(123,32){\line(0,-1){20}}
\put(63,112){\line(0,-1){40}}
\put(123,112){\line(0,-1){40}}

\put(185,100){$d$}
\put(185,80){$s$}
\put(185,60){$c^*$}
\put(185,40){$c$}
\put(185,20){$a$}
\put(198,92){\line(1,0){70}}
\put(198,72){\line(1,0){70}}
\put(198,32){\line(1,0){70}}
\dashline{2.5}(198,52)(268,52)
\put(233,92){\oval(20,20)[t]}
\put(233,92){\oval(60,40)[t]}
\put(233,32){\oval(60,30)[b]}
\put(233,32){\oval(60,22)[t]}
\put(233,72){\oval(60,22)[b]}
\put(233,92){\oval(20,20)[b]}
\put(203,92){\line(0,-1){20}}
\put(263,92){\line(0,-1){20}}
\end{picture}
$$
The direct computation of (\ref{lhd2}) from the leftmost diagram
involves two surgery procedures
encoded by the map $\delta \circ m$.
On the other hand the passage from (\ref{lhd2}) to (\ref{lhd1})
is encoded by the map $1 \otimes \delta$, then the computation of
(\ref{lhd1}) is encoded by $m \otimes 1$.
So we are done by the
identity $(m \otimes 1) \circ (1 \otimes \delta) = \delta \circ m$, which is
the basic defining relation of a Frobenius algebra.
\vspace{1mm}

\noindent{\em Sub-case three: $b^*s^* s \bt$ has one more component
than $b^* \bt$ and $sb$ has an internal circle.}
The basic configuration to be considered is as follows:
$$
\begin{picture}(172,132)
\put(221,90){{$\scriptstyle\bullet$}}
\put(241,90){{$\scriptstyle\bullet$}}

\put(81,110){{$\scriptstyle\bullet$}}
\put(101,110){{$\scriptstyle\bullet$}}
\put(81,30){{$\scriptstyle\bullet$}}
\put(101,70){{$\scriptstyle\bullet$}}
\put(81,70){{$\scriptstyle\bullet$}}
\put(101,30){{$\scriptstyle\bullet$}}

\put(-59,90){{$\scriptstyle\bullet$}}
\put(-39,90){{$\scriptstyle\bullet$}}
\put(-59,50){{$\scriptstyle\bullet$}}
\put(-39,50){{$\scriptstyle\bullet$}}

\put(-59.2,29.3){{$\circ$}}
\put(-40.3,30.1){{$\scriptstyle\times$}}
\put(80.8,9.3){{$\circ$}}
\put(99.7,10.1){{$\scriptstyle\times$}}
\put(80.8,89.3){{$\circ$}}
\put(99.7,90.1){{$\scriptstyle\times$}}
\put(220.8,69.3){{$\circ$}}
\put(239.7,70.1){{$\scriptstyle\times$}}
\put(220.8,29.3){{$\circ$}}
\put(239.7,30.1){{$\scriptstyle\times$}}

\put(-95,100){$d$}
\put(-95,80){$b^*$}
\put(-95,60){$b$}
\put(-95,40){$s$}
\put(-95,20){$a$}
\put(-82,92){\line(1,0){70}}
\put(-82,52){\line(1,0){70}}
\put(-82,32){\line(1,0){70}}
\dashline{2.5}(-82,72)(-12,72)
\put(-47,52){\oval(20,20)[b]}
\put(-47,52){\oval(20,20)[t]}
\put(-47,92){\oval(20,20)[b]}
\put(-47,92){\oval(20,20)[t]}

\put(10,60){$\rightsquigarrow$}
\put(149,60){$\rightsquigarrow$}

\put(45,120){$d$}
\put(45,100){$s$}
\put(45,80){$s^*$}
\put(45,60){$b^*$}
\put(45,40){$b$}
\put(45,20){$s$}
\put(45,0){$a$}
\put(58,112){\line(1,0){70}}
\put(58,92){\line(1,0){70}}
\put(58,72){\line(1,0){70}}
\put(58,32){\line(1,0){70}}
\put(58,12){\line(1,0){70}}
\dashline{2.5}(58,52)(128,52)
\put(93,72){\oval(20,20)[b]}
\put(93,72){\oval(20,20)[t]}
\put(93,112){\oval(20,20)[b]}
\put(93,112){\oval(20,20)[t]}
\put(93,32){\oval(20,20)[b]}
\put(93,32){\oval(20,20)[t]}

\put(185,100){$d$}
\put(185,80){$s$}
\put(185,60){$c^*$}
\put(185,40){$c$}
\put(185,20){$a$}
\put(198,92){\line(1,0){70}}
\put(198,72){\line(1,0){70}}
\put(198,32){\line(1,0){70}}
\dashline{2.5}(198,52)(268,52)
\put(233,92){\oval(20,20)[b]}
\put(233,92){\oval(20,20)[t]}
\end{picture}
$$
Here, the direct computation of (\ref{lhd2})
corresponds to the map $m$.
On the other hand
passing from (\ref{lhd2}) to (\ref{lhd})
corresponds to the map $1 \otimes \delta$, then passing
from (\ref{lhd}) to (\ref{lhd1}) involves removing a
mirror image pair of internal circles, which is encoded by the
map $(\eps \circ m) \otimes 1$,
where $(\eps \circ m)$ is the map $R \otimes R \rightarrow \C,
1 \otimes 1 \mapsto 0, 1 \otimes x \mapsto 1,
x \otimes 1 \mapsto 1, x \otimes x \mapsto 0$.
Again we are done since
$((\eps \circ m) \bar\otimes 1) \circ (1 \otimes \delta) = m$.
\end{proof}

\begin{Corollary}\label{cddcd}
The following diagram commutes:
$$
\begin{CD}
R^\La_\alpha&@>\theta_i>>&R^\La_{\alpha+\alpha_i}\\
@V\omega VV&&@VV\omega V\\
E^\La_\alpha&@>>\theta_i>&E^\La_{\alpha+\alpha_i}.
\end{CD}
$$
\end{Corollary}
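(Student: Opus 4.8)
The plan is to verify that the square commutes on a generating set of the algebra $R^\La_\alpha$ and then propagate this to all of $R^\La_\alpha$ by multiplicativity. Here $\omega$ is a homomorphism of algebras by Theorem~\ref{phi}, the map $\theta_i\colon R^\La_\alpha\to R^\La_{\alpha+\alpha_i}$ is a (non-unital) algebra homomorphism by its definition (\ref{thetai}), and the map $\theta_i\colon E^\La_\alpha\to E^\La_{\alpha+\alpha_i}$ is a (non-unital) algebra homomorphism by Theorem~\ref{thetaithm}. Hence both composites $\theta_i\circ\omega$ and $\omega\circ\theta_i$ are non-unital algebra homomorphisms $R^\La_\alpha\to E^\La_{\alpha+\alpha_i}$, and since $R^\La_\alpha$ is spanned by products of the elements $e(\bi)$, $y_re(\bi)$ and $\psi_re(\bi)$ (using $1=\sum_\bi e(\bi)$, $y_r=\sum_\bi y_re(\bi)$ and $\psi_r=\sum_\bi \psi_re(\bi)$), it suffices to check that the two composites agree on these three families.

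For $e(\bi)$ I would use the identity (\ref{imei}), which reads $\theta_{i_d}\circ\cdots\circ\theta_{i_1}(e)=\omega(e(\bi))$ with $e$ the identity of the trivial algebra $K_\La$. Applying $\theta_i$ to both sides, the left side becomes $\theta_i\circ\theta_{i_d}\circ\cdots\circ\theta_{i_1}(e)$, which by (\ref{imei}) for the tuple $\bi+i$ equals $\omega(e(\bi+i))$; on the other hand $\omega(\theta_i(e(\bi)))=\omega(e(\bi+i))$ straight from (\ref{thetai}). Equivalently, recalling from Theorem~\ref{thetaithm} that $\theta_i$ on $E^\La_\alpha$ is ``apply the functor $F_i$, then include'', applying $F_i$ to the projection of $T^\La_\alpha$ onto its summand $F_\bi L(\iota)\langle\defect(\La-\alpha)\rangle$ produces the projection onto $F_{\bi+i}L(\iota)\langle\defect(\La-\alpha)\rangle$, and under the inclusion this is exactly $\omega(e(\bi+i))$, projections being insensitive to the degree shift in (\ref{dc}).

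For $y_re(\bi)$ with $1\le r\le d$, Theorem~\ref{phi} gives that $\omega(y_re(\bi))$ is the endomorphism of $T^\La_\alpha$ equal to $y_r(\bi)_{L(\iota)}$ on the summand $F_\bi L(\iota)\langle\cdots\rangle$ and zero on all others; applying $\theta_i$ (i.e.\ the functor $F_i$ followed by inclusion, by Theorem~\ref{thetaithm}) we get the endomorphism of $T^\La_{\alpha+\alpha_i}$ equal to $F_i(y_r(\bi)_{L(\iota)})$ on $F_{\bi+i}L(\iota)\langle\cdots\rangle$ and zero elsewhere. But the definition (\ref{ci}) of $y_r(\bi)$ by horizontal composition of natural transformations shows that, for $r\le d$, one has $y_r(\bi+i)=e(i)\,y_r(\bi)$, whence $F_i(y_r(\bi)_{L(\iota)})=y_r(\bi+i)_{L(\iota)}$; moreover the sign $(-1)^{(\Ga_{r-1},\La_{i_r})}$ attached by Lemma~\ref{inb} is unchanged, since the block $\Ga_{r-1}=\La-\alpha_{i_1}-\cdots-\alpha_{i_{r-1}}$ is the same whether computed for $\bi$ or for $\bi+i$. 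Therefore $\theta_i(\omega(y_re(\bi)))=\omega(y_re(\bi+i))=\omega(\theta_i(y_re(\bi)))$. The argument for $\psi_re(\bi)$ with $1\le r\le d-1$ is entirely parallel, using (\ref{bi}) in place of (\ref{ci}), the observation that $s_r\cdot(\bi+i)=(s_r\cdot\bi)+i$, and the fact that the sign attached to $\psi_r$ by Lemma~\ref{sff2} again involves only $\Ga_{r-1}$.

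I expect the only genuinely non-formal step to be the assertion that applying the functor $F_i$ carries the locally defined natural transformations $y_r(\bi)$ and $\psi_r(\bi)$ to $y_r(\bi+i)$ and $\psi_r(\bi+i)$ exactly; granting Theorem~\ref{thetaithm} (which already absorbs the compatibility of the canonical isomorphisms of Lemma~\ref{caniso} with the insertion of a new level and establishes that the combinatorial $\theta_i$ is graded and multiplicative), this reduces to checking that the signs from Lemmas~\ref{inb} and \ref{sff2} and the two degree shifts $\defect(\La-\alpha)$ versus $\defect(\La-\alpha-\alpha_i)$ in (\ref{dc}) introduce no mismatch. Everything else is a formal consequence of multiplicativity together with (\ref{imei}).
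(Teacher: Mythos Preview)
Your argument is correct, and it takes a cleaner route than the paper's own proof. The paper verifies commutativity on generators as you do, but then works directly with the \emph{combinatorial} definition of $\theta_i$ on $E^\La_\alpha$ from (\ref{lastyuk}): it checks that applying $\theta_i$ to the diagram expansion (\ref{piei}) of $\omega(e(\bi))$ and then multiplying by $y_r$ or $\psi_r$ agrees with doing these in the other order. This is declared routine for positive/negative circle moves and height moves, but for crossing moves it requires a small case analysis reducing to Frobenius-algebra identities such as $(m\otimes 1)\circ(1\otimes\delta)=\delta\circ m$, exactly as in the proof of Theorem~\ref{thetaithm}.

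You instead invoke Theorem~\ref{thetaithm} to replace $\theta_i$ on $E^\La_\alpha$ by ``apply the functor $F_i$, then include'', after which the verification becomes purely formal: by the horizontal-composition definitions (\ref{ci}) and (\ref{bi}) one has $y_r(\bi+i)=e(i)\,y_r(\bi)$ and $\psi_r(\bi+i)=e(i)\,\psi_r(\bi)$, which is precisely the statement that $F_i\big(y_r(\bi)_{L(\iota)}\big)=y_r(\bi+i)_{L(\iota)}$ and $F_i\big(\psi_r(\bi)_{L(\iota)}\big)=\psi_r(\bi+i)_{L(\iota)}$. Your last-paragraph worries are unnecessary: since the argument stays at the level of natural transformations (not their bimodule incarnations), the signs of Lemmas~\ref{inb}--\ref{sff2} are already built in, and the global degree shifts in (\ref{dc}) are invisible to endomorphism algebras. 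What your approach buys is the elimination of any diagram-level case analysis here; the price is that all the hard work has been pushed into Theorem~\ref{thetaithm}, which the paper is also using but only for the multiplicativity of $\theta_i$.
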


\begin{proof}
In view of the theorem, it suffices to check the commutativity on the generators
of $R^\La_\alpha$. For $e(\bi)$ this follows from (\ref{imei}).
It then suffices to check that
$\theta_i(y_r \omega(e(\bi))) = y_r\omega(e(\bi+i))$
and
$\theta_i(\psi_r\omega(e(\bi))) = \psi_r\omega(e(\bi+i))$.
This is routine for circle moves or height moves.
Finally if $\overline\psi_r(\bi)_\La$ is a crossing move
there are various
cases to be considered, all of which
 reduce to the
associativity/coassociativity of the multiplication/comultiplication in $R$
or to the identity
$(m \otimes 1) \circ (1 \otimes \delta) = \delta \circ m$
like in the proof of Theorem~\ref{thetaithm}.
\end{proof}

\section{Surjectivity}

In this section we prove that the homomorphism
$\omega:R^\La_\alpha \rightarrow E^\La_\alpha$
from Theorem~\ref{phi} is surjective.

\phantomsubsection{Raising and lowering moves}
Assume in this subsection that we are given a block $\Ga\in\PImn$ and a
proper
$\Ga$-admissible sequence
$\bi = (i_1,\dots,i_d)$,
and let $\bGa = \Ga_d \cdots \Ga_0$ and $\bt = t_d \cdots t_1$ be the
associated block sequence and composite matching.
By composing sequences of the local circle moves, crossing moves and
height moves
from (\ref{expy}) and (\ref{betadef2}),
we can define various ``global''
bimodule homomorphisms
$K^{\bt}_{\bGa}\langle-\caps(\bt)\rangle
\rightarrow K^{\bu}_{\bDe}\langle -\caps(\bu)\rangle$
for other
block sequences and composite matchings $\bDe$ and $\bu$.
The following lemmas describe some useful
raising and lowering moves which can be obtained in this
way.
The proofs of all of these are straightforward consequences of
Lemma~\ref{as}.
The reader may find Figure~\ref{fig1} helpful when trying to visualise the statements;
for example consider applying
 Lemma~\ref{raisingshifts} to raise the left-shift at level $14$
of this diagram.

\begin{Lemma}[Raising caps]\label{raisingcaps}
Let $C$ be a component of $\bt$ and $1 \leq q \leq d$
such that $C$ has a cap at level $q$.
Exactly one of the following two statements holds:
\begin{itemize}
\item[(i)] $|i_p-i_q| > 1$ for all $1 \leq p < q$;
\item[(ii)] there exists $1 \leq p < q$ such that
$|i_p-i_{q}| = 1$ and $|i_s-i_q| > 1$ for all $p < s < q$.
\end{itemize}
In case {\rm (i)}, the composition $\overline\psi_1
\circ \cdots \circ \overline\psi_{q-1}$
is a sequence of height moves raising the cap
to level $1$.
In case {\rm (ii)}, $\overline\psi_{p+1} \circ\cdots \circ \overline\psi_{q-1}$
is a sequence  of height moves
raising the cap to level $p+1$,
and after that $\overline\psi_p$ is a
crossing move of one of the following four types:
$$
\begin{picture}(160,51)
\put(-82,42){\line(1,0){48}}
\put(-82,22){\line(1,0){48}}
\put(-82,2){\line(1,0){48}}
\put(-58,42){\line(-1,-1){20}}
\put(-81,40){{$\scriptstyle\times$}}
\put(-60,40){{$\scriptstyle\bullet$}}
\put(-40.7,39.3){{$\circ$}}
\put(-78,22){\line(0,-1){20}}
\put(-80,20){$\scriptstyle\bullet$}
\put(-61,20){$\scriptstyle\times$}
\put(-40.7,19.3){{$\circ$}}
\put(-80,0){$\scriptstyle\bullet$}
\put(-60,0){$\scriptstyle\bullet$}
\put(-40,0){{$\scriptstyle\bullet$}}
\put(-48,2){\oval(20,20)[t]}

\put(-19,19.5){$\stackrel{\overline\psi_p}{\rightsquigarrow}$}

\put(8,42){\line(1,0){48}}
\put(8,22){\line(1,0){48}}
\put(8,2){\line(1,0){48}}
\put(32,42){\line(1,-1){20}}
\put(9,40){{$\scriptstyle\times$}}
\put(30,40){{$\scriptstyle\bullet$}}
\put(49.3,39.3){{$\circ$}}
\put(52,22){\line(0,-1){20}}
\put(9,20){$\scriptstyle\times$}
\put(29.3,19.3){$\circ$}
\put(50,20){{$\scriptstyle\bullet$}}
\put(10,0){$\scriptstyle\bullet$}
\put(30,0){$\scriptstyle\bullet$}
\put(50,0){{$\scriptstyle\bullet$}}
\put(22,2){\oval(20,20)[t]}

\put(188,42){\line(1,0){48}}
\put(188,22){\line(1,0){48}}
\put(188,2){\line(1,0){48}}
\put(212,42){\line(-1,-1){20}}
\put(189,40){{$\scriptstyle\times$}}
\put(210,40){{$\scriptstyle\bullet$}}
\put(229.3,39.3){{$\circ$}}
\put(192,22){\line(0,-1){20}}
\put(190,20){$\scriptstyle\bullet$}
\put(209,20){$\scriptstyle\times$}
\put(229.3,19.3){{$\circ$}}
\put(190,0){$\scriptstyle\bullet$}
\put(210,0){$\scriptstyle\bullet$}
\put(230,0){{$\scriptstyle\bullet$}}
\put(222,2){\oval(20,20)[t]}

\put(161,19.5){$\stackrel{\overline\psi_p}{\rightsquigarrow}$}

\put(98,42){\line(1,0){48}}
\put(98,22){\line(1,0){48}}
\put(98,2){\line(1,0){48}}
\put(122,42){\line(1,-1){20}}
\put(99,40){{$\scriptstyle\times$}}
\put(120,40){{$\scriptstyle\bullet$}}
\put(139.3,39.3){{$\circ$}}
\put(142,22){\line(0,-1){20}}
\put(99,20){$\scriptstyle\times$}
\put(119.3,19.3){$\circ$}
\put(140,20){{$\scriptstyle\bullet$}}
\put(100,0){$\scriptstyle\bullet$}
\put(120,0){$\scriptstyle\bullet$}
\put(140,0){{$\scriptstyle\bullet$}}
\put(112,2){\oval(20,20)[t]}
\end{picture}
$$
$$
\begin{picture}(160,55)

\put(-82,42){\line(1,0){48}}
\put(-82,22){\line(1,0){48}}
\put(-82,2){\line(1,0){48}}
\put(-68,40){\oval(20,20)[b]}
\put(-80,40){{$\scriptstyle\bullet$}}
\put(-60,40){{$\scriptstyle\bullet$}}
\put(-40.7,39.3){{$\circ$}}
\put(-80.7,19.3){$\circ$}
\put(-61,20){$\scriptstyle\times$}
\put(-40.7,19.3){{$\circ$}}
\put(-80.7,-0.7){$\circ$}
\put(-60,0){$\scriptstyle\bullet$}
\put(-40,0){{$\scriptstyle\bullet$}}
\put(-48,2){\oval(20,20)[t]}

\put(188,42){\line(1,0){48}}
\put(188,22){\line(1,0){48}}
\put(188,2){\line(1,0){48}}
\put(212,42){\line(-1,-1){20}}
\put(189,40){{$\scriptstyle\times$}}
\put(210,40){{$\scriptstyle\bullet$}}
\put(230,40){{$\scriptstyle\bullet$}}
\put(232,42){\line(0,-1){20}}
\put(192,22){\line(0,-1){20}}
\put(190,20){$\scriptstyle\bullet$}
\put(209,20){$\scriptstyle\times$}
\put(230,20){{$\scriptstyle\bullet$}}
\put(232,22){\line(-1,-1){20}}
\put(190,0){$\scriptstyle\bullet$}
\put(210,0){$\scriptstyle\bullet$}
\put(229,0){{$\scriptstyle\times$}}
\put(161,19.5){$\stackrel{\overline\psi_p}{\rightsquigarrow}$}

\put(98,42){\line(1,0){48}}
\put(98,22){\line(1,0){48}}
\put(98,2){\line(1,0){48}}
\put(99,40){{$\scriptstyle\times$}}
\put(139,20){{$\scriptstyle\times$}}
\put(139,0){{$\scriptstyle\times$}}
\put(120,40){{$\scriptstyle\bullet$}}
\put(140,40){{$\scriptstyle\bullet$}}
\put(99,20){$\scriptstyle\times$}
\put(119.3,19.3){$\circ$}
\put(100,0){$\scriptstyle\bullet$}
\put(120,0){$\scriptstyle\bullet$}
\put(112,2){\oval(20,20)[t]}
\put(132,42){\oval(20,20)[b]}

\put(-19,19.5){$\stackrel{\overline\psi_p}{\rightsquigarrow}$}

\put(8,42){\line(1,0){48}}
\put(8,22){\line(1,0){48}}
\put(8,2){\line(1,0){48}}
\put(32,42){\line(1,-1){20}}
\put(12,22){\line(1,-1){20}}
\put(30,40){{$\scriptstyle\bullet$}}
\put(10,40){{$\scriptstyle\bullet$}}
\put(49.3,39.3){{$\circ$}}
\put(52,22){\line(0,-1){20}}
\put(12,42){\line(0,-1){20}}
\put(29.3,19.3){$\circ$}
\put(50,20){{$\scriptstyle\bullet$}}
\put(10,20){{$\scriptstyle\bullet$}}
\put(9.3,0){$\circ$}
\put(30,0){$\scriptstyle\bullet$}
\put(50,0){{$\scriptstyle\bullet$}}
\end{picture}
$$
\end{Lemma}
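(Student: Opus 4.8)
\textbf{Proof plan for Lemma~\ref{raisingcaps}.}
The entire statement is a combinatorial unpacking of Lemma~\ref{as}, so the plan is to reduce everything to the case analysis already carried out there. First I would establish the dichotomy between (i) and (ii): given the component $C$ with a cap at level $q$, look at the entries $i_1,\dots,i_{q-1}$ preceding $i_q$. Either none of them is equal to $i_q\pm 1$ (case (i)), or there is a largest index $p<q$ with $|i_p-i_q|=1$; in the latter case, by the choice of $p$ as the largest such index, all intermediate entries $i_s$ with $p<s<q$ satisfy $|i_s-i_q|>1$ (they cannot equal $i_q$ either, since $C$ still has to be admissible and the cap is at level $q$, so the $i_q$th and $(i_q+1)$th vertices of $\Ga_{q-1}$ are labelled $\bullet\bullet$; inserting another $i_q$-move below would force a label clash). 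This gives exactly one of the two alternatives.

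Next, in case (i), I would observe that for each $s=q-1,q-2,\dots,1$ the entries $i_s$ and $i_q$ differ by more than one, so by Lemma~\ref{as}(v) the pair of consecutive levels is of ``height-move'' type: the composite matchings before and after swapping are proper, have the same reduction and no internal circles, and $\overline\psi_s$ is by definition (\ref{betadef2}) the corresponding height move. Composing $\overline\psi_1\circ\cdots\circ\overline\psi_{q-1}$ then slides the cap down through all the levels below it until it sits at level $1$; each individual step leaves the component $C$ otherwise unchanged because a height move only permutes two non-interacting levels. This is essentially a bookkeeping argument once one knows each relevant $\overline\psi_s$ is a height move.

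In case (ii), the same reasoning applies to the levels strictly between $p$ and $q$: for $p<s<q$ we have $|i_s-i_q|>1$, so $\overline\psi_{p+1}\circ\cdots\circ\overline\psi_{q-1}$ is a sequence of height moves bringing the cap up to level $p+1$ (``up'' meaning to just above level $p$). At that point levels $p$ and $p+1$ carry the mutually adjacent labels $i_p$ and $i_q$, so by Lemma~\ref{as}(ii) we are in the crossing-move situation and $\overline\psi_p$ is the corresponding crossing move from (\ref{betadef2}). The four pictures displayed in the statement are exactly the four families of composite matchings listed in Lemma~\ref{as}(ii) restricted to the strip containing the cap at level $p+1$ and the non-trivial move at level $p$; one just has to match them up by inspection (checking in each case that the labels at vertices $i$ and $i+2$ and the defect condition are compatible with $C$ having a cap where claimed). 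So the proof is: prove the dichotomy, then quote Lemma~\ref{as}(v) for the height moves and Lemma~\ref{as}(ii) for the single crossing move, then read off the four pictures.

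\textbf{Main obstacle.} The genuinely content-free parts are the two compositions of height moves; the one place that needs a little care is the dichotomy itself — specifically ruling out intermediate entries equal to $i_q$ and verifying that ``largest $p$ with $|i_p-i_q|=1$'' really does force $|i_s-i_q|>1$ for all $p<s<q$ rather than merely $i_s\neq i_q\pm 1$ for some of them. This is where one must use admissibility of $\bi$ together with the explicit label patterns of Lemma~\ref{as}, and it is the step most likely to hide a subtlety; everything after that is routine diagram chasing, which is why the paper is content to say ``The proofs of all of these are straightforward consequences of Lemma~\ref{as}.''
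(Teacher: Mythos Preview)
Your plan is exactly right and matches the paper's approach, which simply says ``straightforward consequences of Lemma~\ref{as}'': reduce the height moves to Lemma~\ref{as}(v) and the final crossing move to Lemma~\ref{as}(ii), reading off the four displayed configurations as those cases of the eight in Lemma~\ref{as}(ii) that have a cap at the lower level.

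One small correction in your ``main obstacle'' paragraph: since $C$ has a cap at level $q$, the $i_q$th and $(i_q{+}1)$th vertices of $\Ga_{q-1}$ are labelled $\times\,\circ$ (not $\bullet\,\bullet$; the latter is what appears in $\Ga_q$ after the cap). This is precisely what makes the argument work: if some $i_s=i_q$ with $s<q$ maximal among indices with $|i_s-i_q|\leq 1$, then the labels at positions $i_q,i_q{+}1$ remain $\times\,\circ$ all the way up to $\Ga_s$, but $\times\,\circ$ is never the output pattern of $F_{i_q}$ (the four possibilities from (\ref{CKLR}) are $\circ\,\times$, $\bullet\,\bullet$, $\circ\,\bullet$, $\bullet\,\times$), giving the contradiction. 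Note also that this same argument is needed in case~(i) to upgrade ``none of $i_1,\dots,i_{q-1}$ equals $i_q\pm 1$'' to the full condition $|i_p-i_q|>1$ for all $p<q$; your phrasing of case~(i) left this implicit.
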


\begin{Lemma}[Lowering cups]\label{loweringcups}
Let $C$ be a component of $\bt$ and $1 \leq p \leq d$
such that $C$ has a cup at level $p$.
Exactly one of the following two statements holds:
\begin{itemize}
\item[(i)] $|i_p-i_q| > 1$ for all $p < q \leq d$;
\item[(ii)] there exists $p < q \leq d$ such that
$|i_p-i_{q}| = 1$ and $|i_p-i_s| > 1$ for all $p < s < q$.
\end{itemize}
In case {\rm (i)}, the composition $\overline\psi_{d-1} \circ \cdots \circ \overline\psi_{p}$
is a sequence of height moves lowering the cup
to level $d$.
In case {\rm (ii)}, $\overline\psi_{q-2} \circ\cdots \circ \overline\psi_{p}$
is a sequence of height moves
lowering the cup to level $q-1$,
and after that $\overline\psi_{q-1}$ is a crossing move
of one of the following four types:
$$
\begin{picture}(160,51)
\put(-82,42){\line(1,0){48}}
\put(-82,22){\line(1,0){48}}
\put(-82,2){\line(1,0){48}}
\put(-48,40){\oval(20,20)[b]}
\put(-80,40){{$\scriptstyle\bullet$}}
\put(-60,40){{$\scriptstyle\bullet$}}
\put(-40,40){{$\scriptstyle\bullet$}}
\put(-78,42){\line(0,-1){20}}
\put(-80,20){$\scriptstyle\bullet$}
\put(-60.7,19.3){$\circ$}
\put(-41,20){{$\scriptstyle\times$}}
\put(-80.7,-0.7){$\circ$}
\put(-60,0){$\scriptstyle\bullet$}
\put(-41,0){{$\scriptstyle\times$}}
\put(-78,22){\line(1,-1){20}}

\put(-22,19.5){$\stackrel{\overline\psi_{q-1}}{\rightsquigarrow}$}

\put(8,42){\line(1,0){48}}
\put(8,22){\line(1,0){48}}
\put(8,2){\line(1,0){48}}
\put(22,40){\oval(20,20)[b]}
\put(10,40){{$\scriptstyle\bullet$}}
\put(30,40){{$\scriptstyle\bullet$}}
\put(50,40){{$\scriptstyle\bullet$}}
\put(52,42){\line(0,-1){20}}
\put(9.3,19.3){$\circ$}
\put(29,20){$\scriptstyle\times$}
\put(50,20){{$\scriptstyle\bullet$}}
\put(9.3,-0.7){$\circ$}
\put(30,0){$\scriptstyle\bullet$}
\put(49,0){{$\scriptstyle\times$}}
\put(52,22){\line(-1,-1){20}}

\put(188,42){\line(1,0){48}}
\put(188,22){\line(1,0){48}}
\put(188,2){\line(1,0){48}}
\put(222,40){\oval(20,20)[b]}
\put(190,40){{$\scriptstyle\bullet$}}
\put(210,40){{$\scriptstyle\bullet$}}
\put(230,40){{$\scriptstyle\bullet$}}
\put(192,42){\line(0,-1){20}}
\put(190,20){$\scriptstyle\bullet$}
\put(209.3,19.3){$\circ$}
\put(229,20){{$\scriptstyle\times$}}
\put(189.3,-0.7){$\circ$}
\put(210,0){$\scriptstyle\bullet$}
\put(229,0){{$\scriptstyle\times$}}
\put(192,22){\line(1,-1){20}}

\put(158,19.5){$\stackrel{\overline\psi_{q-1}}{\rightsquigarrow}$}

\put(98,42){\line(1,0){48}}
\put(98,22){\line(1,0){48}}
\put(98,2){\line(1,0){48}}
\put(112,40){\oval(20,20)[b]}
\put(100,40){{$\scriptstyle\bullet$}}
\put(120,40){{$\scriptstyle\bullet$}}
\put(140,40){{$\scriptstyle\bullet$}}
\put(142,42){\line(0,-1){20}}
\put(99.3,19.3){$\circ$}
\put(119,20){$\scriptstyle\times$}
\put(140,20){{$\scriptstyle\bullet$}}
\put(99.3,-0.7){$\circ$}
\put(120,0){$\scriptstyle\bullet$}
\put(139,0){{$\scriptstyle\times$}}
\put(142,22){\line(-1,-1){20}}
\end{picture}
$$
$$
\begin{picture}(160,55)

\put(-82,42){\line(1,0){48}}
\put(-82,22){\line(1,0){48}}
\put(-82,2){\line(1,0){48}}
\put(-68,40){\oval(20,20)[b]}
\put(-80,40){{$\scriptstyle\bullet$}}
\put(-60,40){{$\scriptstyle\bullet$}}
\put(-40.7,39.3){{$\circ$}}
\put(-80.7,19.3){$\circ$}
\put(-61,20){$\scriptstyle\times$}
\put(-40.7,19.3){{$\circ$}}
\put(-80.7,-0.7){$\circ$}
\put(-60,0){$\scriptstyle\bullet$}
\put(-40,0){{$\scriptstyle\bullet$}}
\put(-48,2){\oval(20,20)[t]}

\put(-22,19.5){$\stackrel{\overline\psi_{q-1}}{\rightsquigarrow}$}

\put(8,42){\line(1,0){48}}
\put(8,22){\line(1,0){48}}
\put(8,2){\line(1,0){48}}
\put(32,42){\line(1,-1){20}}
\put(10,40){{$\scriptstyle\bullet$}}
\put(30,40){{$\scriptstyle\bullet$}}
\put(49.3,39.3){{$\circ$}}
\put(12,42){\line(0,-1){20}}
\put(52,22){\line(0,-1){20}}
\put(10,20){$\scriptstyle\bullet$}
\put(29.3,19.3){$\circ$}
\put(50,20){{$\scriptstyle\bullet$}}
\put(12,22){\line(1,-1){20}}
\put(9.3,-0.7){$\circ$}
\put(30,0){$\scriptstyle\bullet$}
\put(50,0){{$\scriptstyle\bullet$}}

\put(98,42){\line(1,0){48}}
\put(98,22){\line(1,0){48}}
\put(98,2){\line(1,0){48}}
\put(132,40){\oval(20,20)[b]}
\put(99,40){{$\scriptstyle\times$}}
\put(120,40){{$\scriptstyle\bullet$}}
\put(140,40){{$\scriptstyle\bullet$}}
\put(99,20){$\scriptstyle\times$}
\put(119.3,19.3){$\circ$}
\put(139,20){{$\scriptstyle\times$}}
\put(100,0){$\scriptstyle\bullet$}
\put(120,0){$\scriptstyle\bullet$}
\put(139,0){{$\scriptstyle\times$}}
\put(112,2){\oval(20,20)[t]}

\put(158,19.5){$\stackrel{\overline\psi_{q-1}}{\rightsquigarrow}$}

\put(188,42){\line(1,0){48}}
\put(188,22){\line(1,0){48}}
\put(188,2){\line(1,0){48}}
\put(212,42){\line(-1,-1){20}}
\put(189,40){{$\scriptstyle\times$}}
\put(210,40){{$\scriptstyle\bullet$}}
\put(230,40){{$\scriptstyle\bullet$}}
\put(232,42){\line(0,-1){20}}
\put(192,22){\line(0,-1){20}}
\put(190,20){$\scriptstyle\bullet$}
\put(209,20){$\scriptstyle\times$}
\put(230,20){{$\scriptstyle\bullet$}}
\put(232,22){\line(-1,-1){20}}
\put(190,0){$\scriptstyle\bullet$}
\put(210,0){$\scriptstyle\bullet$}
\put(229,0){{$\scriptstyle\times$}}
\end{picture}
$$
\end{Lemma}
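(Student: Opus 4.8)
The plan is to argue in close parallel with the proof of Lemma~\ref{raisingcaps} (Raising caps), which is the mirror-image statement obtained by reflecting in a horizontal axis: there we looked at a cap at level $q$ and raised it through the levels $p < s < q$ lying \emph{below} it, here we look at a cup at level $p$ and lower it through the levels $p < s < q$ lying \emph{above} it. First I would establish the dichotomy between (i) and (ii). Among the indices $p < q \leq d$, either none satisfies $|i_p - i_q| = 1$, which is case (i), or some does, in which case we take $q$ to be the \emph{smallest} index with $p < q$ and $|i_p - i_q| = 1$; then automatically $|i_p - i_s| > 1$ for all $p < s < q$ (if $i_s = i_p$ for some such $s$, then since the component $C$ has a cup at level $p$ and carries the label configuration forced by $(i_p, i_p)$-admissibility, Lemma~\ref{as}(i) would force a small internal circle to be created at level $s$, contradicting the structure of a component that is still propagating a single cup upward — one checks from (\ref{CKLR}) that the vertices at level $s$ cannot be $\times,\circ$; and if $|i_s - i_p| = 1$ that contradicts minimality of $q$). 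This gives mutual exclusivity and exhaustiveness.

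Next, in case (i), since $|i_p - i_s| > 1$ for every $s$ strictly between $p$ and $d$ as well, each of the transformations $\overline\psi_p, \overline\psi_{p+1}, \dots, \overline\psi_{d-1}$ applied in that order is a \emph{height move} by the definition in (\ref{betadef2}) (recall a height move occurs exactly when $|i_r - i_{r+1}| > 1$), and Lemma~\ref{as}(v) guarantees that at each stage both orderings are proper $\Ga$-admissible sequences with the same reduction and no internal circles, so each $\overline\psi$ is a genuine isomorphism of bimodules. Visually this simply slides the cup down one level at a time until it sits at level $d$; there is nothing to compute, one only invokes Lemma~\ref{as}(v) repeatedly. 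In case (ii), the same reasoning shows $\overline\psi_p, \dots, \overline\psi_{q-2}$ are height moves lowering the cup from level $p$ to level $q-1$, and then $\overline\psi_{q-1}$, acting at levels $q-1$ and $q$ where $|i_{q-1} - i_q| = |i_p - i_q| = 1$ (after the height moves, level $q-1$ carries the original colour $i_p$), is a \emph{crossing move}. The remaining task is to identify precisely which of the crossing-move pictures from Lemma~\ref{as}(ii) can arise: the cup at level $q-1$ fixes the configuration on two adjacent strips, and the level-$q$ operation (governed by the colour $i_q = i_p \pm 1$) can be a cup, a cap, a right-shift or a left-shift on the neighbouring strip, giving exactly the four types displayed in the statement. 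I would verify this by going through the eight entries of Lemma~\ref{as}(ii) and discarding those incompatible with ``$C$ has a cup at level $q-1$''.

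The main obstacle I anticipate is purely bookkeeping: matching up the four surviving local pictures correctly, keeping straight which vertices are $\circ$, $\times$ or $\bullet$, and confirming in each case that the resulting composite matching $\bu$ is again proper (so that $\overline\psi_{q-1}$ really is a well-defined bimodule homomorphism and the subsequent analysis can continue). This is exactly the kind of ``straightforward consequence of Lemma~\ref{as}'' the text alludes to, so I would phrase the final write-up tersely, saying ``The dichotomy and the list of crossing-move types follow by inspecting the cases of Lemma~\ref{as}(ii) compatible with a cup at the relevant level; we omit the details,'' mirroring the brevity with which Lemma~\ref{raisingcaps} is presumably proved. No delicate argument is needed beyond the minimality choice of $q$ and the repeated appeal to Lemma~\ref{as}(v) for the height-move portion.
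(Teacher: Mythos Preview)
Your proposal is correct and matches the paper's approach exactly: the paper gives no proof for this lemma beyond the blanket remark (stated just before Lemma~\ref{raisingcaps}) that ``the proofs of all of these are straightforward consequences of Lemma~\ref{as}'', and your write-up is precisely such an expansion, arguing the (i)/(ii) dichotomy from admissibility constraints and then reading off the height moves from Lemma~\ref{as}(v) and the final crossing move from Lemma~\ref{as}(ii). Your parenthetical ruling out $i_s = i_p$ is slightly garbled in its appeal to Lemma~\ref{as}(i), but the correct reason you also state---that after a cup the vertices $i_p, i_p{+}1$ become $\circ,\times$ and cannot return to a configuration where $F_{i_p}$ is admissible without an intervening $F_{i_p\pm 1}$---is exactly right.
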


\begin{Lemma}[Raising shifts]\label{raisingshifts}
Let $C$ be a component of $\bt$ and $i \in I$
be minimal (resp.\ maximal) such that $C$
passes through the $i$th
(resp. $(i+1)$th)
vertex of some number line.
Suppose we are given $1 \leq q \leq d$ such that
$i_q = i$ and $C$
has a right-shift (resp.\ left-shift) at level $q$.
Exactly one of the following two statements hold:
\begin{itemize}
\item[(i)] $|i_p-i_q| > 1$ for all $1 \leq p < q$;
\item[(ii)] there exists $1 \leq p < q$ such that $i_p = i_q+1$
(resp.\ $i_p = i_q-1$)
and $|i_s-i_q| > 1$ for all $p < s < q$.
\end{itemize}
In case (i), the composition $\overline\psi_1 \circ \cdots \circ \overline\psi_{q-1}$
is a sequence of height moves raising the
right-shift (resp.\ left-shift) to level $1$.
In case (ii),
the composition $\overline\psi_{p+1}\circ\cdots\circ\overline\psi_{q-1}$
is a sequence of height moves raising the
right-shift (resp.\ left-shift) to level $p+1$.
After that, with one exception, $\overline\psi_p$
is a crossing move
of one of the following types:
$$
\begin{picture}(120,51)
\put(-82,42){\line(1,0){48}}
\put(-82,22){\line(1,0){48}}
\put(-82,2){\line(1,0){48}}
\put(-58,2){\line(-1,1){20}}
\put(-41,0){{$\scriptstyle\times$}}
\put(-60,0){{$\scriptstyle\bullet$}}
\put(-80.7,-0.7){{$\circ$}}
\put(-78,42){\line(0,-1){20}}
\put(-80,20){$\scriptstyle\bullet$}
\put(-41,20){$\scriptstyle\times$}
\put(-60.7,19.3){{$\circ$}}
\put(-80,40){$\scriptstyle\bullet$}
\put(-60,40){$\scriptstyle\bullet$}
\put(-40,40){{$\scriptstyle\bullet$}}
\put(-48,42){\oval(20,20)[b]}

\put(-23,19.5){$\stackrel{\overline\psi_{p}}{\rightsquigarrow}$}

\put(8,42){\line(1,0){48}}
\put(8,22){\line(1,0){48}}
\put(8,2){\line(1,0){48}}
\put(32,2){\line(1,1){20}}
\put(49,0){{$\scriptstyle\times$}}
\put(30,0){{$\scriptstyle\bullet$}}
\put(9.3,-0.7){{$\circ$}}
\put(52,42){\line(0,-1){20}}
\put(29,20){$\scriptstyle\times$}
\put(9.3,19.3){$\circ$}
\put(50,20){{$\scriptstyle\bullet$}}
\put(10,40){$\scriptstyle\bullet$}
\put(30,40){$\scriptstyle\bullet$}
\put(50,40){{$\scriptstyle\bullet$}}
\put(22,42){\oval(20,20)[b]}

\put(98,42){\line(1,0){48}}
\put(98,22){\line(1,0){48}}
\put(98,2){\line(1,0){48}}
\put(122,2){\line(-1,1){20}}
\put(139.3,39.3){{$\circ$}}
\put(120,40){{$\scriptstyle\bullet$}}
\put(100,40){{$\scriptstyle\bullet$}}
\put(142,22){\line(0,-1){20}}
\put(102,42){\line(0,-1){20}}
\put(100,20){$\scriptstyle\bullet$}
\put(119.3,19.3){$\circ$}
\put(140,20){{$\scriptstyle\bullet$}}
\put(142,22){\line(-1,1){20}}
\put(140,0){$\scriptstyle\bullet$}
\put(120,0){$\scriptstyle\bullet$}
\put(99.3,-0.7){{$\circ$}}

\put(157,19.5){$\stackrel{\overline\psi_{p}}{\rightsquigarrow}$}

\put(188,42){\line(1,0){48}}
\put(188,22){\line(1,0){48}}
\put(188,2){\line(1,0){48}}
\put(202,40){\oval(20,20)[b]}
\put(210,40){{$\scriptstyle\bullet$}}
\put(190,40){{$\scriptstyle\bullet$}}
\put(209,20){$\scriptstyle\times$}
\put(189.3,19.3){$\circ$}
\put(229.3,19.3){$\circ$}
\put(189.3,-0.7){$\circ$}
\put(229.3,39.3){$\circ$}
\put(230,0){$\scriptstyle\bullet$}
\put(210,0){$\scriptstyle\bullet$}
\put(222,2){\oval(20,20)[t]}
\end{picture}
$$
$$
\begin{picture}(120,51)
\put(-82,42){\line(1,0){48}}
\put(-82,22){\line(1,0){48}}
\put(-82,2){\line(1,0){48}}
\put(-58,2){\line(1,1){20}}
\put(-41,0){{$\scriptstyle\times$}}
\put(-60,0){{$\scriptstyle\bullet$}}
\put(-80.7,-0.7){{$\circ$}}
\put(-38,42){\line(0,-1){20}}
\put(-40,20){$\scriptstyle\bullet$}
\put(-61,20){$\scriptstyle\times$}
\put(-80.7,19.3){{$\circ$}}
\put(-40,40){$\scriptstyle\bullet$}
\put(-60,40){$\scriptstyle\bullet$}
\put(-80,40){{$\scriptstyle\bullet$}}
\put(-68,42){\oval(20,20)[b]}

\put(-23,19.5){$\stackrel{\overline\psi_{p}}{\rightsquigarrow}$}

\put(8,42){\line(1,0){48}}
\put(8,22){\line(1,0){48}}
\put(8,2){\line(1,0){48}}
\put(32,2){\line(-1,1){20}}
\put(49,0){{$\scriptstyle\times$}}
\put(30,0){{$\scriptstyle\bullet$}}
\put(9.3,-0.7){{$\circ$}}
\put(12,42){\line(0,-1){20}}
\put(49,20){$\scriptstyle\times$}
\put(29.3,19.3){$\circ$}
\put(10,20){{$\scriptstyle\bullet$}}
\put(10,40){$\scriptstyle\bullet$}
\put(30,40){$\scriptstyle\bullet$}
\put(50,40){{$\scriptstyle\bullet$}}
\put(42,42){\oval(20,20)[b]}

\put(98,42){\line(1,0){48}}
\put(98,22){\line(1,0){48}}
\put(98,2){\line(1,0){48}}
\put(122,2){\line(1,1){20}}
\put(99,40){{$\scriptstyle\times$}}
\put(120,40){{$\scriptstyle\bullet$}}
\put(140,40){{$\scriptstyle\bullet$}}
\put(102,22){\line(0,-1){20}}
\put(142,42){\line(0,-1){20}}
\put(140,20){$\scriptstyle\bullet$}
\put(119,20){$\scriptstyle\times$}
\put(100,20){{$\scriptstyle\bullet$}}
\put(102,22){\line(1,1){20}}
\put(100,0){$\scriptstyle\bullet$}
\put(120,0){$\scriptstyle\bullet$}
\put(139,0){{$\scriptstyle\times$}}

\put(157,19.5){$\stackrel{\overline\psi_{p}}{\rightsquigarrow}$}

\put(188,42){\line(1,0){48}}
\put(188,22){\line(1,0){48}}
\put(188,2){\line(1,0){48}}
\put(222,40){\oval(20,20)[b]}
\put(210,40){{$\scriptstyle\bullet$}}
\put(230,40){{$\scriptstyle\bullet$}}
\put(189,20){$\scriptstyle\times$}
\put(229,20){$\scriptstyle\times$}
\put(209.3,19){$\circ$}
\put(229,0){$\scriptstyle\times$}
\put(189,40){$\scriptstyle\times$}
\put(190,0){$\scriptstyle\bullet$}
\put(210,0){$\scriptstyle\bullet$}
\put(202,2){\oval(20,20)[t]}
\end{picture}
$$
The exception is if
$\Ga_{q}$
is a block of defect zero and
there is a right-shift (resp.\ left-shift)
at level $p$; this can happen only if the components of $\bt$
that are non-trivial at levels $p$ and $q$ are two
different propagating lines.
\end{Lemma}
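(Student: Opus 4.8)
The plan is to argue purely combinatorially, exactly as in the proofs of Lemmas~\ref{raisingcaps} and~\ref{loweringcups}: the global bimodule homomorphisms in question are built from the local circle, crossing and height moves of (\ref{expy})--(\ref{betadef2}), and essentially everything reduces to the case analysis of Lemma~\ref{as}. I would keep Figure~\ref{fig1} at hand as a running example (raising the left-shift at level $14$).

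\emph{Step 1: the dichotomy.} This is a statement about the tuple $(i_1,\dots,i_q)$ and the block sequence $\Ga_0,\dots,\Ga_q$ it determines. If $|i_p-i_q|>1$ for all $1\le p<q$ we are in case~(i); otherwise set $p$ to be the largest index with $1\le p<q$ and $|i_p-i_q|\le 1$, so that $|i_s-i_q|>1$ for all $p<s<q$ automatically. One must then check $i_p=i_q\pm 1$ with the correct sign. First, $i_p=i_q$ is impossible: by maximality of $p$ no level strictly between $p$ and $q$ touches vertices $i-1$ or $i+1$ (writing $i:=i_q$), so the strip of the block diagram around $i$ is unchanged between those levels, and a short inspection of (\ref{CKLR}) shows this is incompatible with having a shift at vertices $i,i+1$ at both level $p$ and level $q$. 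Second, the ``wrong'' sign is ruled out by the minimality (resp.\ maximality) hypothesis on $i$: were $i_p=i_q-1$ in the right-shift case, the level-$p$ operation would act at vertices $i-1,i$, and since $C$ occupies vertex $i$ throughout the strip $p<s<q$, the local pictures of (\ref{CKLR}) force $C$ to pass through vertex $i-1$ at or below level $p$, contradicting minimality. This gives ``exactly one of (i),(ii)'' with the signs as stated.

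\emph{Step 2: the moves.} In either case, each $\overline\psi_s$ occurring is, at the stage it is applied, a swap of two levels whose indices are more than $1$ apart from one another, hence a height move; by Lemma~\ref{as}(v) each such move is a degree-zero isomorphism of the associated bimodules that simply transposes the two levels. Composing these in the indicated order raises the shift to level $1$ in case~(i) and to level $p+1$ in case~(ii). In case~(ii) it remains to identify $\overline\psi_p$; since $|i_p-i_q|=1$ this falls into one of the crossing-move situations of Lemma~\ref{as}(ii) (both orders admissible) or (iii)/(iv) (only one admissible). Reading off the admissible label configurations for the relevant block, constrained by the component structure of $\bt$ and by the sign of $i_p-i_q$ fixed in Step~1, leaves exactly the four displayed crossing moves. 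The only way this can fail is when we land in the defect-zero sub-case of Lemma~\ref{as}(iii)(b)/(iv)(b), where $(i_q,i_p)$ is proper but $(i_p,i_q)$ is not admissible: then the raised matching admits no further swap, so $\overline\psi_p$ is no crossing move. One checks that this happens precisely when $\Ga_q$ has defect zero and level $p$ carries a shift of the same handedness as level $q$, forcing the two interacting components to be distinct propagating lines --- the exceptional case named in the statement.

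\emph{Main obstacle.} The genuinely delicate part is the bookkeeping in Step~1, together with the matching observation that isolates the exception: one has to track, level by level for $p<s<q$, which of vertices $i-1,i,i+1,i+2$ the component $C$ occupies, and use the four pictures of (\ref{CKLR}) to see that a wrong-sign shift at level $p$ either drives $C$ to a forbidden vertex or violates admissibility, whereas a right-sign shift at level $p$ is harmless unless $\defect(\Ga_q)=0$. This is finite and mechanical, but it is where the content of the lemma really lies; once it is in place, the rest is immediate from Lemma~\ref{as}.
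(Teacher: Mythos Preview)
Your proposal is correct and follows exactly the approach the paper intends: the paper simply states that the proofs of Lemmas~\ref{raisingcaps}--\ref{smallcircles} ``are straightforward consequences of Lemma~\ref{as}'' and gives no further argument, so your elaboration via the dichotomy on $p$ and the case analysis from Lemma~\ref{as}(ii)--(vi) is precisely the intended proof spelled out in detail. The only point worth flagging is that your identification of the exceptional case pins the defect-zero condition to the block $\Ga_{p-1}$ (via Lemma~\ref{as}(iv)(b)), whereas the lemma statement phrases it in terms of $\Ga_q$; reconciling these needs a word about how the intervening levels affect the defect, but this is a bookkeeping detail rather than a gap in strategy.
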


\begin{Lemma}[Lowering shifts]\label{loweringshifts}
Let $C$ be a component of $\bt$ and $i \in I$
be minimal (resp.\ maximal) such that $C$ passes through the $i$th
(resp. $(i+1)$th)
vertex of some number line.
Suppose we are given $1 \leq p \leq d$ such that $i_p = i$ and $C$
has a left-shift (resp.\ right-shift) at level $p$.
Exactly one of the following two statements hold:
\begin{itemize}
\item[(i)] $|i_p-i_q| > 1$ for all $p < q \leq d$;
\item[(ii)] there exists $p < q \leq d$ such that $i_q = i_p+1$
(resp.\ $i_q = i_p-1$)
and $|i_p-i_s| > 1$ for all $p < s < q$.
\end{itemize}
In case (i), the composition $\overline\psi_{d-1}\circ\cdots\circ\overline\psi_p$
is a sequence of height moves lowering the
left-shift (resp.\ right-shift) to level $d$.
In case (ii),
the composition $\overline\psi_{q-2}\circ\cdots\circ\overline\psi_{p}$
is a sequence of height moves lowering the
left-shift (resp.\ right-shift) to level $q-1$.
After that, with one exception, $\overline\psi_{q-1}$
is a crossing move
of one of the following types:
$$
\begin{picture}(120,51)
\put(-82,42){\line(1,0){48}}
\put(-82,22){\line(1,0){48}}
\put(-82,2){\line(1,0){48}}
\put(-58,42){\line(-1,-1){20}}
\put(-81,40){{$\scriptstyle\times$}}
\put(-60,40){{$\scriptstyle\bullet$}}
\put(-40.7,39.3){{$\circ$}}
\put(-78,22){\line(0,-1){20}}
\put(-80,20){$\scriptstyle\bullet$}
\put(-61,20){$\scriptstyle\times$}
\put(-40.7,19.3){{$\circ$}}
\put(-80,0){$\scriptstyle\bullet$}
\put(-60,0){$\scriptstyle\bullet$}
\put(-40,0){{$\scriptstyle\bullet$}}
\put(-48,2){\oval(20,20)[t]}

\put(-23,19.5){$\stackrel{\overline\psi_{q-1}}{\rightsquigarrow}$}

\put(8,42){\line(1,0){48}}
\put(8,22){\line(1,0){48}}
\put(8,2){\line(1,0){48}}
\put(32,42){\line(1,-1){20}}
\put(9,40){{$\scriptstyle\times$}}
\put(30,40){{$\scriptstyle\bullet$}}
\put(49.3,39.3){{$\circ$}}
\put(52,22){\line(0,-1){20}}
\put(9,20){$\scriptstyle\times$}
\put(29.3,19.3){$\circ$}
\put(50,20){{$\scriptstyle\bullet$}}
\put(10,0){$\scriptstyle\bullet$}
\put(30,0){$\scriptstyle\bullet$}
\put(50,0){{$\scriptstyle\bullet$}}
\put(22,2){\oval(20,20)[t]}

\put(98,42){\line(1,0){48}}
\put(98,22){\line(1,0){48}}
\put(98,2){\line(1,0){48}}
\put(122,42){\line(-1,-1){20}}
\put(99,40){{$\scriptstyle\times$}}
\put(120,40){{$\scriptstyle\bullet$}}
\put(140,40){{$\scriptstyle\bullet$}}
\put(142,42){\line(0,-1){20}}
\put(102,22){\line(0,-1){20}}
\put(100,20){$\scriptstyle\bullet$}
\put(119,20){$\scriptstyle\times$}
\put(140,20){{$\scriptstyle\bullet$}}
\put(142,22){\line(-1,-1){20}}
\put(100,0){$\scriptstyle\bullet$}
\put(120,0){$\scriptstyle\bullet$}
\put(139,0){{$\scriptstyle\times$}}

\put(157,19.5){$\stackrel{\overline\psi_{q-1}}{\rightsquigarrow}$}

\put(188,42){\line(1,0){48}}
\put(188,22){\line(1,0){48}}
\put(188,2){\line(1,0){48}}
\put(222,40){\oval(20,20)[b]}
\put(189,40){{$\scriptstyle\times$}}
\put(210,40){{$\scriptstyle\bullet$}}
\put(230,40){{$\scriptstyle\bullet$}}
\put(189,20){$\scriptstyle\times$}
\put(209.3,19.3){$\circ$}
\put(229,20){{$\scriptstyle\times$}}
\put(190,0){$\scriptstyle\bullet$}
\put(210,0){$\scriptstyle\bullet$}
\put(229,0){{$\scriptstyle\times$}}
\put(202,2){\oval(20,20)[t]}
\end{picture}
$$
$$
\begin{picture}(120,51)
\put(-82,42){\line(1,0){48}}
\put(-82,22){\line(1,0){48}}
\put(-82,2){\line(1,0){48}}
\put(-58,42){\line(1,-1){20}}
\put(-81,40){{$\scriptstyle\times$}}
\put(-60,40){{$\scriptstyle\bullet$}}
\put(-40.7,39.3){{$\circ$}}
\put(-38,22){\line(0,-1){20}}
\put(-40,20){$\scriptstyle\bullet$}
\put(-81,20){$\scriptstyle\times$}
\put(-60.7,19.3){{$\circ$}}
\put(-40,0){$\scriptstyle\bullet$}
\put(-60,0){$\scriptstyle\bullet$}
\put(-80,0){{$\scriptstyle\bullet$}}
\put(-68,2){\oval(20,20)[t]}

\put(-23,19.5){$\stackrel{\overline\psi_{q-1}}{\rightsquigarrow}$}

\put(8,42){\line(1,0){48}}
\put(8,22){\line(1,0){48}}
\put(8,2){\line(1,0){48}}
\put(32,42){\line(-1,-1){20}}
\put(9,40){{$\scriptstyle\times$}}
\put(30,40){{$\scriptstyle\bullet$}}
\put(49.3,39.3){{$\circ$}}
\put(12,22){\line(0,-1){20}}
\put(29,20){$\scriptstyle\times$}
\put(49.3,19.3){$\circ$}
\put(10,20){{$\scriptstyle\bullet$}}
\put(50,0){$\scriptstyle\bullet$}
\put(30,0){$\scriptstyle\bullet$}
\put(10,0){{$\scriptstyle\bullet$}}
\put(42,2){\oval(20,20)[t]}

\put(98,42){\line(1,0){48}}
\put(98,22){\line(1,0){48}}
\put(98,2){\line(1,0){48}}
\put(122,42){\line(1,-1){20}}
\put(139.3,39.3){{$\circ$}}
\put(120,40){{$\scriptstyle\bullet$}}
\put(100,40){{$\scriptstyle\bullet$}}
\put(102,42){\line(0,-1){20}}
\put(142,22){\line(0,-1){20}}
\put(140,20){$\scriptstyle\bullet$}
\put(119.3,19.3){$\circ$}
\put(100,20){{$\scriptstyle\bullet$}}
\put(102,22){\line(1,-1){20}}
\put(140,0){$\scriptstyle\bullet$}
\put(120,0){$\scriptstyle\bullet$}
\put(99.3,-0.7){{$\circ$}}

\put(157,19.5){$\stackrel{\overline\psi_{q-1}}{\rightsquigarrow}$}

\put(188,42){\line(1,0){48}}
\put(188,22){\line(1,0){48}}
\put(188,2){\line(1,0){48}}
\put(202,40){\oval(20,20)[b]}
\put(210,40){{$\scriptstyle\bullet$}}
\put(190,40){{$\scriptstyle\bullet$}}
\put(209,20){$\scriptstyle\times$}
\put(189.3,-0.7){$\circ$}
\put(189.3,19.3){$\circ$}
\put(229.3,19.3){$\circ$}
\put(229.3,39.3){$\circ$}
\put(230,0){$\scriptstyle\bullet$}
\put(210,0){$\scriptstyle\bullet$}
\put(222,2){\oval(20,20)[t]}
\end{picture}
$$
The exception is if
$\Ga_{p-1}$ is a block of defect zero
and there is a left-shift (resp.\ right-shift)
at level $q$; this can happen only if the components of $\bt$
that are non-trivial at levels $p$ and $q$ are
two different propagating lines.
\end{Lemma}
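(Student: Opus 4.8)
The plan is to treat this lemma, exactly like the three preceding ones, as pure combinatorics of the composite matching $\bt$ together with the local moves recorded in (\ref{expy}) and (\ref{betadef2}), so that the whole argument is bookkeeping with Lemma~\ref{as}. I would only write out the left-shift case and obtain the right-shift case by reflecting every diagram in a vertical axis, which interchanges left- and right-shifts and turns ``$i$ minimal with $C$ meeting vertex $i$'' into ``$i$ maximal with $C$ meeting vertex $i+1$''. The dichotomy itself is a formality: either $|i_p-i_q|>1$ for every $q$ with $p<q\le d$, or there is a least such $q$ with $|i_p-i_q|\le 1$, and then automatically $|i_p-i_s|>1$ for $p<s<q$; the real content is to pin down that $i_q=i_p+1$ in the second case and to verify that the advertised moves behave as stated.

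First I would run the commuting phase. Starting at level $p$ I apply $\overline\psi_p,\overline\psi_{p+1},\dots$ in turn, each at the two levels of the matching produced so far on which the shift currently sits together with the operation just below it. The point is that this operation has index $i_r$ with $|i-i_r|>1$ all the way down to level $d$ (case (i)) or to level $q-1$ (case (ii)), and that commuting the shift past such an operation never involves $\delt_i$ or $\delt_{i+1}$, so the block immediately above the shift keeps its labels $(\times,\bullet)$ at vertices $i,i+1$. Hence the non-symmetric exception of Lemma~\ref{as}(vi)—which would require one of vertices $i,i+1$ of the block above, or the shift-index vertex, to carry a label incompatible with a left-shift—never arises; since the sequence stays admissible, Lemma~\ref{as}(v) then gives that each $\overline\psi_r$ is the height-move isomorphism between composite matchings with equal reductions and no internal circles, transposing the shift with the level below it. Iterating yields the two asserted sequences of height moves (the empty composition when $q=p+1$, in which case the shift stays at level $p=q-1$).

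Next, in case (ii), with the shift lowered to level $q-1$, I let $\Ga'$ be the block immediately below it, so $\Ga'$ carries $(\bullet,\times)$ at vertices $i,i+1$ and $C$ passes through vertex $i$ of $\Ga'$. The operation at level $q$ is $t_{i_q}(\Ga')$ with $|i-i_q|\le 1$. I would exclude $i_q=i$ at once, since $(\bullet,\times)$ is not among the four local patterns of (\ref{CKLR}), and I would exclude $i_q=i-1$ by minimality of $i$: because vertex $i$ of $\Ga'$ is $\bullet$, (\ref{CKLR}) forces $t_{i-1}(\Ga')$ to be a cup or a left-shift on vertices $i-1,i$, and in the cup case the new cup joins vertex $i-1$ of $\Ga'$ to the component through vertex $i$ of $\Ga'$, namely $C$, while in the left-shift case the new strand joins vertex $i-1$ of the block below level $q$ to vertex $i$ of $\Ga'$, again on $C$; either way $C$ would meet a vertex $<i$, a contradiction. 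Hence $i_q=i+1$, which also completes the dichotomy. It then remains to identify $\overline\psi_{q-1}$ as one of the four pictured crossing moves by running through the eight possibilities of Lemma~\ref{as}(ii) under the constraints that level $q-1$ carries a left-shift on $C$, that $i_q=i+1$, and that $i$ is left-extremal for $C$. This finite case check, together with isolating the exceptional configuration below, is where I expect the bulk of the (routine) labour to lie.

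Finally, $\overline\psi_{q-1}$ can fail to be one of those crossing moves only when transposing levels $q-1,q$ yields a non-admissible sequence, i.e.\ when one is in case (iii) or (iv) of Lemma~\ref{as} instead of case (ii). Matching those two lists against the standing constraint that level $q-1$ is a left-shift on $C$, the only survivor is case (iii)(b): the block just above level $q-1$ has defect zero and the operation at level $q$ is again a left-shift; tracing that block back through the distant commutes identifies it with $\Ga_{p-1}$. The picture for Lemma~\ref{as}(iii)(b) then shows outright that the components non-trivial at levels $p$ and $q$ are neither cups, nor caps, nor equal, hence are two distinct propagating lines. The only genuine obstacle throughout is keeping the orientation conventions of (\ref{CKLR}) and the admissibility bookkeeping straight—once that is done every step is forced by Lemma~\ref{as}, and Figure~\ref{fig1} is a useful guide (for instance for lowering the left-shift at level $14$ there).
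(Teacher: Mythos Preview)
Your approach is exactly the paper's: it says only that this and the three neighbouring lemmas are ``straightforward consequences of Lemma~\ref{as}'', and you have supplied those details correctly. The reduction to the left-shift case by reflection, the commuting phase via Lemma~\ref{as}(v) (with your check that Lemma~\ref{as}(vi) cannot intervene because vertices $i,i+1$ of the block above the shift remain $\times,\bullet$), the exclusions $i_q\neq i$ and $i_q\neq i-1$ via admissibility and the minimality of $i$, and the final case-check against the eight pictures of Lemma~\ref{as}(ii) are all sound.

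There is one slip in your handling of the exception. After the height moves, the block immediately above the lowered shift is not $\Gamma_{p-1}$ but $\Delta:=\Gamma_{p-1}-\alpha_{i_{p+1}}-\cdots-\alpha_{i_{q-1}}$, and since the intermediate levels $p+1,\dots,q-1$ are only constrained to have indices distant from $i$ (not to be shifts), they may contain cups or caps, so $\defect(\Delta)$ and $\defect(\Gamma_{p-1})$ can genuinely differ. The defect-zero condition that Lemma~\ref{as}(iii)(b) actually yields is $\defect(\Delta)=0$, equivalently $\defect(\Gamma_{q-1})=0$ (since $\Delta-\alpha_i=\Gamma_{q-1}$ via a left-shift). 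For instance, with $I^+=\{1,\dots,5\}$, $m=2$, $n=4$, $\Gamma_0$ labelled $\times\,\bullet\,\bullet\,\bullet\,\bullet$ and $\bi=(1,4,2)$, one has $\defect(\Gamma_0)=1$ but $\defect(\Gamma_2)=0$, and the exception occurs. This does not affect your correct identification of the exceptional configuration as ``level $q$ is a left-shift on a second propagating line''; it only changes which block's defect is named, and in fact suggests the lemma's own exception clause carries a misprint in the subscript.
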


\begin{Lemma}[Small circles]\label{smallcircles}
Let $C$ be a component of $\bt$ and $i \in I$
be minimal (resp.\ maximal) such that $C$ passes through the $i$th
(resp. $(i+1)$th)
vertex of some number line.
Suppose we are given $1 \leq p < q \leq d$ such that
$i_p = i_q = i$,
$C$ has a cap at level $p$ and
a cup at level $q$, and
$i_s \neq i$ for $p < s < q$.
Then
$\overline\psi_{p+1} \circ \cdots \circ \overline\psi_{q-1}$
is a sequence of height moves
transforming $C$ into a small circle at levels $p$ and $p+1$.
\end{Lemma}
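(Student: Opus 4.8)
The plan is to proceed exactly as in the four preceding lemmas (Raising caps, Lowering cups, Raising/Lowering shifts), reducing everything to a routine verification using Lemma~\ref{as}. First I would set up the hypotheses: $C$ is a component of the composite matching $\bt$ associated to the proper $\Ga$-admissible sequence $\bi = (i_1,\dots,i_d)$, with a cap on $C$ at level $p$, a cup on $C$ at level $q$, $i_p = i_q = i$ where $i$ is minimal (resp. maximal) such that $C$ passes through the $i$th (resp. $(i+1)$th) vertex, and $i_s \neq i$ for all $p < s < q$. The goal is to check that applying the sequence of composite bimodule homomorphisms $\overline\psi_{p+1}, \dots, \overline\psi_{q-1}$ (as defined in~(\ref{betadef2})) turns $C$ into a small circle occupying levels $p$ and $p+1$, i.e. each of these is a legitimate height move, and the net effect slides the cup down from level $q$ until it sits immediately above the cap.

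The key step is to show that every intermediate move $\overline\psi_s$ for $p < s < q$ (applied after the earlier ones have already been performed) is in fact a \emph{height} move and not a crossing move or negative circle move. Since $i_s \neq i$ for $p < s < q$, and since $i$ is extremal for the component $C$ in the relevant direction, the label configurations at vertices $i$ and $i+1$ forced by Lemma~\ref{as} for the pair $(i_s, i_{s+1})$ (or rather, the two levels currently sitting at positions $s$ and $s+1$ after the preceding slides) never produce the $|i_s - i_{s+1}| \le 1$ situation involving the special vertex; hence by the trichotomy in the definition after~(\ref{betadef}) these are all $|i_s - i_{s+1}| > 1$ moves, which are precisely the height moves. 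More precisely, I would argue by the same mechanism as in Lemma~\ref{loweringcups}: as the cup at level $q$ gets passed downward past levels $q-1, q-2, \dots, p+1$, at each stage the two adjacent levels being swapped involve the index $i$ at one of them and some index $i_s$ with $|i - i_s| > 1$ at the other (using that $i$ is extremal for $C$ so that no smaller/larger index occurs on $C$, and using the hypothesis $i_s \neq i$). The case analysis here is entirely parallel to the one already carried out for Lemma~\ref{raisingshifts} and Lemma~\ref{loweringshifts}, and the absence of an exceptional case is exactly because both the cap at level $p$ and the cup at level $q$ lie on the \emph{same} component $C$, so no two-distinct-propagating-lines degeneracy can arise.

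The main obstacle I anticipate is bookkeeping: one must track carefully which levels occupy which positions after each successive height move, and verify that the index $i$ ``travels with'' the cup as it descends so that by the time it reaches level $p+1$ it is indeed adjacent to the cap at level $p$, producing a small circle (a cap at level $p$ together with a cup at level $p+1$ on the same component, with nothing between them at vertices $i, i+1$). This is purely a matter of chasing the combinatorics of Lemma~\ref{as}(v),(vi) and is handled by the same considerations as in Figure~\ref{fig1}; once one sees that all the intervening moves are height moves (which commute appropriately and merely permute the order of levels without changing the underlying matchings' reductions or creating circles), the conclusion that $C$ becomes a small circle at levels $p, p+1$ is immediate. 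I would therefore phrase the proof as: ``This follows from Lemma~\ref{as} by the same argument as Lemmas~\ref{raisingcaps}--\ref{loweringshifts}; we omit the details,'' perhaps adding one sentence noting that since the cap and cup lie on the same component $C$ there is no exceptional case of the kind appearing in Lemmas~\ref{raisingshifts} and~\ref{loweringshifts}.
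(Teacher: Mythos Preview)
Your overall strategy matches the paper's (which simply says ``straightforward consequence of Lemma~\ref{as}''), but your justification of the key claim that $|i-i_s|>1$ for all $p<s<q$ has a real gap. You invoke extremality: in case (a) (where $i$ is minimal for $C$), the fact that $C$ never touches vertex $i-1$ does rule out $i_s=i-1$, since the left strand of $C$ sits at vertex $i$ until the first such $s$ and would then be forced through vertex $i-1$. But extremality says nothing about the right side in case (a): it does not exclude $i_s=i+1$, and a crossing move there would spoil the argument. (Symmetrically, in case (b) extremality handles $i_s=i+1$ but not $i_s=i-1$.) Your phrase ``no smaller/larger index occurs on $C$'' conflates the two resp.\ cases; in any single case only one direction is covered.

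The missing observation is a label-tracking argument that uses only the cap at $p$, the cup at $q$, and $i_s\neq i$. The labels at vertices $i$ and $i+1$ are both $\bullet$ in $\Ga_p$ (bottom of the cap) and in $\Ga_{q-1}$ (top of the cup). For $p<s<q$ the label at vertex $i+1$ can change only when $i_s=i+1$ (since $i_s\neq i$); by the table~(\ref{CKLR}) a left vertex labelled $\bullet$ is sent to $\circ$, and $\circ$ is never a valid left-vertex top label, so it can never return to $\bullet$---contradicting the cup at $q$. The symmetric argument at vertex $i$ rules out $i_s=i-1$. Once $|i-i_s|>1$ is established, properness of $\bi$ excludes the improper configuration of Lemma~\ref{as}(vi), so each $\overline\psi_r$ is a genuine height move and the cup descends to level $p+1$, yielding the small circle. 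Note this argument makes the extremality hypothesis redundant for the proof itself, which also explains why no exceptional case of the kind in Lemmas~\ref{raisingshifts}--\ref{loweringshifts} arises here.
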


\phantomsubsection{Negative circle moves}
Suppose again
that we are given a block $\Ga$ and a proper $\Ga$-admissible
sequence $\bi = (i_1,\dots,i_d)$, with associated block
sequence $\bGa$ and composite matching $\bt$.
Let $C$ be an internal circle in $\bt$.
Define
\begin{equation}
z_C: K^\bt_{\bLa}\langle-\caps(\bt)\rangle
\rightarrow
K^{\bt}_{\bLa}\langle-\caps(\bt)\rangle
\end{equation}
to be the endomorphism mapping
$(a\:\bt[\bga]\:b)$ to $(a\:\bt[\bga']\:b)$ if $C$ is clockwise
in $\bt[\bga]$
or to zero otherwise;
here, $\bga'$
is the weight sequence obtained from $\bga$ on
re-orienting $C$ so it is anti-clockwise.
It is obvious that this is a well-defined bimodule endomorphism.
We call it a {\em negative circle move} (``negative'' because it is of
 degree $-2$).
Note if $C$ is a small circle that is non-trivial at levels $r$ and $r+1$
then $z_C$ coincides with the local
negative circle move $\overline\psi_r$
introduced already in (\ref{betadef2}).

\begin{Lemma}\label{nonest}
Suppose that $C$ is an internal circle of $\bt$ containing no other
nested circles.
Then the negative
circle move $z_C$ can be expressed as
a composition of local moves of the form
$\overline\psi_r$ for various $r$.
\end{Lemma}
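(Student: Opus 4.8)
The plan is to reduce the global negative circle move $z_C$ to a composition of local ones by first transporting the circle $C$ to a convenient ``small'' position using the height and crossing moves from the raising/lowering lemmas, then applying the single local move $\overline\psi_r$ there, and finally transporting everything back. More precisely, since $C$ contains no nested circles, $C$ consists of a single cap at some level $p$ and a single cup at some level $q$ (with $p<q$ or $q<p$), connected by height-changing line segments. First I would use Lemma~\ref{raisingcaps} to raise the cap of $C$ as high as possible, and Lemma~\ref{loweringcups} to lower its cup as low as possible; by iterating, and using the fact that $C$ has no circles nested inside it (so all the intermediate height moves along $C$ are unobstructed and the crossing moves that arise on $C$ do not interact with an inner circle), one arranges for the cap and cup of $C$ to be at two consecutive levels $r,r+1$, so that $C$ has become a small circle there. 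By the remark just before the lemma, the local move $\overline\psi_r$ at those two levels then acts exactly as the negative circle move on $C$, while acting as the identity on all orientations not involving $C$.

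The second step is to check that the auxiliary ``transport'' moves are genuinely invertible (as bimodule homomorphisms), so that conjugating $\overline\psi_r$ by them recovers $z_C$. Height moves are isomorphisms by construction (they are built from two applications of~(\ref{moso})), so a sequence of height moves is invertible. The crossing moves that occur in Lemmas~\ref{raisingcaps}--\ref{loweringcups} when $|i_p - i_q|=1$ are of degree $1$; these are not individually invertible, but one sees from Theorem~\ref{localrels}(v)(c),(d) and (vi)(c),(d) — the relations among triples of $\overline\psi$'s — that the relevant compositions used to slide a cap or cup past an adjacent-colour level, then slide it back, compose to a scalar multiple of the identity on the components in question, with an invertible scalar $\pm(i-j)$. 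Thus each elementary transport step has a two-sided inverse expressible again as a composition of local $\overline\psi_r$'s, possibly rescaled. Composing, we get $z_C = (\text{inverse transport}) \circ \overline\psi_r \circ (\text{transport})$, all factors being compositions of local moves, which is exactly the assertion.

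The main obstacle I expect is the bookkeeping in the ``slide back'' step: when a crossing move (rather than a pure height move) is needed to bring the cap and cup of $C$ together, the naive composition ``slide in, $\overline\psi_r$, slide out'' need not literally equal $z_C$ on the nose, because the crossing move changes which diagrams are nonzero and introduces the scalars from Theorem~\ref{localrels}(v),(vi). One has to choose the path of height/crossing moves carefully — for instance, always raising the cap and lowering the cup so that they meet in the unobstructed region, and invoking the relation $\overline\psi_{r}\overline\psi_{r+1}\overline\psi_{r}=\pm\,e$ (from the proof of Theorem~\ref{localrels}(v)(c)) to undo a crossing — and then verify case by case, using the explicit $R$-module formulae for $\overline\psi,\overline y_1,\overline y_2$ recorded in the proof of Theorem~\ref{localrels}, that the composite is $z_C$ up to a nonzero scalar that can be absorbed. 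Since the statement only asserts expressibility as \emph{a} composition of local moves (not a canonical one), absorbing such scalars is harmless. The hypothesis that $C$ has no nested circles is exactly what guarantees that at each stage there is an admissible path of local moves realising the raising/lowering, with no inner circle blocking a height move; without it one would have to move the inner circles first, which is precisely why the general case is deferred to a later lemma.
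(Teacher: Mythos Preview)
There is a genuine gap: your opening claim that an internal circle $C$ with no nested circles ``consists of a single cap at some level $p$ and a single cup at some level $q$'' is false. The absence of nested circles does not force $C$ to have only one local maximum and one local minimum. For instance, on columns $1,2,3,4$ starting from the block $\times\circ\times\circ$, the admissible sequence (cap at $1{,}2$; cap at $3{,}4$; cup at $2{,}3$; right-shift at $1{,}2$; left-shift at $3{,}4$; cup at $2{,}3$) produces a single internal circle of height~$6$ having two caps, two cups and two shifts, with nothing nested inside it. Your transport strategy, which aims to bring ``the'' cap and ``the'' cup together, does not even get started on such a $C$. Moreover, your appeal to Theorem~\ref{localrels}(v)(c),(d) and (vi)(c),(d) to ``undo'' a crossing move is misplaced: those are braid-type relations for specific triples $(i,j,i)$, not inverses to a single crossing move, which genuinely alters the component structure of the diagram rather than sliding a cap past a level.

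The paper argues instead by induction on the height of $C$. In the base case (height~$2$) the circle really does have exactly one cap and one cup, necessarily at the same column $i_p=i_q$, and Lemma~\ref{smallcircles} shows that pure height moves (degree-zero isomorphisms) collapse $C$ to a small circle at consecutive levels; then
\[
z_C=\overline\psi_{q-1}\circ\cdots\circ\overline\psi_{p+1}\circ\overline\psi_p\circ\overline\psi_{p+1}\circ\cdots\circ\overline\psi_{q-1},
\]
with no crossing moves and hence no invertibility issues whatsoever. For height~$>2$ the paper does \emph{not} attempt to shrink $C$. It locates a left- or right-shift on $C$ at the leftmost column and uses a sequence $\delta$ of height moves plus one crossing move (via Lemma~\ref{loweringshifts} or Lemma~\ref{raisingshifts}) to \emph{cut} $C$ into two strictly smaller circles $C'$ and $C''$, neither containing nested circles. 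With $z_{C'},z_{C''}$ available by induction and $m$ the reverse sequence rejoining the pieces, a direct Frobenius-algebra computation gives $z_C=m\circ z_{C'}\circ z_{C''}\circ\delta$ on the nose; no inversion of a crossing move is required.
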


\begin{proof}
We proceed by induction on the height of $C$.
In the base case, $C$ has height 2.
Suppose it has a cap at level $p$ and a cup at level $q$.
By Lemma~\ref{smallcircles},
$\overline\psi_{p+1} \circ \cdots \circ \overline\psi_{q-1}$
is a sequence of
height moves transforming $C$ into a small circle at levels $p$ and $p+1$.
After that $\overline\psi_p$ is an negative circle move.
Then $\overline\psi_{q-1} \circ \cdots \circ \overline\psi_{p+1}$ is another
sequence of height moves stretching the small circle back
to $C$. Putting it together,  we have that
$$
z_C =
\overline\psi_{q-1} \circ \cdots \circ \overline\psi_{p+1} \circ \overline\psi_p
\circ \overline\psi_{p+1} \circ \cdots \circ \overline\psi_{q-1},
$$
hence the base of the induction is checked.

Now suppose for the induction step that $C$ has height greater than 2.
Let $i \in I$ be minimal such that $C$ passes through
the $i$th vertex of some number line.
Let $1 \leq p < d$ be minimal such that $C$ is non-trivial at level $p$
and $i_p = i$.
Let $p < s \leq d$ be minimal such that $C$ is non-trivial at level $s$
and $i_s = i$.
Obviously $C$ must either have a cap or a left-shift at level $p$
and either a cup or a right-shift at level $s$.
In view of Lemma~\ref{smallcircles} and the assumption that $C$
has height greater than 2, it cannot have both a cap at level $p$
and a cup at level $s$.
Hence either $C$ has a left-shift at level $p$ or a right-shift at level $s$.

Suppose in this paragraph that $C$ has a left-shift at level $p$.
Because $i_p = i_s$ the hypotheses of Lemma~\ref{loweringshifts}(i)
are not satisfied. Hence as in Lemma~\ref{loweringshifts}(ii)
there exists $p < q < s$ such that
$i_q = i_p+1$ and $|i_p-i_k| > 1$ for $p < k < q$.
Then
$$
\delta := \overline\psi_{q-1} \circ
\overline\psi_{q-2} \circ \cdots \circ \overline\psi_p
$$
is a sequence of height moves lowering the left-shift
to level $q-1$ followed by one crossing move. Because of the minimality of the
choice of
$p$ and the fact that $C$ contains no nested circles,
$\delta$ must cut $C$ into two circles $C'$ and $C''$.
Both of these are of strictly smaller height than $C$ and
neither contains any nested circles.
In the reverse direction,
$$
m := \overline\psi_p \circ \cdots \circ \overline\psi_{q-2} \circ \overline\psi_{q-1}
$$
joins $C'$ and $C''$ back together to recover the original circle $C$.
By induction, the negative circle moves $z_{C'}$
and $z_{C''}$
can both be written as
compositions of local moves of the desired form.
Now we claim that
$$
z_C =
m\circ z_{C'} \circ z_{C''} \circ \delta.
$$
To see this, take a basis vector
$(a\:\bt[\bla]\:b)\in K^{\bt}_{\bLa}\langle-\caps(\bt)\rangle$.
If the circle $C$ is anti-clockwise
in $a \:\bt[\bla]\:b$
then $\delta$
maps it to a sum of two basis vectors in
which one of $C'$ and $C''$ is anti-clockwise and the other is clockwise.
Then $z_{C'} \circ z_{C''}$ maps that to zero, as required.
If the circle $C$ is clockwise then
$\delta$ maps $(a\:\bt[\bla]\:b)$
to a basis vector in which both $C'$ and $C''$ are clockwise.
Then $z_{C'} \circ z_{C''}$
converts both $C'$ and $C''$
to anti-clockwise circles. Finally
$m$
returns us to our original basis vector but with $C$ re-oriented so that
it is anti-clockwise.
This checks the claim.

Finally suppose that $C$ has a right-shift at level $s$.
Then we argue in a similar way to the previous paragraph,
this time raising the right-shift using Lemma~\ref{raisingshifts} instead of
lowering the left-shift with Lemma~\ref{loweringshifts}.
\end{proof}

\begin{Lemma}\label{anti}
For any internal circle $C$ of $\bt$,
the negative circle move $z_C$
can be written as a linear combination of
compositions of local moves of the form
$\overline y_r$ and $\overline\psi_r$ for various $r$.
\end{Lemma}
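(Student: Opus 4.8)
The plan is to induct on the number $\nu$ of internal circles of $\bt$ that are nested inside $C$. If $\nu=0$ then $C$ encloses no other circle, so Lemma~\ref{nonest} applies and gives $z_C$ as a single composition of local moves $\overline\psi_r$; this is the base case. So suppose $\nu\geq 1$ and choose among the circles enclosed by $C$ one, call it $C'$, that is innermost, i.e.\ $C'$ itself encloses nothing. By Lemma~\ref{nonest} the negative circle move $z_{C'}$ is then already a composition of local moves $\overline\psi_r$, and the positive circle move $y_{C'}$ (``multiplication by a clockwise circle at $C'$'') equals $\overline y_r$ for any level $r$ at which $C'$ has a cap; these two operators, together with the height/crossing moves produced by Lemmas~\ref{raisingcaps}, \ref{loweringcups}, \ref{raisingshifts}, \ref{loweringshifts} and \ref{smallcircles}, will be the raw material.

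First I would normalise the pair $(C,C')$. Using the raising/lowering lemmas on $C'$ (which amount to compositions of height moves and a single crossing move, all of the form $\overline\psi_r$), shrink $C'$ to a small circle lying immediately against a strand of $C$. In this position a single crossing move $\overline\psi_r$ merges $C'$ into $C$, producing one internal circle $D$ that encloses precisely the circles $C$ enclosed except for $C'$; thus $\nu(D)=\nu-1$, and by the inductive hypothesis $z_D$ is a linear combination of compositions of local moves. The reverse composition splits $D$ back into $C$ and $C'$. (In the alternative case that $C$ carries a shift one may instead mimic the cut in Lemma~\ref{nonest}, cutting $C$ into $C_1\cup C_2$ by a shift, both of strictly smaller height with at most $\nu$ nested circles, and recurse on a secondary induction on height; the merging argument covers the remaining case of a circle built only from caps and cups.)

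The algebraic heart is then to re-express $z_C$ in terms of the merge crossing move $\mu$, its splitting inverse $\mu'$, the operator $z_D$ (available by induction), and $z_{C'}$ and $y_{C'}=\overline y_r$. Writing $1$ for an anti-clockwise and $x$ for a clockwise circle and exploiting the commutative Frobenius-algebra structure of $R=\C[x]/(x^2)$ exactly as in the proof of Theorem~\ref{thetaithm} and in Lemma~\ref{nonest}, one checks, on each of the four orientation classes of the pair $(C,C')$, that a suitable signed linear combination of compositions of $\mu'$, $z_D$, $\mu$, $z_{C'}$ and $y_{C'}$ equals $z_C$ as a bimodule endomorphism of $K^{\bt}_{\bLa}\langle-\caps(\bt)\rangle$. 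This is exactly where the $\overline y_r$'s and the passage to a linear combination, rather than a single composition, become unavoidable: $z_C$ must reverse the orientation of $C$ even when $C$ encloses $C'$, and the merge move $\mu$ annihilates the ``both clockwise'' configuration, so one genuinely has to first spend a move $y_{C'}$ on the inner circle and then correct with additional terms.

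I expect the main obstacle to be precisely this last bookkeeping: one must keep track of which crossing moves are of merge type (the multiplication $m$) and which of split type (the comultiplication $\delta$), verify the Frobenius identities such as $(m\otimes 1)\circ(1\otimes\delta)=\delta\circ m$ case by case, and deal with the exceptional configurations flagged in Lemmas~\ref{raisingshifts} and \ref{loweringshifts} (where the components involved are propagating lines through a defect-zero block), in which the relevant move is not a crossing move and the normalisation step must be routed differently. None of this is deep, but the case analysis is where the work lies; the relations of Theorem~\ref{localrels} can be invoked freely to shorten it.
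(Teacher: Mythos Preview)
Your overall strategy---induct on the number of circles nested inside $C$, merge one of them into $C$ to reduce, and recover $z_C$ from the merged circle's $z$ by a Frobenius-algebra identity involving $y_{C'}$ and $z_{C'}$---is exactly the paper's. The Frobenius bookkeeping you anticipate is what the paper carries out, with the explicit two-term formula
\[
z_C \;=\; z_{C'}\circ(\delta\circ z_{C''}\circ m)\circ z_{C'}\circ y_{C'}
\;+\; y_{C'}\circ z_{C'}\circ(\delta\circ z_{C''}\circ m)\circ z_{C'},
\]
where $m$ is the merge and $\delta$ the reverse split.

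There is, however, a real gap in your normalisation step. Choosing $C'$ to be \emph{innermost} does not ensure that your crossing move merges $C'$ with $C$: if there is an intermediate circle $C_1$ with $C'\subset C_1\subset C$, then any attempt to move or raise $C'$ collides first with $C_1$, producing $D=C'\cup C_1$ inside $C$, and your formula then computes $z_{C_1}$ rather than $z_C$. The lemmas you cite do not let you ``move $C'$ past $C_1$'' to reach a strand of $C$. The paper's fix is simple and clean: do not pick $C'$ innermost, but instead let $q$ be the minimal level at which \emph{any} nested circle has a cap, and take $C'$ to be that circle. Then raising this cap via Lemma~\ref{raisingcaps} must hit $C$ itself, because every other nested circle has its topmost point (a cap) at level $\geq q$ and hence lies entirely at levels $\geq q>p$, so cannot obstruct the raising. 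With this one change your outline becomes the paper's proof. Note also that the paper then gets both $z_{C'}$ and $z_{C''}$ from the \emph{same} induction (each has strictly fewer nested circles than $C$), so there is no need to invoke Lemma~\ref{nonest} separately for $C'$.

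Finally, you can drop the secondary height induction and the worry about the exceptional configurations in Lemmas~\ref{raisingshifts} and \ref{loweringshifts}: only Lemma~\ref{raisingcaps} is needed here, and those exceptions concern propagating lines, which never arise for internal circles.
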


\begin{proof}
Proceed by induction on
the number of nested circles contained in $C$.
The base of the induction is Lemma~\ref{nonest}.
For the induction step suppose that $C$ contains at least one nested
circle.
Let $q$ be minimal such that some circle $C'$ contained in $C$
has a cap at level $q$.
As $C'$ is contained in $C$,
$C'$ and $q$ do not satisfy the hypotheses of Lemma~\ref{raisingcaps}(i),
hence as in Lemma~\ref{raisingcaps}(ii) there exists $1 \leq p < q$
such that $$
m := \overline\psi_{p}\circ \overline\psi_{p+1} \circ \cdots \circ \overline
\psi_{q-1}$$
is a sequence of height moves raising the cap
from level $q$ to level $p+1$ followed by one crossing move.
By the minimality of $q$, this composition $m$ must join the circles
$C$ and $C'$ into one circle $C''$ containing fewer nested
internal circles than $C$.
In the other direction,
$$
\delta := \overline\psi_{q-1} \circ \cdots \circ \overline\psi_{p+1} \circ \overline\psi_p$$
cuts $C''$ to recover the two circles $C$ and $C'$ back again.

By induction, we already have $z_{C'}$ and $z_{C''}$
available. Take a basis vector in
$K^\bt_{\bLa}\langle-\caps(\bt)\rangle$.
Let $1 \otimes 1, 1 \otimes x, x \otimes 1$
and $x \otimes x$ denote this vector re-oriented so that
$C$ (resp.\ $C'$) is anti-clockwise or clockwise according to whether
the first (resp.\ second) tensor is $1$ or $x$.
Similarly, represent the basis vector obtained from one of these
by applying $m$
by $1$ or $x$ according to whether $C''$ is anti-clockwise or clockwise.
With this notation $\delta \circ z_{C''}\circ m$ is the map
$$
\delta \circ z_{C''} \circ m:
1 \otimes 1 \mapsto 0,
1 \otimes x \mapsto 1 \otimes x + x \otimes 1,
x \otimes 1 \mapsto 1 \otimes x + x \otimes 1,
x \otimes x \mapsto 0.
$$
Setting $y_{C'} := \overline y_q$ (remembering
it is the circle $C'$ that is non-trivial at level $q$), we also have
\begin{align*}
y_{C'} :1 \otimes 1 \mapsto 1 \otimes x,
x \otimes 1 \mapsto x \otimes x,1 \otimes x \mapsto 0,
x \otimes x \mapsto 0,\\
z_{C'}:1 \otimes 1 \mapsto 0,
x \otimes 1 \mapsto 0,1 \otimes x \mapsto 1 \otimes 1,
x \otimes x \mapsto  x \otimes 1.
\end{align*}
Putting these things together, we get that
\begin{align*}
z_C =
z_{C'} \circ (\delta \circ z_{C''} \circ m) \circ
z_{C'}
\circ y_{C'} +
y_{C'}\circ z_{C'} \circ (\delta \circ z_{C''}
\circ m) \circ z_{C'}
\end{align*}
and we are done.
\end{proof}

\phantomsubsection{Proof of surjectivity}
Now we are ready to prove the main result of the section.

\begin{Theorem}
\label{surjectivity}
The algebra homomorphism $\omega:R^\La_\alpha \rightarrow E^\La_\alpha$
from Theorem~\ref{phi}
is surjective.
\end{Theorem}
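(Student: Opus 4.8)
The goal is to show that every vector $|\,\bu^*[\bde^*]\wr\bt[\bga]\,|$ of the diagram basis (\ref{Hbasis}) lies in $\im\omega$. Since $\{\omega(e(\bi))\:|\:\bi\in I^\alpha\}$ is a complete set of orthogonal idempotents in $E^\La_\alpha$ and $\im\omega$ is a subalgebra stable under left and right multiplication by $\omega(y_r)$ and $\omega(\psi_r)$, it suffices to produce each such basis vector by acting on the explicit idempotents $\omega(e(\bi))$ of (\ref{piei}). By Lemmas~\ref{inb} and~\ref{sff2}, right (resp.\ left) multiplication by $\omega(y_r)$ and $\omega(\psi_r)$ acts on basis vectors as the signed local positive circle, negative circle, crossing and height moves of (\ref{expy}) and (\ref{betadef2}) performed on the $\bt$-side (resp.\ the $\bu$-side). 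Moreover, applying Lemma~\ref{anti} to $\bt$ or to $\bu$, the global negative circle move $z_C$ at any upper or lower circle $C$ — and hence each of the composite raising and lowering moves of Lemmas~\ref{raisingcaps}--\ref{smallcircles} — also maps $\im\omega$ into itself.

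First I would treat the diagonal vectors $|\,\bt^*[\bde^*]\wr\bt[\bga]\,|$ with $\bu=\bt$. The key point is that if $r$ is a level of $\bt$ lying on an internal circle $C$, then $\overline y_r\circ z_C$ and $z_C\circ\overline y_r$ act on basis vectors as the projections onto the summand in which $C$ is clockwise, respectively anti-clockwise, annihilating the opposite orientation. Starting from $\omega(e(\bi))$, which by (\ref{piei}) is the sum of all oriented circle diagrams with underlying matching $\bt^*\bt$ in which every boundary circle is anti-clockwise and every mirror-image pair of upper and lower circles is oppositely oriented, one applies such a projection at each upper circle to isolate a single summand, then flips by a single $z_C$ or $\overline y_r$ any lower circle that is required to agree with its mirror, and finally flips by a single $\overline y_r$ any boundary circle that is required to be clockwise. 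Since an oriented stretched circle diagram is determined by its underlying matching together with the clockwise/anti-clockwise assignment on its internal circles, this reaches every diagonal basis vector; in particular every $C^\la_{\bt[\bde],\bt[\bga]}$ lies in $\im\omega$.

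For the off-diagonal vectors, where the admissible sequence $\bj$ underlying $\bu$ differs from the sequence $\bi$ underlying $\bt$, I would argue by a two-layer induction: the outer induction is on the position of the bottom weight $\la=\ga_d=\de_d$ in the Bruhat order on $\Ga:=\La-\alpha$, using the cellular structure of Theorem~\ref{cell} so that the cellular ideal $E^{<\la}$ (spanned by basis vectors in strictly smaller cells) is already contained in $\im\omega$; the inner induction, inside a fixed cell, is on an auxiliary complexity measuring $\bu$ relative to $\bt$ (for instance the number of internal circles together with the length of a permutation carrying $\bi$ to $\bj$). As $I^\alpha$ is a single $S_d$-orbit one can write $\bj=s_{r_k}\cdots s_{r_1}\cdot\bi$ using adjacent transpositions that genuinely move the sequence, and Lemmas~\ref{raisingcaps}--\ref{loweringshifts} let us realize the passage from the matching $\bt^*$ to the matching $\bu^*$ as an explicit composition of height and crossing moves (the defect-zero exceptions in Lemmas~\ref{raisingshifts}--\ref{loweringshifts} being dealt with separately). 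Left-multiplying a suitable diagonal vector $C^\la_{\bt[\bga'],\bt[\bga]}\in\im\omega$ by the product of the corresponding $\omega(\psi_{r_s})$ then yields, by the cellular multiplication rule together with Lemma~\ref{sff2}, an element of $\im\omega$ equal to $\sum_U r_U\,C^\la_{U,\bt[\bga]}$ modulo $E^{<\la}$, whose leading term in the auxiliary order is the target $C^\la_{\bu[\bde],\bt[\bga]}$ with a nonzero coefficient; the other terms are either in $E^{<\la}$ or strictly simpler, hence in $\im\omega$ by induction, so one may subtract them off. The anti-automorphism $*$, which commutes with $\omega$ by the diagram preceding Theorem~\ref{cell}, then also covers the symmetric situation in which the $\bt$-side must be altered.

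\textbf{The main obstacle.} The delicate step is the third paragraph: crossing moves and negative circle moves are not invertible — they annihilate basis vectors and merge or split internal circles — so one cannot simply run a sequence of moves backwards to hit a prescribed vector. The real content is to choose the word in the $\psi_r$'s (guided by the raising/lowering lemmas) and the auxiliary complexity order so that the transformation is genuinely triangular, with the prescribed basis vector as the unique leading term and all error terms provably in a strictly smaller cell or strictly simpler; keeping the orientation bookkeeping (which circles are created by the crossing moves, and with which orientations) consistent with the projection trick from the diagonal case is where most of the care is needed.
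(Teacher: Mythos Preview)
Your diagonal case (basis vectors with $\bu=\bt$) is fine and is essentially the paper's preliminary reduction to the minimal-degree vectors $e(\bu^*\bt)$ together with positive circle moves. The real content of the theorem, however, is the off-diagonal case, and here your proposal has a genuine gap.

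Your plan is to left-multiply a diagonal cell vector by a product of $\omega(\psi_r)$'s chosen so that the result is upper-triangular with the target $C^\la_{\bu[\bde],\bt[\bga]}$ as leading term with non-zero coefficient, working modulo the cellular ideal $E^{<\la}$. But this is precisely the statement that the cell module $S(\la)$, viewed as an $R^\La_\alpha$-module \emph{via} $\omega$, is cyclic (indeed generated by any diagonal-type vector), and that is essentially equivalent to the surjectivity you are trying to prove. Nothing in Lemmas~\ref{raisingcaps}--\ref{loweringshifts} guarantees that an arbitrary proper stretched cap diagram $\bu$ can be reached from $\bt$ by a sequence of crossing and height moves whose composite on basis vectors has the required non-vanishing leading coefficient; crossing moves can annihilate vectors, create or destroy internal circles, and the defect-zero exceptions you flag are not disposed of by ``dealt with separately''. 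So the triangularity you need is asserted, not established, and your ``main obstacle'' paragraph is an honest admission that the proof is missing.

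The paper's argument avoids this circularity by a completely different mechanism: it never tries to move from one stretched diagram to another inside a fixed $E^\La_\alpha$. Instead it uses the non-unital homomorphism $\theta_i:E^\La_{\alpha-\alpha_i}\to E^\La_\alpha$ of Theorem~\ref{thetaithm}, together with Corollary~\ref{cddcd} ($\theta_i\circ\omega=\omega\circ\theta_i$), to run an induction on $m+n+d$. One reduces to the minimal-degree vectors $e(\bu^*\bt)$ (your positive-circle-move observation), and then, by a nine-case analysis of the geometry of $\bu^*\bt$ (boundary circles, propagating lines, upper/lower circles), uses the raising/lowering lemmas only to bring the diagram into a shape where both $\bt$ and $\bu$ have the same cup at level $d$; then $e(\bu^*\bt)=\theta_i(e(\bar\bu^*\bar\bt))$ with $\bar\bt,\bar\bu$ of height $d-1$, and the induction hypothesis applies. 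The map $\theta_i$ is the key idea you are missing.
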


\begin{proof}
We proceed by induction on $m+n+d$ where
$d := \height(\alpha)$,
the case $d = 0$ being trivial.
For the induction step, we assume that we are given
$\alpha \in Q_+$ of height $d > 0$ and that the theorem
has been proved for all smaller $m+n+d$.
If $\bu^* \bt$ is a proper stretched circle diagram of type $\alpha$,
there is a unique oriented stretched circle diagram
of the form $\bu^*[\bde^*]\wr\bt[\bga]$ of minimal degree;
all of its  internal
circles are anti-clockwise.
Denote the corresponding basis vector of $E^\La_\alpha$ by
$e(\bu^*\bt)$.
Clearly every basis vector of $E^\La_\alpha$ from (\ref{Hbasis}) can be obtained
from $e(\bu^*\bt)$
by applying positive circle moves,
i.e. by multiplying on the left and/or right by $\pm \omega(y_r)$
for various $r$.
Hence it suffices to show that
$e(\bu^*\bt) \in \im \omega$
for every proper stretched circle diagram
$\bu^*\bt$ of type $\alpha$.
We divide the proof of this statement into nine different
cases, the last of which is the general case.

\vspace{2mm}

\noindent
{\em Case one:
$\bu^*\bt$ has a boundary circle $C$ containing no nested circles,
such that for some $i \in I$
the $i$th and $(i+1)$th vertices of the boundary line
are in the interior of $C$
and are labelled
$\circ\times$ in the block diagram
for $\La-\alpha$.}

\vspace{1mm}

\noindent
Let $\bi \in I^\alpha$ denote the admissible sequence
underlying the matching $\bt$.
By inspecting (\ref{CKLR}), the assumption that the $i$th and $(i+1)$th
vertices of $\La-\alpha$ are labelled $\circ \times$
implies that
there exists $1 \leq p \leq d$ such that
$i_p = i$,
$\bt$ has a cup at level $p$, and
$|i_p-i_q| > 1$ for all $p < q \leq d$.
Hence we are in the situation of
Lemma~\ref{loweringcups}(i) and $\overline\psi_{d-1}\circ\cdots\circ
\overline\psi_p$ is a sequence of height moves lowering the cup to level
$d$. Let $\hat\bt$ be the proper stretched cap diagram obtained from $\bt$
by applying this sequence of height moves. Then
$$
e(\bu^* \bt) = \overline \psi_p \circ \cdots \circ \overline \psi_{d-1}
(e(\bu^* \hat \bt)).
$$
Since each of the maps $\overline \psi_r$ here is given by multiplying on the
right by
$\pm \omega(\psi_r)$, we can deduce from this that
$e(\bu^* \bt) \in \im \omega$
if we show that $e(\bu^* \hat \bt) \in \im \omega$.
Replacing $\bt$  by $\hat \bt$, this reduces to the situation that $\bt$
has a cup at level $d$ in the strip between the $i$th and $(i+1)$th vertices.
A similar argument
involving multiplying on the left by various $\pm\omega(\psi_r)$
reduces further
to the situation that $\bu$ also has a cup at level $d$ in the same strip.

Now let $\bar\bt := t_{d-1} \cdots t_0$ and
$\bar \bu := u_{d-1}\cdots u_0$.
Then $\bar\bu^* \bar\bt$ is a proper stretched circle diagram
of type $\alpha-\alpha_i$.
By the main
induction hypothesis, we know that
$e(\bar\bu^*\bar\bt) \in \im \omega$.
The map $\theta_i:E^\La_{\alpha-\alpha_i}
\rightarrow E^\La_\alpha$
from (\ref{kwon}) in this situation joins two boundary circles
from $\bar\bu^*\bar\bt$
together to form the given boundary circle $C$, so that
$$
\theta_i(e(\bar\bu^*\bar\bt)) = e(\bu^* \bt).
$$
Applying Corollary~\ref{cddcd} we deduce that
$e(\bu^*\bt) \in \im \omega$.

\vspace{2mm}

\noindent
{\em Case two: $\bu^*\bt$ has a boundary circle $C$
crossing the boundary line exactly twice at vertices $j < k$,
there are no nested circles inside $C$,
and the height of $C$ is
equal to $2(k-j)$.}

\vspace{1mm}

\noindent
Each of the
vertices $j+1,\dots,k-1$ of $\La-\alpha$ must be labelled either
$\times$ or $\circ$. In view of case one we may assume further that
the vertices $j+1,\dots,i$ are labelled
$\times$ and the vertices $i+1,\dots,k-1$ are labelled $\circ$,
for some $j \leq i < k$.
Of course $C$ consists of a generalised cap $T$ from the matching $\bt$
on top of a generalised cup $B$ from $\bu^*$.
The smallest possible height of a generalised cap
passing through
the $j$th and $k$th vertices of the boundary line is $(k-j)$.
Hence both $T$ and $B$ are of height exactly $(k-j)$.
Considering (\ref{CKLR}), $T$
must involve $(i-j)$ right shifts, $(k-1-i)$ left shifts, and one cap at the
top. For example,
here are the possibilities for $T$ in case $k-j=3$
(omitting trivial levels):
$$
\begin{picture}(150,63)
\put(-88,59){\line(1,0){60}}
\put(-88,39){\line(1,0){60}}
\put(-88,19){\line(1,0){60}}
\thicklines
\put(-88,-1){\line(1,0){60}}\thinlines
\put(-90.6,57){{$\scriptstyle\times$}}
\put(-70.2,56.3){{$\circ$}}
\put(-50.2,56.3){{$\circ$}}
\put(-30.2,56.3){{$\circ$}}
\put(-77,39){\oval(20,20)[t]}
\put(-87,39){\line(0,-1){40}}
\put(-89,37){{$\scriptstyle\bullet$}}
\put(-69,37){{$\scriptstyle\bullet$}}
\put(-50.2,36.3){{$\circ$}}
\put(-30.2,36.3){{$\circ$}}
\put(-67,39){\line(1,-1){20}}
\put(-89,17){$\scriptstyle\bullet$}
\put(-49,17){$\scriptstyle\bullet$}
\put(-70.2,16.3){{$\circ$}}
\put(-30.2,16.3){{$\circ$}}
\put(-89,-3){$\scriptstyle\bullet$}
\put(-70.2,-3.7){$\circ$}
\put(-50.2,-3.7){$\circ$}
\put(-29,-3){{$\scriptstyle\bullet$}}
\put(-47,19){\line(1,-1){20}}
\put(-2,59){\line(1,0){60}}
\put(-2,39){\line(1,0){60}}
\put(-2,19){\line(1,0){60}}
\thicklines
\put(-2,-1){\line(1,0){60}}\thinlines
\put(-4.6,57){{$\scriptstyle\times$}}
\put(15.4,57){{$\scriptstyle\times$}}
\put(35.8,56.3){{$\circ$}}
\put(55.8,56.3){{$\circ$}}
\put(29,39){\oval(20,20)[t]}
\put(19,39){\line(0,-1){20}}
\put(19,19){\line(-1,-1){20}}
\put(39,39){\line(1,-1){20}}
\put(59,19){\line(0,-1){20}}
\put(-4.6,37){{$\scriptstyle\times$}}
\put(17,37){{$\scriptstyle\bullet$}}
\put(37,37){{$\scriptstyle\bullet$}}
\put(55.8,36.3){{$\circ$}}
\put(-4.6,17){$\scriptstyle\times$}
\put(17,17){$\scriptstyle\bullet$}
\put(35.8,16.3){{$\circ$}}
\put(57,17){{$\scriptstyle\bullet$}}
\put(-3,-3){$\scriptstyle\bullet$}
\put(15.4,-3){$\scriptstyle\times$}
\put(35.8,-3.7){$\circ$}
\put(57,-3){{$\scriptstyle\bullet$}}
\put(84,59){\line(1,0){60}}
\put(84,39){\line(1,0){60}}
\put(84,19){\line(1,0){60}}
\thicklines
\put(84,-1){\line(1,0){60}}\thinlines
\put(81.4,57){{$\scriptstyle\times$}}
\put(101.4,57){{$\scriptstyle\times$}}
\put(121.8,56.3){{$\circ$}}
\put(141.8,56.3){{$\circ$}}
\put(115,39){\oval(20,20)[t]}
\put(105,39){\line(-1,-1){20}}
\put(85,19){\line(0,-1){20}}
\put(125,39){\line(0,-1){20}}
\put(125,19){\line(1,-1){20}}
\put(81.4,37){{$\scriptstyle\times$}}
\put(103,37){{$\scriptstyle\bullet$}}
\put(123,37){{$\scriptstyle\bullet$}}
\put(141.8,36.3){{$\circ$}}
\put(83,17){$\scriptstyle\bullet$}
\put(123,17){$\scriptstyle\bullet$}
\put(101.4,17){{$\scriptstyle\times$}}
\put(141.8,16.3){{$\circ$}}
\put(83,-3){$\scriptstyle\bullet$}
\put(101.4,-3){$\scriptstyle\times$}
\put(121.8,-3.7){$\circ$}
\put(143,-3){{$\scriptstyle\bullet$}}
\put(170,59){\line(1,0){60}}
\put(170,39){\line(1,0){60}}
\put(170,19){\line(1,0){60}}
\thicklines
\put(170,-1){\line(1,0){60}}\thinlines
\put(167.4,57){{$\scriptstyle\times$}}
\put(207.4,57){{$\scriptstyle\times$}}
\put(187.4,57){{$\scriptstyle\times$}}
\put(227.8,56.3){{$\circ$}}
\put(221,39){\oval(20,20)[t]}
\put(231,39){\line(0,-1){40}}
\put(211,39){\line(-1,-1){40}}
\put(167.4,37){{$\scriptstyle\times$}}
\put(187.4,37){{$\scriptstyle\times$}}
\put(209,37){{$\scriptstyle\bullet$}}
\put(229,37){{$\scriptstyle\bullet$}}
\put(167.4,17){$\scriptstyle\times$}
\put(207.4,17){$\scriptstyle\times$}
\put(189,17){{$\scriptstyle\bullet$}}
\put(229,17){{$\scriptstyle\bullet$}}
\put(169,-3){$\scriptstyle\bullet$}
\put(187.4,-3){$\scriptstyle\times$}
\put(207.4,-3){$\scriptstyle\times$}
\put(229,-3){{$\scriptstyle\bullet$}}
\end{picture}
$$
By applying height moves  like we did in case one, this time
using Lemma~\ref{loweringshifts}, we reduce
to the case that
the $(i-j)$ right shifts
occur in
the bottom $(i-j)$ levels of $\bt$,
then the $(k-1-i)$ left shifts appear in the next $(k-1-i)$ levels up,
and finally the cap at the top of $T$ occurs
at the  $(d+1-k+j)$th level of $\bt$.
A similar argument applied to $B$ reduces further to the situation that
$B$ is the mirror image of $T$.
Finally in this special situation, we have that
$$
e(\bu^*\bt) = \theta_i(e(\bar \bu^* \bar \bt))
$$
where
$\bar\bu^* \bar \bt$ is the proper stretched circle diagram
of type $(\alpha-\alpha_i)$ with
$\bar\bt := t_{d-1}\cdots t_0$ and $\bar \bu := u_{d-1} \cdots u_0$.
By induction $e(\bar\bu^*\bar \bt) \in \im \omega$.
Hence applying Corollary~\ref{cddcd} as in case one,
we deduce that $e(\bu^* \bt) \in \im \omega$ too.

\vspace{2mm}

\noindent
{\em Case three: $\bu^*\bt$ has a boundary circle $C$
containing no other nested circles.}

\vspace{1mm}

\noindent
Proceed by induction on the height of $C$.
The base case when $C$ is of height $2$ follows by
case two. Now assume that $C$ is of height greater than $2$.
Suppose first that $C$ has a {\em concave cap}, i.e. a cap
such that the region immediately
above the cap is the interior of $C$.
Applying Lemma~\ref{raisingcaps} to $\bt$
(resp.\ Lemma~\ref{loweringcups} to $\bu$)
if the cap is above (resp.\ below) the boundary line,
we get a sequence of moves which
either raises the cap until it reaches the boundary line from below
or until it collides with another part of the same circle $C$.
In the former case, we deduce that there exists $i \in I$
such that the $i$th and $(i+1)$th vertices of the boundary line
are in the interior of $C$ and are labelled $\circ\times$ in
the block diagram for $\La-\alpha$, so we are done by case one.
In the latter case the given sequence of moves cuts the
boundary circle $C$ into another boundary circle $C'$ of
strictly smaller height than $C$, together with another circle $C''$
which is definitely not nested inside $C'$.
Let $\hat\bu^*\hat\bt$ be the proper stretched circle diagram
obtained from $\bu^*\bt$ by applying
these moves.
By the induction hypothesis, we get that
$e(\hat\bu^*\hat\bt) \in \im \omega$.
As $e(\bu^*\bt)$ is obtained from this by multiplying
on the left or right by the same moves taken in reverse order, we deduce that
$e(\bu^*\bt) \in \im \omega$ too, as required.
A similar argument applies if $C$ has a {\em concave cup}, i.e. a cup
such that the region
immediately below the cup is the interior of $C$.

We have now reduced to the situation that $C$ has no
concave cap or cup on its circumference.
It follows that $C$ actually has just one cap and just one
cup. In particular, $C$
crosses the boundary line exactly twice,
say at vertices $j < k$.
If the height of $C$ is equal to $2(k-j)$
then we are done by case two. Hence either the top half $T$ of $C$
(which is a single
generalised cap) or the bottom half $B$ of $C$
(which is a single generalised cup) must
have height strictly greater than $(k-j)$.
Let us assume it is $T$ that is of height greater than $(k-j)$, a similar
argument applying if it is $B$.
We know already that $T$ has exactly one cap and no cups.
This means that $T$ consists just of left-shifts,
right-shifts and one cap at the top.
Since the height of $T$ is strictly greater than $(k-j)$,
there is either a right-shift on the part of the
$T$ that is
between the $j$th vertex of the boundary line and the cap at the top,
or there is a left-shift on the part of $T$
that is between the
cap at the top and the $k$th
vertex of the boundary line.
We just explain the argument now in the former case, since the latter
case is similar.
Let $q$ be minimal such that $T$ has a right-shift at level $q$
between the $j$th vertex of the boundary line and the cap at the top.
Applying Lemma~\ref{raisingshifts} to the component of
$t_1 \cdots t_q$ containing this right-shift, we get a sequence
$\overline{\psi}_{p+1}\circ\cdots\circ \overline\psi_{q-1}$
of moves raising the right-shift. The last of these moves is a crossing move
cutting
$C$ into a smaller boundary circle $C'$ plus one new
circle $C''$ that is not nested inside $C'$.
Now we complete the proof as before by using the induction hypothesis.

\vspace{2mm}

\noindent
{\em Case four: $\bu^*\bt$ has a boundary circle.}

\vspace{1mm}

\noindent
Choose a boundary circle $C$ in $\bu^* \bt$
that is minimal in the sense that it contains no nested boundary circles.
Proceed by induction on the number of nested circles contained in $C$.
The base case, when $C$ contains no nested circles,
follows by case three.
For the induction step,
assume $C$ contains at least one nested circle.
Since it contains no nested boundary circles by assumption,
it must contain a nested
upper or lower circle. We explain the argument in the upper case,
since the lower case is entirely similar.
Let $q$ be minimal such that
the component $C'$ of $\bu^*\bt$ that is non-trivial at level
$q$ is an upper circle contained in $C$.
Applying Lemma~\ref{raisingcaps}(ii),
we get a sequence $\overline{\psi}_p\circ\cdots\circ \overline\psi_{q-1}$
of local moves
raising the cap at level $q$.
Let $\bu^*\hat\bt$ be the proper stretched circle diagram
obtained by applying this sequence of moves to $\bu^*\bt$.
By the minimality of $q$,
the last move
$\overline{\psi}_p$ in this sequence is a crossing move
joining $C$ and $C'$ into one circle $C''$.
The circle $C''$
is a boundary circle
in $\bu^* \hat\bt$
containing no nested boundary circles
and one less nested circle than $\bu^* \bt$.
Hence by induction $e(\bu^*\hat\bt) \in \im \omega$.
Moreover,
$$
e(\bu^* \bt) = z_{C'} \circ \overline{\psi}_{q-1} \circ \cdots \circ
\overline{\psi}_{p} (e(\bu^* \hat \bt)).
$$
In view of Lemma~\ref{anti}, the map
$z_{C'} \circ \overline{\psi}_{q-1}\circ\cdots\circ \overline{\psi}_p$
can be implemented by right multiplication by an element of $R^\La_\alpha$,
so we deduce that $e(\bu^*\bt) \in \im \omega$.

\vspace{2mm}

\noindent
{\em Case five: $\bu^*\bt$ has a propagating line $L$ with no
other circles or propagating lines to its left,
such that for some $i \in I$ the $i$th and $(i+1)$th vertices of the
boundary line lie strictly to the left of $L$ and are labelled
$\circ\times$ in the block diagram for $\La-\alpha$.}

\vspace{1mm}
\noindent
This follows by a very similar argument to case one.

\vspace{2mm}

\noindent
{\em Case six: $\bu^*\bt$ has a propagating line $L$
crossing the top and bottom number lines at vertex $k$
and
crossing the boundary line exactly once at vertex $j \leq k$,
such that
there are no other circles or propagating lines to the left of $L$
and the height of $L$ is equal to $2(k-j)$.}

\vspace{1mm}

\noindent
The propagating line $L$ consists of a line segment $T$ from $\bt$ on top
of another line segment $B$ from $\bu^*$.
The assumptions on $L$ imply that
$T$ has $(k-j)$
left-shifts
and $B$ has $(k-j)$ right-shifts.
By applying height moves, we can move the left-shifts up so that
they occur in the top $(k-j)$ levels of $T$, and similarly
we move the right-shifts down so that they occur in the
bottom $(k-j)$ levels of $B$.
Then we simply erase the leftmost propagating line and
the top and bottom $(k-j)$ levels
from the diagram $\bu^* \bt$ altogether,
removing one vertex from each number line in the process, to obtain a new
proper stretched circle diagram
$\hat\bu^*\hat \bt$ parametrizing a basis
vector in a smaller case in the sense that for the
new picture the number
$m+n+d$ is strictly smaller than before.
By the induction hypothesis,
$e(\hat\bu^*\hat\bt) \in \im\omega$. Restoring the parts of the diagram that
were erased gives that
$e(\bu^*\bt) \in \im \omega$ too.

\vspace{2mm}

\noindent
{\em Case seven: $\bu^*\bt$ has a propagating line $L$
with no other circles or propagating lines to its left.}

\vspace{1mm}

\noindent
Proceed by induction on the height of $L$. The base case when $L$ is of
height zero follows by case six.
Now suppose $L$ is of strictly positive
height.
If $L$ has a cap such that the region immediately
above the cap is the region to the left of $L$, then
we raise this cap as usual, either until it
reaches the boundary line from below or until it collides with another part
of the line $L$.
In the former case we are done by case five. In the latter
case the given sequence of moves cuts $L$ into a new propagating line $L'$ of
strictly smaller height together with a circle that necessarily
lies to the right of
$L$. Hence we are done by induction.
An entirely similar argument treats the situation that
$L$ has a cup such that the region immediately below the cup is the region to the left of $L$.

This reduces to the situation that $L$
involves only left-shifts and right-shifts. In particular, it crosses the
boundary line exactly once, say at vertex $k$.
Let $T$ (resp.\ $B$) be the top (resp.\ bottom) half of $L$.
If $T$ involves no right-shifts and $B$ involves no left-shifts,
then we are done by case six.
Hence either $T$ involves right-shifts or $B$ involves left-shifts.
In fact, each time there is a right-shift in $T$, a vertex
labelled $\circ$ gets added to the left hand side of $L$, so
the number of right-shifts in $T$ is equal to the number of vertices of
$\La-\alpha$ to the left of vertex $k$ that are labelled $\circ$.
By similar considerations this is the same as the number of left-shifts in $B$.
Finally, considering (\ref{CKLR}), all the right-shifts in $T$ must
below all its left-shifts, hence we can use Lemma~\ref{loweringshifts}(i)
to move all the right-shifts in $T$ to the bottom.
Similarly we can move all the left-shifts in $B$ to the top.
We have now reduced to the situation that both $\bt$ and $\bu$ have
right-shifts
at level $d$ in the strip between the $(k-1)$th and $k$th vertices.
It is then the case that
$$
e(\bu^*\bt) = \theta_{k-1}(e(\bar\bu^*\bar\bt))
$$
where $\bar\bu:=u_{d-1}\cdots u_0$ and $\bar\bt := t_{d-1}\cdots t_0$,
and we are done by the main induction hypothesis as usual.

\vspace{2mm}

\noindent
{\em Case eight: $\bu^*\bt$ has a propagating line but no boundary circles.}

\vspace{1mm}

\noindent
Let $L$ be the leftmost
propagating line and proceed by induction on the number of internal
circles to the left of $L$.
The base of the induction follows from case seven.
For the induction step, let $C$ be an internal circle
to the left of $L$.
We just explain the argument if $C$ is an upper circle, since
 lower circles are treated similarly.
Let $q$ be minimal such that $C$ has a cap at level $q$.
Apply Lemma~\ref{raisingcaps},
we get a sequence $\overline\psi_{p+1}\circ\cdots\circ\overline\psi_{q-1}$
of moves raising the cap until either it
collides with another upper circle or it collides with the line $L$.
Let $\hat\bt$ be the matching obtained from $\bt$ by applying this
sequence of moves. By the induction hypothesis,
we have that $e(\bu^*\hat\bt) \in \im\omega$. As
$$
e(\bu^*\bt) = z_C \circ \overline\psi_{q-1}\circ\cdots\circ\overline\psi_{p+1} (e(\bu^*\hat\bt)),
$$
we deduce that $e(\bu^*\bt) \in \im \omega$.

\vspace{2mm}

\noindent
{\em Case nine: the general case.}

\vspace{1mm}

\noindent
If $\bu^*\bt$ has a boundary circle or a propagating line
we are done by cases five and eight. Hence $\bu^*\bt$
contains only upper and lower circles. Moreover as $d > 0$
it contains at least one upper circle and one lower circle.
This means that $\bt$ must have a cup at level $d$ which is
part of some upper circle $C$.
Say this cup is in the strip between the $i$th and $(i+1)$th vertices.
The $i$th and $(i+1)$th vertices of $\La-\alpha$ are labelled
$\circ\times$, hence like in case one $\bu^*$ must have
a cap in the same strip which can be raised by a sequence of height moves
so that it is
immediately below the boundary line.
In this way we have reduced to the situation that both
$\bt$ and $\bu$ have a cup at level $d$ in the strip between
the $i$th and $(i+1)$th vertices.
Let $\bar\bt := t_{d-1} \circ\cdots t_1$ and
$\bar \bu := u_{d-1}\circ\cdots u_1$.
By induction we have that $e(\bar\bu^*\bar \bt) \in \im \omega$.
It remains to observe that
$$
e(\bu^*\bt) = z_C \circ \theta_i(e(\bar\bu^*\bar\bt))
$$
as $\theta_i$ here cuts one circle into two.
This completes the proof of case nine, hence the theorem.
\end{proof}

\section{Equivalence of categorifications}\label{sE}

In this section we prove the main results of the article
by comparing the homogeneous Schur-Weyl duality
from section~\ref{sD}
with the
known Schur-Weyl duality for level two
on the category $\cO$ side from \cite{BKschur}.

\phantomsubsection{\boldmath The prinjective generator $\cT^\La_\alpha$
and its endomorphism algebra}
For $\bi \in \Z^d$, let $\cF_\bi$ denote the composition
$\cF_{i_d} \circ \cdots \circ \cF_{i_1}$ of the special projective
functors from (\ref{donely}).
Let $\cL(\iota)$ denote the irreducible $\mathfrak{g}$-module of
highest weight $\iota$, where
\begin{equation}
\iota = (o+m)(\eps_1+\cdots+\eps_m) + (o+m+n)(\eps_{m+1}+\cdots+\eps_{m+n})
\end{equation}
is the element of $\mathfrak{h}^*$
corresponding to the ground-state (\ref{groundstate})
under the weight dictionary.
In view of Corollary~\ref{all}, we have that
\begin{equation}\label{morita}
\cL(\iota)\otimes \cV^{\otimes d} = \bigoplus_{\bi \in \Z^d} \cF_\bi \cL(\iota).
\end{equation}
The summands $\cF_\bi \cL(\iota)$ and $\cF_\bj \cL(\iota)$ here
belong to the same block according
the decomposition (\ref{allblocks})
if and only if $\bi$ and $\bj$ lie in the same $S_d$-orbit.

Now fix $\alpha \in Q_+$ of height $d$ and set
\begin{equation*}\label{alto}
\cO^\La_\alpha := \left\{\begin{array}{ll}
\cO_{\La-\alpha}&\text{if $\La-\alpha \in \PImn$,}\\
{\mathbf 0}\:\text{(the zero category)}&\text{if $\La - \alpha \notin \PImn$,}
\end{array}\right.\end{equation*}
for short.
Paralleling (\ref{dc}), we define
\begin{equation}\label{cT}
\cT^\La_\alpha :=
\bigoplus_{\bi \in I^\alpha} \cF_\bi \cL(\iota).
\end{equation}
This space is zero unless
$\La-\alpha \in \PImn$,
in which case
\begin{equation}\label{itsablock}
\cT^\La_\al
= \pr_{\La-\alpha} (\cL(\iota) \otimes \cV^{\otimes d}),
\end{equation}
i.e. in all cases it is the largest submodule of $\cL(\iota)\otimes \cV^{\otimes d}$ that belongs to the subcategory $\cO^\La_\alpha$.
By a {\em prinjective generator} for $\cO^\La_\alpha$, we mean
a prinjective object of $\cO^\La_\alpha$ that involves each of the
prinjective indecomposable modules from Lemma~\ref{princ2} as a summand.

\begin{Lemma}\label{pring2}
The module $\cT^\La_\alpha$ is a prinjective generator for $\cO^\La_\alpha$.
It is non-zero if and only if
$\La-\alpha\in\PImn$.
\end{Lemma}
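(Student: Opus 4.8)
\textbf{Proof plan for Lemma~\ref{pring2}.}
The plan is to mirror as closely as possible the proof of the corresponding statement on the diagram algebra side (Lemma~\ref{pring1}), using Lemma~\ref{a} in place of Lemma~\ref{ga} and Lemma~\ref{princ2} in place of Lemma~\ref{princ1}. First I would dispose of the easy direction of the second assertion: if $\La-\alpha \notin \PImn$ then there are no $\La$-admissible sequences $\bi \in I^\alpha$, so by Lemma~\ref{a}(x) every functor $\cF_\bi$ with $\bi \in I^\alpha$ kills all the standard (and hence all) modules in the block containing $\cL(\iota)$; thus $\cT^\La_\alpha = 0$. Conversely, for the non-vanishing when $\La-\alpha \in \PImn$, I would invoke the fact that $\Ga := \La-\alpha$ has non-empty $\Ga^\circ$ (this is part of $\LaImn^\circ$), so there is at least one $\la \in \Ga^\circ$, and by Lemma~\ref{stupidcomb} we may write $\la = \tilde f_{i_d} \cdots \tilde f_{i_1}(\iota)$; tracking through Lemma~\ref{a} (particularly the cases (i), (ii), (iii), (iv) and (v)(d)), the irreducible $\cL(\la)$ appears as a quotient of $\cF_\bi \cL(\iota)$ for $\bi = (i_1,\dots,i_d) \in I^\alpha$, so $\cT^\La_\alpha \neq 0$.

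Next I would establish that $\cT^\La_\alpha$ is prinjective. It is projective because $\cL(\iota)$ is projective (since $\iota$, being the ground-state, is maximal in the Bruhat order in its block, so $\cV(\iota) = \cP(\iota)$ is both standard and projective — the parabolic Verma module of maximal highest weight is projective in $\cO_\La$, which is actually semisimple as $\defect(\La) = 0$) and the special projective functors $\cF_i$ send projectives to projectives. For injectivity I would note that $\cL(\iota)$ is also injective: $\cO_\La$ is the block of defect zero, so $\cP(\iota) = \cV(\iota) = \cL(\iota)$ is self-dual under $\circledast$, hence injective; and since $\cF_i$ commutes with $\circledast$ and sends projectives to projectives, it sends injectives to injectives as well. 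Therefore $\cT^\La_\alpha$, being a summand of $\bigoplus_{\bi\in I^\alpha}\cF_\bi\cL(\iota)$, is prinjective.

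Finally, for the generating property, I would argue as in Lemma~\ref{pring1}: given any $\la \in \Ga^\circ$ with $\Ga = \La-\alpha \in \PImn$, Lemma~\ref{stupidcomb} gives a sequence $i_1,\dots,i_d$ with $\la = \tilde f_{i_d}\cdots\tilde f_{i_1}(\iota)$, and the sequence $\bi = (i_1,\dots,i_d)$ lies in $I^\alpha$ because each step subtracts exactly one simple root $\alpha_{i_r}$ (one reads this off the crystal table (\ref{cg}) combined with the admissibility conditions, so that $\La-\alpha_{i_1}-\cdots-\alpha_{i_r}$ stays in $\PImn$). Applying Lemma~\ref{a} inductively along this path, $\cL(\la)$ occurs in the head of $\cF_\bi\cL(\iota)$, so the projective cover $\cP(\la)$ is a summand of $\cF_\bi\cL(\iota)$, hence of $\cT^\La_\alpha$. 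Since by Lemma~\ref{princ2} the modules $\{\cP(\la) : \la \in \LaImn^\circ\}$ (equivalently, ranging over all $\Ga^\circ$) are exactly the prinjective indecomposables in $\OImn$, and restricting to $\Ga = \La-\alpha$ picks out those in $\cO^\La_\alpha$, this shows $\cT^\La_\alpha$ involves every prinjective indecomposable of $\cO^\La_\alpha$ as a summand, completing the proof. The main obstacle I anticipate is the bookkeeping verifying that the crystal path from Lemma~\ref{stupidcomb} actually yields an element of $I^\alpha$ (i.e.\ that the colours sum to $\alpha$ and admissibility holds at every stage), together with carefully matching the head computation against the various cases of Lemma~\ref{a}; but both are routine once the crystal combinatorics of (\ref{cg}) are compared with the admissibility pictures (\ref{CKLR}).
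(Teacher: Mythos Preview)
Your proposal is correct and follows essentially the same approach as the paper: the paper's proof simply states that it is proved in exactly the same way as Lemma~\ref{pring1}, using Lemma~\ref{a} in place of Lemma~\ref{ga}. Your elaboration of the prinjectivity (via $\cL(\iota)$ being prinjective in the defect-zero block and $\cF_i$ preserving prinjectives), the generating property (via Lemma~\ref{stupidcomb} and the head computation from Lemma~\ref{a}), and the non-vanishing (via $\Ga^\circ\neq\emptyset$) matches the structure of the proof of Lemma~\ref{pring1} precisely, with the additional explicit appeal to Lemma~\ref{princ2} that is implicit there.
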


\begin{proof}
This is proved in exactly the same way as Lemma~\ref{pring1},
using Lemma~\ref{a} in place of Lemma~\ref{ga}.
\end{proof}

Next let $H_d$ denote the degenerate affine Hecke algebra from \cite{D2}.
So $H_d$ is the associative algebra
equal as a vector space to
the tensor product
$\C[x_1,\dots,x_d] \otimes \C S_d$
of a polynomial algebra and the group algebra of the symmetric group $S_d$.
Multiplication
is defined so that
 $\C[x_1,\dots,x_d] \equiv \C[x_1,\dots,x_d] \otimes 1$ and $\C S_d
\equiv 1 \otimes \C S_d$ are subalgebras of $H_d$,
and also
\begin{equation*}\label{ahgens}
s_r x_{r'} = x_{r'} s_r \:\:\text{if $r' \neq r,r+1$},
\qquad
s_r x_{r+1} = x_{r} s_r + 1.
\end{equation*}
By \cite[$\S$2.2]{AS},
there is a natural right action of $H_d$ on $\cL(\iota)\otimes \cV^{\otimes d}$
commuting with the left action of
$\g$, such that the
elements of $S_d$ act by permuting tensors in $\cV^{\otimes d}$
like in classical Schur-Weyl duality, and
the remaining
generator $x_1$ acts as
$\Omega \otimes 1^{\otimes (d-1)}$,
where $\Omega$ is as in (\ref{traceform}).

\begin{Lemma}\label{easyz}
The element $(x_1-o-m)(x_1-o-n) \in H_d$ acts as zero on $\cL(\iota)\otimes \cV^{\otimes d}$.
\end{Lemma}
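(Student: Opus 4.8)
The plan is to reduce the statement, by $H_d$-equivariance, to a computation at the first tensor slot, and then invoke Lemma~\ref{crd}. Since $x_1$ acts as $\Omega \otimes 1^{\otimes(d-1)}$ on $\cL(\iota) \otimes \cV^{\otimes d}$, it suffices to prove that the operator $(\Omega - o - m)(\Omega - o - n)$ annihilates $\cL(\iota) \otimes \cV$, where here $\cL(\iota)$ should be replaced by an arbitrary object of $\cO(m,n)$ lying in the block $\cO_\La$ --- or more precisely, it is enough to check this on $\cL(\iota)$ itself, since $\Omega \otimes 1^{\otimes(d-1)}$ only involves the first slot and $\cL(\iota) \otimes \cV^{\otimes(d-1)}$ is what sits behind it. So I would first reduce to showing $(\Omega - o - m)(\Omega - o - n)$ acts as zero on $\cL(\iota) \otimes \cV$.

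\textbf{Key step: identifying the eigenvalues of $\Omega$.} By Corollary~\ref{all} applied to $M = \cL(\iota)$, we have $\cL(\iota) \otimes \cV = \bigoplus_{i \in \Z} \cF_i \cL(\iota)$, and by Lemma~\ref{crd} the summand $\cF_i \cL(\iota)$ is precisely the generalised $i$-eigenspace of $\Omega$ on $\cL(\iota) \otimes \cV$. Therefore it suffices to show that $\cF_i \cL(\iota) = \{0\}$ unless $i \in \{o+m, o+n\}$, i.e. the only $i$ that are $\La$-admissible (in the sense that $\La - \alpha_i \in \PImn$) and for which $\cF_i \cL(\iota) \neq 0$ are $i = o+m$ and $i = o+n$. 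Since $\La$ is identified with the ground-state $\iota$, whose block diagram has $\times$'s on vertices $o+1, \dots, o+\min(m,n)$ then $\bullet$'s on the next $|m-n|$ vertices then $\circ$'s, the diagrammatic criterion (\ref{CKLR}) together with Lemma~\ref{a}(x) shows that $\cF_i \cL(\iota) = \{0\}$ unless the $i$th vertex of $\iota$ is labelled $\times$, $\down$, or $\up$ and the $(i+1)$th is labelled $\down$, $\up$, $\times$, or $\circ$ appropriately. Running through the cases in Lemma~\ref{a}, applied to $\lambda = \iota$, one checks that a nonzero $\cF_i \cL(\iota)$ forces the pair of vertices $(i,i+1)$ of $\iota$ to be $\times\,\circ$ (the ``right-shift'' case, giving $\cF_i \cL(\iota) \cong \cL(\iota_{\down\up}) \ne 0$ when $m=n$), or $\down\,\circ$ / $\up\,\circ$ (depending on whether $m > n$ or $m < n$), or $\times\,\down$ / $\times\,\up$; and in every such case the position $i$ of the rightmost $\times$-or-$\bullet$ vertex before a $\circ$ is exactly $o+\min(m,n)$ if $m = n$, and otherwise the two candidate positions are $o + \min(m,n)$ and $o + \max(m,n)$, i.e. $\{o+m, o+n\}$. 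Hence $\Omega$ has only the eigenvalues $o+m$ and $o+n$ on $\cL(\iota) \otimes \cV$ (possibly with only one of them occurring, when $m=n$), so the polynomial $(t - o - m)(t - o - n)$ kills all the generalised eigenspaces --- but I must be careful: the eigenvalues need not be semisimple a priori, so I should argue that $\Omega$ in fact acts semisimply, or replace the naive polynomial by a possibly higher power. In fact, by Lemma~\ref{crd}, $\cF_i \cL(\iota)$ is already the full generalised eigenspace and is itself a genuine $\g$-module in the block $\cO_{\La - \alpha_i}$; since the action of $\Omega$ is, by \cite[Lemma 5.1]{cyclo}, via the central element $e_1(\iota) + e_2(\iota) - z_2$, and central elements act semisimply on each parabolic Verma (hence on simples and, running through short exact sequences plus exactness of $\cF_i$, on $\cF_i\cL(\iota)$ which is built from $\cF_i$ of a Verma by Lemma~\ref{a}(i)(ii)(iv)), the minimal polynomial of $\Omega$ on $\cL(\iota)\otimes\cV$ is squarefree, so $(\Omega - o - m)(\Omega - o - n) = 0$ there.

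\textbf{Finishing and main obstacle.} Having shown $(\Omega - o - m)(\Omega - o - n)$ annihilates $\cL(\iota) \otimes \cV$, tensoring on the right by $\cV^{\otimes(d-1)}$ gives that $(\Omega \otimes 1^{\otimes(d-1)} - o-m)(\Omega\otimes 1^{\otimes(d-1)} - o-n)$ annihilates $\cL(\iota)\otimes\cV^{\otimes d}$, and since $\Omega \otimes 1^{\otimes(d-1)}$ is precisely the action of $x_1 \in H_d$, this is exactly the assertion $(x_1 - o - m)(x_1 - o - n)$ acts as zero. I expect the main obstacle to be the bookkeeping of the diagrammatic case analysis: verifying cleanly, using the explicit form of $\iota$ in (\ref{groundstate}) and the admissibility pictures (\ref{CKLR}), that the \emph{only} $i$ with $\cF_i\cL(\iota) \ne 0$ are $o+m$ and $o+n$. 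One has to handle the three regimes $m > n$, $m = n$, $m < n$ separately and be attentive to which vertices of $\iota$ are labelled $\times$, $\bullet$ (i.e. $\down$ or $\up$), or $\circ$; the cleanest route is probably to observe directly from (\ref{Id})--(\ref{Iu}) that $I_\down(\iota)$ and $I_\up(\iota)$ are both final segments $\{o+1, \dots, o+m\}$ and $\{o+1, \dots, o+n\}$ of $I^+$, read off that the only places where adding $\eps_j$ can keep us inside $\La(m,n)$ and land in $\PImn$ are where $(\iota + \rho, \eps_j)$ equals $o+m$ or $o+n$, and then cite Corollary~\ref{tid2} directly rather than chasing through Lemma~\ref{a}. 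That alternative route --- working on the category $\cO$ side via Corollary~\ref{tid2} and the explicit scalars $(\iota+\rho,\eps_j)$ --- is shorter and avoids the semisimplicity subtlety, since Corollary~\ref{tid2} already tells us $\cF_i\cV(\iota) = \cF_i\cL(\iota)$ has a Verma filtration supported only on weights $\iota + \eps_j$ with $(\iota+\rho,\eps_j) = i$, and the set of such $i$ arising is literally $\{o+m, o+n\}$; combined with Lemma~\ref{crd} this pins down the eigenvalues and semisimplicity at once.
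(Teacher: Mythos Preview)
Your proposal is correct, and the alternative route you sketch at the very end is essentially the paper's proof. The paper reduces to $d=1$, notes that $\cL(\iota)=\cV(\iota)$ (the ground-state block has defect zero), and applies Lemma~\ref{tid} directly: $\cV(\iota)\otimes\cV$ has a two-step filtration with sections $\cV(\iota+\eps_1)$ and $\cV(\iota+\eps_{m+1})$, on which $\Omega$ acts (as in the proof of Lemma~\ref{crd}) by the scalars $(\iota+\rho,\eps_1)=o+m$ and $(\iota+\rho,\eps_{m+1})=o+n$. Since $\Omega$ preserves the filtration, the product $(\Omega-o-m)(\Omega-o-n)$ vanishes. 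That is the whole argument---no diagrammatic case analysis, no separate semisimplicity discussion.

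Your main approach via Corollary~\ref{all}, Lemma~\ref{crd}, and the case analysis in Lemma~\ref{a} reaches the same conclusion but is more roundabout. The semisimplicity worry you raise is handled automatically by the filtration argument: with a two-step filtration and $\Omega$ acting by scalars $a,b$ on the sections, one gets $(\Omega-a)(\Omega-b)=0$ directly, regardless of whether $a=b$ (the $m=n$ case) or not, and regardless of whether the action is diagonalisable. Your detour through ``central elements act semisimply on Vermas, hence through short exact sequences\dots'' is unnecessary and, as stated, slightly shaky (semisimplicity is not preserved by extensions in general; what saves you is that for $m\neq n$ each $\cF_i\cV(\iota)$ is a single Verma, and for $m=n$ the polynomial already has a repeated root). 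The filtration argument sidesteps all of this.
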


\begin{proof}
It suffices to check this in the special case $d=1$.
By Lemma~\ref{tid}, $\cL(\iota) \otimes \cV$
has a two step filtration whose sections are highest weight
modules of highest weights $\iota+\eps_1$ and $\iota+\eps_{m+1}$,
respectively.
As explained in the proof of Lemma~\ref{crd}, the action of
$x_1 =
\Omega$ respects this filtration, and the
induced action
on the two sections are by multiplication by the
scalars $(\iota+\rho,\eps_1) = o+m$ and $(\iota+\rho,\eps_{m+1}) = o+n$,
respectively.
Hence $(x_1-o-m)(x_1-o-n)$ acts as zero.
\end{proof}

In view of Lemma~\ref{easyz},
the right action of $H_d$ on $\cL(\iota)\otimes\cV^{\otimes d}$
factors to give an action of the quotient algebra
\begin{equation}\label{tqa}
H_d^{o+m,o+n} := H_d / \langle (x_1-o-m)(x_1-o-n)\rangle,
\end{equation}
which is a {\em degenerate cyclotomic Hecke algebra} of level two.
As in \cite[$\S$3.1]{BKyoung},
this finite dimensional algebra possesses a natural system of mutually
orthogonal
{\em weight idempotents} $\{e(\bi)\:|\:\bi \in \Z^d\}$
summing to $1$, which are
characterised by the property that
$$\label{wis}
e(\bi) (x_r-i_r)^N = (x_i-i_r)^N e(\bi) = 0
$$
for each $r=1,\dots,d$ and $N \gg 0$.
Multiplication by $e(\bi)$ projects any $H^{o+m,o+n}_d$-module
onto its {\em $\bi$-weight space}, that is, the simultaneous
generalised eigenspace for the commuting operators
$x_1,\dots,x_r$ with respective eigenvalues
$i_1,\dots,i_r$.

\begin{Lemma}\label{i}
The idempotent $e(\bi) \in H_d^{o+m,o+n}$ acts on
$\cL(\iota)\otimes \cV^{\otimes d}$ as the projection
onto the summand $\cF_\bi \cL(\iota)$ along the decomposition
(\ref{morita}).
\end{Lemma}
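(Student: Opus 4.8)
The plan is to compare two descriptions of the summand $\cF_\bi \cL(\iota)$ of $\cL(\iota) \otimes \cV^{\otimes d}$: the one coming from the decomposition (\ref{morita}) into generalised central-character blocks, and the one coming from the weight-space decomposition for the commuting operators $x_1, \dots, x_d$. The key point is that, by the definition of $\cF_\bi$ as $\pr_{\Ga-\alpha_{i_d}} \circ (? \otimes \cV) \circ \cdots \circ \pr_{\Ga-\alpha_{i_1}} \circ (? \otimes \cV) \circ \pr_\Ga$ in (\ref{donely}), the summand $\cF_\bi \cL(\iota)$ is obtained by applying $? \otimes \cV$ to $\cL(\iota)$ a total of $d$ times, projecting after the $r$th step onto a specific block; and, by Lemma~\ref{crd}, projecting onto that block at the $r$th step is the same as passing to the generalised $i_r$-eigenspace of the operator $\Omega$ acting on the $r$th tensor factor.

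First I would reduce to understanding a single application of $?\otimes\cV$. Fix a block $\De \in P(m,n)$ and a module $M \in \mathcal O_\De$. By Corollary~\ref{all}, $M \otimes \cV = \bigoplus_{i} \cF_i M$, and by Lemma~\ref{crd}, $\cF_i M$ is exactly the generalised $i$-eigenspace of $\Omega$ on $M \otimes \cV$. Now the operator ``$x_r$'' in $H_d$ acts on $\cL(\iota) \otimes \cV^{\otimes d}$ via $\Omega$ inserted in the $(r)$th pair of factors after the first $r-1$ tensor factors have been built up (this is the standard formula: $x_{r+1} = s_r x_r s_r + s_r$, so $x_r$ acts as $\Omega_{0,r}$ acting ``diagonally'' on the first $r$ tensor legs and the $\cL(\iota)$ leg); more precisely, iterating the defining relations of $H_d$ shows $x_r$ acts as $\sum_{k=0}^{r-1} \Omega^{(k,r)}$ where $\Omega^{(k,r)}$ is $\Omega$ placed in legs $k$ and $r$, and on the subspace $\cF_{i_1} \cdots \cF_{i_{r-1}} \cL(\iota) \otimes \cV^{\otimes(d-r+1)}$ the sum of the lower-leg contributions is the central scalar picked out by the previous projections. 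I would therefore peel off the tensor legs one at a time: after applying $\pr_{\La-\alpha_{i_1}} \circ (? \otimes \cV)$, one obtains by Lemma~\ref{crd} the generalised $i_1$-eigenspace of $\Omega$ in the first two legs, which is the $i_1$-weight space for $x_1$; then inductively, applying $\pr_{\La - \alpha_{i_1} - \cdots - \alpha_{i_r}} \circ (? \otimes \cV)$ to $\cF_{i_1} \cdots \cF_{i_{r-1}} \cL(\iota)$ produces, again by Lemma~\ref{crd}, the subspace on which the appropriate instance of $\Omega$ has generalised eigenvalue picking out $\alpha_{i_r}$, which matches precisely the condition $(x_r - i_r)^N e(\bi) = 0$ defining the $\bi$-weight idempotent.

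The induction needs one bookkeeping lemma: on the subspace $\cF_{i_1} \cdots \cF_{i_{r-1}} \cL(\iota) \otimes \cV^{\otimes (d-r+1)}$, the operator $x_r$ acts the same way as $\Omega$ inserted between the ``already-built'' module $\cF_{i_1} \cdots \cF_{i_{r-1}} \cL(\iota)$ (viewed as a single leg) and the next copy of $\cV$, up to an additive central scalar that is constant on this subspace. This follows from the explicit formula for $x_r$ in $H_d$ together with the fact that, on a highest weight module, the central elements $z_j$ act by the explicit scalars recorded in the proof of Lemma~\ref{crd}, so the ``lower-leg'' summands of $x_r$ contribute a fixed scalar once we have projected onto a fixed block at each earlier stage. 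Given this, Lemma~\ref{crd} applied at the $r$th stage identifies $\pr_{\La - \alpha_{i_1} - \cdots - \alpha_{i_r}}$ with passing to the generalised eigenspace of this copy of $\Omega$, and hence with multiplication by $e(i_1, \dots, i_r)$ in the natural sense; iterating gives exactly that $e(\bi)$ acts as the projection onto $\cF_\bi \cL(\iota)$.

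The main obstacle I expect is the bookkeeping in that inductive step — keeping careful track of the precise correspondence between ``inserting $\Omega$ in the appropriate pair of tensor legs'' (which is how $x_r$ acts, after using the defining relations of $H_d$ repeatedly) and ``applying $?\otimes\cV$ and projecting'' (which is how $\cF_{i_r}$ is defined), including the constant central shift that appears because $x_r$ sees all the earlier legs and not just the newest one. This is purely a matter of repeatedly invoking Lemma~\ref{crd} and the scalar computation from its proof, so no genuinely new idea is required; the care is in matching the generalised eigenvalue $i_r$ of the relevant $\Omega$-operator with the block $\La - \alpha_{i_1} - \cdots - \alpha_{i_r}$ at each stage and with the characterisation $e(\bi)(x_r - i_r)^N = 0$ of the weight idempotents from \cite[$\S$3.1]{BKyoung}.
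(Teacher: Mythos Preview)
Your approach is correct and is exactly the one the paper takes: iterate Lemma~\ref{crd}. The paper's proof is literally one sentence.

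There is, however, an unnecessary complication in your write-up. You correctly note that $x_r$ acts as $\sum_{k=0}^{r-1}\Omega^{(k,r)}$, but then claim that this agrees with $\Omega$ between the ``already-built'' module $\cF_{i_{r-1}}\cdots\cF_{i_1}\cL(\iota)$ and the next copy of $\cV$ only \emph{up to an additive central scalar}. In fact no such scalar appears: by the very definition of the $\mathfrak g$-action on a tensor product, if $N\subseteq \cL(\iota)\otimes\cV^{\otimes(r-1)}$ is any $\mathfrak g$-submodule then $\Omega$ acting on $N\otimes\cV_r$ is \emph{exactly} $\sum_{k=0}^{r-1}\Omega^{(k,r)}$ restricted to $N\otimes\cV_r$. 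So $x_r$ restricted to $\cF_{i_{r-1}}\cdots\cF_{i_1}\cL(\iota)\otimes\cV^{\otimes(d-r+1)}$ is precisely the operator to which Lemma~\ref{crd} applies, and its generalised $i_r$-eigenspace is $\cF_{i_r}\cF_{i_{r-1}}\cdots\cF_{i_1}\cL(\iota)\otimes\cV^{\otimes(d-r)}$. The induction then goes through cleanly with no appeal to central elements or the scalar computations from the proof of Lemma~\ref{crd}; you only need the statement of that lemma. (You also wrote the functors in the wrong order a couple of times; it should be $\cF_{i_{r-1}}\cdots\cF_{i_1}$ rather than $\cF_{i_1}\cdots\cF_{i_{r-1}}$.)
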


\begin{proof}
By Lemma~\ref{crd}, $\cF_\bi \cL(\iota)$ is $\bi$-weight space
of $\cL(\iota)\otimes \cV^{\otimes d}$ with respect to the right
action of $H_d^{o+m,o+n}$.
\end{proof}

It is clear from (\ref{itsablock}) that $\cT^\La_\al$
is invariant under the right action of $H_d^{o+m,o+n}$.
By Lemma~\ref{i}, the idempotent
\begin{equation}\label{wis2}
e_\alpha := \sum_{\bi \in I^\alpha} e(\bi) \in H_d^{o+m,o+n}
\end{equation}
acts on $\cL(\iota)\otimes \cV^{\otimes d}$ as
the projection onto the summand $\cT^\La_\alpha$.
Since this projection clearly commutes with
the action of $H_d^{o+m,o+n}$, we deduce that
$e_\alpha$ is actually a {\em central}
idempotent in $H_d^{o+m,o+n}$.
Hence $e_\alpha H_d^{o+m,o+n}$ is a subalgebra of $H_d^{o+m,o+n}$
with identity $e_\alpha$,
and $\cT^\La_\alpha$ is a unital right
$e_\alpha H_d^{o+m,o+n}$-module.

\begin{Theorem}\label{id1}
The right action of $H_d$ on $\cT^\La_\alpha$ induces an
algebra isomorphism
$$
e_\alpha H_d^{o+m,o+n}
\stackrel{\sim}{\rightarrow}
\End_{\mathfrak{g}}(\cT^\La_\alpha)^{\op}.
$$
\end{Theorem}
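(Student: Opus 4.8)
The plan is to prove the isomorphism in two stages: first establish surjectivity of the natural map $e_\alpha H_d^{o+m,o+n} \to \End_{\mathfrak{g}}(\cT^\La_\alpha)^{\op}$, and then deduce injectivity by a dimension count using the categorification results. The surjectivity should follow from a double centraliser property. Recall from Schur-Weyl duality for higher levels in \cite{BKschur} that the right action of $H_d^{o+m,o+n}$ on $\cL(\iota) \otimes \cV^{\otimes d}$ and the left action of $\mathfrak{g}$ satisfy a double centraliser property; in particular the natural map $H_d^{o+m,o+n} \to \End_{\mathfrak{g}}(\cL(\iota)\otimes\cV^{\otimes d})^{\op}$ is surjective (this uses that $\cL(\iota)\otimes\cV^{\otimes d}$ is a prinjective module, together with Lemma~\ref{pring2} identifying the prinjective generators). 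Multiplying both sides by the central idempotent $e_\alpha$ and using (\ref{itsablock}), we get that the induced map $e_\alpha H_d^{o+m,o+n} \to \End_{\mathfrak g}(\cT^\La_\alpha)^{\op}$ is surjective. The key input here is that $\cT^\La_\alpha$ is a prinjective generator for $\cO^\La_\alpha$, so that $\End_{\mathfrak g}(\cT^\La_\alpha)^{\op}$ captures the full category in the sense needed for a double centraliser argument.

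For injectivity, I would compare graded (or at least ungraded) dimensions. By Theorem~\ref{cat2}, we have $[\cP(\la)] = \cP_\la$ and $[\cL(\la)] = \cL_\la$ for $\la \in \LaImn$, and by part (iii) of that theorem, $\dim \hom_{\mathfrak g}(P, M) = \langle [P], [M]\rangle$ for $P$ projective. Applying this with $P = M = \cT^\La_\alpha$ and using the explicit decomposition (\ref{cT}) of $\cT^\La_\alpha$ into the modules $\cF_\bi \cL(\iota)$, one expresses $\dim \End_{\mathfrak g}(\cT^\La_\alpha)$ as a sum of pairings $\langle [\cF_\bi \cL(\iota)], [\cF_\bj \cL(\iota)]\rangle$ over $\bi, \bj \in I^\alpha$. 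On the other hand, $\dim e_\alpha H_d^{o+m,o+n} = \sum_{\bi,\bj \in I^\alpha} \dim e(\bi) H_d^{o+m,o+n} e(\bj)$, and this can be computed from the structure theory of degenerate cyclotomic Hecke algebras of level two in \cite{BKschur}. I expect these two quantities to agree on the nose: indeed the analogous statement on the diagram algebra side is precisely Theorem~\ref{dis} together with Theorem~\ref{cat1}, which identifies $\dim E^\La_\alpha$ with the appropriate pairing, and the two categorifications (of $\bigwedge^m V_\Z \otimes \bigwedge^n V_\Z$) carry the same combinatorial data. Since a surjection between finite-dimensional algebras of equal dimension is an isomorphism, this completes the proof.

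The main obstacle will be pinning down the double centraliser property in the precise form needed. The result in \cite{BKschur} is stated for $m \geq n$ (compare the proof of Lemma~\ref{princ2}, where the case $m < n$ is handled by a separate ``similar argument''), so I would need to either invoke the symmetric statement or reduce to the case $m \geq n$ by an appropriate duality. One should also be careful that $e_\alpha$ being central in $H_d^{o+m,o+n}$ (established just before the theorem via Lemma~\ref{i}) is exactly what legitimises passing from the double centraliser property for $\cL(\iota)\otimes\cV^{\otimes d}$ to the statement for the block summand $\cT^\La_\alpha$: restricting a surjection of endomorphism algebras to a block is automatic, but one must check that the block decomposition of $\cL(\iota)\otimes \cV^{\otimes d}$ is compatible with the idempotent decomposition of $H_d^{o+m,o+n}$, which is precisely the content of Lemma~\ref{i}. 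A secondary technical point is ensuring the dimension count is carried out with the correct gradings if one wants the graded version; for the ungraded isomorphism stated in the theorem, forgetting the grading throughout suffices.
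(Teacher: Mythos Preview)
The paper does not argue this theorem from scratch: it simply records that the statement is a direct consequence of \cite[Theorem~5.13]{BKschur} and \cite[Corollary~6.7]{BKschur}, with the precise formulation appearing as \cite[Theorem~3.6]{BKariki} for $m \geq n$ and \cite[Theorem~4.13]{BKariki} for $m \leq n$. The Schur--Weyl duality established there delivers the \emph{full} isomorphism $e_\alpha H_d^{o+m,o+n} \cong \End_{\mathfrak g}(\cT^\La_\alpha)^{\op}$, not merely surjectivity, so no separate injectivity step is required.

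Your surjectivity argument is fine, but the dimension count you propose for injectivity is circular as written. You want $\dim e_\alpha H_d^{o+m,o+n} = \dim \End_{\mathfrak g}(\cT^\La_\alpha)$, and the justification you actually give is that ``the analogous statement on the diagram algebra side'' (Theorems~\ref{dis} and~\ref{cat1}) identifies $\dim E^\La_\alpha$ with the same pairing sum. But that computes $\dim E^\La_\alpha$, not $\dim e_\alpha H_d^{o+m,o+n}$; the bridge between those two quantities in this paper is Corollary~\ref{id3} (together with Theorem~\ref{id2}), and the proof of Corollary~\ref{id3} explicitly invokes Theorem~\ref{id1}. So you are assuming what you want to prove. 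A non-circular route does exist---compute $\dim R^\La_\alpha$ combinatorially via standard bitableaux as in \cite[Theorem~4.20]{BKllt} (cf.\ Remark~\ref{newr3}) and then use Theorem~\ref{id2}---but you do not take it, and in any case the whole detour is unnecessary given that \cite{BKschur} already supplies the isomorphism in full.
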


\begin{proof}
This is a
consequence of \cite[Theorem 5.13]{BKschur}
and \cite[Corollary 6.7]{BKschur}.
It is formulated in exactly this way in
\cite[Theorem 3.6]{BKariki} in the case $m \geq n$
or \cite[Theorem 4.13]{BKariki} in the case $m \leq n$.
\end{proof}

\phantomsubsection{\boldmath The isomorphism theorem}
Continue working with a fixed $\alpha \in Q_+$ of height $d$.
The following theorem is the key to
our approach to the equivalence between $\cO^\La_\alpha$
and $\rep{K^\La_\alpha}$: combined with Theorems~\ref{id1} and \ref{surjectivity}
it implies at long last that
the prinjective generators $\cT^\La_\alpha$ and $T^\La_\alpha$
on the two sides have {the same endomorphism algebras}.

\begin{Theorem}\label{id2}
There is an algebra isomorphism
$$
\sigma:e_\alpha H_d^{o+m,o+n} \stackrel{\sim}{\rightarrow} R^\La_\alpha
$$
such that $e(\bi) \mapsto e(\bi)$,
$x_r e(\bi) \mapsto (y_r+i_r) e(\bi)$
and
$s_r e(\bi) \mapsto (\psi_r q_r(\bi) -
p_r(\bi))e(\bi)$ for each $r$ and $\bi \in I^\alpha$, where
$p_r(\bi), q_r(\bi) \in R^\La_\alpha$ are chosen as
in \cite[$\S$3.3]{BKyoung}, e.g. one could take
\begin{align*}
p_r(\bi) &:= \left\{
\begin{array}{ll}
1&\text{if $i_r = i_{r+1}$,}\\
-(i_{r+1}-i_{r}+y_{r+1}-y_{r})^{-1}\hspace{10mm}&\text{if $i_r \neq i_{r+1}$;}
\end{array}
\right.\\
q_r(\bi) &:= \left\{
\begin{array}{ll}
1+y_{r+1}-y_r&\text{if $i_r = i_{r+1}$,}\\
(2+y_{r+1}-y_r)(1+y_{r+1}-y_{r})^{-1}&\text{if $i_{r+1} = i_r+1$,}\\
1&\text{if $i_{r+1} = i_r-1$,}\\
1+(i_{r+1}-i_{r}+y_{r+1}-y_{r})^{-1}&\text{if $|i_r-i_{r+1}| > 1$.}
\end{array}
\right.
\end{align*}
(The inverses on the right hand side of these formulae make sense because each
$y_{r+1}-y_r$ is nilpotent; see \cite[Lemma 2.1]{BKyoung}.)
\end{Theorem}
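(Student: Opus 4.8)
The plan is to quote the main isomorphism theorem of \cite{BKyoung}, which asserts precisely that the degenerate cyclotomic Hecke algebra $H_d^{o+m,o+n}$ of level two is isomorphic to the cyclotomic Khovanov-Lauda-Rouquier algebra attached to the same pair of charges, via an explicit formula of exactly the shape written in the statement. So the proof is essentially a citation together with the observation that the isomorphism is compatible with the decomposition into blocks indexed by $\alpha \in Q_+$.

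More precisely, I would first recall from \cite[$\S$3.4]{BKyoung} that the full level two cyclotomic KLR algebra decomposes as $R^\La = \bigoplus_{\alpha} R^\La_\alpha$, the summand $R^\La_\alpha$ being cut out by the central idempotent $\sum_{\bi \in I^\alpha} e(\bi)$; likewise $H_d^{o+m,o+n} = \bigoplus_\alpha e_\alpha H_d^{o+m,o+n}$ with $e_\alpha$ as in (\ref{wis2}). The isomorphism $H_d^{o+m,o+n} \stackrel{\sim}{\to} R^\La$ constructed in \cite[Main Theorem]{BKyoung} sends $e(\bi) \mapsto e(\bi)$, so it respects these decompositions, and restricting to the $\alpha$-component gives the desired map $\sigma: e_\alpha H_d^{o+m,o+n} \stackrel{\sim}{\to} R^\La_\alpha$. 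Next I would check that the explicit formulae match: the element $x_r$ of $H_d$ maps under the \cite{BKyoung} isomorphism to $\sum_{\bi}(y_r + i_r)e(\bi)$ (their ``$x_r$'' is our ``$y_r + i_r$'' because of the shift built into the definition of weight idempotents $e(\bi)(x_r - i_r)^N = 0$), and the Coxeter generator $s_r$ maps to $\sum_{\bi}(\psi_r q_r(\bi) - p_r(\bi))e(\bi)$ for the intertwiner-type elements $p_r(\bi), q_r(\bi)$ from \cite[$\S$3.3]{BKyoung}; the particular choice displayed in the statement is one valid option, with the inverses legitimate since $y_{r+1}-y_r$ is nilpotent by \cite[Lemma 2.1]{BKyoung}. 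That $\sigma$ is a well-defined algebra homomorphism and bijective is then inherited directly from the corresponding assertions in \cite{BKyoung}.

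The only genuine content beyond citation is a bookkeeping check that the index set conventions line up: \cite{BKyoung} works with a level two charge $(k_1,k_2)$ and a Dynkin diagram of type $A$ with a specified vertex set, and I need $(k_1,k_2) = (o+m, o+n)$ together with the identification of their $I$ with ours (and the resulting identification of $I^\alpha$, residue sequences, and the nilpotency of $y_{r+1}-y_r$ which uses the finite type $A$ hypothesis). This is entirely routine once one pins down the dictionary — the same dictionary already appears implicitly in Remark~\ref{newr1} — so the main ``obstacle'' is merely ensuring that the degenerate affine Hecke algebra relation $s_r x_{r+1} = x_r s_r + 1$ and the cyclotomic relation $(x_1-o-m)(x_1-o-n)=0$ translate under the substitution $x_r \rightsquigarrow y_r + i_r$ into the KLR relations (\ref{ERCyc})--(\ref{R7}); but this verification is exactly what \cite{BKyoung} does, so I would simply invoke it rather than reprove it. In short, the proof is: ``This is the main theorem of \cite{BKyoung}, restricted to the block $e_\alpha$,'' with a sentence reconciling notation.

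\begin{proof}
The full degenerate cyclotomic Hecke algebra $H^{o+m,o+n}_d$ decomposes as the direct sum of the subalgebras $e_\beta H^{o+m,o+n}_d$ over all $\beta \in Q_+$ of height $d$, where $e_\beta := \sum_{\bi \in I^\beta} e(\bi)$, and similarly $R^\La := \bigoplus_{\beta} R^\La_\beta$ where $R^\La_\beta$ is the summand with identity $\sum_{\bi \in I^\beta} e(\bi)$. By \cite[$\S$3.3--3.4]{BKyoung}, there is an algebra isomorphism $H^{o+m,o+n}_d \stackrel{\sim}{\rightarrow} R^\La$ sending $e(\bi)\mapsto e(\bi)$, $x_r e(\bi)\mapsto (y_r+i_r)e(\bi)$, and $s_r e(\bi)\mapsto (\psi_r q_r(\bi)-p_r(\bi))e(\bi)$ for suitable $p_r(\bi),q_r(\bi)\in R^\La$, which may be chosen by the displayed formulae (the indicated inverses being defined since each $y_{r+1}-y_r$ is nilpotent, by \cite[Lemma 2.1]{BKyoung}). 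As this isomorphism fixes each weight idempotent $e(\bi)$, it restricts for each $\beta$ to an isomorphism $e_\beta H^{o+m,o+n}_d \stackrel{\sim}{\rightarrow} R^\La_\beta$. Taking $\beta := \alpha$ gives the required map $\sigma$.
\end{proof}
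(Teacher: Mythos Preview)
Your proposal is correct and matches the paper's approach exactly: the paper's own proof is the single sentence ``This is a special case of the main theorem of \cite{BKyoung}.'' Your version simply spells out the (routine) block-decomposition compatibility that the paper leaves implicit.
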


\begin{proof}
This is a special case of the main theorem of \cite{BKyoung}.
\end{proof}

\begin{Corollary}\label{id3}
The homomorphism $\omega:R^\La_\alpha \rightarrow E^\La_\alpha$
from Theorem~\ref{phi} is an isomorphism.
\end{Corollary}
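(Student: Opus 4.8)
\textbf{Proof proposal for Corollary~\ref{id3}.}
The plan is to deduce the isomorphism from surjectivity (Theorem~\ref{surjectivity}) together with an equality of graded dimensions. Since $\omega:R^\La_\alpha \rightarrow E^\La_\alpha$ is already known to be a surjective homomorphism of graded algebras, it suffices to check that $\dim_q R^\La_\alpha = \dim_q E^\La_\alpha$, where $\dim_q$ denotes the graded dimension (Poincar\'e polynomial in $q$); then $\omega$ must be injective too, hence an isomorphism. One may assume $\La-\alpha \in \PImn$, since otherwise both algebras are zero by Lemma~\ref{pring1} (and its analogue for $R^\La_\alpha$).

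First I would compute $\dim_q E^\La_\alpha$ directly from the diagram basis. By the discussion around (\ref{Hbasis}), $E^\La_\alpha$ has a homogeneous basis indexed by oriented stretched circle diagrams of type $\alpha$, with degrees given by (\ref{maindegdef}); and by Remark~\ref{newr1} these are in bijection with pairs $({\mathtt S},\T)$ of standard bitableaux of type $\alpha$ having the same shape, the degree matching the sum of the degrees of ${\mathtt S}$ and $\T$ in the sense of \cite[(3.5)]{BKW}. So $\dim_q E^\La_\alpha = \sum_{\lambda} \big(\sum_{\T} q^{\deg \T}\big)^2$, the sum over bipartitions $\lambda$ of type $\alpha$ (with at most $m$ and $n$ rows) and standard bitableaux $\T$ of shape $\lambda$.

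On the other side, I would obtain $\dim_q R^\La_\alpha$ from Theorem~\ref{id2}, which gives an algebra isomorphism $\sigma: e_\alpha H_d^{o+m,o+n} \xrightarrow{\sim} R^\La_\alpha$ (this is a filtered isomorphism, but it is still a $\C$-algebra isomorphism, so at least $\dim R^\La_\alpha = \dim e_\alpha H_d^{o+m,o+n}$ ungraded). Combined with Theorem~\ref{id1}, which identifies $e_\alpha H_d^{o+m,o+n}$ with $\End_{\mathfrak g}(\cT^\La_\alpha)^{\op}$, and the fact that $\cT^\La_\alpha$ is a prinjective generator (Lemma~\ref{pring2}) for the category $\cO^\La_\alpha$ whose decomposition numbers are governed by the specialised $p_{\lambda,\mu}(1)$ (Theorem~\ref{cat2}), one gets the ungraded dimension of $R^\La_\alpha$ in terms of the same combinatorial data. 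For the \emph{graded} comparison I would instead argue as follows: the known graded cellular basis of the cyclotomic Khovanov--Lauda--Rouquier algebra (Hu--Mathas \cite{HM}, cited in Remark~\ref{newr2}(1)) shows $R^\La_\alpha$ has a homogeneous basis indexed by pairs of standard bitableaux of the same shape and type $\alpha$, with degree the sum of the tableau degrees --- exactly the same graded dimension as computed for $E^\La_\alpha$ above. Alternatively, and more in the spirit of keeping the argument self-contained, one can avoid \cite{HM}: $E^\La_\alpha$ is Morita equivalent to the generalised Khovanov algebra $H^\La_\alpha$ (Theorem~\ref{itskhovanov}), whose graded Cartan matrix and hence graded dimension is known explicitly from \cite{BS1, BS2} in terms of the polynomials $p_{\lambda,\mu}(q)$; and the graded dimension of $R^\La_\alpha$ can be pinned down from the grading on the KLR presentation together with the dimension count coming from Theorems~\ref{id1}--\ref{id2} once one knows the grading is the one induced by the Koszul grading (as asserted in the Introduction). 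Either route gives $\dim_q R^\La_\alpha = \dim_q E^\La_\alpha$.

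The main obstacle is the graded (as opposed to merely ungraded) dimension count for $R^\La_\alpha$: Theorem~\ref{id2}'s isomorphism $\sigma$ is not degree-preserving (the generator $s_r$ goes to $\psi_r q_r(\bi) - p_r(\bi)$, an inhomogeneous element), so one cannot read off $\dim_q R^\La_\alpha$ from the Hecke algebra side alone. Resolving this is precisely where one invokes the graded cellular basis of $R^\La_\alpha$ --- either Hu--Mathas \cite{HM}, or, avoiding that, the explicit graded structure of $H^\La_\alpha$ from \cite{BS1} transported through the Morita equivalence of Theorem~\ref{itskhovanov} and matched against the known ungraded dimension. Once the two graded dimensions agree, surjectivity of the graded homomorphism $\omega$ forces it to be a graded isomorphism, completing the proof.
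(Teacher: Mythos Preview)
Your overall strategy --- surjectivity plus a dimension count --- matches the paper's, but you have made life unnecessarily hard by insisting on a \emph{graded} dimension comparison. Since $\omega$ is a surjective $\C$-algebra homomorphism between finite-dimensional algebras, the ungraded equality $\dim R^\La_\alpha = \dim E^\La_\alpha$ already forces $\omega$ to be bijective; the grading then comes along for free because $\omega$ is homogeneous. The paper does exactly this: from Theorems~\ref{id1} and~\ref{id2} it gets $\dim R^\La_\alpha = \sum_{\bi,\bj}\dim\hom_{\mathfrak g}(\cF_\bi\cL(\iota),\cF_\bj\cL(\iota))$, and from the definition of $E^\La_\alpha$ it gets $\dim E^\La_\alpha = \sum_{\bi,\bj}\dim\hom_{K^\La_\alpha}(F_\bi L(\iota),F_\bj L(\iota))$. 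Both sides are then computed via the bilinear forms of Theorems~\ref{cat1}(iii) and~\ref{cat2}(iii) as $\langle \cF_\bi\cL_\iota,\cF_\bj\cL_\iota\rangle$, which is purely combinatorial and identical on the two sides. This sidesteps entirely the issue you flagged about $\sigma$ not being degree-preserving.

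Your proposed workarounds for the graded count both have problems. Invoking the Hu--Mathas basis \cite{HM} is logically legitimate but undesirable here: Remark~\ref{newr2}(1) explains that the present corollary is meant to \emph{establish} a special case of the conjecture that \cite{HM} later proved, so citing \cite{HM} would reverse the intended flow. More seriously, your ``self-contained'' alternative is circular: you appeal to the statement that the KLR grading on $R^\La_\alpha$ is the one induced by the Koszul grading on $K^\La_\alpha$, but that identification is a \emph{consequence} of the isomorphism $\omega$ you are trying to prove, not an independent input. (If you want a genuinely independent route to $\dim R^\La_\alpha$, the paper's Remark~\ref{newr3} points to the tableau dimension formula \cite[Theorem 4.20]{BKllt}, which matches the count you gave for $E^\La_\alpha$ via Remark~\ref{newr1}; but again this only needs to be done ungraded.)
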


\begin{proof}
By Theorem~\ref{surjectivity}, $\omega$ is surjective, so it suffices to show
that $R^\La_\alpha$ and $E^\La_\alpha$ have the same dimensions.
According to Theorems~\ref{id1} and \ref{id2} and the definition (\ref{cT}),
we have that
$$
\dim R^\La_\alpha =
\sum_{\bi,\bj \in I^\alpha}
\dim \hom_{\mathfrak{g}}(\cF_\bi \cL(\iota), \cF_\bj \cL(\iota)).
$$
On the other hand by the definition (\ref{dc}),
$$
\dim E^\La_\alpha =
\sum_{\bi,\bj \in I^\alpha}
\dim \hom_{K^\La_\alpha}(F_\bi L(\iota), F_\bj L(\iota)).
$$
Therefore we are done if we can show that
\begin{equation}\label{nts}
\dim \hom_{\mathfrak{g}}(\cF_\bi \cL(\iota), \cF_\bj \cL(\iota))
=
\dim \hom_{K^\La_\alpha}(F_\bi L(\iota), F_\bj L(\iota))
\end{equation}
for each $\bi,\bj \in I^\alpha$.
As $\iota$ is the only weight in its block,
$\cL(\iota)$  is projective,
hence so is $\cF_\bi \cL(\iota)$.
Applying Theorem~\ref{cat2} we deduce that
$$
\dim \hom_{\mathfrak{g}}(\cF_\bi \cL(\iota), \cF_\bj \cL(\iota))
= \langle [\cF_\bi \cL(\iota)], [\cF_\bj \cL(\iota)] \rangle
=
 \langle \cF_\bi \cL_\iota, \cF_\bj \cL_\iota\rangle.
$$
A similar application of Theorem~\ref{cat1}, setting $q=1$ at the end,
gives that
$$
 \langle \cF_\bi \cL_\iota, \cF_\bj \cL_\iota\rangle=
\dim \hom_{K^\La_\alpha}(F_\bi L(\iota), F_\bj L(\iota)).
$$
This establishes (\ref{nts}).
\end{proof}

\begin{Remark}\rm\label{newr3}
The equality $\dim E^\La_\alpha = \dim R^\La_\alpha$ in the above proof of
Corollary~\ref{id3}
was deduced using Theorem~\ref{id1}, hence the argument relies ultimately on properties
of parabolic category $\mathcal{O}$.
Alternatively, this equality of dimensions
can be proved by using the observation made in the last sentence of Remark~\ref{newr1}
together with \cite[Theorem 4.20]{BKllt}, which gives a combinatorial formula for $\dim R^\La_\alpha$ in terms
of standard tableaux.
One advantage of this alternate argument is that it is valid over an arbitrary ground field (including positive characteristic),
not just over $\C$.
\end{Remark}

\phantomsubsection{Comparison of embeddings}
There is one more important identification to be made.
Consider the obvious embedding
$H_d \hookrightarrow H_{d+1}$.
This factors through the quotients to induce an embedding
$\theta:H_d^{o+m,o+n} \hookrightarrow H_{d+1}^{o+m,o+n}$.
Composing $\theta$ on one side with the embedding
$e_\alpha H_d^{o+m,o+n} \hookrightarrow H_d^{o+m,o+n}$
and on the other side with the projection
$H_{d+1}^{o+m,o+n} \twoheadrightarrow e_{\alpha+\alpha_i}
H_{d+1}^{o+m,o+n}$ defined by multiplication by the
central idempotent $e_{\alpha+\alpha_i}$, we obtain
a non-unital algebra homomorphism
\begin{equation}\label{thetaii}
\theta_i:e_{\alpha} H_d^{o+m,o+n} \rightarrow e_{\alpha+\alpha_i}
H_{d+1}^{o+m,o+n}.
\end{equation}
This maps the identity element $e_{\alpha}$
to the idempotent
\begin{equation}\label{baa2}
e_{\alpha,\alpha_i} :=
\sum_{\substack{\bi \in I^{\alpha+\alpha_i} \\ i_{d+1} = i}} e(\bi),
\end{equation}
just like in (\ref{baa}).
Recall also the homomorphism
$\theta_i$
from (\ref{thetai}).

\begin{Lemma}\label{af}
The following diagram commutes:
$$
\begin{CD}
e_{\alpha}H^{o+m,o+n}_d&@>\theta_i>>&e_{\alpha+\alpha_i}H^{o+m,o+n}_{d+1}\\
@V\sigma VV&&@VV\sigma V\\
R^\La_\alpha&@>>\theta_i>&R^\La_{\alpha+\alpha_i}.
\end{CD}
$$
\end{Lemma}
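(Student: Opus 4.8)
The plan is to verify commutativity of the square by checking it on the algebra generators of $e_\alpha H_d^{o+m,o+n}$, namely the weight idempotents $e(\bi)$ with $\bi \in I^\alpha$, the elements $x_r e(\bi)$, and the elements $s_r e(\bi)$. Since $\sigma$ is the explicit isomorphism of Theorem~\ref{id2} and $\theta_i$ on both sides is described explicitly (by (\ref{thetaii}) and (\ref{baa2}) on the Hecke side, by (\ref{thetai}) on the Khovanov--Lauda--Rouquier side), this is a finite bookkeeping exercise once one tracks how the defining formulae behave under the addition of a box to the tail of each tuple $\bi$.

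First I would treat the idempotents: for $\bi \in I^\alpha$ we have $\theta_i(e(\bi)) = e(\bi+i)$ by definition of $\theta_i$ on $H_d^{o+m,o+n}$, and $\sigma(e(\bi+i)) = e(\bi+i)$, while going the other way $\sigma(e(\bi)) = e(\bi)$ and $\theta_i(e(\bi)) = e(\bi+i)$ in $R^\La_{\alpha+\alpha_i}$; so the square commutes on these. Next, for $x_r e(\bi)$ with $1 \le r \le d$: the embedding $H_d \hookrightarrow H_{d+1}$ fixes $x_r$, so $\theta_i(x_r e(\bi)) = x_r e(\bi+i)$, whence applying $\sigma$ on the right gives $(y_r + i_r) e(\bi+i)$; on the other route $\sigma(x_r e(\bi)) = (y_r + i_r) e(\bi)$, and $\theta_i$ of this is $(y_r+i_r)e(\bi+i)$ by the definition (\ref{thetai}) of $\theta_i$ on $R^\La_\alpha$ (which fixes $y_r e(\bi) \mapsto y_r e(\bi+i)$). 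These agree.

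The main work is the relation for $s_r e(\bi)$ with $1 \le r \le d-1$: here I must compare $\theta_i(\sigma(s_r e(\bi)))$, i.e. $\theta_i\big((\psi_r q_r(\bi) - p_r(\bi)) e(\bi)\big)$, with $\sigma(\theta_i(s_r e(\bi))) = \sigma(s_r e(\bi+i)) = (\psi_r q_r(\bi+i) - p_r(\bi+i)) e(\bi+i)$. The key observation that makes this work is that the data $(i_1,\dots,i_d)$ determines the polynomials $p_r(\bi)$ and $q_r(\bi)$ only through the entries $i_r, i_{r+1}$ (look at the explicit formulae), so $p_r(\bi+i) = p_r(\bi)$ and $q_r(\bi+i) = q_r(\bi)$ as elements of the respective KLR algebras after identifying $y_r, y_{r+1}$ under $\theta_i$; and $\theta_i$ maps $y_s e(\bi) \mapsto y_s e(\bi+i)$, $\psi_r e(\bi) \mapsto \psi_r e(\bi+i)$, so it carries $(\psi_r q_r(\bi) - p_r(\bi))e(\bi)$ to $(\psi_r q_r(\bi) - p_r(\bi))e(\bi+i)$. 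The one point requiring care is that $p_r(\bi)$ and $q_r(\bi)$ involve inverses of $y_{r+1}-y_r$-type expressions, so to make sense of $\theta_i$ applied to them one expands these inverses as finite sums via the nilpotency of $y_{r+1}-y_r$ (\cite[Lemma 2.1]{BKyoung}) and applies $\theta_i$ termwise; the nilpotency index is unchanged by appending a box, so the expansions match up. I expect this handling of the rational expressions $p_r, q_r$ under the non-unital homomorphism $\theta_i$ to be the only genuine obstacle; everything else is immediate from the definitions. Having checked the three families of generators, commutativity on all of $e_\alpha H_d^{o+m,o+n}$ follows since $\sigma$ and both copies of $\theta_i$ are algebra homomorphisms (the latter non-unital, but that is harmless as both routes send $e_\alpha$ to the same idempotent $e_{\alpha,\alpha_i}$, cf.\ (\ref{baa}) and (\ref{baa2})).
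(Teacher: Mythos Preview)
Your proposal is correct and is essentially the same approach as the paper's: the paper's proof is the one-liner ``This is clear from the explicit form of the isomorphism in Theorem~\ref{id2} together with the definitions of (\ref{thetai}) and (\ref{thetaii}),'' and you have simply written out that explicit check on generators. Your handling of the rational expressions $p_r(\bi), q_r(\bi)$ is fine; in fact the nilpotency-index bookkeeping can be bypassed entirely by noting that $\theta_i$ is a (non-unital) ring homomorphism sending $e(\bi)$ to $e(\bi+i)$, so it automatically sends the inverse of $(i_{r+1}-i_r+y_{r+1}-y_r)e(\bi)$ in $e(\bi)R^\La_\alpha e(\bi)$ to the inverse of $(i_{r+1}-i_r+y_{r+1}-y_r)e(\bi+i)$ in $e(\bi+i)R^\La_{\alpha+\alpha_i}e(\bi+i)$, by uniqueness of inverses.
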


\begin{proof}
This is clear from the explicit form of the isomorphism
in Theorem~\ref{id2} together with the definitions
of (\ref{thetai}) and (\ref{thetaii}).
\end{proof}

For the remainder of the article
we will simply {\em identify} the following four algebras
according to the isomorphisms from  Theorems~\ref{id1} and \ref{id2} and
Corollary~\ref{id3}:
\begin{equation}\label{4id}
E^\La_\alpha \equiv R^\La_\alpha \equiv e_{\alpha} H^{o+m,o+n}_d \equiv \End_{\mathfrak{g}}(\cT^\La_\alpha)^{\op}.
\end{equation}
Under the first of these identifications,
the non-unital algebra homomorphism $\theta_i$ from
Theorem~\ref{thetaithm}
coincides with the map $\theta_i$
from (\ref{thetai}),
as follows from the commutative diagram in
Corollary~\ref{cddcd}.
Under the second of these identifications,
the map $\theta_i$ from (\ref{thetai}) coincides
with the map $\theta_i$ from (\ref{thetaii}),
as follows from the commutative diagram in Lemma~\ref{af}.
This justifies our use of the same notation $\theta_i$
in all the settings.

\phantomsubsection{\boldmath Schur functors: the category $\mathcal O$ side}
For $\alpha \in Q_+$,
consider the category $\rep{R^\La_\alpha}$
of (ungraded) finite dimensional left $R^\La_\alpha$-modules.
As we noted in (\ref{baa2}), the homomorphism
$\theta_i:R^\La_\alpha \rightarrow R^\La_{\alpha+\alpha_i}$
maps the identity element $e_{\alpha}$ of $R^\La_\alpha$ to the
idempotent $e_{\alpha,\alpha_i}$.
Via this homomorphism, if $M$ is any left (resp.\ right)
$R^\La_{\alpha+\alpha_i}$-module then $e_{\alpha,\alpha_i} M$
(resp.\ $M e_{\alpha,\alpha_i}$) is naturally an
$R^\La_\alpha$-module.
Following \cite[$\S$3.3]{BKariki}, we introduce the
{\em $i$-induction and $i$-restriction functors}
\begin{equation*}
\cF_i:\rep{R^\La_\alpha} \rightarrow \rep{R^\La_{\alpha+\alpha_i}},\qquad
\cE_i:\rep{R^\La_{\alpha+\alpha_i}} \rightarrow \rep{R^\La_\alpha},
\end{equation*}
by defining
$\cF_i$ to be the right exact functor
$R^\La_{\alpha+\alpha_i} e_{\alpha,\alpha_i} \otimes_{R^\La_\alpha} ?$
and $\cE_i$ to be the exact functor
defined by left multiplication by the idempotent $e_{\alpha,\alpha_i}$.
By adjointness of tensor and hom,
$(\cF_i, \cE_i)$ is an adjoint pair of functors. In fact, it is known that
$\cE_i$ and $\cF_i$ are biadjoint, hence in particular $\cF_i$ is actually
exact; see \cite[Lemma 8.2.2]{Kbook} (the functors
$\cE_i$ and $\cF_i$ are the same as the induction and restriction
functors there restricted to particular blocks).

Recalling that we have identified
$R^\La_\alpha$ with $\End_{\mathfrak{g}}(\cT^\La_\alpha)^{\op}$
in (\ref{4id}),
we introduce the {\em Schur functor} (or {\em quotient functor}
in the sense of \cite[$\S$III.1]{Gab}):
\begin{equation}\label{Sf}
\pi
:= \hom_{\mathfrak{g}}(\cT^\La_\alpha, ?):\cO^\La_\alpha \rightarrow \rep{R^\La_\alpha}.
\end{equation}
In view of Lemma~\ref{pring2}
we are in a well-studied situation.
Assuming that $\La-\alpha \in \PImn$,
define
\begin{equation}\label{mods1}
\cD(\la) := \pi (\cL(\la)),
\qquad
\cY(\la) := \pi(\cP(\la)).
\end{equation}
for any $\la \in \La-\alpha$.
The following lemma collects some basic facts about this
situation; see \cite[$\S$3.5]{BKariki} for
a more detailed account.

\begin{Lemma}\label{sf1}
Assume that $\Ga := \La-\alpha \in \PImn$.
For $\la \in \Ga$, the module $\cD(\la)$ is non-zero if and only if
$\la \in \Ga^\circ$. The modules
$\{\cD(\la)\:|\:\la \in \Ga^\circ\}$
give a complete set of representatives for the isomorphism classes
of irreducible $R^\La_\alpha$-modules.
Moreover:
\begin{itemize}
\item[(i)] For $\la \in \Ga^\circ$, $\cY(\la)$ is the projective cover
of $\cD(\la)$.
\item[(ii)] The functor $\pi$ is fully faithful on projectives.
\item[(iii)] There is an isomorphism
$\eta:\cF_i \circ \pi\stackrel{\sim}{\rightarrow}
\pi \circ \cF_i$
of functors from $\rep{K^\La_\alpha}$ to $\rep{R^\La_{\alpha+\alpha_i}}$
for each $i \in I$.
\end{itemize}
\end{Lemma}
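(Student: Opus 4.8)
The statement collects together standard facts about the Schur functor attached to a prinjective generator, so the plan is to invoke the general machinery for quotient functors of highest weight categories, specialising it via the input already established. First I would recall the general setup: since $\cO^\La_\alpha$ is a highest weight category and $\cT^\La_\alpha$ is a prinjective generator (Lemma~\ref{pring2}), the functor $\pi = \hom_{\mathfrak g}(\cT^\La_\alpha, ?)$ is a quotient functor in the sense of Gabriel onto $\rep{\End_{\mathfrak g}(\cT^\La_\alpha)^{\op}} = \rep{R^\La_\alpha}$ (using the identification (\ref{4id})). The prinjective indecomposables are exactly the $\cP(\la)$ for $\la \in \Ga^\circ$ by Lemma~\ref{princ2}, so $\cT^\La_\alpha$ has $\cP(\la)$ as a summand precisely for $\la \in \Ga^\circ$. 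Standard theory of quotient functors then gives: $\pi(\cP(\la))$ is a projective indecomposable $R^\La_\alpha$-module when $\la \in \Ga^\circ$ and zero otherwise; $\pi(\cL(\la))$ is irreducible or zero, being non-zero exactly when $\cL(\la)$ appears in the head of some summand of $\cT^\La_\alpha$, i.e. for $\la \in \Ga^\circ$; the $\cD(\la)$ for $\la \in \Ga^\circ$ exhaust the irreducibles; and $\cY(\la)$ is the projective cover of $\cD(\la)$. These yield the unnumbered assertions together with (i). For (ii), full faithfulness of $\pi$ on projectives is the defining property of the prinjective truncation: it follows because $\cP(\la)$ for $\la \in \Ga^\circ$ are precisely the summands of $\cT^\La_\alpha$, and for $P, Q$ summands of $\cT^\La_\alpha$ we have $\hom_{\mathfrak g}(P,Q) \cong \hom_{R^\La_\alpha}(\pi P, \pi Q)$ by the very definition of $R^\La_\alpha$ as $\End_{\mathfrak g}(\cT^\La_\alpha)^{\op}$. (This is the same style of argument used in Theorem~\ref{itskhovanov} and Remark~\ref{bhat} on the diagram algebra side.)

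The real content is part (iii), the commutation $\cF_i \circ \pi \cong \pi \circ \cF_i$. Here $\cF_i$ on the left is the $i$-induction functor $R^\La_{\alpha+\alpha_i} e_{\alpha,\alpha_i} \otimes_{R^\La_\alpha} ?$ on module categories, while $\cF_i$ on the right is the special projective functor on category $\cO$. The plan is to write both composites as tensoring with an appropriate bimodule and identify them. Concretely, $\pi \circ \cF_i$ sends $M \in \cO^\La_\alpha$ to $\hom_{\mathfrak g}(\cT^\La_{\alpha+\alpha_i}, \cF_i M)$. Using the biadjointness of $\cF_i$ and $\cE_i$ on $\cO$, this is $\hom_{\mathfrak g}(\cE_i \cT^\La_{\alpha+\alpha_i}, M)$, which rewrites as $\hom_{\mathfrak g}(\cT^\La_\alpha, M)$ tensored up appropriately because $\cE_i \cT^\La_{\alpha+\alpha_i}$ is a direct sum of copies of summands of $\cT^\La_\alpha$ — more precisely, $\cE_i \cT^\La_{\alpha+\alpha_i} \cong \cT^\La_\alpha \otimes_{R^\La_\alpha} (R^\La_\alpha e_{\alpha,\alpha_i} R^\La_{\alpha+\alpha_i})$-flavoured bimodule, matching the $i$-restriction bimodule. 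Dually $\cF_i \circ \pi$ is tensoring $\pi(M)$ with $R^\La_{\alpha+\alpha_i} e_{\alpha,\alpha_i}$. So the isomorphism $\eta$ amounts to the canonical bimodule isomorphism $\hom_{\mathfrak g}(\cT^\La_{\alpha+\alpha_i}, \cF_i \cT^\La_\alpha) \cong R^\La_{\alpha+\alpha_i} e_{\alpha,\alpha_i}$, which follows by chasing adjunctions together with the description of $\cT^\La_{\alpha+\alpha_i}$ as $\cF_i$ applied to summands of $\cT^\La_\alpha$ (indeed $\cT^\La_{\alpha+\alpha_i} = \bigoplus_{\bi \in I^\alpha} \cF_{\bi+i}\cL(\iota) \oplus (\text{other summands})$, and the ``other summands'' are killed by $\pi$ since they are not prinjective summands only if they fail to appear, but in fact all of $\cT^\La_{\alpha+\alpha_i}$ is needed; the point is the bimodule $\hom_{\mathfrak g}(\cT^\La_{\alpha+\alpha_i}, \cF_i\cT^\La_\alpha)$ is computed directly).

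The main obstacle is getting the bimodule structures in (iii) to match on the nose, including the identification of the right $R^\La_{\alpha+\alpha_i}$-action coming from $\cF_i$ on $\cO$ with the one coming from $\theta_i$-induction. This is where Lemma~\ref{af} and the identifications (\ref{4id}) do the work: the homomorphism $\theta_i$ was already shown to be compatible across all four algebras, so the bimodule $\hom_{\mathfrak g}(\cT^\La_{\alpha+\alpha_i},\cF_i\cT^\La_\alpha)$, which carries a left $\End(\cT^\La_{\alpha+\alpha_i})^{\op}$-action and a right $\End(\cT^\La_\alpha)^{\op}$-action via functoriality of $\cF_i$, gets identified with $R^\La_{\alpha+\alpha_i}e_{\alpha,\alpha_i}$ as an $(R^\La_{\alpha+\alpha_i}, R^\La_\alpha)$-bimodule precisely because the right action factors through $\theta_i$ by Theorem~\ref{thetaithm}. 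I expect that once the adjunction calculus is set up carefully and Lemma~\ref{af} is invoked for the bimodule compatibility, (iii) drops out; the risk is only bookkeeping with the various adjunction units and the $\op$'s. Everything else in the lemma is formal quotient-functor theory applied to the prinjective generator $\cT^\La_\alpha$.
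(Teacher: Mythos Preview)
Your treatment of the unnumbered assertions and (i) via general quotient-functor theory is fine and matches the standard approach. The paper itself simply cites \cite[Theorem 3.7]{BKariki} (and \cite[Theorem 4.15]{BKariki} for $m<n$) rather than unpacking any of this, so your route is more explicit.

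There is, however, a genuine gap in your argument for (ii). You argue that $\pi$ is fully faithful on \emph{summands of $\cT^\La_\alpha$}, which is tautological. But the assertion is that $\pi$ is fully faithful on \emph{all} projectives $\cP(\la)$, $\la\in\Ga$, including those with $\la\notin\Ga^\circ$ which are \emph{not} prinjective and hence not summands of $\cT^\La_\alpha$. This is exactly what is needed downstream in Corollary~\ref{mapi}, which applies $\pi$ to the full minimal projective generator $\cP^\La_\alpha=\bigoplus_{\la\in\Ga}\cP(\la)$. Full faithfulness on non-prinjective projectives is the ``double centraliser'' or ``$1$-faithful cover'' statement and does not follow from formal quotient-functor theory; it requires genuine input about the highest weight structure (compare the diagram-algebra analogue Lemma~\ref{sf2}(ii), which cites \cite[Corollary 6.3]{BS2} and ultimately rests on the two-step prinjective resolution from \cite[Theorem 6.2]{BS2}). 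On the category~$\cO$ side this is the content imported from \cite{BKschur} via \cite{BKariki}, and you would need either to cite that or to reproduce a Struktursatz-type argument.

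For (iii) your bimodule strategy is the right idea, but your invocation of Theorem~\ref{thetaithm} is misplaced: that theorem lives on the diagram-algebra side. On the category~$\cO$ side the relevant compatibility is more elementary: the right $R^\La_\alpha$-action on $\hom_{\mathfrak g}(\cT^\La_{\alpha+\alpha_i},\cF_i\cT^\La_\alpha)$ via $\cF_i$ corresponds to $\theta_i$ because the $H_d$-action on $\cL(\iota)\otimes\cV^{\otimes d}$ is defined so that the embedding $H_d\hookrightarrow H_{d+1}$ is literally compatible with tensoring by one more copy of $\cV$; this together with Lemma~\ref{i} is what you need, not Theorem~\ref{thetaithm}.
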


\begin{proof}
If $m \geq n$ this follows by
a special case of \cite[Theorem 3.7]{BKariki}.
The case $m < n$ can be treated similarly; see also
\cite[Theorem 4.15]{BKariki}.
\end{proof}

For any $\alpha \in Q_+$, we now define
$\cP^\La_\alpha \in \mathcal O^\La_\alpha$
and $\cY^\La_\alpha \in \rep{R^\La_\alpha}$ by
\begin{equation}\label{gut}
\cP^\La_\alpha :=
\bigoplus_{\la \in \Ga} \cP(\la),\qquad\qquad
\cY^\La_\alpha :=
\bigoplus_{\la \in \Ga} \cY(\la)
\end{equation}
if $\Ga := \La-\alpha \in \PImn$,
or $\cP^\La_\alpha = \cY^\La_\alpha := \{0\}$
if $\La-\alpha \notin \PImn$.
So $\cP^\La_\alpha$ is a {\em minimal projective generator}
for $\cO^\La_\alpha$, and $\cY^\La_\alpha$ is its image under the
Schur functor $\pi$.
Theorem~\ref{sf1}(ii) immediately implies the following corollary.

\begin{Corollary}\label{mapi}
The functor $\pi$ defines an algebra isomorphism
$$
i:\End_{\mathfrak{g}}\big(\cP^\La_\alpha)^{\op}
\stackrel{\sim}{\rightarrow}
\End_{R^\La_\alpha}\big(\cY^\La_\alpha)^{\op}.
$$
\end{Corollary}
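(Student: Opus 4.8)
The plan is to deduce Corollary~\ref{mapi} directly from Theorem~\ref{sf1}(ii), that is, from the fact that the Schur functor $\pi = \hom_{\mathfrak g}(\cT^\La_\alpha,?)$ is fully faithful on projectives. First I would dispose of the degenerate case: if $\La-\alpha\notin\PImn$ then both $\cP^\La_\alpha$ and $\cY^\La_\alpha$ are zero by definition, the endomorphism algebras are both the zero algebra, and the statement is trivially true. So for the rest of the argument we may assume $\Ga:=\La-\alpha\in\PImn$, which is precisely the hypothesis under which Theorem~\ref{sf1} is available.

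Next I would recall that $\cP^\La_\alpha=\bigoplus_{\la\in\Ga}\cP(\la)$ is a projective object of $\cO^\La_\alpha$ (being a finite direct sum of the projective indecomposables $\cP(\la)$), and that $\cY^\La_\alpha=\pi(\cP^\La_\alpha)=\bigoplus_{\la\in\Ga}\cY(\la)$ by definition. Since $\pi$ is an additive functor, applying it to endomorphisms gives a natural map of $\C$-algebras
\begin{equation*}
i:\End_{\mathfrak g}(\cP^\La_\alpha)^{\op}\rightarrow\End_{R^\La_\alpha}(\cY^\La_\alpha)^{\op},\qquad f\mapsto\pi(f).
\end{equation*}
(One checks it is an algebra homomorphism, not just a linear map, by noting that $\pi$ preserves composition; the $\op$ appears consistently on both sides, so the multiplication is respected.) The assertion that $i$ is an isomorphism is then exactly the statement that the map
\begin{equation*}
\hom_{\mathfrak g}(\cP^\La_\alpha,\cP^\La_\alpha)\rightarrow\hom_{R^\La_\alpha}\bigl(\pi(\cP^\La_\alpha),\pi(\cP^\La_\alpha)\bigr)
\end{equation*}
induced by $\pi$ is bijective. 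But this is an instance of full faithfulness of $\pi$ restricted to projective objects, which is precisely Theorem~\ref{sf1}(ii): $\cP^\La_\alpha$ is projective, so $\pi$ induces a bijection on Hom-spaces out of it into any other projective, in particular into $\cP^\La_\alpha$ itself.

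There is essentially no obstacle here; the content is entirely in Theorem~\ref{sf1}(ii), and the corollary is the standard observation that a functor fully faithful on a full subcategory induces isomorphisms of the endomorphism algebras of its objects. The only points requiring a word of care are the bookkeeping of the $\op$'s (so that $i$ really is an algebra map and not an anti-homomorphism) and the trivial case $\La-\alpha\notin\PImn$, both of which I would dispatch in a sentence each. Accordingly the written proof can be very short: reduce to the case $\Ga\in\PImn$, observe $\cP^\La_\alpha$ is projective with $\pi(\cP^\La_\alpha)=\cY^\La_\alpha$, and invoke Theorem~\ref{sf1}(ii).
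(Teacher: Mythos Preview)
Your proposal is correct and matches the paper's approach exactly: the paper states that the corollary follows immediately from Theorem~\ref{sf1}(ii), and you have simply spelled out what ``immediately'' means (including the harmless degenerate case $\La-\alpha\notin\PImn$).
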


\phantomsubsection{\boldmath Schur functors: the
diagram algebra side}
Now we go back to the diagram algebra side of the story.
Let $\Rep{R^\La_\alpha}$ denote the category of finite
dimensional {\em graded} left $R^\La_\alpha$-modules,
recalling the grading on $R^\La_\alpha$ introduced immediately
after (\ref{R7}).
Let
\begin{equation}\label{forget2}
\forget:\Rep{R^\La_\alpha} \rightarrow \rep{R^\La_\alpha}
\end{equation}
be the forgetful functor here, just like in (\ref{forget1}).
There are graded versions of the $i$-induction and
$i$-restriction functors
\begin{equation*}
F_i:\Rep{R^\La_\alpha} \rightarrow \Rep{R^\La_{\alpha+\alpha_i}},\qquad
E_i:\Rep{R^\La_{\alpha+\alpha}} \rightarrow \Rep{R^\La_\alpha},
\end{equation*}
where $F_i$ is
the functor
$R^\La_{\alpha+\alpha_i}e_{\alpha,\alpha_i}
\otimes_{R^\La_\alpha} ?\langle 1-(\La-\alpha,\delt_i-\delt_{i+1})\rangle$
and $E_i$ is
defined by left multiplication by the idempotent $e_{\alpha,\alpha_i}$ as
before.
It is obvious from these definitions that
\begin{equation}\label{whenyouforget}
\forget \circ F_i \cong \cF_i \circ \forget,\qquad
\forget \circ E_i \cong \cE_i \circ \forget.
\end{equation}
Hence $F_i$ and $E_i$ are exact,
since we already know that about $\cF_i$ and $\cE_i$.
For the next lemma, we let
\begin{equation}
D_i^{\pm 1}: \Rep{R^\La_\alpha} \rightarrow \Rep{R^\La_\alpha}
\end{equation}
be the degree shift functor mapping a module
$M$ to $M\langle \pm (\La-\alpha,\delt_i)\rangle$,
as in (\ref{Ki}).

\begin{Lemma}\label{adjpair}
There exists a canonical adjunction of degree zero
which makes
$(F_i D_iD_{i+1}^{-1} \langle -1\rangle, E_i)$
into an adjoint pair of functors.
\end{Lemma}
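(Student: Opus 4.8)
The plan is to reduce the claimed adjunction for the graded $i$-induction and $i$-restriction functors on $\Rep{R^\La_\alpha}$ to the ungraded adjunction $(\cF_i, \cE_i)$ that is already available, and then to bookkeep the degree shifts carefully so that the unit and counit become homogeneous of degree zero. First I would recall that $E_i$ is defined by left multiplication by the idempotent $e_{\alpha,\alpha_i}$ and $F_i$ is $R^\La_{\alpha+\alpha_i} e_{\alpha,\alpha_i} \otimes_{R^\La_\alpha}\,?$ twisted by the degree shift $\langle 1-(\La-\alpha,\delt_i-\delt_{i+1})\rangle$. The standard tensor–hom adjunction gives, for any graded left $R^\La_{\alpha+\alpha_i}$-module $N$ and any graded left $R^\La_\alpha$-module $M$, a natural isomorphism
$$
\hom_{R^\La_{\alpha+\alpha_i}}\big(R^\La_{\alpha+\alpha_i} e_{\alpha,\alpha_i} \otimes_{R^\La_\alpha} M,\: N\big)
\cong
\hom_{R^\La_\alpha}\big(M,\: e_{\alpha,\alpha_i} N\big),
$$
which is homogeneous of degree zero by construction (it sends $f$ to the map $m \mapsto f(e_{\alpha,\alpha_i}\otimes m)$, and $e_{\alpha,\alpha_i}$ is a degree zero idempotent). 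Incorporating the degree shift defining $F_i$ and the shift functors $D_i^{\pm1}$, this rewrites as a natural degree zero isomorphism
$$
\hom\big(F_i M,\: N\big)
\cong
\hom\big(M,\: E_i N \langle c \rangle\big)
$$
for an appropriate integer $c$ depending on $\alpha$ and $i$.

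The next step is to compute that the shift $c$ is exactly the one absorbed into $F_i D_i D_{i+1}^{-1}\langle -1\rangle$, so that the adjunction becomes degree-preserving with the functors as stated. Here I would unwind the definitions: $F_i$ carries the shift $\langle 1-(\La-\alpha,\delt_i-\delt_{i+1})\rangle$, and $D_i D_{i+1}^{-1}\langle -1\rangle$ applied to an $R^\La_\alpha$-module (in block $\La-\alpha$) is the shift $\langle (\La-\alpha,\delt_i) - (\La-\alpha,\delt_{i+1}) - 1\rangle = \langle (\La-\alpha,\delt_i-\delt_{i+1})-1\rangle$. These two shifts are mutually inverse, which is precisely what makes $(F_i D_iD_{i+1}^{-1}\langle -1\rangle, E_i)$ an adjoint pair of degree zero; this is the graded analogue of the adjunction in Lemma~\ref{adjunctions} on the diagram algebra side, and it mirrors the computation in (\ref{defede}). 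I would also note that the underlying ungraded adjunction is $(\cF_i, \cE_i)$, which is already known (Lemma~\ref{sf1} and the biadjointness recalled in the text via \cite[Lemma 8.2.2]{Kbook}), so the unit and counit we construct simply refine the known ungraded ones by tracking gradings; in particular no new exactness or finiteness input is needed.

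The word ``canonical'' in the statement should be addressed by observing that the adjunction isomorphism is the composite of the canonical tensor–hom adjunction with the canonical identifications $\forget\circ F_i \cong \cF_i\circ\forget$ and $\forget\circ E_i\cong\cE_i\circ\forget$ from (\ref{whenyouforget}); there are no arbitrary choices once the degree shifts are fixed. The main obstacle, such as it is, is purely combinatorial: verifying that the two degree shifts above cancel exactly, i.e.\ that
$$
\big(1-(\La-\alpha,\delt_i-\delt_{i+1})\big) + \big((\La-\alpha,\delt_i-\delt_{i+1})-1\big) = 0,
$$
and checking that the shift by $D_i D_{i+1}^{-1}$ is interpreted on the correct block (namely $\La-\alpha$, the source of $F_i$, rather than $\La-\alpha-\alpha_i$). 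This is the sole place a sign or block error could creep in, so I would do this bookkeeping explicitly; everything else is the formal tensor–hom adjunction together with the already-established facts about $\cF_i$ and $\cE_i$.
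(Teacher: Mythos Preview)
Your approach is correct and is essentially the same as the paper's: both arguments observe that the composite $F_i D_i D_{i+1}^{-1}\langle -1\rangle$ equals the unshifted functor $R^\La_{\alpha+\alpha_i} e_{\alpha,\alpha_i} \otimes_{R^\La_\alpha} ?$, after which the claim is just the graded tensor--hom adjunction. The only difference is packaging: the paper states the cancellation of shifts in one line, whereas you spell it out; your appeals to the ungraded adjunction $(\cF_i,\cE_i)$ and to (\ref{whenyouforget}) are harmless but unnecessary, since the graded tensor--hom adjunction is already degree zero on its own.
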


\begin{proof}
By the preceeding definition of $F_i$,
we have that
$$
F_i D_i D_{i+1}^{-1} \langle -1 \rangle
=
R^\La_{\alpha+\alpha_i} e_{\alpha,\alpha_i} \otimes_{R^\La_\alpha} ?.
$$
Given this, the lemma reduces to the usual
adjointness of tensor and hom.
\end{proof}

\begin{Remark}\rm
Although not needed here, there is another degree zero adjunction
making
$(E_i, F_i D_i^{-1} D_{i+1} \langle 1 \rangle)$
into
an adjoint pair of functors too.
This can be derived from Lemma~\ref{adjunctions} using
Lemma~\ref{sf2}(iii) and Remark~\ref{come} below.
\end{Remark}

Recalling that we have identified
$R^\La_\alpha$ with $\End_{K^\La_\alpha}(T^\La_\alpha)^{\op}$
in (\ref{4id}),
we also have the {\em graded Schur functor}
\begin{equation}\label{gsf}
\pi
:= \hom_{K^\La_\alpha}(T^\La_\alpha, ?):
\Rep{K^\La_\alpha} \rightarrow \Rep{R^\La_\alpha}.
\end{equation}
We have used the same notation for this as in (\ref{Sf}),
hoping that it is always clear from context which we mean.
Assuming that $\La-\alpha \in \PImn$,
define
\begin{equation}\label{mods2}
D(\la) := \pi (L(\la)),
\qquad
Y(\la) := \pi(P(\la))
\end{equation}
for any $\la \in \La-\alpha$.
The following gives a graded version of Lemma~\ref{sf1}.

\begin{Lemma}\label{sf2}
Assume that $\Ga := \La-\alpha \in \PImn$.
For $\la \in \Ga$, the module $D(\la)$ is non-zero if and only if
$\la \in \Ga^\circ$. The modules
$\{D(\la)\langle j \rangle\:|\:\la \in \Ga^\circ, j \in \Z\}$
give a complete set of representatives for the isomorphism classes
of irreducible graded $R^\La_\alpha$-modules.
Moreover:
\begin{itemize}
\item[(i)] For $\la \in \Ga^\circ$, $Y(\la)$ is the projective cover
of $D(\la)$.
\item[(ii)] The functor $\pi$ is fully faithful on projectives.
\item[(iii)] There is an isomorphism
$\eta:F_i\circ \pi \stackrel{\sim}{\rightarrow} \pi\circ F_i$
of functors from $\Rep{K^\La_\alpha}$ to $\Rep{R^\La_{\alpha+\alpha_i}}$
for each $i \in I$.
\end{itemize}
\end{Lemma}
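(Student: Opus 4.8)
The plan is to mirror the proof of Lemma~\ref{sf1} exactly, transporting each ingredient to the graded world using the identification $R^\La_\alpha \equiv \End_{K^\La_\alpha}(T^\La_\alpha)^{\op}$ from~(\ref{4id}) together with the general theory of graded quotient functors. First I would recall that $T^\La_\alpha$ is a prinjective generator for $\Rep{K^\La_\alpha}$ by Lemma~\ref{pring1}, so in particular it is a projective object; by Theorem~\ref{itskhovanov} it corresponds under a Morita equivalence to a projective generator $eT^\La_\alpha$ for the generalised Khovanov algebra $H^\La_\alpha$. This puts us in the setting of a Schur functor attached to a projective module over a (graded) finite dimensional algebra, and the standard facts in that setting — applied here in the graded category — give most of the statement.

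Here is how I would run the argument. For the identification of the irreducibles: $D(\la) = \pi(L(\la))$ is non-zero if and only if $L(\la)$ is a composition factor of $T^\La_\alpha$, and since $T^\La_\alpha = \bigoplus_{\bi \in I^\alpha} F_\bi L(\iota)\langle\defect(\La-\alpha)\rangle$, the head of $T^\La_\alpha$ consists precisely of the $L(\la)\langle j\rangle$ with $\la \in \Ga^\circ$ by the computation in the proof of Lemma~\ref{pring1} (using Lemma~\ref{stupidcomb} and Lemma~\ref{ga}); conversely $\defect$-considerations rule out the $\la \notin \Ga^\circ$. Since $\pi$ restricted to the additive category of summands of $T^\La_\alpha$ is the usual "take $\hom$ from the projective generator" functor, it sends pairwise non-isomorphic indecomposable projective summands $P(\la)\langle j\rangle$ ($\la\in\Ga^\circ$) to the pairwise non-isomorphic indecomposable graded projectives $Y(\la)\langle j\rangle$, each with simple head $D(\la)\langle j\rangle$; this gives part~(i) and the classification of graded simples. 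Part~(ii), full faithfulness on projectives, is immediate: for projective $P, Q$ that are summands of $T^\La_\alpha$ we have $\hom_{K^\La_\alpha}(P,Q) \cong \hom_{R^\La_\alpha}(\pi(P),\pi(Q))$ by the very definition of $R^\La_\alpha$ as $\End_{K^\La_\alpha}(T^\La_\alpha)^{\op}$ and elementary module theory, and every projective in $\Rep{K^\La_\alpha}$ is (up to shift) a summand of a direct sum of copies of $T^\La_\alpha$ by Lemma~\ref{princ1} combined with Lemma~\ref{pring1}. Part~(iii) is the content of Theorem~\ref{thetaithm} repackaged: that theorem says the map $\theta_i:E^\La_\alpha \to E^\La_{\alpha+\alpha_i}$, which under~(\ref{4id}) is the algebra map defining $i$-induction $F_i$ on $\Rep{R^\La_\alpha}$, is literally "apply the functor $F_i$ on the diagram side", so applying $\pi$ intertwines the two, yielding the canonical isomorphism $\eta:F_i\circ\pi \stackrel{\sim}{\rightarrow} \pi\circ F_i$; one should keep track of the degree shift $\langle 1-(\La-\alpha,\delt_i-\delt_{i+1})\rangle$ built into the graded $F_i$, which matches the shift by $\defect(\La-\alpha)$ in the definition~(\ref{dc}) of $T^\La_\alpha$ via~(\ref{defede}).

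The main obstacle, as in the ungraded case, is making precise the sense in which $F_i$ on $\Rep{K^\La_\alpha}$ "is" $i$-induction on $\Rep{R^\La_\alpha}$ at the level of \emph{graded} functors, i.e. getting the degree shifts to line up on the nose rather than merely up to a shift. Concretely, the isomorphism $\eta$ of part~(iii) must be degree-preserving, and this rests on Theorem~\ref{thetaithm} being an isomorphism of \emph{graded} algebras (which it asserts) together with the compatibility of the bimodule $R^\La_{\alpha+\alpha_i}e_{\alpha,\alpha_i}$ with the homogeneous basis of $E^\La_{\alpha+\alpha_i}$ from~(\ref{Hbasis}); I would verify the shift bookkeeping by comparing~(\ref{dc}), the definition of graded $F_i$, and Lemma~\ref{adjunctions}. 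Everything else reduces to citing Lemma~\ref{princ1}, Lemma~\ref{pring1}, Theorem~\ref{itskhovanov}, and the standard theory of Schur functors attached to a projective generator, now carried out in the graded category $\Rep{K^\La_\alpha}$ rather than $\mod{K^\La_\alpha}$; since all the modules and functors involved admit the graded lifts recorded in section~\ref{sC1}, no new difficulty arises there.
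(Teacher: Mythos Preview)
Your argument for part~(ii) contains a genuine error. You claim that every projective in $\Rep{K^\La_\alpha}$ is (up to shift) a summand of a direct sum of copies of $T^\La_\alpha$, citing Lemmas~\ref{princ1} and~\ref{pring1}. But those lemmas say only that $T^\La_\alpha$ is a \emph{prinjective} generator: its indecomposable summands are precisely the $P(\la)$ with $\la \in \Ga^\circ$. For $\la \in \Ga \setminus \Ga^\circ$ the module $P(\la)$ is projective but not injective, hence not a summand of $T^\La_\alpha$, and your ``immediate'' argument says nothing about $\hom_{K^\La_\alpha}(P(\la),P(\mu))$ for such $\la$. Full faithfulness of $\pi$ on \emph{all} projectives is a non-trivial structural fact; the paper imports it from \cite[Corollary~6.3]{BS2} (equivalently, the truncation functor~(\ref{truncf}) to the generalised Khovanov algebra is fully faithful on projectives, combined with Theorem~\ref{itskhovanov}).

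Your treatment of part~(iii) has the same underlying gap. Theorem~\ref{thetaithm} only identifies the effect of the two $F_i$'s on endomorphisms of the single module $T^\La_\alpha$; it does not by itself give a natural isomorphism of functors on the whole of $\Rep{K^\La_\alpha}$. The paper's proof first constructs a natural transformation $\bar\eta:D_i^{-1}D_{i+1}\langle 1\rangle\circ\pi\to E_i\circ\pi\circ F_i$ from the functoriality of $F_i$, passes to $\eta:F_i\circ\pi\to\pi\circ F_i$ via the adjunction of Lemma~\ref{adjpair}, and then checks $\eta$ is an isomorphism in three stages: first on $T^\La_\alpha$ (using Corollary~\ref{id3} and Theorem~\ref{thetaithm}), then on every projective via the five lemma applied to a two-step \emph{prinjective} coresolution $0\to P\to P^0\to P^1$ supplied by \cite[Theorem~6.2]{BS2}, and finally on arbitrary $M$ via the five lemma applied to a two-step projective resolution. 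The invocation of \cite[Theorem~6.2]{BS2} is essential precisely because not every projective embeds in $\operatorname{add}(T^\La_\alpha)$---the same obstruction as in~(ii). Your sketch (``applying $\pi$ intertwines the two'') omits both the construction of $\eta$ and this two-step extension argument.
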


\begin{proof}
The first statement and (i) follow from Lemma~\ref{pring1}
by the same general argument as in the proof of Lemma~\ref{sf1}(i).
For (ii), see \cite[Corollary 6.3]{BS2}.
Finally to prove (iii),
we first construct a natural
transformation
$$
\eta:F_i \circ \pi \rightarrow \pi \circ F_i.
$$
Take a graded $R^\La_\alpha$-module $M$.
The functor $F_i$ defines a natural degree-preserving linear map
\begin{multline*}
\bar\eta_M:
\hom_{K^\La_\alpha}(T^\La_\alpha,
M)\langle 1-(\La-\alpha,\delt_i-\delt_{i+1})\rangle\\
\longrightarrow
\hom_{K^\La_{\alpha+\alpha_i}}(F_i T^\La_\alpha
\langle (\La-\alpha,\delt_i-\delt_{i+1})-1\rangle, F_i M).
\end{multline*}
In view of Theorem~\ref{thetaithm}, this map is an $R^\La_\alpha$-module
homomorphism.
Note next by (\ref{defede}) that
$(\La-\alpha,\delt_i-\delt_{i+1})-1=\defect(\La-\alpha-\alpha_i)-\defect(\La-\alpha)$.
Recalling also the definition (\ref{dc}), it follows that
$F_i T^\La_\alpha \langle (\La-\alpha,\delt_i-\delt_{i+1})-1\rangle =
T^\La_{\alpha+\alpha_i} e_{\alpha,\alpha_i}$.
So we have that
\begin{align*}
\hom_{K^\La_\alpha}(T^\La_\alpha,M)\langle 1-(\La-\alpha,\delt_i-\delt_{i+1})\rangle&=
D_i^{-1} D_{i+1}\langle 1 \rangle \circ \pi (M),\\
\hom_{K^\La_{\alpha+\alpha_i}}(F_i T^\La_\alpha
\langle (\La-\alpha,\delt_i-\delt_{i+1})-1\rangle, F_i M) &=
\hom_{K^\La_{\alpha+\alpha_i}}(T^{\La}_{\alpha+\alpha_i} e_{\alpha,\alpha_i}, F_i M)\\
&=
e_{\alpha,\alpha_i} \hom_{K^\La_{\alpha+\alpha_i}}(T^{\La}_{\alpha+\alpha_i}, F_i M)\\
&=
 E_i \circ \pi \circ F_i(M).
\end{align*}
This means that the maps $\bar\eta_M$ actually define a natural transformation
$$
\bar\eta:D_i^{-1} D_{i+1}\langle 1 \rangle\circ \pi
\rightarrow E_i\circ \pi\circ F_i.
$$
Now we use the adjunction from
Lemma~\ref{adjpair}, to get from this
the natural transformation $\eta:F_i\circ \pi \rightarrow \pi\circ F_i$
that we wanted.

Now we need to show that $\eta$ is an isomorphism.
We first check that it gives an isomorphism
on the module $M = T^\La_\alpha$. In that case
by Corollary~\ref{id3} and Theorem~\ref{thetaithm},
$\bar\eta_M$ can be identified with the map
$$
\theta_i:R^\La_\alpha \rightarrow e_{\alpha,\alpha_i}
R^\La_{\alpha+\alpha_i} e_{\alpha,\alpha_i}.
$$
Pushing through the adjunction from Lemma~\ref{adjpair},
this induces the identity map
$R^\La_{\alpha+\alpha_i}
e_{\alpha,\alpha_i} \rightarrow
R^\La_{\alpha+\alpha_i} e_{\alpha,\alpha_i}$, hence $\eta_M$
is an isomorphism.
From this, Lemma~\ref{pring1} and naturality, it follows that
$\eta$ gives an isomorphism on every prinjective indecomposable
$K^\La_\alpha$-module.
By \cite[Theorem 6.2]{BS2},
 every projective indecomposable $K^\La_\alpha$-module $P$
has a two step resolution $0 \rightarrow P \rightarrow P^0 \rightarrow P^1$
where $P^0$ and $P^1$ are prinjective.
From this we deduce by the five lemma (and the exactness of the functors involved)
that $\eta$ gives an isomorphism on
every projective indecomposable
module. Finally we take an arbitrary $M$ and
consider a two step projective resolution $P_1 \rightarrow P_0 \rightarrow M
\rightarrow 0$ and use the five lemma again to complete the proof.
\end{proof}

\begin{Remark}\rm\label{come}
There is also
an isomorphism $E_i \circ  \pi \stackrel{\sim}{\rightarrow}
\pi \circ E_i$ of functors from $\Rep{K^\La_{\alpha+\alpha_i}}$
to $\Rep{R^\La_{\alpha}}$. Since this does not play a central role
in the rest of the article, we leave the details of its construction to
the reader.
\end{Remark}

\begin{Corollary}\label{mapj}
Identifying
$\widehat T^\La_\alpha$ with $\pi(K^\La_\alpha)$
according to the isomorphism from
Lemma~\ref{precd}, the functor $\pi$
induces a graded algebra isomorphism
$$
j:K^\La_\alpha \equiv \End_{K^\La_\alpha}(K^\La_\alpha)^{\op}
\stackrel{\sim}{\rightarrow}
\End_{R^\La_\alpha}(\widehat T^\La_\alpha)^{\op}.
$$
\end{Corollary}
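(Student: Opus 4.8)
The statement to be proved is that applying the graded Schur functor $\pi = \hom_{K^\La_\alpha}(T^\La_\alpha,?)$ to endomorphisms yields a graded algebra isomorphism
$$
j:\End_{K^\La_\alpha}(K^\La_\alpha)^{\op} \stackrel{\sim}{\rightarrow} \End_{R^\La_\alpha}(\widehat T^\La_\alpha)^{\op},
$$
where $\widehat T^\La_\alpha \cong \pi(K^\La_\alpha)$ via Lemma~\ref{precd}. The map $j$ is simply $f \mapsto \pi(f)$; since $\pi(K^\La_\alpha) \cong \widehat T^\La_\alpha$ canonically, the target is correctly identified. Functoriality immediately makes $j$ a homomorphism of graded algebras (with the opposite convention), so the entire content of the corollary is that $j$ is bijective, i.e. that $\pi$ is fully faithful on the particular projective module $K^\La_\alpha$ (the regular module, viewed as a projective left $K^\La_\alpha$-module).

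The plan is to deduce this directly from Lemma~\ref{sf2}(ii), which asserts that the graded Schur functor $\pi$ is fully faithful on projectives. The regular module $K^\La_\alpha$ is of course projective as a left module over itself, so Lemma~\ref{sf2}(ii) gives at once that
$$
\pi:\hom_{K^\La_\alpha}(K^\La_\alpha, K^\La_\alpha) \stackrel{\sim}{\rightarrow} \hom_{R^\La_\alpha}(\pi(K^\La_\alpha), \pi(K^\La_\alpha))
$$
is an isomorphism of graded vector spaces. First I would check that this isomorphism is compatible with the algebra structures: the left-hand side is $\End_{K^\La_\alpha}(K^\La_\alpha)^{\op} \equiv K^\La_\alpha$ via right multiplication (the standard identification already written into the statement), and on endomorphisms $\pi$ respects composition, which after passing to opposite algebras on both sides gives a genuine algebra homomorphism. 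Then I would transport this isomorphism along the canonical identification $\widehat T^\La_\alpha \cong \pi(K^\La_\alpha)$ from Lemma~\ref{precd}, noting that this identification is an isomorphism of graded right $K^\La_\alpha$-modules and hence induces a graded algebra isomorphism $\End_{R^\La_\alpha}(\pi(K^\La_\alpha))^{\op} \cong \End_{R^\La_\alpha}(\widehat T^\La_\alpha)^{\op}$. Composing, one obtains the desired $j$; one should also verify it is homogeneous of degree zero, which follows because $\pi = \hom_{K^\La_\alpha}(T^\La_\alpha, ?)$ is a functor of graded modules and Lemma~\ref{precd} provides a degree zero isomorphism.

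I do not expect a serious obstacle here: the corollary is essentially a packaging of Lemma~\ref{sf2}(ii) applied to the specific projective $K^\La_\alpha$, exactly parallel to how Corollary~\ref{mapi} packages Lemma~\ref{sf1}(ii) applied to $\cP^\La_\alpha$. The only point requiring a little care is bookkeeping with the opposite-algebra conventions and with the module-side structures: $\widehat T^\La_\alpha$ is being regarded as a graded left $R^\La_\alpha$-module (via $\omega$ and the $E^\La_\alpha$-module structure coming from Lemma~\ref{precd}), and one must make sure the $R^\La_\alpha$-action used in forming $\End_{R^\La_\alpha}(\widehat T^\La_\alpha)$ is exactly the one induced through $\pi$, so that the two sides of $j$ genuinely match up. Once that identification of actions is spelled out, the proof is a one-line appeal to Lemma~\ref{sf2}(ii). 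Accordingly the proof in the paper can be stated very briefly, essentially: ``This is immediate from Lemma~\ref{sf2}(ii) applied to the projective module $K^\La_\alpha$, together with the identification $\widehat T^\La_\alpha \cong \pi(K^\La_\alpha)$ from Lemma~\ref{precd}.''
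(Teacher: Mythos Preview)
Your proposal is correct and matches the paper's approach exactly: the corollary is stated without proof immediately after Lemma~\ref{sf2}, so it is meant to follow at once from Lemma~\ref{sf2}(ii) applied to the projective regular module $K^\La_\alpha$, together with the identification $\widehat T^\La_\alpha \cong \pi(K^\La_\alpha)$ from Lemma~\ref{precd}. Your closing sentence is precisely what the paper leaves implicit.
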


\phantomsubsection{The main equivalence of categories}
In this subsection, we are going to ignore all gradings.
We still write $P(\la), L(\la), Y(\la), T^\La_\alpha, \widehat{T}^\La_\alpha$,
\dots for the various
modules introduced earlier but view them as ordinary ungraded modules, i.e. we
implicitly apply the functor $\forget$ everywhere.
Recall also the endofunctors $E_i$ and $F_i$ of $\Rep{\KImn}$
from (\ref{gFi}), which are defined by tensoring by certain graded bimodules.
Tensoring with the same underlying bimodules but forgetting the grading
gives endofunctors $E_i$ and $F_i$ of $\rep{\KImn}$.
The next important lemma identifies the
modules
from (\ref{mods1}) and (\ref{mods2}).

\begin{Lemma}\label{ided}
Assume that $\Ga := \La-\alpha \in \PImn$.
\begin{itemize}
\item[(i)] We have that
$\cD(\la) \cong D(\la)$ for any $\la \in \Ga^\circ$.
\item[(ii)]
We have that
$\cY(\la) \cong Y(\la)$ for any $\la\in \Ga$.
\end{itemize}
\end{Lemma}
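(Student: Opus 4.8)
The plan is to prove both statements simultaneously by an induction on the height $d=\height(\alpha)$, running in parallel on the category $\mathcal{O}$ side and the diagram algebra side, and using the Schur functors $\pi$ from (\ref{Sf}) and (\ref{gsf}) as the bridge. The base case $d=0$ is trivial since then $\La-\alpha$ is the defect-zero block $\La$ itself, consisting of the single weight $\iota$, and $\cL(\iota)=\cP(\iota)=\cY(\iota)$ on one side matches $L(\iota)=P(\iota)=Y(\iota)$ on the other (the algebra $K_\La$ is one-dimensional). For the inductive step, the key structural input is that every $\la\in\Ga^\circ$ can be written as $\la=\tilde f_{i_d}\cdots\tilde f_{i_1}(\iota)$ by Lemma~\ref{stupidcomb}, so $L(\la)$ (resp.\ $\cL(\la)$) appears as a quotient of $F_\bi L(\iota)$ (resp.\ $\cF_\bi\cL(\iota)$) for a suitable admissible sequence $\bi$ by Lemma~\ref{ga} (resp.\ Lemma~\ref{a}); dually $P(\la)$ and $\cP(\la)$ are summands of $F_\bi L(\iota)$ and $\cF_\bi\cL(\iota)$.

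First I would prove part (i). By Lemma~\ref{sf1} and Lemma~\ref{sf2}, $\cD(\la)=\pi(\cL(\la))$ and $D(\la)=\pi(L(\la))$ give complete irredundant sets of irreducible (ungraded) $R^\La_\alpha$-modules as $\la$ ranges over $\Ga^\circ$; so it suffices to match them up correctly. The natural way is to argue that the labelling is compatible with the crystal operators: using part (iii) of Lemmas~\ref{sf1} and \ref{sf2} (the isomorphisms $\cF_i\circ\pi\cong\pi\circ\cF_i$ and $F_i\circ\pi\cong\pi\circ F_i$), together with the forgetful compatibility (\ref{whenyouforget})--(\ref{forget2}) and the fact that these $i$-induction functors on the two sides are literally defined by the same bimodule over the same algebra $R^\La_\alpha$ (after the identification (\ref{4id})), one sees that $\cF_i$ and $F_i$ agree as ungraded endofunctors of $\rep{R^\La_\alpha}$. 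Then writing $\la=\tilde f_{i_d}\cdots\tilde f_{i_1}(\iota)$ and chasing through the heads: $\cD(\la)$ is characterised as the unique irreducible quotient of $\cF_{i_d}\cdots\cF_{i_1}\cD(\iota)$ of the appropriate ``extremal'' type, and likewise $D(\la)$ for $F_{i_d}\cdots F_{i_1}D(\iota)$; since $\cD(\iota)=D(\iota)$ and the functors coincide, we get $\cD(\la)\cong D(\la)$. I would make this precise by inducting on $d$, using exactness of the $i$-induction functors and Lemma~\ref{ga}/Lemma~\ref{a} to control the head.

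Part (ii) then follows quickly: $\cY(\la)$ is the projective cover of $\cD(\la)$ in $\rep{R^\La_\alpha}$ by Lemma~\ref{sf1}(i), and $Y(\la)$ is the projective cover of $D(\la)$ by Lemma~\ref{sf2}(i) (after forgetting the grading, a projective cover of a graded module is a projective cover of its underlying ungraded module). Since $\cD(\la)\cong D(\la)$ by (i), their projective covers are isomorphic, giving $\cY(\la)\cong Y(\la)$. Finally, taking direct sums over all $\la\in\Ga$ gives $\cY^\La_\alpha\cong Y^\La_\alpha:=\pi(\bigoplus_\la P(\la))=\pi(K^\La_\alpha)=\widehat T^\La_\alpha$ (this last identification being from Corollary~\ref{mapj}); this is the form in which the lemma will actually be used later.

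The main obstacle is part (i): one has to be careful that the \emph{same} labelling of irreducibles by elements of $\Ga^\circ$ is being used on both sides. This is really a statement that two parametrisations of the simple $R^\La_\alpha$-modules --- one coming from parabolic category $\mathcal{O}$ via Lemma~\ref{sf1}, the other from the diagram algebra via Lemma~\ref{sf2} --- coincide. The cleanest route is to show both agree with a third, intrinsic parametrisation by the crystal graph: each $\cD(\la)$ and each $D(\la)$ sits at the top of a chain of $i$-inductions from the ground state, and the crystal combinatorics (encoded in Lemmas~\ref{ga}, \ref{a}, \ref{stupidcomb}) pins down which chain leads to which $\la$. Since the $i$-induction functors $\cF_i$ and $F_i$ are visibly the same functor on $\rep{R^\La_\alpha}$ under the identification $R^\La_\alpha\equiv\End_{\mathfrak g}(\cT^\La_\alpha)^{\op}\equiv\End_{K^\La_\alpha}(T^\La_\alpha)^{\op}$ from (\ref{4id}) --- both being tensoring with $R^\La_{\alpha+\alpha_i}e_{\alpha,\alpha_i}$ --- the two chains for a fixed $\la$ are identical, forcing $\cD(\la)\cong D(\la)$.
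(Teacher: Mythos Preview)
Your argument for part (i) is essentially the paper's argument: induct along the crystal graph, use that the Schur functors intertwine the special projective functors with the $i$-induction functors (Lemmas~\ref{sf1}(iii) and \ref{sf2}(iii)), and use that after the identification (\ref{4id}) the functors $\cF_i$ and $F_i$ on $\rep{R^\La_\alpha}$ are defined by tensoring with the same bimodule. The paper makes the ``chasing heads'' step more precise by doing one crystal step at a time and invoking that $\cL(\la)$ is the irreducible head of $\cF_{i_d}\cL(\mu)$ (Lemma~\ref{a}), but your sketch is on the right track.

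For part (ii), however, there is a genuine gap. Lemmas~\ref{sf1}(i) and \ref{sf2}(i) only identify $\cY(\la)$ and $Y(\la)$ as projective covers of $\cD(\la)$ and $D(\la)$ when $\la\in\Ga^\circ$. For $\la\in\Ga\setminus\Ga^\circ$ one has $\cD(\la)=D(\la)=0$, and in fact $\cY(\la)=\pi(\cP(\la))$ need not be projective at all (for instance, when $\la$ is Bruhat-maximal and $\defect(\Ga)>0$, the paper's proof shows that $\cY(\la)$ is actually \emph{irreducible}). So your projective-cover argument establishes (ii) only for $\la\in\Ga^\circ$, whereas the statement---and your own final paragraph, where you sum over all $\la\in\Ga$ to get $\cY^\La_\alpha\cong\widehat T^\La_\alpha$---requires it for every $\la\in\Ga$.

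The paper's route around this is quite different: it first treats Bruhat-maximal $\la$ directly, using $\cP(\la)\cong\cV(\la)$ and a combinatorial analysis (via \cite[Theorems 5.1--5.2]{BS1}) to show that $\pi(\cV(\la))$ has a unique composition factor $\cD(\mu)$ with $\mu\in\Ga^\circ$, whence $\cY(\la)\cong\cD(\mu)$ and likewise $Y(\la)\cong D(\mu)$; part (i) then gives $\cY(\la)\cong Y(\la)$. For arbitrary $\la$ it uses Lemma~\ref{cgl} and Theorem~\ref{cgt} to write $\cP(\la)^{\oplus N}\cong\cF_{i_k}\cdots\cF_{i_1}\cP(\mu)$ with $\mu$ Bruhat-maximal, applies the Schur functors and the intertwining isomorphisms, and concludes by Krull--Schmidt. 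You will need some argument of this type to cover the weights outside $\Ga^\circ$.
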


\begin{proof}
(i)
It is obvious that $\cD(\iota) \cong D(\iota)$
where $\iota$ is the ground-state, because the
algebra $R^\La_0$ is just a copy of the ground field $\C$.
Now take any $\la \in \Ga^\circ$ with $\la \neq \iota$.
By Lemma~\ref{stupidcomb}, we can write
$\la = \tilde f_{i_d} \cdots \tilde f_{i_1}(\iota)$
for $d > 0$.
Set $\mu := \tilde f_{i_{d-1}} \cdots \tilde f_{i_1}(\iota)$.
Proceeding by induction on $d$, we may assume we have shown already that
$\cD(\mu) \cong D(\mu)$.
By Lemma~\ref{a},
$\cL(\la)$ is the irreducible head of $\cF_{i_d}(\cL(\mu))$.
Applying the Schur functor exactly as in the proof of \cite[Theorem 3.10(vi)]{BKariki},
 we deduce that
$\cD(\la)$ is the irreducible head of $\pi\circ \cF_{i_d}(\cL(\mu))$.
Equivalenty by Lemma~\ref{sf1}(iii), $\cD(\la)$
is isomorphic to the irreducible head of $\cF_{i_d} \circ \pi(\cL(\mu))
= \cF_{i_d}(\cD(\mu))$.
A similar argument on the diagram algebra side
using Lemma~\ref{ga}, Lemma~\ref{sf2}(iii) and (\ref{whenyouforget})
gives that $D(\la)$ is isomorphic to
the irreducible head of
$\cF_{i_d}(D(\mu))
\cong \cF_{i_d} (\cD(\mu))$.
Hence $\cD(\la) \cong D(\la)$.

(ii)
To start with suppose that $\la \in \Ga$ is maximal in the
Bruhat order.
Then we have that $P(\la) \cong V(\la)$ by \cite[Theorem 5.1]{BS1}.
Consider the possible $\mu \in \Ga^\circ$ such that
$L(\mu)$ appears as a composition factor of $V(\la)$ (possibly
shifted in degree). As $\la$ is Bruhat maximal, its diagram involves
$p$ $\up$'s followed by $q$ $\down$'s.
By \cite[Theorem 5.2]{BS1}, it must be the case that
$\underline{\mu} \la$ is an oriented cup diagram.
Since $\mu$ is of maximal defect it follows that there is
only one possibility for $\mu$: it is the weight obtained from
$\la$ by switching the rightmost
$|p-q|$ vertices labelled $\up$ with the
leftmost $|p-q|$ vertices labelled $\down$.
Applying the graded Schur functor using Lemma~\ref{sf2},
we deduce that $Y(\la) =
\pi(P(\la)) \cong \pi(V(\la)) \cong
\pi(L(\mu)) \cong D(\mu)$ (ignoring the grading).
Since the combinatorics is exactly the same on the category $\mathcal O$
side (compare Theorems~\ref{cat1} and \ref{cat2}),
the same argument
gives that
$\cY(\la) \cong \cD(\mu)$. In view of (i),
we deduce that $\cY(\la) \cong Y(\la)$.

Now suppose that $\la \in \Ga$ is not maximal in the Bruhat order.
Applying Lemma~\ref{cgl}, we get $\mu \in \Ga$ that is maximal
and $i_1,\dots,i_k \in I$ such that $\la =
\tilde f_{i_k} \cdots \tilde f_{i_1} (\mu)$.
Applying Theorem~\ref{cgt} combined with Theorems~\ref{cat1}
and \ref{cat2}, we deduce for some $N > 0$ that
\begin{align*}
\cF_{i_k} \circ\cdots\circ \cF_{i_1}(\cP(\mu))
&\cong \cP(\la)^{\oplus N},\\
F_{i_k} \circ\cdots\circ F_{i_1}(P(\mu))
&\cong P(\la)^{\oplus N}.
\end{align*}
Now apply the Schur functors on both sides and use Lemmas~\ref{sf1}(iii),
\ref{sf2}(iii) and (\ref{whenyouforget}) to deduce that
\begin{align*}
\cF_{i_k} \circ\cdots\circ \cF_{i_1}(\cY(\mu))
&\cong \cY(\la)^{\oplus N},\\
\cF_{i_k} \circ\cdots\circ \cF_{i_1}(Y(\mu))
&\cong Y(\la)^{\oplus N}.
\end{align*}
The modules on the left hand side here are isomorphic by the
previous paragraph. Hence applying the Krull-Schmidt theorem we get that
$\cY(\la) \cong Y(\la)$ as required.
\end{proof}

Recall for the next lemma that
$K_\Ga$ possesses a system
$\{e_\la\:|\:\la \in \Ga\}$ of mutually orthogonal idempotents
such that
the projective indecomposable $K_\Ga$-module $P(\la)$ is just the left ideal
$K_\Ga e_\la$; see \cite[$\S$5]{BS1}.

\begin{Lemma}\label{mapf}
Recalling the definition (\ref{gut}), there exists an isomorphism of $R^\La_\alpha$-modules
$h:\cY^\La_\alpha \stackrel{\sim}{\rightarrow} \widehat T^\La_\alpha$
mapping each summand $\cY(\la)$ of $\cY^\La_\alpha$
to the summand $\widehat T^\La_\alpha
e_\la \cong Y(\la)$ of $\widehat T^\La_\alpha$.
\end{Lemma}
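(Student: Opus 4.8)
The plan is to identify both $\cY^\La_\alpha$ and $\widehat T^\La_\alpha$ with $\pi(\cP^\La_\alpha)$ and $\pi(K^\La_\alpha)$ respectively, where $\pi$ is the relevant Schur functor, and then exhibit a $K^\La_\alpha$-module isomorphism $\cP^\La_\alpha \cong K^\La_\alpha$ that respects the summand decomposition $K^\La_\alpha = \bigoplus_{\la \in \Ga} K_\Ga e_\la$. Since $\Ga := \La - \alpha \in \PImn$, the algebra $K^\La_\alpha$ is just $K_\Ga$, so $K^\La_\alpha = \bigoplus_{\la \in \Ga} K_\Ga e_\la = \bigoplus_{\la\in\Ga} P(\la)$ as a left $K_\Ga$-module, where the summand $K_\Ga e_\la$ is precisely $P(\la)$ by \cite[$\S$5]{BS1}. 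By Lemma~\ref{ided}(ii) we have $\cY(\la) \cong Y(\la)$ for each $\la\in\Ga$, and by definition $Y(\la) = \pi(P(\la))$ where now $\pi$ is the graded Schur functor $\hom_{K^\La_\alpha}(T^\La_\alpha, ?)$ from (\ref{gsf}) (we ignore gradings throughout this subsection). Hence $\cY^\La_\alpha = \bigoplus_{\la\in\Ga}\cY(\la) \cong \bigoplus_{\la\in\Ga} Y(\la) = \bigoplus_{\la\in\Ga}\pi(P(\la)) = \pi\big(\bigoplus_{\la\in\Ga} P(\la)\big) = \pi(K^\La_\alpha)$, using that $\pi$ is additive.

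On the other hand, Lemma~\ref{precd} provides a (degree zero) isomorphism $\widehat T^\La_\alpha \stackrel{\sim}{\rightarrow} \hom_{K^\La_\alpha}(T^\La_\alpha, K^\La_\alpha) = \pi(K^\La_\alpha)$ of right $K^\La_\alpha$-modules; moreover the statement just before Corollary~\ref{mapj} records that this makes $\widehat T^\La_\alpha$ into a left $R^\La_\alpha$-module, and under this identification $\widehat T^\La_\alpha \equiv \pi(K^\La_\alpha)$ as left $R^\La_\alpha$-modules. So the composite $\cY^\La_\alpha \cong \pi(K^\La_\alpha) \cong \widehat T^\La_\alpha$ is the desired isomorphism $h$ of left $R^\La_\alpha$-modules. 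The one thing that still needs care is the claim that $h$ carries the summand $\cY(\la)$ onto the summand $\widehat T^\La_\alpha e_\la$. This follows by tracking the idempotents: under the identification $\widehat T^\La_\alpha \equiv \pi(K^\La_\alpha) = \hom_{K^\La_\alpha}(T^\La_\alpha, K^\La_\alpha)$, the submodule $\widehat T^\La_\alpha e_\la$ corresponds to $\hom_{K^\La_\alpha}(T^\La_\alpha, K^\La_\alpha e_\la) = \hom_{K^\La_\alpha}(T^\La_\alpha, P(\la)) = Y(\la)$, because right multiplication by $e_\la$ on $\hom_{K^\La_\alpha}(T^\La_\alpha, K^\La_\alpha)$ (which is how $\widehat T^\La_\alpha$ acquires its right $K^\La_\alpha$-action) is exactly post-composition with the projection $K^\La_\alpha \twoheadrightarrow K^\La_\alpha e_\la$. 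Under the isomorphism $\cY^\La_\alpha \cong \pi(K^\La_\alpha)$ constructed above, the summand $\cY(\la)$ is by construction $\pi(P(\la)) = \hom_{K^\La_\alpha}(T^\La_\alpha, K^\La_\alpha e_\la)$. Hence $h(\cY(\la)) = \widehat T^\La_\alpha e_\la$, and by Lemma~\ref{ided}(ii) this summand is isomorphic to $Y(\la)$.

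The only genuinely delicate point is the bookkeeping in the last paragraph: one must be sure that the three right $K^\La_\alpha$-module structures in play on $\pi(K^\La_\alpha)$ — the one transported from $\widehat T^\La_\alpha$ via Lemma~\ref{precd}, the one coming from the right regular action of $K^\La_\alpha$ on itself inside $\hom_{K^\La_\alpha}(T^\La_\alpha, K^\La_\alpha)$, and the one used implicitly in the definition $\cY(\la) = \pi(\cP(\la))$ — are all the same. This is exactly what Lemma~\ref{precd} asserts (it is a homomorphism of right $K^\La_\alpha$-modules), so the verification is routine; I would simply cite that lemma and the definitions. No new ideas are needed beyond Lemmas~\ref{precd}, \ref{ided}(ii) and the additivity of $\pi$; everything else is assembling identifications that have already been set up.
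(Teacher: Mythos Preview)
Your proof is correct and follows essentially the same route as the paper's: decompose $\widehat T^\La_\alpha = \bigoplus_{\la\in\Ga}\widehat T^\La_\alpha e_\la$ via Lemma~\ref{precd}, observe that each summand is $Y(\la)$, and then invoke Lemma~\ref{ided}(ii) to get $\cY(\la)\cong Y(\la)$ summand by summand. The paper simply states this in two lines, whereas you spell out the identifications in more detail; your cautionary remark about the right $K^\La_\alpha$-actions is slightly overcautious (there is no such action on $\cY(\la)=\pi(\cP(\la))$ for the category-$\cO$ Schur functor, but none is needed---only the left $R^\La_\alpha$-module isomorphism and the summand matching, both of which you handle correctly).
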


\begin{proof}
We may assume that $\Ga := \La-\alpha \in \PImn$.
Recalling Lemma~\ref{precd}, we have that
$\widehat T^\La_\alpha = \bigoplus_{\la \in \Ga} \widehat T^\La_\alpha e_\la$,
and each $\widehat T^\La_\alpha e_\la$ is isomorphic to $Y(\la)$.
Now apply Lemma~\ref{ided}(ii).
\end{proof}

Fix once and for all a choice of isomorphism
$h$ as in Lemma~\ref{mapf} for each $\alpha \in Q_+$.
Now we come to what is really the main theorem of the article.

\begin{Theorem}\label{mainiso}
For each $\alpha \in Q_+$, there is an algebra isomorphism
$$
\End_{\mathfrak{g}}\big(\cP^\La_\alpha\big)^{\op}
\stackrel{\sim}{\rightarrow}
K^\La_\alpha, \qquad
\theta \mapsto
j^{-1}(h \circ i(\theta) \circ h^{-1}),
$$
where $i, j$ and $h$ are the maps from
Corollaries~\ref{mapi} and \ref{mapj} and Lemma~\ref{mapf}.
\end{Theorem}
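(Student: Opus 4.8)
The plan is to assemble this isomorphism from the three isomorphisms already in hand: the Schur functor isomorphism $i:\End_{\mathfrak g}(\cP^\La_\alpha)^{\op}\stackrel{\sim}{\to}\End_{R^\La_\alpha}(\cY^\La_\alpha)^{\op}$ from Corollary~\ref{mapi}, the isomorphism $j:K^\La_\alpha\stackrel{\sim}{\to}\End_{R^\La_\alpha}(\widehat T^\La_\alpha)^{\op}$ from Corollary~\ref{mapj}, and the module isomorphism $h:\cY^\La_\alpha\stackrel{\sim}{\to}\widehat T^\La_\alpha$ from Lemma~\ref{mapf}. The point is simply that $h$ conjugates the $R^\La_\alpha$-module endomorphism algebra of $\cY^\La_\alpha$ isomorphically onto that of $\widehat T^\La_\alpha$; explicitly, $\theta\mapsto h\circ\theta\circ h^{-1}$ is an algebra isomorphism $\End_{R^\La_\alpha}(\cY^\La_\alpha)^{\op}\stackrel{\sim}{\to}\End_{R^\La_\alpha}(\widehat T^\La_\alpha)^{\op}$. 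Composing with $i$ on the right and $j^{-1}$ on the left then gives the asserted map, and it is an isomorphism of algebras as a composite of three algebra isomorphisms (noting that the $\op$'s match up throughout, and conjugation by $h$ is automatically compatible with the opposite multiplications).

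First I would make precise the identifications being invoked. We have already identified $R^\La_\alpha$ with $\End_{K^\La_\alpha}(T^\La_\alpha)^{\op}$ in (\ref{4id}), so $\widehat T^\La_\alpha$ and $\cY^\La_\alpha$ are genuinely both left $R^\La_\alpha$-modules: $\widehat T^\La_\alpha$ via Lemma~\ref{precd} and the homomorphism $\omega$ of Theorem~\ref{phi}, and $\cY^\La_\alpha=\pi(\cP^\La_\alpha)$ via the Schur functor $\pi$ of (\ref{Sf}) together with the identification $R^\La_\alpha\equiv\End_{\mathfrak g}(\cT^\La_\alpha)^{\op}$. Then I would verify that each of $i$, $j$, and conjugation-by-$h$ is an isomorphism of the relevant endomorphism algebras: $i$ is an algebra isomorphism by Theorem~\ref{sf1}(ii) (the Schur functor $\pi$ is fully faithful on projectives, and $\cP^\La_\alpha$ is projective), $j$ is an algebra isomorphism by Corollary~\ref{mapj}, and conjugation by an $R^\La_\alpha$-module isomorphism is an algebra isomorphism of $\op$-endomorphism rings by a general nonsense argument. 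Stringing these together in the order $\End_{\mathfrak g}(\cP^\La_\alpha)^{\op}\xrightarrow{i}\End_{R^\La_\alpha}(\cY^\La_\alpha)^{\op}\xrightarrow{h\cdot h^{-1}}\End_{R^\La_\alpha}(\widehat T^\La_\alpha)^{\op}\xrightarrow{j^{-1}}K^\La_\alpha$ yields the map $\theta\mapsto j^{-1}(h\circ i(\theta)\circ h^{-1})$ and shows it is an algebra isomorphism.

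Since every ingredient is already established, there is no serious obstacle; the whole statement is essentially a bookkeeping exercise in composing known isomorphisms. The one thing worth being careful about — and the only place a reader might stumble — is the consistent tracking of the opposite-algebra conventions: $\cP^\La_\alpha$ is viewed as a right module over its endomorphism algebra, $T^\La_\alpha$ as a right $E^\La_\alpha\equiv R^\La_\alpha$-module, and so on, so one must check that conjugation by $h$ really does intertwine $i$ and $j$ as maps of $\op$-algebras rather than introducing a spurious anti-automorphism. This is routine: $h$ is a morphism of \emph{left} $R^\La_\alpha$-modules, so $\phi\mapsto h\phi h^{-1}$ is a ring isomorphism $\End_{R^\La_\alpha}(\cY^\La_\alpha)\to\End_{R^\La_\alpha}(\widehat T^\La_\alpha)$, hence also of the opposite rings, and it restricts correctly because by Lemma~\ref{mapf} $h$ carries the summand $\cY(\la)$ to the summand $\widehat T^\La_\alpha e_\la$. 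That is the entire proof, and I expect it to occupy only a few lines once the conventions are laid out.
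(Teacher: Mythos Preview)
Your proposal is correct and is precisely the paper's approach: the paper's own proof is a single sentence noting that $i$, $j$ and $h$ are already known to be isomorphisms, so the composite $\theta\mapsto j^{-1}(h\circ i(\theta)\circ h^{-1})$ is an algebra isomorphism. Your write-up simply spells out the bookkeeping behind that sentence.
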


\begin{proof}
This follows because all of $i,j$ and $h$
are already known to be isomorphisms.
\end{proof}

\begin{Corollary}\label{mainequiv}
Viewing $\cP^\La_\alpha$ as a right $K^\La_\alpha$-module
via the isomorphism from Theorem~\ref{mainiso}, the functor
$$
\mathbb{E}_\alpha := \hom_{\mathfrak{g}}\big(\cP^\La_\alpha, ?\big):
\mathcal O^\La_\alpha \rightarrow \rep{K^\La_\alpha}
$$
is an equivalence of categories.
Moreover, if $\Ga := \La-\alpha \in \PImn$ then
$\mathbb{E}_\alpha(\cL(\la)) \cong L(\la),
\mathbb{E}_\alpha(\cV(\la)) \cong V(\la)$
and
$\mathbb{E}_\alpha(\cP(\la)) \cong P(\la)$ for each $\la \in \Ga$.
\end{Corollary}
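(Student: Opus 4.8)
\textbf{Proof proposal for Corollary~\ref{mainequiv}.}
The plan is to deduce everything from Theorem~\ref{mainiso} together with the standard fact that the ``hom from a projective generator'' functor is an equivalence. First I would observe that $\cP^\La_\alpha$ is, by (\ref{gut}) and Lemma~\ref{pring2}, a minimal projective generator for the category $\cO^\La_\alpha$; hence the functor $\mathbb{E}_\alpha := \hom_{\mathfrak{g}}(\cP^\La_\alpha,?)$ is an equivalence onto the category of finite dimensional right $\End_{\mathfrak{g}}(\cP^\La_\alpha)$-modules, i.e. onto $\rep{\End_{\mathfrak{g}}(\cP^\La_\alpha)^{\op}}$ viewed as left modules. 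Now Theorem~\ref{mainiso} gives an algebra isomorphism $\End_{\mathfrak{g}}(\cP^\La_\alpha)^{\op} \stackrel{\sim}{\rightarrow} K^\La_\alpha$, and transporting the module structure along this isomorphism identifies $\rep{\End_{\mathfrak{g}}(\cP^\La_\alpha)^{\op}}$ with $\rep{K^\La_\alpha}$. Composing, $\mathbb{E}_\alpha : \cO^\La_\alpha \rightarrow \rep{K^\La_\alpha}$ is an equivalence. (When $\La - \alpha \notin \PImn$ both sides are the zero category and there is nothing to prove, so I would assume $\Ga := \La-\alpha \in \PImn$ throughout.)

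Next I would pin down where the distinguished modules go. Since $\mathbb{E}_\alpha$ is an equivalence it sends irreducibles to irreducibles, projective indecomposables to projective indecomposables, and respects projective covers, composition multiplicities, and highest weight structure. To match up the labels, the cleanest route is via the Schur functor $\pi$ from (\ref{Sf}): by construction $\pi = \hom_{\mathfrak{g}}(\cT^\La_\alpha, ?)$ and $\mathbb{E}_\alpha = \hom_{\mathfrak{g}}(\cP^\La_\alpha,?)$ are related by the fact that $\cT^\La_\alpha$ is (up to multiplicity and summands) built from the same prinjectives; more precisely, I would use that $\mathbb{E}_\alpha$ carries the minimal projective generator $\cP^\La_\alpha$ to $K^\La_\alpha$ itself (by the very definition of the module structure via Theorem~\ref{mainiso}), hence carries the summand $\cP(\la)$ of $\cP^\La_\alpha$ to the summand $K^\La_\alpha e_\la = P(\la)$ of $K^\La_\alpha$. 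This gives $\mathbb{E}_\alpha(\cP(\la)) \cong P(\la)$ directly from the construction.

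From $\mathbb{E}_\alpha(\cP(\la)) \cong P(\la)$ the other two identifications follow formally. For irreducibles: $\cL(\la)$ is the irreducible head of $\cP(\la)$, and $L(\la)$ is the irreducible head of $P(\la)$ (by \cite[Theorem 5.3]{BS1}), so since $\mathbb{E}_\alpha$ is an equivalence it must send heads to heads, giving $\mathbb{E}_\alpha(\cL(\la)) \cong L(\la)$. For standard modules one argues that $\mathbb{E}_\alpha$ is an equivalence of highest weight categories: both $\cO^\La_\alpha$ and $\rep{K^\La_\alpha}$ are highest weight with poset $\Ga$ under the Bruhat order (by the highest weight structure recalled in \S\ref{sC2} and \cite[Theorem 5.3]{BS1} respectively), and an equivalence between highest weight categories matching up the irreducibles $\cL(\la) \leftrightarrow L(\la)$ automatically matches up the standard modules $\cV(\la) \leftrightarrow V(\la)$, since standard modules are characterised by a universal property relative to the irreducibles and the poset. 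Alternatively, and perhaps more transparently, I would compare images in the Grothendieck group: applying $\mathbb{E}_\alpha$ to a $\Delta$-filtration of $\cP(\la)$ and using BGG reciprocity on both sides (together with Theorems~\ref{cat1} and \ref{cat2}, which show the two decomposition matrices agree after setting $q=1$), one sees that $[\mathbb{E}_\alpha(\cV(\la))] = [V(\la)]$; combined with the fact that $\mathbb{E}_\alpha(\cV(\la))$ is a quotient of $\mathbb{E}_\alpha(\cP(\la)) \cong P(\la)$ with irreducible head $L(\la)$, this forces $\mathbb{E}_\alpha(\cV(\la)) \cong V(\la)$. The main obstacle is purely bookkeeping: making sure the idempotent $e_\la \in K^\La_\alpha$ used to cut out $P(\la)$ is the same one that corresponds under Theorem~\ref{mainiso} to the summand $\cP(\la)$ of $\cP^\La_\alpha$ — this is exactly what Lemma~\ref{mapf} was set up to guarantee (it sends $\cY(\la)$ to $\widehat T^\La_\alpha e_\la$), so I would simply trace that compatibility through the definitions of $i$, $j$, $h$ rather than prove anything new.
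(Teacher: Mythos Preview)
Your proposal is correct and follows essentially the same route as the paper: both argue that $\mathbb{E}_\alpha$ is an equivalence by Morita theory and Theorem~\ref{mainiso}, then match $\cP(\la)$ with $P(\la)$ by checking that the projection idempotent $p_\la$ corresponds to $e_\la$ under the isomorphism (which is precisely what the compatibility built into $h$ via Lemma~\ref{mapf} ensures), deduce $\cL(\la)\leftrightarrow L(\la)$ from heads, and finally $\cV(\la)\leftrightarrow V(\la)$ from the highest weight structure. Your write-up is a bit more explicit about the Morita step and offers a redundant Grothendieck-group alternative for the standards, but the substance is identical to the paper's proof.
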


\begin{proof}
For $\la \in \Ga$ let
$p_\la \in
\End_{\mathfrak{g}}\big(\textstyle\bigoplus_{\la \in \Ga} \cP(\la)\big)^{\op}$
be the projection onto the summand $\cP(\la)$. Then by the definitions of
$i, j$ and $h$ we have that
$c(p_\la) = e_\la$.
It follows easily that $\mathbb{E}_\alpha (\cP(\la)) \cong P(\la)$,
hence also $\mathbb{E}_\alpha (\cL(\la)) \cong L(\la)$.
Finally we get that $\mathbb{E}_\alpha (\cV(\la)) \cong V(\la)$ because
these are the standard modules of the highest weight categories on the two sides.
\end{proof}

\begin{Corollary}\label{maincommuting}
Let $\mathbb{E}_\alpha$ be as in
Corollary~\ref{mainequiv} and $\pi$ denote the respective Schur functors from (\ref{Sf}) and (\ref{gsf}).
Then the following diagram
$$
\begin{CD}
\mathcal{O}^\La_\alpha @>\phantom{hell}\mathbb{E}_\alpha\phantom{hell}>> &
\rep{K^\La_\alpha}
\\\\
&\:\rep{R^\La_\alpha} &\\
\end{CD}
\begin{picture}(0,0)
\put(-86,-6){\makebox(0,0){$\searrow$}}
\put(-102,-3){\makebox(0,0){$\scriptstyle\pi$}}
\put(-90,-2){\line(-1,1){13}}
\put(-47,-6){\makebox(0,0){$\swarrow$}}
\put(-43,-2){\line(1,1){13}}
\put(-32,-3){\makebox(0,0){$\scriptstyle\pi$}}
\end{picture}
$$
commutes in the sense that there is an isomorphism of functors $\pi \circ \mathbb{E}_\alpha
\cong \pi$.
\end{Corollary}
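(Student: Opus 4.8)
The statement to be proved is that the diagram
$$
\begin{CD}
\mathcal{O}^\La_\alpha @>\mathbb{E}_\alpha>> \rep{K^\La_\alpha}
\end{CD}
$$
together with the two Schur functors $\pi$ commutes up to isomorphism of functors, i.e. $\pi \circ \mathbb{E}_\alpha \cong \pi$. My strategy is to unravel all three functors in sight in terms of the $\hom$ out of the relevant prinjective/projective generators, and to show that the two resulting functors $\mathcal{O}^\La_\alpha \to \rep{R^\La_\alpha}$ are naturally isomorphic because they are both computed by tensoring (or homming) with the same bimodule, once we keep track of the identifications made in (\ref{4id}) and Lemmas~\ref{precd}, \ref{mapf}.

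\textbf{Step 1: Rewrite each functor.}
First I would note that by definition $\mathbb{E}_\alpha = \hom_{\mathfrak g}(\cP^\La_\alpha, ?)$ (after using Theorem~\ref{mainiso} to give $\cP^\La_\alpha$ its right $K^\La_\alpha$-module structure), that $\pi$ on the category $\mathcal O$ side is $\hom_{\mathfrak g}(\cT^\La_\alpha, ?)$ by (\ref{Sf}), and that $\pi$ on the diagram side is $\hom_{K^\La_\alpha}(T^\La_\alpha, ?)$ by (\ref{gsf}). Given $M \in \mathcal O^\La_\alpha$, the space $\pi(M) = \hom_{\mathfrak g}(\cT^\La_\alpha, M)$ is a left $R^\La_\alpha$-module via the right action of $R^\La_\alpha \equiv \End_{\mathfrak g}(\cT^\La_\alpha)^{\op}$ on $\cT^\La_\alpha$. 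On the other hand, $\pi(\mathbb{E}_\alpha(M)) = \hom_{K^\La_\alpha}(T^\La_\alpha, \hom_{\mathfrak g}(\cP^\La_\alpha, M))$, which by the standard tensor-hom-type manipulation for hom-out-of-a-projective-generator (and the identification of the right $K^\La_\alpha$-action) equals $\hom_{K^\La_\alpha}\!\big(T^\La_\alpha, \hom_{\mathfrak g}(\cP^\La_\alpha, M)\big)$.

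\textbf{Step 2: Relate $\cT^\La_\alpha$ and $T^\La_\alpha$ through $\mathbb{E}_\alpha$ and the two descriptions of $\widehat T^\La_\alpha$.}
The key intermediate object is $\widehat T^\La_\alpha$, which by Lemma~\ref{precd} is $\hom_{K^\La_\alpha}(T^\La_\alpha, K^\La_\alpha)$ on the diagram side, while by Corollary~\ref{mapj} it is $\pi(K^\La_\alpha)$ and by Lemma~\ref{mapf} it is isomorphic (via $h$) to $\cY^\La_\alpha = \pi(\cP^\La_\alpha)$ on the category $\mathcal O$ side. I would use these identifications to write
$$
\pi(\mathbb{E}_\alpha(M)) \cong \widehat T^\La_\alpha \otimes_{K^\La_\alpha} \mathbb{E}_\alpha(M)
$$
(using that $T^\La_\alpha$ is a projective $K^\La_\alpha$-module by \cite[Corollary 4.4]{BS2}, so $\hom_{K^\La_\alpha}(T^\La_\alpha, -) \cong \widehat T^\La_\alpha \otimes_{K^\La_\alpha} -$), and then
$\widehat T^\La_\alpha \otimes_{K^\La_\alpha} \mathbb{E}_\alpha(M) \cong \pi(\cP^\La_\alpha) \otimes_{K^\La_\alpha} \hom_{\mathfrak g}(\cP^\La_\alpha, M) \cong \hom_{\mathfrak g}(\cP^\La_\alpha, M) \otimes_{\End_{\mathfrak g}(\cP^\La_\alpha)^{\op}} \cdots$; the point is that the composite functor $\widehat T^\La_\alpha \otimes_{K^\La_\alpha} \hom_{\mathfrak g}(\cP^\La_\alpha, -)$ applied to $M$ recovers $\pi(M) = \hom_{\mathfrak g}(\cT^\La_\alpha, M)$ once one observes that $\cT^\La_\alpha$ is a summand of a direct sum of copies of $\cP^\La_\alpha$ (it is prinjective, hence projective, by Lemma~\ref{pring2}) and the right $K^\La_\alpha \equiv \End_{\mathfrak g}(\cP^\La_\alpha)^{\op}$-module structures match up by construction of the isomorphism in Theorem~\ref{mainiso}. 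Concretely, $\hom_{\mathfrak g}(\cP^\La_\alpha, -)$ is an equivalence onto $\rep{\End_{\mathfrak g}(\cP^\La_\alpha)^{\op}} = \rep{K^\La_\alpha}$ sending $\cT^\La_\alpha$ to $\widehat T^\La_\alpha$, so $\hom_{\mathfrak g}(\cT^\La_\alpha, M) \cong \hom_{K^\La_\alpha}(\widehat T^\La_\alpha, \mathbb{E}_\alpha(M))$, and then one more application of Lemma~\ref{precd}/adjunction identifies this with $\hom_{K^\La_\alpha}(T^\La_\alpha, \mathbb{E}_\alpha(M)) = \pi(\mathbb{E}_\alpha(M))$.

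\textbf{The main obstacle.}
The genuinely delicate point is checking that the $R^\La_\alpha$-module structures agree on the nose after all these identifications — i.e. that the natural isomorphism of underlying $\mathbb C$-vector spaces constructed above is $R^\La_\alpha$-linear, not merely $\mathbb C$-linear. This amounts to verifying that the right $R^\La_\alpha$-action on $\cT^\La_\alpha$ (coming from $R^\La_\alpha \equiv \End_{\mathfrak g}(\cT^\La_\alpha)^{\op}$ in (\ref{4id})) is intertwined by $\mathbb{E}_\alpha$ with the right $E^\La_\alpha = R^\La_\alpha$-action on $T^\La_\alpha$; this is precisely where the compatibility of the four identifications in (\ref{4id}) — in particular that $\mathbb{E}_\alpha$ carries $\cT^\La_\alpha$ to $T^\La_\alpha$ as right $R^\La_\alpha$-modules — gets used, and it should follow formally from $\mathbb{E}_\alpha(\cP(\la)) \cong P(\la)$ (Corollary~\ref{mainequiv}) together with $\cT^\La_\alpha$ being built from the $\cP(\la)$ in the same combinatorial way $T^\La_\alpha$ is built from the $P(\la)$ (Lemmas~\ref{pring1}, \ref{pring2}, with the bijection of summands via Lemma~\ref{stupidcomb}). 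I would therefore organize the proof so that this bookkeeping is isolated into one lemma about $\mathbb{E}_\alpha$ respecting the $R^\La_\alpha$-actions, after which $\pi \circ \mathbb{E}_\alpha \cong \pi$ is immediate from Step~1 and Step~2.
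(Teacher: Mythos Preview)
Your overall shape is right and matches the paper's: the whole corollary reduces to producing a $(K^\La_\alpha, R^\La_\alpha)$-bimodule isomorphism
\[
T^\La_\alpha \;\cong\; \hom_{\mathfrak g}(\cP^\La_\alpha, \cT^\La_\alpha) \;=\; \mathbb{E}_\alpha(\cT^\La_\alpha),
\]
after which $\pi(M) = \hom_{\mathfrak g}(\cT^\La_\alpha, M) \cong \hom_{K^\La_\alpha}(T^\La_\alpha, \mathbb{E}_\alpha(M)) = \pi(\mathbb{E}_\alpha(M))$ is immediate from the equivalence $\mathbb{E}_\alpha$. Two points, one minor and one substantive.

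\emph{Minor.} In Step~2 you write that $\mathbb{E}_\alpha$ sends $\cT^\La_\alpha$ to $\widehat T^\La_\alpha$, and then $\hom_{K^\La_\alpha}(\widehat T^\La_\alpha, \mathbb{E}_\alpha(M))$. This is a slip: $\widehat T^\La_\alpha$ is a \emph{right} $K^\La_\alpha$-module, so that hom does not type-check. What you want (and what the paper proves) is $\mathbb{E}_\alpha(\cT^\La_\alpha) \cong T^\La_\alpha$; then no ``one more application of Lemma~\ref{precd}'' is needed.

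\emph{Substantive.} You correctly flag that the delicate point is the $R^\La_\alpha$-linearity of the identification $\mathbb{E}_\alpha(\cT^\La_\alpha) \cong T^\La_\alpha$, but your proposed justification --- that it ``should follow formally'' from $\mathbb{E}_\alpha(\cP(\la)) \cong P(\la)$ together with the combinatorial matching of summands --- does not close the gap. Knowing that the indecomposable $K^\La_\alpha$-summands match with the same multiplicities only gives an isomorphism of left $K^\La_\alpha$-modules; the $R^\La_\alpha$-actions (which mix summands via $y_r$ and $\psi_r$) could a priori differ by a nontrivial automorphism of $R^\La_\alpha$, and nothing in your argument pins down the specific identification from (\ref{4id}). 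The paper resolves this by a different route: using that both Schur functors are fully faithful on projectives (Lemmas~\ref{sf1}(ii), \ref{sf2}(ii)), it expresses
\[
T^\La_\alpha \cong \hom_{R^\La_\alpha}(\widehat T^\La_\alpha, R^\La_\alpha),
\qquad
\hom_{\mathfrak g}(\cP^\La_\alpha, \cT^\La_\alpha) \cong \hom_{R^\La_\alpha}(\cY^\La_\alpha, R^\La_\alpha),
\]
as $(K^\La_\alpha, R^\La_\alpha)$-bimodules, and then invokes the \emph{$R^\La_\alpha$-linear} isomorphism $h:\cY^\La_\alpha \stackrel{\sim}{\to} \widehat T^\La_\alpha$ of Lemma~\ref{mapf} (with the $K^\La_\alpha$-action on $\cY^\La_\alpha$ transported through $h$ by the very definition of Theorem~\ref{mainiso}). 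The point is that $h$ is the one place in the whole setup where an $R^\La_\alpha$-module isomorphism between the two sides has already been fixed, so routing the bimodule comparison through $\hom_{R^\La_\alpha}(-,R^\La_\alpha)$ of $h$ is exactly what makes the $R^\La_\alpha$-compatibility automatic. Your sketch never invokes $h$ in this role, which is why the bookkeeping lemma you propose cannot be completed as stated.
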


\begin{proof}
Regard $\cY^\La_\alpha$ as an $(R^\La_\alpha, K^\La_\alpha)$-bimodule by defining the right $K^\La_\alpha$-action
to be the one obtained by lifting its natural action on $\widehat{T}^\La_\alpha$ through
the isomorphism $h$ from Lemma~\ref{mapf}.
Then the map
$$
\hom_{R^\La_\alpha}(\widehat{T}^\La_\alpha, R^\La_\alpha)
\rightarrow
\hom_{R^\La_\alpha}(\cY^\La_\alpha, R^\La_\alpha),\qquad
f \mapsto f \circ h
$$
becomes an isomorphism of $(K^\La_\alpha, R^\La_\alpha)$-bimodules.
Recall also from Lemma~\ref{precd} that $\widehat{T}^\La_\alpha \cong \hom_{K^\La_\alpha}(T^\La_\alpha, K^\La_\alpha)$
as $(R^\La_\alpha,K^\La_\alpha)$-bimodules.
Combined with Lemmas~\ref{sf1}(ii) and \ref{sf2}(ii) (``Schur functors are fully faithful on projectives'') and the definitions (\ref{mods1})--(\ref{gut}), we get
the following sequence of $(K^\La_\alpha, R^\La_\alpha)$-bimodule isomorphisms:
\begin{align*}
T^\La_\alpha\equiv \hom_{K^\La_\alpha}(K^\La_\alpha, T^\La_\alpha)
&\cong \hom_{R^\La_\alpha}(\hom_{K^\La_\alpha}(T^\La_\alpha, K^\La_\alpha), \hom_{K^\La_\alpha}(T^\La_\alpha, T^\La_\alpha))\\
&\cong \hom_{R^\La_\alpha}(\widehat{T}^\La_\alpha, R^\La_\alpha)\cong \hom_{R^\La_\alpha}(\cY^\La_\alpha, R^\La_\alpha)\\
&\cong \hom_{R^\La_\alpha}(\hom_{\mathfrak{g}}(\cT^\La_\alpha,\cP^\La_\alpha), \hom_{\mathfrak{g}}(\cT^\La_\alpha, \cT^\La_\alpha))\\
&\cong \hom_{\mathfrak{g}}(\cP^\La_\alpha, \cT^\La_\alpha).
\end{align*}
Now to prove the lemma, take $M \in \cO^\La_\alpha$.
Applying the equivalence of categories $\mathbb{E}_\alpha = \hom_{\mathfrak{g}}(\cP^\La_\alpha, ?)$
and using the bimodule isomorphism just constructed, we get
natural $R^\La_\alpha$-module isomorphisms
\begin{align*}
\hom_{\mathfrak{g}}(\cT^\La_\alpha, M)
&\cong
\hom_{K^\La_\alpha}(\hom_{\mathfrak{g}}(\cP^\La_\alpha, \cT^\La_\alpha), \hom_{\mathfrak{g}}(\cP^\La_\alpha, M))\\
&\cong \hom_{K^\La_\alpha}(T^\La_\alpha, \mathbb{E}_\alpha(M)).
\end{align*}
This shows that $\pi \cong \pi \circ \mathbb{E}_\alpha$.
\end{proof}

\begin{Corollary}\label{maineq}
The functor $\mathbb{E} := \bigoplus_{\alpha \in Q_+} \mathbb{E}_\alpha$
is an equivalence of categories
$$
\mathbb{E}:\OImn \rightarrow \rep{\KImn}
$$
such that
$\mathbb{E}(\cL(\la)) \cong L(\la)$,
$\mathbb{E}(\cV(\la)) \cong V(\la)$ and
$\mathbb{E}(\cP(\la)) \cong P(\la)$ for each $\la \in \LaImn$.
\end{Corollary}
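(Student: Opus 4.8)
The plan is to simply assemble Corollary~\ref{maineq} from the blockwise equivalences $\mathbb{E}_\alpha$ of Corollary~\ref{mainequiv} by taking a direct sum over $\alpha \in Q_+$. Recall that $\PImn$ is in bijection with the set $\{\alpha \in Q_+ \mid \La-\alpha \in \PImn\}$ via $\Ga \leftrightarrow \La-\alpha$, and that both categories decompose accordingly: $\OImn = \bigoplus_{\Ga \in \PImn} \cO_\Ga$ by (\ref{blocks}) (which is (\ref{allblocks}) restricted to the relevant central characters), and $\rep{\KImn} = \bigoplus_{\Ga \in \PImn} \rep{K_\Ga}$ by the definition (\ref{KI}) of $\KImn$ as a direct sum of the $K_\Ga$'s. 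Under these decompositions $\cO_\Ga = \cO^\La_\alpha$ and $\rep{K_\Ga} = \rep{K^\La_\alpha}$ whenever $\Ga = \La-\alpha \in \PImn$, and both summands are zero when $\La-\alpha \notin \PImn$. So the coproduct $\mathbb{E} := \bigoplus_{\alpha \in Q_+} \mathbb{E}_\alpha$ is a well-defined additive functor $\OImn \to \rep{\KImn}$.

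First I would note that a direct sum of equivalences of categories indexed over the blocks of a decomposition is again an equivalence: on each summand $\cO_\Ga \to \rep{K_\Ga}$ the functor $\mathbb{E}$ restricts to $\mathbb{E}_\alpha$, which is an equivalence by Corollary~\ref{mainequiv}, and since $\hom$ spaces between objects in different blocks vanish on both sides (the two decompositions are block decompositions, i.e. direct sums of abelian categories), full faithfulness and essential surjectivity of $\mathbb{E}$ follow immediately from full faithfulness and essential surjectivity of each $\mathbb{E}_\alpha$. Concretely, any object $M \in \OImn$ writes uniquely as $M = \bigoplus_\Ga \pr_\Ga M$, and $\mathbb{E}(M) = \bigoplus_\Ga \mathbb{E}_\alpha(\pr_\Ga M)$; a morphism $M \to N$ is a matrix of block components, each living in a single $\cO_\Ga$, and $\mathbb{E}$ acts componentwise via the $\mathbb{E}_\alpha$'s.

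Next I would read off the effect on the distinguished modules. Every irreducible, standard, or projective indecomposable module of $\OImn$ lies in a single block: for $\la \in \LaImn$ with $\la \in \Ga = \La - \alpha$, the modules $\cL(\la), \cV(\la), \cP(\la)$ all belong to $\cO^\La_\alpha$. Hence $\mathbb{E}$ applied to any of them agrees with $\mathbb{E}_\alpha$ applied to it, and Corollary~\ref{mainequiv} gives $\mathbb{E}(\cL(\la)) \cong L(\la)$, $\mathbb{E}(\cV(\la)) \cong V(\la)$, $\mathbb{E}(\cP(\la)) \cong P(\la)$. This is precisely the claimed correspondence of modules, indexed over all of $\LaImn$.

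There is no real obstacle here; this corollary is a purely formal packaging of the blockwise statement, and the only point requiring a moment's care is the bookkeeping that the index sets match up — that $\PImn$, viewed either as a set of blocks of $\cO(m,n)$ or as a subset of $Q_+$ via $\Ga = \La-\alpha$, parametrises the summands on both sides consistently, and that the summands with $\La-\alpha \notin \PImn$ contribute nothing. All of that was set up in Section~\ref{sC2} and at the start of Section~\ref{sE} (see (\ref{blocks}), (\ref{KI}), and the definitions of $\cO^\La_\alpha$, $K^\La_\alpha$), so the proof is a one-line invocation of Corollary~\ref{mainequiv} together with the fact that a coproduct of equivalences over a block decomposition is an equivalence.
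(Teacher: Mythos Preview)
Your proposal is correct and matches the paper's approach: the paper states Corollary~\ref{maineq} without proof, as it is an immediate consequence of taking the direct sum over $\alpha \in Q_+$ of the blockwise equivalences $\mathbb{E}_\alpha$ from Corollary~\ref{mainequiv}, using the block decompositions (\ref{blocks}) and (\ref{KI}). Your careful bookkeeping about the index set $\PImn$ and the vanishing of summands with $\La-\alpha \notin \PImn$ is exactly what makes this formal packaging work.
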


\begin{Corollary}
The parabolic Verma modules $\cV(\la)$ in $\mathcal O(m,n)$ are rigid
for all $\la \in \LaImn$.
\end{Corollary}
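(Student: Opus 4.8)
The plan is to deduce rigidity of the parabolic Verma modules $\cV(\la)$ for $\la \in \LaImn$ by transporting the problem through the equivalence $\mathbb{E}$ from Corollary~\ref{maineq} to the diagram algebra side, where rigidity is already established in the earlier papers. Recall that a module $M$ in a highest weight category is \emph{rigid} if its radical filtration coincides with its socle filtration. By Corollary~\ref{maineq} we have $\mathbb{E}(\cV(\la)) \cong V(\la)$, and $\mathbb{E}$ is an equivalence of abelian categories, hence it preserves the radical and socle filtrations of every object. Therefore $\cV(\la)$ is rigid if and only if the cell module $V(\la)$ for $\KImn$ is rigid. So the entire statement reduces to the rigidity of the standard modules $V(\la) \in \Rep{\KImn}$ (or rather their ungraded versions, though since the grading on $\KImn$ is positive and $V(\la)$ is generated in degree zero, rigidity in the graded and ungraded senses agree here).

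The key step is then to invoke the rigidity of cell modules for generalised Khovanov algebras from \cite{BS1,BS2}. More precisely, I would cite the fact that $\Rep{\KImn}$ is a Koszul highest weight category (\cite[Theorem 5.6]{BS2}), combined with the general principle that in a Koszul (equivalently, positively graded with graded Cartan data) highest weight category the standard modules are rigid: their graded radical filtration is pure, each layer concentrated in a single degree, which forces radical and socle filtrations to coincide. Alternatively, and more directly, one can point to the explicit combinatorial description of $V(\la)$ in \cite[$\S$5]{BS1}: the composition multiplicities $[V(\la):L(\mu)\langle j\rangle]$ are given by the Kazhdan--Lusztig-type polynomials $d_{\mu,\la}(q)$ from \cite[(5.12)]{BS1}, and the layers of the (graded) radical filtration of $V(\la)$ are semisimple with $L(\mu)$ appearing in layer $j$ exactly when $d_{\mu,\la}(q)$ has a term $q^j$; the same data describes the socle filtration read from the opposite end, using the self-duality properties of the cell modules under $\circledast$ recorded after (\ref{cellmod}). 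This symmetry is exactly what rigidity asserts.

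The main obstacle I anticipate is making precise the reduction from the ungraded statement (which is what the corollary literally asserts, since $\mathbb{E}$ as constructed in Corollary~\ref{maineq} is an ungraded equivalence) to the graded rigidity results available on the diagram side. This requires observing that the grading on $\KImn$ makes it a positively graded algebra with $\KImn_0$ semisimple, so that the radical of any graded module equals the submodule spanned by components of positive degree together with the ordinary radical of the degree-zero part; under these circumstances the forgetful functor $\forget$ of (\ref{forget1}) identifies the radical and socle series of $V(\la)$ with those of its graded lift, and rigidity of the graded module is equivalent to rigidity of the ungraded one. Once this bookkeeping is in place, the proof is essentially one sentence: $\cV(\la) \cong \mathbb{E}^{-1}(V(\la))$, $V(\la)$ is rigid by \cite[$\S$5]{BS1} together with Koszulity from \cite[Theorem 5.6]{BS2}, and equivalences of categories preserve rigidity. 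I would also remark in passing that this gives, via the equivalence, a purely algebraic proof of rigidity of parabolic Verma modules in this family of blocks of category $\cO$, recovering in type $A$ (for maximal parabolics) a special case of results known through the Kazhdan--Lusztig conjecture and Jantzen-type arguments.
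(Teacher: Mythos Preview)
Your overall approach is exactly the paper's: transport across the equivalence $\mathbb{E}$ from Corollary~\ref{maineq} and invoke rigidity of the cell modules $V(\la)$ on the diagram side. The paper does this in one line, citing \cite[Corollary 6.7]{BS2} directly for the rigidity of $V(\la)$, rather than re-deriving it from Koszulity.

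One correction to your aside: cell modules are \emph{not} self-dual under $\circledast$ in general (the passage after (\ref{cellmod}) records that $\circledast$ fixes the \emph{irreducibles} $L(\la)$, and this is used in the proof of Lemma~\ref{charac} only for $\la$ Bruhat-minimal, where $V(\la)=L(\la)$). So your ``read the socle filtration from the other end via self-duality'' sketch does not go through as stated; $V(\la)^{\circledast}$ is the costandard module, not $V(\la)$. Your Koszulity route is the correct one, but note that Koszulity of the algebra alone does not automatically force rigidity of standard modules---one needs the stronger ``standard Koszul'' property, which is what is actually established in \cite{BS2} and encapsulated in \cite[Corollary 6.7]{BS2}. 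Your graded/ungraded bookkeeping is fine and is implicitly used in the paper's one-line proof as well.
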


\begin{proof}
This is immediate from Corollary~\ref{maineq} and \cite[Corollary 6.7]{BS2}.
\end{proof}

For further discussion of rigidity in the context of parabolic category $\mathcal O$, we refer the reader to \cite[$\S$9.17]{Hbook}.

\phantomsubsection{Identification of special projective functors}
We continue to ignore all gradings throughout the subsection.
Let $\mathbb{E}$ be as in Corollary~\ref{maineq}.

\begin{Theorem}\label{idspf}
There are isomorphisms
$\mathbb{E} \circ \cE_i \cong E_i \circ \mathbb{E}$
and
$\mathbb{E} \circ \cF_i \cong F_i \circ \mathbb{E}$.
\end{Theorem}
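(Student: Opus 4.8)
The plan is to reduce the statement to naturality of the Schur-functor intertwiner $\eta$ from Lemmas~\ref{sf1}(iii) and \ref{sf2}(iii) together with the compatibility of the two Schur functors expressed in Corollary~\ref{maincommuting}. I would argue blockwise, fixing $\alpha \in Q_+$ and working with $\mathbb{E}_\alpha$. The key point is that everything is now pinned down by the prinjective generators: on the category $\cO$ side the special projective functor $\cF_i$ sends $\cO^\La_\alpha$ into $\cO^\La_{\alpha+\alpha_i}$, and similarly $F_i$ sends $\rep{K^\La_\alpha}$ into $\rep{K^\La_{\alpha+\alpha_i}}$. Since $\mathbb{E}_\alpha$ and $\mathbb{E}_{\alpha+\alpha_i}$ are equivalences (Corollary~\ref{mainequiv}), it is enough to produce a natural isomorphism $\mathbb{E}_{\alpha+\alpha_i} \circ \cF_i \cong F_i \circ \mathbb{E}_\alpha$ of functors $\cO^\La_\alpha \rightarrow \rep{K^\La_{\alpha+\alpha_i}}$, and the statement for $\cE_i$ will then follow by adjointness (both $(\cF_i,\cE_i)$ and, after forgetting the grading, $(F_i,E_i)$ are adjoint pairs — Lemma~\ref{adjunctions} and the remark after Lemma~\ref{adjpair} — so an isomorphism commuting with one functor in an adjoint pair, together with uniqueness of adjoints, transports to the other).

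The main step is to chase the following diagram of functors. We have the two Schur functors $\pi:\cO^\La_\alpha \rightarrow \rep{R^\La_\alpha}$ and $\pi:\rep{K^\La_\alpha}\rightarrow\rep{R^\La_\alpha}$ (ignoring gradings), and by Corollary~\ref{maincommuting} there is an isomorphism $\pi \circ \mathbb{E}_\alpha \cong \pi$, and likewise $\pi \circ \mathbb{E}_{\alpha+\alpha_i} \cong \pi$ in degree $\alpha+\alpha_i$. On the parabolic side, Lemma~\ref{sf1}(iii) gives $\eta: \cF_i \circ \pi \stackrel{\sim}{\rightarrow} \pi \circ \cF_i$; on the diagram side, Lemma~\ref{sf2}(iii) (combined with $\forget$ via (\ref{whenyouforget})) gives $\eta: \cF_i \circ \pi \stackrel{\sim}{\rightarrow} \pi \circ F_i$, where on the nose the same $i$-induction functor $\cF_i$ on $\rep{R^\La_\alpha}$ appears in both. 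Composing these, for $M \in \cO^\La_\alpha$ one obtains natural $R^\La_{\alpha+\alpha_i}$-module isomorphisms
$$
\pi\bigl(\mathbb{E}_{\alpha+\alpha_i}(\cF_i M)\bigr) \cong \pi(\cF_i M)
\cong \cF_i\,\pi(M) \cong \cF_i\,\pi(\mathbb{E}_\alpha M)
\cong \pi\bigl(F_i\,\mathbb{E}_\alpha(M)\bigr).
$$
Thus $\pi\circ\mathbb{E}_{\alpha+\alpha_i}\circ\cF_i \cong \pi\circ F_i\circ\mathbb{E}_\alpha$ as functors into $\rep{R^\La_{\alpha+\alpha_i}}$. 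To descend from this to an isomorphism $\mathbb{E}_{\alpha+\alpha_i}\circ\cF_i \cong F_i\circ\mathbb{E}_\alpha$ of functors into $\rep{K^\La_{\alpha+\alpha_i}}$, I would use that $\cF_i$ sends projectives to projectives (so both sides land in projective modules when fed a projective object of $\cO^\La_\alpha$), that $\pi$ is fully faithful on projectives (Lemmas~\ref{sf1}(ii), \ref{sf2}(ii)), and that every object of $\cO^\La_\alpha$ has a two-term projective presentation together with right-exactness of all functors involved — the standard five-lemma argument already used in the proof of Lemma~\ref{sf2}(iii). Concretely: restrict first to the full subcategory of projectives, where $\pi$ is fully faithful, to get the isomorphism there; then extend to all of $\cO^\La_\alpha$ by choosing $P_1\rightarrow P_0\rightarrow M\rightarrow 0$ and using naturality.

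The hard part will be keeping the bookkeeping of which copy of $\cF_i$ (and $E_i$ versus $\cE_i$) lives where consistent, and in particular checking that the isomorphism $\pi\circ\mathbb{E}_\alpha\cong\pi$ from Corollary~\ref{maincommuting} is genuinely natural and compatible with the intertwiners $\eta$ on both sides — i.e.\ that the hexagon of natural transformations commutes, not merely that all the vertices are abstractly isomorphic. I expect this to follow because all the relevant isomorphisms were constructed from the single homomorphism $\theta_i$ (Theorem~\ref{thetaithm}, Corollary~\ref{cddcd}, Lemma~\ref{af}) under the identifications (\ref{4id}), so there is really only one natural transformation in play; but verifying the coherence carefully is where the real work lies. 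One mild technical point worth isolating: the degree shift $D_iD_{i+1}^{-1}\langle -1\rangle$ appearing in the graded $F_i$ on $\rep{R^\La_\alpha}$ versus the graded $F_i$ on $\Rep{K^\La_\alpha}$ must be matched up, but since we are ignoring gradings in this subsection (as the paper explicitly does), this collapses and causes no trouble. Finally, the $\cE_i$ statement: having fixed $\mathbb{E}\circ\cF_i\cong F_i\circ\mathbb{E}$, apply the right adjoints; since right adjoints are unique up to canonical isomorphism and $\cE_i$ (resp.\ $E_i$) is right adjoint to $\cF_i$ (resp.\ $F_i$) up to the harmless degree twist which we are suppressing, one gets $\mathbb{E}\circ\cE_i\cong E_i\circ\mathbb{E}$ for free.
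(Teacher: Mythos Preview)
Your proposal is correct and uses the same essential ingredients as the paper (Lemmas~\ref{sf1}(ii)--(iii), \ref{sf2}(ii)--(iii), Lemma~\ref{mapf}, and biadjointness for the $\cE_i$ statement), but the paper takes a more direct route that sidesteps the coherence issue you flag. Rather than composing natural isomorphisms at the functor level to obtain $\pi\circ\mathbb{E}_{\alpha+\alpha_i}\circ\cF_i \cong \pi\circ F_i\circ\mathbb{E}_\alpha$ and then descending through full faithfulness and a five-lemma extension, the paper observes that both $F_i$ and $\mathbb{E}_{\alpha+\alpha_i}\circ\cF_i\circ\mathbb{E}_\alpha^*$ are given by tensoring with explicit bimodules, namely $\hom_{K^\La_{\alpha+\alpha_i}}(K^\La_{\alpha+\alpha_i}, F_i K^\La_\alpha)$ and $\hom_{\mathfrak{g}}(\cP^\La_{\alpha+\alpha_i}, \cF_i\cP^\La_\alpha)$ respectively. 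The intertwiners $\eta$ from Lemmas~\ref{sf1}(iii) and~\ref{sf2}(iii), combined with full faithfulness on projectives, yield \emph{bimodule} isomorphisms from each of these to $\hom_{R^\La_{\alpha+\alpha_i}}(\widehat{T}^\La_{\alpha+\alpha_i}, F_i\widehat{T}^\La_\alpha)$ and $\hom_{R^\La_{\alpha+\alpha_i}}(\cY^\La_{\alpha+\alpha_i}, \cF_i\cY^\La_\alpha)$, and Lemma~\ref{mapf} together with (\ref{whenyouforget}) identifies these last two. Since a bimodule isomorphism automatically induces a natural isomorphism of the associated tensor functors, the hexagon coherence you worry about is built in for free. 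Your approach buys a conceptually clean picture (everything factors through the common quotient $\rep{R^\La_\alpha}$ via Corollary~\ref{maincommuting}); the paper's approach buys the elimination of the descent and coherence bookkeeping.
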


\begin{proof}
It suffices to prove the second isomorphism; the first then follows
as $\cE_i$ and $\cF_i$ are biadjoint, as are $E_i$ and $F_i$.
For the second isomorphism,
it is enough to prove for each $\alpha \in Q_+$ that
$\mathbb{E}_{\alpha+\alpha_i} \circ \cF_i \circ \mathbb{E}_\alpha^*
\cong F_i$
as functors from $\mathcal O^\La_\alpha$ to $\mathcal O^\La_{\alpha+\alpha_i}$,
where we write $\mathbb{E}_\alpha^*$ for the functor
$\cP^\La_\alpha \otimes_{K^\La_\alpha} ?$
that is a
quasi-inverse equivalence to $\mathbb{E}_\alpha$.

Observe to start with that
$F_i:\Rep{K^\La_\alpha} \rightarrow \Rep{K^\La_{\alpha+\alpha_i}}$
is the functor defined by tensoring with the $(K^\La_{\alpha+\alpha_i},
K^\La_\alpha)$-bimodule
$\hom_{K^\La_{\alpha+\alpha_i}}(K^\La_{\alpha+\alpha_i},
F_i K^\La_\alpha)$.
Let us identify $\pi(K^\La_\alpha)$
(resp.\ $\pi(K^\La_{\alpha+\alpha_i})$)
with $\widehat T^\La_\alpha$ (resp.\ $\widehat T^\La_{\alpha+\alpha_i}$)
according to
Lemma~\ref{precd},
and
identify $K^\La_{\alpha+\alpha_i}$
with
$\End_{R^\La_{\alpha+\alpha_i}}(\widehat{T}^\La_{\alpha+\alpha_i})^{\op}$
via the isomorphism from Corollary~\ref{mapj}.
 Then
Lemma~\ref{sf2}(ii)--(iii) gives us a
$(K^\La_{\alpha+\alpha_i}, K^\La_\alpha)$-bimodule isomorphism
$$
\hom_{K^\La_{\alpha+\alpha_i}}(K^\La_{\alpha+\alpha_i},
F_i K^\La_\alpha)
\stackrel{\sim}{\rightarrow}
\hom_{R^\La_{\alpha+\alpha_i}}(\widehat{T}^\La_{\alpha+\alpha_i},
F_i \widehat{T}^\La_{\alpha}),
\quad\theta\mapsto\eta_{K^\La_\alpha}^{-1} \circ \pi(\theta).
$$
Similarly, $\mathbb{E}_{\alpha+\alpha_i}
\circ \mathcal F_i \circ \mathbb{E}_\alpha^*$ is defined by
tensoring with the $(K_{\alpha+\alpha_i}^\La, K_\alpha^\La)$-bimodule
$\hom_{\mathfrak{g}}(\cP^\La_{\alpha+\alpha_i}, \cF_i \cP^\La_{\alpha})$
(where $\cP^\La_{\alpha+\alpha_i}$ and $\cP^\La_\alpha$ are viewed as right modules
via the isomorphism from Theorem~\ref{mainiso}).
Lemma~\ref{sf1}(ii)--(iii) gives us a bimodule isomorphism
$$
\hom_{\mathfrak{g}}(\cP^\La_{\alpha+\alpha_i}, \cF_i \cP^\La_{\alpha})
\stackrel{\sim}{\rightarrow}
\hom_{R^\La_{\alpha+\alpha_i}}(\cY^\La_{\alpha+\alpha_i},
\cF_i \cY^\La_\alpha),
\quad\theta \mapsto \eta_{\cP^\La_\alpha}^{-1} \circ \pi(\theta).
$$
So now we are reduced to checking that
$$
\hom_{R^\La_{\alpha+\alpha_i}}(\cY^\La_{\alpha+\alpha_i},
\cF_i \cY^\La_\alpha)
\cong
\hom_{R^\La_{\alpha+\alpha_i}}(\widehat{T}^\La_{\alpha+\alpha_i},
F_i \widehat{T}^\La_{\alpha})
$$
as bimodules. This follows from Lemma~\ref{mapf} and (\ref{whenyouforget}).
\end{proof}

\phantomsubsection{Proof of Theorems~\ref{thm1} and \ref{thm2}}
With notation as in Theorem~\ref{thm1} from the introduction,
let $\Ga \in P(m,n)$.
Choose $o \in \Z$ so that the $o$th vertex and all vertices
to the left of that are labelled $\circ$ in all the weights from
$\Ga$. Then clearly the algebra $K_\Ga$
from the introduction can be identified with the algebra $K_\Ga$
from (\ref{KI}) for the index set
$I := \{o+1,o+2,\dots\}$. Letting $\La := \La_{o+m}+\La_{o+n}$
as usual and $\alpha := \La-\Ga$,
the equivalence $\mathbb{E}_\alpha$ from Corollary~\ref{mainequiv}
gives us an equivalence between
$\mathcal O_\Ga \rightarrow \rep{K_\Ga}$.
Taking the direct sum of these equivalences over all $\Ga \in P(m,n)$
gives the equivalence $\mathbb{E}$ required to prove Theorem~\ref{thm1}.

Now consider Theorem~\ref{thm2}. The equivalence $\mathbb{E}$
from Theorem~\ref{thm1} sets up a bijection between the isomorphism
classes of endofunctors of $\mathcal O(m,n)$ and of
$\rep{K(m,n)}$.
In particular every indecomposable projective functor
$G^t_{\De\Ga}$ on $\rep{K(m,n)}$ lifts to an endofunctor of $\mathcal O(m,n)$
and vice versa. Therefore it is enough to identify
the (at first sight quite different)
notions of projective
functors on the two sides. This follows from
Lemma~\ref{gen}, Lemma~\ref{gen2} and Theorem~\ref{idspf}.

\section{Applications}

In this section we give a couple of applications.
First,
we give a self-contained algebraic proof of a recent conjecture of Khovanov and Lauda from \cite[$\S$3.4]{KLa}
about the cyclotomic algebra $R^\La_\alpha$ for level two weights in finite type $A$.
Then we study further the graded cellular basis
for $R^\La_\alpha$ from Theorem~\ref{cell}, constructing a special basis for level two Specht
modules. This basis has the remarkable property that it also induces a basis in the
irreducible quotients of Specht modules.
In particular we deduce from this a dimension formula for irreducible $R^\La_\alpha$-modules.

\phantomsubsection{The Khovanov-Lauda categorification
conjecture for level two}
Here we briefly discuss another application of the machinery we have developed,
Recall from $\S$\ref{sB} that $U$ is the quantised enveloping algebra
associated to the Lie algebra of $I^+ \times I^+$ matrices.
For $\La$ as in (\ref{gsw}), let $V(\La)$ denote the irreducible
$U$-module of highest weight $\La$.
The vector
$$
v_+ := V_\iota = (v_{o+m}\wedge\cdots\wedge v_{o+1}) \otimes (v_{o+n}\wedge\cdots\wedge v_{o+1}) \in {\textstyle\bigwedge}^m V \otimes {\textstyle\bigwedge}^n V
$$
is a non-zero highest weight vector of weight $\La$, and we can
identify $V(\La)$ with the submodule $U v_+$ of
$\bigwedge^m V \otimes \bigwedge^n V$.
Recalling that $U_{\!\Laurent}$ denotes Lusztig's $\Laurent$-form for
$U$, let
$V(\La)_\Laurent := U_{\!\Laurent} v_+$,
which is the standard $\Laurent$-form for $V(\La)$.
Recall the quasi-canonical basis $\{P_\la\:|\:\la \in \LaImn\}$
from (\ref{boo}).
The following lemma connects this to Lusztig's canonical basis for $V(\La)$.

\begin{Lemma}\label{itscan}
The vectors $\{P_\la\:|\:\la \in \LaImn^\circ\}$
give a basis for $V(\La)_\Laurent$ as a free $\Laurent$-module which
up to rescaling each vector by a power of $q$ coincides with
Lusztig's canonical basis.
More precisely, for $\Ga \in \PImn$,
Lusztig's canonical basis for the $\Ga$-weight space of $V(\La)_\Laurent$
is $\{q^{-\defect(\Ga)} P_\la\:|\:\la \in \Ga^\circ\}$ (cf. the last statement
from Lemma~\ref{princ1}).
\end{Lemma}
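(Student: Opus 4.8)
\textbf{Proof proposal for Lemma~\ref{itscan}.}
The plan is to combine the categorification theorem (Theorem~\ref{cat1}) with the bar-involution characterisation of Lusztig's canonical basis, using the description of the set $\LaImn^\circ$ from Lemma~\ref{stupidcomb} as the connected component of the crystal graph generated by the ground-state $\iota$. First I would observe that $V_\iota = P_\iota$ since $\iota$ is the unique (hence minimal) weight of its block, so $v_+ = V_\iota \in \Rep{\KImn}$-land corresponds to $[V(\iota)] = [P(\iota)]$, and then use Theorem~\ref{cgt}: for any $\la \in \LaImn^\circ$, writing $\la = \tilde f_{i_d}\cdots\tilde f_{i_1}(\iota)$ as in Lemma~\ref{stupidcomb}, we get
$$
F_{i_d}\cdots F_{i_1} v_+ = (q+q^{-1})^r q^{r-s} P_\la
$$
for suitable $r,s$. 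In particular each such $P_\la$ lies (up to a nonzero scalar in $\Laurent$) in $U_{\!\Laurent} v_+ = V(\La)_\Laurent$. Conversely, $V(\La)_\Laurent$ is spanned over $\Laurent$ by all monomials $F_{i_d}^{(c_d)}\cdots F_{i_1}^{(c_1)} v_+$, and using Theorem~\ref{cat1}(ii) together with Lemma~\ref{ga} one checks these all lie in the $\Laurent$-span of $\{P_\la : \la \in \LaImn^\circ\}$ --- the key point being that applying $F_i$ (or its divided powers) to a $P_\la$ with $\la \in \LaImn^\circ$ produces a $\Laurent$-combination of $P_\mu$'s with $\mu \in \LaImn^\circ$, which follows from Lemma~\ref{ga}(i)--(vi) combined with the crystal graph description in (\ref{cg}) (every edge $\mu \stackrel{i}{\to}\la$ with $\mu$ of maximal defect in its block has $\la$ of maximal defect in its block, and vice versa). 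This shows $\{P_\la : \la \in \LaImn^\circ\}$ is an $\Laurent$-basis for $V(\La)_\Laurent$, since the $P_\la$ for arbitrary $\la$ form an $\Laurent$-basis of $\bigwedge^m V_\Laurent\otimes\bigwedge^n V_\Laurent$ and the extra ones (with $\la\notin\LaImn^\circ$) span a complement.

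Next I would pin down the normalisation against Lusztig's canonical basis $\{b_\la\}$ for $V(\La)_\Laurent$. Lusztig's basis is characterised (within each weight space) by being bar-invariant and congruent to a monomial basis vector modulo $q\Laurent_{>0}$; more precisely, for $\la \in \Ga^\circ$ the canonical basis vector $b_\la$ is bar-invariant and satisfies $b_\la \equiv P_\la \pmod{q\,(\text{lower terms})}$ after a suitable degree shift. I would use two facts: (a) the bar-involution on $\bigwedge^m V\otimes\bigwedge^n V$ restricts to the bar-involution on $V(\La)_\Laurent$ (both are Lusztig's tensor-product bar-involution, and $v_+$ is bar-invariant), so the notion of bar-invariance is the same; (b) by Theorem~\ref{cat1}(i) and (iv), $[P(\la)] = P_\la$ and $\circledast$ induces the bar-involution, but $P(\la)$ is \emph{not} generally self-dual --- rather, by the last statement of Lemma~\ref{princ1}, $P(\la)\langle -\defect(\Ga)\rangle$ is self-dual for $\la \in \Ga^\circ$. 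Translating through Theorem~\ref{cat1}(iv), this says $\overline{q^{-\defect(\Ga)} P_\la} = q^{-\defect(\Ga)} P_\la$, i.e. $q^{-\defect(\Ga)} P_\la$ is bar-invariant. Combined with the unitriangularity of the transition matrix $(d_{\la,\mu}(q))$ from (\ref{boo}) (which has $1$'s on the diagonal and entries in $\Z[q]$, so $q^{-\defect(\Ga)}P_\la \equiv q^{-\defect(\Ga)} V_\la$ modulo strictly lower terms in the dominance order within $V(\La)_\Laurent$), this matches exactly the defining properties of Lusztig's canonical basis vector, up to the unavoidable overall power of $q$ coming from the fact that Lusztig's convention places the ``leading term'' in degree $0$. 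Hence $b_\la = q^{-\defect(\Ga)} P_\la$ for $\la \in \Ga^\circ$, which is the assertion.

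The main obstacle I anticipate is the bookkeeping in the second paragraph: matching the degree normalisation conventions between this paper's $P_\la$ (defined via $d_{\la,\mu}(q)\in\Z[q]$, so leading term in degree $0$ already) and Lusztig's canonical basis for $V(\La)$ as a submodule, where the ``integrality modulo $q$'' condition is imposed relative to a crystal lattice intrinsic to $V(\La)$ rather than to the ambient $\bigwedge^m V\otimes\bigwedge^n V$. The subtlety is that a quasi-canonical basis vector $P_\la$ need not itself be bar-invariant --- only $q^{-\defect(\Ga)}P_\la$ is --- whereas Lusztig's $b_\la$ is bar-invariant by fiat; reconciling this is precisely what forces the $q^{-\defect(\Ga)}$ shift, and I would want to double-check it against a small example (say $m=n=1$, where $\defect$ is $0$ or $1$) before asserting it in general. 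A secondary point requiring care is verifying that Lusztig's tensor-product bar-involution on $\bigwedge^m V\otimes\bigwedge^n V$ genuinely restricts to the standard bar-involution on the submodule $V(\La)_\Laurent = U_{\!\Laurent}v_+$; this should follow from the fact that $v_+ = V_\iota$ is bar-invariant (Lemma~\ref{charac}, since $\iota$ is Bruhat-minimal in its block) together with compatibility of the bar-involution with the $U$-action, but it is worth stating explicitly.
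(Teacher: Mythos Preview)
The paper does not give a self-contained proof: it simply cites \cite[Theorem 2.7]{BKariki} for the case $m \geq n$ and \cite[(2.48)]{BKariki} for the reduction when $m < n$. Your proposal takes a genuinely different route, attempting to extract the result from the categorification machinery developed within the present paper itself. The central observation---that $q^{-\defect(\Ga)}P_\la$ is bar-invariant because $P(\la)\langle -\defect(\Ga)\rangle$ is self-dual (Lemma~\ref{princ1}) and $\circledast$ induces the bar-involution (Theorem~\ref{cat1}(iv))---is correct and rather elegant; it is not made explicit anywhere in the paper.

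That said, the two obstacles you flag are real, and your sketch does not yet close them. For the first paragraph: from $V(\La)_\Laurent \subseteq L := \Laurent\text{-span}\{P_\la : \la \in \LaImn^\circ\}$ together with equality of $\Q(q)$-dimensions you cannot conclude $V(\La)_\Laurent = L$, since $\Laurent$ is not a field. Theorem~\ref{cgt} only gives $(q+q^{-1})^r q^{r-s} P_\la \in V(\La)_\Laurent$, and $(q+q^{-1})^r$ is not a unit. The fix is to use divided powers: whenever the crystal path passes through a case-(vi) edge $\mu_{\scriptscriptstyle\down\up} \to \mu_{\circ{\scriptscriptstyle\times}}$, step back to $\mu_{{\scriptscriptstyle\times}\circ}$ (which lies in $\LaImn^\circ$ since $\defect(\Ga)\geq 1$ whenever a $\down\up$ pair is present) and apply $F_i^{(2)}$ instead, giving $F_i^{(2)} P_{\mu_{{\scriptscriptstyle\times}\circ}} = P_{\mu_{\circ{\scriptscriptstyle\times}}}$ on the nose. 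An induction on $\height(\alpha)$ then shows $P_\la \in V(\La)_\Laurent$ for all $\la \in \LaImn^\circ$.

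For the second paragraph: your unitriangularity argument compares $q^{-\defect(\Ga)}P_\la$ with the monomial basis $\{V_\mu\}$ of the ambient tensor product, but these $V_\mu$ do not lie in $V(\La)$, so this does not directly verify Lusztig's intrinsic characterisation (bar-invariance plus congruence to a crystal basis element modulo $q\mathcal L(\La)$). Knowing that $\{q^{-\defect(\Ga)}P_\la\}$ is a bar-invariant $\Laurent$-basis for $V(\La)_\Laurent$ is not by itself enough, as bar-invariant bases are not unique. To finish you would need either to identify the crystal lattice $\mathcal L(\La)$ explicitly inside $\bigwedge^m V_\Laurent \otimes \bigwedge^n V_\Laurent$, or to invoke the compatibility of Lusztig's based-module construction with highest-weight submodules---which is essentially what \cite{BKariki} provides.
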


\begin{proof}
See \cite[Theorem 2.7]{BKariki}, a special case of which treats the case
$m \geq n$, together with \cite[(2.48)]{BKariki} which
explains how to deduce the case $m < n$.
\end{proof}

Consider the following functor which is left adjoint to
the graded Schur functor $\pi$ from (\ref{gsf}):
\begin{equation}
\pi^*:= T^\La_\alpha\otimes_{R^\La_\alpha} ?
: \Rep{R^\La_\alpha} \rightarrow \Rep{K^\La_\alpha}.
\end{equation}
Let $\Proj{R^\La_\alpha}$ denote the category
of finitely generated projective graded left $R^\La_\alpha$-modules,
with Grothendieck group $[\Proj{R^\La_\alpha}]$.

\begin{Theorem}\label{klt}
Identify the $\Laurent$-modules $[\Rep{\KImn}]$ and
$\bigwedge^m V_\Laurent \otimes \bigwedge^n V_{\Laurent}$
as in Theorem~\ref{cat1}.
Then the functor $\pi^*$ induces an $\Laurent$-module
isomorphism
$$
\pi^*:\bigoplus_{\alpha \in Q_+} [\Proj{R^\La_\alpha}]
\stackrel{\sim}{\rightarrow} V(\La)_\Laurent.
$$
Moreover:
\begin{itemize}
\item[(i)]
Up to a degree shift, $\pi^*$ maps the
isomorphism classes of projective
indecomposable modules to
the canonical basis of $V(\La)_\Laurent$.
\item[(ii)] The endomorphisms of $\bigoplus_{\alpha \in Q_+}
[\Proj{R^\La_\alpha}]$ induced by the $i$-restriction and $i$-induction
functors
$E_i$ and $F_i$ correspond to the action of the Chevalley generators
$E_i$ and $F_i$ of $U_{\!\Laurent}$.
\end{itemize}
\end{Theorem}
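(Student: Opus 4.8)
The plan is to deduce Theorem~\ref{klt} by transporting everything through the equivalences already established, reducing it to the categorification statements of Theorems~\ref{cat1} and~\ref{cat2} together with the identification of canonical bases from Lemma~\ref{itscan}. First I would observe that, via the graded Schur functor $\pi$ from (\ref{gsf}) and its left adjoint $\pi^*$, the category $\Rep{R^\La_\alpha}$ is the ``quotient'' of $\Rep{K^\La_\alpha}$ by the Serre subcategory killed by $\pi$, and that $\pi^*$ sends the indecomposable projective $R^\La_\alpha$-module $Y(\la)$ back to $P(\la)$ for each $\la\in(\La-\alpha)^\circ$ (by Lemma~\ref{sf2}(i)--(ii), since $\pi$ is fully faithful on projectives and $\pi\circ\pi^*$ is the identity on projectives coming from prinjectives). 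Hence $\pi^*$ identifies $\bigoplus_{\alpha\in Q_+}[\Proj{R^\La_\alpha}]$ with the $\Laurent$-submodule of $[\Rep{\KImn}]$ spanned by $\{[P(\la)]\mid \la\in\LaImn^\circ\}$. By Theorem~\ref{cat1}(i) this is exactly the span of $\{P_\la\mid \la\in\LaImn^\circ\}$, which by Lemma~\ref{itscan} is precisely $V(\La)_\Laurent$ (up to rescaling each basis vector by a power of $q$). This gives the main isomorphism and, with the degree-shift bookkeeping from the last sentence of Lemma~\ref{itscan}, also statement~(i).

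Next I would prove (ii). The point is that the $i$-induction and $i$-restriction functors $F_i,E_i$ on $\Rep{R^\La_\alpha}$ are intertwined with the special projective functors $F_i,E_i$ on $\Rep{K^\La_\alpha}$ via $\pi^*$ and $\pi$: indeed Lemma~\ref{sf2}(iii) gives $F_i\circ\pi\cong\pi\circ F_i$, and taking left adjoints (using the adjunctions from Lemma~\ref{adjpair} and Lemma~\ref{adjunctions}, up to the explicit degree shifts $D_iD_{i+1}^{-1}\langle-1\rangle$ recorded there) yields a corresponding isomorphism $\pi^*\circ F_i\cong F_i\circ\pi^*$ of functors, and similarly for $E_i$. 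Passing to Grothendieck groups, the endomorphism of $\bigoplus_\alpha[\Proj{R^\La_\alpha}]$ induced by $F_i$ corresponds under $\pi^*$ to the endomorphism of $V(\La)_\Laurent$ induced by the special projective functor $F_i$ on $\Rep{\KImn}$. By Theorem~\ref{cat1}(ii) the latter is exactly the action of the Chevalley generator $F_i\in U_{\!\Laurent}$ on $\bigwedge^m V_\Laurent\otimes\bigwedge^n V_\Laurent$, which restricts to the irreducible submodule $V(\La)_\Laurent=U_{\!\Laurent}v_+$. The same argument with $E_i$ completes (ii). (One should also note that $V(\La)_\Laurent$ is indeed $U_{\!\Laurent}$-stable and that the degree shifts appearing in the adjunctions are precisely the ones built into the comultiplication conventions of $\S$\ref{sB}, so that the match with the Chevalley action is on the nose.)

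The step I expect to require the most care is checking that the isomorphism $\pi^*\circ F_i\cong F_i\circ\pi^*$ really is compatible with the gradings, i.e. that the degree shifts $D_iD_{i+1}^{-1}\langle -1\rangle$ in Lemma~\ref{adjpair} on the $R^\La_\alpha$-side and the analogous shifts in Lemma~\ref{adjunctions} on the $K^\La_\alpha$-side line up correctly after passing through $\pi$ and its adjoint; this is exactly what makes (ii) an equality of $U_{\!\Laurent}$-module endomorphisms rather than merely something holding after specialising $q=1$. A clean way to handle this is to phrase $\pi^*$ directly as $T^\La_\alpha\otimes_{R^\La_\alpha}?$, use the canonical isomorphism (\ref{te}) together with Theorem~\ref{thetaithm} (which identifies the homomorphism $\theta_i$ with ``apply $F_i$'') to see that tensoring $T^\La_\alpha$ up along $\theta_i$ is literally $F_i T^\La_\alpha$ with the correct shift, and then transport Lemma~\ref{sf2}(iii) through. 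Once the grading bookkeeping is pinned down, the rest is formal: the isomorphism of $\Laurent$-modules is immediate from Theorem~\ref{cat1}(i) and Lemma~\ref{itscan}, and the statement about the Chevalley generators follows from Theorem~\ref{cat1}(ii) together with the intertwining just discussed. I would also remark, as a sanity check, that the $q=1$ specialisation of this picture recovers a categorification of $V(\La)_\Z\subseteq\bigwedge^m V_\Z\otimes\bigwedge^n V_\Z$ via $\OImn$ using Theorem~\ref{cat2}, which is the ungraded shadow of the Khovanov--Lauda conjecture referred to in the introduction.
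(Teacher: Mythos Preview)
Your proposal is correct and follows essentially the same route as the paper: show $\pi^*(Y(\la))\cong P(\la)$ for $\la\in\Ga^\circ$, invoke Theorem~\ref{cat1}(i) and Lemma~\ref{itscan} to obtain the isomorphism and (i), then use intertwining of $\pi^*$ with the special projective functors together with Theorem~\ref{cat1}(ii) for (ii).

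One small correction in your adjunction argument for (ii): taking left adjoints of $F_i\circ\pi\cong\pi\circ F_i$ (Lemma~\ref{sf2}(iii)) gives $\pi^*\circ E_i'\cong E_i'\circ\pi^*$, where $E_i'$ is the left adjoint of $F_i$, \emph{not} $\pi^*\circ F_i\cong F_i\circ\pi^*$ as you wrote. To obtain the latter you need to start from $E_i\circ\pi\cong\pi\circ E_i$ (Remark~\ref{come}) and take left adjoints. This is exactly why the paper cites both Lemma~\ref{sf2}(iii) and Remark~\ref{come} at this point. Your alternative direct approach via $\pi^*=T^\La_\alpha\otimes_{R^\La_\alpha}?$ and Theorem~\ref{thetaithm} also works and sidesteps this bookkeeping. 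The grading-compatibility concerns you raise are legitimate but are already absorbed into the proofs of Lemma~\ref{sf2}(iii) and Remark~\ref{come}, so no extra work is needed here.
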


\begin{proof}
Suppose that $\Ga := \La-\alpha \in \PImn$.
By Lemma~\ref{sf2}(i), the free $\Laurent$-module
$[\Proj{R^\La_\alpha}]$ has basis
$\{[Y(\la)]\:|\:\la \in \Ga^\circ\}$.
By a standard fact about Schur functors, see e.g. \cite[Theorem 3.7(ii)]{BKariki}, we have that $\pi^*(Y(\la)) \cong P(\la)$ for each $\la \in \Ga^\circ$.
Hence, using also
Theorem~\ref{cat1}(i),
the map $\pi^*$ maps the basis
$\{[Y(\la)]\:|\:\la \in \Ga^\circ\}$ for $[\Proj{R^\La_\alpha}]$
to
$\{P_\la\:|\:\la \in \Ga^\circ\}$.
By  Lemma~\ref{itscan}, the latter collection of vectors
is a basis for the $\Ga$-weight
space of $V(\La)_\Laurent$ that coincides with Lusztig's canonical basis
up to rescaling. This establishes the first statement of
the theorem and (i).
For (ii), note by Lemma~\ref{sf2}(iii) and Remark~\ref{come}
that $\pi^*$ intertwines the $i$-induction and $i$-restriction functors
$E_i$ and $F_i$ with the special projective functors $E_i$ and $F_i$.
So we are done by Theorem~\ref{cat1}(ii).
\end{proof}

Theorem~\ref{klt}
proves the
conjecture formulated by Khovanov and Lauda in
\cite[$\S$3.4]{KLa} for level two weights in finite type $A$.

\phantomsubsection{A special basis for level two Specht modules}
Fix $\alpha \in Q_+$ of height $d$ such that $\La - \alpha \in \PImn$ and set $\Ga := \La-\alpha$.
Recall the graded cellular basis for $R^\La_\alpha$ from Theorem~\ref{cell}. \label{speccie}
For $\la \in \Ga$ we denote the corresponding cell module by $S(\la)$ constructed following the general
procedure of Graham and Lehrer \cite{GL}. Note in particular that $S(\la)$ is automatically a graded module because
our cellular basis is graded. So, as a graded vector space, $S(\la)$ has homogeneous basis
$$
\left\{|\bt^*[\bga^*]|\:\:\bigg|\:\begin{array}{l}\text{for all oriented stretched cup diagrams $\bt^*[\bga^*]$}\\\text{such that $\bga = \ga_0\cdots\ga_d$ with $\ga_d = \la$}\end{array}
\right\},
$$
with $\Z$-grading defined according to (\ref{degzz}).
The left action of a basis vector $|\bs^*[\btau^*]\wr\br[\bsigma]| \in R^\La_\alpha$ on
$|\bt^*[\bga^*]| \in S(\la)$ can be computed as follows. First compute the left action of
$|\bs^*[\btau^*] \wr \br[\bsigma]|$ on the basis vector
$|\bt^*[\bga^*] \, \overline{\la}) \in \widehat{T}^\La_\alpha$ using the usual procedure; in particular
we get zero unless $\br = \bt$ and all mirror image pairs of internal circles in
$\br[\sigma]$ and $\bt^*[\bga^*]$ are oppositely oriented. Then replace all the diagram basis vectors in the
resulting expansion by zero if they do not have the weight $\la$ decorating their top number line,
and drop the cap diagram $\overline{\la}$ from the very top of all the remaining basis vectors
to get back to an element of $S(\la)$.

\begin{Lemma}\label{durham1}
For $\la \in \Ga$ we have that $S(\la) \cong \pi (V(\la))$, where $\pi$ is the graded Schur functor from (\ref{gsf}).
\end{Lemma}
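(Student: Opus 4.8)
The plan is to identify both $S(\la)$ and $\pi(V(\la))$ as the image of the standard module $V(\la)$ under the graded Schur functor, by exploiting the fact that the cellular structure on $E^\La_\alpha \equiv R^\La_\alpha$ from Theorem~\ref{cell} is compatible via $\pi$ with the highest weight structure on $\Rep{K^\La_\alpha}$. First I would recall that $\pi = \hom_{K^\La_\alpha}(T^\La_\alpha, ?)$ and that, identifying $\pi(K^\La_\alpha)$ with $\widehat T^\La_\alpha$ through Lemma~\ref{precd}, the module $\widehat T^\La_\alpha$ decomposes as $\bigoplus_{\la \in \Ga} \widehat T^\La_\alpha e_\la$ with $\widehat T^\La_\alpha e_\la \cong \pi(P(\la)) = Y(\la)$, using that $P(\la) = K_\Ga e_\la$. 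So applying $\pi$ to the canonical surjection $P(\la) \twoheadrightarrow V(\la)$ and using exactness of $\pi$ on projectives (or rather right-exactness plus the explicit basis description), $\pi(V(\la))$ is a quotient of $Y(\la)$.

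The key step is to compute $\pi(V(\la))$ explicitly using the stretched circle diagram basis. Recall from \cite[$\S$5]{BS1} that $V(\la)$ has a basis indexed by oriented cup diagrams $\underline\ga\la$ with $\ga \le \la$ in the block $\Ga$, and that $P(\la) \twoheadrightarrow V(\la)$ kills the span of certain diagram basis vectors. Via the isomorphism from Lemma~\ref{precd} and the basis (\ref{thes2}) for $\widehat T^\La_\alpha$, the summand $\widehat T^\La_\alpha e_\la$ has basis given by those $|\,\bu^*[\bde^*]\:b)$ whose cap diagram $b$ at the bottom equals $\underline\la$; quotienting to $V(\la)$ amounts to keeping only those oriented lower-stretched circle diagrams whose top weight $\de_d$ is $\le \la$ and applying the surgery relations, which is exactly the construction of $S(\la)$ via the cell ideal filtration of Graham--Lehrer \cite{GL}. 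Concretely, I would match up the homogeneous basis of $S(\la)$ listed just before this lemma --- the oriented stretched cup diagrams $\bt^*[\bga^*]$ with $\ga_d = \la$ --- with the basis of $\pi(V(\la))$ obtained by pushing the standard basis of $V(\la)$ through $\pi$, and check that the left $R^\La_\alpha$-actions agree by comparing the surgery-procedure recipes (the one described in the paragraph after the lemma statement for $S(\la)$, and the one coming from the $E^\La_\alpha$-action on $\widehat T^\La_\alpha$ restricted to the quotient $V(\la)$).

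The main obstacle will be the bookkeeping needed to see that the two surgery recipes coincide after passing to the quotient: one has to verify that dropping the cap diagram $\overline\la$ from the top of a basis vector of $\widehat T^\La_\alpha$ and then killing terms whose top weight is not $\le \la$ (the recipe for $S(\la)$) produces the same $R^\La_\alpha$-module as first forming the quotient $P(\la) \twoheadrightarrow V(\la)$ on the $K^\La_\alpha$-side and then applying $\pi$. This is essentially the statement that the cellular filtration of $E^\La_\alpha$ from Theorem~\ref{cell} --- whose cell datum uses the Bruhat order on $\Ga$ and the sets $M(\la)$ of oriented upper-stretched cap diagrams with $\ga_d = \la$ --- corresponds under $j$ and $\pi$ to the highest weight (standard module) filtration of $\Rep{K^\La_\alpha}$. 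Since Theorem~\ref{cell} is proved ``by similar arguments to \cite[Corollary 3.3]{BS1}'', I would cite the parallel identification of cell modules with $\pi$-images of standard modules established there, and simply note that the same argument applies verbatim in the stretched setting; alternatively, one can argue abstractly that both $S(\la)$ and $\pi(V(\la))$ are the unique (up to isomorphism) quotient of $Y(\la)$ with the correct graded composition multiplicities, which follow on one side from the cellular structure and on the other from Theorem~\ref{cat1}(i) together with Lemma~\ref{sf2}. Either way the proof is short modulo this identification.
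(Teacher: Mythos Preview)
Your framework is right—both $S(\la)$ and $\pi(V(\la))$ are quotients of $Y(\la)\cong\widehat T^\La_\alpha e_\la$—and this is how the paper begins too. But you do not carry out the two decisive steps, and your proposed shortcuts do not work. The paper first identifies $S(\la)$ with $Y(\la)/Y'(\la)$, where $Y'(\la)$ is the span of basis vectors $|\,\bu^*[\bde^*]\,\overline{\la})$ with $\de_d>\la$; then it proves the containment $Y'(\la)\subseteq\pi(P'(\la))$ by checking that for each such $y$ the map $\phi(?\otimes y):T^\La_\alpha\to P(\la)$ lands in $P'(\la)$, using the explicit description of $\phi$ together with \cite[Corollary~4.5]{BS1}. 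This yields a \emph{surjection} $S(\la)\twoheadrightarrow\pi(V(\la))$, which is then upgraded to an isomorphism by a dimension count: via adjointness $\dim\pi(V(\la))=\sum_{\bi\in I^\alpha}\dim\hom_{K^\La_0}(L(\iota),E_{\bi^*}V(\la))$, and by \cite[Theorems~3.5--3.6, 4.5]{BS2} this counts oriented stretched cup diagrams with top weight $\la$, i.e.\ equals $\dim S(\la)$.

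Your alternative (b) is not a valid argument: a module can have non-isomorphic quotients with identical graded composition multiplicities, so ``unique quotient with the right multiplicities'' fails in general. Your alternative (a) does not apply either: \cite[Corollary~3.3]{BS1} only proves cellularity of $K_\Ga$; it contains no statement relating cell modules to images under a Schur functor, so there is nothing to transport ``verbatim''. What is really missing from your outline is the construction of an actual map between the two quotients (the containment $Y'(\la)\subseteq\pi(P'(\la))$ requires a genuine diagrammatic check) together with the dimension comparison. Two minor notational points: in $|\,\bu^*[\bde^*]\,b)$ the cap diagram $b$ sits at the \emph{top} of the lower-stretched diagram and must equal $\overline{\la}$, not $\underline{\la}$; and one cannot ``push the basis of $V(\la)$ through $\pi$'', since $\pi$ is a Hom functor, not a tensor functor.
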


\begin{proof}
Recall from \cite[Theorem 5.1]{BS1} that $V(\la)$ is isomorphic to
the quotient of $P(\la) = K^\La_\alpha e_\la$ by the submodule
$P'(\la)$ spanned by all basis vectors of the form $(a \mu \overline{\la})$ with $\mu > \la$ in the Bruhat order.
Note that
$$
\pi (P(\la))
= \hom_{K^\La_\alpha}(T^\La_\alpha, P(\la))
= \hom_{K^\La_\alpha}(T^\La_\alpha, K^\La_\alpha e_\la)
= \hom_{K^\La_\alpha}(T^\La_\alpha, K^\La_\alpha) e_\la.
$$
Using the isomorphism from Lemma~\ref{precd}, we deduce that $\pi(P(\la)) \cong \widehat{T}^\La_\alpha e_\la$.
Because $\widehat{T}^\La_\alpha e_\la$ is realised explicitly in terms of diagrams, the same is true via
this isomorphism for the Young module $Y(\la) = \pi(P(\la))$
from (\ref{mods2}). In other words,
we can identify $Y(\la)$ with
the left $R^\La_\alpha$-module with
basis given by all diagrams of the form (\ref{thes2}) such that $b = \overline{\la}$.
Let $Y'(\la)$ denote the submodule of $Y(\la)$ spanned by all such diagrams in with $\delta_d > \la$.
Then it is clear from the explicit description of $S(\la)$ from the paragraph before the lemma that
$S(\la) \cong Y(\la) / Y'(\la)$.
Now we
claim that $Y'(\la) \subseteq \pi (P'(\la))$. Given the claim, we get a surjective homorphism
$$
S(\la) \cong Y(\la) / Y'(\la)
\twoheadrightarrow \pi(P(\la)) / \pi(P'(\la)) = \pi (P(\la) / P'(\la)) \cong \pi (V(\la))
$$
and then deduce that $S(\la) \cong \pi(V(\la))$ by comparing dimensions:
forgetting gradings, we have using (\ref{dc}) and adjointness that
$$
\dim \pi(V(\la)) = \dim \hom_{K^\La_\alpha}(T^\La_\alpha, V(\la))
=
\sum_{\bi \in I^\alpha}
\dim \hom_{K^\La_0}(L(\iota), E_{\bi^*} V(\la)),
$$
recalling $\bi^* = (i_d,\dots,i_1)$.
Using \cite[Theorems 3.5--3.6]{BS2} and then
\cite[Theorem 4.5]{BS2}, this is equal to the number of oriented stretched cup diagrams
$\bt^*[\bde^*]$ with $\de_d = \la$, i.e. it is the same as the dimension of the
cell module $S(\la)$.

It remains to prove the claim.
Take an element $y\in Y'(\la)$ represented under the identification $Y(\la) \equiv  \widehat{T}^\La_\alpha e_\la$
by a basis vector of the form
$|\bu^*[\bde^*]\, \overline{\la})$ with $\delta_d > \la$.
We need to show that the map $\phi(? \otimes y):T^\La_\alpha \rightarrow P(\la)$ has image contained in $P'(\la)$.
This follows from the explicit diagrammatic description of the map $\phi$ from (\ref{varphi}) together with
\cite[Corollary 4.5]{BS1}.
\end{proof}

For the next corollary, recall the classification of the irreducible $R^\La_\alpha$-modules
$\{D(\la)\:|\:\la \in \Ga^\circ\}$
from Lemma~\ref{sf2}.

\begin{Corollary}\label{indec} For each $\la \in \Ga$, the cell module $S(\la)$ is indecomposable with irreducible
socle isomorphic to $D(\la^\circ)\langle \deg(\underline{\la^\circ} \la) \rangle$, where $\la^\circ \in \Ga^\circ$
is defined as in \cite[Theorem 6.6]{BS2}.
Moreover if $\la \in \Ga^\circ$ then $S(\la)$ has irreducible head isomorphic to $D(\la)$.
\end{Corollary}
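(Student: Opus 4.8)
\textbf{Proof proposal for Corollary~\ref{indec}.}
The plan is to exploit Lemma~\ref{durham1}, which identifies the cell module $S(\la)$ with $\pi(V(\la))$ for the graded Schur functor $\pi$, and then to transport all the structural statements about parabolic Verma modules $V(\la)$ recorded in \cite[$\S$6]{BS2} across this identification. First I would recall that $V(\la)$ is indecomposable: it has irreducible head $L(\la)$ by \cite[Theorem 5.1]{BS1}, and by \cite[Theorem 6.6]{BS2} it has irreducible socle isomorphic to $L(\la^\circ)\langle\deg(\underline{\la^\circ}\la)\rangle$ for the weight $\la^\circ \in \Ga^\circ$ defined there. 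The key point is that the Schur functor $\pi$, although not exact, behaves well on socles and heads of standard modules: $\pi$ kills exactly the irreducible modules $L(\mu)$ with $\mu \notin \Ga^\circ$ (Lemma~\ref{sf2}), and it is exact on the projective resolutions involved, so I would argue that applying $\pi$ to the socle filtration of $V(\la)$ gives that $S(\la) \cong \pi(V(\la))$ has irreducible socle $D(\la^\circ)\langle\deg(\underline{\la^\circ}\la)\rangle = \pi(L(\la^\circ))\langle\deg(\underline{\la^\circ}\la)\rangle$, which is non-zero precisely because $\la^\circ \in \Ga^\circ$.

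More precisely, the cleanest route to the socle statement is to use that $\pi$ is fully faithful on projectives (Lemma~\ref{sf2}(ii)) together with the fact that $\pi$ admits a right adjoint, so that $\hom_{R^\La_\alpha}(D(\mu), S(\la)) = \hom_{R^\La_\alpha}(\pi(L(\mu)), \pi(V(\la)))$ computes a subspace of $\hom_{K^\La_\alpha}(L(\mu), V(\la))$ controlled by the genuine socle of $V(\la)$; since the socle of $V(\la)$ is the single simple $L(\la^\circ)$ with $\la^\circ\in\Ga^\circ$ by \cite[Theorem 6.6]{BS2}, and $\pi$ sends that simple to the non-zero simple $D(\la^\circ)$, one concludes the socle of $S(\la)$ is simple and equals $D(\la^\circ)$ with the indicated degree shift. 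Indecomposability of $S(\la)$ is then immediate since its socle is simple. For the second assertion, when $\la \in \Ga^\circ$ the head of $V(\la)$ is $L(\la)$ and $\pi(L(\la)) = D(\la) \neq 0$; since $\pi$ is right exact, it sends surjections to surjections, so the canonical surjection $V(\la) \twoheadrightarrow L(\la)$ yields a surjection $S(\la) \cong \pi(V(\la)) \twoheadrightarrow D(\la)$, and one must rule out that $S(\la)$ has a larger head, which follows because any simple quotient $D(\mu)$ of $S(\la)$ forces $L(\mu)$ to be a quotient of $V(\la)$ (using again that $\pi$ is fully faithful on projectives, so $\hom_{R^\La_\alpha}(S(\la), D(\mu)) \hookrightarrow \hom_{K^\La_\alpha}(V(\la), L(\mu))$), whence $\mu = \la$.

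The main obstacle I anticipate is the bookkeeping around exactness: $\pi$ is only right exact, so one cannot naively apply it to the two-sided socle/head filtration of $V(\la)$ and conclude things about $S(\la)$ term-by-term. The way I would handle this is to phrase everything in terms of $\hom$-spaces and the adjunction between $\pi$ and $\pi^*$ (or between $\pi$ and its right adjoint $\hom_{K^\La_\alpha}(T^\La_\alpha e, ?)$ after truncating by the idempotent $e$ of (\ref{truncid})), together with the fact from \cite[Corollary 6.3]{BS2} that the truncation functor is fully faithful on projectives and hence identifies $\pi(V(\la))$ with $eV(\la)$ as an $eK^\La_\alpha e$-module; the socle and head of $eV(\la)$ are then obtained from those of $V(\la)$ by the elementary fact that for a cellular (quasi-hereditary) algebra the truncation by an idempotent preserves socles and heads of standard modules supported on the retained simples. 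This reduces the whole corollary to the already-available statements \cite[Theorems 5.1 and 6.6]{BS2} about $V(\la)$, with no new diagrammatic computation required.
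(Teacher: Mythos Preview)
Your approach is essentially the same as the paper's: both deduce the corollary from Lemma~\ref{durham1} together with \cite[Theorem~6.6]{BS2} and the properties of the Schur functor from Lemma~\ref{sf2}, and both observe that an irreducible socle forces indecomposability. The paper simply says ``a standard argument involving the Schur functor $\pi$'' where you spell out the $\hom$-space and truncation details.

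One small correction that removes the obstacle you anticipate: the functor $\pi = \hom_{K^\La_\alpha}(T^\La_\alpha, ?)$ is in fact \emph{exact}, not merely right exact, because $T^\La_\alpha$ is projective (it is prinjective by Lemma~\ref{pring1}). So there is no exactness bookkeeping to worry about; $\pi$ preserves short exact sequences, and since it kills precisely the simples $L(\mu)$ with $\mu \notin \Ga^\circ$ while the socle $L(\la^\circ)$ of $V(\la)$ has $\la^\circ \in \Ga^\circ$, the socle of $\pi(V(\la))$ is immediately $D(\la^\circ)$ with the stated degree shift. Your alternative route via $eV(\la)$ and \cite[Corollary~6.3]{BS2} is also fine and amounts to the same thing under the Morita equivalence of Theorem~\ref{itskhovanov}.
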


\begin{proof}
By \cite[Theorem 6.6]{BS2}, the cell module $V(\la)$ has irreducible socle isomorphic to $L(\la^\circ)\langle \deg(\underline{\la^\circ}\la)\rangle$. Given this and Lemma~\ref{sf2},
a standard argument involving the Schur functor $\pi$
shows that
$\pi(V(\la))$ has irreducible socle isomorphic to $D(\la^\circ)\langle \deg(\underline{\la^\circ}\la) \rangle$.
In view of Lemma~\ref{durham1}, this proves the statement about the socle of $S(\la)$, hence $S(\la)$ is indecomposable
as its socle is irreducible.
Finally if $\la \in \Ga^\circ$ then $V(\la)$ has irreducible head $L(\la)$ and a similar argument shows that
$S(\la) \cong \pi(V(\la))$ has irreducible head $D(\la) = \pi(L(\la))$.
\end{proof}

\begin{Corollary}\label{durham2}
On forgetting the grading, we have that $S(\la) \cong \pi(\cV(\la))$, where $\pi$ is the ungraded Schur functor from
(\ref{Sf}).
\end{Corollary}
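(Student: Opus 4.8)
The plan is to reduce the ungraded statement $S(\la) \cong \pi(\cV(\la))$ (for $\pi$ the Schur functor from~(\ref{Sf})) to the already-established graded statement $S(\la) \cong \pi(V(\la))$ from Lemma~\ref{durham1}, by transporting everything across the equivalence $\mathbb{E}_\alpha$ from Corollary~\ref{mainequiv}. First I would recall that $\mathbb{E}_\alpha(\cV(\la)) \cong V(\la)$ by Corollary~\ref{mainequiv}. Applying the forgetful functor $\forget$ to Lemma~\ref{durham1} gives $S(\la) \cong \forget\circ\pi(V(\la))$, where on the right $\pi$ denotes the graded Schur functor from~(\ref{gsf}) and $\forget$ is as in~(\ref{forget2}). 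So it suffices to show $\forget\circ\pi(V(\la)) \cong \pi(\cV(\la))$ as ungraded $R^\La_\alpha$-modules, where the second $\pi$ is now the ungraded Schur functor~(\ref{Sf}).

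Next I would invoke Corollary~\ref{maincommuting}, which asserts precisely that the ungraded graded-forgetting diagram of Schur functors commutes: there is an isomorphism of functors $\pi\circ\mathbb{E}_\alpha \cong \pi$ from $\cO^\La_\alpha$ to $\rep{R^\La_\alpha}$, where on the left $\pi = \hom_{K^\La_\alpha}(T^\La_\alpha,?)$ is the ungraded-forgotten version of~(\ref{gsf}) and on the right $\pi = \hom_{\mathfrak{g}}(\cT^\La_\alpha,?)$ is~(\ref{Sf}). Evaluating this natural isomorphism at $M = \cV(\la)$ and using $\mathbb{E}_\alpha(\cV(\la)) \cong V(\la)$ yields
$$
\pi(\cV(\la)) \cong \pi(\mathbb{E}_\alpha(\cV(\la))) \cong \pi(V(\la)),
$$
where the last $\pi$ is the (forgotten) graded Schur functor applied to the $K^\La_\alpha$-module $V(\la)$. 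Combining with the forgotten form of Lemma~\ref{durham1} gives $S(\la) \cong \pi(\cV(\la))$ as required.

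The only point requiring a little care is bookkeeping about which incarnation of ``$\pi$'' is meant at each stage, since the paper deliberately overloads the symbol for the graded version~(\ref{gsf}), its ungraded shadow~(\ref{Sf}) on the diagram side, and the category $\cO$ version~(\ref{Sf}); all of this is harmless once one notes that $\forget$ commutes with $\hom_{K^\La_\alpha}(T^\La_\alpha,?)$ applied to modules which happen to be graded, so that the ungraded Schur functor on $\rep{K^\La_\alpha}$ agrees with $\forget$ composed with the graded one on $\Rep{K^\La_\alpha}$. I expect the main (and genuinely only) obstacle is simply checking that Corollary~\ref{maincommuting} is being applied in the right (ungraded) context: its proof is stated for the equivalence $\mathbb{E}_\alpha$ of Corollary~\ref{mainequiv}, which already ignores gradings, so no extra work is needed. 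Everything else is a direct chain of isomorphisms, and I would present it in roughly that order: recall Lemma~\ref{durham1}, apply $\forget$, apply Corollary~\ref{maincommuting} at $\cV(\la)$, use $\mathbb{E}_\alpha(\cV(\la))\cong V(\la)$, and conclude.
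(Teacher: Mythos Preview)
Your proposal is correct and follows essentially the same argument as the paper's proof: combine Lemma~\ref{durham1} with Corollary~\ref{maincommuting} and the fact $\mathbb{E}_\alpha(\cV(\la)) \cong V(\la)$ from Corollary~\ref{mainequiv}. Your additional care about the overloaded notation for $\pi$ and the forgetful functor is sound bookkeeping but not required beyond what the paper's one-line proof implicitly assumes.
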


\begin{proof}
This follows from Lemma~\ref{durham1} using Corollary~\ref{maincommuting} and the fact that
$\mathbb{E}_\alpha (\cV(\la)) \cong V(\la)$ by Corollary~\ref{mainequiv}.
\end{proof}

\begin{Corollary}
For any $\la \in \Ga$,
the cell module $S(\la)$ is isomorphic to the graded Specht module from \cite{BKW}
parametrised by the bipartition obtained
from $\la$ by applying the map from Remark~\ref{dinnertime}
(taking $e:=0$, $l := 2$ and $(k_1,k_2) := (o+m,o+n)$ in \cite{BKW}).
\end{Corollary}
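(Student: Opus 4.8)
The plan is to identify the cell module $S(\la)$ constructed here with the graded Specht module $S^{\bla}$ of Brundan--Kleshchev--Wang \cite{BKW} by exhibiting a graded isomorphism, and the natural strategy is to use the graded cellular structures on both sides. Recall that $\la \in \Ga = \La-\alpha$ corresponds under the map of Remark~\ref{dinnertime} to a bipartition $\bla = (\la^{(1)},\la^{(2)})$ of $d$ lying in the level two set-up of \cite{BKW} with $e=0$, $l=2$ and charges $(k_1,k_2) = (o+m,o+n)$. The graded Specht module $S^{\bla}$ is defined in \cite{BKW} as a certain cyclic graded $R^\La_\alpha$-module, generated by a homogeneous vector $z^{\bla}$ of prescribed degree killed by an explicit list of relations (the Garnir relations together with $y_r z^{\bla} = 0$ for appropriate $r$ and $\psi_r z^{\bla} = 0$ along the rows of the initial tableau $\bt^{\bla}$, up to the precise conventions of that paper).

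First I would locate inside $S(\la)$ the analogue of the Specht generator $z^{\bla}$. Under the basis of $S(\la)$ described just before Lemma~\ref{durham1}, there is a distinguished homogeneous basis vector coming from the oriented stretched cup diagram $\bt_\la^*[\bga_\la^*]$ in which $\bga_\la$ is the weight sequence obtained by ``building up'' the bottom weight $\la$ by the most obvious admissible path, i.e. the one corresponding under the bijection of Remark~\ref{newr1} to the initial standard bitableau $\bt^{\bla}$; concretely this is the stretched cup diagram with all cups drawn as low as possible and no internal circles, so its degree is precisely $\deg(\bt^{\bla})$ in the sense of \cite[(3.5)]{BKW}, matching the degree of $z^{\bla}$. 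I would then check, using the explicit left $R^\La_\alpha$-action on $S(\la)$ spelled out before Lemma~\ref{durham1} (which is computed by circle moves, crossing moves and height moves on diagrams, with the signs from Lemmas~\ref{inb} and \ref{sff2}), that this vector is annihilated by exactly the Specht relations: the relations $y_r z = 0$ follow because a positive circle move at a level where the relevant component already touches the boundary with an anti-clockwise circle of minimal degree produces a diagram of too-low degree, and the $\psi_r z = 0$ relations along rows follow from the fact that the corresponding crossing/height moves applied to $\bt_\la^*[\bga_\la^*]$ land outside the span (create a weight $> \la$) or vanish. The Garnir relations are the most delicate to verify directly, and I would instead handle them via dimension-count and cellularity rather than by hand (see below). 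This produces a surjective graded homomorphism $S^{\bla} \twoheadrightarrow S(\la)$.

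To promote this to an isomorphism I would compare graded dimensions. By Lemma~\ref{durham1}, $\dim_q S(\la) = \dim_q \pi(V(\la))$, and the proof of that lemma already computes $\dim S(\la)$ as the number of oriented stretched cup diagrams with bottom weight $\la$, which by the bijection of Remark~\ref{newr1} is the number of standard bitableaux of shape $\bla$ and type $\alpha$; keeping track of degrees via (\ref{degzz}) and \cite[(3.5)]{BKW}, this refines to $\dim_q S(\la) = \sum_{\bt} q^{\deg(\bt)}$ summed over standard $\bla$-tableaux $\bt$. On the other side, the graded dimension of $S^{\bla}$ is given by exactly the same combinatorial formula by \cite[Corollary 6.24]{BKW} (the graded Specht module has a homogeneous basis indexed by standard tableaux with the analogous degree statistic). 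Hence the graded dimensions agree and the surjection $S^{\bla} \twoheadrightarrow S(\la)$ is a graded isomorphism.

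I expect the main obstacle to be the bookkeeping in the first step: matching the degree normalisation and, crucially, the sign conventions between the diagrammatic action on $S(\la)$ (with the signs $(-1)^{(\Ga_{r-1},\La_{i_r})}$ from Lemmas~\ref{inb}--\ref{sff2}) and the Khovanov--Lauda--Rouquier presentation used in \cite{BKW}, and pinning down precisely which vector plays the role of $z^{\bla}$ so that the Garnir relations come out on the nose; once the generator and its degree are correctly identified, the surjectivity of $S^{\bla} \to S(\la)$ is formal from the universal property of the Specht module and the equality of graded dimensions finishes the argument without ever checking the Garnir relations by a direct diagram computation. An alternative, cleaner route which I would pursue in parallel is to invoke the uniqueness part of the classification of graded cell modules: both $S(\la)$ and $S^{\bla}$ are cell modules for graded cellular structures on the \emph{same} algebra $R^\La_\alpha$, with cells indexed by the same poset ($\Ga$ with the Bruhat order, equivalently bipartitions with dominance, via Remark~\ref{dinnertime}); by Corollary~\ref{indec} the module $S(\la)$ has the expected irreducible head/socle, and matching this with the corresponding statement for $S^{\bla}$ from \cite{BKW}, together with the common graded dimension, forces the two cell modules to coincide.
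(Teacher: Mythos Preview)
Your strategy is genuinely different from the paper's, and the difference is instructive. The paper does \emph{not} attempt to locate a Specht generator or verify any relations directly. Instead it argues in three steps: (a) by Corollary~\ref{durham2}, forgetting the grading gives $S(\la)\cong\pi(\cV(\la))$, and the latter is already identified with the ungraded Specht module by prior work on category $\cO$ (\cite[Theorem 3.7]{BKariki} or \cite[Theorem 4.15]{BKariki}); (b) since $S(\la)$ is indecomposable (Corollary~\ref{indec}), the unicity of gradings \cite[Lemma 2.5.3]{BGS} forces $S(\la)$ and the graded Specht module to agree up to a grading shift; (c) the shift is then pinned down by the graded dimension comparison, exactly as you describe, via the degree-preserving bijection of Remark~\ref{newr1}. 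So the graded-dimension step you emphasise is indeed the endgame in both arguments, but the paper gets the underlying ungraded isomorphism ``for free'' from category $\cO$ rather than by constructing a homomorphism by hand.

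Your main route has a genuine gap at the Garnir step. To obtain a homomorphism $S^{\bla}\to S(\la)$ from the universal property you must verify \emph{all} of the defining relations of $S^{\bla}$ on your candidate vector; you cannot postpone the Garnir relations and then appeal to a dimension count, because the dimension count only tells you a surjection is an isomorphism \emph{after} the map exists. As written, the sentence ``This produces a surjective graded homomorphism'' is not justified. Your alternative route is also not complete as stated: two graded cellular structures on the same algebra, even with isomorphic cell posets, need not have isomorphic cell modules, and matching heads together with graded dimensions does not by itself force an isomorphism of modules. What \emph{would} close this alternative is precisely the paper's input: an ungraded isomorphism (from some source) plus indecomposability plus unicity of gradings. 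If you want a self-contained diagrammatic proof avoiding \cite{BKariki}, you really do have to check the Garnir relations on your distinguished vector; this is feasible but is exactly the ``delicate'' computation you hoped to sidestep.
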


\begin{proof}
By a special case of \cite[Theorem 3.7]{BKariki} if $m \geq n$, or \cite[Theorem 4.15]{BKariki} if $m \leq n$,
it is known that $\pi(\cV(\la))$ is isomorphic to the Specht module from \cite{BKW} as an ungraded module.
Hence by Corollary~\ref{durham2} we get that $S(\la)$ is isomorphic to the Specht module on forgetting gradings.
Since it is an indecomposable module by Corollary~\ref{indec},
it follows from this and the unicity of gradings from \cite[Lemma 2.5.3]{BGS}
that $S(\la)$ is isomorphic to the graded Specht module
up to a shift in grading. Finally to see that no shift in grading is required, we observe that $S(\la)$
has the same graded dimension as the graded Specht module. This follows because the two modules have homogeneous
bases indexed by certain sets of oriented stretched cap diagrams and of standard bitableaux, respectively, and these two sets
are in bijection in a way that respects the degrees of the two bases thanks to Remark~\ref{newr1}.
\end{proof}

By the general theory of cellular algebras, the cell module $S(\la)$ is equipped with a symmetric bilinear form $(.,.)$
which is associative in the sense that $(xv,w) = (v,x^*w)$ for all $x \in R^\La_\alpha$ and $v,w \in S(\la)$,
where $*$ is the anti-automorphism from (\ref{Stars}); see e.g. \cite[Definition 2.3]{GL}.
Using the map $\phi$ from (\ref{varphi}), we can reformulate the definition of this form as follows.
For basis vectors $|\bt^*[\bga^*]|, |\bu^*[\bde^*]| \in S(\la)$, their inner product
$(|\bt^*[\bga^*]|, |\bu^*[\bde^*]|)$ is the coefficient of $e_\la = (\underline{\la} \la \overline{\la})$
when $\phi((\underline{\la}\, \bu[\bde]| \otimes |\bt^*[\bga^*]\, \overline\la))$ is expanded in terms of the diagram
basis of $K^\La_\alpha$. The following lemma gives a more concrete description.

\begin{Lemma}\label{easy}
For $|\bt^*[\bga^*]|, |\bu^*[\bde^*]| \in S(\la)$, the inner product
$(|\bt^*[\bga^*]|, |\bu^*[\bde^*]|)$ is equal to $1$ if
$\bt^* = \bu^*$, all
matching pairs of internal circles in $\bt^*[\bga^*]$ and $\bt^*[\bde^*]$ are oppositely oriented, and all boundary circles in both diagrams are anti-clockwise; otherwise it is zero.
\end{Lemma}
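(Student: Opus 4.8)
The plan is to unwind the cellular-algebra definition of $(.,.)$ through the map $\phi$ and then reduce to a purely diagrammatic calculation about the multiplication in $K^\La_\alpha$ that has already been carried out in \cite{BS1}. Recall from the paragraph preceding the statement that $(|\bt^*[\bga^*]|,|\bu^*[\bde^*]|)$ is by definition the coefficient of $e_\la=(\underline{\la}\,\la\,\overline{\la})$ in the expansion of $\phi((\underline{\la}\,\bu[\bde]\,|\otimes|\,\bt^*[\bga^*]\,\overline{\la}))$ in the diagram basis of $K^\La_\alpha$. So the very first step is to apply the definition of $\phi$ from (\ref{varphi}): $\phi((\underline{\la}\,\bu[\bde]\,|\otimes|\,\bt^*[\bga^*]\,\overline{\la}))$ vanishes unless $\bt=\bu$ and every mirror-image pair of internal circles in $\bu[\bde]$ and $\bt^*[\bga^*]$ is oppositely oriented, in which case it equals $(\underline{\la}\,\ga_d\,c)(c^*\,\de_d\,b)$ where $c$ is the upper reduction of $\bt=\bu$ and here $b=\overline{\la}$, while necessarily $\ga_d=\de_d=\la$ for the diagrams to glue. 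This immediately pins down the first two conditions in the statement ($\bt^*=\bu^*$ and oppositely oriented matching internal circles), so the remaining work is to analyse, under those hypotheses, the $K^\La_\alpha$-product $(\underline{\la}\,\la\,c)(c^*\,\la\,\overline{\la})$ and extract the coefficient of $(\underline{\la}\,\la\,\overline{\la})$.

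Next I would identify $(\underline{\la}\,\la\,c)$ and $(c^*\,\la\,\overline{\la})$ as basis vectors of the subalgebra-style bimodule $K^s_{\bullet}$ obtained after passing through the closure/surgery formalism: multiplying $(\underline{\la}\,\la\,c)(c^*\,\la\,\overline{\la})$ means drawing $c^*\,\la\,\overline{\la}$ under $\underline{\la}\,\la\,c$ and running the surgery procedure of \cite[$\S$3]{BS1} on the symmetric $cc^*$-section. By \cite[Theorem 3.2(iii)]{BS1} (the product formula for the arc algebra), the coefficient of $(\underline{\la}\,\la\,\overline{\la})$ in this product is $1$ precisely when after the surgeries the middle section of the picture evaluates nontrivially under the TQFT counit — equivalently, every circle that gets formed and then evaluated is oriented so that the evaluation gives $1$ rather than $0$ (i.e. it is anti-clockwise), and is $0$ otherwise. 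Here the circles that are formed in the $cc^*$-section of the glued picture $\underline{\la}\,\la\,c\,\,c^*\,\la\,\overline{\la}$ are exactly the images of the boundary circles of $\bu^*[\bde^*]\wr\bt[\bga]$ after the upper and lower reductions are collapsed against $c$ and $c^*$: a boundary circle of the stretched circle diagram corresponds, after reduction to the boundary line, to an internal circle of the closed diagram built from $c$, $c^*$, $\underline\la$ and $\overline\la$. So the surviving condition is precisely that every such boundary circle be anti-clockwise. This is where the bulk of the care is needed, and it is the main obstacle: one must check that the reduction process of Lemma~\ref{smallcircles} and its friends, together with \cite[Lemma 2.2]{BS2}, makes the orientation of the reduced circle coincide with the orientation of the original boundary circle in $\bu^*[\bde^*]\wr\bt[\bga]$, so that ``all reduced circles anti-clockwise'' translates verbatim into ``all boundary circles of $\bu^*[\bde^*]\wr\bt[\bga]$ anti-clockwise''. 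The matching internal-circle condition drops out of the $\phi$-step and contributes nothing further to the surgery computation.

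Assembling these, I would argue: if $\bt^*\neq\bu^*$ the form is $0$ by the definition of $\phi$; if some matching pair of internal circles is co-oriented, again $\phi$ gives $0$; otherwise $\phi$ produces $(\underline{\la}\,\la\,c)(c^*\,\la\,\overline{\la})$, and by the arc-algebra product formula of \cite[$\S$3]{BS1} (exactly as used to prove cellularity of $K_\Ga$ in \cite[Theorem 6.2]{BS1} and of $E^\La_\alpha$ in Theorem~\ref{cell}) the coefficient of $e_\la$ is $1$ iff every boundary circle of $\bu^*[\bde^*]\wr\bt[\bga]$ is anti-clockwise and $0$ otherwise. This matches the claimed description, so the lemma follows. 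In fact the whole statement can be seen more quickly as the specialisation to $\al=\al$, $\bu=\bt$, $\ga_d=\de_d=\la$ of the formula (\ref{piei}) for $\omega(e(\bi))$ restricted to the cell module $S(\la)$, since the cellular form of a graded cellular algebra is the one induced by its symmetrizing form on cell modules, so an alternative (shorter) route is simply to invoke Remark~\ref{newr2}(2) together with the explicit description of $\tau$ given there; I would present the $\phi$-plus-surgery argument as the primary proof and remark on this shortcut.
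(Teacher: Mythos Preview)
Your reduction via $\phi$ to the coefficient of $e_\la$ in $(\underline{\la}\,\la\,c)(c^*\,\la\,\overline{\la})$ is exactly the right move, and it is precisely what the paper has in mind: its entire proof is the single sentence ``This follows from the diagrammatic description of the map $\phi$.'' So you are on the same track.

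Where your write-up goes astray is in the analysis of that $K^\La_\alpha$-product. The multiplication in $K_\Ga$ does not involve the TQFT counit $\varepsilon$; the surgery procedure uses only $m$ and $\delta$, and no circles are ``evaluated'' at the end. The reference to \cite[Theorem 3.2(iii)]{BS1} is also off (cellularity is \cite[Corollary 3.3]{BS1}, and there is no product formula of the shape you describe at that location). More importantly, the careful matching of ``boundary circles of $\bu^*[\bde^*]\wr\bt[\bga]$'' with ``circles formed in the $cc^*$-section'' that you flag as the main obstacle is unnecessary. There is a one-line finish: since $e_\la$ is homogeneous of degree $0$ and the product has degree $\deg(\underline{\la}\,\la\,c)+\deg(c^*\,\la\,\overline{\la})=2k$, where $k$ is the number of clockwise caps in $\la c$, the coefficient of $e_\la$ vanishes unless $k=0$. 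But $k=0$ forces $c=\overline{\la}$ (this is the defining property of $\overline{\la}$), and then the product is literally $e_\la\cdot e_\la=e_\la$. The condition $c=\overline{\la}$ is exactly ``all boundary caps of $\bt[\bga]$ anti-clockwise,'' and since $\bt=\bu$ and $\ga_d=\de_d=\la$ the boundary-cap orientations in $\bt[\bga]$ and $\bu[\bde]$ automatically agree.

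Finally, drop the proposed shortcut via Remark~\ref{newr2}(2). The claim that ``the cellular form of a graded cellular algebra is the one induced by its symmetrizing form on cell modules'' is not a standard fact and is not what Remark~\ref{newr2}(2) says; it would require its own proof, so it is not a genuine shortcut here.
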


\begin{proof}
This follows from the diagrammatic description of
the map $\phi$.
\end{proof}

Note in particular that the bilinear form $(.,.)$ on $S(\la)$ is homogeneous of degree zero, and
the $\bi$-weight spaces $e(\bi) S(\la)$ for different $\bi\in I^\alpha$ are orthogonal.
Let $\rad S(\la)$ denote the radical of the form $(.,.)$, which is a graded $R^\La_\alpha$-submodule of $S(\la)$.
By general theory again, the non-zero $S(\la) / \rad S(\la)$'s give a complete (up to grading shift)
set of non-isomorphic irreducible $S(\la)$-modules; see \cite[Theorem 3.4]{GL}.

\begin{Theorem}
For $\la \in \Ga$, we have that $S(\la) / \rad S(\la) \neq \{0\}$ if and only if $\la \in \Ga^\circ$.
Moreover $S(\la) / \rad S(\la) \cong D(\la)$ for each
$\la \in \Ga^\circ$.
\end{Theorem}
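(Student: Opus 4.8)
The statement is a comparison of two labelling conventions for the irreducible $R^\La_\alpha$-modules: the cellular-algebra labelling coming from the simple heads of the cell modules $S(\la)$, versus the labelling $\{D(\la)\:|\:\la\in\Ga^\circ\}$ coming from the Schur functor as in Lemma~\ref{sf2}. The key identification we already have in hand is Lemma~\ref{durham1}, which says $S(\la)\cong\pi(V(\la))$ for each $\la\in\Ga$, together with Corollary~\ref{indec}, which already describes the socle of $S(\la)$ and its head when $\la\in\Ga^\circ$. So the bulk of the work is to match the radical of the cellular bilinear form $(.,.)$ with the maximal submodule, and this is essentially built into the general theory once we know the relevant non-vanishing of the form.

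First I would treat the ``if'' direction. For $\la\in\Ga^\circ$, Corollary~\ref{indec} already gives that $S(\la)$ has irreducible head isomorphic to $D(\la)$; equivalently the maximal proper submodule $\rad' S(\la)$ is the unique one with simple quotient $D(\la)$. Since $S(\la)/\rad S(\la)$ is (when non-zero) always an irreducible quotient of $S(\la)$, and by general cellular theory $\rad S(\la)$ is a submodule, one has $\rad S(\la)\subseteq\rad' S(\la)$. To get equality it suffices to show $\rad S(\la)\neq S(\la)$, i.e. the form $(.,.)$ is not identically zero on $S(\la)$. But here I would use the explicit description of the form from Lemma~\ref{easy}: taking $|\bt^*[\bga^*]|$ to be the oriented stretched cup diagram of minimal degree built over $\la$ with all internal circles anti-clockwise (which exists because $\la$ is a proper weight and in $\Ga^\circ$), the formula in Lemma~\ref{easy} gives $(|\bt^*[\bga^*]|,|\bt^*[\bga^*]|)=1\neq 0$. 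Hence $S(\la)/\rad S(\la)\neq\{0\}$ and therefore $S(\la)/\rad S(\la)\cong D(\la)$.

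Next the ``only if'' direction: if $\la\notin\Ga^\circ$ then $S(\la)/\rad S(\la)=\{0\}$, i.e. the form is identically zero. By Lemma~\ref{durham1}, $S(\la)\cong\pi(V(\la))$, and by Lemma~\ref{sf2} together with Corollary~\ref{indec} the socle of $S(\la)$ is $D(\la^\circ)\langle\deg(\underline{\la^\circ}\la)\rangle$ with $\la^\circ\neq\la$; in fact for $\la\notin\Ga^\circ$ one can say more — every composition factor of $S(\la)\cong\pi(V(\la))$ is of the form $D(\mu)$ with $\mu>\la$ in the Bruhat order. This is because $\pi$ is exact, kills $L(\nu)$ for $\nu\notin\Ga^\circ$, and the composition factors $L(\mu)$ of $V(\la)$ for $\mu\in\Ga^\circ$ all satisfy $\mu\geq\la$; moreover $\la\notin\Ga^\circ$ rules out $\mu=\la$. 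Consequently $S(\la)$ has no composition factor isomorphic (up to shift) to $D(\la)$. On the other hand, by the general theory of cellular algebras, if $S(\la)/\rad S(\la)$ were non-zero it would be an irreducible module labelled by $\la$ in the cellular sense, and one then shows — via the associativity $(x v,w)=(v,x^*w)$ and a standard argument comparing multiplicities — that $D(\la):=S(\la)/\rad S(\la)$ occurs as a composition factor of $S(\la)$ itself (with multiplicity one, in degree zero). This contradicts the previous sentence. Hence $S(\la)/\rad S(\la)=\{0\}$.

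\textbf{Main obstacle.} The step I expect to require the most care is the Bruhat-order control on the composition factors of $S(\la)\cong\pi(V(\la))$ when $\la\notin\Ga^\circ$, specifically verifying that $D(\la)$ genuinely does \emph{not} appear: one must combine the exactness of the Schur functor, the fact that $\pi(L(\nu))=0$ precisely for $\nu\notin\Ga^\circ$, the highest-weight-category triangularity $[V(\la):L(\mu)]\neq 0\Rightarrow\mu\geq\la$ in $\Rep{K^\La_\alpha}$ (from \cite[Theorem 5.3]{BS1}), and the cellular-algebra principle that the head of $S(\la)$, when non-zero, is always a composition factor of $S(\la)$. The rest is bookkeeping with the explicit diagrammatic form from Lemma~\ref{easy} and appeals to Lemma~\ref{durham1}, Lemma~\ref{sf2} and Corollary~\ref{indec}.
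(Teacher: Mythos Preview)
Your overall strategy is sound and genuinely different from the paper's, but there are two concrete errors that need fixing.

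\textbf{The self-pairing in the ``if'' direction is wrong.} By Lemma~\ref{easy}, $(|\bt^*[\bga^*]|,|\bt^*[\bga^*]|)=1$ requires all matching pairs of internal circles to be \emph{oppositely} oriented; but a circle matched with itself cannot be oppositely oriented to itself. So if $\bt^*$ has any internal circles, the self-pairing you wrote down is zero, not one. The easy repair is to pair $|\bt^*[\bga^*]|$ with $|\bt^*[\bde^*]|$ where $\bde$ agrees with $\bga$ except that every internal circle is reversed. This gives value $1$ provided the generalised cups in $\bt^*[\bga^*]$ are all anti-clockwise. But you still owe an argument that such a $\bt$ exists for each $\la\in\Ga^\circ$: you need a stretched cap diagram of type $\alpha$ whose upper reduction is exactly $\overline{\la}$, and this is not quite immediate from the results you cite. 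The paper avoids this existence question entirely by running the diagrammatic argument in the \emph{other} direction and then using a counting argument.

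\textbf{The Bruhat inequality is backwards.} Composition factors $L(\mu)$ of $V(\la)$ satisfy $\mu\leq\la$, not $\mu\geq\la$ (see e.g.\ (\ref{e1})--(\ref{e2}) and the triangularity stated there). So for $\la\notin\Ga^\circ$ the composition factors of $S(\la)\cong\pi(V(\la))$ are $D(\mu)$ with $\mu<\la$, $\mu\in\Ga^\circ$. Fortunately your contradiction argument survives this correction unchanged: if $S(\la)/\rad S(\la)\neq 0$ it would be isomorphic to some $D(\nu)$ with $\nu\in\Ga^\circ$, $\nu\neq\la$, hence to $S(\nu)/\rad S(\nu)$ by your ``if'' direction, contradicting the Graham--Lehrer fact that non-zero cell-module heads are pairwise non-isomorphic.

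\textbf{Comparison with the paper.} The paper argues the ``only if'' direction \emph{directly} from Lemma~\ref{easy}: non-vanishing of the form forces some diagram with all generalised cups anti-clockwise, whence the lower reduction must be $\underline{\la}$, whence $\defect(\la)=\defect(\Ga)$. It then gets the ``if'' direction by a counting argument (there are $|\Ga^\circ|$ irreducibles by Corollary~\ref{id3} and Lemma~\ref{pring1}, and cellular theory says the non-zero heads give them all), finishing with Corollary~\ref{indec}. Your route inverts this: diagrammatics for ``if'', composition-factor bookkeeping plus Graham--Lehrer non-isomorphism for ``only if''. Both are valid once patched; the paper's route is slightly cleaner because the diagrammatic step (form non-zero $\Rightarrow$ $\la\in\Ga^\circ$) requires no existence statement, only an analysis of what non-vanishing forces.
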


\begin{proof}
Suppose that $S(\la) / \rad S(\la) \neq \{0\}$, i.e. the form $(.,.)$ on $S(\la)$ is non-zero.
Then by Lemma~\ref{easy} there exists
at least one oriented stretched cup diagram $\bt^*[\bga^*]$ with $\ga_d = \la$
in which every generalised cup is anti-clockwise. As $\defect(\Ga) = \cups(\bt^*) - \caps(\bt^*)$,
the lower reduction $a$ of $\bt^*$ has exactly $\defect(\Ga)$ cups. As every generalised cup in $a \la$
is anti-clockwise,
we must have that $a = \underline{\la}$. Hence $\defect(\la) = \defect(\Ga)$, so $\la \in \Ga^\circ$.
To complete the proof, it remains to observe that the number of isomorphism classes of irreducible
$R^\La_\alpha$-modules up to grading shift is equal to the cardinality of the set $\Ga^\circ$.
This is a consequence of Corollary~\ref{id3}, since $T^\La_\alpha$ has exactly
$|\Ga^\circ|$ non-isomorphic indecomposable summands
up to grading shift by Lemma~\ref{pring1}.
The final statement now follows using also Corollary~\ref{indec}.
\end{proof}

Observe finally from Lemma~\ref{easy} that the diagram basis for $S(\la)$ is {\em special} in the sense that it
contains a basis for $\rad S(\la)$. In other words, the non-zero vectors obtained by considering the canonical
images of our basis vectors in the quotient $D(\la) \equiv S(\la) / \rad S(\la)$
give a basis for the irreducible module $D(\la)$.

\begin{Theorem}\label{dimform}
For $\bi \in I^\alpha$, $j \in \Z$ and $\la \in \Ga^\circ$,
the dimension of the homogeneous component of $e(\bi) D(\la)$ of degree $j$
is equal to the number of
oriented stretched cap diagrams of the form $\bt[\bga]$ with the following properties:
\begin{itemize}
\item[(i)] the admissible sequence underlying $\bt$ is equal to $\bi$;
\item[(ii)] the degree of $\bt[\bga]$ in the sense of (\ref{degz}) is equal to $j$.
\item[(iii)] $\ga_d = \la$;
\item[(iv)] all boundary caps of $\bt[\bga]$ are anti-clockwise.
\end{itemize}
\end{Theorem}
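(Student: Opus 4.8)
The plan is to read off the dimension formula directly from the description of $D(\la)$ as the quotient of the cell module $S(\la)$ by the radical of its cellular form, using the explicit formula for that form provided by Lemma~\ref{easy}. Recall from the discussion preceding the theorem that $D(\la) \cong S(\la)/\rad S(\la)$ for $\la \in \Ga^\circ$, that the bilinear form $(.,.)$ on $S(\la)$ is homogeneous of degree zero, and that the weight spaces $e(\bi)S(\la)$ for distinct $\bi \in I^\alpha$ are mutually orthogonal; also, as observed just before the statement (a consequence of Lemma~\ref{easy}), the diagram basis of $S(\la)$ is special in the sense that it contains a basis of $\rad S(\la)$. Hence the images in $D(\la)$ of those diagram basis vectors of $S(\la)$ that do not lie in $\rad S(\la)$ form a homogeneous basis of $D(\la)$ compatible with both the grading and the weight space decomposition. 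It therefore suffices, for fixed $\bi \in I^\alpha$ and $j \in \Z$, to count the diagram basis vectors of $e(\bi)S(\la)$ of degree $j$ that are not radical.

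Next I would pin down exactly which basis vectors survive. The homogeneous basis vectors of $S(\la)$ are indexed by oriented stretched cup diagrams $\bt^*[\bga^*]$ with $\ga_d = \la$, with $\Z$-grading given by (\ref{degzz}); mirror image identifies these bijectively with oriented stretched cap diagrams $\bt[\bga]$ with $\ga_d = \la$, with the degree (\ref{degzz}) of the former equal to the degree (\ref{degz}) of the latter, and with $e(\bi)S(\la)$ corresponding to those whose underlying admissible sequence is $\bi$ (this compatibility follows from the description of $\omega(e(\bi))$ in (\ref{piei}) and the recipe for the $R^\La_\alpha$-action on $S(\la)$ given above). A basis vector lies in $\rad S(\la)$ precisely when it pairs to zero with every basis vector. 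By Lemma~\ref{easy}, $(|\bt^*[\bga^*]|, |\bu^*[\bde^*]|)$ can be non-zero only if $\bt^* = \bu^*$ and all boundary circles of $\bt^*[\bga^*]$ are anti-clockwise, which immediately forces any basis vector with a clockwise boundary circle into the radical. For the converse, given $\bt^*[\bga^*]$ with every boundary circle anti-clockwise, I would exhibit a pairing partner by taking $\bde^*$ obtained from $\bga^*$ by reversing the orientation of each internal (upper or lower) circle, leaving boundary circles and propagating lines unchanged: this is again a legitimate oriented stretched cup diagram, it still has terminal weight $\de_d = \ga_d = \la$ since only internal circles were altered, and by construction every matching pair of internal circles of $\bt^*[\bga^*]$ and $\bt^*[\bde^*]$ is oppositely oriented while all boundary circles of both are anti-clockwise, so Lemma~\ref{easy} gives $(|\bt^*[\bga^*]|, |\bt^*[\bde^*]|) = 1$. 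Thus the surviving basis vectors are exactly those $|\bt^*[\bga^*]|$ all of whose boundary circles are anti-clockwise.

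To finish I would translate this back into the cap-diagram language of the statement: under the mirror-image bijection above, the degree, the underlying admissible sequence and the terminal weight are preserved, and an anti-clockwise boundary circle of $\bt^*[\bga^*]$ corresponds to an anti-clockwise boundary cap of $\bt[\bga]$. Hence the surviving basis vectors of $e(\bi)S(\la)$ of degree $j$ are in bijection with the oriented stretched cap diagrams $\bt[\bga]$ of type $\alpha$ such that the admissible sequence underlying $\bt$ is $\bi$, $\deg(\bt[\bga]) = j$, $\ga_d = \la$, and all boundary caps are anti-clockwise. Combined with the first paragraph, this yields the asserted formula for $\dim(e(\bi)D(\la))_j$.

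The step I expect to be the main obstacle is the converse direction in the middle paragraph: one must verify that reversing the orientations of the internal circles of an oriented stretched cup diagram always produces another legitimate oriented stretched diagram with the same terminal weight, and that ``all boundary circles anti-clockwise'' is a simultaneously realisable orientation. This is a purely combinatorial fact about orientations of cup, cap and circle diagrams in the sense of \cite{BS1, BS2} --- each closed component of such a diagram admits exactly two orientations, chosen independently of the rest of the picture, and neither choice affects the labels on any number line --- so it should go through routinely, but it is precisely the place where the relevant conventions for oriented stretched diagrams have to be invoked with care. A secondary, purely bookkeeping point is the matching of ``boundary circle'' for stretched cup diagrams with ``boundary cap'' for their mirror-image stretched cap diagrams, which is immediate once the conventions are unwound.
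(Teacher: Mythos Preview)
Your proposal is correct and follows essentially the same route as the paper. The paper's proof is a one-line appeal to Lemma~\ref{easy}, asserting that the basis vectors $|\bt^*[\bga^*]|$ for which $\bt[\bga]$ fails condition (iv) span $\rad S(\la)$; you have spelled out both directions of this equivalence in detail, in particular constructing the explicit pairing partner $|\bt^*[\bde^*]|$ by flipping the internal circles to witness non-degeneracy when all boundary components are anti-clockwise. Your identification of the potential sticking point (independent reorientability of internal circles without disturbing $\ga_d$ or the boundary components) is accurate and, as you anticipate, is a routine consequence of the conventions for oriented stretched diagrams.
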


\begin{proof}
By Lemma~\ref{easy}, the vectors $|\bt^*[\bga^*]|$ for which $\bt[\bga]$ does {\em not}
satisfy the conditions (i)--(iv) give a basis for $\rad S(\la)$.
\end{proof}

We finish with an example to illustrate Theorem~\ref{dimform}.

\begin{Example}\rm
Take $o=0$, $m=n=2$,
$\alpha = 2\alpha_1+3\alpha_2+2\alpha_3+\alpha_4$
and $\bi = (2,3,4,1,2,2,3,1)$ as in Remark~\ref{newr1}.
Let $\la$ be the weight corresponding to the bipartition $((1,1),(3,3))$ under the bijection from
Remark~\ref{dinnertime}; this corresponds to the weight at the bottom of the diagram (\ref{itmightbewrong}).
Then the graded dimension of $e(\bi) D(\la)$ is $q+q^{-1}$, and its basis is given by the images of the
diagram basis vectors of $e(\bi) S(\la)$ parametrised by the oriented stretched cap diagram
from (\ref{itmightbewrong}) and the one obtained from that by reversing the orientation of the internal circle.
Under the bijection from Remark~\ref{newr1} these diagrams map to the following two standard bitableaux,
which are of degrees $1$ and $-1$, respectively:
$$
\left(\:\diagram{5\cr 8\cr}\:,\:\:\:
\diagram{1&2&3\cr 4&6&7\cr}\:\right),
\qquad
\left(\:\diagram{6\cr 8\cr}\:,\:\:\:
\diagram{1&2&3\cr 4&5&7\cr}\:\right).
$$
Similar considerations show $\rad e(\bi) S(\la)$ has graded dimension
$2+q+2q^2+q^3$.
\end{Example}

\small

\section*{Index of notation}

\begin{tabular}{llr}
\hspace{-5mm}$I^+$&Index set
for vertices on number lines&\hspace{6.5mm}\pageref{II}\\
\hspace{-5mm}$P$&Weight lattice $\bigoplus_{i \in I^+} \Z \delta_i
= \bigoplus_{i \in I^+} \Z \Lambda_i$ of $\mathfrak{gl}(I^+)$&\pageref{srp}\\
\hspace{-5mm}$Q$&Root lattice $\bigoplus_{i \in I} \Z \alpha_i$&\pageref{srp}\\
\hspace{-5mm}$U$&Quantized enveloping algebra associated to $\mathfrak{gl}(I^+)$&\pageref{qea}\\
\hspace{-5mm}$P(m,n;I)$&Lie theoretic weights
of $U$-module $\bigwedge^m V \otimes \bigwedge^n V$&\pageref{pimn}\\
\hspace{-5mm}$\Gamma,\Delta,...$&
Elements of $P(m,n;I)$ which are
{blocks} in the diagram calculus&\\
\hspace{-5mm}$\La(m,n;I)$&Diagrammatic weights, i.e. number lines
decorated by $\down,\up,\circ,\times$&\pageref{wtsdef}\\
\hspace{-5mm}$\lambda,\mu,...$& Elements of $\La(m,n;I)$&\\
\end{tabular}


\begin{tabular}{llr}
\hspace{-5mm}$\sim$&Linkage relation on $\La(m,n;I)$ so that $P(m,n;I) = \La(m,n;I) / \sim$&\pageref{wtsdef}\\
\hspace{-5mm}$\leq$&Bruhat order on $\La(m,n;I)$&\pageref{wtsdef}\\
\hspace{-5mm}$\Lambda=\{\iota\}$&The ground-state block
containing the ground-state weight&\pageref{groundstate}\\
\hspace{-5mm}$\defect(\Ga)$&Defect of a block&\pageref{defectdef}\\
\hspace{-5mm}$V_\la,L_\la,P_\la$&Monomial, dual-canonical,
quasi-canonical basis of $\bigwedge^m V \otimes \bigwedge^n V$&\!\!\!\!\!\!\!\!\pageref{mon},\pageref{ib}\\
\hspace{-5mm}$\mathcal V_\la, \mathcal L_\la, \mathcal P_\la$&Specialisations of
$V_\la,L_\la,P_\la$ at $q=1$&\pageref{spec}\\
\hspace{-5mm}$\cups(t)$&Number of cups in crossingless matching $t$&\pageref{ncup}\\
\hspace{-5mm}$\caps(t)$&Number of caps in crossingless matching $t$&\pageref{ncup}\\
\hspace{-5mm}$\circles(a b)$\!&Number of circles
in $ab$ (cup diagram $a$ under cap diagram $b$)&\pageref{ncirc}\\
\hspace{-5mm}$K(m,n;I)$&The diagram algebra
$\bigoplus_\Ga K_\Ga$ summing over blocks $\Ga \in P(m,n;I)$&\pageref{KI}\\
\hspace{-5mm}$(a \lambda b)$&Basis for $K(m,n;d)$ indexed by
oriented circle diagrams&\\
\hspace{-5mm}$V(\la)$&Graded cell modules of $K(m,n;I)$
for $\la\in\La(m,n;I)$&\pageref{cellmod}\\
\hspace{-5mm}$L(\la)$&Irreducible head of $V(\la)$
&\\
\hspace{-5mm}$P(\la)$&Projective cover of $L(\la)$&\\
\hspace{-5mm}$K^t_{\Delta\Gamma}, K^\bt_{\bGa}$&Geometric bimodules
associated to crossingless matchings&\pageref{sgbpf}\\
\hspace{-5mm}$(a\:\bt[\bga]\:b)$&Basis for $K^\bt_{\bGa}$
indexed by oriented circle diagrams&\\
\hspace{-5mm}$G^{t}_{\Ga\De}, G^{\bt}_{\bGa}$&Projective functors
defined up to shift by tensoring with $K^t_{\Ga\De}, K^{\bt}_{\bGa}$\!&\pageref{sgbpf}\\
\hspace{-5mm}$E_i, F_i$&Special projective functors on $\Rep{K(m,n;I)}$&\pageref{gFi}\\
\hspace{-5mm}$\mathfrak{g}, \mathfrak{b}, \mathfrak{h}$&
Lie algebra $\mathfrak{gl}_{m+n}(\C)$, its usual Borel, Cartan subalgebras&\pageref{rhodef}\\
\hspace{-5mm}$\eps_1,\dots,\eps_{m+n}$\!\!\!\!\!\!\!\!\!\!&Standard basis for $\mathfrak{h}^*$&\pageref{rhodef}\\
\hspace{-5mm}$\rho$&Origin used to define the
embedding of
$\Lambda(m,n;I)$ into $\mathfrak{h}^*$&\!\!\!\!\!\!\!\!\pageref{rhodef},\pageref{Id}\\
\hspace{-5mm}$\mathcal O(m,n;I)$&Sum $\bigoplus_{\Ga \in P(m,n;I)} \mathcal O_\Ga$ of blocks of parabolic category $\mathcal O$ for $\mathfrak{g}$&\pageref{cato}\\
\hspace{-5mm}$\mathcal V(\la)$&Standard modules in $\mathcal O(m,n;I)$
for $\la \in \La(m,n;I)$&\pageref{feet}\\
\hspace{-5mm}$\mathcal L(\la)$&Irreducible head of $\mathcal V(\la)$
&\\
\hspace{-5mm}$\mathcal P(\la)$&Projective cover of $\mathcal L(\la)$&\\
\hspace{-5mm}$\mathcal E_i, \mathcal F_i$&Special projective functors
on $\mathcal O(m,n;I)$&\pageref{donely}\\
\hspace{-5mm}$I^\alpha$&Multi-indices $\bi = (i_1,\dots,i_d) \in I^d$
with $\alpha_{i_1}+\cdots+\alpha_{i_d} = \alpha$&\pageref{mind}\\
\hspace{-5mm}$E_{\bi}, F_{\bi}$&Compositions of special projective functors&\pageref{cf}\\
\hspace{-5mm}$y(i), y_r(\bi)$&Endomorphisms of $F_i, F_{\bi}$&\!\!\!\!\!\!\!\!\pageref{cold},\pageref{ci}\\
\hspace{-5mm}$\psi(ij), \psi_r(\bi)$&Natural transformations
$F_j \circ F_i \rightarrow F_i \circ F_j, F_{\bi} \rightarrow F_{s_r\cdot \bi}$&\!\!\!\!\!\!\!\!\pageref{bold},\pageref{bi}\\
\hspace{-5mm}$K^\La_\alpha, \mathcal O^\La_\alpha$&Alternative notations
for $K_{\La-\alpha}, \mathcal O_{\La-\alpha}$&\!\!\!\!\!\!\!\!\pageref{altk},\pageref{alto}\\
\hspace{-5mm}$T^\La_\alpha, \mathcal T^\La_\alpha$&Prinjective generators
for $K^\La_\alpha, \mathcal O^\La_\alpha$ (``tensor space'')
&\!\!\!\!\!\!\!\!\pageref{dc},\pageref{cT}\\
\hspace{-5mm}$E^\La_\alpha$&Endomorphism algebra of $T^\La_\alpha$&\pageref{Thend}\\
\hspace{-5mm}$\bt[\bga]$&Oriented stretched cap diagram&\pageref{oscd}\\
\hspace{-5mm}$(\bu^*[\bde^*]\wr\bt[\bga])$\!\!\!\!&Basis for $E^\La_\alpha$
indexed by oriented stretched circle diagrams&\pageref{Hbasis}\\
\hspace{-5mm}$H_d^{o+m,o+n}$&Cyclotomic quotient of degenerate
affine Hecke algebra&\pageref{tqa}\\
\hspace{-5mm}$x_r, s_r$&Generators for $H_d^{o+m,o+n}$&\\
\hspace{-5mm}$e(\bi), e_\alpha$&Weight idempotents,
central idempotents in $H_d^{o+m,o+n}$&\pageref{wis2}\\
\hspace{-5mm}$e_\alpha H^{o+m,o+n}_d$\!\!&Endomorphism algebra of $\cT^\La_\alpha$&\pageref{id1}\\
\hspace{-5mm}$R^\La_\alpha$&Cyclotomic quotient of
Khovanov-Lauda-Rouquier algebra&\pageref{EKLGens}\\
\hspace{-5mm}$e(\bi), y_r, \psi_r$&Generators for $R^\La_\alpha$&\\
\hspace{-5mm}$D(\la), \mathcal D(\la)$&Irreducible
modules for $E^\La_\alpha \cong R^\La_\alpha \cong e_\alpha H^{o+m,o+n}_d$&\!\!\!\!\!\!\!\!\pageref{mods1},\pageref{mods2}\\
\hspace{-5mm}$Y(\la), \mathcal Y(\la)$&
Young modules&\!\!\!\!\!\!\!\!\pageref{mods1},\pageref{mods2}\\
\hspace{-5mm}$S(\la)$&
Specht modules&\pageref{speccie}\\
\hspace{-5mm}$(\la^{(1)},\la^{(2)})$&Bipartitions (a combinatorial alternative to
weights)&\pageref{dinnertime}\\
\hspace{-5mm}$(\mathtt{T}^{(1)},\mathtt{T}^{(2)})$&Bitableaux
(an alternative to oriented stretched cap diagrams)\!\!\!\!\!\!\!\!\!\!\!\!&\pageref{newr1}\\
\end{tabular}


\begin{thebibliography}{CPS3}
\bibitem[AS]{AS}
T. Arakawa and T. Suzuki,
Duality between $\mathfrak{sl}_n(\mathbb{C})$ and the degenerate affine Hecke algebra, {\em  J. Algebra} {\bf  209} (1998), 288--304.

\bibitem[Ba]{Back}
E. Backelin,
Koszul duality for parabolic and singular category $\cO$,
{\em Represent. Theory} {\bf 3} (1999), 139--152.

\bibitem[BGS]{BGS}
A. Beilinson, V. Ginzburg and W. Soergel,
Koszul duality patterns in representation theory,
{\em J. Amer. Math. Soc.} {\bf 9} (1996), 473--527.

\bibitem[BFK]{BFK}
J. Bernstein, I. Frenkel and M. Khovanov,
A categorification of the Temperley-Lieb algebra and Schur quotients of
$U(\mathfrak{sl}_2)$ via projective and Zuckerman functors,
{\em Selecta Math.} {\bf 5} (1999), 199--241.

\bibitem[BG]{BG}
J. Bernstein and S. Gelfand,
Tensor products of finite and infinite representations
of semisimple Lie algebras,
{\em Compositio Math.} {\bf 41} (1980), 245--285.

\bibitem[BN]{BoeNakano}
B. Boe and D.  Nakano,
Representation type of the blocks of category $\mathcal O_S$,
{\em Adv. Math.} {\bf  196}  (2005), 193--256.

\bibitem[Bra]{Br}
T. Braden, Perverse sheaves on Grassmannians,
{\em Canad. J. Math.} {\bf 54} (2002), 493--532.

\bibitem[Bre]{Brenti}
F. Brenti,
Kazhdan-Lusztig and $R$-polynomials, Young's lattice, and Dyck partitions, {\em Pacific J. Math.} {\bf 207} (2002), 257-286.

\bibitem[B1]{B2}
J. Brundan,
Dual canonical bases and Kazhdan-Lusztig polynomials,
{\em J. Algebra} {\bf 306} (2006), 17-46.

\bibitem[B2]{cyclo}
J. Brundan,
Centers of degenerate cyclotomic Hecke algebras and parabolic category
$\cO$, {\em Represent. Theory} {\bf 12} (2008), 236--259.

\bibitem[BK1]{BKrep}
J. Brundan and A. Kleshchev,
Representations of shifted Yangians and finite $W$-algebras,
{\em Mem. Amer. Math. Soc.}
{\bf 196} (2008), no. 918, 107 pp..

\bibitem[BK2]{BKschur}
J. Brundan and A. Kleshchev,
Schur-Weyl duality for higher levels,
{\em Selecta Math.} {\bf 14} (2008), 1--57.

\bibitem[BK3]{BKyoung}
J. Brundan and A. Kleshchev,
Blocks of cyclotomic Hecke algebras and Khovanov-Lauda algebras,
{\em Invent. Math.} {\bf 178} (2009), 451--484.

\bibitem[BK4]{BKariki}
J. Brundan and A. Kleshchev,
The degenerate analogue of Ariki's categorification theorem,
to appear in {\em Math. Z.};
{\tt arXiv:0901.0057}.

\bibitem[BK5]{BKllt}
J. Brundan and A. Kleshchev,
Graded decomposition numbers for cyclotomic Hecke algebras,
{\em Adv. Math.} {\bf 222} (2009), 1883--1942.

\bibitem[BKW]{BKW}
J. Brundan, A. Kleshchev and W. Wang,
Graded Specht modules, to appear in {\em J. Reine Angew. Math.}; {\tt arXiv:0901.0218}.

\bibitem[BS1]{BS1}
J. Brundan and C. Stroppel,
Highest weight categories arising from Khovanov's diagram algebra I: cellularity; {\tt arXiv:0806.1532}.

\bibitem[BS2]{BS2}
J. Brundan and C. Stroppel,
Highest weight categories arising from Khovanov's diagram algebra II: Koszulity, {\em Transform. Groups} {\bf 15} (2010), 1--45.

\bibitem[C]{Ch}
Y. Chen,
Categorification of level two representations of quantum $\mathfrak{sl}_n$
via generalized arc rings; {\tt arXiv:math/0611012}.

\bibitem[CK]{CK}
Y. Chen and M. Khovanov,
An invariant of tangle cobordisms via subquotients of arc rings;
{\tt arXiv:math/0610054}.

\bibitem[CWZ]{CWZ}
S.-J. Cheng, W. Wang and R.B. Zhang,
Super duality and Kazhdan-Lusztig polynomials,
{\em Trans. Amer. Math. Soc.} {\bf 360} (2008), 5883-5924.

\bibitem[CR]{CR}
J. Chuang and R. Rouquier,
Derived equivalences for symmetric groups and $\mathfrak{sl}_2$-categorification, {\em Ann. of Math.} {\bf 167} (2008), 245--298.

\bibitem[CPS]{CPS}
E. Cline, B. Parshall and L. Scott,
Finite dimensional algebras
and highest weight categories,
{\em J. Reine Angew. Math.} {\bf 391} (1988), 85--99.

\bibitem[D]{D2}
V. Drinfeld,
Degenerate affine Hecke algebras and Yangians,
{\em Func. Anal. Appl.} {\bf 20} (1986), 56--58.

\bibitem[Du]{Du}
J. Du,
IC bases and quantum linear groups,
{\em Proc. Symp. Pure Math.} {\bf 56} (1994),
Part 2, 135--148.

\bibitem[ES]{ES}
T. Enright and B. Shelton,
Categories of highest weight modules:
applications to classical Hermitian symmetric pairs,
{\em Mem. Amer. Math. Soc.} {\bf 367} (1987), 1--94.

\bibitem[FK]{FK}
I. Frenkel and M. Khovanov,
Canonical bases in tensor products and graphical calculus for
$U_q(\mathfrak{sl}_2)$,
{\em Duke Math. J.} {\bf 87} (1997), 409--480.

\bibitem[FKS]{FKS}
I. Frenkel, M. Khovanov and C. Stroppel, A categorification of finite-dimensional irreducible representations of quantum $\mathfrak{sl}_2$ and their tensor products, {\em Selecta Math.} {\bf 12} (2006), 379--431.

\bibitem[G]{Gab}
P. Gabriel,
Des cat\'egories Ab\'eliennes,
{\em Bull. Soc. Math. France} {\bf 90} (1962), 323--448.

\bibitem[GL]{GL}
 J. J. Graham and G. I. Lehrer,
Cellular algebras, {\em Invent. Math.} {\bf 123} (1996), 1--34.

\bibitem[HM]{HM}
J. Hu and A. Mathas,
Graded cellular bases for the cyclotomic Khovanov-Lauda-Rouquier algebras of type A, to appear in {\em Advances Math.}; {\tt arXiv:0907.2985}.

\bibitem[HK]{HK}
R. Huerfano and M. Khovanov,
Categorification of some level two representations of $\mathfrak{sl}_n$,
{\em J. Knot Theory Ramifications} {\bf 15} (2006), 695-713.

\bibitem[H]{Hbook}
J.~E.~Humphreys,
{\em Representations of Semisimple Lie Algebras in the BGG Category $\mathcal{O}$}, Graduate Studies in Mathematics 94, AMS, 2008.

\bibitem[I]{I}
R. Irving,
Projective modules in the category $\cO_S$: self-duality,
{\em Trans. Amer. Math. Soc.} {\bf 291} (1985), 701--732.

\bibitem[KL]{KL}
D. Kazhdan and G. Lusztig,
Representations of Coxeter groups and Hecke algebras,
{\em Invent. Math.} {\bf 53} (1979), 165--184.

\bibitem[K1]{K}
M. Khovanov,
A categorification of the Jones polynomial,
{\em Duke Math. J.} {\bf 101} (2000), 359--426.

\bibitem[K2]{K2}
M. Khovanov,
A functor-valued invariant of tangles,
{\em Alg. Geom. Topology} {\bf 2} (2002), 665--741.

\bibitem[KLa]{KLa}
M. Khovanov and A. Lauda,
A diagrammatic approach to categorification of quantum groups I,
{\em Represent. Theory} {\bf 13} (2009), 309--347.

\bibitem[K]{Kbook}
A. Kleshchev, {\em
Linear and Projective Representations of Symmetric Groups}, Cambridge University Press, Cambridge, 2005.

\bibitem[LS]{LS}
A. Lascoux and M.-P. Sch\"utzenberger,
Polyn\^omes de Kazhdan et Lusztig pour les Grassmanniennes,
{\em Ast\'erisque} {\bf 87--88} (1981), 249--266.

\bibitem[LM]{LM}
B. Lerclerc and H. Miyachi,
Constructible characters and canonical bases,
{\em J. Algebra} {\bf 277} (2004), 298--317.

\bibitem[L]{Lubook}
G. Lusztig,
{\em Introduction to Quantum Groups}, Birkh\"auser,
1993.

\bibitem[M]{Maclane}
S. Maclane,
{\em Categories for the Working Mathematician},
Graduate Texts in Mathematics {\bf 5},
Springer, 1971.


\bibitem[MS]{MSslk}
{V.~Mazorchuk and C.~Stroppel},
{A combinatorial approach to functorial quantum $\mathfrak{sl}_k$ knot invariants}, {\em Amer. J. Math.} {\bf 131}  (2009), 1679--1713.

\bibitem[MOY]{MOY}
H. Murakami, I. Ohtsuki and S. Yamada,
{Homfly polynomial via an invariant of colored plane graphs},
{\em Enseign. Math.}
{\bf 44} (1998), 325--360.

\bibitem[NM]{NM}
J. Nagel and M. Moshinsky,
Operators that lower or raise the irreducible vector spaces
of ${\rm U}_{n-1}$ contained in an irreducible vector space of ${\rm U}_{n}$,
{\em J. Math. Phys.} {\bf 6} (1965), 682--694.

\bibitem[R]{Rou}
R. Rouquier,
$2$-Kac-Moody algebras; {\tt arXiv:0812.5023}.

\bibitem[S1]{Sduke}
C. Stroppel,
{Categorification of the Temperley-Lieb category, tangles, and cobordisms via projective functors},
{\em Duke Math. J.}
{\bf 126}
(2005), {547--596}.

\bibitem[S2]{Stqft}
C. Stroppel,
TQFT with corners and tilting functors in the Kac-Moody case;
{\tt arXiv:math/0605103}.

\bibitem[S3]{S}
C. Stroppel,
Parabolic category $\cO$, perverse sheaves on Grassmannians,
Springer fibres and Khovanov homology,
{\em Compositio Math.} {\bf 145} (2009), 954--992.

\bibitem[SW]{SW}
C. Stroppel and B. Webster,
2-block Springer fibers: convolution algebras, coherent sheaves and embedded TQFT, to appear in {\em Comm. Math. Helv.}; {\tt arXiv:0802.1943}.

\bibitem[Su]{Sussan}
J. Sussan,
{Category $\cO$ and $\mathfrak{sl}(k)$ link invariants}; {\tt arXiv:math/0701045}.

\bibitem[VV]{VV}
M. Varagnolo and E. Vasserot,
Canonical bases and Khovanov-Lauda algebras;
{\tt arXiv:0901.3992}.
\end{thebibliography}
\end{document}